%% file: main.tex
\documentclass{article}

\input{preamble}

\title{Deformations of harmonic maps with conical singularities}
\author{Dominik Gutwein\textsuperscript{$*$} and Thibault Langlais\textsuperscript{$\dagger$}}
\date{}

\begin{document}

\maketitle

\footnotetext[1]{Universität Hamburg, Germany. Email: \texttt{dominik.gutwein[]uni-hamburg.de}.}
\footnotetext[2]{Humboldt-Universität, Berlin, Germany. Email: \texttt{thibault.langlais[]hu-berlin.de}.}

\begin{abstract}
    We study the deformation theory of harmonic maps with isolated singularities between compact Riemannian manifolds, in the case where all the tangent maps are smooth (away from the origin) and the decay to the tangents is polynomial. We will refer to these as conically singular harmonic maps. Under certain conditions on the Morse index and the nullity of the tangent maps and a non-degeneracy assumption on a weighted kernel of the Jacobi operator, we prove that such harmonic maps persist under a small $C^2$-perturbation of the background metric.

    In the second part of the paper, we construct examples of conically singular harmonic maps satisfying the assumptions of our deformation theorem. We first prove that the desired conditions on the tangent maps are satisfied by the radial projections $\R^m \setminus \{0\} \to \Sp^{m-1}$ ($m \geq 4$) and the complex Hopf fibrations $\C^{n+1} \setminus \{0\} \to \mathbb{CP}^{n}$ ($n \geq 1$). We then construct explicit examples of conically singular maps $\Sp^{m} \setminus \{N,S\} \to \Sp^{m-1}$, $\Sp^{4} \setminus \{N,S\} \to \Sp^2$ and $\mathbb{CP}^2 \setminus \{[0:0:1]\} \to \mathbb{CP}^1$ which are harmonic and satisfy the assumptions of the deformation theorem with respect to an appropriate metric on the domain and the round metric on the target.
\end{abstract}

\vspace{1cm}
\tableofcontents

\setcounter{footnote}{0}
\renewcommand{\thefootnote}{\arabic{footnote}}

\newpage

\input{section1_introduction}
\input{section2_conical}

\input{section3_banach}
\input{section4_proof}

\input{section5_modelmaps}
\input{section6_examples}

\appendix

\input{appendix_stationary}

\small

\addcontentsline{toc}{section}{References}

\bibliographystyle{amsplain}
\bibliography{bibliography_harmonic}

\end{document}

%% file: preamble.tex
\usepackage[english]{babel}	
\usepackage[T1]{fontenc}		
\usepackage[utf8]{inputenc}	
\usepackage{amsmath}
\usepackage{amsthm}	
\usepackage{amssymb}
\usepackage{mathrsfs}
\usepackage{appendix}
\usepackage{mathtools}
\usepackage{paralist}	
\usepackage{geometry}			
\usepackage[colorlinks = true,
            linktocpage = true,
            linkcolor = blue,
            urlcolor  = blue,
            citecolor = blue,
            anchorcolor = blue]{hyperref}
\usepackage[capitalize,nameinlink,noabbrev]{cleveref}
\usepackage[hang,flushmargin,symbol,bottom]{footmisc}

\numberwithin{equation}{section}        
\newtheoremstyle{bfnote}                
	{}{}
	{\itshape}{}
	{\bfseries}{.}
	{ }
	{\thmname{#1}\thmnumber{ #2}\thmnote{ (#3)}}

\newtheoremstyle{bfremark}              
	{}{}
	{}{}
	{\bfseries}{.}
	{ }
	{\thmname{#1}\thmnumber{ #2}\thmnote{ (#3)}}

\theoremstyle{bfnote}

\newtheorem{thm}{Theorem}[section]          

\newtheorem{prop}[thm]{Proposition}         

\newtheorem{lem}[thm]{Lemma}                

\newtheorem{cor}[thm]{Corollary}            

\newtheorem{hyp}[thm]{Assumption}           

\theoremstyle{definition}

\newtheorem{Def}[thm]{Definition}           

\theoremstyle{bfremark}

\newtheorem{rem}[thm]{Remark}               

\newtheorem{ex}[thm]{Example}               

\newcommand{\col}{\colon \thinspace}        

\newcommand{\N}{\mathbb{N}}                 
\newcommand{\Z}{\mathbb{Z}}                 
\newcommand{\R}{\mathbb{R}}                 
\newcommand{\C}{\mathbb{C}}                 
\renewcommand{\P}{\mathbb{P}}               

\newcommand{\Ball}{\mathbb{B}}              
\newcommand{\Sp}{\mathbb{S}}                

\newcommand{\II}{\mathrm{I\!I}}             

\newcommand{\Map}{\mathrm{Map}}             
\newcommand{\Met}{\mathrm{Met}}             

\newcommand{\pr}{\mathrm{pr}}               

\newcommand{\pairl}{\langle \! \langle}     
\newcommand{\pairr}{\rangle \! \rangle}     

\newcommand{\diff}{\mathrm{d}}                      
\newcommand{\bmap}[1]{\boldsymbol{\mathrm{#1}}}     

\DeclareMathOperator{\vol}{vol}                 
\DeclareMathOperator{\tr}{tr}                   
\DeclareMathOperator{\Ric}{Ric}                 
\DeclareMathOperator{\id}{id}                   
\DeclareMathOperator{\im}{im}                   
\DeclareMathOperator{\inj}{inj}                 
\DeclareMathOperator{\Hom}{Hom}                 
\DeclareMathOperator{\End}{End}                 
\DeclareMathOperator{\Ind}{Ind}                 
\DeclareMathOperator{\Spec}{Spec}               
\DeclareMathOperator{\Hess}{Hess}               
\DeclareMathOperator{\divergence}{div}          


%% file: section1_introduction.tex
    \section{Introduction and background}

    \subsection{Context and main results}

A harmonic map between Riemannian manifolds $(M,g)$ and $(N,h)$ is by definition a critical point of the Dirichlet energy functional
\begin{equation*}
    E(\phi) \coloneqq \frac{1}{2} \int_M \vert \diff \phi \vert^2 \vol_g
\end{equation*}
naturally defined for maps of regularity $W^{1,2}(M,N)$. While \emph{weakly} harmonic maps have poor regularity properties in general (see \cite{Riviere1992DiscontHarmonicMaps}, which constructs weakly harmonic maps that are nowhere continuous), it is known that \emph{stationary} maps are smooth on the complement of a set $S\subset M$ of vanishing $(m-2)$-dimensional Hausdorff measure \cite{Bethuel1993Regularity,Evans1991Regularity} (where $m= \dim(M)$). Moreover, the singular set of an \emph{energy-minimising} harmonic map is of Hausdorff codimension at least 3 \cite{SchoenUhlenbeck1982RegularityHarmonicMaps}. See also \cite{chang1999regularity,chanillo1991sobolev,lin1999gradient,Naber2017RegularityHarmonicMaps,riviere2007partial} for a non-exhaustive list of related results, as well as the surveys \cite{hardt1997singularities,helein2007harmonic} and references therein.

In order to analyse the behaviour of a stationary harmonic map around its singular set $S$, one typically studies sequences of rescalings of the map around a point $s \in S$. By the monotonicity formula, such sequences of blow-ups sub-converge to a radially-invariant harmonic map $\phi_{s} \col \mathbb{R}^m\setminus S_s \to N$, a so-called \emph{tangent map}. A celebrated result by Simon \cite{simon1983asymptotics,simon1993theorems} (see also \cite{CaniatoParise2025logepiperimetric} for an alternative proof) states that if $\phi\in W^{1,2}(M,N)$ is energy-minimising with $N$ analytical and $s \in S$ is an isolated singularity such that at least one of the tangent maps $\phi_s$ is smooth outside the origin (i.e. $S_s=\{0\}$), then the tangent map $\phi_s$ at $s \in S$ is unique, i.e. independent of the choice of blow-up sequence. In fact, under the additional hypothesis that all infinitesimal harmonic deformations of $\varphi_s \coloneqq \left. \phi_s \right|_{\Sp^{m-1}}$ are integrable, \cite{adams1988rates} proved that the convergence rate to the tangent map $\phi_s$ is polynomial. We will call such harmonic maps with isolated singularities, where all the tangent maps are smooth (away from the origin) and the decay to the tangents is polynomial, \emph{conically singular} (cf. \cref{Def: Conically singular maps}).

The goal of the present paper is to study the deformation theory of conically singular harmonic maps. In the smooth setting, a classical result of Eells--Lemaire \cite{eells1981deformations} states that if $\phi \col (M,g) \to (N,h)$ is a smooth harmonic map between compact Riemannian manifolds which has no non-trivial Jacobi fields, then for any sufficiently small perturbations $\tilde{g}$ of $g$ and $\tilde{h}$ of $h$ there exists a smooth harmonic map $\tilde{\phi} \col (M,\tilde{g}) \to (N,\tilde{h})$ close to $\phi$. Our first main result exhibits sufficient conditions for a conically singular harmonic map to satisfy a similar persistence result with respect to a variation of the domain metric. More precisely, we prove:

\begin{thm}     \label{thm: main theorem introduction}
    Let $(M^m,g)$, $(N^n,h)$ be compact Riemannian manifolds such that $m = \dim(M) \geq 4$ and $n = \dim(N) \geq 2$ and let $S \subset M$ be a finite subset. Let $\phi \col (M \setminus S, g) \to (N,h)$ be a conically singular harmonic map with tangent maps $\{ \phi_s \col \R^m \setminus \{0\} \to N\}_{s \in S}$. We further assume that for any $s \in S$, $\varphi_s \coloneqq \left. \phi_s \right|_{\Sp^{m-1}} \col (\Sp^{m-1},g_{\Sp^{m-1}}) \to (N,h)$ is a smooth harmonic map satisfying:
    \begin{enumerate}
        \item $-(m-3)$ and $0$ are the only non-positive eigenvalues of the Jacobi operator $J^{\varphi_s}$.
        \item $\dim \ker(J^{\varphi_s} + m-3) = m$.
        \item All the Jacobi fields of $J^{\varphi_s}$ are integrable.
    \end{enumerate}
    Suppose moreover that $\phi$ satisfies the following non-degeneracy condition:
    \begin{enumerate}
        \item[4.] Any Jacobi field $u \in \ker(J^\phi_g)$ of $\phi$ satisfying
        \begin{equation*}
            |u| = \mathcal{O}(r^{-1}),
        \end{equation*}
        where $r \col M \to [0,+\infty)$ is the distance to the singular set $S \subset M$, is of the form $u = \phi^* \xi$ where $\xi \in \mathfrak{X}(N)$ is a Killing field for the metric $h$. Furthermore, we have $\dim \phi^*\mathfrak{iso}(N) = \dim \mathfrak{iso}(N)$.
    \end{enumerate}
    Then there exists $\varepsilon > 0$ such that if $\tilde{g}$ is a Riemannian metric on $M$ with $\|\tilde{g} - g\|_{C^2} < \varepsilon$, there exists a finite subset $\tilde{S} \subset M$ with $|\tilde{S}| = |S|$ and a harmonic map $\tilde{\phi} \col (M \setminus \tilde{S}, \tilde{g}) \to (N,h)$ conically singular along $\tilde{S}$. 
\end{thm}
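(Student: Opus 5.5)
The plan is to recast the problem as a nonlinear equation between weighted Hölder spaces on $M\setminus S$ and apply the implicit function theorem. The parameters are the singular points and the tangent maps, which are allowed to move, and the unknown is a decaying correction. First, I fix weighted Hölder spaces $C^{k,\alpha}_{\beta}$ on $M\setminus S$, with norms measuring $r^{-\beta}|\nabla^j u|$ at scale $r$. Sections of $\phi^*TN$ go in $C^{2,\alpha}_{\beta}$ and the tension field lies in $C^{0,\alpha}_{\beta-2}$. The rate $\beta>0$ is small, below the decay rate of $\phi$ to its tangents and below the first positive indicial root. The indicial roots of the model operator $\partial_r^2+\frac{m-1}{r}\partial_r+r^{-2}J^{\varphi_s}$ at each $s$ solve $\gamma(\gamma+m-2)=\lambda$ for eigenvalues $\lambda$ of $J^{\varphi_s}$. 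For $\lambda=0$ this gives $\gamma\in\{0,-(m-2)\}$. For $\lambda=-(m-3)$ it gives $\gamma\in\{-1,-(m-3)\}$, and since $m\ge4$ these are distinct negative roots, or coincide when $m=4$, which still gives a consistent window. Hypothesis 1 guarantees that no other roots lie in $[-1,0]$, and the $\lambda>0$ roots stay away from $0$. So $J^\phi_g$ is Fredholm on $C^{2,\alpha}_\beta\to C^{0,\alpha}_{\beta-2}$ for generic small $\beta$. By duality its cokernel is identified with the kernel at the dual weight $2-m-\beta$.

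The expected cokernel is governed by the asymptotic data. Translating the singular point $s$ in a direction $v\in\R^m$ moves $\phi_s$ by $-\diff\phi_s(v)$, which is homogeneous of degree $-1$. Its spherical part is an eigenfunction with eigenvalue $-(m-3)$, and hypothesis 2 says these $m$ translations exhaust that eigenspace. Rotating the tangent map by harmonic deformations of $\varphi_s$ corresponds to the $\lambda=0$ eigenspace. By hypothesis 3 these are integrable, so they come from an actual smooth family of tangent maps $\varphi_{s,t}$. I would therefore enlarge the domain of the map. For $p=(\tilde S,t)$ near $(S,0)$, I build approximate solutions $\phi_{p}$ by gluing the tangent maps $\phi_{s,t}$ centred at the moved points $\tilde s$ into $\phi$ with cutoffs. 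Combined with $\exp_{\phi_p}(u)$ for $u\in C^{2,\alpha}_\beta$, this gives a map
\begin{equation*}
  \mathcal{F}(\tilde g,\tilde S,t,u)=\tau_{\tilde g}\bigl(\exp_{\phi_{\tilde S,t}}u\bigr)\in C^{0,\alpha}_{\beta-2}.
\end{equation*}
To use the isometries of $N$, I also quotient by or add the parameter $\mathfrak{iso}(N)$. Alternatively, I restrict $u$ to an $L^2$-complement of $\phi^*\mathfrak{iso}(N)$, which is a genuine kernel because of the equivariance of harmonicity.

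The main step, and the obstacle, is to show that the linearisation in $(\tilde S,t,u)$ at $(g,S,0,0)$ is surjective modulo $\phi^*\mathfrak{iso}(N)$. I would use a Lockhart–McOwen/Melrose-type wall-crossing formula. The cokernel at weight $\beta$ is dual to the kernel at weight $2-m-\beta$. A kernel element $w$ of that decay has an asymptotic expansion whose leading terms sit at the dual roots, namely the duals of $-1$ and $0$. Its pairing with $J^\phi_g$ applied to the cut-off translation and rotation fields is a boundary flux. That flux equals the pairing of the leading coefficients of $w$ with the $\lambda=-(m-3)$ and $\lambda=0$ eigenspaces. A cokernel element orthogonal to all of these has vanishing leading coefficients. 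Then it actually satisfies $|w|=\mathcal O(r^{-1})$, so hypothesis 4 makes it of Killing type $\phi^*\xi$. The dimension assumption $\dim\phi^*\mathfrak{iso}(N)=\dim\mathfrak{iso}(N)$ ensures these account exactly for the equivariant directions, which are removed by the symmetry reduction. The delicate bookkeeping is to check that the flux pairing is nondegenerate and that the counting of roots between $-(m-2)$ and $0$ matches. This is where I expect the effort to lie.

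Finally, smooth dependence of $\mathcal F$ on $\tilde g$ in $C^2$ is standard. The implicit function theorem then produces $(\tilde S,t,u)$ solving $\mathcal F=0$ for $\|\tilde g-g\|_{C^2}<\varepsilon$. The map $\tilde\phi=\exp_{\phi_{\tilde S,t}}u$ is harmonic off $\tilde S$, with $|\tilde S|=|S|$, and it decays at rate $r^\beta$ to the tangent maps $\phi_{\tilde s,t}$. It is therefore conically singular along $\tilde S$.
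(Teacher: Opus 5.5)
Your linear analysis matches the paper's. That is: Fredholmness at a small positive rate, the cokernel identified with $\ker J^\phi_g$ at the dual weight, and the flux pairing of the $r^{2-m}$ and $r^{3-m}$ coefficients (for $m=4$, the $r^{-1}\log r$ coefficient) against the deformation and translation fields. Hypotheses 2--3 make this pairing exhaust those coefficients, and hypothesis 4 handles what remains.

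The gap is in the nonlinear set-up. You treat it as routine, but the paper's real work, and the reason for the $C^2$ hypothesis, lies there. As you define $\mathcal F$, two things fail.

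\emph{Wrong model cones.} $\phi_{s,t}$ centred at $\tilde s$ is harmonic only for the Euclidean metric of the chart. But $\tilde g(\tilde s)$ is in general not conformal to that metric. This happens already for $\tilde S=S$ once $\tilde g(s)$ is not conformal to $g(s)$, and for $\tilde s\neq s$ even when $\tilde g=g$. A cone $\varphi\circ\pr$ is in general not harmonic for a constant non-conformal metric. So $\tau_{\tilde g}(\phi_{\tilde S,t})$ has a term of exact order $r^{-2}$ near $\tilde s$ that no $u\in C^{2,\alpha}_\beta$ can cancel. Hence $\mathcal F$ does not take values in $C^{0,\alpha}_{\beta-2}$ on any neighbourhood of $(g,S,0,0)$.

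\emph{Moving domain.} Your $u$ lives in a space weighted at $\tilde S$, so the domain of $\mathcal F$ moves with the parameter. Identifying these spaces means composing with diffeomorphisms $F_{\tilde S}$ carrying $S$ to $\tilde S$. The map $u\mapsto u\circ F_{\tilde S}^{-1}$ loses a derivative: its $\tilde S$-derivative involves $\nabla u$, and on H\"older spaces it is not even continuous in $\tilde S$. So $\mathcal F$ is not $C^1$ in $\tilde S$ on an open set, and the implicit function theorem does not apply.

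The missing idea is to keep $S$, the charts and the manifold of maps fixed, and to move the singular points through the metric instead, using $\tau_{\tilde g}(\psi\circ f^{-1})=\tau_{f^*\tilde g}(\psi)\circ f^{-1}$. The paper takes $f_{\underline{\mathfrak t},\tilde g}$ to be a composition of flows of compactly supported vector fields which near $s$ reads $x\mapsto A_s^{-1/2}x+\mathfrak t_s$. Here $A_s$ is the matrix of $\tilde g$ at $\Upsilon_s(\mathfrak t_s)$. The linear factor makes $f^*\tilde g$ Euclidean at each $s$, so the fixed families $\mathcal M_s$ of harmonic tangent maps remain the right models and the tension has rate $\mu-2$.

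Accordingly, the infinitesimal translations are $\diff\phi(\xi_g(\dot{\underline{\mathfrak t}}))$, where $\xi_g$ carries a linear correction making $\pounds_{\xi_g}g$ vanish on $S$. By contrast, for $v$ constant in the chart, $J^\phi_g(\diff\phi(v))$ is in general of order $r^{-2}$ unless $\partial_v g(s)=0$. The price of this construction is that $(\underline{\mathfrak t},\tilde g)\mapsto f^*_{\underline{\mathfrak t},\tilde g}\tilde g$ is only $C^1$ as a map from $C^{k+2}$ to $C^{k+1}$ metrics. This is exactly why $C^2$-closeness is required.

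A smaller point concerns the Killing fields. $\phi^*\xi$ does not decay, so it is not in $C^{2,\alpha}_\beta$ and restricting $u$ does nothing. It enters the kernel through the $t$-directions, since $\varphi_s^*\xi\in T_{\varphi_s}\mathcal M_s$. To remove it from the cokernel you need the Noether argument: $\tau_{f^*\tilde g}(\psi)$ is $L^2(\vol_{f^*\tilde g})$-orthogonal to $\psi^*\mathfrak{iso}(N)$ for every $\psi$. Using $\dim\phi^*\mathfrak{iso}(N)=\dim\mathfrak{iso}(N)$, the tension is then a section of the complementary subbundle.
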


\begin{rem}
    The set $\tilde{S}$ and the map $\tilde{\phi}$ in the previous theorem are not arbitrary: they can be chosen close to the original $S$ and $\phi$ in a sense that will be made precise in \cref{subsec: the deformation theorem}.
\end{rem}

Note that the first three conditions only concern the tangent maps of $\phi$. In particular, conditions 1 and 2 impose strong restrictions on their Morse index. Indeed, for any non-trivial \emph{smooth} harmonic map $\varphi \col \Sp^{m-1} \to N$, the sections of the form $\diff \varphi(v^\perp)$ (where $v \in \R^m$ and $v^\perp \in \mathfrak{X}(\Sp^{m-1})$ is its orthogonal projection onto $T\Sp^{m-1}$) span an $m$-dimensional subspace of $\ker(J^\varphi + m-3)$ \cite{elsoufi1995indice}. Hence the first two conditions ensure that this is the only source of negative eigenvalues for the tangent maps of $\phi$, and are equivalent to requiring that their Morse index is exactly $m$. The third condition has the following precise meaning: for any $s \in S$, there exists a finite-dimensional embedded submanifold $\mathcal{M}_s \subset C^\infty(\Sp^{m-1},N)$ consisting of harmonic maps (with respect to the round metric on $\Sp^{m-1}$ and the metric $h$ on $N$) such that $\varphi_s \in \mathcal{M}_s$ and $T_{\varphi_s} \mathcal{M}_s = \ker(J^{\varphi_s})$. Such integrability conditions on the tangent maps are very common in this type of problems; for instance, this is one of the conditions under which Adams--Simons \cite{adams1988rates} prove polynomial convergence to the tangent map at an isolated singularity. 

By contrast, the last condition genuinely sees both the map $\phi$ and the underlying metric $g$ on $M$. Its role is to ensure that the linearised deformation operator -- which in this case corresponds to the Jacobi operator of $(\phi,g)$ -- induces an invertible map between appropriate Banach spaces. Remark that in the smooth case, the deformation theorem of Eells--Lemaire assumes that the Jacobi operator does not admit any non-trivial Jacobi fields, while in the singular case there are asymptotic conditions to take into account near the singularities. 

The proof of \cref{thm: main theorem introduction} relies on the Implicit Function Theorem. In order to apply it, we will define in \cref{sec: Banach manifolds} suitable Banach manifolds $\Map^{k,p}_{\mu}(M,N;\mathfrak{S})$ consisting of conically singular maps $\phi \col M\setminus S\to N$ of Sobolev regularity $W^{k,p}$, where $\mu > 0$ is a small exponent controlling the rate of decay towards the tangent maps, and $\mathfrak{S}$ is a data set consisting of the singular set $S \subset M$ (which is fixed) and a finite collection of submanifolds $\mathcal{M}_s \subset C^{\infty}(\Sp^{m-1},N)$ parametrising the possible tangent maps. The Euler--Lagrange operator of the harmonic map equation (i.e. the tension field) can then be interpreted as a differentiable section $\mathcal{T}$ of a suitable Banach vector bundle lying over the entire parameter space $\Map^{k,p}_{\mu}(M,N;\mathfrak{S}) \times \Ball_1^{S} \times \Met^k(M)$, where $\Ball_1^S \subset (\R^m)^S$ parametrises the local deformations of the singular set and $\Met^k(M)$ is the space of Riemannian metrics of regularity $C^k$ on $M$. By definition, the zero locus of $\mathcal{T}$ consists precisely of the conically singular harmonic maps in this parameter space. 

In order to prove that the linearisation of $\mathcal{T}$ is surjective at a given harmonic map, we first analyse the derivative of $\mathcal{T}$ with respect to a fixed metric and singularity data. This amounts to studying the Jacobi operator of a fixed singular harmonic map as an operator between weighted function spaces. For any sufficiently small rate $\mu > 0$, this is always a Fredholm operator of negative Fredholm index. In particular, this operator can never be surjective. However, we show that under conditions 1--3, the deformations of the singular set and of the tangent maps can be designed in order fix this defect, so that the Fredholm section $\mathcal{T}$ has index zero. The last assumption in \cref{thm: main theorem introduction} (condition 4) then ensures that its linearisation at a given harmonic map is invertible: in that sense, condition 4 is a \emph{non-degeneracy} condition. Since the Jacobi operator is formally self-adjoint, one might expect that this condition is satisfied for sufficiently generic pairs $(g,\phi)$ consisting of a metric on $M$ and a conically singular harmonic map $\phi \col (M,g)\to (N,h)$; this observation underlies our construction of explicit examples, which we describe next.

\begin{rem}
    Let us briefly explain the origin of the conditions $m = \dim(M) \geq 4$ and the $C^2$-closeness required in the previous theorem.

    The condition $\dim(M) \geq 4$ is natural, since in that case any conically singular harmonic map $\phi \col M \setminus S \to N$ can be seen as a $W^{1,2}$-map from $M$ to $N$, which is weakly harmonic and even stationary (cf. \cref{prop: equivalent conditions cs harmonic} and \cref{app: cs as stationary maps}). By contrast, if $m \leq 2$ a conically singular map always has infinite Dirichlet energy, so we cannot talk about a conically singular harmonic map in a meaningful sense. In the borderline case where $m = 3$, conically singular maps do have finite energy and a conically singular map which is harmonic on its smooth locus can still be regarded as a weakly harmonic map (in the $W^{1,2}$-sense). While it is possible to set up the deformation problem when $m = 3$, it turns out that the Jacobi operator always has many Jacobi fields of order $\mathcal{O}(r^{-1})$ besides the ones coming from the lifts of Killing fields, and therefore we cannot write a geometrically meaningful condition ensuring that the linearised deformation operator is invertible between the appropriate weighted Sobolev spaces.

    On the other hand, the assumption of $C^2$-closeness of the background metrics on $M$ is stronger than the assumption of the deformation theorem of Eells--Lemaire in the smooth case (which only requires $C^{1,\alpha}$-closeness). Ultimately, it comes from the necessity to move around the singularities in order to make the deformation problem invertible for conically singular harmonic maps. Indeed, we shall see in \cref{sec: Banach manifolds} that it is necessary to control at least two derivatives of the metric in order to prove that the Banach section induced by the tension field is continuously differentiable across the whole parameter space (see \cref{rem: derivatives of the metric}).
\end{rem}

The second part of this article is dedicated to the construction of conically singular harmonic maps satisfying the assumptions of \cref{thm: main theorem introduction}. We first study possible models for the tangent maps satisfying assumptions 1--3. The simplest example is to consider the identity map $\id \col \Sp^{m-1} \to \Sp^{m-1}$ for any $m \geq 4$, where we equip $\Sp^{m-1}$ with the standard round metric on the domain and the target. In this case, the study of the spectrum of the Jacobi operator essentially boils down to studying the Hodge-de Rham Laplacian on $1$-forms, and properties 1--3 are satisfied. A more interesting example is the standard Hopf fibration $\varphi \col \Sp^3 \to \Sp^2$, where we again equip the spheres with the round metrics. In this case, the description of the spectrum of $\varphi$ is due to Urakawa \cite{urakawa1987stability}, who notably proved that it has Morse index $4$ and nullity $8$. Moreover, all the Jacobi fields of $\varphi$ can be explicitly integrated by pre-composing $\varphi$ with an isometry of $\Sp^3$ and post-composing $\varphi$ by a biholomorphism of $\Sp^2 \simeq \mathbb{CP}^1$, so that properties 1--3 are satisfied. Our second main result extends this to the higher-dimensional complex Hopf fibrations $\Sp^{2n+1} \to \mathbb{CP}^n$, which in particular all satisfy condition 1--3:

\begin{thm}     \label{thm: hopf fibrations intro}
    For any $n \geq 1$, the standard Hopf fibration $\varphi \col \Sp^{2n+1} \to \mathbb{CP}^n$ has Morse index 
    \begin{equation*}
        \textup{Ind}(\varphi) = 2(n+1)
    \end{equation*} 
    and nullity 
    \begin{equation*}
        \textup{Null}(\varphi) = n(3n+5)
    \end{equation*}
    with respect to the round metric on $\Sp^{2n+1}$ and the Fubini--Study metric on $\mathbb{CP}^n$. Moreover, all the Jacobi fields of $\varphi$ are integrable, and the harmonic deformations of $\varphi$ can be obtained by pre-composing $\varphi$ with an isometry of $\Sp^{2n+1}$ and post-composing $\varphi$ with a biholomorphism of $\mathbb{CP}^n$.
\end{thm}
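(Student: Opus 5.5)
The plan is to compute the spectrum of the Jacobi operator $J^\varphi$ explicitly, using representation theory of $U(n+1)$ and the Riemannian submersion structure of $\varphi\colon \Sp^{2n+1}\to\mathbb{CP}^n$. Sections of $\varphi^*T\mathbb{CP}^n$ are identified with $\C^{n+1}$-valued functions $u$ on $\Sp^{2n+1}$ that are pointwise Hermitian-orthogonal to $x$. Equivalently, they are horizontal vector fields on $\Sp^{2n+1}$: since $\varphi$ is a Riemannian submersion (with Fubini--Study normalised to holomorphic sectional curvature $4$), $\diff\varphi$ identifies $\varphi^*T\mathbb{CP}^n$ with the horizontal distribution $\mathcal H=(x, ix)^\perp$. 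We then write
\begin{equation*}
J^\varphi u = \bar\Delta u - \tr_g R^{\mathbb{CP}^n}(u,\diff\varphi\,\cdot)\diff\varphi\,\cdot .
\end{equation*}
The curvature term is algebraic. Using the explicit Fubini--Study curvature, the trace over a horizontal frame gives $-(2n+2)u$ up to the correct sign conventions, since $\sum_a R(u,e_a)e_a$ equals $(2n-2+4)u$ on $\mathcal H$. The rough Laplacian $\bar\Delta$ of the pullback connection corresponds to the horizontal projection of the Levi-Civita connection of $\Sp^{2n+1}$, together with an extra term along the Hopf fibre direction $\xi = ix$.

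The key step is a Weitzenböck-type decomposition of $J^\varphi$ in terms of operators commuting with the $U(n+1)$-action. The Hilbert space $L^2(\Sp^{2n+1},\C^{n+1})$ splits into the spaces $\mathcal H_{p,q}\otimes\C^{n+1}$ of bihomogeneous harmonic polynomials. Projecting to $\mathcal H$ commutes with $U(n+1)$, so $J^\varphi$ preserves isotypic components. On each component, $J^\varphi$ should be expressible through the round Laplacian eigenvalue $(p+q)(p+q+2n)$, the $\xi$-derivative eigenvalue $i(p-q)$ (which appears quadratically, as $(p-q)^2$, and linearly through the connection term), and constants. I would compute $J^\varphi$ on the finitely many $U(n+1)$-types in $\mathcal H_{p,q}\otimes\C^{n+1}$ projected horizontally. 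Concretely, these are the sections $u = f\,v - \langle f v,x\rangle x - \langle fv, ix\rangle ix$ with $f\in\mathcal H_{p,q}$ and $v\in\C^{n+1}$, and their decomposition into irreducibles of types $(p\pm1,q)$, $(p,q\pm1)$. This yields an explicit eigenvalue formula $\lambda(p,q,\text{type})$.

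From that formula, the negative and zero eigenvalues occur only in low degrees $p+q\le 2$.
\begin{itemize}
\item Index: the negative eigenspace should be exactly $\{\diff\varphi(v^\perp)\colon v\in\R^{2n+2}\}$, of real dimension $2(n+1)$, with eigenvalue $-(m-3)=-(2n-2)$, consistent with El Soufi \cite{elsoufi1995indice}.
\item Kernel, isometries: Killing fields of $\Sp^{2n+1}$ lie in $\mathfrak{so}(2n+2)$, of dimension $(n+1)(2n+1)$. Their images under $\diff\varphi$ vanish exactly on the centraliser part $\mathfrak{u}(n+1)$ modulo the contribution coming from $\mathfrak{su}(n+1)$. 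Rather than subtract, I would count the image as $\mathfrak{so}(2n+2)/\mathfrak{u}(n+1)$ plus the $\mathfrak{su}(n+1)$ part, which coincides with Killing fields of $\mathbb{CP}^n$ composed with $\varphi$.
\item Kernel, biholomorphisms: $\mathfrak{sl}(n+1,\C)$, of real dimension $2(n^2+2n)$, adds the non-Killing holomorphic fields $J\xi$.
\end{itemize}
In total the dimension should be $n(n+1) + 2n(n+2) = n(3n+5)$, which matches.

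Integrability then follows because every Jacobi field is tangent to the orbit $\{\psi\circ\varphi\circ A\}$ with $A\in O(2n+2)$ and $\psi\in PGL(n+1,\C)$. Pre-composition by an isometry and post-composition by a conformal/holomorphic map of a Kähler target (holomorphic maps compose harmonic maps that are horizontally conformal, or more precisely $\pm$-holomorphic over the complex structure on the horizontal distribution) preserve harmonicity. The orbit is a finite-dimensional smooth manifold of dimension equal to the nullity, so the Jacobi fields are integrable.

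The main obstacle is the exact eigenvalue computation on each isotypic piece. This means tracking the connection terms coupling the fibre derivative with $J$ on $\mathcal H$, and ruling out extra kernel or negative modes in the types $(p,q)=(1,1),(2,0),(0,2)$ and $(1,0)$. I would first treat $n=1$ and check the result against Urakawa \cite{urakawa1987stability}, which gives index $4$ and nullity $8$.
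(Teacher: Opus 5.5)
\textbf{Gap: the eigenvalue computation, which is the content of the theorem, is never carried out.}

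Your route, diagonalising $J^\varphi$ with $\mathrm{U}(n+1)$-representation theory, is not the paper's. It can be made to work, but the proposal stops exactly where the proof has to happen. The eigenvalue formula $\lambda(p,q,\text{type})$ is never produced, and the claim that all non-positive eigenvalues occur in degrees $p+q\le 2$ is only asserted. That claim \emph{is} the theorem: the lower bounds $\textup{Ind}\ge 2(n+1)$ and $\textup{Null}\ge n(3n+5)$ already follow from translations, Killing fields and $I\cdot\mathfrak{su}(n+1)$.

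The paper avoids any Urakawa-type computation. Using the Sasaki identity $\nabla_u\xi=Iu$, it shows that for $u$ with $\pounds_\xi u=kIu$ one has
\begin{equation*}
\int\langle J^\varphi u,u\rangle=\int\big(\langle J^{\mathrm{id}}u,u\rangle-2(k+2)|u|^2\big).
\end{equation*}
It then combines the known spectrum of $J^{\mathrm{id}}$ on $V_\ell,V'_\ell$ with the fact that the $V_\ell$- and $V'_\ell$-components of such a $u$ vanish unless $|k|\le \ell+1$. This gives eigenvalues $\ge 1$ on the complement of $W_-\oplus W_0$ without computing the spectrum.

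\textbf{The set-up as sketched would give a wrong answer.} First, tensoring $\mathcal H_{p,q}$ with $\C^{n+1}$ does not only produce the types $(p\pm1,q)$, $(p,q\pm1)$. It also produces non-spherical types, with highest weights of the form $(a,1,0,\ldots,0,c)$. The $\mathfrak p_{n+1}$-Jacobi fields $x\mapsto B\bar x$ with $B^T=-B$ (already horizontal) lie in a copy of $\Lambda^2\C^{n+1}$ of this kind. Your list would therefore miss $n(n+1)$ dimensions of the kernel and would never test these types for further small eigenvalues.

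Second, $\mathrm{U}(n+1)$-equivariance alone does not make $J^\varphi$ scalar on isotypic components, because on real sections some types occur twice. You need that $I$ is $\prescript{\varphi}{}{\nabla}$-parallel, so that $J^\varphi$ is complex linear on $(\Gamma(\mathscr H),I)$. For $n\ge 2$, Frobenius reciprocity and the branching rule $\mathrm{U}(n+1)\downarrow\mathrm{U}(n)$ then show this representation is multiplicity free, with types $\lambda=(a,b,0,\ldots,0,c)$, $a\ge1$, $b\in\{0,1\}$, $c\le 0$.

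\textbf{How to close the gap.} Write $\prescript{\varphi}{}{\nabla}^*\prescript{\varphi}{}{\nabla}$ as the Casimir of $\mathrm{U}(n+1)$ minus that of $\mathrm{U}(n)$ on $\C^n$, plus a fibre term depending only on $|\lambda|=a+b+c$. With $d=a-c\ge 1$ and $2-d\le s=a+c\le d$, one obtains
\begin{equation*}
    J^\varphi|_{V_\lambda}=d^2+2nd+4s-4n-4 \quad (b=0), \qquad J^\varphi|_{V_\lambda}=d^2+2nd+2s-2n-3 \quad (b=1).
\end{equation*}
These are bounded below by $(d-2)(d+2n-2)$ and $(d-1)(d+2n-1)$ respectively. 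Hence the only non-positive eigenvalues are $1-2n$ on $(1,0,\ldots,0)$, and $0$ on $(1,0,\ldots,0,-1)$ and $(1,1,0,\ldots,0)$. This gives index $2(n+1)$ and nullity $2n(n+2)+n(n+1)=n(3n+5)$. Once done, your route yields the exact spectrum, which is sharper than the paper's lower bound.

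Also, $-(m-3)=-(2n-1)$, not $-(2n-2)$.
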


\begin{rem}
    Recent work of Rivière \cite{Riviere2023HarmonicMapsS3toS2} proves that, up to pre-composing with an isometry of $\Sp^3$ and post-composing with a holomorphic map of $\mathbb{CP}^1$ to itself, the Hopf fibration $\varphi\col \Sp^3 \to \Sp^2$ is the only harmonic map $\varphi \col \Sp^3 \to \Sp^2$ with Morse index less or equal to $4$. Thus, for a conically singular harmonic map $\phi \col M^4\setminus S \to \Sp^2 \simeq \mathbb{CP}^1$ (where we equip $\Sp^2$ with the round metric) to satisfy the first two assumptions of \cref{thm: main theorem introduction}, we need all the tangent maps to be given by the Hopf fibration, possibly up to post-composition with a holomorphic map $\mathbb{CP}^1 \to \mathbb{CP}^1$. 
    
    The authors were also informed of upcoming work of Lara and Loubeau \cite{laraloubeau1} proving the analogous result for all complex Hopf fibrations $\Sp^{2n+1} \to \mathbb{CP}^n$, and in particular recovering the value $2(n+1)$ for the Morse index. However, they also prove that the index of the quaternionic Hopf fibration $\Sp^7 \to \Sp^4$ is strictly greater than $8$ \cite{laraloubeau2}, so that conditions 1--3 in \cref{thm: main theorem introduction} can fail even in very simple cases.
\end{rem}

We then turn to construct explicit examples of singular harmonic maps to which \cref{thm: main theorem introduction} applies. The main difficulty hereby comes from the fourth condition, which can be exemplified in the following situation. Consider the rational map
\begin{equation*}
    \phi_1 \col \mathbb{CP}^2\setminus \{[0:0:1]\} \to \mathbb{CP}^1, ~ [z_0:z_1:z_2] \mapsto [z_0:z_1]. 
\end{equation*}
If both projective spaces are equipped with their respective Fubini--Study metrics, then $\phi$ is harmonic (as a holomorphic map between Kähler manifolds \cite{eells1964harmonic}) and has an isolated conical singularity at $[0:0:1]$ modelled on the Hopf fibration $\Sp^3 \to \mathbb{CP}^1 \simeq \Sp^2$. Thus, the first three conditions are satisfied by the previous discussion. However, since both manifolds are Kähler, every infinitesimal biholomorphism on $\mathbb{CP}^1$ contributes a Jacobi field that is of order $\mathcal{O}(r^0)$ close to $[0:0:1]$. In particular, there are Jacobi fields that are of order $\mathcal{O}(r^{-1})$ close to the singular point that are not given by the pull-back of a Killing vector field. Thus, for harmonic maps arising from rational (holomorphic) maps $(M,g) \to \mathbb{CP}^1$ from a Kähler manifold, condition 4 has no chance of being satisfied. 

A similar issue occurs for other simple examples of conically singular harmonic maps. For instance, the equatorial projection $\phi_2 \col \Sp^{m} \setminus \{N,S\} \to \Sp^{m-1}$ ($m \geq 4$) is a conically singular harmonic map with respect to the round metrics, with conical singularities at the poles modelled on the identity map $\id \col \Sp^{m-1} \to \Sp^{m-1}$. When $m = 4$, we can also compose this map with the Hopf fibration $\Sp^3 \to \Sp^2$ to obtain a conically singular harmonic map $\Sp^4 \setminus \{N,S\} \to \Sp^2$. In these cases, pre-composing $\phi_i$ with arbitrary isometries of $\Sp^{m}$ yield Jacobi fields of order $\mathcal{O}(r^{-1})$ which are not the pull-back of Jacobi fields on the target. In this sense, the situation is not generic enough for condition 4 to be satisfied.

Nevertheless, we shall prove the following in \cref{sec: Examples from deformed suspensions}:

\begin{thm}     \label{thm: examples intro}
    For each of the conically singular maps
    \begin{equation*}
        \phi_1 \col \mathbb{CP}^2 \setminus \{[0:0:1]\} \to \mathbb{CP}^1, ~~ \phi_2 \col \Sp^m \setminus \{N,S\} \to \Sp^{m-1} ~ (m \geq 4), ~~ \phi_3 \col \Sp^4 \setminus \{N,S\} \to \Sp^2
    \end{equation*}
    described above, there exists a metric $g$ on the domain such that $\phi_i$ is harmonic (with respect to $g$ on the domain and the round metric on the target) and all the assumptions of \cref{thm: main theorem introduction} are satisfied.
\end{thm}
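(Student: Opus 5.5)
The plan is to handle all three cases by one construction: a warped (suspension-type) metric on the domain for which the given map remains harmonic, followed by a separation-of-variables analysis of the Jacobi operator. The first three assumptions of \cref{thm: main theorem introduction} only involve the tangent maps. For $\phi_2$ the tangent map at each pole is $\id \col \Sp^{m-1}\to\Sp^{m-1}$; for $\phi_3$ and $\phi_1$ it is the Hopf fibration $\Sp^3\to\Sp^2\simeq\mathbb{CP}^1$. By the model-map analysis of \cref{sec: Banach manifolds} and the following sections, in particular \cref{thm: hopf fibrations intro} with $n=1$, conditions 1--3 hold, and they are unaffected by any change of $g$ that is smooth across the singular points. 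So the whole task is to choose $g$ so that $\phi_i$ stays harmonic and condition 4 holds.

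To construct $g$, write $\Sp^m\setminus\{N,S\}\cong(0,\pi)\times\Sp^{m-1}$ and take
\begin{equation*}
g=\diff t^2+f(t)^2g_{\Sp^{m-1}},
\end{equation*}
where $f$ equals $\sin t$ near the endpoints, so that $g$ is smooth at the poles, and is a generic perturbation of $\sin t$ in the interior. Since $\phi_2,\phi_3$ factor as $(t,\theta)\mapsto\varphi(\theta)$ with $\varphi$ harmonic on the round sphere, the tension field equals $f^{-2}\tau(\varphi)=0$ for every $f$, so harmonicity is automatic. For $\phi_1$ I will proceed the same way. The complement of the point is the total space of $\mathcal O(1)\to\mathbb{CP}^1$, foliated by spheres $\Sp^3$ around $[0:0:1]$ on which $\phi_1$ restricts to the Hopf map. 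I take a $U(2)$-invariant metric $\diff t^2+a(t)^2g_{\mathrm{hor}}+b(t)^2\eta^2$ and check that the fibres of $\phi_1$ remain minimal and the map horizontally conformal, so $\phi_1$ stays harmonic; equivalently, $\phi_1$ is a Riemannian submersion up to scale with totally geodesic fibres.

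For condition 4, decompose a Jacobi field $u$ with $|u|=\mathcal O(r^{-1})$ into eigenmodes of the Jacobi operator $J^\varphi$ of the tangent map on each slice; the symmetry makes the operator diagonal in these modes. Each mode gives a second-order ODE in $t$,
\begin{equation*}
u''+(m-1)\frac{f'}{f}u'-\frac{\lambda}{f^2}u=0,
\end{equation*}
where $\lambda$ runs over the eigenvalues of $J^\varphi$, and $u$ must satisfy the growth conditions at both ends. The cases are handled as follows. For $\lambda>0$, a maximum-principle or energy argument shows the only solution decaying at both ends is zero. For $\lambda=0$, the constant solutions are exactly the pullbacks of Jacobi fields of $\varphi$. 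Among these, the pullbacks of Killing fields $\mathfrak{iso}(N)$ are admitted; the others, such as the infinitesimal biholomorphisms for the Hopf map and the $t$-dependent rotations, must be excluded because their $t$-dependence is incompatible with the metric unless they are Killing. For $\lambda=-(m-3)$, the solutions behave like $r^{-1}$ or $r^{m-3}$ near each pole, and a solution with $\mathcal O(r^{-1})$ at both ends exists for the round $f=\sin t$, coming from the conformal/isometric motions of $\Sp^m$. The choice of $f$ must destroy this. I view it as a two-point boundary problem: the solution that is $\mathcal O(r^{m-3})$-subdominant or $\mathcal O(r^{-1})$ at one end must fail the allowed asymptotics at the other. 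I will show that a suitable first-order variation of $f$ in the interior changes the connecting coefficient, via a Wronskian or variational formula $\int \delta(f)\,(\dots)u_0^2\,\diff t\neq0$. This yields a nondegenerate $f$.

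The main obstacle is this $\lambda=-(m-3)$ mode, together with the precise decay class. A solution can be admissible with the $r^{-1}$ branch at both ends, so I must make sure the perturbation removes every such solution while keeping the zero-mode analysis intact. The last claim, $\dim\phi^*\mathfrak{iso}(N)=\dim\mathfrak{iso}(N)$, follows because $\phi_i$ is surjective with dense image, so a Killing field vanishing along $\phi_i$ vanishes identically.
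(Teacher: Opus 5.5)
\paragraph{The gap: the zero modes for $\phi_3$ and $\phi_1$.}
The proposal fails at the step where you dismiss the non-Killing $\lambda=0$ modes for $\phi_3$ and $\phi_1$. Your reason, ``their $t$-dependence is incompatible with the metric'', does not apply. These solutions are constant in $t$, and they are genuine Jacobi fields for every metric in both of your families.

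For $\phi_3$ with $g=\diff t^2+f(t)^2g_{\Sp^3}$, the slice operator is $f^{-2}J^\varphi$, where $J^\varphi$ is the round Hopf Jacobi operator. By \cref{prop: Jacobi for warped product and constant map}, any $u(t,\theta)=w(\theta)$ with $w\in\ker J^\varphi$ satisfies $J^\phi u=0$ and $|u|=\mathcal O(1)$, whatever $f$ is. Now $\ker J^\varphi=W_{0,0}\oplus W_{2,0}$ is $8$-dimensional. It contains the lifts of the three holomorphic non-Killing fields and the $W_{0,0}$ directions coming from domain isometries that do not commute with the Hopf action. Since $\dim\mathfrak{iso}(\Sp^2)=3$, condition 4 fails for every warping function.

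The same happens for $\phi_1$ with a $U(2)$-invariant metric $\diff t^2+a^2g_{\mathrm{hor}}+b^2\eta^2$. The slices are Berger spheres, i.e.\ $\beta_1=\beta_2$. The formula in \cref{lem: Jacobi operator for Hopf map with perturbation of metric} then gives $J^\varphi_\beta w=(1-\beta^{-1})(\partial_{X_1}^2+\partial_{X_2}^2+4)w=0$ on all of $W_{2,0}$. Equivalently, post-composing $\phi_1$ with a M\"obius map of $\mathbb{CP}^1$ keeps it horizontally conformal with the same minimal fibres, so it stays harmonic. Hence the three non-Killing infinitesimal biholomorphisms give $\mathcal O(1)$ Jacobi fields that extend smoothly over $\mathbb{CP}^1$.

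These are symmetry effects that your ans\"atze preserve, so no generic choice of $f$, $a$, $b$ can remove them. This is exactly the obstruction described in the introduction.

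\paragraph{The missing idea: break the symmetry of the slices.}
The slice metric itself has to be deformed, not just the warping. The paper uses the non-Berger left-invariant metrics $\beta_\varepsilon=(\frac{1}{1-\varepsilon},\frac{1}{1+\varepsilon},1)$. These keep the Hopf map harmonic (\cref{lem: tension of Hopf fibration for deformed metric}), and by \cref{prop: Jacobi fields of left-invariant metrics on S3} they cut the kernel down to the Killing lifts. The term $-4\varepsilon\overline{w}$ lifts $W_{0,0}$, and the coupling $\tfrac12\varepsilon A$ between $W_{2,0}$ and $W_{2,2}$ lifts the non-Killing part of $W_{2,0}$.

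The squashing must be switched off near the singular points (and near $\mathbb{CP}^1$). Otherwise $g$ is not smooth there and the tangent map is not the round Hopf fibration. This turns the problem into a $t$-dependent coupled matching problem on $W_{2,0}\oplus W_{2,2}$, plus a separate argument on $W_0$ (\cref{lem: rule out W_0}). Handling it is what requires:
\begin{itemize}
\item the long neck;
\item condition 2 of \cref{Def: Admissible values};
\item the order-$\varepsilon^2$ determinant asymptotics of \cref{lem: Point (ii) of admissible values}.
\end{itemize}
None of this is in your proposal, so as written it establishes nothing for $\phi_1$ and $\phi_3$.

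\paragraph{The case $\phi_2$.}
Your argument does work here, and it is a genuine alternative to the paper's. Here $\ker J^{\mathrm{id}}$ is exactly the Killing fields of $\Sp^{m-1}$, so only the $\lambda=-(m-3)$ mode matters. The Wronskian identity gives the first-order change of the connecting coefficient as $-\int u_0\,\dot L u_0\,\diff t$. Here $L_fu=-(f^{m-1}u')'-(m-3)f^{m-3}u$, $u_0=\cot t$, and $\dot L$ is the derivative of $L_f$ in the direction of the interior perturbation of $f$. The resulting integrand $(m-1)f^{m-2}(u_0')^2-(m-3)^2f^{m-4}u_0^2$ is not identically zero. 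So a generic interior perturbation of $\sin t$ works, where the paper instead uses a long cylindrical neck and the condition $f(T)\neq0\neq f'(T)$.

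One correction: the indicial exponents of this mode are $-1$ and $-(m-3)$, with $r^{-1}\log r$ when $m=4$, not $-1$ and $m-3$.
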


The idea to prove the above theorem is to find a metric on the domain with respect to which $\phi_i$ is still harmonic, and which is sufficiently symmetric that we can understand the Jacobi fields but sufficiently non-generic for condition 4 in \cref{thm: main theorem introduction} to be satisfied. Indeed, when the metric on the domain is homogeneous, this condition cannot hold as we previously pointed out. We will therefore consider metric which are invariant under a \emph{cohomogeneity-one} action of a Lie group on the domain.

More specifically, our strategy of proof will be to equip the domain manifold in each case (i.e. $\mathbb{CP}^2$, $\Sp^m$, and $\Sp^4$, respectively) with a 1-parameter family of metrics $g_T$ that satisfy the following properties:
\begin{enumerate}[(i)]
    \item The maps $\phi_i$ are still harmonic with respect to $g_T$ (and the round metric on on the target).
    \item The metrics $g_T$ are all invariant under a natural cohomogeneity-one group action on the domain.
    \item There exists an open region $U$ inside each domain such that $(U,g_T)$ is isometric to the cylinder $(-T,T) \times \Sp^3$ (for $\phi_1$ and $\phi_3$) and $(-T,T) \times \Sp^{m-1}$ (for $\phi_2$). 
\end{enumerate}
Property (ii) ensures that the Jacobi fields of $\phi_i$ with respect to $g_T$ are given as solutions of countably many finite-dimensional ODE systems. We then consider the space of solutions of these ODEs over the cylindrical region $(-T,T) \times \Sp^3$ (or $(-T,T) \times \Sp^{m-1}$, respectively). This solution space has two subspaces that are relevant for our purpose: the space of solutions that have the appropriate decay when extended past the left end of the cylinder and the space of solutions that have the appropriate decay when extended past the right end of the cylinder. Checking condition 4 in \cref{thm: main theorem introduction} then becomes a `matching problem': we want these two subspaces to intersect precisely along the space spanned by the pull-backs of Killing vector fields. For the map $\phi_2$, it turns out that this matching condition is satisfied for an open dense subset of values $T \in [T_0,\infty)$ as long as $T_0$ is sufficiently large. For the maps $\phi_1$ and $\phi_3$ however, we need to introduce a further small $\Sp^3$-invariant deformation $g_{\varepsilon,T}$ of the metric in the cylindrical neck region in order to get rid of the Jacobi fields induced by the (non-Killing) holomorphic vector fields on $\Sp^2$. Through a careful ODE analysis we verify that in these two examples there again exists an open dense set of values $T \in [T_0,\infty)$ (for $T_0$ sufficiently large) such the matching condition is satisfied with respect to the metric $g_{\varepsilon,T}$ for any small enough $\varepsilon > 0$.

\paragraph{Organisation of the paper.} Let us close this introduction with an overview of the contents of the article and its organisation.
\begin{itemize}
    \item The second part of this introductory section (\cref{subsec: background}) recalls some background on harmonic maps and sets the conventions and notations that will be used throughout the paper. \cref{subsubsec: definitions and notations} treats smooth harmonic maps, the tension field and the Jacobi operator, while \cref{subsubsec: cs harmonic maps} introduces maps with conical singularities.
    
    \item \cref{sec: Analysis in weighted spaces} lays the analytical foundations underlying the proof of \cref{thm: main theorem introduction}. We begin with the study of dilation-invariant harmonic maps $\phi \col \R^m \setminus \{0\} \to N$ in \cref{subsec: Dilation-invariant harmonic maps}, where we derive an expression for the Jacobi operator of such maps and introduce the fundamental notions of indicial roots and polyhomogeneous sections. This serves as motivation for the introduction of a class of weighted Sobolev spaces which are well-adapted to the analysis of conically singular problems in \cref{subsec: Weighted spaces}. There, we also define a special class of non-linear bundle maps, called \emph{bundle maps with dilation-invariant ends}, which will play a key role in setting up the non-linear deformation problem. Finally, we use Lockhart--McOwen theory in order to describe the mapping properties of the Jacobi operator of a conically singular harmonic map in \cref{subsec: The Jacobi operator of a CS harmonic map}. In particular, we analyse the critical rates of the Jacobi operator, and introduce a duality pairing which will help us understand how to compensate the obstructions coming from the cokernel of the linearised deformation problem.

    \item \cref{sec: Banach manifolds} is the technical heart of the proof of \cref{thm: main theorem introduction}. We first define Banach manifolds of conically singular maps with Sobolev regularity in \cref{subsec: manifold structure}, which we combine with a finite-dimensional manifold parametrising the deformations of the singular set and the space of Riemannian metrics with finite regularity on the domain in order to form the entire parameter space of our deformation problem. Then in \cref{subsec: variations of the tension} we prove, under certain assumptions on the allowed tangents maps, that the tension field defines a continuously differentiable section of a Banach vector bundle lying over this parameter space.
    
    \item \cref{sec: The deformation theorem} proves \cref{thm: main theorem introduction}, building upon the work of the previous sections. We first explain in \cref{subsec: the tension revisited} how to work transversely to the subspace of the cokernel of the linearised deformation operator generated by the pull-back of the Killing fields of the target space. Then, in \cref{subsec: the deformation theorem}, we state a more precise deformation theorem (\cref{thm: deformation theorem with sobolev regularity}) for $C^k$-variations of the domain metric (where $k \geq 2$), whose proof relies on the Implicit Function Theorem. 
    
    \item \cref{sec: Models for the tangent maps} studies several model harmonic maps satisfying the first three assumptions of \cref{thm: main theorem introduction}. We begin with a general discussion of harmonic fibrations in \cref{subsec: harmonic fibrations}, before describing in \cref{subsec: The identity map} the example of the identity map $\id \col \Sp^{m-1} \to \Sp^{m-1}$ ($m \geq 4$), where the desired properties can be deduced from the classical description of the spectrum of the Lapalcian operator on round spheres. We subsequently consider the Hopf fibration $\Sp^3 \to \Sp^2$ in \cref{subsec: Hopf fibration as model map}, where we give a detailed description of the Jacobi operator and its spectrum with respect to the round metric on $\Sp^3$. Moreover, we analyse the effect of certain symmetric deformations of the background metric which will play a key role in \cref{sec: Examples from deformed suspensions}. Finally, we study the higher-dimensional Hopf fibrations $\Sp^{2n+1} \to \mathbb{CP}^n$ and prove \cref{thm: hopf fibrations intro} in \cref{subsec: higher Hopf fibrations}.
    
    \item \cref{sec: Examples from deformed suspensions} constructs background metrics on the domains of each of the conically singular maps $\phi_i$ described above, with respect to which the maps are harmonic and all the assumptions of \cref{thm: main theorem introduction} are satisfied, thereby proving \cref{thm: examples intro}. We first discuss our general strategy in \cref{subsec: General stategy of examples}, before treating the map $\phi_2$ in \cref{subsec: map S^m to S^{m-1}}, the map $\phi_3$ in \cref{sec: map S^4 to S^2} and $\phi_1$ in \cref{sec: map CP2 to CP1}.
    
    \item Finally, in \cref{app: cs as stationary maps} we prove that a conically singular map $\phi \col M \setminus S \to N$ whose tension vanishes on the regular locus is weakly harmonic if  $m = \dim(M) \geq 3$ and stationary once $m \geq 4$.
\end{itemize}

\paragraph{Acknowledgements.} D.G. would like to thank Simon Donaldson, who suggested that singular harmonic maps whose tangents are modelled on the Hopf fibration should persist under small perturbations of the metric \cite{Donaldson2021talk}, for making him aware of the deformation problem for singular harmonic maps. Moreover, the authors are grateful to \'Eric Loubeau for valuable discussions concerning his upcoming work.  D.G. is supported by the Deutsche Forschungsgemeinschaft (DFG, German Research Foundation) under SFB-Geschäftszeichen 1624 – Projektnummer 506632645. T.L. is supported by a fellowship from the Alexander-von-Humboldt Foundation.

    \subsection{Background}     \label{subsec: background}

In the next section, we gather some background material on harmonic maps. In \cref{subsubsec: definitions and notations}, we give some reminders on the Dirchlet energy, the tension and the Jacobi opertor of smooth maps, and set the notations that will be used throughout this article. Conically singular maps are introduced in \cref{subsubsec: cs harmonic maps}, for which we observe that different notions of harmonicity (weak harmonicity, stationarity and vanishing of the tension over the smooth locus) all coincide as long as the dimension of the domain manifold is greater or equal to $4$.

    \subsubsection{Definitions and notations}       \label{subsubsec: definitions and notations}

Let $\phi \col M^m \rightarrow N^n$ be a smooth map, where $(M,g)$ and $(N,h)$ are Riemannian manifolds. We shall denote by $\prescript{\phi}{}{TN} \rightarrow M$ the vector bundle $\phi^* (TN)$, and by $\prescript{\phi}{}{\nabla}$ the pull-back connection $\phi^*(\nabla^{h})$. Recall that we can regard the tangent map $\diff\phi$ as a section of the vector bundle $T^*M \otimes \prescript{\phi}{}{TN}$, and the local \emph{energy density} $e(\phi)$ is $\frac{1}{2} |\diff\phi|^2$ where the norm is computed with respect to $g$ on $TM$ and $h_\phi \coloneqq \phi^* h$ on $\prescript{\phi}{}{TN}$. The map $\phi$ is said to be \emph{harmonic} if it satisfies the associated Euler--Lagrange equations. 

To describe them, we need to introduce the \emph{tension} $\tau_g(\phi)$ of such a map. If $k \geq 1$, the vector bundle $T^*M^{\otimes k} \otimes \prescript{\phi}{}{TN}$ has a connection denoted by $\prescript{\phi}{}{\nabla}^{g}$, induced by the Levi-Civita connection $\nabla^{g}$ of $g$ on $T^*M^{\otimes k}$ and the connection $\prescript{\phi}{}{\nabla}$ on $\prescript{\phi}{}{TN}$. Thus we may define the \emph{Hessian}
\begin{equation}
    \Hess_g(\phi) \coloneqq \prescript{\phi}{}{\nabla}^{g} \diff\phi
\end{equation}
of $\phi$, which is a section of $T^*M^{\otimes 2} \otimes \prescript{\phi}{}{TN}$. Explicitly, 
\begin{equation*}
    \Hess_g(\phi) = (\phi^*\nabla^{h})_X \diff\phi(Y) - \diff\phi(\nabla^{g}_X Y) 
\end{equation*}
for any vector fields $X$, $Y$ on $M$. Then the tension $\tau(\phi) \in \Gamma(\prescript{\phi}{}{TN})$ is obtained by tracing the Hessian with respect to the metric $g$ (on the first two indices). That is,
\begin{equation*}
    \tau_g(\phi) = \sum_{i=1}^m \prescript{\phi}{}{\nabla}^{g} \diff\phi (e_i,e_i) 
\end{equation*}
where $e_1,\ldots,e_m$ is a local $g$-orthonormal frame of $TM$. Therefore, the harmonic map equation can be written $\tau_g(\phi) = 0$.

When $m$ is compact, harmonic maps satisfy a global variational principle. We can define the \emph{Dirichlet energy} $E(\phi)$ by 
\begin{equation*}
    E(\phi) \coloneqq \frac{1}{2} \int_M |\diff\phi|^2 \vol_g .
\end{equation*}
We can consider this functional as a smooth function of the Fr\'echet manifold $C^\infty(M,N)$ of smooth maps from $M$ to $N$. The tangent space $T_\phi C^\infty(M,N)$ can be identified with the space of sections $C^\infty(\prescript{\phi}{}{TN})$. Then one can compute
\begin{equation*}
    D_\phi E(u) = - \int_M \langle \tau_g(\phi), u \rangle_{h_\phi} \vol_g
\end{equation*}
for any $u \in C^\infty(\prescript{\phi}{}{TN})$. Thus the tension $\tau(\phi)$ is minus the gradient of the Dirichlet energy. 

There is also a second variation formula. Intrinsically (that is, without a choice of connection on the manifold of maps from $M$ to $N$), it can only be defined for a harmonic map. Given such a harmonic map $\phi$, the Hessian $D^2_\phi E$ of the energy functional reads:
\begin{equation*}
    D^2_\phi E(u,v) = \int_M \langle J^\phi_g(u), v \rangle_{h_\phi} \vol_g , ~~~~~ \forall u, v \in C^\infty (\prescript{\phi}{}{TN})
\end{equation*}
where $J^\phi_g \col C^\infty(\prescript{\phi}{}{TN}) \rightarrow C^\infty (\prescript{\phi}{}{TN})$ is the \emph{Jacobi operator} of $\phi$ (with respect to the metric $g$). Sometimes, we will simply use the notation $J^\phi$ if there is no ambiguity on the background metric $g$; on the other hand, the metric $h$ on the target $N$ will always be fixed, so we shall not need to specify it in our notations. 

It is well-known that the Jacobi operator is an elliptic and formally self-adjoint differential operator, which can be written as
\begin{equation}        \label{eq: Jacobi operator in general}
    J^\phi_{g}(u) = - \tr_{g}(\prescript{\phi}{}{\nabla}^{g} \prescript{\phi}{}{\nabla} u) - \tr_{g}(\mathrm{Rm}_h(u, \diff\phi) \diff\phi) = \prescript{\phi}{}{\nabla}^*\prescript{\phi}{}{\nabla} u - \tr_{g}(\mathrm{Rm}_h(u, \diff\phi) \diff\phi)
\end{equation}
for any $u \in C^\infty(\prescript{\phi}{}{TN})$. In the above expression, $\mathrm{Rm}_h$ is the Riemann curvature tensor of the metric $h$, which in our conventions reads
\begin{equation*}
    \mathrm{Rm}_h(U,V)W  = \nabla^h_U \nabla^h_V W - \nabla^h_V \nabla^h_U W - \nabla^h_{[U,V]} W .
\end{equation*} 
Moreover, $\prescript{\phi}{}{\nabla}^*$ is the formal adjoint of the operator $\prescript{\phi}{}{\nabla} \col C^\infty(\prescript{\phi}{}{TN}) \rightarrow C^\infty(T^*M \otimes \prescript{\phi}{}{TN})$ with respect to the metric $g$. The elements of the kernel of $J^\phi_g$ (i.e. the sections $u \in C^\infty(\prescript{\phi}{}{TN})$ such that $J^\phi_g u = 0$) are called \emph{Jacobi fields}.

    \subsubsection{Harmonic maps with isolated conical singularities}       \label{subsubsec: cs harmonic maps}

Henceforth, we shall denote by $\Ball^m_R \subset \R^m$ (or simply $\Ball_R$ if there is no ambiguity on the dimension) the open ball of radius $R$ and centre $0$ in $\R^m$, and by $\overline{\Ball}_R$ its closure. In this section, we shall also consider compact manifolds $M$ and $N$.

\begin{Def}     \label{Def: Adapted set of charts}
    Let $S \subset M$ be a finite subset and let $R > 0$. 
    \begin{enumerate}
        \item A set of charts $\{ \Upsilon_s \col (\Ball_R \subset \R^m) \to M\}_{s \in S}$ will be called \emph{adapted} if:
        \begin{itemize}
            \item for any $s \in S$, $\Upsilon_s(0) = s$, and $\Upsilon_s$ extends to a smooth embedding of $\overline{\Ball}_R \to M$;
            \item for any $s \neq s^\prime \in S$, $\overline{\Upsilon_s(\Ball_R)} \cap \overline{\Upsilon_{s^\prime}(\Ball_R)} = \emptyset$ .
        \end{itemize}

        \item A Riemannian metric $g$ on $M$ and an adapted set of charts $\{ \Upsilon_s \}_{s \in S}$ are said to be \emph{compatible} if for any $s \in S$, $\Upsilon^*_s g(0) = g_{\R^m}$ (where $g_{\R^m}$ denotes the standard Euclidean metric on $\R^m$).
    \end{enumerate}
\end{Def}

\begin{Def}     \label{Def: Conically singular maps}
    Let $S \subset M$ be a finite subset. A smooth map $\phi \col M \setminus S \rightarrow N$ will be called \emph{conically singular} with rate $\mu \in (0,1)$ if there exists the following set of data:
    \begin{itemize}
        \item $R >0$ and an adapted set of charts $\{\Upsilon_s \col \Ball_R \to M\}_{s \in S}$;
        \item a set of smooth dilation-invariant maps $\{ \phi_s \col \R^m \setminus \{0\} \to N \}_{s \in S}$; and
        \item a set of smooth sections $\{ u_s \in C^\infty(\Ball_R \setminus \{0\}, \prescript{\phi_s}{}{TN}) \}_{s \in S}$
    \end{itemize}
    such that for all $s \in S$, 
    \begin{equation*}
        \phi \circ \Upsilon_s(x) = \exp_{\phi_s(x)}(u_s(x)), ~~~  \forall x \in \Ball_R \setminus \{0\},
    \end{equation*}
    and the sections $u_s$ satisfy the decay condition
    \begin{equation*}
        |(\prescript{\phi_s}{}{\nabla})^k u_s | = \mathcal{O}(r^{\mu-k}), ~~~ \forall k \in \N_0
    \end{equation*}
    as $r \to 0$, where $r$ is the radius function on $\R^m$. Note that in the above, $\exp$ is the exponential map associated with the metric $h$ on $N$.
\end{Def}

\begin{rem}     \label{rem: independence of conically singular from choice of framing}
    Since we assumed $\mu \in (0,1)$, it is not difficult to see that the definition of a conically singular map as given above does not depend on the choice of adapted coordinate systems $\{\Upsilon_s \col \Ball_R \to M\}_{s\in S}$ or of background Riemannian metric on $M$. Indeed, if $\phi_0 \col \R^m \setminus \{0\} \to N$ is a smooth dilation-invariant map and $u \in \Gamma(B_R(0) \setminus \{0\}, \prescript{\phi_0}{}{TN})$ is a section that satisfies 
    \begin{equation*}
        \big\vert (\prescript{\phi_0}{}{\nabla})^k u \big\vert = \mathcal{O}(r^{\mu-k}) 
    \end{equation*}
    as $r\to 0$ for every $k \in \mathbb{N}_0$, and $\Psi \col \R^m \to \R^m$ is a diffeomorphism with $\Psi(0) = 0$ and $T_0 \Psi = A \in \mathrm{GL}(m,\R)$, then the function $\phi_A = \phi_0 \circ A$ is a smooth dilation-invariant function on $\R^m \setminus \{0\}$, and for some small enough $r > 0$ there exists a section $v\in \Gamma(\prescript{\phi_A}{}{TN})$ defined over $\Ball_r \setminus \{0\}$ such that
    \begin{equation*}
        \exp_{\phi_0(\Psi(x))} (u(\Psi(x))) = \exp_{\phi_A(x)}(v(x)), ~~~ \forall x \in \Ball_r \setminus \{0\},
    \end{equation*}
    and
    \begin{equation*}
        \big\vert (\prescript{\phi_A}{}{\nabla})^k v \big\vert = \mathcal{O}(r^{\min\{\mu,1\}-k})
    \end{equation*}
    as $r\to 0$ for every $k \in \mathbb{N}$. In particular, this means that if $g$ is a background Riemannian metric on $M$, we are free to choose an adapted coordinate system compatible with $g$.
\end{rem}

For singular maps between Riemannian manifolds which are at least of regularity $W^{1,2}$, the Dirichlet energy is still well defined. Therefore, one can consider different notions of harmonicity (weak harmonicity, stationarity, or vanishing of the tension over the smooth locus for instance), which differ by the type of variations which are allowed in order to test whether a map is a critical point of the Dirichlet energy functional. However, it turns out that when the dimension of the domain manifold is at least $4$, conically singular maps are well-behaved in the sense that these notions all coincide:

\begin{prop}        \label{prop: equivalent conditions cs harmonic}
    Let $(M^m,g)$, $(N^n,h)$ be compact Riemannian manifolds such that $m = \dim(M) \geq 4$ and $n = \dim(N) \geq 1$, and let $S \subset M$ be a finite subset. Then any conically singular map $\phi \col M \setminus S \to N$ can naturally be regarded as a map $\phi \in W^{1,2}(M,N)$. Moreover, the following conditions are equivalent:
    \begin{enumerate}[(i)]
        \item $\tau_g(\phi) = 0$ on $M \setminus S$.
        \item $\phi$ is a weakly harmonic map.
        \item $\phi$ is a stationary map.
    \end{enumerate}
\end{prop}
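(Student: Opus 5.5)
The plan is to prove that $\phi \in W^{1,2}$ and then run the cycle (iii) $\Rightarrow$ (ii) $\Rightarrow$ (i) $\Rightarrow$ (iii). The key tool throughout is a capacity/cut-off argument near the finite set $S$, made possible by the decay $|\diff\phi| = \mathcal{O}(r^{-1})$ and $m \geq 4$.

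\emph{Step 1 ($W^{1,2}$ regularity).} Embed $N \hookrightarrow \R^L$ isometrically (Nash), so that $\phi$ is a bounded map $M \setminus S \to \R^L$. By \cref{Def: Conically singular maps}, in each adapted chart we have $\phi\circ\Upsilon_s = \exp_{\phi_s}(u_s)$. Here $\phi_s$ is dilation-invariant, so $|\diff\phi_s| = \mathcal{O}(r^{-1})$, and $|u_s|, |\nabla u_s| = \mathcal{O}(r^{\mu}), \mathcal{O}(r^{\mu-1})$. Hence $|\diff\phi| \leq C r^{-1}$ near $S$, and
\[
\int_{B_R(s)} |\diff\phi|^2 \lesssim \int_0^R r^{-2} r^{m-1}\,dr < \infty ,
\]
since $m \geq 3$. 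A bounded function that is smooth off a finite set, with $L^2$ gradient there, lies in $W^{1,2}(M)$. Indeed, points have zero $W^{1,2}$-capacity when $m \geq 2$, so the distributional gradient carries no mass on $S$. To check this, test against $\eta_\varepsilon \psi$, where $\eta_\varepsilon$ is a cutoff vanishing on $B_\varepsilon(S)$ with $|\nabla\eta_\varepsilon| \lesssim \varepsilon^{-1}$. The error term is bounded by $\|\phi\|_\infty\,\varepsilon^{-1}\,\varepsilon^{m} \to 0$.

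\emph{Step 2 ((iii) $\Rightarrow$ (ii) $\Rightarrow$ (i)).} Stationarity means criticality under domain variations $\phi\circ F_t$. Weak harmonicity means criticality under target variations $\pi_N(\phi + t\psi)$ with $\psi \in C^\infty_c(M,\R^L)$. I take (iii) $\Rightarrow$ (ii) to be the easier direction for conically singular maps only via the cycle: I would prove (i) $\Rightarrow$ (ii) and (i) $\Rightarrow$ (iii) directly, and note that (ii) $\Rightarrow$ (i) and (iii) $\Rightarrow$ (i) hold by restricting to variations supported in $M \setminus S$. On that set $\phi$ is smooth, so either variational identity gives $\tau_g(\phi)=0$ pointwise. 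For the stationary case this uses $\langle \tau_g(\phi), \diff\phi(X)\rangle = -\operatorname{div}(\cdot)$, i.e.\ only $\diff\phi(\tau)$-type information. That is not sufficient if $\diff\phi$ is degenerate, so I would instead prove (i)$\Leftrightarrow$(ii) and (ii)$\wedge$(i) $\Rightarrow$ (iii), and handle (iii) $\Rightarrow$ (i) separately. For this direction I would use that smooth stationary maps satisfy $\langle \tau,\diff\phi(X)\rangle=0$, combined with the conical structure, to show $\tau = 0$. This is the subtle point; the appendix presumably addresses it. First I would try the following: at points where $\diff\phi$ has full rank the conclusion is immediate, while elsewhere one falls back on the weak harmonic equation.

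\emph{Step 3 ((i) $\Rightarrow$ (ii) and (i) $\Rightarrow$ (iii)).} Write the weak harmonic map equation $\int \langle \diff\phi, \diff\psi\rangle - \langle A(\diff\phi,\diff\phi),\psi\rangle = 0$, which holds for $\psi$ supported away from $S$. Replace $\psi$ by $\eta_\varepsilon\psi$ and let $\varepsilon\to 0$. The error is
\[
\int |\diff\phi||\nabla\eta_\varepsilon||\psi| \lesssim \varepsilon^{-1}\cdot\varepsilon^{-1}\cdot\varepsilon^{m} = \varepsilon^{m-2} \to 0 ,
\]
which needs $m\geq 3$. For stationarity, compute the first variation along the flow of a vector field $X$: the stress–energy tensor $S_\phi = e(\phi)g - \phi^*h$ is divergence-free on $M\setminus S$ because $\tau=0$ there. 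Cutting off, the error becomes
\[
\int |S_\phi||\nabla\eta_\varepsilon||X| \lesssim \varepsilon^{-2}\cdot\varepsilon^{-1}\cdot\varepsilon^{m} = \varepsilon^{m-3},
\]
which tends to zero precisely because $m\geq 4$. I expect this to be the main point where the dimension assumption enters. The main obstacle overall is the converse (iii) $\Rightarrow$ (i), where stationarity only tests $\tau$ against the image of $\diff\phi$.
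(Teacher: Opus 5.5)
Your Step 1, the implication (ii)$\Rightarrow$(i) by testing against variations supported in $M\setminus S$, and Step 3 are correct, and they are essentially the paper's argument. The appendix also uses $|\diff\phi|=\mathcal{O}(r^{-1})$ and $|T|=\mathcal{O}(r^{-2})$ for the stress--energy tensor. It discards the contribution of $S$ by integrating by parts over the complement of small balls $B_\varepsilon(s)$, with boundary terms of order $\varepsilon^{m-1}$, $\varepsilon^{m-2}$ and $\varepsilon^{m-3}$. Your cut-offs $\eta_\varepsilon$ give the same powers, and you correctly place the use of $m\geq 4$ in the last one.

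The genuine gap is (iii)$\Rightarrow$(i). Your proposed fix is circular: ``falling back on the weak harmonic equation'' presupposes (ii), which is equivalent to the conclusion (i). The full-rank remark gives nothing at points where $\diff\phi$ is not surjective. More importantly, no argument can work if (iii) only means $\frac{\diff}{\diff t}\big|_{t=0}E(\phi\circ\exp(tX))=0$ for all $X$. Domain variations only see the component of $\tau_g(\phi)$ along $\diff\phi(TM)$, and this can vanish while $\tau_g(\phi)\neq 0$.

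For a counterexample, take the flat torus $M=\R^m/\Z^m$ with $f(x)=x_1 \bmod 1$, and let $\gamma\col\R/\Z\to\Sp^2$ be a constant-speed parametrisation of a non-equatorial circle of latitude. Then $\phi=\gamma\circ f$ is smooth, hence conically singular with constant tangent maps at any finite $S$. Its tension is $\tau_g(\phi)=(\nabla_{\gamma'}\gamma')\circ f\neq 0$. Yet $\langle\tau_g(\phi),\diff\phi(X)\rangle=\langle\nabla_{\gamma'}\gamma',\gamma'\rangle\,\diff x_1(X)=0$, so $\frac{\diff}{\diff t}\big|_{t=0}E(\phi\circ\exp(tX))=-\int_M\langle\tau_g(\phi),\diff\phi(X)\rangle\vol_g=0$ for every $X$.

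The equivalence therefore holds only under the standard convention that a stationary (harmonic) map is a weakly harmonic map which is also critical for domain variations. Under that convention (iii)$\Rightarrow$(ii) is tautological. This is how the paper proceeds: its appendix proves only (i)$\Leftrightarrow$(ii) for $m\geq 3$ and (ii)$\Rightarrow$(iii) for $m\geq 4$, and calls stationarity ``a stronger notion of harmonicity''. You should state this convention explicitly and drop the attempted converse rather than try to prove it.
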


Since this proposition will not be used in the rest of this article (where we will always use condition (i) to characterise harmonicity), we shall defer its proof to \cref{app: cs as stationary maps}. Nevertheless, it serves to motivate the following definition:

\begin{Def}
    With the notations of \cref{prop: equivalent conditions cs harmonic}, we shall say that $\phi$ is a \emph{conically singular harmonic map} with respect to the Riemannian metric $g$ on $M$ and $h$ on $N$ if it is conically singular in the sense of \cref{Def: Conically singular maps} and the equivalent conditions (i), (ii) and (iii) of the previous proposition are satisfied.
\end{Def}

\begin{rem}
    If $\phi \in W^{1,2}(M,N)$ is a conically singular harmonic map in the sense of the previous definition, with singular set $S$, then in an adapted set of chats $\{\Upsilon_s\}_{s \in S}$ compatible with $g_M$ all the dilation-invariant tangent maps $\phi_s \col \R^m \setminus \{0\} \to N$ are harmonic with respect to the standard Euclidean metric on $\R^m$. Indeed, the map $\phi_s$ is the unique tangent map of $\phi$ at $s$, since the rescaled maps $\phi_{s,\varepsilon} \col \mathbb{B}_{\varepsilon^{-1} R} \rightarrow N$ defined as $\phi_{s,\varepsilon}(x) = \phi \circ \Upsilon_s(\varepsilon x)$ converge to $\varphi$ in $C^\infty_{\mathrm{loc}}$ as $\varepsilon \rightarrow 0$. In particular, if we write $\phi_s(x) = \varphi_s(x/|x|)$ for a smooth map $\varphi_s \col \Sp^{m-1} \to N$, then $\varphi_s$ is a smooth harmonic map with respect to the standard round metric.
\end{rem}

\begin{rem}
    While the assumption in \cref{Def: Conically singular maps} that all the derivatives of each $u_s$ decay polynomially is very strong, it occurs naturally for harmonic maps in a number of contexts. In fact, if the target manifold $(N,h)$ is \emph{analytical} and we assume that for any $s \in S$, all the Jacobi fields of the harmonic map $\varphi_s \col \Sp^{m-1} \rightarrow N$ are integrable, a classical result of Adams--Simon \cite{adams1988rates} shows that it is enough to assume that
    \begin{equation*}
        |u_s | + r|\nabla^{\phi_s} u_s| \to 0 ~~ \text{as} ~ r \to 0
    \end{equation*}
    to get the polynomial decay of all derivatives. On the other hand, and earlier result of Simon \cite{simon1983asymptotics,simon1993theorems} shows that the above condition holds for an energy-minimising harmonic map with isolated singularities, under the assumption that for each singularity, at least one of the tangent maps is smooth (away from $0$ in $\R^m$, where the target manifold is still assumed to be analytical and isometrically embedded into some Euclidean space). Note that when the target space is non-analytical, there are counterexamples to the uniqueness of the tangent maps at an isolated singularity \cite{white1992nonunique}.
\end{rem}

%% file: section2_conical.tex
    \section{Analysis in weighted spaces}       \label{sec: Analysis in weighted spaces}

This sections gathers the fundamental results of weighted analysis which underlie the proof of the deformation theorem in the next sections. We first describe the Jacobi operator of dilation-invariant harmonic maps $\phi \col \R^m \setminus \{0\} \to N$ and their Jacobi fields in \cref{subsec: Dilation-invariant harmonic maps}. This serves as motivation for the introduction of a good class of weighted Sobolev spaces in \cref{subsec: Weighted spaces}, which is well-suited for the analysis of conically singular problems. Finally, we use Lockhart--McOwen theory \cite{Lockhart1985ellipticOperators_on_noncompact_mfds} to describe the mapping properties of the Jacobi operator of a conically singular harmonic map between weighted spaces in \cref{subsec: The Jacobi operator of a CS harmonic map}.

    \subsection{Dilation-invariant harmonic maps}       \label{subsec: Dilation-invariant harmonic maps} 

This section discusses the Jacobi operator of dilation-invariant harmonic maps $\phi \col \R^m \setminus \{0 \} \to N$. In \cref{subsubsec: the jacobi operator of dilation-invariant maps}, we derive an explicit expression in terms of the Jacobi operator of $\varphi = \left. \phi \right|_{\Sp^{m-1}}$, in \cref{subsubsec: roots and polyhomogeneity} we discuss the roots of the Jacobi operator and the notion of polyhomogeneity, and finally in \cref{subsubsec: further remarks} we specialise the discussion to the case of maps satisfying conditions 1 an 2 in \cref{thm: main theorem introduction}.

    \subsubsection{The Jacobi operator of a dilation-invariant harmonic map}    \label{subsubsec: the jacobi operator of dilation-invariant maps}

Let $m \geq 2$ and consider the projection 
\begin{align*}
    \pr \col \R^m \setminus \{0\} & \longrightarrow \mathbb{S}^{m-1} \\ x \longmapsto x/|x|
\end{align*}
and let $\varphi \col (\Sp^{m-1},g_{\mathrm{\Sp^{m-1}}}) \rightarrow (N,h)$ be a smooth harmonic map with respect to the standard round metric on $\Sp^{m-1}$. Let $\phi = \varphi \circ \pr \col (\R^m \setminus \{0\}, g_{\R^m}) \to (N,h)$ be the associated dilation-invariant map, which is harmonic with respect to the Euclidean metric $g_{\R^m}$.  After identifying $\R^m \setminus \{0\}$ with the cone $C(\Sp^{m-1}) \simeq (0,\infty) \times \Sp^{m-1}$ endowed with the metric $g_{\R^m} \simeq \diff r^2 + r^2 g_{\Sp^{m-1}}$, a section $u \in \Gamma(\prescript{\phi}{}{TN}) \simeq  \Gamma(\pr^* (\prescript{\varphi}{}{TN}))$ can be seen as an $r$-dependent section $u(r)$ of the bundle $\prescript{\varphi}{}{TN}$ over $\Sp^{m-1}$. 

\begin{lem}
    Under the identification $\Gamma(\prescript{\phi}{}{TN}) \simeq  \Gamma(\pr^* (\prescript{\varphi}{}{TN}))$, the connection $\prescript{\phi}{}{\nabla}$ reads:
    \begin{equation*}
        \prescript{\phi}{}{\nabla} u = (\partial_r u ) \otimes \diff r + \prescript{\varphi}{}{\nabla} u ,
    \end{equation*}
    for all $u \in \Gamma(\pr^*(\prescript{\varphi}{}{TN}))$. Moreover, the formal adjoint $\prescript{\phi}{}{\nabla}^*$ of $\prescript{\phi}{}{\nabla}$ satisfies:
    \begin{equation*}
        \prescript{\phi}{}{\nabla}^* (v \otimes dr + A) = - \left( \frac{\partial}{\partial r} + \frac{m-1}{r} \right) v(r) + r^{-2} \prescript{\varphi}{}{\nabla}^* A
    \end{equation*}
    where $v \in \Gamma(\pr^*(\prescript{\varphi}{}{TN}))$ and $A \in \Gamma(\pr^*(T^* \Sp^{m-1} \otimes \prescript{\varphi}{}{TN}))$.
\end{lem}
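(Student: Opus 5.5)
The plan is to compute everything in the product/cone picture $\R^m \setminus \{0\} \simeq (0,\infty) \times \Sp^{m-1}$ with $g_{\R^m} = \diff r^2 + r^2 g_{\Sp^{m-1}}$. I will use that $\phi = \varphi \circ \pr$ is independent of $r$. I will also use that $\pr$ is the projection onto the second factor.

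\textbf{The connection.} Since $\prescript{\phi}{}{TN} = \pr^*(\prescript{\varphi}{}{TN})$, the pull-back connection $\prescript{\phi}{}{\nabla} = \pr^*(\prescript{\varphi}{}{\nabla})$ is determined by its action on pulled-back sections. For $X$ tangent to the sphere factor and $\sigma \in \Gamma(\prescript{\varphi}{}{TN})$, we have $\prescript{\phi}{}{\nabla}_X \pr^*\sigma = \pr^*(\prescript{\varphi}{}{\nabla}_{\diff \pr(X)}\sigma)$. On the other hand, $\prescript{\phi}{}{\nabla}_{\partial_r}\pr^*\sigma = 0$, because $\diff\pr(\partial_r)=0$. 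For a general $u(r)$, I write $u = \sum_i f_i(r,\theta)\,\pr^*\sigma_i$ locally and apply the Leibniz rule. This gives $\prescript{\phi}{}{\nabla} u = (\partial_r u)\otimes \diff r + \prescript{\varphi}{}{\nabla} u$, where the second term is the sphere-directional covariant derivative at fixed $r$. This step is routine.

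\textbf{The adjoint.} I will compute $\prescript{\phi}{}{\nabla}^*$ by integration by parts against compactly supported $w \in \Gamma_c(\pr^*(\prescript{\varphi}{}{TN}))$ on $(0,\infty)\times\Sp^{m-1}$. There I use $\vol_{g_{\R^m}} = r^{m-1}\,\diff r \wedge \vol_{\Sp^{m-1}}$ and the fibre metric $h_\phi = \pr^* h_\varphi$, which is $r$-independent.

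The key point is the norm of a mixed tensor. For $v\otimes\diff r + A$, the $g_{\R^m}$-inner product against $\prescript{\phi}{}{\nabla} w = \partial_r w\otimes \diff r + \prescript{\varphi}{}{\nabla}w$ is
\[
\langle v,\partial_r w\rangle + r^{-2}\langle A, \prescript{\varphi}{}{\nabla} w\rangle_{g_{\Sp^{m-1}}} .
\]
This holds because $|\diff r|=1$, $\diff r$ is orthogonal to $T^*\Sp^{m-1}$, and on $T^*\Sp^{m-1}$ the cone metric is $r^{-2}$ times the round one.

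\textbf{Integrating the two terms.} For the radial term, integrating $\int r^{m-1}\langle v,\partial_r w\rangle\,\diff r$ by parts yields
\[
-\int \langle r^{1-m}\partial_r(r^{m-1}v), w\rangle\, r^{m-1}\,\diff r ,
\]
and $r^{1-m}\partial_r(r^{m-1}v) = \partial_r v + \tfrac{m-1}{r}v$. Here I use metric compatibility of $\prescript{\phi}{}{\nabla}$ in the $\partial_r$ direction, that is, $\partial_r\langle v,w\rangle = \langle\partial_r v,w\rangle + \langle v,\partial_r w\rangle$. For the angular term, at each fixed $r$ the sphere integral $\int_{\Sp^{m-1}}\langle A,\prescript{\varphi}{}{\nabla}w\rangle$ equals $\int_{\Sp^{m-1}}\langle \prescript{\varphi}{}{\nabla}^*A, w\rangle$ by definition of the formal adjoint on the round sphere, and the factor $r^{-2}$ simply persists.

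Combining the two terms gives the stated formula. The only mildly delicate point, rather than a real obstacle, is getting the weight $r^{-2}$ on the angular part and the $\tfrac{m-1}{r}$ term from the volume density right. Both come directly from the warped-product form of the metric.
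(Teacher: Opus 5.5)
Your proposal is correct and follows essentially the same route as the paper. The paper also identifies $\prescript{\phi}{}{\nabla}$ as the pull-back connection $\pr^*(\prescript{\varphi}{}{\nabla})$. It then integrates by parts against compactly supported sections with $\vol_{g_{\R^m}} = r^{m-1}\,\diff r \wedge \vol_{\Sp^{m-1}}$. There, too, the angular term carries the weight $r^{m-3}$ (your $r^{-2}$ factor times $r^{m-1}$), and the radial term produces $-\bigl(\partial_r + \tfrac{m-1}{r}\bigr)$.
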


\begin{proof}
    The formula for $\prescript{\phi}{}{\nabla}$ follows from the fact that $\prescript{\phi}{}{\nabla} = \pr^*(\prescript{\varphi}{}{\nabla})$. To see the expression of its formal adjoint,  we integrate by parts, assuming that $u$, $v$ and $A$ are compactly supported sections:
    \begin{multline*}
        \int_0^\infty \int_{\Sp^{m-1}} \langle \partial_r u \otimes dr + \prescript{\varphi}{}{\nabla} u , v \otimes dr + A \rangle_{g_{\R^m},h_\varphi} r^{m-1} \vol_{\Sp^{m-1}} \diff  r = \int_0^\infty \int_{\Sp^{m-1}} \langle \partial_r u , v \rangle_{h_\varphi} r^{m-1} \vol_{\Sp^{m-1}} \diff  r  \\ + \int_0^\infty \int_{\Sp^{m-1}} \langle \prescript{\varphi}{}{\nabla} u, A \rangle_{g_{\Sp^{m-1}},h_\varphi} r^{m-3} \vol_{\Sp^{m-1}} \diff  r .
    \end{multline*}
    For the first term on the right-hand side we have
    \begin{equation*}
        \int_0^\infty \int_{\Sp^{m-1}} \langle \partial_r u , v \rangle_{h_\varphi} r^{m-1} \vol_{\Sp^{m-1}} \diff  r = - \int_0^\infty \int_{\Sp^{m-1}} \langle u, \partial_r v + \tfrac{m-1}{r} v \rangle_{h_\varphi} r^{m-1} \vol_{\Sp^{m-1}} \diff  r
    \end{equation*}
    and for the second term 
    \begin{equation*}
        \int_0^\infty \int_{\Sp^{m-1}} \langle \prescript{\varphi}{}{\nabla} u, A \rangle_{g_{\Sp^{m-1}},h_\varphi} r^{m-3} \vol_{\Sp^{m-1}} \diff  r = \int_0^\infty \int_{\Sp^{m-1}} \langle u, \prescript{\varphi}{}{\nabla}^* A \rangle_{h_\varphi} r^{m-3} \vol_{\Sp^{m-1}} \diff  r
    \end{equation*}
    using the Fubini theorem and integration by parts. This is essentially our claim.
\end{proof}

With the previous lemma, we can write the rough Laplacian $\prescript{\phi}{}{\nabla}^*\prescript{\phi}{}{\nabla}$ as:
\begin{equation*}
    \prescript{\phi}{}{\nabla}^*\prescript{\phi}{}{\nabla} = -\frac{\partial^2}{\partial r^2} - \frac{m-1}{r} \frac{\partial}{\partial r} + r^{-2} \prescript{\varphi}{}{\nabla}^* \prescript{\varphi}{}{\nabla}  .
\end{equation*}
As a consequence, we obtain the following expression of the Jacobi operator $J^\phi$, which we shall denote by $J^\varphi_C$ to highlight that it is the cone over the harmonic map $\varphi$:

\begin{prop}        \label{prop: Jacobi operator of a cone}
    The Jacobi operators $J^\varphi_C = J^\phi$ and $J^\varphi$ are related via the formula
    \begin{equation*}
        J^\varphi_C = - \left( \frac{\partial^2}{\partial r^2} + \frac{m-1}{r} \frac{\partial}{\partial r} \right) + r^{-2} J^\varphi .
    \end{equation*}
\end{prop}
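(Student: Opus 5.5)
The plan is to start from the general expression \eqref{eq: Jacobi operator in general} for the Jacobi operator of $\phi = \varphi \circ \pr$ with respect to $g_{\R^m} = \diff r^2 + r^2 g_{\Sp^{m-1}}$. This gives the sum of a rough Laplacian and a curvature term. The rough Laplacian part has already been computed from the previous lemma:
\begin{equation*}
    \prescript{\phi}{}{\nabla}^*\prescript{\phi}{}{\nabla} = -\frac{\partial^2}{\partial r^2} - \frac{m-1}{r}\frac{\partial}{\partial r} + r^{-2}\prescript{\varphi}{}{\nabla}^*\prescript{\varphi}{}{\nabla}.
\end{equation*}
So the only remaining task is to show that the curvature term satisfies
\begin{equation*}
    \tr_{g_{\R^m}}(\mathrm{Rm}_h(u,\diff\phi)\diff\phi) = r^{-2}\,\tr_{g_{\Sp^{m-1}}}(\mathrm{Rm}_h(u,\diff\varphi)\diff\varphi).
\end{equation*}
Once this holds, the $r^{-2}$ terms combine into $r^{-2}J^\varphi$, using the analogue of \eqref{eq: Jacobi operator in general} on the sphere, and the claimed formula follows.

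To compute the trace, I will use a local orthonormal frame adapted to the cone. Take $e_0 = \partial_r$, together with $e_i = r^{-1}\tilde e_i$ for $i = 1,\ldots,m-1$, where $\tilde e_i$ is a local $g_{\Sp^{m-1}}$-orthonormal frame on the sphere, lifted to $\R^m\setminus\{0\}$ as a dilation-invariant field. Since $\phi$ is dilation-invariant, $\diff\phi(\partial_r) = 0$, so the $e_0$ contribution vanishes. Also, $\diff\phi(e_i) = r^{-1}\diff\varphi(\tilde e_i)$ at the point $x/|x|$. The curvature term is quadratic in $\diff\phi$, so each summand acquires a factor $r^{-2}$, and the sum over $i$ is exactly the spherical trace.

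It remains to check that the zeroth-order curvature operator commutes with the identification $\Gamma(\prescript{\phi}{}{TN}) \simeq \Gamma(\pr^*(\prescript{\varphi}{}{TN}))$. This is immediate because the operator acts pointwise and $\phi = \varphi\circ\pr$.

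I expect no real obstacle. The only point needing care is bookkeeping of conventions: the sign of the curvature term, and the fact that $\prescript{\varphi}{}{\nabla}^*$ in the lemma is taken with respect to the round metric on $\Sp^{m-1}$. The latter is what makes the $r^{-2}$ factors match between the Laplacian and curvature parts.
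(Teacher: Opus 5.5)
Your proposal is correct and follows essentially the same route as the paper. The paper also combines the already-derived formula for $\prescript{\phi}{}{\nabla}^*\prescript{\phi}{}{\nabla}$ with the identity $\tr_{g_{\R^m}}(\mathrm{Rm}_h(u,\diff\phi)\diff\phi) = r^{-2}\tr_{g_{\Sp^{m-1}}}(\mathrm{Rm}_h(u,\diff\varphi)\diff\varphi)$, which it justifies from $g_{\R^m} = \diff r^2 + r^2 g_{\Sp^{m-1}}$. Your adapted orthonormal frame computation just spells out that one-line scaling argument explicitly.
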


\begin{proof}
    Given the above expression for $\prescript{\phi}{}{\nabla}^*\prescript{\phi}{}{\nabla}$, it follows from  \eqref{eq: Jacobi operator in general} and the identity
    \begin{equation*}
        \tr_{g_{\R^m}}(\mathrm{Rm}_h(u,\diff\phi)\diff\phi) = r^{-2} \tr_{g_{\Sp^{m-1}}}(\mathrm{Rm}_h(u,\diff\varphi)\diff\varphi)
    \end{equation*}
    where the scaling factor $r^{-2}$ comes from the identity $g_{\R^m} = \diff r^2 + r^2 g_{\Sp^{m-1}}$.
\end{proof}

    \subsubsection{Roots and polyhomogeneity}       \label{subsubsec: roots and polyhomogeneity}

\begin{Def}
    Let $\varphi \col (\Sp^{m-1},g_{\Sp^{m-1}}) \rightarrow (N,h)$ be a smooth harmonic map.
    \begin{enumerate}
        \item A complex number $\zeta \in \C$ is called a \emph{root} of $J^\varphi_C$ if there exists a non-zero section $u \in \Gamma(\prescript{\varphi}{}{TN} \otimes \C)$ such that $J^\varphi_C(r^\zeta u) = 0$. 
        \item The \emph{multiplicity} of a root $\zeta$ is the largest integer $m \geq 1$ such that there exist $u_0,\ldots,u_{m-1} \in \Gamma(\prescript{\varphi}{}{TN} \otimes \C)$ such that $u_{m-1} \neq 0$ and $J^\varphi_C(r^\zeta \sum \log(r)^i u_i) = 0$.
        \item $\nu \in \R$ is called a \emph{critical rate} if the line $\nu + i \R \subset \C$ contains a root of $J^\varphi_C$. The set of critical rates will be denoted by $\mathcal{D}(J^\varphi_C) \subset \R$.
    \end{enumerate}
\end{Def}

Since the operator $J^\varphi$ is formally self-adjoint, the roots of $J^\varphi_C$ and their multiplicities can easily be described:

\begin{prop}        \label{prop: Description of the roots}
    Let $\zeta \in \C$. Then the following hold:
    \begin{enumerate}[(i)]
        \item $\zeta$ is a root of $J^\varphi_C$ if and only if $\zeta(\zeta + m - 2) = \lambda_\zeta$ is an eigenvalue of the Jacobi operator $J^\varphi$.
        \item Its multiplicity $m_\zeta$ is equal to its multiplicity as a root of the polynomial $t(t+m-2)-\lambda_\zeta$. In particular, $m_\zeta = 1$ if $\zeta \neq - \tfrac{m-2}{2}$, and if $\zeta = -\tfrac{m-2}{2}$ is a root of $J^\varphi_C$ it has multiplicity $2$.
    \end{enumerate}
\end{prop}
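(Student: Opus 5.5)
The plan is to reduce everything to a family of scalar ODEs in $r$ by expanding in an eigenbasis of $J^\varphi$. Since $\Sp^{m-1}$ is compact and $J^\varphi$ is elliptic and formally self-adjoint, $L^2(\prescript{\varphi}{}{TN}\otimes\C)$ has an orthonormal basis of smooth eigensections $e_j$ with real eigenvalues $\lambda_j$, each eigenspace being finite-dimensional. By \cref{prop: Jacobi operator of a cone}, for any section $u$ on $\Sp^{m-1}$ and any $\zeta \in \C$,
\begin{equation*}
    J^\varphi_C(r^\zeta u) = r^{\zeta-2}\big(-\zeta(\zeta-1) - (m-1)\zeta + J^\varphi\big) u = r^{\zeta-2}\big(J^\varphi - \zeta(\zeta+m-2)\big)u .
\end{equation*}
Hence $J^\varphi_C(r^\zeta u)=0$ with $u\neq 0$ holds exactly when $u$ is an eigensection of $J^\varphi$ with eigenvalue $\zeta(\zeta+m-2)$. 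This proves (i); note that $\lambda_\zeta$ is then real, which forces $\zeta$ either to be real or to have real part $-\tfrac{m-2}{2}$.

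For (ii), write $P(t) = t(t+m-2)$ and consider $w = r^\zeta\sum_{i=0}^{k-1}\log(r)^i u_i$. I would use the standard trick of differentiating in the exponent: since $\partial_\zeta^i r^\zeta = \log(r)^i r^\zeta$, we have
\begin{equation*}
    J^\varphi_C\big(r^{\zeta}\log(r)^i u\big) = \partial_\zeta^i\big[r^{\zeta-2}(J^\varphi-P(\zeta))u\big].
\end{equation*}
Expanding the right-hand side by Leibniz gives $r^{\zeta-2}\sum_{l\le i}\binom{i}{l}\log(r)^{i-l}\,\partial^l_\zeta(J^\varphi-P(\zeta))u$. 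Projecting onto each eigenspace $E_\lambda$ of $J^\varphi$ reduces the equation $J^\varphi_C w=0$ to a scalar problem in which $J^\varphi - P(\zeta)$ is replaced by $\lambda - P(\zeta)$. Collecting coefficients of $\log(r)^j$ then yields a triangular system in the components $u_i^\lambda$.

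Alternatively, and more cleanly, I would substitute $t=\log r$. The equation becomes the constant-coefficient ODE
\begin{equation*}
    \big(\partial_t^2 + (m-2)\partial_t - \lambda\big)f = 0
\end{equation*}
on each eigenspace. Its solutions of the form $e^{\zeta t}\cdot(\text{polynomial in } t)$ are known explicitly: the maximal polynomial degree plus one equals the multiplicity of $\zeta$ as a root of the characteristic polynomial $P(t)-\lambda$. Only $\lambda=\lambda_\zeta$ contributes, since for the other eigenspaces $\zeta$ is not a root at all, so $m_\zeta$ equals the multiplicity of $\zeta$ as a root of $P(t)-\lambda_\zeta$.

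Finally, $P(t)-\lambda_\zeta$ is a quadratic with derivative $2t+m-2$. It therefore has a double root exactly when that root is $-\tfrac{m-2}{2}$, and otherwise all its roots are simple. The only mildly delicate point is justifying the reduction to eigenspaces, i.e.\ that a finite sum $\sum\log(r)^iu_i$ with smooth $u_i$ can be projected componentwise. This is immediate because the $L^2$-projections onto eigenspaces commute with $J^\varphi$ and with multiplication by functions of $r$.
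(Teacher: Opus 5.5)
Your proposal is correct and follows essentially the same route as the paper. Part (i) comes from the identity $J^\varphi_C(r^\zeta u) = r^{\zeta-2}\big(J^\varphi - \zeta(\zeta+m-2)\big)u$. Part (ii) comes from projecting onto the eigenspaces of $J^\varphi$ and substituting $t=\log r$, which gives a constant-coefficient ODE with characteristic polynomial $t(t+m-2)-\lambda$. You are slightly more explicit than the paper in checking that components outside $\ker(J^\varphi-\lambda_\zeta)$ vanish and that the maximal power of $\log r$ is controlled by the root multiplicity; the exponent-differentiation sketch you give first is not needed.
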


\begin{proof}
    If $u \in \Gamma(\prescript{\varphi}{}{TN} \otimes \C)$ and $\zeta \in \C$, a direct computation using \cref{prop: Jacobi operator of a cone} gives
    \begin{equation*}
        J^\varphi_C(r^\zeta u) = r^{\zeta-2}(J^\varphi u - (\zeta(\zeta-1) + (m-1) \zeta) u) =   r^{\zeta-2} (J^\varphi u - \zeta(\zeta + m -2) u)
    \end{equation*}
    which proves the first assertion. To prove the second assertion, we can explicitly solve the Jacobi equation by separation of variables. There is an orthogonal decomposition 
    \begin{equation*}
        L^2(\prescript{\varphi}{}{TN}) = \bigoplus_{\lambda \in \Spec(J^\varphi)} \ker(J^\varphi - \lambda),
    \end{equation*}
    and for any section $u \in \Gamma(\pr^*(\prescript{\varphi}{}{TN} \otimes \C))$ we denote by $u_\lambda(r)$ the projection of $u(r) \in \Gamma(\prescript{\varphi}{}{TN} \otimes \C)$ onto $\ker(J^\varphi - \lambda) \otimes \C$. Then the Jacobi equation $J^\varphi_C u = 0$ is equivalent to the collection of linear ODE systems
    \begin{equation*}
        - u^{\prime\prime}_\lambda(r) - \frac{m-1}{r} u^\prime_\lambda(r) + \frac{\lambda}{r^2} u_\lambda(r) = 0, ~~~ \lambda \in \Spec(J^\varphi).
    \end{equation*}
    Using the variable $t = \log(r)$, we see that when the polynomial $t(t+m-2) - \lambda$ has two distinct roots $\zeta^+_\lambda$ and $\zeta^-_\lambda$, i.e. when $\lambda \neq - \tfrac{(m-2)^2}{4}$, the solutions of this ODEs are of the form
    \begin{equation*}
        r^{\zeta^+_\lambda} u^+_\lambda + r^{\zeta^-_\lambda} u^-_\lambda, ~~~ u^+_\lambda, u^-_\lambda \in \ker(J^\varphi-\lambda) \otimes \C. 
    \end{equation*}
    Otherwise, if $\lambda = -\tfrac{(m-2)^2}{4}$, the polynomial $t(t+m-2) - \lambda$ has a double root $\zeta = -\tfrac{m-2}{2}$, and the solutions of the ODEs are of the form
    \begin{equation*}
        r^{-\tfrac{m-2}{2}}(v_0 + \log(r) v_1), ~~~ v_0, v_1 \in \ker(J^\varphi + \tfrac{(m-2)^2}{4}) \otimes \C.
    \end{equation*}
    This proves our claim.
\end{proof}

\begin{cor}
    The critical rates of $J^\varphi_C$ satisfy the following properties:
    \begin{enumerate}[(i)]
        \item $-\tfrac{m-2}{2} \in \mathcal{D}(J^\varphi_C)$ if and only if $\Spec(J^\varphi) \cap (-\infty, -\tfrac{(m-2)^2}{4}] \neq \emptyset$.
        \item $\mathcal{D}(J^\varphi_C) \cap (-\tfrac{m-2}{2},+\infty) = \left\{ \tfrac{- (m-2) + \sqrt{(m-2)^2 + 4 \lambda}}{2} ~ \big| ~ \lambda \in \Spec(J^\varphi) \cap (-\tfrac{(m-2)^2}{4},+\infty) \right\}$
        \item $\mathcal{D}(J^\varphi_C) \cap (-\tfrac{m-2}{2},+\infty) = \left\{ \tfrac{- (m-2) - \sqrt{(m-2)^2 + 4 \lambda}}{2} ~ \big| ~ \lambda \in \Spec(J^\varphi) \cap (-\tfrac{(m-2)^2}{4},+\infty) \right\}$
    \end{enumerate}
\end{cor}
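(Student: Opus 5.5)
The corollary should follow directly from \cref{prop: Description of the roots}(i), by solving the quadratic equation relating roots to eigenvalues and reading off real parts. Since the real parts split into three regimes according to the sign of the discriminant, I will treat the three items as a case split on the eigenvalue $\lambda$. One remark first: item (iii) as printed repeats the interval $(-\tfrac{m-2}{2},+\infty)$, which cannot be intended, because the numbers listed there are all $< -\tfrac{m-2}{2}$. I will prove it in the form
\begin{equation*}
    \mathcal{D}(J^\varphi_C) \cap \left(-\infty,-\tfrac{m-2}{2}\right) = \left\{ \tfrac{- (m-2) - \sqrt{(m-2)^2 + 4 \lambda}}{2} ~ \big| ~ \lambda \in \Spec(J^\varphi) \cap \left(-\tfrac{(m-2)^2}{4},+\infty\right) \right\}.
\end{equation*}

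First, since $J^\varphi$ is elliptic and formally self-adjoint on the compact manifold $\Sp^{m-1}$, its spectrum is real and discrete. By \cref{prop: Description of the roots}(i), $\zeta\in\C$ is a root of $J^\varphi_C$ exactly when $\zeta^2+(m-2)\zeta-\lambda=0$ for some $\lambda\in\Spec(J^\varphi)$. For a fixed real $\lambda$ the solutions are
\begin{equation*}
    \zeta^\pm_\lambda=\tfrac{-(m-2)\pm\sqrt{(m-2)^2+4\lambda}}{2}.
\end{equation*}
A real number $\nu$ is a critical rate if and only if $\nu=\mathrm{Re}\,\zeta^\pm_\lambda$ for some $\lambda\in\Spec(J^\varphi)$, so it suffices to compute these real parts. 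The discriminant $(m-2)^2+4\lambda$ is real, which gives two cases.

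In the first case, $\lambda\le-\tfrac{(m-2)^2}{4}$. Then the square root is purely imaginary (or zero), so $\mathrm{Re}\,\zeta^\pm_\lambda=-\tfrac{m-2}{2}$. In the second case, $\lambda>-\tfrac{(m-2)^2}{4}$. Then $\zeta^\pm_\lambda$ are real, with $\zeta^+_\lambda>-\tfrac{m-2}{2}>\zeta^-_\lambda$, placed symmetrically about $-\tfrac{m-2}{2}$. Hence the rate $-\tfrac{m-2}{2}$ can only arise from the first case, which gives (i) in both directions. Likewise, a critical rate $>-\tfrac{m-2}{2}$ must equal $\zeta^+_\lambda$ with $\lambda>-\tfrac{(m-2)^2}{4}$, and conversely every such $\zeta^+_\lambda$ is a critical rate; this is (ii). The same argument with $\zeta^-_\lambda$ yields the corrected (iii).

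There is no real obstacle; the only point needing care is that the real parts of complex roots all collapse onto the single line $\mathrm{Re}\,\zeta=-\tfrac{m-2}{2}$, which is what isolates (i) from the other two items.
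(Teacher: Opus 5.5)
Your proposal is correct and is exactly the (implicit) argument of the paper, which states this corollary without proof as an immediate consequence of \cref{prop: Description of the roots}(i): solve $\zeta^2+(m-2)\zeta-\lambda=0$ for real $\lambda\in\Spec(J^\varphi)$ and separate the cases $\lambda\le-\tfrac{(m-2)^2}{4}$ and $\lambda>-\tfrac{(m-2)^2}{4}$. You are also right that the interval in (iii) is a typo and should read $(-\infty,-\tfrac{m-2}{2})$, since all the listed values are strictly smaller than $-\tfrac{m-2}{2}$.
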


\begin{Def}
    A section $u \in \Gamma(\pr^*(\prescript{\varphi}{}{TN}))$ is called \emph{polyhomogeneous of rate $\nu \in \R$} if there exists $\zeta_1,\ldots,\zeta_k \in \nu + i \R$ and a collection of sections $u_{ij} \in \Gamma(\prescript{\varphi}{}{TN} \otimes \C)$, $i = 1, \ldots, k$, $j = 0,\ldots,\ell_i$ such that
    \begin{equation*}
        u = \Re \left( \sum_{i=1}^k \sum_{j=0}^{\ell_i} r^{\zeta_i} \log^j(r) u_{ij} \right) .
    \end{equation*}
    Moreover, if $\nu \in \mathcal{D}(J^\varphi_C)$ the vector space of polyhomogeneous sections of rate $\nu$ satisfying $J^\varphi_C u = 0$ will be denoted by $\mathcal{K}(J^\varphi_C)_\nu$.
\end{Def}

As an immediate consequence of the previous results, we can explicitly describe the spaces $\mathcal{K}(J^\varphi_C)_\nu$ as follows:

\begin{prop}
    Let $\nu \in \mathcal{D}(J^\varphi_C)$.
    \begin{enumerate}[(i)]
        \item If $\nu \neq -\tfrac{m-2}{2}$, then
        \begin{equation*}
            \mathcal{K}(J^\varphi_C)_\nu = \{ r^\nu u ~ | ~ u \in \ker(J^\varphi-\lambda_\nu) \} , ~~~ \lambda_\nu = \nu(\nu+m-2) .
        \end{equation*}
        \item If $\nu = - \tfrac{m-2}{2}$, then $\mathcal{K}(J^\varphi_C)_{-\tfrac{m-2}{2}} = \mathcal{K}^\prime(J^\varphi_C)_{-\tfrac{m-2}{2}} \oplus \mathcal{K}^{\prime\prime}(J^\varphi_C)_{-\tfrac{m-2}{2}}$, where
        \begin{align*}
            \mathcal{K}^\prime(J^\varphi_C)_{-\tfrac{m-2}{2}} & = \left\{ r^{-\tfrac{m-2}{2}} (v_0 + \log(r) v_1) ~ | ~ v_0,v_1 \in \ker(J^\varphi + \tfrac{(m-2)^2}{4}) \right\} \\
            \mathcal{K}^{\prime\prime}(J^\varphi_C)_{-\tfrac{m-2}{2}} & = \bigoplus_{\alpha > 0} \left\{ r^{-\tfrac{m-2}{2}}(\cos(\alpha \log(r)) v_0 + \sin(\alpha \log(r) ) v_1) ~ | ~ v_0, v_1 \in \ker(J^\varphi + \tfrac{(m-2)^2}{4} + \alpha^2) \right\} .
        \end{align*}
    \end{enumerate}
\end{prop}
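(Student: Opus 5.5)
The plan is to decompose a polyhomogeneous kernel element spectrally and match exponents with the roots described in \cref{prop: Description of the roots}.

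Let $u \in \mathcal{K}(J^\varphi_C)_\nu$, so that
\begin{equation*}
    u = \Re\Bigl(\sum_i \sum_j r^{\zeta_i}\log^j(r)\, u_{ij}\Bigr)
\end{equation*}
with $\zeta_i \in \nu + i\R$ pairwise distinct and $J^\varphi_C u = 0$. The complex extension $J^\varphi_C$ commutes with complex conjugation. It is therefore enough to study the complex sum $U = \sum_{i,j} r^{\zeta_i}\log^j(r)\,u_{ij}$, after first arranging that $J^\varphi_C U = 0$. To do this, collect the terms of $u$ by exponent; conjugate exponents $\bar\zeta_i$ also lie on $\nu + i\R$.

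Next project onto the eigenspaces $\ker(J^\varphi-\lambda)$, as in the proof of \cref{prop: Description of the roots}. Write $t = \log r$. For each $\lambda$, the component $U_\lambda(t)$ is an exponential polynomial $\sum_i e^{\zeta_i t} p_i(t)$ with polynomial coefficients $p_i$. It must solve the constant-coefficient ODE $-\ddot U_\lambda - (m-2)\dot U_\lambda + \lambda U_\lambda = 0$. By the uniqueness of exponential-polynomial representations, the ODE solution space is exactly
\begin{equation*}
    \mathrm{span}\{e^{\zeta^\pm_\lambda t}\}, \qquad \text{or} \qquad \mathrm{span}\{e^{\zeta t}, t e^{\zeta t}\} \text{ at the double root.}
\end{equation*}
Consequently only exponents with $\zeta(\zeta+m-2) = \lambda$ survive, and logarithms can appear only at the double root $\zeta = -\tfrac{m-2}{2}$.

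Case (i): $\nu \neq -\tfrac{m-2}{2}$. A root $\zeta = \nu + i\alpha$ with $\alpha \neq 0$ would give $\lambda = \zeta(\zeta+m-2) \in \R$. The imaginary part of $\zeta(\zeta+m-2)$ is $\alpha(2\nu + m - 2)$, so this forces $\nu = -\tfrac{m-2}{2}$, a contradiction. Hence the only exponent is the real number $\nu$ itself, with $\lambda_\nu = \nu(\nu+m-2)$ and no logarithms. Thus $u = r^\nu v$ with $v \in \ker(J^\varphi - \lambda_\nu)$, and $v$ is real after taking the real part. Conversely, each such $r^\nu v$ lies in the kernel by the computation $J^\varphi_C(r^\zeta u) = r^{\zeta-2}(J^\varphi - \zeta(\zeta+m-2))u$ from the proof of \cref{prop: Description of the roots}.

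Case (ii): $\nu = -\tfrac{m-2}{2}$. Write $\zeta = \nu + i\alpha$, so that $\zeta(\zeta+m-2) = -\tfrac{(m-2)^2}{4} - \alpha^2$. For $\alpha = 0$ the double root contributes $r^\nu(v_0 + \log(r) v_1)$ with $v_0, v_1 \in \ker(J^\varphi + \tfrac{(m-2)^2}{4})$, which is the space $\mathcal{K}'$. For $\alpha > 0$ the pair $\nu \pm i\alpha$ contributes, after taking real parts, $r^\nu(\cos(\alpha\log r)\,v_0 + \sin(\alpha\log r)\,v_1)$ with $v_0, v_1 \in \ker(J^\varphi + \tfrac{(m-2)^2}{4} + \alpha^2)$, which is the space $\mathcal{K}''$. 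The sum of these pieces is direct because the eigenspaces are orthogonal. By discreteness of the spectrum, only finitely many $\alpha$ occur in any given $u$.

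The only delicate point is the uniqueness of the representation of exponential polynomials in $t$, which is used to read off exponents and logarithmic degrees. This is standard linear independence of the functions $t^j e^{\zeta t}$, so I expect no real obstacle.
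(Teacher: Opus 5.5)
Your proposal is correct and follows essentially the paper's route: the paper presents this statement as an immediate consequence of the separation-of-variables analysis in the proof of \cref{prop: Description of the roots}, where each eigenspace component solves $-\ddot U - (m-2)\dot U + \lambda U = 0$ in the variable $t = \log r$. Your additional details, namely that the imaginary part $\alpha(2\nu+m-2)$ of $\zeta(\zeta+m-2)$ forces real exponents unless $\nu = -\tfrac{m-2}{2}$, and the linear independence of the functions $t^j e^{\zeta t}$, simply make explicit what the paper leaves implicit.
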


Observe that for any critical rate $\nu \in \R$, $\lambda_\nu = \lambda_{2-m-\nu}$, and therefore we can define a natural non-degenerate bilinear pairing $(\cdot,\cdot)_\nu \col \mathcal{K}(J^\varphi_C)_\nu \otimes \mathcal{K}(J^\varphi_C)_{2-m-\nu} \rightarrow \R$, which can be described as follows. Let $\chi \col [0,\infty) \rightarrow \R$ be a smooth cutoff function such that $\chi \equiv 1$ in a neighbourhood of $0$ and $\chi \equiv 0$ in a neighbourhood of $+\infty$. Then we can define
\begin{equation*}
    (\alpha,\beta)_\nu = \int_0^\infty \int_{\Sp^{m-1}} \langle J^\varphi_C(\chi \alpha), \beta \rangle_{h_\varphi} r^{m-1} \vol_{\Sp^{m-1}} \diff  r, ~~~ \forall (\alpha,\beta) \in \mathcal{K}(J^\varphi_C)_\nu \times \mathcal{K}(J^\varphi_C)_{2-m-\nu} .
\end{equation*}
Remark that this definition does not depend on the choice of cutoff function, since the difference between two cutoff functions is compactly supported and we can integrate by parts. To see the non-degeneracy of this pairing, we can give it another description:

\begin{prop}        \label{prop: Polyhomoegenous pairing}
    Let $\nu \in \mathcal{D}(J^\varphi_C)$.
    \begin{enumerate}[(i)]
        \item If $\nu \neq -\tfrac{m-2}{2}$ and $u, v \in \ker(J^\varphi-\lambda_\nu)$, 
        \begin{equation*}
            (r^\nu u, r^{2-m-\nu} v )_\nu = (m-2 + 2\nu) \langle u, v \rangle_{L^2} .
        \end{equation*}

        \item $\mathcal{K}^\prime(J^\varphi_C)_{-\tfrac{m-2}{2}}$ and $\mathcal{K}^{\prime\prime}(J^\varphi_C)_{-\tfrac{m-2}{2}}$ are orthogonal for the pairing $(\cdot,\cdot)_{-\tfrac{m-2}{2}}$.
        
        \item For any $u_1,v_1,u_2,v_2 \in \ker(J^\varphi+\tfrac{(m-2)^2}{4})$,
        \begin{equation*}
            (r^{-\tfrac{m-2}{2}}(u_1 + \log(r) v_1), r^{-\tfrac{m-2}{2}}(u_2+\log(r)v_2) )_{-\tfrac{m-2}{2}} = \langle v_1, u_2 \rangle_{L^2} - \langle u_1, v_2 \rangle_{L^2} .
        \end{equation*}
        In particular, the restriction of $(\cdot,\cdot)_{-\tfrac{m-2}{2}}$ to $\mathcal{K}^\prime(J^\varphi_C)_{-\tfrac{m-2}{2}}$ is non-degenerate and skew-symmetric.
    \end{enumerate}
\end{prop}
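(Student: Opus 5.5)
The plan is to reduce the pairing to a boundary term by integrating by parts, and then evaluate that term explicitly on each of the three types of polyhomogeneous sections. Fix $\alpha \in \mathcal{K}(J^\varphi_C)_\nu$ and $\beta \in \mathcal{K}(J^\varphi_C)_{2-m-\nu}$. Since $J^\varphi_C\beta = 0$ and $J^\varphi_C(\chi\alpha)$ is supported in the annulus where $\chi' \neq 0$, we may write the pairing as a limit of integrals over $\{\epsilon < r < R\}$ with $R$ large and $\epsilon$ small. On this region we use the formula of \cref{prop: Jacobi operator of a cone}. The angular part $r^{-2}J^\varphi$ is formally self-adjoint on $\Sp^{m-1}$, so it integrates away against $\beta$ with no boundary contribution.

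The radial part $-(\partial_r^2 + \tfrac{m-1}{r}\partial_r) = -r^{1-m}\partial_r(r^{m-1}\partial_r)$ gives a Green-type identity. Integrating by parts twice in $r$, and using $J^\varphi_C\beta=0$, we get
\begin{equation*}
    (\alpha,\beta)_\nu = \lim_{\epsilon\to 0} \epsilon^{m-1}\Big( \langle \partial_r(\chi\alpha), \beta\rangle_{L^2(\Sp^{m-1})} - \langle \chi\alpha, \partial_r \beta\rangle_{L^2(\Sp^{m-1})}\Big)\Big|_{r=\epsilon} .
\end{equation*}
Near $r = 0$ we have $\chi \equiv 1$, and the term at $r=R$ vanishes because $\chi = 0$ there. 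So the pairing equals the Wronskian-type expression
\begin{equation*}
    W(\alpha,\beta) = r^{m-1}\big(\langle \partial_r\alpha,\beta\rangle_{L^2} - \langle\alpha,\partial_r\beta\rangle_{L^2}\big).
\end{equation*}
This expression is independent of $r$: its $r$-derivative equals $r^{m-1}\big(\langle \cdot\rangle$ terms involving $J^\varphi_C\alpha$ and $J^\varphi_C\beta\big)$, and both vanish because the angular parts cancel by self-adjointness. The sign of $W$ depends on the orientation convention from the integration by parts; I will fix it so that it matches the formula in (i).

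\textbf{Case (i).} Take $\alpha = r^\nu u$ and $\beta = r^{2-m-\nu} v$. Then
\begin{equation*}
    W = r^{m-1}\big(\nu - (2-m-\nu)\big) r^{\nu-1+2-m-\nu}\langle u,v\rangle_{L^2} = (m-2+2\nu)\langle u,v\rangle_{L^2}.
\end{equation*}

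\textbf{Cases (ii) and (iii).} Here $\nu = -\tfrac{m-2}{2}$, and the computation splits according to the eigenspace decomposition. Sections of $\mathcal{K}'$ and $\mathcal{K}''$ lie in different eigenspaces of $J^\varphi$, which are $L^2$-orthogonal, so $W$ vanishes between them. This gives (ii).

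For (iii), write $\alpha = r^{-\frac{m-2}{2}}(u_1+\log r\, v_1)$ and $\beta = r^{-\frac{m-2}{2}}(u_2+\log r\, v_2)$. Then
\begin{equation*}
    r\partial_r\alpha = r^{-\frac{m-2}{2}}\Big(-\tfrac{m-2}{2}(u_1+\log r\, v_1) + v_1\Big),
\end{equation*}
and similarly for $\beta$. The $-\tfrac{m-2}{2}$ terms cancel in the antisymmetric combination. The powers of $r$ combine to $r^{m-1}r^{-1}r^{-(m-2)} = 1$, so
\begin{equation*}
    W = \langle v_1, u_2+\log r\, v_2\rangle_{L^2} - \langle u_1+\log r\, v_1, v_2\rangle_{L^2} = \langle v_1,u_2\rangle_{L^2} - \langle u_1,v_2\rangle_{L^2},
\end{equation*}
since the $\log r$ terms cancel.

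Skew-symmetry is immediate from this formula. For non-degeneracy, given a nonzero element, pair it with the element obtained by swapping $u$ and $v$ and changing a sign. For example, if $(u_1,v_1)\neq 0$, choose $u_2 = v_1$ and $v_2 = -u_1$; this gives $\|v_1\|^2+\|u_1\|^2 > 0$.

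The main point needing care is justifying the integration by parts. The sections are singular at $0$ and at $\infty$, but the cutoff kills the contribution at $\infty$, and the $\epsilon$-boundary is handled by the constancy of $W$. Beyond that, the only delicate issue is keeping the sign conventions consistent.
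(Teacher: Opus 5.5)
Your proof is correct and is essentially the paper's argument. The paper also integrates by parts in $r$, starting from the identity $r^{m-1}J^\varphi_C = -\partial_r(r^{m-1}\partial_r) + r^{m-3}J^\varphi$. This amounts to writing $(\alpha,\beta)_\nu = -\int_0^\infty \chi'(r)\, W(\alpha,\beta)\,\diff r$, with your $r$-independent Wronskian $W$, and then using $\int_0^\infty \chi' = -1$. You instead evaluate the same boundary term at a radius where $\chi \equiv 1$, which is a slightly cleaner packaging and makes (iii) a one-line computation.

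One small point: the sign of your boundary term is not a convention you need to choose. The Green identity on $[\epsilon,R]$ forces it, because the inner endpoint enters with a minus sign, and it comes out exactly as you displayed it. So there is no need to ``fix'' the sign to match (i).
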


\begin{proof}
    Notice that we can write
    \begin{equation*}
        r^{m-1}J^\varphi_C = -\frac{\partial}{\partial r} \left( r^{m-1} \frac{\partial}{\partial r} \right) + r^{m-3} J^\varphi
    \end{equation*}
    and therefore for any $u \in \Gamma(\pr^*\prescript{\varphi}{}{TN})$ we have
    \begin{equation*}
        r^{m-1}J^\varphi_C(\chi(r)u(r)) - r^{m-1}\chi(r) J^\varphi_C u(r) = - \partial_r(r^{m-1} \chi^\prime(r) u(r)) - r^{m-1}\chi^\prime(r)u^\prime(r) .
    \end{equation*}
    For the proof of point (i), we obtain for $u \in \ker(J^\varphi-\lambda_\nu)$,
    \begin{equation*}
        r^{m-1}J^\varphi_C(\chi(r) r^\nu u) = - \partial_r( r^{m-1+\nu} \chi^\prime(r))u - \nu r^{m-2+\nu} \chi^\prime(r) u
    \end{equation*}
    since $J^\varphi_C (r^\nu u) = 0$. Since $\chi^\prime$ is compactly supported, we can integrate by parts and deduce that
    \begin{align*}
        \int_0^\infty \int_{\Sp^{m-1}} r^{m-1} \langle J^\varphi_C (\chi r^\nu u), r^{2-m-\nu} v \rangle_{h_\varphi} \vol_{\Sp^{m-1}} \diff  r & = (2-m-2\nu) \int_0^\infty \int_{\Sp^{m-1}} \chi^\prime(r)  \langle u, v \rangle_{h_\varphi} \vol_{\Sp^{m-1}} \diff  r \\
            & = (m-2 + 2\nu) \langle u, v \rangle_{L^2}
    \end{align*}
    since $\int_0^\infty \chi^\prime(r) \diff  r = -1$.

    Point (ii) follows directly from the fact that if $u \in \mathcal{K}^\prime(J^\varphi_C)_{-\tfrac{m-2}{2}}$ and $v \in \mathcal{K}^{\prime\prime}(J^\varphi_C)_{-\tfrac{m-2}{2}}$, $u(r)$ and $v(r)$ belong to different eigenspaces of $J^\varphi$ and hence are $L^2$-orthogonal for any $r \in (0,\infty)$.
    
    For point (iii), let us now assume that $\nu = - \tfrac{m-2}{2}$.  This time, we obtain
    \begin{multline*}
        r^{m-1} J^\varphi_C(r^{-\tfrac{m-2}{2}}(u_1 + \log(r) v_1)) = - \partial_r(r^{\tfrac{m}{2}}\chi^\prime(r))u_1 + \frac{m-2}{2} r^{\tfrac{m-2}{2}} \chi^\prime(r) u_1 \\ - \partial_r(r^{\tfrac{m}{2}} \log(r) \chi^\prime(r)) v_1 + \frac{m-2}{2} r^{\tfrac{m-2}{2}} \log(r) \chi^\prime(r) v_1 - r^{\tfrac{m-2}{2}} \chi^\prime(r) v_1 .
    \end{multline*}
    Integrating by parts, we have:
    \begin{align*}
        \int_0^\infty \int_{\Sp^{m-1}} & r^{m-1} \langle J^\varphi_C(\chi(r)r^{-\frac{m-2}{2}}(u_1 + \log(r) v_1), r^{-\frac{m-2}{2}}(u_2+\log(r)v_2) \rangle_{h_\varphi} \vol_{\Sp^{m-1}} \diff  r \\
            = & \int_0^\infty \int_{\Sp^{m-1}} \chi^\prime(r) (\tfrac{m-2}{2} \log(r) + r^{\frac{m}{2}} \partial_r(r^{-\frac{m-2}{2}} \log(r))) \langle u_1, v_2 \rangle_{h_\varphi} \vol_{\Sp^{m-1}} \diff  r \\
            & - \int_0^\infty \int_{\Sp^{m-1}} \chi^\prime(r) \langle v_1, u_2 \rangle_{h_\varphi} \vol_{\Sp^{m-1}} \diff  r \\
            & + \int_0^\infty \int_{\Sp^{m-1}} \chi^\prime(r) (\tfrac{m-2}{2} + r^{\frac{m}{2}} \partial_r(r^{-\frac{m-2}{2}})) \langle u_1, u_2 \rangle_{h_\varphi} \vol_{\Sp^{m-1}} \diff  r \\
            & + \int_0^\infty \int_{\Sp^{m-1}} \chi^\prime(r)(\tfrac{m-2}{2}\log^2(r)- \log(r) + r^{\frac{m}{2}} \log(r)  \partial_r (r^{-\frac{m-2}{2}} \log(r))) \langle v_1, v_2 \rangle_{h_\varphi} \vol_{\Sp^{m-1}} \diff  r \\
            = & \langle v_1, u_2 \rangle_{L^2} - \langle u_1, v_2 \rangle_{L^2}
    \end{align*}
    which proves our claim.
\end{proof}

\begin{rem}
    One could also derive an expression for the pairing on $\mathcal{K}^{\prime\prime}(J^\varphi_C)_{-\tfrac{m-2}{2}}$, but for the purpose of this article it will not be needed.
\end{rem}

    \subsubsection{Further remarks}     \label{subsubsec: further remarks}

If the map $\varphi \col (\Sp^{m-1},g_{\Sp^{m-1}}) \rightarrow (N,h)$ is non-trivial (its image is not reduced to a point), pre-composing $\varphi$ by isometries induces harmonic deformations of $\varphi$, and therefore the Killing fields $\xi$ on the sphere generate Jacobi fields $u = \diff \varphi(\xi)$ which span a non-trivial subspace of the kernel of $J^\varphi$. Moreover, the infinitesimal translations of $\R^m$ generate homogeneous Jacobi fields of degree $-1$ which locally span $T\Sp^{m-1}$, and hence $\nu = -1$ is always a root of $J^\varphi_C$, corresponding to the eigenvalue $-(m-3)$ of $J^\varphi$. Note that the infinitesimal translations generate an $m$-dimensional subspace of $\ker(J^\varphi + m-3)$ \cite{elsoufi1995indice} (for otherwise the dilation-invariant map $\varphi \circ \pr$ would be invariant under a translation and would be singular along at least a line in $\R^m$). It will be therefore useful to consider the following assumption:

\begin{hyp}     \label{assumption: Eigenvalues of varphi}
    The image of the harmonic map $\varphi \col \Sp^{m-1} \rightarrow (N,h)$ is not reduced to a point, and $-(m-3)$ and $0$ are the only non-positive eigenvalues of $J^\varphi$.
\end{hyp}

Under this assumption, the roots of the operator $J^\varphi_C$ have the following properties:

\begin{prop}    \label{prop: indicial roots of model operator}
    Let $m \geq 4$ and let $\varphi \col \Sp^{m-1} \rightarrow (N,h)$ satisfying \cref{assumption: Eigenvalues of varphi}. Then the following hold:
	\begin{enumerate}[(i)]
        \item All the roots of $J^\varphi_C$ are real.
        
		\item There exists $\varepsilon > 0$ such that the only critical rates of $J^\varphi_C$ in the interval $[-(m-2)-\varepsilon,\varepsilon]$ are
		\begin{equation*}
			-(m-2), ~~ -(m-3), ~~ -1 ~~ \text{and} ~~ 0 .
		\end{equation*}
		Moreover,
        \begin{itemize}
            \item $0$ and $-(m-2)$ are simple roots,
            \item if $m = 4$, $-1 = -(m-3)$ is a double root, and 
            \item if $m \geq 5$ the roots $-(m-3)$ and $-1$ are simple.
        \end{itemize}
        
		\item The other indicial roots are simple and given by 
		\begin{equation*}
			\nu_{\lambda}^\pm = - \frac{m-2 \mp \sqrt{(m-2)^2+4\lambda}}{2}
		\end{equation*}
		where $\lambda$ ranges across the positive eigenvalues of $J_\phi$. In particular, $\nu_\lambda^+ > 0$ and $\nu^-_\lambda < -(m-2)$.
	\end{enumerate}
\end{prop}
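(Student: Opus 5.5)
The plan is to read everything off \cref{prop: Description of the roots}: $\zeta$ is a root of $J^\varphi_C$ exactly when $\zeta(\zeta+m-2)=\lambda$ for some $\lambda \in \Spec(J^\varphi)$, and its multiplicity is its multiplicity as a root of $t(t+m-2)-\lambda$. For a given eigenvalue $\lambda$, the two roots are
\begin{equation*}
\zeta^\pm_\lambda = \frac{-(m-2) \pm \sqrt{(m-2)^2+4\lambda}}{2}.
\end{equation*}
The whole proof reduces to locating the spectrum of $J^\varphi$ via \cref{assumption: Eigenvalues of varphi} and checking which $\lambda$ produce roots in the relevant window.

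For (i), I will show every eigenvalue satisfies $\lambda > -\tfrac{(m-2)^2}{4}$, so the discriminant is positive and both roots are real. By \cref{assumption: Eigenvalues of varphi}, the only non-positive eigenvalues are $-(m-3)$ and $0$, so $\lambda \geq -(m-3)$. It then suffices to note
\begin{equation*}
\frac{(m-2)^2}{4} - (m-3) = \frac{(m-4)^2}{4} \geq 0,
\end{equation*}
with equality exactly when $m=4$. So the discriminant is non-negative and vanishes only for $\lambda=-(m-3)$ when $m=4$; in that case the root $-1$ is still real, and it is a double root by \cref{prop: Description of the roots}(ii). One must also check that $-(m-3)$ really is an eigenvalue, which holds by the discussion preceding the assumption: the translations give $m$ independent elements of $\ker(J^\varphi+m-3)$ because $\varphi$ is non-constant.

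For (ii) and (iii), I treat each eigenvalue in turn.
\begin{itemize}
\item For $\lambda=0$ (the Killing fields $\diff\varphi(\xi)$ show that $0$ is an eigenvalue), the roots are $0$ and $-(m-2)$. They are distinct, hence simple.
\item For $\lambda=-(m-3)$, the polynomial $t^2+(m-2)t+(m-3)=(t+1)(t+m-3)$ gives the roots $-1$ and $-(m-3)$. These are distinct for $m\ge5$ and coincide (a double root) for $m=4$.
\item For every remaining eigenvalue we have $\lambda>0$. Then $\sqrt{(m-2)^2+4\lambda}>m-2$, which gives $\nu^+_\lambda>0$ and $\nu^-_\lambda<-(m-2)$, and the two roots are distinct, hence simple. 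This is (iii).
\end{itemize}

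The only real obstacle is producing a uniform $\varepsilon$ in (ii). Since $J^\varphi$ is elliptic and self-adjoint on a compact manifold, its spectrum is discrete, so there is a smallest positive eigenvalue $\lambda_1>0$. Because $\nu^+_\lambda$ is increasing in $\lambda$ and $\nu^-_\lambda$ is decreasing, every positive eigenvalue satisfies $\nu^+_\lambda\ge\nu^+_{\lambda_1}>0$ and $\nu^-_\lambda\le\nu^-_{\lambda_1}<-(m-2)$. Taking
\begin{equation*}
\varepsilon<\min\left\{\nu^+_{\lambda_1},\,-(m-2)-\nu^-_{\lambda_1}\right\}
\end{equation*}
excludes all such roots from $[-(m-2)-\varepsilon,\varepsilon]$. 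Finally, since all roots are real by (i), the critical rates are exactly the roots, which completes (ii).
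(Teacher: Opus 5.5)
Your proposal is correct and follows the route the paper intends. The paper gives no separate proof, treating the statement as an immediate consequence of \cref{prop: Description of the roots} together with \cref{assumption: Eigenvalues of varphi}. Your discriminant bound $(m-2)^2-4(m-3)=(m-4)^2\geq 0$, the factorisation $(t+1)(t+m-3)$ for $\lambda=-(m-3)$, and the spectral-gap choice of $\varepsilon$ supply exactly the details the paper leaves out.
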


The spaces $\mathcal{K}(J^\varphi_C)_\nu$ also admit a simple description:

\begin{lem}     \label{lem: the space of polyhomogeneous kernel elements}
    Let $m \geq 4$ and suppose that $\varphi \col \Sp^{m-1} \rightarrow (N,h)$ satisfies \cref{assumption: Eigenvalues of varphi}. Then:
    \begin{enumerate}[(i)]
        \item If $\nu \in \R$ is a simple root of $J^\varphi_C$ (e.g., $m \geq 5$ or $m=4$ and $\nu \neq -1$), 
        \begin{equation*}
            \mathcal{K}(J^\varphi_C)_\nu = \{ r^\nu u ~ | ~ u \in \ker(J^\varphi - \lambda_\nu)\}, ~~~ \lambda_\nu = \nu(\nu+ m-2).
        \end{equation*}
        \item If $m = 4$ and $\nu = -1$, then
        \begin{equation*}
            \mathcal{K}(J^\varphi_C)_{-1} = \{ r^{-1}(u+\log(r) v) ~ | ~ u,v \in \ker(J^\varphi+1) \} .
        \end{equation*}
    \end{enumerate}
\end{lem}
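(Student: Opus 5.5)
This is a direct specialisation of the earlier proposition describing $\mathcal{K}(J^\varphi_C)_\nu$, using the root structure from \cref{prop: indicial roots of model operator}. The plan has three steps.

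\textbf{Step 1: all roots are real.} Under \cref{assumption: Eigenvalues of varphi} every eigenvalue of $J^\varphi$ satisfies $\lambda \geq -(m-3)$. Since
\begin{equation*}
    (m-2)^2 - 4(m-3) = (m-4)^2 \geq 0,
\end{equation*}
we get $(m-2)^2 + 4\lambda \geq 0$ for every $\lambda \in \Spec(J^\varphi)$. Hence $t(t+m-2)-\lambda$ has only real roots, consistent with \cref{prop: indicial roots of model operator}(i). It follows that for a real critical rate $\nu$, the only roots on the line $\nu + i\R$ lie at $\nu$ itself. So no oscillatory terms $r^{\nu}\cos(\alpha\log r)$ with $\alpha>0$ occur, and in particular the space $\mathcal{K}^{\prime\prime}$ vanishes.

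\textbf{Step 2: simple roots.} Suppose $\nu$ is a simple root. By \cref{prop: Description of the roots}(ii), simplicity means $\nu \neq -\tfrac{m-2}{2}$. Part (i) of the earlier proposition then gives
\begin{equation*}
    \mathcal{K}(J^\varphi_C)_\nu = \{ r^\nu u \mid u \in \ker(J^\varphi - \lambda_\nu) \}, \qquad \lambda_\nu = \nu(\nu+m-2),
\end{equation*}
which is statement (i). The parenthetical cases follow from \cref{prop: indicial roots of model operator}(ii)--(iii). A double root requires $\lambda = -\tfrac{(m-2)^2}{4}$. For $m \geq 5$ this is strictly below $-(m-3)$, so it is not an eigenvalue. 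For $m=4$ it equals $-1 = -(m-3)$, which is attained and gives $\nu = -1$.

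\textbf{Step 3: $m=4$, $\nu = -1 = -\tfrac{m-2}{2}$.} Part (ii) of the earlier proposition splits $\mathcal{K}(J^\varphi_C)_{-1}$ as $\mathcal{K}^\prime \oplus \mathcal{K}^{\prime\prime}$. Here $\mathcal{K}^{\prime\prime}$ is built from $\ker(J^\varphi + 1 + \alpha^2)$ with $\alpha > 0$. These kernels are trivial, because $-1-\alpha^2 < -1 = -(m-3)$ lies below the spectrum. Therefore only
\begin{equation*}
    \mathcal{K}^\prime = \{ r^{-1}(u + \log(r) v) \mid u, v \in \ker(J^\varphi+1) \}
\end{equation*}
survives, which is statement (ii).

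\textbf{Expected obstacle.} There is no real difficulty here. The only care needed is the bookkeeping in Steps 2 and 3: checking that the eigenvalue lower bound $\lambda \geq -(m-3)$ excludes both complex roots and the extra double-root contributions.
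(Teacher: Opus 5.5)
Your proposal is correct and is essentially the paper's argument, which states this lemma without a separate proof as a direct specialisation of the general description of $\mathcal{K}(J^\varphi_C)_\nu$. The bound $\lambda \geq -(m-3)$ on $\Spec(J^\varphi)$ gives two facts: $-\tfrac{m-2}{2}$ is a root only when $m=4$ (where it equals $-1$), and $\mathcal{K}^{\prime\prime}(J^\varphi_C)_{-1}$ vanishes because $\ker(J^\varphi+1+\alpha^2)=0$ for $\alpha>0$. Your Step 1 is not needed for part (i), since non-real roots can only lie on the line $-\tfrac{m-2}{2}+i\R$ regardless of the spectral assumption, but it does no harm.
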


    \subsection{Weighted spaces}        \label{subsec: Weighted spaces}

In this section, we discuss a class of weighted spaces adapted to the analysis of conically singular problems. The notions of a adapted bundles and weighted Sobolev spaces are introduced in \cref{subsubsec: adapted bundles}. In \cref{subsubsec: The tangent bundle as adapted bundle}, we fix certain conventions in order to regard the tangent bundle of a compact manifold as an adapted bundle. Finally, in \cref{subsubsec: Dilation-invariant bundle maps} and \cref{subsubsec: Bundle maps with dilation-invariant ends} we introduce a class of non-linear bundle maps, called \emph{bundle maps with dilation-invariant ends}, which will play a key role in \cref{sec: Banach manifolds} and \cref{sec: The deformation theorem}. The main results here are \cref{lem: Master lemma} and \cref{lem: Parametrised master lemma} concerning the differentiability of the Banach maps induced by non-linear bundle maps with dilation-invariant ends on weighted spaces of sections.

    \subsubsection{Adapted bundles and weighted spaces}     \label{subsubsec: adapted bundles}

Throughout this section, we consider a compact manifold $M$ of dimension $m$. Let $S \subset M$ be a finite subset. We fix an adapted set of charts $\{\Upsilon_s \col \Ball_1 \to M\}_{s \in S}$ (see \cref{Def: Adapted set of charts}), and pick a continuous function  $r \col M \rightarrow [0,\infty)$, smooth on $M \setminus S$, and such that $r \circ \Upsilon_s(x) = |x|_{g_{\R^m}}$ for any $s \in S$ and $x \in \Ball_1$ and $r \geq 1$ on $M \setminus (\cup_s \Upsilon_s(\Ball_1))$.

\begin{Def}
    Let $k \in \N_0$, $p \in [1,\infty)$, $\nu \in \R$, and let $g$ be a background metric on $M$. 
    
    \begin{enumerate}
    	
		\item We denote by $C^{k}_\nu(M \setminus S)$ the vector space of functions $f \col M \setminus S \rightarrow \R$ which are of class $C^{k}_{\mathrm{loc}}$ on $M \setminus S$ and such that the quantity
        \begin{equation*}
            \|f\|_{C^{k}_\nu} \coloneqq \sup_{0 \leq \ell \leq k} \sup_{x \in M} |r(x)^{\ell-\nu} (\nabla^{g})^\ell f(x)|
        \end{equation*}
        is finite. The above expression defines a norm which endows $C^{k}_\nu(M \setminus S)$ with the structure of a Banach space. The intersection $\cap_{k \in \N_0} C^{k}_\nu(M \setminus S)$ will be denoted by $C^\infty_\nu(M \setminus S)$.
    
    	\item We denote by $W^{k,p}_\nu(M \setminus S)$ the vector space of functions $f \col M \setminus S \rightarrow \R$ which are of class $W^{k,p}_{\mathrm{loc}}$ on $M \setminus S$ and such that the quantity
    \begin{equation*}
        \|f\|_{W^{k,p}_\nu} \coloneqq \sum_{\ell = 0}^k \left( \int_M r^{-m} (r^{\ell-\nu} |(\nabla^g)^\ell f|_g)^p \vol_g \right)^{\frac{1}{p}}
    \end{equation*}
    is finite. The above expression defines a norm which endows $W^{k,p}_\nu(M \setminus S)$ with the structure of a Banach space. When $k = 0$, we shall denote $W^{0,p}_\mu(M \setminus S)$ simply by $L^p_\nu(M \setminus S)$.
	\end{enumerate}
\end{Def}

\begin{rem}
    These definitions do not depend on the choice of background Riemannian metric; in particular, the $C^{k}_\nu$ and $W^{k,p}_\nu$-norms defined for two different choices of background metric on $M$ are equivalent. For this reason, we can make a convenient choice of back Riemannian metric on $M$, for instance by fixing a metric $g_0$ such that $\Upsilon_s^* g_0 = g_{\R^m}$ is flat on $\Ball_1$ for any $s \in S$.
\end{rem}

\begin{ex}
    For any $\nu \in \R$, $r^{\nu} \in C^\infty_{\nu}(M \setminus S)$, and $r^\nu \in W^{k,p}_{\nu-\varepsilon}(M \setminus S)$ for any $k \in \N_0$, $p \geq 1$ and $\varepsilon > 0$. Moreover, multiplication by the function $r^\nu$ defines an isomorphism of Banach spaces $r^\nu \col C^{k}_{\nu^{\prime}}(M \setminus S) \to C^{k}_{\nu^{\prime}+\nu}(M \setminus S)$ and $r^\nu \col W^{k,p}_{\nu^{\prime}}(M \setminus S) \to W^{k,p}_{\nu^{\prime}+\nu}(M \setminus S)$ for any $\nu^{\prime} \in \R$.
\end{ex}

In order to define weighted spaces of sections, we need an adapted class of vector bundles. First, we begin with the notion of dilation-invariant vector bundle on $\R^m \setminus \{0\}$.

\begin{Def}
    Let $E \to \R^m \setminus \{0\}$ be a vector bundle endowed with a bundle metric $h^E$ and a compatible connection $\nabla^E$. Then $(E,\nabla^E,h^E)$ is called \emph{dilation-invariant} if there exists a vector bundle with a metric and compatible connection $(E_0,\nabla^{E_0},h^{E_0})$ on $\Sp^{m-1}$ such that $(E,\nabla^E,h^E) = \pr^*(E_0,\nabla^{E_0},h^{E_0})$.
\end{Def}

\begin{ex}
    If $\varphi \col \Sp^{m-1} \to N$ is a smooth map and $\phi = \varphi \circ \pr \col \R^m \setminus \{0\} \to N$ is a dilation-invariant map, then 
    \begin{equation*}
        (\prescript{\phi}{}{TN},\prescript{\phi}{}{\nabla},h_\phi) = \pr^*(\prescript{\varphi}{}{TN},\prescript{\varphi}{}{\nabla},h_\varphi)
    \end{equation*}
    and thus $(\prescript{\phi}{}{TN},\prescript{\phi}{}{\nabla},h_\phi)$ is a dilation-invariant vector bundle.
\end{ex}

\begin{Def}
    Let $(E,\nabla^E,h^E)$ be a vector bundle over $M \setminus S$, endowed with a metric and compatible connection. We shall say that that $E$ is \emph{adapted} if there exist $\mu > 0$, $R \in (0,1)$ and:
    \begin{itemize}
        \item a set of dilation-invariant bundles $\{(E_s,\nabla^{E_s},h^{E_s})\}$; and
        \item a set of bundle isomorphisms $\{ \Psi_s \col \left. E_s \right|_{\Ball_R} \to \left. E \right|_{\Upsilon_s(\Ball_R)} \}_{s \in S}$ covering the charts $\{\Upsilon_s\}_{s \in S}$ on the ball $\Ball_R \subset \Ball_1$,
    \end{itemize}
    such that
    \begin{equation*}
        |(\nabla^{E_s})^k (\Psi_s^* h^E - h^{E_s})| = \mathcal{O}(r^{\mu-k}), ~~ \text{and} ~~ |(\nabla^{E_s})^k(\Psi_s^*\nabla^E - \nabla^{E_s})| = \mathcal{O}(r^{-1+\mu-k})
    \end{equation*}
    as $r \to 0$, for any $s \in S$ and $k \in \N_0$.
\end{Def}

With these definitions in hand, we can now define weighted spaces of sections of an adapted vector bundle, generalising the notion of weighted space of functions:

\begin{Def}     \label{Def: weighted spaces of adapted bundles}
    Let $g$ be a background Riemannian metric on $M$, $(E,\nabla^E,h^E)$ be an adapted vector bundle over $M \subset S$, and let $k \in \N_0$, $p \in [1,\infty)$ and $\nu \in \R$.
    \begin{enumerate}
    	\item We denote by $C^{k}_\nu(E)$ the set of sections $u \col M \setminus S \to E$ which are $C^{k}_{\mathrm{loc}}$ on $M \setminus S$ and such that the quantity
    	\begin{equation*}
        	\|u\|_{C^{k}_\nu} \coloneqq \sup_{0 \leq \ell \leq k} \sup_{x \in M} |r(x)^{\ell-\nu} (\nabla^E)^\ell u(x)|
		\end{equation*}
    	is finite.\footnote{The norms $|\cdot|$ on $T^*M^{\otimes j} \otimes E$ are computed using the adapted bundle metric $h^E$ on $E$ and the metric $g$ on $M$.} The norm $\|\cdot\|_{C^{k}_\nu}$ endows $C^{k}_\nu(E)$ with the structure of a Banach space.
	
		\item We denote by $W^{k,p}_\nu(E)$ the vector space of sections $u \col M \setminus S \rightarrow E$ which are of class $W^{k,p}_{\mathrm{loc}}$ on $M \setminus S$ and such that the quantity
        \begin{equation*}
            \|u\|_{W^{k,p}_\nu} \coloneqq \sum_{\ell = 0}^k \left( \int_M r^{-m} (r^{\ell-\nu} |(\nabla^E)^\ell u|)^p \vol_g \right)^{\frac{1}{p}}
        \end{equation*}
        is finite. The above expression defines a norm which endows $W^{k,p}_\nu(E)$ with the structure of a Banach space. The Banach space $W^{0,p}_\mu(E)$ will be simply denoted by $L^p_\mu(E)$.
    \end{enumerate}
\end{Def}

\begin{rem}
    As for functions, the definition of weighted spaces of sections does not depend on the choice of background Riemannian metric on $M$, and two different choices of metrics yield equivalent norms on $C^k_\nu(E)$ and $W^{k,p}_\nu(E)$.
\end{rem}

\begin{lem}
    Let $(E,\nabla^E,h^E)$ be an adapted bundle. Then for any $k \in \N_0$, $p \in [1,\infty)$ and $\nu \in \R$, the space $C^\infty_c(E)$ of compactly supported sections (away from $S$) is dense in $W^{k,p}_\nu(E)$.
\end{lem}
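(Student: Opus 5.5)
The plan is the standard truncation-and-mollification argument, carried out in two stages: first cut off near the singular set $S$, then smooth in the interior. We may assume $k \geq 0$ and work with the $W^{k,p}_\nu$-norm in the definition. Away from $S$ the weight $r$ is bounded above and below, so there the weighted norm is equivalent to the ordinary $W^{k,p}$-norm on a compact set.

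\textbf{Step 1: cutoff near $S$.} Fix a smooth $\chi \col [0,\infty) \to [0,1]$ with $\chi \equiv 0$ on $[0,1]$ and $\chi \equiv 1$ on $[2,\infty)$, and put $\chi_\varepsilon \coloneqq \chi(r/\varepsilon)$. Near each $s \in S$, since $r \circ \Upsilon_s = |x|$, the derivatives satisfy $|(\nabla^g)^j \chi_\varepsilon| \leq C_j r^{-j}$, with support in the annulus $\{\varepsilon \leq r \leq 2\varepsilon\}$ for $j \geq 1$. The constants $C_j$ are independent of $\varepsilon$, because $\nabla^g$ differs from the flat connection by a smooth term, which is bounded in $\Ball_1$.

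For $u \in W^{k,p}_\nu(E)$, expand $(\nabla^E)^\ell(\chi_\varepsilon u) - \chi_\varepsilon (\nabla^E)^\ell u$ by the Leibniz rule. Each resulting term has the form $(\nabla^g)^j\chi_\varepsilon \otimes (\nabla^E)^{\ell-j}u$ with $j \geq 1$, and is pointwise bounded by $C r^{-j}|(\nabla^E)^{\ell-j} u|$ on $\{r \leq 2\varepsilon\}$. Multiplying by $r^{\ell-\nu}$ gives $C r^{(\ell-j)-\nu}|(\nabla^E)^{\ell-j}u|$. Hence
\begin{equation*}
    \|u - \chi_\varepsilon u\|_{W^{k,p}_\nu} \leq C \sum_{\ell=0}^k \Big( \int_{\{r \leq 2\varepsilon\}} r^{-m} \big(r^{\ell-\nu}|(\nabla^E)^\ell u|\big)^p \vol_g \Big)^{1/p}.
\end{equation*}
The term $(1-\chi_\varepsilon)u$ has the same bound. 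The right-hand side tends to $0$ as $\varepsilon \to 0$ by dominated convergence, since the full integrals are finite by assumption. This is the key point: the weights are designed precisely so that $r^{-j}$ loss from differentiating the cutoff is absorbed by lowering the derivative order, so no logarithmic cutoff or Hardy inequality is needed.

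\textbf{Step 2: interior smoothing.} Now $\chi_\varepsilon u$ is supported in the compact set $K_\varepsilon = \{r \geq \varepsilon\} \subset M \setminus S$, and on a neighbourhood of $K_\varepsilon$ it lies in $W^{k,p}$. There $E$ is an honest smooth bundle with smooth metric and connection, and $r$ is bounded between positive constants. Choose a finite cover of $\{r \geq \varepsilon/2\}$ by trivialising charts and a subordinate partition of unity. Mollifying each localised piece in coordinates gives smooth sections supported in $\{r > \varepsilon/2\}$ which converge in $W^{k,p}$ as the mollification parameter tends to $0$. This is the classical density of $C^\infty$ in $W^{k,p}$ on compact sets, which applies here because $p < \infty$. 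Since on $\{r \geq \varepsilon/2\}$ the weighted norm is equivalent to the unweighted one, with constants depending only on $\varepsilon$, this convergence also holds in $W^{k,p}_\nu(E)$.

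\textbf{Conclusion and obstacle.} A diagonal choice, first $\varepsilon$ and then the mollification parameter, yields a sequence in $C^\infty_c(E)$ converging to $u$. The only step needing care is the uniform derivative bound on $\chi_\varepsilon$ in Step 1, checked in the adapted charts. The adapted structure of $E$ (the asymptotics of $h^E$ and $\nabla^E$) is not needed for this argument, since only local regularity away from $S$ and the $r$-weights enter.
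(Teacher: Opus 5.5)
Your proposal is correct and follows the paper's proof. Both cut off with $\chi(r/\varepsilon)$ and absorb the $r^{-j}$ loss from differentiating the cutoff into the weight of the lower-order term. Both then conclude by dominated convergence and mollify on the compact region away from $S$. Your bound $|(\nabla^g)^j\chi_\varepsilon|\le C_j r^{-j}$ is slightly more careful than the paper's displayed formula, which keeps only the $\diff r^{\otimes j}$ terms and drops the higher derivatives of $r$.
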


\begin{proof}
    Let $\chi \col [0,+\infty) \to \R$ be a smooth cutoff function such that $\chi \equiv 1$ on $[0,\frac{1}{2}]$ and $\chi \equiv 0$ on $[1,+\infty)$. Then for any $u \in W^{k,p}_\nu(E)$ and $0 \leq \ell \leq k$,
    \begin{equation*}
        r^{\ell-\nu}(\nabla^E)^\ell(\chi(\tfrac{r}{\varepsilon})u) = \sum_{j=0}^\ell (\tfrac{r}{\varepsilon})^j \chi^{(j)}(\tfrac{r}{\varepsilon}) r^{\ell-j-\nu} \diff r^{\otimes j} \otimes (\nabla^E)^{\ell-j} u
    \end{equation*}
    and since $|(\frac{r}{\varepsilon})^j \chi^{(j)}(\frac{r}{\varepsilon})| \leq \sup \chi^{(j)} < + \infty$ we deduce that there exists a constant $C > 0$ independent of $\varepsilon > 0$ such that
    \begin{equation*}
        \|\chi(\tfrac{r}{\varepsilon}) u\|_{W^{k,p}_\nu} \leq C \sum_{\ell = 0}^k \left( \int_{r \leq \varepsilon} r^{-m} (r^{\ell-\nu} |(\nabla^E)^\ell u|)^p \vol_g \right)^{\frac{1}{p}} \longrightarrow 0, ~~ \text{as} ~ \varepsilon \to 0 .
    \end{equation*}
    Thus $(1-\chi(\frac{r}{\varepsilon})) u \to u$ in $W^{k,p}_\nu(E)$ as $\varepsilon \to 0$. Using smoothings this proves the result.
\end{proof}

We list without proof a number of obvious properties of weighted spaces in relation to natural bundle operations:

\begin{lem}     \label{lem: Natural properties of weighted spaces}
    Let $(E,\nabla^E,h^E)$, $(F,\nabla^F,h^F)$ be adapted vector bundles.
    \begin{enumerate}[(i)]
        \item The vector bundles $E^*$, $E^{\otimes \ell}$, $\Lambda^\ell E$, $E \otimes F$, $\Hom(E,F)$ are adapted, endowed with the connections and compatible metrics induced by $(\nabla^E,h^E)$ and $(\nabla^F,h^F)$.

        \item For any $k \geq \ell \in \N_0$, $p \in [1,\infty)$ and $\nu_1 \geq \nu_2$, there are continuous embeddings $C^{k}_{\nu_1}(E) \hookrightarrow C^{\ell}_{\nu_2}(E)$ and $W^{k,p}_{\nu_1}(E) \hookrightarrow W^{\ell,p}_{\nu_2}(E)$.
        
        \item For any $k \in \N_0$, $p \in [1,\infty)$ and $\nu_1 > \nu_2$, there is a continuous embedding $C^k_{\nu_1}(E) \hookrightarrow W^{k,p}_{\nu_2}(E)$.

        \item For any $k \in \N_0$, $p \in [1,\infty)$ and $\nu_1, \nu_2 \in \R$, the tensor product of sections induces bounded bilinear maps $C^{k}_{\nu_1}(E) \times C^{k}_{\nu_2}(F) \to C^{k}_{\nu_1+\nu_2}(E \otimes F)$ and $C^{k}_{\nu_1}(E) \times W^{k,p}_{\nu_2}(F) \to C^{k}_{\nu_1+\nu_2}(E \otimes F)$

        \item For any $k \in \N_0$, $p \in [1,\infty)$ and $\nu_1, \nu_2 \in \R$, the evaluation map induces bounded bilinear maps $C^{k}_{\nu_1}(\Hom(E,F)) \times C^{k}_{\nu_2}(E) \to C^{k}_{\nu_1+\nu_2}(F)$ and $C^{k}_{\nu_1}(\Hom(E,F)) \times W^{k,p}_{\nu_2}(E) \to W^{k,p}_{\nu_1+\nu_2}(F)$
    \end{enumerate}
\end{lem}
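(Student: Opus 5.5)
The plan is to reduce every item to local statements near each $s \in S$, where everything becomes a computation on $\Ball_R \setminus \{0\}$ against dilation-invariant model data. On the complement of $\cup_s \Upsilon_s(\Ball_{R/2})$ the weight $r$ is bounded above and below and all bundles are smooth over a compact set. There each weighted norm is equivalent to the corresponding unweighted $C^k$ or $W^{k,p}$ norm, and every item reduces to the standard facts on compact manifolds.

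For (i), we take as model bundles the same algebraic operations applied to the models, e.g. $E_s^* $, $E_s \otimes F_s$, $\Hom(E_s,F_s)$ on $\Sp^{m-1}$ pulled back by $\pr$. Pullback commutes with these operations, so the results are dilation-invariant. The bundle isomorphisms are the induced ones, $(\Psi_s^{-1})^*$, $\Psi_s \otimes \Psi^F_s$, etc. The induced metric and connection differences are universal polynomial (tensorial) expressions in $\Psi_s^*h^E - h^{E_s}$, $\Psi_s^*\nabla^E - \nabla^{E_s}$, their $F$-counterparts, and the model metrics and their inverses. The model metrics and their inverses have $|(\nabla^{E_s})^k \cdot| = \mathcal{O}(r^{-k})$ by dilation invariance; in fact they are parallel. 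The inverse of $\Psi_s^*h^E$ is controlled for small $r$ because its deviation from the model is $\mathcal{O}(r^\mu)$. Applying the Leibniz rule, each term of the $k$-th derivative of a product of factors of orders $r^{a_i - k_i}$ has order $r^{\sum a_i - k}$. This yields decay $\mathcal{O}(r^{\mu-k})$ for metrics and $\mathcal{O}(r^{-1+\mu-k})$ for connections, after taking the minimum of the two rates $\mu$ for $E$ and $F$.

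For (ii), since $r \le C$ on $M$ we have $r^{\ell-\nu_2} \le C' r^{\ell-\nu_1}$ whenever $\nu_1 \ge \nu_2$. Dropping the top derivatives immediately gives both embeddings. For (iii), we bound $|r^{\ell-\nu_2}(\nabla^E)^\ell u| \le \|u\|_{C^k_{\nu_1}} r^{\nu_1-\nu_2}$. It then suffices that $\int_M r^{-m} r^{p(\nu_1-\nu_2)}\vol_g < \infty$. Near $s$, with $\vol_g$ comparable to $r^{m-1}\diff r\,\vol_{\Sp^{m-1}}$, this integral reduces to $\int_0 r^{p(\nu_1-\nu_2)-1}\diff r$, which is finite exactly because $\nu_1 > \nu_2$. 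This is the only place where strict inequality is needed.

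For (iv) and (v), we use the Leibniz rule
\[
    (\nabla)^\ell(u\otimes v) = \sum_{j} \tbinom{\ell}{j} \nabla^j u \otimes \nabla^{\ell-j}v ,
\]
and similarly for evaluation, where the connection on $\Hom(E,F)$ is the induced one, so that $\nabla(Av) = (\nabla A)v + A\nabla v$. Multiplying by $r^{\ell-\nu_1-\nu_2} = r^{j-\nu_1}r^{\ell-j-\nu_2}$ and using pointwise sup bounds on the $C^k$ factor gives the $C^k$ estimates directly. In the mixed case, the $C^k_{\nu_1}$ factor contributes a pointwise bound $\|A\|_{C^k_{\nu_1}}$ and the weighted $L^p$ norm falls on the Sobolev factor, giving the $W^{k,p}_{\nu_1+\nu_2}$ bound. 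The mixed case of (iv) should land in $W^{k,p}_{\nu_1+\nu_2}$ rather than $C^k$, presumably a typo in the statement, and I would prove it in that form. Pointwise norms of tensor products are submultiplicative, and the background metric $g$ versus the model metric only changes constants.

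The only mildly delicate step is (i), namely checking that the inverse metric and the induced connection difference on $E^*$ and $\Hom$ retain the decay. That follows from a Neumann series for $(\Psi_s^*h^E)^{-1}$ once $r$ is small enough that $|\Psi_s^*h^E - h^{E_s}| < 1/2$. If necessary, we first shrink $R$ to achieve this.
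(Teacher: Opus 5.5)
Your proposal is correct. The paper states this lemma without proof, calling these ``obvious'' properties, and your argument is exactly the routine verification it leaves out: localisation near $S$, the Leibniz rule for (iv)--(v), the radial integral $\int_0 r^{p(\nu_1-\nu_2)-1}\,\diff r$ for (iii), and Neumann-series control of $(\Psi_s^*h^E)^{-1}$ for (i).

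You are also right that the second map in (iv) must take values in $W^{k,p}_{\nu_1+\nu_2}(E\otimes F)$ rather than $C^k_{\nu_1+\nu_2}$. That is the form in which the paper actually uses it, for example the multiplication $C^0_0\times L^p_\mu\to L^p_\mu$ in the proof of \cref{lem: Parametrised master lemma}.
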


To close this section, we finally state the relevant results concerning weighted Sobolev embeddings and weighted Sobolev multiplication:

\begin{prop}        \label{prop: Sobolev embedding and multiplication}
	Let $(E,\nabla^E,h^E)$, $(F,\nabla^F,h^F)$ be adapted bundles and let $k,\ell \in \N_0$, $p \in [1,\infty)$ and $\nu, \nu^\prime \in \R$.
    \begin{enumerate}[(i)]
        \item If the inequalities
        \begin{equation*} 
		      \frac{1}{p} < \frac{k-\ell}{m} ~~ \text{and} ~~ \nu \geq \nu^\prime,
        \end{equation*}
        then there is a continuous embedding $W^{k,p}_\nu(E) \hookrightarrow C^\ell_{\nu^\prime}(E)$.

        \item If $k \geq 1$ and the inequality $p > \frac{2m}{k}$ holds then the tensor product induces continuous bilinear maps $W^{k,p}_\nu(E) \times W^{k,p}_{\nu^\prime}(F) \to W^{k,p}_{\nu+\nu^\prime}(E \otimes F)$ and $W^{k,p}_\nu(\Hom(E,F)) \times W^{k,p}_{\nu^\prime}(E) \to W^{k,p}_{\nu+\nu^\prime}(F)$.
    \end{enumerate}
\end{prop}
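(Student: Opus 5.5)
The plan is to reduce both statements to the classical (unweighted) Sobolev embedding and multiplication theorems by a dyadic decomposition near each singular point and rescaling. Away from $\bigcup_s \Upsilon_s(\Ball_{1/2})$ the weight $r$ is bounded above and below, and $E$ is a smooth bundle over a compact manifold with boundary, so both claims are the usual Sobolev embedding $W^{k,p}\hookrightarrow C^\ell$ (valid since $k-\ell>m/p$) and the usual fact that $W^{k,p}$ is a Banach algebra when $kp>m$. Note that $p>2m/k$ implies $kp>m$. Since the weighted norms do not depend on the background metric, we may take $g$ flat on each $\Ball_1$. Up to the bundle isomorphisms $\Psi_s$, whose errors decay like $r^\mu$, we may also replace $(E,\nabla^E,h^E)$ on $\Ball_R$ by the dilation-invariant model $\pr^*E_s$. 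The difference in connections is $\mathcal{O}(r^{-1+\mu})$ with all scaled derivatives controlled, which is of lower order after rescaling. This gives norms uniformly equivalent on each annulus.

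Near $s$, cover $\Ball_R\setminus\{0\}$ by annuli $A_j=\{2^{-j-1}R<r<2^{-j}R\}$ and slightly larger annuli $\tilde A_j$. Let $\delta_j(x)=2^{-j}x$ and pull back by $\delta_j$ to the fixed annulus $A=\{R/2<r<R\}$ (respectively $\tilde A$). Because the model bundle and connection are dilation-invariant and $r\simeq 2^{-j}$ on $A_j$, a direct scaling computation gives
\begin{equation*}
\|u\|_{W^{k,p}_\nu(A_j)}\simeq 2^{j\nu}\,\|\delta_j^*u\|_{W^{k,p}(A)},\qquad \|u\|_{C^\ell_{\nu'}(A_j)}\simeq 2^{j\nu'}\|\delta_j^*u\|_{C^\ell(A)},
\end{equation*}
with constants independent of $j$. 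The factor $r^{-m}$ in the measure cancels the Jacobian $2^{-jm}$, and the $r^\ell$ cancels the scaling of $\ell$ derivatives. This is the key step. The one point needing care is that the perturbation terms from the adapted structure become uniformly small after rescaling, and I expect this to be routine.

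For (i), apply the fixed-domain embedding on $\tilde A$ to get $2^{j\nu'}\|\delta_j^*u\|_{C^\ell(A)}\le C2^{j\nu'}\|\delta_j^*u\|_{W^{k,p}(\tilde A)}\le C2^{j(\nu'-\nu)}\|u\|_{W^{k,p}_\nu(\tilde A_j)}$. Since $\nu\ge\nu'$ and $r\le1$, the factor $2^{j(\nu'-\nu)}\le1$, and the supremum over $j$ is bounded by $C\|u\|_{W^{k,p}_\nu}$.

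For (ii), on each rescaled annulus use $\|fg\|_{W^{k,p}(\tilde A)}\le C\|f\|_{W^{k,p}(\tilde A)}\|g\|_{W^{k,p}(\tilde A)}$. Via scaling this yields $\|u\otimes v\|_{W^{k,p}_{\nu+\nu'}(A_j)}\le C\|u\|_{W^{k,p}_\nu(\tilde A_j)}\|v\|_{W^{k,p}_{\nu'}(\tilde A_j)}$. Summing $p$-th powers over $j$ and using $\sum a_jb_j\le(\sum a_j)(\sup b_j)$ with $\sup_j b_j\le\|v\|^p$ gives the bound, because the $\tilde A_j$ have bounded overlap. The $\Hom$ version is identical, composing with the bounded evaluation map. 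Alternatively, one can route (ii) through (i) with $\ell$ up to roughly $k/2$ and the $C\times W$ multiplication of \cref{lem: Natural properties of weighted spaces}. This is where $p>2m/k$ enters: in the Leibniz expansion, each term has one factor carrying at most $k/2$ derivatives, and that factor is in $C^0$ by (i).
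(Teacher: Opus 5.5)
Your proposal is correct. For (i) it is the same scaling argument the paper invokes. The paper only sketches it, citing Bartnik and pointing to the dilation-invariance of the bundle and of the measure $r^{-m}\,dx$. Your dyadic bookkeeping fills in that sketch, including the observation that the adapted-structure errors rescale to $\mathcal{O}(2^{-j\mu})$ on the model annulus.

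For (ii) your main route genuinely differs. The paper derives multiplication from (i) alone: it uses the embedding $W^{k,p}_\nu\hookrightarrow C^{\lfloor k/2\rfloor}_\nu$, which is where $p>2m/k$ is needed. It then takes a Leibniz expansion in which every term has one factor with at most $k/2$ derivatives bounded in $C^0$ and the other in $L^p$, and concludes by density of $C^\infty_c$. This is exactly your ``alternative''. Your primary route instead applies the classical unweighted Banach-algebra property of $W^{k,p}$ on a fixed annulus after rescaling. You then sum over the dyadic pieces using bounded overlap and $\sum_j a_jb_j\le(\sum_j a_j)\sup_j b_j$.

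This costs you an appeal to the sharp unweighted multiplication theorem, whose proof is more delicate than a $k/2$-splitting. In return it only needs $kp>m$, so it gives directly the optimal range $p>m/k$, which the paper's subsequent remark says could be reached ``by induction''. The paper's route is shorter and self-contained given (i), at the price of the non-optimal hypothesis $p>2m/k$.
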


\begin{proof}
    The weighted Sobolev embedding is a classical generalisation of the compact (unweighted) case, cf. \cite[Th. 1.2]{Bartnik1986mass_of_ALF} for instance. It can be proved by a scaling argument, using the dilation-invariance of the bundle $E$ near $S$ and the fact that the measure $r^{-m} \diff x^m$ is also dilation-invariant on $\R^m \setminus \{0\}$.
    
    Weighted Sobolev multiplication is a direct consequence of this embedding: indeed, under the assumptions that $k \geq 1$ and $p > \frac{2m}{k}$, there is a continuous embedding $W^{k,p}_\nu(E) \hookrightarrow C^{\lfloor\frac{k}{2}\rfloor}_\nu(E)$ and similarly for $F$. Thus if $u \in C^\infty_c(E), v \in C^\infty(F)$ and $0 \leq \ell \leq k$ we have
    \begin{multline*}
        r^{\ell-\nu-\nu^\prime} \nabla^\ell(u \otimes v) = \sum_{j \leq \frac{\ell}{2}} (r^{j-\nu} (\nabla^E)^j u) \otimes (r^{\ell-j-\nu^\prime} (\nabla^F)^{\ell-j} u) \\ + \sum_{j > \frac{\ell}{2}} (r^{j-\nu} (\nabla^E)^j u) \otimes (r^{\ell-j-\nu^\prime} (\nabla^F)^{\ell-j} u)
    \end{multline*}
    and using the Sobolev embedding $W^{k,p}_\nu(E) \hookrightarrow C^{\lfloor\frac{k}{2}\rfloor}_\nu(E)$ to bound the terms on the first line and the embedding $W^{k,p}_{\nu^\prime}(F) \hookrightarrow C^{\lfloor\frac{k}{2}\rfloor}_{\nu^\prime}(F)$ to bound the terms on the second line, we deduce that $\|u \otimes v \|_{W^{k,p}_{\nu+\nu^\prime}} \leq C \|u\|_{W^{k,p}_\nu} \|v\|_{W^{k,p}_{\nu^\prime}}$. The result follows by density of $C^\infty_c(E) \subset W^{k,p}_\nu(E)$ and $C^\infty_c(F) \subset W^{k,p}_{\nu^\prime}(F)$.
\end{proof}

\begin{rem}
    The assumption $p > \frac{2m}{k}$ for (ii) is not optimal: by induction, one could relax it to $p > \frac{m}{k}$, like for classical Sobolev multiplication on compact manifolds. Nevertheless, this version will be sufficient for our purpose, and has the advantage of being quick to prove. 
\end{rem}

    \subsubsection{The tangent bundle as an adapted bundle}        \label{subsubsec: The tangent bundle as adapted bundle}

An easy observation which will often be useful is the fact that if $(E,\nabla^E,h^E)$ is a vector bundle defined over the entire manifold $M$, it can naturally be seen as an adapted vector bundle over $M \setminus S$. Indeed, using the adapted set of charts $\{\Upsilon_s \col \Ball_1 \to M\}_{s \in S}$, we can use parallel transport in the radial direction in order to define an isomorphism of vector bundles
\begin{equation*}
    \Psi_s \col (\Ball_1 \setminus \{0\}) \times E_s \to E_{|\Upsilon(\Ball_1) \setminus \{s\}}
\end{equation*}
where $E_s$ is the fibre of $E$ over $s$ and the vector bundle $(\Ball_1 \setminus \{0\}) \times E_s$ is equipped with the trivial connection and the metric $h^{E_s}$. It is not difficult to see that $|(\nabla^{E_S})^k(\Psi_s^* h^E - h^{E_s})| = \mathcal{O}(r^{1-k})$ and $|(\nabla^{E_S})^k(\Psi_s^* \nabla^E - \nabla^{E_s})| = \mathcal{O}(r^{-k})$, e.g. $E_{|M\setminus S}$ is adapted with rate $\mu = 1$. Moreover, in terms of weighted spaces, if $u \in C^k(E)$ then $\nabla^\ell u \in C^{k-\ell}_0(T^*M^{\otimes \ell} \otimes E)$ for any $0 \leq \ell \leq k$.

This observation notably applies to the tangent bundle $TM$, the cotangent bundle $T^*M$ and tensor products thereof, equipped with any Riemannian metric $g$ and the associated Levi-Civita connection $\nabla^g$. This immediately implies the following result, which will help reducing the proof of many regularity results to an easy induction:

\begin{lem}     \label{lem: Covariant derivatives in weighted spaces}
    Let $E \to M \setminus S$ be an adapted bundle and $u \in W^{k,p}_{\mathrm{loc}}(E)$. Then for any $\nu \in \R$, $u \in W^{k,p}_\nu(E)$ if and only if for all $\ell = 0,1,\ldots,k$, $\nabla^\ell u \in L^p_{\nu-\ell}(T^*M^{\otimes \ell} \otimes E)$, where we regard $T^*M$ as an adapted bundle over $M \setminus S$.
\end{lem}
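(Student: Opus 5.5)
The plan is to prove both implications by induction on $k$, comparing the derivatives $(\nabla^E)^\ell u$ (the derivatives appearing in the definition of $W^{k,p}_\nu(E)$) with the iterated derivatives $\nabla^\ell u$. Here $\nabla^\ell u$ is computed with the connection induced by $\nabla^g$ on $T^*M^{\otimes \ell}$ and $\nabla^E$ on $E$, with $T^*M$ regarded as an adapted bundle over $M\setminus S$ as in the discussion above. We are free to choose the background metric $g$ defining the norms, so the key observation is that both quantities are built from the same connection $\nabla^E$ together with a connection on $T^*M$.

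The first step is to fix a background metric $g_0$ with $\Upsilon_s^* g_0 = g_{\R^m}$ on $\Ball_1$. Near each $s$, the radially parallel trivialisation of $T^*M$ then agrees with the Euclidean one, and $\nabla^{g_0}$ is the flat connection. For this choice, the derivatives entering $\|u\|_{W^{k,p}_\nu}$ are exactly $\nabla^\ell u$ with $T^*M$ carrying the connection $\nabla^{g_0}$. The equivalence is then tautological: $\|u\|_{W^{k,p}_\nu} = \sum_{\ell\le k}\|\nabla^\ell u\|_{L^p_{\nu-\ell}}$, because $r^{\ell-\nu}|\nabla^\ell u| = r^{-(\nu-\ell)}|\nabla^\ell u|$, which is precisely the $L^p_{\nu-\ell}$ weight.

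The second step handles a general metric $g$ on $T^*M$, which is needed since $\nabla^\ell u$ in the statement may be taken with respect to an arbitrary smooth metric (and its adapted structure). Write $\nabla^g = \nabla^{g_0} + A$, where $A$ is a smooth tensor on all of $M$. Thus $A \in C^\infty_0$ in the weighted sense, and in particular $A\in C^\infty_{-1}$ since $r$ is bounded. An induction on $\ell$ gives
\begin{equation*}
    \nabla_g^\ell u = \nabla_{g_0}^\ell u + \sum_{j<\ell} B_{\ell,j} \cdot \nabla_{g_0}^j u,
\end{equation*}
where $B_{\ell,j}$ is a polynomial in $A$ and its $g_0$-derivatives; each such derivative is smooth and bounded, so $B_{\ell,j}\in C^\infty_0 \subset C^\infty_{j-\ell}$. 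By \cref{lem: Natural properties of weighted spaces}(v), each term $B_{\ell,j}\cdot\nabla^j_{g_0}u$ lies in $L^p_{\nu-j+(j-\ell)} = L^p_{\nu-\ell}$ whenever $\nabla^j_{g_0}u \in L^p_{\nu-j}$. The same formula with $g$ and $g_0$ interchanged gives the reverse inclusion, so the conditions for $g$ and $g_0$ are equivalent. Combined with the independence of the weighted norms from the background metric (remarked after \cref{Def: weighted spaces of adapted bundles}), this proves the lemma.

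The main obstacle is bookkeeping rather than analysis. One has to make sure that the adapted structure on $T^*M$ described in \cref{subsubsec: The tangent bundle as adapted bundle} does not introduce weights that are worse than $r^0$, for instance from the connection difference being $\mathcal{O}(r^{-k})$. These terms only affect derivatives of the bundle trivialisation and never multiply $u$ with a negative power of $r$. The intended check is that $\Psi_s^*\nabla - \nabla^{E_s}$ is a bounded smooth tensor in Cartesian coordinates, so that it is $\mathcal{O}(1) \subset \mathcal{O}(r^{-1})$, and each derivative loses at most one power of $r$, which the weight $r^{\ell}$ absorbs.
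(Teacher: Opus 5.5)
Your proposal is correct and matches the paper, which treats the lemma as immediate once $T^*M$ with $(\nabla^g,g)$ is known to be adapted: the $\ell$-th summand of $\|u\|_{W^{k,p}_\nu}$ is literally $\|\nabla^\ell u\|_{L^p_{\nu-\ell}}$, which is exactly your first step. Your second step ($\nabla^g=\nabla^{g_0}+A$) only verifies the paper's remark that different background metrics give equivalent norms. The worry in your last paragraph is unnecessary, since $L^p_{\nu-\ell}$-norms involve only the bundle metric and not the connection of the adapted structure on $T^*M$.
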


Let us also mention a point of subtlety, in order to avoid any later confusions. Given any Riemannian metric $g$ on $M$, the local model near the singular set $S$ for the bundle $(T^*M,\nabla^g,g)$, seen as an adapted bundle, is merely the bundle $(T^*(\R^m \setminus \{0\}),\nabla^{g_{\R^m}},g_{\R^m})$ equipped with the flat connection induced by the standard metric $g_{\R^m}$ on $\R^m$. On the other hand, there is an natural inclusion of the bundle $\pr^* (T^* \Sp^{m-1}) \hookrightarrow T^*(\R^m \setminus \{0\})$, and $\pr^*(T^* \Sp^{m-1})$ has a natural structure of dilation-invariant bundle by pulling back the round metric $g_{\Sp^{m-1}}$ and the associated Levi-Civita connection $\nabla^{g_{\Sp^{m-1}}}$. It is important to note that the embedding $\pr^* (T^* \Sp^{m-1}) \hookrightarrow T^*(\R^m \setminus \{0\})$ is \emph{not} compatible with the two dilation-invariant structures: indeed, if $\eta \in \Omega^1(\Sp^{m-1})$, then the norm $|\pr^* \eta|_{g_{\R^m}} = r^{-1} |\eta|_{g_{\Sp^{m-1}}}$, since in radial coordinates $g_{\R^m} = \diff r^2 + r^2 \pr^*g_{\Sp^{m-1}}$. Rather, if $\eta \in \Omega^1(\Sp^{m-1})$ then the rescaled section $r\pr^* \eta$ has constant norm, and indeed is parallel in the radial directions, and thus should be considered as a dilation-invariant section with respect to the dilation-invariant structure of $(T^*(\R^m \setminus \{0\}),\nabla^{g_{\R^m}},g_{\R^m})$ induced by radial parallel transport.

\begin{rem}     \label{rem: Factors of r}
    The above discussion implies that, if $(E,\nabla^E) = \pr^* (E_0,\nabla^{E_0})$ is a dilation-invariant bundle over $\R^m \setminus \{0\}$, then
    \begin{enumerate}
        \item for any dilation-invariant section $u = \pr^* u_0$ of $E$, $r \nabla^E u$ is a dilation-invariant section of $T^*\R^m \otimes E$; and similarly,
        \item for any connection $\tilde{\nabla}^{E_0}$ on $E_0$, the pull-back connection $\widetilde{\nabla}^{E} = \pr^* \widetilde{\nabla}^{E_0}$ has the property that $r (\nabla^{E}-\widetilde{\nabla}^{E}) \in \Gamma(T^*\R^m \otimes E)$ is a dilation-invariant section of $T^* \R^m \otimes E$.
    \end{enumerate}
    This is because $\nabla^E u = \pr^*(\nabla^{E_0} u_0)$ and $\nabla^{E}-\widetilde{\nabla}^{E} = \pr^*(\nabla^{E_0}-\widetilde{\nabla}^{E_0})$ for the bundle-valued $1$-forms $\nabla^{E_0} u_0, \nabla^{E_0}-\widetilde{\nabla}^{E_0} \in \Gamma(T^* \Sp^{m-1},E_0)$. This observation will be useful to keep in mind as a sanity check for various identities in the next sections.
\end{rem}

    \subsubsection{Dilation-invariant bundle maps}      \label{subsubsec: Dilation-invariant bundle maps}

Throughout this paper, it will be convenient to have a systematic argument at our disposal in order to prove that certain classes of vector bundles are adapted, and that certain sections thereof satisfy appropriate decay conditions. We develop such an argument in the next two sections, through the notions of \emph{dilation-invariant bundle maps} and \emph{bundle maps with dilation-invariant ends}. Let us first recall a few facts about the geometry of nonlinear bundle maps.

From now on, if $B$ is a base manifold (not necessarily compact), $E,F \to B$ are smooth vector bundles and $U \subseteq E$ is an open sub-bundle (which we will always assume to be convex and to contain the zero section $0_E$), we shall refer to any smooth base-point preserving map 
\begin{equation*}
    \bmap{f} \col (U \subseteq E) \to F
\end{equation*} 
as a \emph{(smooth) bundle map}. We emphasize here that we do not require $\bmap{f}$ to be linear along the fibres of $E$. We will also adopt the following notation: if $\bmap{A} \col (U \subseteq E) \to \Hom(E,F)$ is a smooth bundle map and $u \in \Gamma(U \subseteq E)$,  $v \in \Gamma(E)$ are sections such that $u$ takes values in $U$, we will write $\bmap{A}(u)(v) = \bmap{A}(u) \{ v\}$ to make explicit the fact that $\bmap{A}$ depends linearly on $v$ but may not be linear with respect to $u$. In particular, $\bmap{A}(u)\{v_1+ \lambda v_2\} = \bmap{A}(u)\{v_1\} + \lambda \bmap{A}(u) \{v_2\}$ for any $u \in \Gamma(U)$, $v_1,v_2 \in \Gamma(E)$ and $\lambda \in C^\infty(B)$. 

For a (possibly non-linear) bundle map $\bmap{f} \col (U \subseteq E) \to F$, there is a well-defined notion of \emph{vertical derivative}, which is the smooth bundle map $D^{\mathrm{v}} \bmap{f} \col (U \subseteq E) \to \Hom(E,F)$ defined as
\begin{equation}
    D^{\mathrm{v}} \bmap{f}(u)\{v\} = \left. \frac{\diff}{\diff t} \right|_{t=0} \bmap{f}(u+tv), ~~~~ \forall u \in U_b, v \in E_b, \forall b \in B.
\end{equation}
Let us point out here that this definition is intrinsic, in the sense that it does not depend on the choice of a connection on $E$. If in addition the bundles $E, F$ are endowed with connections $\nabla^E, \nabla^F$, then we may also define a notion of \emph{horizontal derivative}, $D^{\mathrm{h}} \bmap{f} \col (U \subseteq E) \to T^*B \otimes F$ which can be characterised as follows:

\begin{lem}     \label{lem: First derivative of bundle map}
    Suppose $(E,\nabla^E), (F,\nabla^F)$ are vector bundles with connections defined over a base manifold $B$ and let $\bmap{f} \col (U \subseteq E) \rightarrow F$ be a (not necessarily linear) bundle map. Then there exist a smooth bundle map
    \begin{equation*}
        D^{\mathrm{h}} \bmap{f} \col (U \subseteq E) \to T^*B \otimes F ,
    \end{equation*}
    called the horizontal derivative of $\bmap{f}$, such that
    \begin{equation*}
        \nabla^{F}(\bmap{f}(u)) = D^{\mathrm{h}} \bmap{f}(u) + D^{\mathrm{v}} \bmap{f}(u)\{\nabla^{E} u\}, ~~~~ \forall u \in \Gamma(U \subseteq E) .
    \end{equation*}
    Moreover, if $\bmap{f}$ vanishes along the zero section $0_E \in \Gamma(E)$ then so does $D^{\mathrm{h}} \bmap{f}_0$.
\end{lem}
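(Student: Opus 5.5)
We prove the lemma by working in local coordinates and local frames, where $D^{\mathrm{h}}\bmap{f}$ is simply the difference between $\nabla^F(\bmap{f}(u))$ and the vertical term. Then we check that this difference depends only on the pointwise value $u(b)$ and not on its derivatives; that is, it is a genuine bundle map.

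\emph{Step 1: define $D^{\mathrm{h}}\bmap{f}$ pointwise.} Fix $b \in B$, $e \in U_b$ and $X \in T_bB$. Choose any local section $\tilde{u}$ of $E$ with $\tilde{u}(b) = e$ and $(\nabla^E \tilde{u})(b) = 0$. Such a section exists, for instance by parallel transport of $e$ along radial geodesics of an auxiliary metric, or simply by taking $\tilde u = \sum_a e^a(x) \sigma_a$ in a local frame $\sigma_a$ with $e^a$ chosen so that the first-order terms cancel the connection coefficients at $b$. Set
\begin{equation*}
    D^{\mathrm{h}}\bmap{f}(e)(X) \coloneqq \nabla^F_X(\bmap{f}(\tilde{u}))(b).
\end{equation*}

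\emph{Step 2: independence of the choice.} For an arbitrary local section $u$ of $U$, the chain rule in a local trivialisation $E|_V \cong V \times \R^r$, $F|_V \cong V \times \R^q$ gives the following. Write $\bmap{f}(x,\xi)$, the connection $1$-forms as $A^E$ and $A^F$, and $\nabla^E u = \diff u + A^E u$. Then
\begin{equation*}
    \nabla^F_X(\bmap{f}(u)) = \partial_X \bmap{f}(x,u) + \partial_\xi \bmap{f}(x,u)\{\diff u(X)\} + A^F(X)\bmap{f}(x,u).
\end{equation*}
Substituting $\diff u(X) = \nabla^E_X u - A^E(X)u$ yields
\begin{equation*}
    \nabla^F_X(\bmap{f}(u)) = \Big[\partial_X \bmap{f}(x,u) - \partial_\xi\bmap{f}(x,u)\{A^E(X)u\} + A^F(X)\bmap{f}(x,u)\Big] + D^{\mathrm{v}}\bmap{f}(u)\{\nabla^E_X u\},
\end{equation*}
since $\partial_\xi \bmap{f} = D^{\mathrm{v}}\bmap{f}$ in the trivialisation.

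\emph{Step 3: identify the bracket.} The bracketed term is a smooth function of $(x,u(x),X)$ and is linear in $X$. It therefore defines a smooth bundle map $(U \subseteq E) \to T^*B \otimes F$ over $V$. The identity of Step 2 shows that this bracket equals $\nabla^F(\bmap{f}(u)) - D^{\mathrm{v}}\bmap{f}(u)\{\nabla^E u\}$. That expression is intrinsic: it involves only the connections and $D^{\mathrm{v}}\bmap{f}$, which is trivialisation-independent. Hence the local definitions patch together into a global smooth $D^{\mathrm{h}}\bmap{f}$. Evaluating on a section $\tilde u$ with $\nabla^E\tilde u(b)=0$ shows that this agrees with Step 1 and is independent of $\tilde u$. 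Uniqueness is also clear, because every $e \in U_b$ is the value of such a section.

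\emph{Step 4: vanishing along the zero section.} Suppose $\bmap{f}(0_E) = 0$. Take $u = 0$, which is parallel. Then
\begin{equation*}
    D^{\mathrm{h}}\bmap{f}(0) = \nabla^F(\bmap{f}(0)) - D^{\mathrm{v}}\bmap{f}(0)\{0\} = \nabla^F(0) = 0.
\end{equation*}
Equivalently, in the bracket formula every term vanishes at $\xi = 0$. The term $\partial_X\bmap{f}(x,0)$ vanishes because $\bmap{f}(x,0)\equiv 0$, the second term is linear in $u = 0$, and the third contains the factor $\bmap{f}(x,0) = 0$.

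The only step that needs any care is Step 3: we must see that the remainder depends on $u$ only through its value and not through its $1$-jet. This follows at once once $\diff u$ is rewritten in terms of $\nabla^E u$, so there is no real obstacle.
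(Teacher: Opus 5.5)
Your proof is correct and follows essentially the same route as the paper: a chain-rule computation in a local trivialisation that splits $\nabla^F(\bmap{f}(u))$ into a base-derivative part, which becomes $D^{\mathrm{h}}\bmap{f}$, and a fibre-derivative part, which is $D^{\mathrm{v}}\bmap{f}(u)\{\nabla^E u\}$. The only difference is that the paper uses frames parallel at the point $b$, so the connection terms drop out there. You instead keep the connection forms $A^E, A^F$ in an arbitrary trivialisation, which makes the smoothness of $D^{\mathrm{h}}\bmap{f}$ in the base point and its independence of the chart explicit; the paper leaves these implicit.
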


\begin{proof}
    Around a point $b \in B$, we may choose coordinates $(x_1,\ldots,x_m)$ and trivialisations $e_1,\ldots,e_I$ of $E$, and $f_1,\ldots,f_J$ of $F$, such that $(\nabla^{E} e_i)_b = 0$ and $(\nabla^{F} f_j)_b = 0$ for any $1 \leq i \leq I$ and $1 \leq j \leq J$. Then for any section $u = \sum_i u_i e_i$ we can write $\bmap{f}(u) = \sum_j F_j(x_k, u_i)f_j$, and at the point $b$ we have
    \begin{equation*}
        (\nabla^E u)_b = \sum_{i} (\diff u_i)_b \otimes e_i, ~~~~ D^{\mathrm{v}} \bmap{f}(u)_b\{e_i\} = \sum_{j} \left. \frac{\partial F_j}{\partial u_i} (x_k,u_i)\right|_b f_j.
    \end{equation*}
    On the other hand,
    \begin{align*}
        \nabla^{F} \bmap{f}(u)_b & = \sum_j \left. \diff (F_j(x_k,u_i)) \right|_b \otimes f_j \\
        & = \sum_{k,j} \left. \frac{\partial F_j}{\partial x_k}(x_k,u_i) \right|_b \diff x_k \otimes f_j +\sum_{i,j} \left.  \frac{\partial F_j}{\partial u_i}(x_k,u_i)\right|_b \diff u_i \otimes f_j  \\
        & = D^{\mathrm{h}} \bmap{f}(u)_b + D^{\mathrm{v}} \bmap{f}(u)_b \{\nabla^{E} u\}
    \end{align*}
    where 
    \begin{equation*}
        D^{\mathrm{h}} \bmap{f}(u)_b = \sum_{k,j} \left. \frac{\partial F_j}{\partial x_k}(x_k,u_i) \right|_b dx_k \otimes f_j .
    \end{equation*}
    This expression defines a smooth bundle map $D^{\mathrm{h}} \bmap{f} \col (U \subseteq E) \to T^*B \otimes F$. In addition, if $\bmap{f}(0_E) \equiv 0$, then $\frac{\partial F_j}{\partial x_k}(x,0) = 0$ and therefore $D^{\mathrm{h}} \bmap{f}(0_E) \equiv 0$.
\end{proof}

Throughout the following sections, the bundle maps that will appear will be locally dilation-invariant near a finite set of singularities. In order to describe the action of such bundle maps on weighted Sobolev sections, we first consider the following model situation on $\R^m \setminus \{0\}$. We consider two vector bundles with metrics and compatible connections $(E_0,\nabla^{E_0},h^{E_0})$ and $(F_0, \nabla^{F_0},h^{F_0})$ over the sphere $\Sp^{m-1}$, and denote by $(E,\nabla^E,h^E)$ and $(F,\nabla^F,h^F)$ the corresponding dilation-invariant vector bundles over $\R^m \setminus \{0\}$. Let $U_0 \subseteq E_0$ be an open convex neighbourhood of the zero section and denote by $U = \pr^*(U_0) \subseteq E$ the associated dilation-invariant open neighbourhood of the zero section in $E$. Given a non-linear bundle map $\bmap{f}_0 \col (U_0 \subseteq E_0) \to F_0$, there is an induced non-linear bundle map $\bmap{f} = \pr^*(\bmap{f}_0) \col (U \subseteq E) \to F$. Such maps will be called \emph{dilation-invariant bundle maps}. Then we have the following result: 

\begin{lem}     \label{lem: Derivative of dilation-invariant bundle maps}
    Let $\bmap{f} \col (U \subseteq E) \to F$ be a dilation-invariant bundle map. Then the horizontal and vertical derivatives of $\bmap{f}$ have the following properties:
    \begin{enumerate}[(i)]
        \item The vertical derivative $D^{\mathrm{v}} \bmap{f} \col (U \subseteq E) \to \Hom(E,F)$ is dilation-invariant.
        \item With respect to the structure of dilation-invariant vector bundle of $T^*(\R^m \setminus \{0\})$ described in \cref{subsubsec: The tangent bundle as adapted bundle}, $r D^{\mathrm{h}} \bmap{f} \col (U \subseteq E) \to T^*(\R^m \setminus \{0\}) \otimes F$ is dilation-invariant.
    \end{enumerate}
\end{lem}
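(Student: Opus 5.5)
The plan is to exploit the product structure $\R^m\setminus\{0\} \simeq (0,\infty)\times\Sp^{m-1}$ and compute both derivatives explicitly in terms of $\bmap{f}_0$. For (i), fix $x \in \R^m\setminus\{0\}$ with $\pr(x)=y$. The fibres satisfy $E_x = (E_0)_y$ and $F_x=(F_0)_y$, and $\bmap{f}|_{U_x} = \bmap{f}_0|_{(U_0)_y}$. The vertical derivative is computed fibrewise and needs no connection, so $D^{\mathrm{v}}\bmap{f}(u)\{v\} = D^{\mathrm{v}}\bmap{f}_0(u)\{v\}$ for $u\in U_x$ and $v\in E_x$. Hence $D^{\mathrm{v}}\bmap{f} = \pr^*(D^{\mathrm{v}}\bmap{f}_0)$ as a map $U\to \Hom(E,F) = \pr^*\Hom(E_0,F_0)$. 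This is exactly the definition of a dilation-invariant bundle map.

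For (ii), I will first identify $D^{\mathrm{h}}\bmap{f}$ through the characterisation in \cref{lem: First derivative of bundle map}. I apply that lemma on $\Sp^{m-1}$ to obtain $D^{\mathrm{h}}\bmap{f}_0 \col U_0 \to T^*\Sp^{m-1}\otimes F_0$, and then claim that
\begin{equation*}
    D^{\mathrm{h}}\bmap{f} = \pr^*(D^{\mathrm{h}}\bmap{f}_0),
\end{equation*}
where $\pr^*(T^*\Sp^{m-1}\otimes F_0)$ is regarded as a subbundle of $T^*(\R^m\setminus\{0\})\otimes F$ via $\pr^*$ on $1$-forms. To verify this, I check the defining identity on sections $u$ of $U$. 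Since $\nabla^E=\pr^*\nabla^{E_0}$ and $\nabla^F = \pr^*\nabla^{F_0}$, I split derivatives into the radial and spherical directions as in the formula $\prescript{\phi}{}{\nabla} u = \partial_r u\otimes \diff r + \prescript{\varphi}{}{\nabla}u$.
\begin{itemize}
    \item In the radial direction, $\nabla^F_{\partial_r}\bmap{f}(u) = \partial_r(\bmap{f}_0(u(r))) = D^{\mathrm{v}}\bmap{f}_0(u)\{\partial_r u\}$, with no horizontal contribution.
    \item In the spherical directions, at fixed $r$, \cref{lem: First derivative of bundle map} on $\Sp^{m-1}$ gives $\pr^*(D^{\mathrm{h}}\bmap{f}_0)(u) + D^{\mathrm{v}}\bmap{f}_0(u)\{\nabla^{E_0}u(r)\}$.
\end{itemize}
Summing the two shows that $\pr^*(D^{\mathrm{h}}\bmap{f}_0)$ satisfies the defining identity. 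Since $D^{\mathrm{h}}\bmap{f}(u)$ at a point depends only on $u$ there, and any value can be realised by a section, the identity determines $D^{\mathrm{h}}\bmap{f}$ uniquely, which proves the claim.

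It then remains to compare the two dilation-invariant structures on $T^*(\R^m\setminus\{0\})$, as in \cref{subsubsec: The tangent bundle as adapted bundle} and \cref{rem: Factors of r}. For $\eta\in T^*_y\Sp^{m-1}$, the pull-back $\pr^*\eta$ has $g_{\R^m}$-norm $r^{-1}|\eta|$ and is not radially parallel. The rescaled form $r\,\pr^*\eta$, however, is parallel along rays for the flat connection and is the invariant object. Concretely, in Euclidean coordinates $r\,\pr^*\eta$ at $x$ equals $\eta$ viewed as a covector on $x^\perp\subset\R^m$ at $x/|x|$, which is independent of $r$. So $r\,\pr^*$ maps $\pr^*(T^*\Sp^{m-1})$ isomorphically onto a dilation-invariant subbundle of $T^*\R^m$. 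Consequently $r D^{\mathrm{h}}\bmap{f}(u) = (r\,\pr^*\otimes\mathrm{id})(D^{\mathrm{h}}\bmap{f}_0(u))$ depends only on $\pr(x)$ and $u\in(U_0)_{\pr(x)}$, i.e. it is dilation-invariant.

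The main obstacle is this last identification. One must make precise that radial parallel transport in the flat cotangent bundle carries $r\,\pr^*\eta$ at radius $r$ to the same object at radius $r'$, so that the invariance is with respect to the correct structure. The check is a short computation: $\diff(x_i/|x|)$ scaled by $|x|$ is homogeneous of degree $0$.
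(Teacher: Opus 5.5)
Your proof is correct, but for (ii) it takes a more explicit route than the paper; part (i) is the same in both.

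\textbf{The paper's route.} It never computes $D^{\mathrm{h}}\bmap{f}$. It evaluates the defining identity of \cref{lem: First derivative of bundle map} on a dilation-invariant section $u$. By \cref{rem: Factors of r}, the sections $u$, $\bmap{f}(u)$, $r\nabla^E u$ and $r\nabla^F(\bmap{f}(u))$ are all dilation-invariant. Hence so is
\begin{equation*}
    rD^{\mathrm{h}}\bmap{f}(u) = r\nabla^F(\bmap{f}(u)) - D^{\mathrm{v}}\bmap{f}(u)\{r\nabla^E u\}.
\end{equation*}
Since every point of $U$ lies on such a section (pull back a section of $U_0$ through it), the claim follows.

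\textbf{Your route.} You first identify $D^{\mathrm{h}}\bmap{f} = \pr^*(D^{\mathrm{h}}\bmap{f}_0)$, using the radial/spherical splitting of $\nabla^E = \pr^*\nabla^{E_0}$ and the pointwise uniqueness of the horizontal derivative. You then verify directly that $r\,\pr^*$ carries $\pr^*(T^*\Sp^{m-1})$ into the flat dilation-invariant structure on $T^*(\R^m\setminus\{0\})$. The key observation is that $r\,\pr^*\eta$ at $x = ry$ is the $r$-independent covector $\eta$ composed with orthogonal projection onto $y^\perp$. This is exactly the $1$-form case of \cref{rem: Factors of r}, reproved in coordinates.

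\textbf{Comparison.} The paper's argument is shorter and coordinate-free because it delegates that check to the remark. Yours costs a bit more, but it gives an explicit formula for $D^{\mathrm{h}}\bmap{f}$ in terms of $D^{\mathrm{h}}\bmap{f}_0$. In particular, it shows that $D^{\mathrm{h}}\bmap{f}$ has no $\diff r$-component. It also makes the origin of the factor $r$ transparent rather than quoted.
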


\begin{proof}
    The dilation-invariance of the vertical derivative is immediate from the expression $D^{\mathrm{v}} \bmap{f}(u)\{v\} = \frac{\diff}{\diff t} \bmap{f}(u+tv)$. On the other hand, if $u \in \Gamma(U)$ is a dilation-invariant section, then $\bmap{f}(u)$ is also dilation-invariant, and by \cref{rem: Factors of r} the sections $r \nabla^E u$ and $r\nabla^F (\bmap{f}(u))$ are also dilation-invariant. Thus $r D^{\mathrm{h}} \bmap{f}(u) = r(\nabla^F(\bmap{f}(u)) - D^{\mathrm{v}} \bmap{f}(u)\{\nabla^E u\})$ must be dilation-invariant. Since this hold for any dilation-invariant section $u$, the result follows.
\end{proof}

The previous lemma has for consequence the following technical result, which underlies all the decay and regularity statements of this article:

\begin{cor}     \label{cor: Estimates on derivatives for dilation-invariant maps}
    Let $\bmap{f} \col (U \subseteq E) \to F$ be a dilation-invariant bundle map, let $K_0 \subset U_0 \subseteq E_0$ be a compact convex subbundle, $K \coloneqq \pr^*(K_0) \subset E$, and let $R > 0$. 
    \begin{enumerate}[(i)]
        \item There exists a constant $C > 0$ such that
        \begin{equation*}
            |\bmap{f}(u_1)- \bmap{f}(u_0)| \leq C |u_1-u_0|,
        \end{equation*}
        and
        \begin{equation*}
        	r |\nabla^F(\bmap{f}(u_1)- \bmap{f}(u_0))| \leq C (|u_1-u_0| + r |\nabla^E(u_1-u_0)|) + C (|r\nabla^E u_0| + |r\nabla^E u_1|) |u_1-u_0| 
		\end{equation*}
        for any section $u_0,u_1$ of $K$ defined over the ball $\Ball_R \setminus \{0\}$.

        \item If $k \geq 2$ and $A > 0$, then there exists a constant $C^\prime > 0$ such that, if $u_1,u_0 \in \Gamma(K)$ are sections defined over the ball $\Ball_R \setminus \{0\}$ satisfying $r^\ell|(\nabla^{E})^\ell u_0|, r^\ell |(\nabla^F)^\ell u_1| \leq A$ for all $1 \leq \ell \leq \frac{k}{2}$, then
        \begin{multline*}
            r^k|(\nabla^F)^k (\bmap{f}(u_1) - \bmap{f}(u_0)) | \leq C^\prime \sum_{\ell = 0}^k r^{\ell} |(\nabla^E)^\ell (u_1-u_0)| \\ + C^\prime \left( \sum_{\frac{k}{2} < \ell \leq k} r^\ell|(\nabla^E)^\ell u_0| + r^\ell |(\nabla^E)^\ell u_1| \right) \sum_{0 \leq \ell \leq \frac{k}{2}} r^\ell |(\nabla^E)^\ell(u_1-u_0)|
        \end{multline*}
    \end{enumerate}
    In particular, if $\bmap{f}(0_E) \equiv 0$ and $u \in \Gamma(U \subseteq E)$ satisfies $|(\nabla^E)^k u | = \mathcal{O}(r^{\mu-k})$ as $r \to 0$ for all $k \in \N_0$, for some $\mu \geq 0$, then $\bmap{f}(u) \in \Gamma(F)$ satisfies $|(\nabla^F)^k \bmap{f}(u) | = \mathcal{O}(r^{\mu-k})$ as $r \to 0$ for all $k \in \N_0$.
\end{cor}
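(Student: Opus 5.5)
The plan is to reduce everything to the case of the unit sphere, using dilation-invariance, and then to use compactness of $K_0$. For $\lambda>0$ let $\delta_\lambda(x)=\lambda x$. The bundles $E,F$, the connections, the metrics and $\bmap{f}$ are all invariant under $\delta_\lambda$. By \cref{rem: Factors of r}, the weighted quantities $r^\ell|(\nabla^E)^\ell u|$ are scale-invariant: for $u^\lambda\coloneqq\delta_\lambda^*u$ we have $r^\ell|(\nabla^E)^\ell u^\lambda|(x)=(r^\ell|(\nabla^E)^\ell u|)(\lambda x)$, and likewise for $F$. It therefore suffices to prove each pointwise inequality at points with $|x|=1$, for sections defined on the annulus $A=\{1/2<|x|<2\}$. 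To evaluate at $x$ with $|x|=\rho<R$, we rescale by $\lambda=\rho$, which maps $A$ into the region $\{\rho/2<|y|<2\rho\}$. Points near $R$ need a little care: either shrink the annulus or treat a slightly larger $R$. The constants are then uniform because everything is pulled back from the compact sphere.

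\textbf{Zeroth order in (i).} This is pointwise. Write
\[
\bmap{f}(u_1)-\bmap{f}(u_0)=\int_0^1 D^{\mathrm v}\bmap{f}(u_t)\{u_1-u_0\}\,\diff t, \qquad u_t=u_0+t(u_1-u_0)\in K
\]
by convexity of $K$. By \cref{lem: Derivative of dilation-invariant bundle maps}(i), $D^{\mathrm v}\bmap{f}$ is dilation-invariant, so its norm is bounded on $K=\pr^*K_0$ by its supremum on the compact set $K_0$.

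\textbf{First order in (i).} Use \cref{lem: First derivative of bundle map}:
\[
\nabla^F(\bmap{f}(u_1)-\bmap{f}(u_0)) = \big(D^{\mathrm h}\bmap{f}(u_1)-D^{\mathrm h}\bmap{f}(u_0)\big) + D^{\mathrm v}\bmap{f}(u_1)\{\nabla^E(u_1-u_0)\} + \big(D^{\mathrm v}\bmap{f}(u_1)-D^{\mathrm v}\bmap{f}(u_0)\big)\{\nabla^E u_0\}.
\]
Multiply by $r$ and treat the three terms as follows.
\begin{itemize}
\item The first term: $rD^{\mathrm h}\bmap{f}$ is a dilation-invariant bundle map by \cref{lem: Derivative of dilation-invariant bundle maps}(ii). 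Applying the zeroth-order bound to it gives $C|u_1-u_0|$.
\item The second term is bounded by $C\,r|\nabla^E(u_1-u_0)|$.
\item The third term: applying the zeroth-order estimate to the dilation-invariant map $D^{\mathrm v}\bmap{f}$ gives $C|u_1-u_0|\,|r\nabla^E u_0|$.
\end{itemize}

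\textbf{Part (ii).} Argue by induction on $k$, iterating the same decomposition. Applying \cref{lem: First derivative of bundle map} repeatedly, $(\nabla^F)^k\bmap{f}(u)$ is a sum of terms of the form
\[
\bmap{g}(u)\{(\nabla^E)^{j_1}u,\dots,(\nabla^E)^{j_q}u\}, \qquad j_1+\dots+j_q\le k,
\]
where each $\bmap{g}$ is a multilinear-valued bundle map, obtained by taking iterated horizontal and vertical derivatives of $\bmap{f}$. After multiplying by $r^{k-\sum j_i}$, these $\bmap{g}$ are dilation-invariant, by \cref{lem: Derivative of dilation-invariant bundle maps} applied repeatedly (together with the fact that $r\nabla$ of a dilation-invariant object is again dilation-invariant). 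To bound the difference at $u_1$ and $u_0$, telescope in one slot at a time:
\begin{itemize}
\item The difference of the coefficient, $\bmap{g}(u_1)-\bmap{g}(u_0)$, is bounded by $C|u_1-u_0|$ using the zeroth-order estimate.
\item The difference in a slot $j_i$ gives $r^{j_i}|(\nabla^E)^{j_i}(u_1-u_0)|$.
\end{itemize}
In each product at most one index satisfies $j_i>k/2$. The factors with $j_i\le k/2$ are bounded by $A$ by hypothesis. The single high-order factor, when it is not the differenced slot, is multiplied by a low-order difference of order $\le k/2$. This is exactly the second sum in the claimed inequality. The main bookkeeping obstacle is this counting of factors, together with a clean proof that the coefficient maps, weighted by the appropriate powers of $r$, are dilation-invariant. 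I expect the induction on the structure of \cref{lem: First derivative of bundle map} to handle this.

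\textbf{Final claim.} Apply the estimates with $u_0=0$, using $\bmap{f}(0)=0$. The hypotheses give $r^\ell|(\nabla^E)^\ell u|=\mathcal O(r^\mu)$, which is bounded for $\mu\ge0$, so the constant $A$ exists near $0$. Note also that $u$ takes values in some compact convex $K$ near $0$, since $|u|$ is bounded there. Each term on the right-hand side is then $\mathcal O(r^\mu)$, which gives $|(\nabla^F)^k\bmap{f}(u)|=\mathcal O(r^{\mu-k})$.
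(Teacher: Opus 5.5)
Your proposal follows the paper's proof essentially step for step. It uses the fibrewise fundamental theorem of calculus with the bounded dilation-invariant map $D^{\mathrm v}\bmap{f}$ for the zeroth-order bound, and the splitting of $r\nabla^F\bmap{f}(u)$ into $rD^{\mathrm h}\bmap{f}(u)$ and $D^{\mathrm v}\bmap{f}(u)\{r\nabla^E u\}$ for the first-order bound. For (ii) it expands $r^k(\nabla^F)^k\bmap{f}(u)$ into dilation-invariant coefficients applied to factors $r^{j_i}(\nabla^E)^{j_i}u$, at most one of which has $j_i>k/2$.

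The one step to tighten is the final claim. Boundedness of $u$ does not place it in a compact convex $K\subset U$ when $\mu=0$ and $U\neq E$. For example, take the trivial line bundle with $U_0=\{|v|<1\}$, $\bmap{f}(v)=v/(1-v)$ and $u=1-1/\log(1/r)$. Then $u$ satisfies the $\mu=0$ hypothesis, but $\bmap{f}(u)=\log(1/r)-1\to\infty$, so the conclusion genuinely needs an extra hypothesis there. For $\mu>0$ your argument is fine, because $u\to 0$ and hence eventually lies in a closed $\delta$-ball bundle contained in $U$.
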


\begin{proof}
	Using the vertical derivative $D^{\mathrm{v}} \bmap{f}$ and the convexity of $K_0$, we have for any $u_0,u_1 \in \Gamma(K)$
    \begin{equation*}
        \bmap{f}(u_1)-\bmap{f}(u_0) = \int_0^1 D^{\mathrm{v}} \bmap{f}((1-t)u_0+tu_1)\{u_1-u_0\} \diff t .
    \end{equation*}
    Since $K_0$ is compact and convex, $(1-t)u_0+tu_1$ is a section of $K$ and as $D^{\mathrm{v}} \bmap{f}$ is dilation-invariant, we have $|D^{\mathrm{v}} \bmap{f}((1-t)u_0+tu_1)| \leq C$ for a constant $C > 0$ which depends on $K_0$ but not on $u_0,u_1$. In particular we have
    \begin{equation*}
        |\bmap{f}(u_1) - \bmap{f}(u_0)| \leq \int_0^1 |D^{\mathrm{v}} \bmap{f}((1-t)u_0+tu_1)| \cdot |u_1-u_0| \diff t \leq C |u_1-u_0|
    \end{equation*}
    which proves the first inequality.

    For the second inequality, we use the previous lemma: there exists a dilation-invariant bundle map $\tilde{\bmap{f}} = r D^{\mathrm{h}} \bmap{f} \col (U \subseteq E) \to T^*(\R^{m} \setminus \{0\}) \otimes F$ such that
    \begin{equation*}
        r\nabla^F \bmap{f}(u) = \tilde{\bmap{f}}(u) + D^{\mathrm{v}} \bmap{f}(u) \{ r\nabla^E u \}, ~~~~ \forall u \in \Gamma(U),
    \end{equation*}
    and the vertical derivative $D^{\mathrm{v}} \bmap{f}$ is also dilation-invariant. By the previous inequality, $|\tilde{\bmap{f}}(u_1) - \tilde{\bmap{f}}(u_0)| \leq C |u_1-u_0|$ and 
    \begin{align*}
    	|D^{\mathrm{v}} \bmap{f}(u_1) \{ r\nabla^E u_1\} - D^{\mathrm{v}} \bmap{f} (u_0) \{ r \nabla^E u_0\} | & \leq |D^{\mathrm{v}} \bmap{f}(u_1) - D^{\mathrm{v}} \bmap{f}(u_0)| \cdot |r \nabla^E u_1| \\
		& ~~~~~~~ + |D^{\mathrm{v}} \bmap{f}(u_0)| |r\nabla^E u_1 - r\nabla^E u_0| \\
		& \leq C r |\nabla^E u_1| \cdot |u_1-u_0| + C |r \nabla^E u_1 - r \nabla^E u_0|
    \end{align*}
    for some constant $C > 0$. The second inequality follows.
    
    For the generalisation to $k \geq 2$, an immediate induction from \cref{lem: Derivative of dilation-invariant bundle maps} yields that $(\nabla^E)^k \bmap{f}(u)$ can be written as a sum of terms of the form
    \begin{equation*}
    	r^{-j_0} \bmap{f}^\prime(u) \{ \nabla^{j_1} u \otimes \cdots \otimes \nabla^{j_\ell} u \}
    \end{equation*}
    where $\bmap{f}^\prime$ is a dilation-invariant bundle map defined on the appropriate bundle, and the integers $j_0 \geq 0$, $j_1,\ldots,j_\ell \geq 1$ satisfy $j_0 + \cdots + j_\ell = k$ and $j_1 \leq \cdots \leq j_\ell$. After multiplication with $r^k$, we obtain a term of the form $\bmap{f}^\prime(u) \{ (r^{j_1} \nabla^{j_1} u) \otimes \cdots \otimes (r^{j_\ell} \nabla^{j_\ell} u) \}$, and it is enough to prove the desired inequality holds for any such terms. Remarking that $j_1, \ldots, j_{\ell-1} \leq \frac{k}{2}$, we see that within our assumptions there exists some constant $C^\prime > 0$ such that we have the bound
    \begin{align*}
    	|\bmap{f}^\prime(u_1) \{ (r^{j_1} \nabla^{j_1} u_1) & \otimes \cdots \otimes (r^{j_\ell} \nabla^{j_\ell} u_1) \} - \bmap{f}^\prime(u_0) \{ (r^{j_1} \nabla^{j_1} u_0) \otimes \cdots \otimes (r^{j_\ell} \nabla^{j_\ell} u_0) \}| \\
		& \leq C^\prime |\bmap{f}^\prime(u_1)-\bmap{f}^\prime(u_0)| \cdot |r^\ell (\nabla^E)^\ell u_1| + C^\prime |\bmap{f}^\prime(u_0)| \cdot |r^{j_\ell} (\nabla^E)^{j_\ell} u_1 - |r^{j_\ell} (\nabla^E)^{j_\ell} u_0| \\
		& ~~~~ + C^\prime |\bmap{f}^\prime(u_0)| \cdot |r^{j_\ell} (\nabla^E)^{j_\ell} u_1| \cdot \sum_{i = 1}^{\ell-1} |r^{j_i} (\nabla^E)^{j_i}(u_1-u_0)| \\
		& \leq C^\prime |r^{j_\ell} (\nabla^E)^{j_\ell} (u_1-u_0)| + C^\prime |r^{j_\ell} (\nabla^E)^{j_\ell} u_1| \sum_{0 \leq j \leq \frac{k}{2}} |r^j (\nabla^E)^j(u_1-u_0)|
    \end{align*}
    where the constant $C^\prime$ may change from line to line. The first term is bounded by $C^\prime \sum_{j \leq k} |r^j (\nabla^E)^j(u_1-u_0)|$, and so is the second term if $j_\ell \leq \frac{k}{2}$ (since in that case $|r^{j_\ell} (\nabla^E)^{j_\ell} u_1| \leq A$). Otherwise $j_\ell > \frac{k}{2}$ and the second term above is bounded by the second term in the right-hand side of the inequality claimed in point (ii) of the lemma. This finishes the proof.
\end{proof}

	\subsubsection{Bundle maps with dilation-invariant ends}       \label{subsubsec: Bundle maps with dilation-invariant ends}

In addition to proving that certain vector bundles are adapted, we will often want to know that certain bundle maps between adapted bundles automatically induce smooth Banach maps between weighted Sobolev spaces of sections. In all cases, we will be able to boil this down to the following situation. 

Let $M$ be a compact manifold, $S \subset M$ be a finite subset and let $\{ \Upsilon_s \col \Ball_1 \to M\}_{s \in S}$ be an adapted set of charts. We consider two vector bundles $(E,\nabla^E,h^E)$ and $(F,\nabla^F,h^F)$ over $M \setminus S$ which are not only adapted but have \emph{dilation-invariant ends}: that is, for $R \in (0,1)$ small enough, there exist collections of dilation-invariant vector bundles $\{(E_s,\nabla^{E_s},h^{E_s}) \to \R^m \setminus \{0\}\}_{s \in S}$, $\{(F_s,\nabla^{F_s},h^{F_s}) \to \R^m \setminus \{0\}\}_{s \in S}$ and a collection of trivialisations $\Psi^E_s \col \left. E_s \right|_{\Ball_R} \to E$, $\Psi^F_s \col \left. F_s \right|_{\Ball_R} \to F$ such that
\begin{equation*}
    (\Psi^E_s)^* (\nabla^E,h^{E}) = (\nabla^{E_s},h^{E_s}), ~~~ (\Psi^F_s)^* (\nabla^F,h^F) = (\nabla^{F_s},h^{F_s}).
\end{equation*}
For any $\delta > 0$, we will denote by $U_\delta \subset E$ the open sub-bundle $\{ u \in E \mid |u|_{h^E} < \delta \}$. We similarly define $U_{s,\delta} \subset E_s$.

\begin{Def}		\label{Def: non-linear bundle map with dilation invariant ends}
    Let $\bmap{f} \col (U_\delta \subseteq E) \to F$ be a bundle map. We say that $\bmap{f}$ has \emph{dilation-invariant ends} if there exists a collection of dilation-invariant bundle maps $\bmap{f}_s \col (U_{s,\delta} \subseteq E_s) \to F_s$ such that 
    \begin{equation*}
        (\Psi_s^F)^{-1} \circ \bmap{f} \circ \Psi_s^E (u) = \bmap{f}_s(u), ~~~ \forall u \in U_{s,\delta} .
    \end{equation*}
\end{Def}

Given \cref{cor: Estimates on derivatives for dilation-invariant maps}, it is straightforward to deduce that a bundle map $\bmap{f} \col (U_\delta \subseteq E) \to F$ with dilation-invariant ends such that $\bmap{f}(0_E) = 0_F$ induces, for any $k \in \N_0$ and $\mu \geq 0$, a smooth Banach map $u \in C^{k}_\mu(U_\delta) \mapsto \bmap{f}(u) \in C^k_\mu(F)$, where we denote by $C^k_\mu(U_\delta) \subset C^k_\mu(E)$ the open subset of sections taking values in $U_\delta \subset E$. If $\bmap{f}(0_E) \neq 0_F$, then we can still deduce that the induced map $C^k_0(U_\delta) \to C^k_0(F)$ is a smooth Banach map. For weighted Sobolev spaces however, we need to take restrictions on the exponent, $p$, and the number of derivatives, $k$, for $\bmap{f}$ to define a smooth Banach map between weighted spaces of sections. Just like in the compact (unweighted) case, these restrictions correspond to the ones that are necessary for Sobolev multiplication to be a well-defined continuous bilinear map. The precise statement is as follows:

\begin{lem}     \label{lem: Master lemma}
    Let $\bmap{f} \col (U_\delta \subset E) \to F$ be a bundle map with dilation-invariant ends such that $\bmap{f}(0_E) = 0_F$. For any $k \geq 1$, $p \in [1,\infty)$ and $\mu \geq 0$ such that $p > \frac{2m}{k}$, let us denote by $\mathcal{U}^{k,p}_\mu \subset W^{k,p}_\mu(E)$ the subset
    \begin{equation*}
        \mathcal{U}^{k,p}_{\mu,\delta} \coloneqq \{ u \in W^{k,p}_\mu(E) ~ | ~ u(x) \in U_\delta,  \forall x \in M \}.
    \end{equation*}
    Note that since there is a sequence of continuous embeddings $W^{k,p}_\mu(E) \hookrightarrow C^0_\mu(E) \hookrightarrow C^0_0(E)$, $\mathcal{U}^{k,p}_{\mu,\delta}$ is an open subset of $W^{k,p}_{\mu}(E)$. Then $\bmap{f}$ induces a smooth Banach map $\mathcal{F} \col \mathcal{U}^{k,p}_{\mu,\delta} \to W^{k,p}_\mu(F), ~ u \mapsto \bmap{f}(u)$. Moreover, the derivative $D \mathcal{F} \col \mathcal{U}^{k,p}_{\mu,\delta} \to \mathcal{B}(W^{k,p}_\mu(E),W^{k,p}_\mu(F))$ coincides with the map induced by the vertical derivative of $\bmap{f}$, defined as  $D^{\mathrm{v}} \bmap{f}(u) \{v\} = \frac{\diff}{\diff t} \bmap{f}(u+tv)$:
        \begin{equation*}
            D_u \mathcal{F}\{v\} = D^{\mathrm{v}} \bmap{f}(u) \{v\}, ~~~ \forall u \in \mathcal{U}^{k,p}_{\mu,\delta}, ~ v \in W^{k,p}_\mu(E).
        \end{equation*}
\end{lem}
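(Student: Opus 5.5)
The plan is to split $M \setminus S$ into the ends $\Upsilon_s(\Ball_R \setminus \{0\})$ and a compact remainder. On the remainder, $\bmap{f}$ is an ordinary smooth fibre-preserving map between bundles over a compact manifold with boundary, and $p > \frac{2m}{k} > \frac{m}{k}$. The classical (unweighted) result on smoothness of Nemytskii-type operators $W^{k,p} \to W^{k,p}$ therefore applies, since $W^{k,p}$ is then a Banach algebra embedded in $C^0$. On each end we work entirely with the dilation-invariant model $\bmap{f}_s$ on $E_s, F_s$ over $\Ball_R \setminus \{0\}$. Gluing the two pieces with a partition of unity is harmless, because multiplication by smooth functions that are constant near $S$ is bounded on all the weighted spaces.

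\textbf{Step 1: well-definedness and continuity.} Take $u \in \mathcal{U}^{k,p}_{\mu,\delta}$. By \cref{prop: Sobolev embedding and multiplication}(i), $u \in C^{\lfloor k/2 \rfloor}_\mu$, so $r^\ell |\nabla^\ell u| \le A r^\mu \le A$ for $\ell \le k/2$, and $u$ takes values in a compact convex subbundle $K = \pr^* K_0$, where $K_0 = \{|u| \le \delta'\}$ for some $\delta' < \delta$. Apply \cref{cor: Estimates on derivatives for dilation-invariant maps}(i)--(ii) with $u_0 = 0$; this is allowed since $\bmap{f}_s(0) = 0$. It gives, for $0 \le j \le k$,
\[ r^{j}|\nabla^j \bmap{f}(u)| \le C \sum_{\ell \le k} r^\ell |\nabla^\ell u| + C \sum_{\frac{k}{2}<\ell\le k} r^\ell|\nabla^\ell u| \sum_{\ell\le k/2} r^\ell|\nabla^\ell u| . \]
In the last product, the factor with $\ell \le k/2$ is bounded in $C^0_\mu \subset C^0_0$. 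Multiplying by $r^{-\mu}$ and integrating against $r^{-m}\vol_g$ then gives $\|\bmap{f}(u)\|_{W^{k,p}_\mu} \le C(1+\|u\|)\|u\|_{W^{k,p}_\mu}$. Applying the same estimate to a difference $u_1 - u_0$ gives local Lipschitz continuity of $\mathcal{F}$ in $W^{k,p}_\mu$.

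\textbf{Step 2: differentiability.} Write
\[ \bmap{f}(u+v)-\bmap{f}(u)-D^{\mathrm{v}}\bmap{f}(u)\{v\} = \int_0^1 \bigl(D^{\mathrm{v}}\bmap{f}(u+tv)-D^{\mathrm{v}}\bmap{f}(u)\bigr)\{v\}\,\diff t . \]
Here $\bmap{g} \coloneqq D^{\mathrm{v}}\bmap{f}$ is a bundle map $U_\delta \to \Hom(E,F)$ with dilation-invariant ends by \cref{lem: Derivative of dilation-invariant bundle maps}(i), but $\bmap{g}(0) \ne 0$. Step 1 adapts to this case with weight $0$: using both inequalities of the corollary, $u \mapsto \bmap{g}(u)$ is a locally Lipschitz map $\mathcal{U}^{k,p}_{\mu,\delta} \to W^{k,p}_0(\Hom(E,F)) + C^\infty_0$-type sections. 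More precisely, $\bmap{g}(u) - \bmap{g}(0)$ lies in $W^{k,p}_\mu$, and $\bmap{g}(0)$ is a dilation-invariant section, which lies in $C^\infty_0$. Multiplication by the $C^k_0$ part is bounded on $W^{k,p}_\mu$ by \cref{lem: Natural properties of weighted spaces}(v), and multiplication by the $W^{k,p}_\mu$ part is bounded by \cref{prop: Sobolev embedding and multiplication}(ii), since $W^{k,p}_{\mu} \times W^{k,p}_\mu \to W^{k,p}_{2\mu} \hookrightarrow W^{k,p}_\mu$. Together these show:
\begin{itemize}
\item $u \mapsto D^{\mathrm{v}}\bmap{f}(u)\{\cdot\} \in \mathcal{B}(W^{k,p}_\mu(E),W^{k,p}_\mu(F))$ is continuous;
\item the remainder above is $o(\|v\|)$.
\end{itemize}
Hence $\mathcal{F}$ is $C^1$, with $D_u\mathcal{F} = D^{\mathrm{v}}\bmap{f}(u)$.

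\textbf{Step 3: smoothness.} Smoothness follows by induction. $D\mathcal{F}$ is induced by the bundle map $\bmap{g}$, which has the same structure as $\bmap{f}$ except that it need not vanish at $0$. The statement therefore has to be set up for the class of maps of the form ``$\bmap{f}(0)$ a dilation-invariant section plus a vanishing part'', acting by evaluation on multilinear arguments. Iterating Step 2 on this class gives $C^j$ for every $j$.

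The main obstacle is the bookkeeping of Step 2 for non-vanishing maps. The intermediate claim I would isolate is that, for a bundle map $\bmap{g}$ with dilation-invariant ends, $u \mapsto \bmap{g}(u) - \bmap{g}(0)$ is a locally Lipschitz map into $W^{k,p}_\mu$. This is exactly what \cref{cor: Estimates on derivatives for dilation-invariant maps}(ii) provides, once one checks that the high-derivative factors $r^\ell|\nabla^\ell u_i|$ with $\ell > k/2$ land in $L^p_0$ while the low-order factors are controlled in $C^0_\mu$.
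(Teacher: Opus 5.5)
Your proposal is correct and follows the paper's proof. Step 1 gets local Lipschitz bounds for $\bmap{f}$ from \cref{cor: Estimates on derivatives for dilation-invariant maps} via $W^{k,p}_\mu \hookrightarrow C^{\lfloor k/2\rfloor}_\mu$; Step 2 uses the fibrewise fundamental theorem of calculus, with $D^{\mathrm{v}}\bmap{f}(u) - D^{\mathrm{v}}\bmap{f}(0_E) \in W^{k,p}_\mu$ and $D^{\mathrm{v}}\bmap{f}(0_E) \in C^k_0$ both acting boundedly by weighted multiplication; Step 3 is induction on $D^{\mathrm{v}}\bmap{f} - D^{\mathrm{v}}\bmap{f}(0_E)$, exactly as in the paper. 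The only differences are cosmetic: you make explicit the split into a compact region (handled by the classical unweighted result) and the dilation-invariant ends, which the paper leaves implicit, and you use $W^{k,p}_\mu \times W^{k,p}_\mu \to W^{k,p}_{2\mu} \hookrightarrow W^{k,p}_\mu$ where the paper uses $W^{k,p}_\mu \subset W^{k,p}_0$ together with $W^{k,p}_0 \times W^{k,p}_\mu \to W^{k,p}_\mu$.
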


\begin{proof}
    Since there is a sequence of continuous embeddings $W^{k,p}_\mu(E) \hookrightarrow C^{\lfloor \frac{k}{2} \rfloor}_\mu(E) \hookrightarrow C^{\lfloor \frac{k}{2} \rfloor}_0(E)$, it is easy to deduce from \cref{cor: Estimates on derivatives for dilation-invariant maps}, the density of $C^\infty_c(E) \subset W^{k,p}_\mu(E)$ and the assumption $\bmap{f}(0_E) = 0_F$ that for any $u \in \mathcal{U}^{k,p}_{\mu,\delta}$, $\bmap{f}(u) \in W^{k,p}_\mu(F)$, and moreover for any $\delta^\prime < \delta$ and $A > 0$, there exists a constant $C > 0$ such that $\|\bmap{f}(u_1)-\bmap{f}(u_0)\|_{W^{k,p}_\mu} \leq C \|u_1-u_0\|_{W^{k,p}_\mu}$ for any $u_1,u_0 \in W^{k,p}_\mu(E)$ with $\|u_0\|_{C^{0}}, \|u_1\|_{C^0} \leq \delta^\prime$ and $\|u_0\|_{W^{k,p}_\mu}, \|u_1\|_{W^{k,p}_\mu} \leq A$. Thus $\bmap{f}$ induces a continuous map $\mathcal{F} \col \mathcal{U}^{k,p}_{\mu,\delta} \to W^{k,p}_\mu(F)$.
    
    To prove that $\mathcal{F}$ is $C^1$, we can use the fundamental theorem of calculus along the fibres of $E$: for any continuous sections $u_0, u_1 \in C^0(U_\delta \subset E)$,
    \begin{equation*}
        \bmap{f}(u_1) - \bmap{f}(u_0) = \int_0^1 D^{\mathrm{v}} \bmap{f}(u_t) \{u_1-u_0\} \diff t, ~~~~ u_t = (1-t)u_0 + t u_1 .
    \end{equation*}
    Since $D^{\mathrm{v}} \bmap{f}$ has dilation-invariant ends, it follows from the first part of the proof that $D^{\mathrm{v}} \bmap{f}(u) - D^{\mathrm{v}} \bmap{f}(0_E)$ induces a continuous map $\mathcal{U}^{k,p}_{\mu,\delta} \to W^{k,p}_\mu(\Hom(E,F)) \subset W^{k,p}_0(\Hom(E,F))$. On the other hand, by weighted Sobolev multiplication (\cref{prop: Sobolev embedding and multiplication}) there is a natural bounded linear map $W^{k,p}_0(\Hom(E,F)) \to \mathcal{B}(W^{k,p}_\mu(E), W^{k,p}_\mu(F))$. In addition, since $D^{\mathrm{v}} \bmap{f}(0_E)$ is a smooth section with dilation invariant-ends, $D^{\mathrm{v}} \bmap{f}(0_E) \in C^k_0(\Hom(E,F)) \hookrightarrow \mathcal{B}(W^{k,p}_\mu(E), W^{k,p}_\mu(F))$. This proves that the map $u \mapsto D^{\mathrm{v}} \bmap{f}(u)$ induces a continuous map $\mathcal{U}^{k,p}_{\mu,\delta} \to \mathcal{B}(W^{k,p}_\mu(E),W^{k,p}_\mu(F))$ and therefore $\mathcal{F}$ is continuously differentiable, with derivative $D \mathcal{F}$ induced by the vertical derivative $D^{\mathrm{v}} \bmap{f}$. Since $D^{\mathrm{v}} \bmap{f}$ is also dilation-invariant, an immediate induction applied to $D^{\mathrm{v}} \bmap{f} - D^{\mathrm{v}} \bmap{f}(0_E)$ yields that $\mathcal{F}$ is smooth.
\end{proof}

\begin{rem}     \label{rem: Derivative of Banach maps}
    The above lemma should not be taken as a practical way of computing the derivative of $\mathcal{F}$ by directly calculating the vertical derivative of $\bmap{f}$. Rather, its purpose is to establish that the abstract notion of Banach derivative in weighted Sobolev spaces coincides with the `naive' notion of directional derivative along the fibres of $E$, $D_u \mathcal{F}\{v\} = \tfrac{\diff}{\diff t} \bmap{f}(u+tv)$. The point is that the vertical derivative has a purely local expression, and can be evaluated for instance by calculating the directional derivative $\tfrac{\diff}{\diff t} \bmap{f}(u+tv)$ for smooth sections compactly supported away from the singular set $S$.
\end{rem}

The previous lemma also has a `parametrised version' which will often be useful: 

\begin{lem}     \label{lem: Parametrised master lemma}
    Let $\bmap{f} \col (U_\delta \subseteq E) \to F$ be a bundle map with dilation-invariant ends. Then for any $\mu \geq 0$, $k \in \N$ and $p \in [1,\infty)$ such that $p > \frac{2m}{k}$, the map
    \begin{equation*}
        \widetilde{\mathcal{F}} \col \mathcal{U}^{k,p}_{\mu,\delta/2} \times C^{k}_0(U_{\delta/2}) \to W^{k,p}_{\mu}(F), ~~ (u,w) \mapsto \bmap{f}(u+w) - \bmap{f}(w)
    \end{equation*}
    is a smooth Banach map.
\end{lem}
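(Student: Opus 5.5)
The plan is to reduce the statement to \cref{lem: Master lemma} by working on the fibre product $E \oplus E$. On $U_{\delta/2} \oplus U_{\delta/2} \subseteq E \oplus E$ we define the bundle map
\begin{equation*}
    \bmap{g}(u,w) \coloneqq \bmap{f}(u+w) - \bmap{f}(w).
\end{equation*}
It takes values in $F$, has dilation-invariant ends (since $\bmap{f}$ does and addition is dilation-invariant), and satisfies $\bmap{g}(0,w) = 0$. It does not, however, vanish along the whole zero section of $E \oplus E$ in the $w$-direction. Moreover, $w$ is only in $C^k_0$, not in a weighted Sobolev space with rate $\mu$. So the Master lemma cannot be applied verbatim, and we instead redo its proof using the structure $\bmap{g}(u,w) = \bmap{A}(u,w)\{u\}$, where
\begin{equation*}
    \bmap{A}(u,w) \coloneqq \int_0^1 D^{\mathrm{v}}\bmap{f}(w+tu)\,\diff t
\end{equation*}
is a bundle map $U_{\delta/2}\oplus U_{\delta/2} \to \Hom(E,F)$ with dilation-invariant ends.

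\textbf{Step 1 (well-definedness and continuity).} We show that $(u,w) \mapsto \bmap{A}(u,w)$ defines a continuous map
\begin{equation*}
    \mathcal{U}^{k,p}_{\mu,\delta/2} \times C^k_0(U_{\delta/2}) \to C^0_0(\Hom(E,F)) \cap \big(W^{k,p}_{0}(\Hom(E,F)) + C^k_0(\Hom(E,F))\big).
\end{equation*}
Concretely, we split
\begin{equation*}
    \bmap{A}(u,w) = \big(\bmap{A}(u,w)-\bmap{A}(0,w)\big) + \bmap{A}(0,w).
\end{equation*}
The second term equals $D^{\mathrm{v}}\bmap{f}(w)$. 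Since $w \in C^k_0$ and $D^{\mathrm{v}}\bmap{f}$ is dilation-invariant near $S$, applying \cref{cor: Estimates on derivatives for dilation-invariant maps} with $u_0=0$ gives $D^{\mathrm{v}}\bmap{f}(w) \in C^k_0$, depending continuously (indeed smoothly, by the $C^k_0$-remark preceding the Master lemma) on $w$. The first term is handled by part (ii) of the same corollary, applied with base section $w$ and perturbation $u$. The hypothesis $r^\ell|\nabla^\ell w| \leq A$ for $\ell \leq k/2$ holds because $w \in C^k_0$. The hypothesis on $u$ holds because of the embedding $W^{k,p}_\mu \hookrightarrow C^{\lfloor k/2\rfloor}_\mu \subset C^{\lfloor k/2\rfloor}_0$ from \cref{prop: Sobolev embedding and multiplication}, which uses $p>2m/k$. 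The high-derivative factors appearing in (ii) are either $r^\ell\nabla^\ell w$, which are bounded, or $r^\ell\nabla^\ell u$, which lie in $L^p_{0}$ and multiply bounded low-order terms. Hence the first term lies in $W^{k,p}_0$, with Lipschitz bounds on bounded sets. Then the weighted multiplications $W^{k,p}_0 \times W^{k,p}_\mu \to W^{k,p}_\mu$ (from \cref{prop: Sobolev embedding and multiplication}(ii)) and $C^k_0 \times W^{k,p}_\mu \to W^{k,p}_\mu$ (from \cref{lem: Natural properties of weighted spaces}(v)) show that $\widetilde{\mathcal{F}}(u,w) = \bmap{A}(u,w)\{u\}$ lies in $W^{k,p}_\mu(F)$ and depends continuously on $(u,w)$.

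\textbf{Step 2 (differentiability).} The candidate derivative is
\begin{equation*}
    D\widetilde{\mathcal{F}}_{(u,w)}(v,z) = D^{\mathrm{v}}\bmap{f}(u+w)\{v\} + \big(D^{\mathrm{v}}\bmap{f}(u+w) - D^{\mathrm{v}}\bmap{f}(w)\big)\{z\}.
\end{equation*}
The first term is of the form already treated, since $D^{\mathrm{v}}\bmap{f}(u+w) \in W^{k,p}_0 + C^k_0$ by Step 1. The second term is $\bmap{g}'(u,w)\{z\}$ with $\bmap{g}'(u,w) \coloneqq D^{\mathrm{v}}\bmap{f}(u+w) - D^{\mathrm{v}}\bmap{f}(w)$, a map of exactly the same shape as $\widetilde{\mathcal{F}}$ with $\bmap{f}$ replaced by $D^{\mathrm{v}}\bmap{f}$. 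By Step 1 applied to $D^{\mathrm{v}}\bmap{f}$, it lies in $W^{k,p}_\mu(\Hom(E,F))$, and multiplication by $z \in C^k_0$ lands in $W^{k,p}_\mu(F)$. Continuity of $(u,w)\mapsto D\widetilde{\mathcal{F}}_{(u,w)}$ in operator norm follows from the continuity established in Step 1. That this is the Fréchet derivative follows from the fundamental theorem of calculus along fibres, exactly as in the proof of \cref{lem: Master lemma}.

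\textbf{Step 3 (smoothness).} Smoothness follows by induction: the higher derivatives are again combinations of maps of the form $\bmap{h}(u+w)-\bmap{h}(w)$, with $\bmap{h}$ a higher vertical derivative of $\bmap{f}$, together with maps $\bmap{h}(u+w)$ composed with multiplication.

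The main obstacle is the mixed estimate in Step 1, specifically the $k$-th derivative bound when the base section $w$ is only $C^k_0$ and $u$ has weight $\mu$. One must check that every term of corollary (ii) carries at least one factor of $u$ or its derivatives, which is what yields weight $\mu$ rather than $0$. This holds because $\bmap{g}(0,w)\equiv 0$: every term has a difference $u_1-u_0 = u$ factor.
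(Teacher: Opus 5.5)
Your factorisation $\bmap{f}(u+w)-\bmap{f}(w)=\bmap{A}(u,w)\{u\}$ with the splitting $\bmap{A}(u,w)=(\bmap{A}(u,w)-\bmap{A}(0,w))+D^{\mathrm{v}}\bmap{f}(w)$ is a reasonable alternative to the paper's term-by-term expansion. It correctly shows that $\widetilde{\mathcal{F}}(u,w)\in W^{k,p}_\mu(F)$ and that $\widetilde{\mathcal{F}}$ is Lipschitz in $u$ for fixed $w$.

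The gap is joint continuity in $(u,w)$, which is exactly what distinguishes this lemma from \cref{lem: Master lemma}. In Step 1 you assert that $(u,w)\mapsto\bmap{A}(u,w)-\bmap{A}(0,w)$ is continuous into $W^{k,p}_0(\Hom(E,F))$ ``with Lipschitz bounds on bounded sets''. However, \cref{cor: Estimates on derivatives for dilation-invariant maps}(ii) is only a first-difference estimate. Applied to a perturbation $w\to w'$ in the second slot, it bounds $r^\ell|\nabla^\ell(\bmap{A}(u,w)-\bmap{A}(u,w'))|$ and $r^\ell|\nabla^\ell(\bmap{A}(0,w)-\bmap{A}(0,w'))|$ by quantities of size $\|w-w'\|_{C^k_0}$. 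These do not decay at $S$, so they are not in $L^p_0$, because the measure $r^{-m}\vol_g$ has infinite mass near $S$. Each bracket genuinely fails to decay; only their difference does.

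To control $[\bmap{A}(u,w)-\bmap{A}(0,w)]-[\bmap{A}(u,w')-\bmap{A}(0,w')]$ in $W^{k,p}_0$, you need a mixed second-difference estimate that exploits this cancellation, and nothing you cite supplies it. In fact, continuity of $(u,w)\mapsto\bmap{A}((u,0)+(0,w))-\bmap{A}((0,w))$ is itself an instance of the lemma you are proving, applied to the bundle map $\bmap{A}$ on $E\oplus E$. Steps 2 and 3 inherit the gap, since you deduce the operator-norm continuity of $D\widetilde{\mathcal{F}}$ and the induction from Step 1.

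The missing idea is to make all the $w$-dependence enter only through $C^0_0$-valued coefficients, with exactly one factor carrying the $L^p_\mu$-integrability. Continuity (indeed smoothness) of such coefficients on $C^0_0\times C^0_0$ is elementary. This is what the paper does:
\begin{itemize}
\item For $\ell=0$ it writes $\bmap{f}(u+w)-\bmap{f}(w)=\tilde{\bmap{f}}(u,w)\{u\}$, with $\tilde{\bmap{f}}(u,w)$ regarded only as an element of $C^0_0$.
\item For $\ell\geq 1$ it expands $r^\ell(\nabla^F)^\ell(\bmap{f}(u+w)-\bmap{f}(w))$ into two kinds of terms.
\item The first kind is $\bmap{f}'(u+w)\{r^{j_1}\nabla^{j_1}v_1\otimes\cdots\otimes r^{j_q}\nabla^{j_q}v_q\}$ with $v_i\in\{u,w\}$ and at least one $v_i=u$. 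The highest-order $u$-factor goes to $L^p_\mu$. The lower-order $u$-factors have order at most $k/2$ and go to $C^0_0$ via $W^{k,p}_\mu\hookrightarrow C^{\lfloor k/2\rfloor}_\mu$. The $w$-factors and $\bmap{f}'(u+w)$ also go to $C^0_0$.
\item The second kind is $(\bmap{f}'(u+w)-\bmap{f}'(w))\{w\text{-factors}\}$, which is handled by the $\ell=0$ case.
\end{itemize}
You can repair your own route in the same spirit by writing $\bmap{A}(u,w)-\bmap{A}(0,w)=\bmap{B}(u,w)\{u\}$ and applying the Leibniz rule. Carried out carefully, though, this is the same bookkeeping.
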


\begin{proof}
    By the previous lemma, the map $\tilde{\mathcal{F}}$ is well-defined. To prove smoothness, it is enough to prove that for any $0 \leq \ell \leq k$, $(u,w) \mapsto r^\ell(\nabla^F)^\ell (\bmap{f}(u+w)-\bmap{f}(w))$ defines a smooth Banach map from $\mathcal{U}^{k,p}_{\mu,\delta/2} \times C^{k}_0(U_{\delta/2})$ to $L^p_\mu(T^*M^{\otimes \ell} \otimes F)$. We begin with the case $\ell = 0$.
     
     Using again the fundamental theorem of calculus along the fibres of $U_\delta$, we have for any $u, w \in C^0_{\mathrm{loc}}(U_{\delta/2} )\times C^0_{\mathrm{loc}}(U_{\delta/2})$,
    \begin{equation*}
        \bmap{f}(u+w)-\bmap{f}(w) = \int_0^1 D^{\mathrm{v}} \bmap{f}(w+tu) \{u\} \diff t = \tilde{\bmap{f}}(u,w)\{u\}
    \end{equation*}
    for a smooth bundle map $\tilde{\bmap{f}} \col (U_{\delta/2} \oplus U_{\delta/2} \subset E \oplus E) \to \Hom(E,F)$ with  dilation-invariant ends. In particular, $\tilde{\bmap{f}}$ defines a smooth Banach map from $C^0_0(U_{\delta/2}) \times C^0_0(U_{\delta/2}) \to C^0_0(\Hom(E,F))$. Using the continuous embeddings $W^{k,p}_\mu(E) \hookrightarrow C^0_\mu(E) \hookrightarrow C^0_0(E)$ and $C^0_0(\Hom(E,F)) \subset \mathcal{B}(L^p_\mu(E),L^p_\mu(F))$ (cf. \cref{lem: Natural properties of weighted spaces} and \cref{prop: Sobolev embedding and multiplication}), this proves that $(u,w) \mapsto \bmap{f}(u+w)-\bmap{f}(w)$ defines a smooth Banach map from $\mathcal{U}^{k,p}_{\mu,\delta/2} \times C^{k}_0(U_{\delta/2})$ to $L^p_\mu(F)$.
    
    For the covariant derivatives, let us go back to the proof of \cref{cor: Estimates on derivatives for dilation-invariant maps}. We noted that $r^k (\nabla^F)^k \bmap{f}(u)$ can be written as a sum of terms of the form $\bmap{f}^\prime(u) \{ (r^{j_1} (\nabla^E)^{j_1} u) \otimes \cdots \otimes (r^{j_\ell}(\nabla^E)^{j_\ell} u)\}$ where $\bmap{f}^\prime$ is a dilation-invariant bundle map and $1 \leq j_1 \leq \ldots \leq j_\ell$ satisfy $j_1 + \cdots + j_\ell \leq k$. Thus it suffices to prove that any such term defines a smooth Banach map. Expanding the corresponding term in $r^k (\nabla^E)^k (\bmap{f}(u+w)-\bmap{f}(w))$ we obtain two types of terms: either terms of the form
    \begin{equation*}
    	\bmap{f}^\prime(u+w) \{r^{j_1} (\nabla^E)^{j_1} v_1 \otimes \cdots r^{j_\ell} (\nabla^E)^{j_\ell} v_\ell\}
    \end{equation*}
    where $v_i = u ~ \text{or} ~ w$ and at least one $v_i$ is equal to $w$, or
    \begin{equation*}
    	(\bmap{f}^\prime(u+w)-\bmap{f}^\prime(w))\{ r^{j_1} (\nabla^E)^{j_1} w \otimes \cdots r^{j_\ell} (\nabla^E)^{j_\ell} w\} .
    \end{equation*}
    For the first type of term, if $i_0$ is the largest value of $i$ such that $v_i = w$, then $(u,w) \to r^{j_{i_0}} (\nabla^E)^{j_{i_0}} u$ defines a smooth map into $L^p_\mu(T^*M^{\otimes j_{i_0}} \otimes F)$, and using the Sobolev embedding $W^{k,p}_\mu(E) \hookrightarrow C^{\lfloor \frac{k}{2}\rfloor}_\mu(E)$, all the other terms $(u,w) \to r^{j_i} (\nabla^E)^i v_i$ define a smooth map into $C^0_0(T^* M^{\otimes j_i} \otimes F)$ (because $j_i \leq \frac{k}{2}$ for any $i$ such that $v_{j_i} = u$). On the other hand, as before we can use the case $\ell = 0$ and the continuous embedding $W^{k,p}_\mu \hookrightarrow C^0_0$ to show that $(u,w) \to \bmap{f}^\prime(u+w) = \bmap{f}^\prime(w) + (\bmap{f}(u+w)-\bmap{f}(w))$ defines a smooth map into $C^0_0(\cdot)$ of the appropriate bundle. Using the continuous multiplication map $C^{0}_0(\cdot) \times L^p_\mu(\cdot) \to L^p_\mu(\cdot)$ (\cref{lem: Natural properties of weighted spaces}), it follows that the first type of terms define a smooth Banach map into $L^p_\mu(F)$. As for the second term, by the case $\ell = 0$, $\bmap{f}^\prime(u+w)-\bmap{f}^\prime(w)$ defines a smooth Banach map into $L^p_\mu(\cdot)$ of the appropriate bundle, and the tensor product $r^j_1 (\nabla^{j_1} w) \otimes \cdots r^{j_\ell} (\nabla^E)^{j_\ell} w$ defines a smooth Banach map into $C^0_0(\cdot)$, and as above this shows that $(u,w) \mapsto (\bmap{f}^\prime(u+w)-\bmap{f}^\prime(w))\{ r^{j_1} (\nabla^E)^{j_1} w \otimes \cdots r^{j_\ell} (\nabla^E)^{j_\ell} w\}$ defines a smooth Banach map from $\mathcal{U}^{k,p}_{\mu,\delta/2} \times C^{k}_0(U_{\delta/2})$ to $L^p_\mu(F)$. The lemma follows.
\end{proof}

        \subsection{The Jacobi operator of a conically singular harmonic map}       \label{subsec: The Jacobi operator of a CS harmonic map}

In this section, let $(M,g)$ and $(N,h)$ be two compact Riemannian manifolds, and as in the previous section let $S \subset M$ be a finite subset and $\{\Upsilon_s \col \Ball_1 \to M\}_{s \in S}$ an adapted set of charts, which we assume to be compatible with $g$. Our goal is to describe the mapping properties of the Jacobi operator of a conically singular harmonic map $\phi \col M \setminus S \to N$. 

We first prove in \cref{subsubsec: pulled-back tangent bundles} that that pulled-back tangent bundle $\prescript{\phi}{}{TN}$ naturally has the structure of an adapted bundle. We also introduce the notion of \emph{end-conical map}, which we will use in order to construct Banach manifolds of conically singular maps in \cref{sec: Banach manifolds}. We then describe the mapping properties of the Jacobi operator between weighted Sobolev spaces of sections in \cref{subsubsec: Mapping properties}. This is based on the general theory developed by Lockhart and McOwen \cite{Lockhart1985ellipticOperators_on_noncompact_mfds}, which has since then been generalised in the setting of $b$-calculus by Melrose \cite{Melrose1983bCalculus} (see also \cite{Melrose1993AtyiahPatodiSinger}). Good expositions for these materials can also be found in \cite{Bartnik1986mass_of_ALF}, \cite{Donaldson2002FloerHomology}, \cite{KarigiannisLotay2020conifolds}, and \cite{Pacard2008lecture_notes_connected_sums}. Finally, in \cref{subsubsec: pairing}, we discuss certain duality pairings that are useful to analyse the kernel and co-kernel of the Jacobi operator.

    \subsubsection{Pulled-back tangent bundles}     \label{subsubsec: pulled-back tangent bundles}

By means of illustrating how to use the formalism developed in the previous sections, we shall prove that for any conically singular map $\phi \col M \setminus S \to N$, the pulled-back bundle $(\prescript{\phi}{}{TN},\prescript{\phi}{}{\nabla},h_\phi)$ is adapted.

\begin{prop}        \label{prop: The pulled-back bundle has dilation-invariant ends}
    Let $\phi \col M \setminus S \to N$ be a conically singular map with rate $\mu > 0$. Then the pulled-back tangent bundle $(\prescript{\phi}{}{TN},\prescript{\phi}{}{\nabla},h_\phi)$ is adapted. 
    
    More precisely: if $R \in (0,1)$ is small enough so that for any $s \in S$, $\phi \circ \Upsilon_s = \exp_{\phi_s}(u_s)$ for a collection of dilation-invariant maps $\{ \phi_s \col \R^m \setminus \{0\} \to N\}_{s \in S}$ and sections $u_s \in \Gamma(\prescript{\phi_s}{}{TN})$, defined on the ball $\Ball_R \setminus \{0\}$, such that $\|u_s\|_{C^0} < \inj(N,h)$ and $|\prescript{\phi_s}{}{\nabla}^k u_s | = \mathcal{O}(r^{\mu-k})$ for all $k \in \N_0$, then the bundle maps $\Psi_s$ defined as
    \begin{align*}
        \Psi_s \col \prescript{\phi_s}{}{TN}_{|\Ball_R \setminus \{0\}} & \to \prescript{\phi}{}{TN}_{|\Upsilon_s(\Ball_R) \setminus \{s\}}, \\
        (x,v) & \mapsto D_{u_s(x)} \exp_{\phi_s(x)}\{v\} \coloneqq  \left. \tfrac{\diff}{\diff t} \right|_{t=0} \exp_{\phi_s(x)}(u_s(x)+t v),
    \end{align*}
    where $D_{u_s(x)} \exp_{\phi_s(x)}$ defines an isomorphism from $T_{u_s(x)} (T_{\phi_s(x)} N) \simeq T_{\phi_s(x)} N$ onto $T_{\phi(x)} N$, satisfy
    \begin{equation*}
        |\prescript{\phi_s}{}{\nabla}^k(\Psi_s^* h_{\phi} - h_{\phi_s})| = \mathcal{O}(r^{\mu-k}), ~~~ |\prescript{\phi_s}{}{\nabla}^k(\Psi_s^* (\prescript{\phi}{}{\nabla}) - \prescript{\phi_s}{}{\nabla})| = \mathcal{O}(r^{-1+\mu-k}),
    \end{equation*}
    as $r \to 0$ for all $k \in \N_0$.
\end{prop}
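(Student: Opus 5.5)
The plan is to express both $\Psi_s^* h_\phi - h_{\phi_s}$ and $\Psi_s^*(\prescript{\phi}{}{\nabla}) - \prescript{\phi_s}{}{\nabla}$ as dilation-invariant bundle maps evaluated on $u_s$ (and on $r\prescript{\phi_s}{}{\nabla} u_s$), and then invoke the final assertion of \cref{cor: Estimates on derivatives for dilation-invariant maps}. Over $\Sp^{m-1}$ take $E_0 = \prescript{\varphi_s}{}{TN}$ and $U_0 = \{|u| < \inj(N,h)\}$. Let $P \col U \subseteq TN \to \End(TN)$ be the smooth map $P(q,u)\{v\} = $ parallel transport back to $T_qN$ along the geodesic $t\mapsto\exp_q(tu)$ of $D_u\exp_q\{v\}$; in this frame $\Psi_s$ becomes $P(u_s)$. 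Pulled back by $\varphi_s$ and then by $\pr$, $P$ gives a dilation-invariant bundle map, with $P(0) = \id$.

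For the metric: $(\Psi_s^*h_\phi)(v,w) = h(D_{u_s}\exp\{v\}, D_{u_s}\exp\{w\})$. This equals $\bmap{g}(u_s)\{v,w\}$, where $\bmap{g} \col U \to \mathrm{Sym}^2 E^*$ is induced from the smooth map $(q,u) \mapsto (\exp_q)^*h$ evaluated at $u$, viewed on $T_qN$. It is dilation-invariant, and $\bmap{g}(0) = h_{\phi_s}$. The corollary applied to $\bmap{g} - h_{\phi_s}$ (which vanishes on the zero section) yields $|\prescript{\phi_s}{}{\nabla}^k(\Psi_s^*h_\phi - h_{\phi_s})| = \mathcal{O}(r^{\mu-k})$ directly.

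For the connection, which is the main obstacle, I would compute $\Psi_s^*(\prescript{\phi}{}{\nabla})_X v - \prescript{\phi_s}{}{\nabla}_X v$ for $X$ tangent to $\R^m$. Since $\phi = \exp_{\phi_s}(u_s)$, differentiating $\exp_{\phi_s(x)}(u_s(x)+tv(x))$ along $X$ gives a formula of the form
\begin{equation*}
    \Psi_s^*(\prescript{\phi}{}{\nabla})_X v - \prescript{\phi_s}{}{\nabla}_X v = \bmap{A}(u_s)\{\diff\phi_s(X), v\} + \bmap{B}(u_s)\{\prescript{\phi_s}{}{\nabla}_X u_s, v\}.
\end{equation*}
Here $\bmap{A},\bmap{B}$ are smooth bundle maps on $U \subseteq TN$ built from the second derivatives of $\exp$ (horizontal and vertical, using the Sasaki splitting of $T(TN)$). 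They satisfy $\bmap{A}(0)=0$, since at $u=0$ the map $D\exp$ is the identity to first order horizontally, and $\bmap{B}(0) = 0$, since the vertical second derivative of $\exp_q$ vanishes at the origin (geodesics through the origin are lines). So the difference is a sum of two terms. The first is $\bmap{A}(u_s)$ contracted with $\diff\phi_s$, where $r\,\diff\phi_s$ is dilation-invariant by \cref{rem: Factors of r}; the second is $\bmap{B}(u_s)$ contracted with $r\prescript{\phi_s}{}{\nabla}u_s$, divided by $r$.

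To conclude, apply the corollary to the dilation-invariant map $u \mapsto r\,\bmap{A}(u)\{\diff\phi_s,\cdot\}$. This is a pullback of a map on $\Sp^{m-1}$, vanishing at zero, so it is $\mathcal{O}(r^{\mu-k})$ after $k$ derivatives. Treat $\bmap{B}$ using the enlarged bundle $E \oplus (T^*\R^m\otimes E)$ with section $(u_s, r\nabla u_s)$, which also satisfies the $\mathcal{O}(r^{\mu-k})$ bounds. Then $\bmap{B}(u_s)\{r\nabla u_s,\cdot\}$ is a dilation-invariant map of this pair vanishing at zero, hence also $\mathcal{O}(r^{\mu-k})$. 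Dividing by $r$ and using $|\nabla^j r^{-1}| \lesssim r^{-1-j}$ gives the claimed $\mathcal{O}(r^{-1+\mu-k})$. The delicate point is verifying $\bmap{A}(0)=\bmap{B}(0)=0$ and the correct $r$-weights, and I would check these in normal coordinates on $N$.
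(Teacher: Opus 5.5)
Your proposal is correct and follows essentially the paper's argument. The metric is handled through the dilation-invariant bundle map $u \mapsto h(D_u\exp\{\cdot\},D_u\exp\{\cdot\})$, and the connection through a decomposition $\Psi_s^*(\prescript{\phi}{}{\nabla}) = \prescript{\phi_s}{}{\nabla} + r^{-1}\bmap{A}(u_s) + \bmap{B}(u_s)\{\prescript{\phi_s}{}{\nabla}u_s\}$ with $r\bmap{A}$ and $\bmap{B}$ dilation-invariant and $\bmap{A}(0)=0$, after which \cref{cor: Estimates on derivatives for dilation-invariant maps} gives the estimates. The differences are cosmetic: you get the $r^{-1}$ weight from the explicit factor $r\,\diff\phi_s$ rather than from the paper's abstract dilation-invariance argument, and your claim $\bmap{B}(0)=0$ is true but unnecessary, since that term is already linear in $r\prescript{\phi_s}{}{\nabla}u_s$.
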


\begin{proof}
    Working in a neighbourhood of each singularity, we can consider perturbations of a dilation-invariant map $\phi_0 = \varphi_0 \circ \pr \col \R^m \setminus \{0\} \to N$. Let us define the open sub-bundle 
    \begin{equation*}
        \prescript{\phi_0}{}{U} = \{ u \in \prescript{\phi_0}{}{TN} \mid |u|_{h_{\phi_0}} < \inj(N) \}.
    \end{equation*}
    Then we can define a non-linear bundle map $\bmap{h}_{\phi_0} \col \prescript{\phi_0}{}{U} \to \mathrm{Sym}^2 (\prescript{\phi_0}{}{T^*N})$ by the expression
    \begin{equation*}
        \bmap{h}_{\phi_0}(u) \{v_1,v_2\} = h(D_u \exp_{\phi_0}\{v_1\}, D_u\exp_{\phi_0} \{v_2\}) .
    \end{equation*}
    The bundle map $\bmap{h}_{\phi_0}$ is obviously dilation-invariant, and has the following interpretation: if $u \in \Gamma(\prescript{\phi_0}{}{U})$ and we denote by $\phi_u$ the map $\exp_{\phi_0}(u) \col \R^{m} \setminus \{0\} \to N$ and by $\Psi_u$ the bundle isomorphism $D_u \exp_{\phi_0}$, then $\Psi^*_u h_{\phi_u} = \bmap{h}_{\phi_0}(u)$. Therefore it follows form \cref{cor: Estimates on derivatives for dilation-invariant maps} that for any section $u$ such that $|\prescript{\phi_0}{}{\nabla}^k u| = \mathcal{O}(r^{\mu-k})$ for all $k \in \N_0$, we have $|\prescript{\phi_0}{}{\nabla}^k(\Psi_u^*h_{\phi_u}-h_{\phi_0})| = \mathcal{O}(r^{\mu-k})$. 
    
    We can argue similarly for the estimates on $\Psi^*_u (\prescript{\phi_u}{}{\nabla}) - \prescript{\phi_0}{}{\nabla} \in \Omega^1(\End(\prescript{\phi_0}{}{TN}))$, except that it depends on the first derivative of $u$ as well. More precisely, we claim that there exist two dilation-invariant bundle maps,
    \begin{align*}
        \bmap{A}_{\phi_0} \col \prescript{\phi_0}{}{U} \to T^*\R^m \otimes \End(\prescript{\phi_0}{}{TN}), ~~~ \bmap{B}_{\phi_0} \col \prescript{\phi_0}{}{U} \to \Hom(\prescript{\phi_0}{}{TN}, \End(\prescript{\phi_0}{}{TN}))
    \end{align*}
    such that $\bmap{A}_{\phi_0}$ vanishes along the zero section, and for any $u \in \Gamma(\prescript{\phi_0}{}{U})$ we have
    \begin{equation*}
        \Psi_u^* (\prescript{\phi_u}{}{\nabla}) = \prescript{\phi_0}{}{\nabla} + r^{-1}\bmap{A}_{\phi_0}(u) + \bmap{B}_{\phi_0}(u)\{\prescript{\phi_0}{}{\nabla}u\} .
    \end{equation*}
    Given \cref{cor: Estimates on derivatives for dilation-invariant maps} this would prove the proposition. To prove our claim, we can proceed as in \cref{subsubsec: Dilation-invariant bundle maps}. Using local coordinates as in the proof of \cref{lem: First derivative of bundle map}, it is easy to see that there are bundle maps $\bmap{A}^\prime_{\phi_0}$, $\bmap{B}_{\phi_0}$ such that $\Psi_u^* (\prescript{\phi_u}{}{\nabla}) = \prescript{\phi_0}{}{\nabla} + \bmap{A}^\prime_{\phi_0}(u) + \bmap{B}_{\phi_0}(u)\{\prescript{\phi_0}{}{\nabla}u\}$, and we want to prove that $r \bmap{A}^\prime_{\phi_0}$ and $\bmap{B}_{\phi_0}$ are dilation-invariant. We argue as in the proof of \cref{lem: Derivative of dilation-invariant bundle maps}: if $u_0 \in \Gamma(\prescript{\varphi_0}{}{TN})$ and $u = \pr^* u_0$ is the associated dilation-invariant section of $\prescript{\phi_0}{}{TN}$, then the connection $\Psi_u^*(\prescript{\phi_u}{}{\nabla}) = \pr^*(D_{u_0} \exp_{\varphi_0})^*(\prescript{\varphi_u}{}{\nabla})$ is the pull back of a connection on $\prescript{\varphi_0}{}{TN}$ by the projection $\pr \col \R^m \setminus \{0\} \to \Sp^{m-1}$. By \cref{rem: Factors of r}, it follows that $r(\bmap{A}^\prime_{\phi_0}(u) + \bmap{B}_{\phi_0}(u)\{\nabla^E u\})$ is dilation-invariant. Since $r \nabla^E u$ is also dilation-invariant by the same remark, we easily deduce that $\bmap{A}_{\phi_0} = r \bmap{A}^\prime_{\phi_0}$ and $\bmap{B}_{\phi_0}$ are dilation-invariant. Finally, the vanishing of $\bmap{A}_{\phi_0}$ along the zero section is clear from its definition.
\end{proof}

The previous result also has a useful `global version'. Before stating it, we shall introduce a couple of definitions:

\begin{Def}     \label{Def: End-conical maps I}
    A smooth map $\phi_0 \col M \setminus S \to N$ will be called \emph{end-conical} (with respect to the adapted set of charts $\{\Upsilon_s \col \Ball_1 \to M\}_{s \in S}$) if there exist $R \in (0,1)$ and a set of dilation-invariant maps $\{\phi_s \col \R^m \setminus \{0\} \to N\}$ such that
    \begin{equation*}
        \phi_0 \circ \Upsilon_s (x) = \phi_s(x), ~~~~ \forall x \in \Ball_R \setminus \{0\} .
    \end{equation*}
\end{Def}

\begin{Def}     \label{Def: Bundle isomorphism Theta}
    Let $\phi_0 \col M \setminus S \to N$ be an end-conical map.
    \begin{enumerate}
        \item The open sub-bundle $\prescript{\phi_0}{}{U} \subset \prescript{\phi_0}{}{TN}$ will be defined as
    \begin{equation*}
        \prescript{\phi_0}{}{U} = \{ u \in \prescript{\phi_0}{}{TN} \mid |u|_{h_{\phi_0}} < \inj(N) \} \subset \prescript{\phi_0}{}{TN} .
    \end{equation*}

        \item For any $u \in \Gamma(\prescript{\phi_0}{}{U})$, the map $\exp_{\phi_0}(u) \col M \setminus S \to N$ will be denoted by $\phi_u$.

        \item For any $u \in \Gamma(\prescript{\phi_0}{}{U})$, we define a bundle isomorphism
        \begin{equation*}
            \prescript{\phi_0}{}{\Theta}_u \col \prescript{\phi_0}{}{TN} \to \prescript{\phi_u}{}{TN}, ~ v \mapsto D_u \exp_{\phi_0} \{v\}
        \end{equation*}
        where as in the previous lemma, 
        \begin{equation*}
            D_u \exp_{\phi_0} \{v\} \coloneqq \left. \tfrac{\diff}{\diff t} \right|_{t=0} \exp_{\phi_0}(u+tv)
        \end{equation*}
        is the vertical derivative of the exponential map. In particular for any $x \in M$ we can see $D_{u(x)} \exp_{\phi_0(x)}$ as a linear isomorphism $T_{u(x)}(T_{\phi_0(x)}N) \simeq T_{\phi_0(x)} N \to T_{\exp_{\phi_u(x)}}N$.
    \end{enumerate} 
\end{Def}

In order to see $T^*M$ as an adapted bundle, we shall also fix a background metric $g_0$ on $M$ such that $\Upsilon^*_sg_0 = g_{\R^m}$ on $\Ball_1$ for all $s \in S$ (and not just at the origin), and use parallel transport in the radial directions with respect to the Levi-Civita connection $\nabla^{g_0}$ as in \cref{subsubsec: The tangent bundle as adapted bundle}. Then essentially the same proof as the previous lemma yields the following result:

\begin{lem}     \label{lem: Bundle maps of end-conical maps}
    Let $\phi_0 \col M \setminus S \to N$ be an end-conical map. Then there exist non-linear bundle maps with dilation-invariant ends,
    \begin{align*}
        \bmap{h}_{\phi_0} & \col \prescript{\phi_0}{}{U} \to \mathrm{Sym}^2 (\prescript{\phi_0}{}{TN})^*, \\
        \bmap{A}_{\phi_0} & \col \prescript{\phi_0}{}{U} \to T^*M \otimes \End(\prescript{\phi_0}{}{TN}), \\
        \bmap{B}_{\phi_0} & \col \prescript{\phi_0}{}{U} \to \Hom(\prescript{\phi_0}{}{TN},\End(\prescript{\phi_0}{}{TN}))
    \end{align*}
    such that for any $u \in \Gamma(\prescript{\phi_0}{}{U})$, 
    \begin{equation*}
        \prescript{\phi_0}{}{\Theta}_u^* h_{\phi_u} = h_{\phi_0} + \bmap{h}_{\phi_0}(u), ~~~ \prescript{\phi_0}{}{\Theta}_u^*(\prescript{\phi_u}{}{\nabla}) = \prescript{\phi_0}{}{\nabla} + r^{-1} \bmap{A}_{\phi_0}(u) + \bmap{B}_{\phi_0}(u) \{ \prescript{\phi_0}{}{\nabla}u\} .
    \end{equation*}
    Moreover, $\bmap{h}_{\phi_0}$ and $\bmap{A}_{\phi_0}$ vanish along the zero section of $\prescript{\phi_0}{}{TN}$.
\end{lem}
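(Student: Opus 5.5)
The plan is to copy the proof of \cref{prop: The pulled-back bundle has dilation-invariant ends}. The only changes are that the construction now has to be global on $M \setminus S$, and that on the ends we use the dilation-invariant structure of $T^*M$ fixed by the background metric $g_0$.

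\textbf{Step 1: the metric term.} Define $\bmap{h}_{\phi_0}$ pointwise by
\begin{equation*}
    \bmap{h}_{\phi_0}(u)\{v_1,v_2\} = h\big(D_u\exp_{\phi_0}\{v_1\}, D_u \exp_{\phi_0}\{v_2\}\big) - h_{\phi_0}(v_1,v_2),
\end{equation*}
for $u \in \prescript{\phi_0}{}{U}$. This is a smooth bundle map on $\prescript{\phi_0}{}{U}$. It vanishes along the zero section because $D_0\exp = \id$. It depends only on the point $\phi_0(x)$ and not on $x$ itself. On $\Upsilon_s(\Ball_R)$ we have $\phi_0 \circ \Upsilon_s = \phi_s = \varphi_s \circ \pr$, so its pull-back is $\pr^*$ of the analogous map over $\Sp^{m-1}$ built from $\varphi_s$. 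Hence it has dilation-invariant ends. Here the trivialisations $\Psi^E_s,\Psi^F_s$ are just the identifications $\prescript{\phi_0}{}{TN} \circ \Upsilon_s = \prescript{\phi_s}{}{TN}$, which preserve metrics and connections exactly. The identity $\prescript{\phi_0}{}{\Theta}_u^* h_{\phi_u} = h_{\phi_0} + \bmap{h}_{\phi_0}(u)$ is then the definition.

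\textbf{Step 2: the connection term, splitting off the horizontal part.} Work in local coordinates $x_k$ on $M$ and local frames of $TN$ near $\phi_0(x)$, as in the proof of \cref{lem: First derivative of bundle map}. Since $\phi_u = \exp_{\phi_0}(u)$ depends on $x$ only through $(\phi_0(x), u(x))$, the chain rule gives
\begin{equation*}
    \prescript{\phi_0}{}{\Theta}_u^*(\prescript{\phi_u}{}{\nabla}) - \prescript{\phi_0}{}{\nabla} = \bmap{A}'_{\phi_0}(u) + \bmap{B}_{\phi_0}(u)\{\prescript{\phi_0}{}{\nabla} u\}.
\end{equation*}
In this decomposition:
\begin{itemize}
    \item $\bmap{A}'_{\phi_0}(u)$ is linear in $\diff\phi_0$ and equals $\mathcal{C}(\phi_0,u)\{\diff\phi_0\}$, where $\mathcal{C}$ is a smooth fibrewise tensor built from $\exp$ and its derivatives on $TN$. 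It vanishes at $u = 0$ because $\Theta_0 = \id$.
    \item $\bmap{B}_{\phi_0}(u)$ comes from the fibre derivative of $D\exp$ and likewise depends only on $(\phi_0(x),u)$.
\end{itemize}
Set $\bmap{A}_{\phi_0} := r\bmap{A}'_{\phi_0}$. Away from the balls $\Upsilon_s(\Ball_R)$, the function $r$ is smooth and positive, so $\bmap{A}_{\phi_0}$ is a smooth bundle map there.

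\textbf{Step 3: dilation-invariance of $\bmap{A}_{\phi_0}$ and $\bmap{B}_{\phi_0}$ on the ends.} On each end, $\bmap{B}_{\phi_0}$ is the pull-back under $\pr$ of the corresponding map over $\Sp^{m-1}$, so it is dilation-invariant. For $\bmap{A}_{\phi_0}$, note that $\diff\phi_s = \pr^*\diff\varphi_s$. By \cref{rem: Factors of r}, $r\,\diff\phi_s$ is a dilation-invariant section of $T^*\R^m \otimes \prescript{\phi_s}{}{TN}$. Here we use that $\Upsilon_s^*g_0 = g_{\R^m}$, so the adapted structure on $T^*M$ is the flat radially-parallel one. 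Therefore $r\bmap{A}'_{\phi_0}(u) = \mathcal{C}(\phi_s,u)\{r\,\diff\phi_s\}$ is dilation-invariant. Equivalently, one can rerun the argument of \cref{lem: Derivative of dilation-invariant bundle maps}: test on dilation-invariant sections $u = \pr^* u_0$, for which $\Theta_u^*\prescript{\phi_u}{}{\nabla}$ is a pulled-back connection, and then use \cref{rem: Factors of r}. Since $\mathcal{C}(\cdot,0) = 0$, $\bmap{A}_{\phi_0}$ vanishes along the zero section.

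\textbf{Main obstacle.} The delicate point is Step 3. One must check that the horizontal term really factors through $\diff\phi_0$, and in particular that there is no explicit $x$-dependence. This holds because $\exp$ and $\nabla^h$ live on $N$ and $\phi_0$ is exactly conical on $\Ball_R$. Once that is established, the factor $r^{-1}$ is precisely the scaling of $\diff\phi_s$ measured in the flat structure of $T^*M$.
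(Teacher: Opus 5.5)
Your proposal is correct and follows essentially the paper's route. The paper proves this lemma by rerunning the proof of \cref{prop: The pulled-back bundle has dilation-invariant ends}: it defines $\bmap{h}_{\phi_0}$ through $D_u\exp_{\phi_0}$, splits $\prescript{\phi_0}{}{\Theta}_u^*(\prescript{\phi_u}{}{\nabla})-\prescript{\phi_0}{}{\nabla}$ as $\bmap{A}'_{\phi_0}(u)+\bmap{B}_{\phi_0}(u)\{\prescript{\phi_0}{}{\nabla}u\}$ in local coordinates, and obtains dilation-invariance of $r\bmap{A}'_{\phi_0}$ and $\bmap{B}_{\phi_0}$ by testing on dilation-invariant sections and applying \cref{rem: Factors of r}, which is exactly your alternative in Step 3; your primary Step 3 argument (writing the horizontal term as $\mathcal{C}(\phi_0,u)\{\diff\phi_0\}$ and using that $r\,\diff\phi_s$ is dilation-invariant) is just a slightly more explicit form of the same idea.
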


The previous lemma has the following notable consequences:

\begin{lem}
    Let $u \in C^\infty_\mu(\prescript{\phi_0}{}{TN})$ for some $\mu > 0$ and suppose that $\|u\|_{C^0} < \inj(N)$. Then the map $\phi_u = \exp_{\phi_0}(u)$ is conically singular with rate $\mu$, and moreover for any $k \in \N_0$ we have
    \begin{equation*}
        |\prescript{\phi_0}{}{\nabla}^k(\prescript{\phi_0}{}{\Theta}_u^* h_{\phi_u} - h_{\phi_0}) | = \mathcal{O}(r^{\mu-k}), ~~ \text{and} ~~ |\prescript{\phi_0}{}{\nabla}^k(\prescript{\phi_0}{}{\Theta}_u^* (\prescript{\phi_u}{}{\nabla}) - \prescript{\phi_0}{}{\nabla})| = \mathcal{O}(r^{-1+\mu-k}) .
    \end{equation*}
    Moreover, for any $k \in \N_0$, $p \in [1,\infty)$ and $\nu \in \R$, $\prescript{\phi_0}{}{\Theta}_u$ induces an isomorphism of Banach spaces $W^{k,p}_\nu(\prescript{\phi_0}{}{TN}) \to W^{k,p}_\nu(\prescript{\phi_u}{}{TN})$.
\end{lem}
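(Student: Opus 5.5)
The plan is to reduce everything to \cref{lem: Bundle maps of end-conical maps} combined with \cref{cor: Estimates on derivatives for dilation-invariant maps}, working separately near each singular point and away from $S$. Away from $S$ everything is smooth on a compact set, so all of the claimed estimates are trivial there. Near $s \in S$, the map $\phi_0 \circ \Upsilon_s = \phi_s$ is dilation-invariant on $\Ball_R \setminus \{0\}$. The section $u$ then satisfies $|\prescript{\phi_s}{}{\nabla}^k u| = \mathcal{O}(r^{\mu-k})$ for every $k$, because $u \in C^\infty_\mu$ and the adapted structure of $\prescript{\phi_0}{}{TN}$ near $s$ is exactly the dilation-invariant one. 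Moreover $\phi_u \circ \Upsilon_s = \exp_{\phi_s}(u)$. This is precisely the form required in \cref{Def: Conically singular maps}, with tangent map $\phi_s$ and decaying section $u_s = u\circ\Upsilon_s$, so $\phi_u$ is conically singular with rate $\mu$. (For $\mu\ge 1$ one uses the definition verbatim; the statement only needs the decay.)

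For the two estimates I would use the formulas of \cref{lem: Bundle maps of end-conical maps}. The first one is
\begin{equation*}
    \prescript{\phi_0}{}{\Theta}_u^* h_{\phi_u} - h_{\phi_0} = \bmap{h}_{\phi_0}(u).
\end{equation*}
Here $\bmap{h}_{\phi_0}$ has dilation-invariant ends and vanishes on the zero section, so the final assertion of \cref{cor: Estimates on derivatives for dilation-invariant maps} gives $\mathcal{O}(r^{\mu-k})$ for all derivatives. The second one is
\begin{equation*}
    \prescript{\phi_0}{}{\Theta}_u^*(\prescript{\phi_u}{}{\nabla}) - \prescript{\phi_0}{}{\nabla} = r^{-1}\bmap{A}_{\phi_0}(u) + \bmap{B}_{\phi_0}(u)\{\prescript{\phi_0}{}{\nabla} u\}.
\end{equation*}
In this expression, $\bmap{A}_{\phi_0}(u)$ is $\mathcal{O}(r^{\mu-k})$ after $k$ derivatives, by the same corollary. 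Differentiating $r^{-1}$ produces factors $r^{-1-j}$, so the Leibniz rule gives $\mathcal{O}(r^{-1+\mu-k})$. For the $\bmap{B}$ term, the corollary applied to $\bmap{B}_{\phi_0}(u) - \bmap{B}_{\phi_0}(0)$ together with the fact that $\bmap{B}_{\phi_0}(0)$ is a dilation-invariant section shows that all $r^j\nabla^j\bmap{B}_{\phi_0}(u)$ are bounded. Combining this with $|\nabla^{j}(\prescript{\phi_0}{}{\nabla} u)| = \mathcal{O}(r^{\mu-1-j})$ through the Leibniz rule gives the same rate.

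The Banach isomorphism follows by transferring norms. Let $v \in W^{k,p}_\nu(\prescript{\phi_0}{}{TN})$ and $w = \prescript{\phi_0}{}{\Theta}_u v$. The pointwise norms are comparable because $\prescript{\phi_0}{}{\Theta}_u^* h_{\phi_u} = h_{\phi_0} + \bmap{h}_{\phi_0}(u)$, where the perturbation is $\mathcal{O}(r^\mu)$ and bounded. After shrinking, or simply using continuity and compactness, this metric is uniformly equivalent to $h_{\phi_0}$, since both are positive definite and $u$ takes values in $\prescript{\phi_0}{}{U}$. For derivatives, write $\prescript{\phi_u}{}{\nabla}^\ell w = \prescript{\phi_0}{}{\Theta}_u\big((\prescript{\phi_0}{}{\nabla} + a)^\ell v\big)$ with $a = \prescript{\phi_0}{}{\Theta}_u^*\prescript{\phi_u}{}{\nabla} - \prescript{\phi_0}{}{\nabla}$. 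By the second estimate, $r^{1+j}|\nabla^j a|$ is bounded, and in fact $\mathcal{O}(r^\mu)$. Expanding $(\nabla + a)^\ell v$, every term has the form $\nabla^{j_1}a \otimes\cdots\otimes \nabla^{j_q} v$ with total order $\ell$. Multiplying by $r^{\ell}$, each term becomes a bounded coefficient times $r^{j}\nabla^{j} v$ for some $j\le \ell$. This gives $\|w\|_{W^{k,p}_\nu}\le C\|v\|_{W^{k,p}_\nu}$, which can also be phrased through \cref{lem: Covariant derivatives in weighted spaces} and \cref{lem: Natural properties of weighted spaces}(v). The inverse bound follows symmetrically, writing $\prescript{\phi_0}{}{\nabla} = \prescript{\phi_u}{}{\nabla} - a$ transported back.

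I expect the main obstacle to be the bookkeeping of the adapted structures: one has to check that $T^*M$, taken with the background metric $g_0$ used to define norms, interacts correctly with the $r$-weights in the Leibniz expansions. I would handle this by working in the chart $\Upsilon_s$ with $g_0$ flat there, so that the tensor factors of $T^*M$ are parallel and dilation-invariant, exactly as in \cref{rem: Factors of r}.
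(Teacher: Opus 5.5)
Your proposal is correct and takes essentially the same route as the paper. The two decay estimates come from the formulas of \cref{lem: Bundle maps of end-conical maps} combined with \cref{cor: Estimates on derivatives for dilation-invariant maps}, and the Banach isomorphism comes from the fact that the pulled-back pair $\prescript{\phi_0}{}{\Theta}_u^*(\prescript{\phi_u}{}{\nabla},h_{\phi_u})$ defines weighted Sobolev norms equivalent to those of $(\prescript{\phi_0}{}{\nabla},h_{\phi_0})$; your Leibniz-rule bookkeeping spells out that norm equivalence, which the paper states in one line.
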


\begin{proof}
    The estimates on $\prescript{\phi_0}{}{\Theta}_u^*h_{\phi_u}-h_{\phi_0}$ and $\prescript{\phi_0}{}{\nabla}^k(\prescript{\phi_0}{}{\Theta}_u^* (\prescript{\phi_u}{}{\nabla}) - \prescript{\phi_0}{}{\nabla})$ follow from the previous lemma and \cref{cor: Estimates on derivatives for dilation-invariant maps}. On the other hand, since $\|u\|_{C^0} < \inj(N)$, $\prescript{\phi_0}{}{\Theta}_u \in \Hom(\prescript{\phi_0}{}{TN},\prescript{\phi_u}{}{TN})$ is invertible, and thus the pair of metric and compatible connections $\prescript{\phi_0}{}{\Theta}_u^*(\prescript{\phi_u}{}{\nabla},h_{\phi_u})$ define equivalent notions of weighted Sobolev norms as $(\prescript{\phi_0}{}{\nabla},h_{\phi_0})$. Thus $\prescript{\phi_0}{}{\Theta}_u$ induces a isomorphisms of Banach spaces between weighted Sobolev spaces of sections.
\end{proof}

A variant of the above lemma that will also be useful in order to deform the ends of conically singular maps is when the section $u \in \Gamma(\prescript{\phi_0}{}{TN})$ does not decay near the singularity, but instead is almost dilation-invariant, while remaining smaller in $C^0$-norm than the injectivity radius of $N$.

\begin{lem}     \label{lem: theta induces iso of Banach spaces}
    Let $u \in C^\infty_0(\prescript{\phi_0}{}{TN})$ be a section such that $\|u\|_{C^0} < (1-\delta) \inj(N,h)$ for some $\delta \in (0,1)$, and and assume that there exists $\tilde{u} \in C^\infty_\mu(\prescript{\phi_0}{}{TN})$ and a collection of dilation-invariant sections $v_s \in \Gamma(\prescript{\phi_s}{}{TN})$ such that for all $s \in S$, $u - \tilde{u} = \Psi_s(v_s)$, where $\Psi_s$ is defined as in \cref{prop: The pulled-back bundle has dilation-invariant ends}. Then $\phi_u = \exp_{\phi_0}(u)$ is conically singular with rate $\mu$, and $\prescript{\phi_0}{}{\Theta}_u$ induces an isomorphisms of Banach spaces $W^{k,p}_\nu(\prescript{\phi_0}{}{TN}) \to W^{k,p}_\nu(\prescript{\phi_u}{}{TN})$ for all $k \in \N_0$, $p \in [1,\infty)$ and $\nu \in \R$.
\end{lem}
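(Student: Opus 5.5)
The plan is to reduce to the two previous results by splitting $u$ into its dilation-invariant part and its decaying part near each singularity. Near $s \in S$, in the chart $\Upsilon_s$ on $\Ball_R\setminus\{0\}$, the end-conical map satisfies $\phi_0\circ\Upsilon_s = \phi_s$. Hence $\Psi_s$ (built with $u_s=0$) is simply the identification $\prescript{\phi_s}{}{TN} = \prescript{\phi_0}{}{TN}$ there, and $u = v_s + \tilde u$ with $v_s$ dilation-invariant.

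First I would introduce the dilation-invariant map $\phi_s' := \exp_{\phi_s}(v_s) \col \R^m\setminus\{0\}\to N$. It is smooth and dilation-invariant because $v_s = \pr^*(v_{s,0})$. The key step is to rewrite
\begin{equation*}
    \exp_{\phi_s}(v_s+\tilde u) = \exp_{\phi_s'}(w_s), \qquad w_s := \exp_{\phi_s'}^{-1}\big(\exp_{\phi_s}(v_s+\tilde u)\big).
\end{equation*}
This is well defined for small $r$: the distance between $\exp_{\phi_s}(v_s+\tilde u)$ and $\phi_s'$ is $O(|\tilde u|) = O(r^\mu)$. The correspondence $(v,\tilde u)\mapsto w$ is given by a non-linear bundle map $\bmap{g}$ on a neighbourhood of $\{(v,0)\}$ in $\prescript{\phi_s}{}{TN}\oplus\prescript{\phi_s}{}{TN}$, valued in $\prescript{\phi_s}{}{TN}$ after transporting with $D_{v}\exp_{\phi_s}$. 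It is dilation-invariant and vanishes when $\tilde u=0$. The bound $\|u\|_{C^0}<(1-\delta)\inj(N)$ guarantees uniformity on a compact convex subbundle.

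Next I would apply \cref{cor: Estimates on derivatives for dilation-invariant maps} to $\bmap{g}$ with $v_s$ frozen. For fixed dilation-invariant $v_s$, the map $\tilde u\mapsto \bmap{g}(v_s,\tilde u)$ is a dilation-invariant bundle map vanishing at $0$. Since $|\nabla^k\tilde u| = O(r^{\mu-k})$, the corollary gives $|\nabla^k w_s| = O(r^{\mu-k})$. Pulling back by the isomorphism $D_{v_s}\exp_{\phi_s}$ via \cref{prop: The pulled-back bundle has dilation-invariant ends} turns this into decay of $w_s$ as a section of $\prescript{\phi_s'}{}{TN}$. Thus $\phi_u\circ\Upsilon_s = \exp_{\phi'_s}(w_s)$, and $\phi_u$ is conically singular with rate $\mu$ and tangent maps $\phi_s'$.

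For the Banach isomorphism, I would factor $\prescript{\phi_0}{}{\Theta}_u$ near $s$ through $\prescript{\phi_s'}{}{TN}$:
\begin{equation*}
    \prescript{\phi_0}{}{\Theta}_u = \Psi'_s\circ \big((\Psi'_s)^{-1}\circ D_{u}\exp_{\phi_0}\big),
\end{equation*}
where $\Psi'_s$ is the map of \cref{prop: The pulled-back bundle has dilation-invariant ends} for $\phi_u$ with tangent $\phi_s'$ and section $w_s$. That proposition shows $\Psi'_s$ intertwines metrics and connections up to $O(r^{\mu-k})$ and $O(r^{-1+\mu-k})$ errors, so it induces an isomorphism of weighted Sobolev spaces. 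The remaining factor is $\bmap{E}(u)$ for a bundle map with dilation-invariant ends valued in $\End$. It is the sum of an invertible dilation-invariant endomorphism $\bmap{E}(v_s)$, which together with its inverse lies in $C^k_0$, and a $C^\infty_\mu$ error. Hence it is invertible in $C^k_0(\End)$, and multiplication by it and its inverse preserves $W^{k,p}_\nu$ by \cref{lem: Natural properties of weighted spaces}. Away from $S$ everything is smooth and invertible because $|u|<\inj(N)$.

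The main obstacle I expect is the bookkeeping in the key step: showing that $\bmap{g}$ really is dilation-invariant and has uniform bounds. This requires the compactness supplied by the margin $\delta$, together with $r$ being small enough that $|\tilde u|$ stays in a fixed compact convex region where $\exp_{\phi_s'}^{-1}$ is defined.
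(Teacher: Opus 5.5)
Your proof is correct and takes essentially the same route as the paper: near each $s$ you split $u$ into the dilation-invariant end $v_s$, controlled by dilation-invariance of the relevant bundle maps and their inverses using the margin $\delta$, and the decaying part $\tilde u$, controlled exactly as in the previous lemma via \cref{prop: The pulled-back bundle has dilation-invariant ends}. Your explicit factorisation $\prescript{\phi_0}{}{\Theta}_u = \Psi'_s \circ \bmap{E}$, with $\bmap{E}$ an invertible dilation-invariant term plus a $C^\infty_\mu$ error, spells out the step the paper compresses into ``using the previous lemma, it is enough to treat the case when $u$ has dilation-invariant ends''.
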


\begin{proof}
    Using the previous lemma, it is enough to treat the case when the section $u$ has dilation-invariant ends, that is, $u = \Psi_s(v_s)$ in a neighbourhood of each $s \in S$. Then it is clear that $\phi_u$ is also an end-conical map, and $\prescript{\phi_0}{}{\Theta}_u \in \Hom(\prescript{\phi_0}{}{TN},\prescript{\phi_u}{}{TN})$ is dilation-invariant near the singular set $S$, and thus by \cref{lem: Natural properties of weighted spaces} it induces a bounded linear map between weighted spaces of sections. Moreover, since $\|u\|_{C^0} < (1-\delta) \inj(N,h)$ the bundle map $\prescript{\phi_0}{}{\Theta}_u$ is invertible and the inverse is dilation-invariant near $S$, whence the inverse induces a bounded linear map on weighted Sobolev spaces.
\end{proof}

    \subsubsection{Mapping properties of the Jacobi operator}      \label{subsubsec: Mapping properties}

In this section, let us fix a conically singular harmonic map $\phi \col (M \setminus S, g) \to (N,h)$ with rate $\mu > 0$. By \cref{prop: The pulled-back bundle has dilation-invariant ends}, the pulled-back bundle $\prescript{\phi}{}{TN}$ is adapted, and we have a family of bundle isomorphisms $\Psi_s \col \prescript{\phi_s}{}{TN}_{|\Ball_R} \to \prescript{\phi}{}{TN}_{|\Upsilon_s(\Ball_R) \setminus \{s\}}$ covering the adapted set of charts $\{\Upsilon_s \col \Ball_R \to M\}_{s \in S}$ for some $R \in (0,1)$, such that $\Upsilon_s^{*}g(0) = g_{\R^m}$ and $\phi_s \col \R^m \setminus \{0\} \to N$ are the dilation-invariant harmonic maps modelling the singularities of $\phi$. Our goal in this section is to study the properties of the Jacobi operator $J^\phi \col \Gamma(\prescript{\phi}{}{TN}) \to \Gamma(\prescript{\phi}{}{TN})$. Its properties are closely related to those of the model Jacobi operators $J^{\phi_s}$. Let us define for any $s \in S$ the operator $J^s = \Psi^{-1}_s \circ J^\phi \circ \Psi_s$, which is an elliptic operator acting on $\prescript{\phi_s}{}{TN}_{|\Ball_R}$. Then using similar arguments as in the previous part, one can see that $J^s$ may be written as
\begin{equation*}
    J^s = J^{\phi_s} + P_s
\end{equation*}
where the differential operator $P_s$ is of the form
\begin{equation*}
    P_s(v) = \bmap{P}_2(\prescript{\phi_s}{}{\nabla}^2 u) + r^{-1} \bmap{P}_1(\prescript{\phi_s}{}{\nabla} u) + r^{-2} \bmap{P}_0(u)
\end{equation*}
where the coefficients have the following decay properties:
\begin{itemize}
	\item $\bmap{P}_0 \in C^\infty_\mu(\Hom(\prescript{\phi_s}{}{TN},\prescript{\phi_s}{}{TN}))$,
    \item $\bmap{P}_1 \in C^\infty_\mu(\Hom(T^*M \otimes \prescript{\phi_s}{}{TN}, \prescript{\phi_s}{}{TN}))$, and
        \item $\bmap{P}_2 \in C^\infty_\mu(\Hom(T^*M^{\otimes 2} \otimes \prescript{\phi_s}{}{TN}, \prescript{\phi_s}{}{TN}))$.
\end{itemize}
Hence the operator $J^\phi$ is asymptotic to the operator $J^{\phi_s} = J^{\varphi_s}_C$ near each $s \in S$, where we write $\phi_s = \varphi_s \circ \pr$ for a smooth harmonic map $\varphi_s \col \Sp^{m-1} \to (N,h)$, up to error terms decaying like $r^\mu$. For such operators on adapted bundles, there is a well-established theory describing the mapping properties on weighted Sobolev spaces. We shall not attempt to give a detailed account of the theory in this section, and will restrict ourselves to stating the results relevant to us in the specific case of the Jacobi operator; see for instance \cite{Lockhart1985ellipticOperators_on_noncompact_mfds,Melrose1983bCalculus} for original references. 

From the expansion of the operator near the singularities, it is easy to deduce for instance from \cref{lem: Natural properties of weighted spaces} that $J^\phi$ induces a bounded map $J^\phi \col W^{k+2,p}_\nu(\prescript{\phi}{}{TN}) \to W^{k,p}_{\nu-2}(\prescript{\phi}{}{TN})$ for any $k \in \N_0$, $p \in (1,\infty)$ and $\nu \in \R$. Moreover, analogously to elliptic operators on compact manifolds, there are \emph{a priori estimates} of the form
\begin{equation}
    \|u\|_{W^{k+2,p}_\nu} \leq C (\|J^\phi u\|_{W^{k,p}_{\nu-2}} + \|u\|_{L^p_\nu}) 
\end{equation}
for any $u \in W^{k+2,p}_\nu(\prescript{\phi}{}{TN})$, where the constant $C > 0$ depends on $k,p,\nu$ but not on the choice of section. Unlike the compact case however, these estimates do not imply that $J^\phi$ induces a Fredholm map on weighted Sobolev spaces, essentially because the embedding $W^{2,p}_\nu(\prescript{\phi}{}{TN}) \hookrightarrow L^p_\nu(\prescript{\phi}{}{TN})$ fails to be compact. Instead, the Fredholm property depends on the choice rate $\nu \in \R$ and the behaviour of the model Jacobi operators $J^{\phi_s}$ near the singular set $S$.

In the remainder of this paper, we shall call $\{J^{\phi_s}\}_{s \in S}$ the set of \emph{indicial operators} of $J^\phi$, and $\nu \in \R$ an indicial root if it is a root of one of the indicial operators $J^{\phi_s} = J^{\varphi_s}_C$. The set of indicial roots will be denoted by $\mathcal{D}(J^{\phi})$. The Fredholm properties of the Jacobi operator are described in the following proposition, which from a general theorem of Lockhart--McOwen \cite{Lockhart1985ellipticOperators_on_noncompact_mfds}:

\begin{prop}\label{prop: mapping properties Jacobi operator}
    Let $\nu \in \R$, $k \in \N_0$ and $p \in (1,\infty)$. 
    \begin{enumerate}[(i)]
        \item The Jacobi operator $J^\phi$ induces a Fredholm map $J^\phi_\nu \col W^{k+2,p}_\nu(\prescript{\phi}{}{TN}) \rightarrow W^{k,p}_{\nu-2}(\prescript{\phi}{}{TN})$ if and only if $\nu$ is not an indicial root.

        \item If $\nu$ is not an indicial root, $\im(J^\phi_\nu) \subset W^{k,p}_{\nu-2}(\prescript{\phi}{}{TN})$ is the $L^2$-orthogonal complement of the finite-dimensional subspace $\ker(J^\phi) \cap C^{\infty}_{-m-\nu+2}(\prescript{\phi}{}{TN})$.

        \item If $\nu \in \mathcal{D}(J^\phi)$ and there are no other roots in the interval $[\nu-\varepsilon,\nu+\varepsilon]$, then
        \begin{equation*}
            \Ind(J^\phi_{\nu-\varepsilon})-\Ind(J^\phi_{\nu+\varepsilon}) = \sum_{s \in S} \dim(\mathcal{K}_\nu(J^{\varphi_s}_C)).
        \end{equation*}
    \end{enumerate}
\end{prop}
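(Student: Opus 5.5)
The plan is to reduce everything to the general Lockhart--McOwen theory for elliptic operators asymptotic to conical (b-) operators, after transforming to cylindrical coordinates. Near each $s \in S$ we put $t = -\log r$. Multiplying by $r^2$, the operator $r^2 J^\phi$ becomes, via the identification $\Psi_s$ and \cref{prop: Jacobi operator of a cone}, a perturbation of the translation-invariant operator
\begin{equation*}
    -\partial_t^2 + (m-2)\partial_t + J^{\varphi_s}
\end{equation*}
on the half-cylinder $(-\log R,\infty) \times \Sp^{m-1}$. The error $r^2 P_s$ has coefficients in $C^\infty_\mu$, hence decays like $e^{-\mu t}$ with all derivatives. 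Under the same change of variables, the weighted space $W^{k,p}_\nu$ with measure $r^{-m}\vol_g$ becomes the cylindrical space $e^{\nu t}\,W^{k,p}$, uniformly in the translation-invariant sense. This works because the measure $r^{-m}\diff x$ is exactly $\diff t\,\vol_{\Sp^{m-1}}$, and the factors $r^{\ell}\nabla^\ell$ become $\partial^\ell$ in cylindrical coordinates. Multiplication by $r^2$ is an isomorphism $W^{k,p}_{\nu-2} \to W^{k,p}_\nu$ by the example following the definition of weighted spaces, so $J^\phi_\nu$ is conjugate to an operator $e^{\nu t}W^{k+2,p} \to e^{\nu t}W^{k,p}$ that is asymptotically translation-invariant on the ends.

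For (i), I would apply the Lockhart--McOwen theorem. The operator is Fredholm exactly when the conjugated model $-\partial_t^2 + (m-2)\partial_t + J^{\varphi_s}$, acting on $e^{\nu t}$-weighted spaces, is invertible on the full cylinder. Taking the Fourier--Laplace transform in $t$, invertibility fails exactly when $\zeta(\zeta+m-2) \in \Spec(J^{\varphi_s})$ for some $\zeta$ with $\Re \zeta = \nu$, that is, when $\nu$ lies in $\mathcal{D}(J^\phi)$ by \cref{prop: Description of the roots}.

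For (ii), I would use duality. With respect to the unweighted $L^2$ pairing $\int_M\langle u,v\rangle\vol_g$, the dual of $L^{p}_{\nu-2}$ is $L^{p'}_{-m-\nu+2}$: the pairing $\int r^{-m}(r^{-\nu+2}|u|)(r^{\nu-2+m}|v|)$ shows the exponents add to $-m$ once the $r^{-m}$ of the measure is accounted for. Since $J^\phi$ is formally self-adjoint, the cokernel of $J^\phi_\nu$ is identified with the kernel of $J^\phi$ on $W^{2,p'}_{2-m-\nu}$. Closedness of the range follows from Fredholmness. By local elliptic regularity together with the a priori estimates on annuli at every scale, these kernel elements are smooth and lie in $C^\infty_{2-m-\nu}$. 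Here one first uses the weighted Sobolev embedding of \cref{prop: Sobolev embedding and multiplication} with a slightly smaller rate. To recover the exact rate one then uses that $2-m-\nu$ is itself non-critical, because the roots are symmetric under $\nu \mapsto 2-m-\nu$; the asymptotic expansion of kernel elements then improves the rate.

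For (iii), I would use the standard change-of-index formula. An element $u$ of the kernel at rate $\nu-\varepsilon$ has an asymptotic expansion near each $s$ whose leading part at rate $\nu$ is a polyhomogeneous element of $\mathcal{K}_\nu(J^{\varphi_s}_C)$, plus a remainder in $W_{\nu+\varepsilon}$. This expansion comes from inverting the model operator on the end and shifting the contour across the line $\Re\zeta=\nu$. The resulting map, combined with the pairing of \cref{prop: Polyhomoegenous pairing} and the cokernel description in (ii), gives the exact jump $\sum_s\dim\mathcal{K}_\nu$. Equivalently, one can glue model solutions $\chi r^\zeta u$ to produce the right number of new kernel or cokernel elements, with the non-degeneracy of $(\cdot,\cdot)_\nu$ guaranteeing exact matching.

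The main obstacle is the Fredholm part (i): producing a parametrix on the ends via the Fourier--Laplace transform and controlling the decaying perturbation uniformly. The perturbation is handled because it is small in operator norm on $\{t > T\}$ for $T$ large. I would cite \cite{Lockhart1985ellipticOperators_on_noncompact_mfds} for this step rather than redo it, and check carefully only the identification of the weights.
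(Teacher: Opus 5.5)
Your proposal is correct and follows the paper's route. The paper gives no independent proof here; it invokes Lockhart--McOwen theory, with the subsequent remark sketching the integration by parts behind (ii). Your change of variables $t=-\log r$, the duality $L^p_{\nu-2}\leftrightarrow L^{p'}_{2-m-\nu}$, and the index-jump argument through the pairing $(\cdot,\cdot)_\nu$ are exactly the standard ingredients of that theory.

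One small refinement concerns (ii). The weighted Sobolev embedding already holds at the same rate, so non-criticality of $2-m-\nu$ is not needed to recover the exact rate. It is needed for the reverse inclusion $\ker(J^\phi)\cap C^\infty_{2-m-\nu}\subset W^{2,p'}_{2-m-\nu}$, via improvement to rate $2-m-\nu+\delta$, and that inclusion is what identifies the cokernel with the space in the statement.
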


\begin{rem}
    It may be helpful to comment on the orthogonality condition in point (ii) and the duality between rate $\nu-2$ and rate $m-\nu+2$. If $J^\phi u \in W^{k,p}_{\nu-2}$ where $u \in W^{k+2,p}_\nu(\prescript{\phi}{}{TN})$ and $v \in \ker(J^\phi) \cap C^\infty_{-m-\nu+2}(\prescript{\phi}{}{TN})$, one can prove (cf. the next lemma) that since $\nu$ is not an indicial root, $v \in C^\infty_{-m-\nu+2+\varepsilon}(\prescript{\phi}{}{TN})$ for some small $\varepsilon > 0$. In particular, since $C^\infty_{-m-\nu+2+\varepsilon} \subset L^q_{-m-\nu+2+\varepsilon}$ for any $q > 1$, H\"older's inequality implies that $\langle J^\phi u, v \rangle_{h_\phi}$ is $L^1$; and in fact we have just enough control on the asymptotic behaviour to integrate by part and prove that $\langle J^\phi u, v \rangle_{L^2} = \langle u, J^\phi v \rangle_{L^2} = 0$. This implies that the image $\im(J^\phi_\nu) \subset W^{k,p}_{\nu-2}(\prescript{\phi}{}{TN})$ must be orthogonal to $\ker(J^\phi) \cap C^{\infty}_{-m-\nu+2}(\prescript{\phi}{}{TN})$. The difficult part is to prove that this orthogonality condition precisely characterises the image.
\end{rem}

Lockhart--McOwen theory also allows us to analyse the highest order terms of the kernels of the Jacobi operator:

\begin{prop}     \label{lem: kernels at different rates and leading order term}
    Let $\nu < \nu^\prime \in \R$.
    \begin{enumerate}[(i)]
        \item Suppose that there are no critical rates in the interval $[\nu,\nu^\prime]$. Then $\ker(J^\phi_\nu) = \ker(J^\phi_{\nu^\prime})$. 
        \item Suppose that $\nu \in \mathcal{D}(J^\phi)$, and let $\varepsilon \in (0,\mu)$ be small enough that $[\nu-\varepsilon,\nu+\varepsilon] \cap \mathcal{D}(J^\phi) = \{\nu\}$.  Then there is a map $\kappa_\nu = (\kappa^s_\nu)_{s\in S} \col \ker(J^\phi) \cap C^\infty_{\nu-\varepsilon}(\prescript{\phi}{}{TN}) \to \oplus_{s \in S} \mathcal{K}_\nu(J^{\varphi_s}_C)_\nu$ such that the following holds. For any $u \in \ker(J^\phi_\nu)$, there exists a section $\tilde u \in C^\infty_{\nu+\varepsilon}(\prescript{\phi}{}{TN})$, such that near any $s \in S$,
        \begin{equation*}
            u - \tilde{u} = \Psi_s(\kappa^s_\nu(u)) .
        \end{equation*}
    \end{enumerate}
\end{prop}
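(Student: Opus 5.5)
Part (i) is a regularity-plus-comparison statement and part (ii) is an asymptotic expansion. Both rest on the same mechanism: near each $s \in S$, pass to the cylindrical variable $t = \log r$ and treat $J^\phi$ as a small perturbation of the translation-invariant operator $J^{\varphi_s}_C$. Recall from the discussion above that $\Psi_s^{-1} J^\phi \Psi_s = J^{\varphi_s}_C + P_s$ with $P_s$ having $C^\infty_\mu$ coefficients. After multiplying by $r^2$, the equation $J^\phi u = 0$ near $s$ becomes
\begin{equation*}
  -\partial_t^2 u - (m-2)\partial_t u + J^{\varphi_s} u = -r^2 P_s u ,
\end{equation*}
on the half-cylinder $(-\infty,\log R)\times\Sp^{m-1}$. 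The right-hand side decays like $e^{\mu t}$ times the $C^2$-size of $u$. I would decompose into eigenspaces of the self-adjoint operator $J^{\varphi_s}$ as in the proof of \cref{prop: Description of the roots}. For each eigenvalue this gives a scalar (vector-valued) constant-coefficient ODE with exponents $\zeta^\pm_\lambda$, and the critical rates are exactly these exponents.

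\textbf{Step 1 (improvement lemma).} For $u \in W^{k+2,p}_{\nu}$ with $J^\phi u = f$, where $f \in W^{k,p}_{\nu'-2}$ and $\nu<\nu'$, and assuming no critical rate in $[\nu,\nu']$, I would show $u \in W^{k+2,p}_{\nu'}$.

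The argument is to cut off near $s$ and write $\chi u$ as a solution of $J^{\varphi_s}_C(\chi u) = g$, with $g = \chi f - \chi P_s u + [J^{\varphi_s}_C,\chi]u$. The error term $P_s u$ gains a factor $r^\mu$, so $g \in W^{k,p}_{\min(\nu+\mu,\nu')-2}$. For the model operator one uses the translation-invariant theory on the cylinder. Concretely, apply the Fourier–Laplace transform in $t$, or variation of constants mode by mode. Because there is no root on the lines $\Re\zeta \in [\nu,\nu']$, the unique solution at rate $\nu$ of $J^{\varphi_s}_C w = g$ lies in rate $\min(\nu+\mu,\nu')$. Uniqueness holds because a difference of two solutions would be a kernel element of the model operator at a non-critical rate, which must vanish. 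High-frequency modes are handled uniformly by the a priori estimate together with the spectral gap.

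Iterating in steps of size $\mu$ gives the improvement. Elliptic bootstrapping, via the a priori estimate and its rescaled version on annuli, upgrades Sobolev regularity to $C^\infty_{\nu'}$ for kernel elements. Part (i) follows: $\ker J^\phi_{\nu'} \subset \ker J^\phi_\nu$ trivially, and the improvement lemma gives the reverse inclusion.

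\textbf{Step 2 (part (ii)).} Take $u \in \ker(J^\phi)\cap C^\infty_{\nu-\varepsilon}$. Apply the same mode decomposition, now with the single critical rate $\nu$ lying in $(\nu-\varepsilon,\nu+\varepsilon)$, and $g = -\chi P_s u + [\,\cdot\,,\chi]u$ of rate $\nu-\varepsilon+\mu-2 > \nu+\varepsilon-2$ after shrinking $\varepsilon<\mu/2$. If needed, first apply the gain of Step 1 below $\nu$ so that $g$ has rate $\nu+\varepsilon-2$. Solving the ODE for each eigenvalue $\lambda$ by variation of constants, the solution decaying at rate $\nu+\varepsilon$ differs from $\chi u$ by a homogeneous solution. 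For modes with both exponents off the lines $\Re\zeta\in(\nu-\varepsilon,\nu+\varepsilon]$, the homogeneous solution is excluded by the rate constraints. For the finitely many modes with $\lambda=\lambda_\nu$, what remains is exactly an element of $\mathcal K(J^{\varphi_s}_C)_\nu$ as described in \cref{lem: the space of polyhomogeneous kernel elements}, including the $\log r$ terms in the double-root case. Define $\kappa^s_\nu(u)$ to be this element.

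Linearity and well-definedness of $\kappa_\nu$ follow from uniqueness of the decomposition: a polyhomogeneous section of rate exactly $\nu$ that lies in $C^\infty_{\nu+\varepsilon}$ must be zero. Setting $\tilde u = u - \sum_s \chi_s\Psi_s(\kappa^s_\nu(u))$ gives $\tilde u \in C^\infty_{\nu+\varepsilon}$, again after the bootstrapping argument.

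\textbf{Main obstacle.} The hard step is the uniform control of the infinitely many modes in Step 1, namely turning the per-mode ODE estimates into weighted $W^{k,p}$ bounds with $p\neq 2$. I would first do the argument in $L^2$-based spaces, where Parseval makes the mode-wise bounds sum, and then transfer to $W^{k,p}$ using local elliptic estimates on dyadic annuli, which are scale-invariant by dilation. Alternatively, one can simply invoke the general Lockhart–McOwen asymptotic expansion theorem as the paper does for \cref{prop: mapping properties Jacobi operator}.
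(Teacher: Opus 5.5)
Your proposal is correct. Note, however, that the paper gives no argument of its own for this proposition. It quotes the statement as a consequence of Lockhart--McOwen theory, pointing to Lockhart--McOwen and Melrose, in the same way as the Fredholm statement just before it.

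What you have written is essentially the standard mechanism behind that theory, specialised to the Jacobi operator:
\begin{itemize}
\item pass to $t=\log r$;
\item invert the translation-invariant model $-\partial_t^2-(m-2)\partial_t+J^{\varphi_s}$ mode by mode on the full cylinder, after extending $\chi u$ by zero, since that is the setting in which uniqueness at a non-critical rate makes sense;
\item treat $r^2P_s$ as an $O(e^{\mu t})$ perturbation and bootstrap in steps of $\mu$;
\item read off the leading term as the homogeneous solution whose exponents lie on the line $\Re\zeta=\nu$.
\end{itemize}
The citation buys brevity and the $W^{k,p}$ theory for $p\neq 2$ at no cost. Your route buys a self-contained justification that shows where each hypothesis is used. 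For (i), this is the absence of roots in the whole strip $[\nu,\nu^\prime]$. For (ii), it is the preliminary gain below $\nu$ that is needed when $\varepsilon\geq\mu/2$. Invoking Step~1 with a source term handles this correctly, which matters because the statement allows any $\varepsilon<\mu$.

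Two small precisions are worth making.

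First, the proposition is stated without assuming that $-(m-3)$ and $0$ are the only non-positive eigenvalues of $J^{\varphi_s}$. In that generality, the modes retained at $\nu=-\tfrac{m-2}{2}$ are all eigenvalues $\lambda\leq-\tfrac{(m-2)^2}{4}$, not only the modes with $\lambda=\lambda_\nu$. These also produce the oscillatory part $\mathcal{K}^{\prime\prime}(J^{\varphi_s}_C)_{\nu}$. There are still only finitely many such modes, since the spectrum of $J^{\varphi_s}$ is discrete and bounded below.

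Second, your transfer from $L^2$-based to $W^{k,p}$-based spaces costs an arbitrarily small amount of weight. For example, $r^\nu\in C^\infty_\nu$ but only $r^\nu\in W^{k,p}_{\nu-\delta}$ for $\delta>0$. This is harmless because the critical set is discrete, so all the conditions on the rates are open.
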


    \subsubsection{Duality pairings between kernel and obstructions}	\label{subsubsec: pairing}

We finish this section with a few technical results that will be useful in order to analyse the obstructions of the linearised deformation problem in \cref{sec: The deformation theorem}.

\begin{prop}
    Let $\nu \in \mathcal{D}(J^\phi)$, and let $\varepsilon \in (0,\mu)$ such that $[\nu-\varepsilon, \nu+\varepsilon] \cap \mathcal{D}(J^\phi) = \{\nu\}$. Suppose that $u \in \ker(J^\phi_{2-m-\nu-\varepsilon}) \subset C^\infty_{2-m-\nu-\varepsilon}(\prescript{\phi}{}{TN})$ and let $ v \in C^\infty_{\nu-\varepsilon}(\prescript{\phi}{}{TN})$ such that there exists $\tilde v \in C^\infty_{\nu+\varepsilon}(\prescript{\phi}{}{TN})$ and $(v_s)_{s \in S} \in \oplus_{s \in S} \mathcal{K}(J^{\varphi_s}_C)_\nu$ such that near $s \in S$, $v - \tilde v = \Psi_s v_s$.  Then the function $\langle J^\phi(v), u \rangle_{h_\varphi}$ is integrable on $M$, and moreover
    \begin{equation*}
        \int_M \langle J^\phi(v), u \rangle_{h_\varphi} \vol_{g} = \sum_{s \in S} (v_s,\kappa_{2-m-\nu}^s(u))_\nu .
    \end{equation*}
\end{prop}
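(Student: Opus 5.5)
The plan is to reduce everything to the model pairing $(\cdot,\cdot)_\nu$ near each singular point, using a cutoff and integration by parts on the complement of small balls. Fix a cutoff $\chi \col [0,\infty) \to \R$ as in the definition of $(\cdot,\cdot)_\nu$, with $\chi \equiv 1$ near $0$ and support in $[0,R)$. For $t>0$ small, set $\chi_t = \chi(r/t)$, understood as a function on $M$ supported in $\bigcup_s \Upsilon_s(\Ball_{tR})$.

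First I would check integrability. Since $J^\phi v = J^\phi(v-\tilde v) + J^\phi \tilde v$, the term $J^\phi \tilde v$ lies in $C^\infty_{\nu+\varepsilon-2}$. For the other term, near $s$ we have $J^\phi \Psi_s v_s = \Psi_s (J^{\phi_s} + P_s) v_s = \Psi_s P_s v_s$, because $v_s \in \mathcal{K}(J^{\varphi_s}_C)_\nu$. The error terms $P_s$ carry an extra factor $r^\mu$, so this lies in $C^\infty_{\nu-2+\mu-\varepsilon'}$; the $\varepsilon'$ absorbs the $\log$ factors when $m=4$, $\nu=-1$. Since $\varepsilon<\mu$, we get $J^\phi v = \mathcal{O}(r^{\nu-2+\varepsilon''})$ for some $\varepsilon''>0$. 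Combined with $u = \mathcal{O}(r^{2-m-\nu-\varepsilon})$, after shrinking $\varepsilon$ appropriately (we may take the exponent of $u$ to be $2-m-\nu-\varepsilon'''$ with $\varepsilon'''<\varepsilon''$, since $u$ also lies in the kernel at nearby non-critical rates), the product is $\mathcal{O}(r^{-m+\delta})$ with $\delta>0$, hence integrable.

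Next, write $\int_M \langle J^\phi v,u\rangle = \lim_{t\to 0}\int_M (1-\chi_t)\langle J^\phi v,u\rangle$, which holds by dominated convergence. Since $(1-\chi_t)v$ is smooth with compact support in $M\setminus S$ and $J^\phi u=0$, formal self-adjointness gives $\int_M \langle J^\phi((1-\chi_t)v),u\rangle = 0$. Hence
\[
\int_M (1-\chi_t)\langle J^\phi v,u\rangle = \int_M \langle J^\phi(\chi_t v) - \chi_t J^\phi v, u\rangle ,
\]
a commutator term supported in the annuli $tR/2\lesssim r\lesssim tR$. It involves only $\chi_t', \chi_t''$ and one derivative of $v$.

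Now decompose $v = \Psi_s v_s + \tilde v$ and $u = \Psi_s \kappa^s_{2-m-\nu}(u) + \tilde u$ near $s$, using \cref{lem: kernels at different rates and leading order term}(ii) applied at the rate $2-m-\nu$, which is also critical. The commutator is bilinear. Any term containing $\tilde v$ or $\tilde u$ has size $r^{-m+\varepsilon}\cdot r^{-2}\cdot r^2$ on a region of volume $\sim t^m$, so it is $\mathcal{O}(t^{\varepsilon})$ and tends to $0$. Replacing $J^\phi$ by $J^{\phi_s}$, and the metrics $g, h_\phi, \Psi_s$ by their models, likewise costs $\mathcal{O}(t^{\mu})$, up to logarithms.

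The remaining leading term is, by scaling invariance of the model, independent of $t$. It equals
\[
\int\langle J^{\varphi_s}_C(\chi(r/t)v_s) - \chi(r/t)J^{\varphi_s}_C v_s, \kappa^s\rangle r^{m-1}\vol\,\diff r = (v_s,\kappa^s_{2-m-\nu}(u))_\nu ,
\]
using $J^{\varphi_s}_C v_s = 0$ and the cutoff-independence of the pairing noted before \cref{prop: Polyhomoegenous pairing}. There is one subtlety: with the $\log$ term at $m=4$, scale invariance fails and the $t$-dependence must cancel. I would instead invoke independence of the cutoff $\chi(\cdot/t)$ directly, which holds because the difference of two cutoffs is compactly supported.

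Summing over $s\in S$ gives the formula. The main obstacle is the bookkeeping of the error estimates, namely ensuring that all cross terms decay given $\varepsilon<\mu$, and handling the logarithmic case.
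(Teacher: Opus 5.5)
Your proposal is correct and follows essentially the same route as the paper: cut off near $S$, use self-adjointness on the compactly supported piece $(1-\chi_t)v$ to reduce to a term localised near the singular points, and compare that term with the model pairing $(v_s,\kappa^s_{2-m-\nu}(u))_\nu$ computed with the cutoff $\chi(r/t)$, with all errors of order $t^{\varepsilon}$ or $t^{\mu}$ up to logarithms. The only cosmetic difference is that the paper keeps the full term $\int_M\langle J^\phi(\chi_\tau v),u\rangle$, including the piece $\chi_\tau J^\phi v$ (its ``$c$'' term, shown to be $\mathcal{O}(\tau^\varepsilon)$). You instead remove that piece at the start by dominated convergence and work only with the commutator $[J^\phi,\chi_t]v$, which is equivalent.
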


\begin{proof}
    Given the expansion of $v$ near each singularity, $J^\phi(v) \in C^\infty_{\nu-2+\varepsilon}(\prescript{\phi}{}{TN})$ for any small enough $\varepsilon > 0$, and $u \in C^\infty_{2-m-\varepsilon-\varepsilon/2}(\prescript{\phi}{}{TN})$. Thus $|\langle J^\phi v, u \rangle_{h_\varphi}| = O(r_s^{-m+\varepsilon/2})$ near any $s \in S$, and therefore the function is integrable. 

    In order to calculate the integral, let $\chi$ be a cutoff function with $\chi(t) = 1$ for $t \in (-\infty,\tfrac{1}{2}]$ and $\chi = 0$ for $t \in [1,+\infty)$, and let $\chi_\tau = \chi(t/\tau)$. Then for any $\tau > 0$ small enough, the function $\chi_\tau(r)$ is smooth, and by integration by parts we have
    \begin{align*}
        \int_M \langle J^\phi(u), v \rangle \vol_{g} & = \int_M \langle J^\phi((1-\chi_\tau(r))u), v \rangle \vol_{g} + \int_M \langle J^\phi(\chi_\tau(r)u), v \rangle \vol_{g} \\
        & = \int_M \langle J^\phi(\chi_\tau(r)u), v \rangle \vol_{g} .
    \end{align*}
    Therefore
    \begin{equation*}
        \int_M \langle J^\phi(u), v \rangle \vol_{g} = \lim_{\tau \rightarrow 0} \int_M \langle J^\phi(\chi_\tau(r)u), v \rangle \vol_{g} = \lim_{\tau \rightarrow 0} \sum_{s \in S} \int_{\Upsilon_s(\Ball_R)} \langle J^\phi(\chi_\tau(r_s)v), u \rangle_{h_\phi} \vol_{g}
    \end{equation*}
    where $\Upsilon_s \col \Ball_R \to M$ form an adapted set of charts near the singular set $S$ and we can assume without loss of generality that $\Upsilon^*_sg(0) = g_{\R^m}$. Using the identification $\Psi_s \col \prescript{\phi_s}{}{TN}_{|\Ball_R} \rightarrow \prescript{\phi}{}{TN}_{|\Upsilon_s(\Ball_R)}$ described in \cref{subsubsec: Mapping properties}, we deduce from the definition of the pairing $(\cdot,\cdot)_\nu$ that (for $\tau > 0$ small enough) we can write
    \begin{multline*}
        \int_{\Upsilon_s(\Ball_R)} \langle J^\phi(\chi_\tau(r_s)v), u \rangle_{h_\phi} \vol_{g} - (v_s, \kappa_{2-m-\nu}^s(u) )_\nu= \\ \int_0^R \int_{\Sp^{m-1}} \left( \langle J^s(\chi_\tau(r) \Psi_s^{-1}v), \Psi_s^{-1} u \rangle_{\Psi_s^*h_{\phi}} \sqrt{\det(g_{ab})} - \langle J^{\varphi_s}_C(\chi_\tau(r) v_s, \kappa^s_{2-m-\nu}(u) \rangle_{h_{\phi_s}} \right) r^{m-1} \vol_{\Sp^{m-1}} \diff  r .
    \end{multline*}
    where $g_{ab}$ are the coefficients of the metric $g$ in coordinates, and $J^s = \Psi_s^{-1} \circ J^\phi  \circ \Psi_s$. Given the expansions of $J^s$, $u$, $v$ and $g_{ab}$ near $s$, we may deduce from \cref{cor: Estimates on derivatives for dilation-invariant maps} that the quantity $I_\tau$ between parentheses in the integrand at the right-hand side of the above identity may be written as 
    \begin{equation*}
        I_\tau = \tau^{-2}\chi^{\prime\prime}(\tfrac{r}{\tau}) a + \tau^{-1} \chi^\prime(\tfrac{r}{\tau})b + \chi(\tfrac{r}{\tau})c,
    \end{equation*}
    where the functions $a, b, c$ satisfy $a \in C^\infty_{-m+2+\varepsilon}$, $b \in a \in C^\infty_{-m+1+\varepsilon}$ and $c \in C^\infty_{-m + \varepsilon}$ for some small enough $\varepsilon > 0$. It follows that 
    \begin{multline*}
        \int_0^\tau \int_{\Sp^{m-1}} r^{m-1} \chi^{\prime\prime}(\tfrac{r}{\tau})a \vol_{\Sp^{m-1}} \diff  r = O(\tau^{2+\varepsilon}), ~~ \int_0^\tau \int_{\Sp^{m-1}} r^{m-1} \chi^\prime(\tfrac{r}{\tau}) b \vol_{\Sp^{m-1}} \diff  r= O(\tau^{1+\varepsilon}), \\ \int_0^\tau \int_{\Sp^{m-1}} r^{m-1} \chi(\tfrac{r}{\tau}) c \vol_{\Sp^{m-1}} \diff  r  = O(\tau^\varepsilon) .
    \end{multline*}
    Thus $\int_{\Upsilon_s(\Ball_R)} I_\tau \vol_{g} = O(\tau^\varepsilon)$ and thus $\lim_{\tau \to 0} \int_{\Upsilon_s(\Ball_R)} I_\tau \vol_{g} = 0$, which finishes the proof of the proposition.
\end{proof}

We will only need the following consequences of the previous proposition:

\begin{cor}     \label{cor: obstruction pairing}
    Let us assume that $m = \dim(M) \geq 4$ and that for any $s \in S$, the harmonic map $\varphi_s$ satisfies the \cref{assumption: Eigenvalues of varphi}. 
    \begin{enumerate}[(i)]
        \item Let $u \in \ker(J^\phi_{-m+2-\varepsilon})$ and let $u_s \in \ker(J^{\varphi_s})$ such that $\kappa^s_{-m+2}(u) = r^{-m+2} u_s$. Moreover, let $v \in C^\infty_0(\prescript{\phi}{}{TN})$ such that there exist $\tilde v \in C^\infty_\varepsilon(\prescript{\phi}{}{TN})$ and $(v_s) \in \oplus_s \ker(J^{\varphi_s})$ such that $v-\tilde v = \Psi_s v_s$ near $s \in S$. Then
        \begin{equation*}
            \int_M \langle J^\phi v, u \rangle_{h_\phi} \vol_{g} = - (m-2) \sum_{s \in S} \langle v_s, u_s \rangle_{L^2} .
        \end{equation*}

        \item Suppose that $m \geq 5$. Let $u \in \ker(J^\phi_{-m+3-\varepsilon})$ and let $u_s \in \ker(J^{\varphi_s}+m-3)$ such that $\kappa^s_{-m+3}(u) = r^{-m+3} u_s$. Moreover, let $v \in C^\infty_{-1}(\prescript{\phi}{}{TN})$ such that there exist $\tilde v \in C^\infty_{-1+\varepsilon}(\prescript{\phi}{}{TN})$ and $(v_s) \in \oplus_s \ker(J^{\varphi_s}+m-3)$ such that $v-\tilde v = r_s^{-1}\Psi_s v_s$ near $s \in S$. Then
        \begin{equation*}
            \int_M \langle J^\phi v, u \rangle_{h_\phi} \vol_{g} = - (m-4) \sum_{s \in S} \langle v_s, u_s \rangle_{L^2} .
        \end{equation*}

        \item Suppose that $m = 4$. Let $u \in \ker(J^\phi_{-1-\varepsilon})$ and let $u^0_s,u_s^1 \in \ker(J^{\varphi_s}+1)$ such that $\kappa^s_{-1}(u) = r^{-1} (u^0_s+\log(r) u^1_s)$. Moreover, let $v \in C^\infty_{-1}(\prescript{\phi}{}{TN})$ such that there exist $\tilde v \in C^\infty_{-1+\varepsilon}(\prescript{\phi}{}{TN})$ and $(v_s) \in \oplus_s \ker(J^{\varphi_s}+1)$ such that $v-\tilde v = r_s^{-1}\Psi_s v_s$ near $s \in S$. Then
        \begin{equation*}
            \int_M \langle J^\phi v, u \rangle_{h_\phi} \vol_{g} = - \sum_{s \in S} \langle u_s^1, v_s \rangle_{L^2} .
        \end{equation*}
    \end{enumerate}
\end{cor}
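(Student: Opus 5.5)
The plan is to read off each of the three identities from the preceding proposition, applied at the appropriate critical rate $\nu$. The remaining work is then just evaluating the pairing $(\cdot,\cdot)_\nu$ using \cref{prop: Polyhomoegenous pairing}. The critical rates involved are supplied by \cref{prop: indicial roots of model operator}, which lists them under \cref{assumption: Eigenvalues of varphi}, together with \cref{lem: the space of polyhomogeneous kernel elements}.

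For (i), I take $\nu = 0$. Then $2-m-\nu = -m+2$, and $0$ is an isolated simple root with $\lambda_0 = 0$, so $\mathcal{K}(J^{\varphi_s}_C)_0 = \ker(J^{\varphi_s})$ consists of constant-in-$r$ sections. The hypothesis on $v$ is exactly the hypothesis of the previous proposition at rate $0$, once $\varepsilon$ is shrunk below both $\mu$ and the gap to neighbouring roots. Likewise $u \in \ker(J^\phi_{2-m-0-\varepsilon})$, and its leading term is $\kappa^s_{-m+2}(u) = r^{-m+2}u_s$.

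The proposition then gives $\int_M \langle J^\phi v, u\rangle = \sum_s (v_s, r^{2-m}u_s)_0$. Point (i) of \cref{prop: Polyhomoegenous pairing} evaluates each term as $(m-2+0)\langle v_s,u_s\rangle_{L^2}$. Here I have to watch the sign convention. In the proof of that point the integral came out as $(2-m-2\nu)\int\chi'\cdots$, and $\int\chi' = -1$ turns this into $(m-2+2\nu)\langle\cdot,\cdot\rangle$. But $\chi'$ is supported away from $0$ with $\chi(0)=1$, $\chi(\infty)=0$, so its integral is indeed $-1$. The claimed answer $-(m-2)$ therefore requires recomputing this sign carefully, and possibly accounting for the opposite orientation of the pairing: in the previous proposition the cut-off is applied to $v$, which is the slower-decaying entry. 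I would redo the one-dimensional computation directly, taking $r^{m-1}J_C(\chi v_s)$ against $r^{2-m}u_s$ and integrating by parts, to pin down the sign $-(m-2)$. I expect this sign and orientation bookkeeping to be the only real obstacle.

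For (ii), with $m\ge5$, I take $\nu=-1$. Then $2-m-\nu = -m+3$, both roots are simple and isolated, and $\lambda_{-1} = -(m-3)$. So $v_s r^{-1} \in \mathcal{K}_{-1}$, and the same one-dimensional computation gives a factor $\pm(m-2+2\nu) = \pm(m-4)$, with the same sign as in (i).

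For (iii), with $m=4$, both rates coincide at the double root $\nu=-1$, and $\mathcal{K}_{-1}$ is described in \cref{lem: the space of polyhomogeneous kernel elements}(ii). The proposition gives $\sum_s (r^{-1}v_s, r^{-1}(u^0_s+\log r\,u^1_s))_{-1}$. By \cref{prop: Polyhomoegenous pairing}(iii), with first entry $u_1=v_s$, $v_1=0$, this equals $-\langle v_s,u^1_s\rangle_{L^2}$, which is the claim.

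The sign in (iii) comes out correctly from the stated formula. So the sign in (i) and (ii) should follow from the same direct verification, and I would double-check it against (iii) for consistency.
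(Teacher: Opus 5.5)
Your route is the paper's own: its proof is the single line ``previous proposition plus \cref{prop: Polyhomoegenous pairing}''. Your treatment of (iii) is also correct. The problem is the step you defer, because you cannot ``pin down the sign $-(m-2)$''. Your first evaluation was already right.

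There is no orientation flip. The preceding proposition puts the cut-off on $v$, whose leading term $v_s$ lies in $\mathcal{K}(J^{\varphi_s}_C)_\nu$. That is exactly the entry carrying the cut-off in the definition of $(\cdot,\cdot)_\nu$. Hence $\int_M\langle J^\phi v,u\rangle\vol_g=\sum_s(v_s,\kappa^s_{2-m-\nu}(u))_\nu$. Then \cref{prop: Polyhomoegenous pairing}(i) gives the factor $m-2+2\nu$, which is $+(m-2)$ for $\nu=0$ and $+(m-4)$ for $\nu=-1$. Reversing the orientation would flip the sign, since $(\beta,\alpha)_{2-m-\nu}=-(\alpha,\beta)_\nu$, but that is not what occurs.

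An independent check is Green's identity on $M\setminus\bigcup_s B_\delta(s)$, using $J^\phi u=0$ and the formal self-adjointness of $J^\phi$:
\[
\int_M \langle J^\phi v,u\rangle_{h_\phi}\vol_g=\lim_{\delta\to0}\sum_{s\in S}\int_{\partial B_\delta(s)}\big(\langle u,\nabla_{\partial_r}v\rangle-\langle v,\nabla_{\partial_r}u\rangle\big).
\]
Inserting the leading terms gives $(m-2)\sum_s\langle v_s,u_s\rangle_{L^2}$, $(m-4)\sum_s\langle v_s,u_s\rangle_{L^2}$ and $-\sum_s\langle v_s,u^1_s\rangle_{L^2}$ in the three cases. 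In the flat scalar model ($J=-\Delta$, $u=r^{2-m}$, $v$ smooth with $v(0)=1$) this is the familiar identity $\int(-\Delta v)\,r^{2-m}=(m-2)\,\vol(\Sp^{m-1})>0$.

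So (i) and (ii) hold only with the opposite signs, $+(m-2)$ and $+(m-4)$. As printed they fail whenever $\sum_s\langle v_s,u_s\rangle_{L^2}\neq0$; the minus signs are a typo in the statement.

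Your closing inference, that the correctness of (iii) should force minus signs in (i) and (ii), is also not valid. The same Wronskian gives a minus sign in (iii) only because there the $r^{-1}$--$r^{-1}$ part pairs to $m-2+2\nu=0$. The sole surviving term comes from $\partial_r\log r$ hitting the $u$-entry, which enters the Wronskian with a minus sign.

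Your argument therefore proves the corrected statement, and you should say so rather than try to force the printed sign. The discrepancy is harmless downstream: \cref{cor: surjectivity of derivative} uses these identities only to produce a non-zero pairing, which does not depend on the sign.
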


\begin{proof}
    This is an immediate consequence of the previous proposition and \cref{prop: Polyhomoegenous pairing}.
\end{proof}

%% file: section3_banach.tex
    \section{Banach manifolds of conically singular maps}       \label{sec: Banach manifolds}

The goal of this section is to construct the parameter spaces that will be used in the proof of the deformation theorem in \cref{sec: The deformation theorem}. In \cref{subsec: manifold structure}, we first construct Banach manifolds of conically singular maps $\phi \col M \setminus S \to N$, where the singular set $S$ is fixed but the tangent maps are allowed to vary within finite-dimensional subspaces of $C^\infty(\Sp^{m-1},N)$. For this we need to work with maps that have finite regularity, and our choice is to work with Sobolev maps. We then explain in \cref{subsec: variations of the tension} how to regard the tension of a conically singular map as a section of a Banach vector bundle lying over the entire parameter space combining the previously constructed Banach manifolds of conically singular maps, the deformations of the singular set and the variations of the background metric on $M$. The precise statement of this result is \cref{thm: Variations of the tension}, which also computes the derivative of the tension at a conically singular \emph{harmonic} map. This sets the stage for an application of the Implicit Function Theorem in order to prove \cref{thm: main theorem introduction} in the next section.

    \subsection{Manifold structure}     \label{subsec: manifold structure}

This section constructs Banach manifolds of conically singular Sobolev maps $\phi \col M \setminus S \to N$ with fixed singular set and tangent maps depending on finite-dimensional parameter spaces. \cref{subsubsec: Definitions and atlases} sets the precise definitions and describes the manifold structure. We then describe a useful class of Banach vector bundles in \cref{subsubsec: Banach vector bundles}. Finally, we explain how to take into account local deformations of the singular set in \cref{subsubsec: Moving the singular points}.

    \subsubsection{Definitions and atlases}     \label{subsubsec: Definitions and atlases}

In this section, let us fix a Riemannian manifold $(N,h)$, and a compact manifold $M$, of dimension $\dim(M) = m \geq 4$, together with a data set $\mathfrak{S} = (S,\{\Upsilon_s\}_{s \in S},\{\mathcal{M}_s\}_{s\in S})$ consisting of:
\begin{enumerate}
    \item a finite subset $S \subset M$,
    \item an adapted set of charts $\{\Upsilon_s \col (\Ball_1 \subset\mathbb{R}^m) \to M\}_{s\in S}$ as in \cref{Def: Adapted set of charts}.
    \item a set of finite-dimensional submanifolds $\{\mathcal{M}_s \subset C^\infty(\Sp^{m-1},N)\}_{s \in S}$.
\end{enumerate} 
The submanifolds $\{\mathcal{M}_s\subset C^\infty(\Sp^{m-1},N)\}_{s \in S}$ equivalently correspond to a collection of finite-dimensional manifolds $\{\mathcal{M}_s\}_{s \in S}$ together with smooth maps $\underline{\varphi_s } \col \mathcal{M}_s \times \Sp^{m-1} \to N$, such that the induced map $\mathcal{M}_s \to C^\infty(\Sp^{m-1},N)$ is an embedding (an injective immersion, proper onto its image with respect to the Fr\'echet topology). For simplicity, we shall henceforth denote the product $\times_{s \in S} \mathcal{M}_s$ by $\mathcal{M}$.

\begin{Def}     \label{Def: manifold structure for Sobolev maps with variable tangent}
    Let $p \in (m,\infty)$ and $\mu \in (0,1)$ be fixed throughout \cref{sec: Banach manifolds}. 
    \begin{enumerate}
        \item We shall denote by $\Map^\infty_\mu(M,N;\mathfrak{S})$ the set of smooth conically singular maps with rate $\mu$ and tangent maps determined by the data set $\mathfrak{S}$; that is, there exists $R \in (0,1)$, $(\varphi_s)_{s \in S} \in \mathcal{M}$ and sections $u_s \in \Gamma(\prescript{\phi_s}{}{TN})$, where $\phi_s = \varphi_s \circ \pr$, such that $\phi \circ \Upsilon_s = \exp_{\phi_s}(u_s)$ on $\Ball_R \setminus \{0\}$, and $|\prescript{\phi_s}{}{\nabla}^k u_s| = \mathcal{O}(r^{\mu-k})$ for all $k \in \N_0$.

        \item We shall denote by $\Map^\infty_{\mathrm{ec}}(M,N;\mathfrak{S}) \subset \Map^\infty_\mu(M,N;\mathfrak{S})$ the set of end-conical maps $\phi_0 \col M \setminus S \to N$ (cf. \cref{Def: End-conical maps I}) with tangent maps determined by the data set $\mathfrak{S}$; that is, a smooth map $\phi_0 \col M \setminus S \to N$ is in $\Map^{\infty}_{\mathrm{ec}}(M,N;\mathfrak{S})$ if and only if there exists $R \in (0,1)$ and $(\varphi_s)_{s \in S} \in \mathcal{M}$ such that $\left. \phi \circ \Upsilon_s  \right|_{\Ball_R} = \left. \varphi_s \circ \pr \right|_{\Ball_R}$ for all $s \in S$.

        \item For any $k \geq 2$, we shall denote by $\Map^{k,p}_\mu(M,N;\mathfrak{S})$ the set of continuous maps $\phi \col M \setminus S \to N$ such that there exists an end-conical map $\phi_0 \in \Map^\infty_{\mathrm{ec}}(M,N;\mathfrak{S})$ and a section $u \in \prescript{\phi_0}{}{\mathcal{U}}^{k,p}_\mu$, where
        \begin{equation*}
            \prescript{\phi_0}{}{\mathcal{U}}^{k,p}_\mu \coloneqq \{ u \in W^{k,p}_\mu(\prescript{\phi_0}{}{TN}) \mid \|u\|_{C^0} \leq \tfrac{1}{4} \inj(N) \},
        \end{equation*}
        such that
        \begin{equation*}
            \phi = \phi_u \coloneqq \exp_{\phi_0}(u) .
        \end{equation*}
        Such maps we be called conically singular of regularity $W^{k,p}_\mu$ with tangent maps determined by the data set $\mathfrak{S}$.
    \end{enumerate}
\end{Def}

\begin{rem}
    The above definition of $\Map^{k,p}_\mu(M,N;\mathfrak{S})$ is sensical since there is a continuous embedding $W^{k,p}_\mu(\prescript{\phi_0}{}{TN}) \hookrightarrow C^{k-1}_\mu(\prescript{\phi_0}{}{TN}) \subset C^1_\mu(\prescript{\phi_0}{}{TN})$, as we assumed $p > m$ and $k \geq 2$. In particular for any end-conical map $\phi_0$, $\prescript{\phi_0}{}{\mathcal{U}}^{k,p}_\mu$ is an open subset of $W^{k,p}_\mu(\prescript{\phi_0}{}{TN})$. Moreover, since $\mu > 0$ and we assumed that the submanifolds $\mathcal{M}_s \subset C^\infty(\Sp^{m-1},N)$ are embedded and that the charts $\Upsilon_s \col \Ball_1 \to M$ are fixed, there is a well-defined map $\Map^{k,p}_\mu(M,N;\mathfrak{S}) \to \mathcal{M}$ which assigns to a conically singular map of regularity $W^{k,p}_\mu$ its tangents maps along $S$ (or rather, the links thereof).
\end{rem}

We aim to equip $\Map^{k,p}_\mu(M,N;\mathfrak{S})$ with the structure of a Banach manifold. In order to define a convenient atlas of charts, we first need to fix some notations.

\begin{Def}     \label{Def: Banach manifold charts preliminary definitions}
    Fix a smooth cut-off function $\chi \col \R \to [0,1]$ such that $\chi(t) = 1$ if $t \leq \frac{1}{4}$ and $\chi(t) = 0$ if $t \geq \frac{3}{4}$. Let $\phi_0 \in \Map^\infty_{\mathrm{ec}}(M,N;\mathfrak{S})$ be an end-conical map and let $R \in (0,1)$ be small enough such that $\left. \phi_0 \circ \Upsilon_s \right|_{\Ball_R} = \varphi_s \circ \pr$, where $\varphi_s \in \mathcal{M}_s$. We define a linear map
    \begin{equation*}
        \chi_{\phi_0} \col \times_{s \in S} \Gamma(\prescript{\varphi_s}{}{TN}) \to \Gamma(\prescript{\phi_0}{}{TN})
    \end{equation*}
    such that for all $\underline{w} = (w_s)_{s \in S} \in \times_s \Gamma(\prescript{\varphi_s}{}{TN})$, $\chi_{\phi_0}(\underline{w}) = 0$ on $M \setminus (\cup_s \Upsilon_s(\Ball_R))$, and for all $s \in S$,
    \begin{equation*}
        \chi_{\phi_0}(\underline{w}) \circ \Upsilon_s(x) = \chi(\tfrac{|x|}{R}) w_s, ~~~~ \forall x \in \Ball_R \setminus \{0\},
        \end{equation*}
    where we naturally identify $\left. \prescript{\phi_0}{}{TN}\right|_{\Upsilon_s(\Ball_R)}$ with $\left.\pr^* (\prescript{\varphi_s}{}{TN})\right|_{\Ball_R}$.
\end{Def}

Now for any $s \in S$ and $\varphi_s \in \mathcal{M}_s$, let us pick a finite-dimensional submanifold $\prescript{\varphi_s}{}{\mathcal{W}}_s \subset \Gamma(\prescript{\varphi_s}{}{TN})$ such that $|w|_{h_{\varphi_s}} \leq \frac{1}{4}\inj(N)$ for any $w \in \prescript{\varphi_s}{}{\mathcal{W}}_s$, and $\exp_{\varphi_s}(\cdot) \col \prescript{\varphi_s}{}{\mathcal{W}}_s \to \mathcal{M}_s$ defines a diffeomorphism onto an open neighbourhood of $\varphi_s$ in $\mathcal{M}_s$. We will moreover define, for any $\phi_0 \in \Map^\infty_{\mathrm{ec}}(M,N;\mathfrak{S})$ whose tangent maps are given by $(\varphi_s)_{s \in S} \in \mathcal{M}$, the open subset
\begin{equation}
    \prescript{\phi_0}{}{\mathcal{W}} \coloneqq \times_{s \in S} \prescript{\varphi_{s}}{}{\mathcal{W}_s} \subset \times_s \Gamma(\prescript{\varphi_s}{}{TN}) .
\end{equation}
Notice that since we assumed that $|w|_{h_{\varphi_{s}}} < \tfrac{1}{4}\inj(N)$ for any $\underline{w} \in \prescript{\phi_0}{}{\mathcal{W}}$, $ \|u + \chi_{\phi_0}(\underline{w}) \|_{C^0} < \tfrac{1}{2}\inj(N)$ for any $(u,\underline{w}) \in \prescript{\phi_0}{}{\mathcal{U}}^{k,p}_\mu \times \prescript{\phi_0}{}{\mathcal{W}}$. Moreover, it is clear that $\exp_{\phi_0}(u+\chi_{\phi_0}(\underline{w})) \in \Map^{k,p}_\mu(M,N;\mathfrak{S})$, with tangent maps given by $\exp_{\varphi_s}(w_s)$ for all $s \in S$. In particular, we may use the exponential map to define an injective map
\begin{equation}
    \begin{aligned}
        \Psi_{\phi_0} \col \prescript{\phi_0}{}{\mathcal{U}}^{k,p}_\mu \times \prescript{\phi_0}{}{\mathcal{W}} & \to \Map^{k,p}_\mu(M,N;\mathfrak{S}), \\
        (u,\underline{w}) & \mapsto \exp_{\phi_0}(u+\chi_{\phi_0}(\underline{w})) .
    \end{aligned}
\end{equation}
Let us denote by $\prescript{\phi_0}{}{\mathcal{V}}^{k,p}_\mu$ the image of $\Psi_{\phi_0}$. We shall prove that 
\begin{equation*}
    \{\Psi_{\phi_0} \col \prescript{\phi_0}{}{\mathcal{U}}^{k,p}_\mu \times \prescript{\phi_0}{}{\mathcal{W}} \to \prescript{\phi_0}{}{\mathcal{V}}^{k,p}_\mu\}_{\phi_0 \in \Map^\infty_\mu(M,N;\mathfrak{S})}
\end{equation*}
is a smooth atlas of Banach charts on $\Map^{k,p}_\mu(M,N;\mathfrak{S})$. Note that it is obvious from the definitions that $\Map^{k,p}_\mu(M,N;\mathfrak{S})$ is covered by $\{\prescript{\phi_0}{}{\mathcal{V}}^{k,p}_\mu\}_{\phi_0 \in \Map^\infty_{\mathrm{ec}}(M,N;\mathfrak{S})}$. Thus in order to define a Banach manifold structure (and with it, a topology) on $\Map^{k,p}_\mu(M,N;\mathfrak{S})$ we just need to check the compatibility of the charts. This is the object of the next proposition:

\begin{prop}        \label{prop: banach manifold atlas}
    Let $\phi_0,\phi_1 \in \Map^\infty_{\mathrm{ec}}(M,N;\mathfrak{S})$ be end-conical maps such that $\prescript{\phi_0}{}{\mathcal{V}}^{k,p}_\mu \cap \prescript{\phi_1}{}{\mathcal{V}}^{k,p}_\mu \neq \emptyset$. Then $(\prescript{\phi_0}{}{\mathcal{U}}^{k,p}_\mu \times \prescript{\phi_0}{}{\mathcal{W}}) \cap \Psi_{\phi_0}^{-1}(\prescript{\phi_1}{}{\mathcal{V}}^{k,p}_\mu)$ is an open subset of $\prescript{\phi_0}{}{\mathcal{U}}^{k,p}_\mu \times \prescript{\phi_0}{}{\mathcal{W}} \subset W^{k,p}_\mu(\prescript{\phi_0}{}{TN}) \times \prescript{\phi_0}{}{\mathcal{W}}$. Moreover, the transition function
    \begin{equation*}
        \Psi_{\phi_1}^{-1} \circ \Psi_{\phi_0} \col (\prescript{\phi_0}{}{\mathcal{U}}^{k,p}_\mu \times \prescript{\phi_0}{}{\mathcal{W}}) \cap \Psi_{\phi_0}^{-1}(\prescript{\phi_1}{}{\mathcal{V}}^{k,p}_\mu) \to (\prescript{\phi_1}{}{\mathcal{U}}^{k,p}_\mu \times \prescript{\phi_1}{}{\mathcal{W}}) \cap \Psi_{\phi_1}^{-1}(\prescript{\phi_0}{}{\mathcal{V}}^{k,p}_\mu)
    \end{equation*}
    is a smooth Banach map\footnote{Note that all $\mathcal{M}_s \subset C^{\infty}(\Sp^{m-1},N)$ are assumed to be smoothly embedded \emph{finite-dimensional} submanifolds. This implies that $\prescript{\phi_0}{}{\mathcal{W}} \subset \mathcal{M}$ is a finite-dimensional manifold in its own right for any end-conical map $\phi_0$. Thus $\prescript{\phi_0}{}{\mathcal{U}}^{k,p}_\mu \times \prescript{\phi_0}{}{\mathcal{W}}$ carries the structure of smooth Banach manifold, and the smoothness of maps is understood in this sense.}. In particular the charts $\{\Psi_{\phi} \col \prescript{\phi}{}{\mathcal{U}}^{k,p}_\mu \times \prescript{\phi}{}{\mathcal{W}} \to \prescript{\phi}{}{\mathcal{V}}^{k,p}_\mu\}_{\phi \in \Map^\infty_\mu(M,N;\mathfrak{S})}$ endow $\Map^{k,p}_\mu(M,N;\mathfrak{S})$ with the structure of a smooth Banach manifold.
\end{prop}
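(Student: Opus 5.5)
The plan is to write the transition map explicitly and reduce its smoothness to the Master Lemmas (\cref{lem: Master lemma} and \cref{lem: Parametrised master lemma}), applied to bundle maps with dilation-invariant ends.

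\textbf{Step 1: the transition formula.} Let $(u,\underline{w})$ lie in the overlap, so $\phi \coloneqq \exp_{\phi_0}(u+\chi_{\phi_0}(\underline{w})) = \exp_{\phi_1}(u'+\chi_{\phi_1}(\underline{w}'))$. The tangent maps $\exp_{\varphi^0_s}(w_s) = \exp_{\varphi^1_s}(w'_s)$ are determined by $\phi$, as remarked after \cref{Def: manifold structure for Sobolev maps with variable tangent}. Hence
\begin{equation*}
    \underline{w}' = \Big(\exp_{\varphi^1_s}^{-1}\circ\exp_{\varphi^0_s}(w_s)\Big)_{s \in S},
\end{equation*}
which is a smooth map between finite-dimensional manifolds, because $\exp_{\varphi_s}\colon {}^{\varphi_s}\mathcal{W}_s \to \mathcal{M}_s$ are diffeomorphisms onto open sets. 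The function part is then
\begin{equation*}
    u' = \exp_{\phi_1}^{-1}\big(\exp_{\phi_0}(u+\chi_{\phi_0}(\underline{w}))\big) - \chi_{\phi_1}(\underline{w}').
\end{equation*}
Since $\phi_0$ and $\phi_1$ have nearby tangent maps and both $C^0$-norms are below $\tfrac12\inj(N)$, the points $\phi_0(x)$ and $\phi_1(x)$ are within $\inj(N)$ of each other. So define the fibrewise bundle map $\bmap{f}\colon V \to {}^{\phi_1}TN$, $\bmap{f}(v) = \exp_{\phi_1}^{-1}\exp_{\phi_0}(v)$, on a suitable open sub-bundle $V \subset {}^{\phi_0}TN$. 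Near each $s$, both maps are dilation-invariant on $\Ball_R$, so $\bmap{f}$ has dilation-invariant ends in the sense of \cref{Def: non-linear bundle map with dilation invariant ends}.

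\textbf{Step 2: decomposition.} Put $\sigma(\underline{w}) \coloneqq \bmap{f}(\chi_{\phi_0}(\underline{w})) - \chi_{\phi_1}(\underline{w}')$ and rewrite
\begin{equation*}
    u' = \big[\bmap{f}(u+\chi_{\phi_0}(\underline{w})) - \bmap{f}(\chi_{\phi_0}(\underline{w}))\big] + \sigma(\underline{w}).
\end{equation*}
\begin{itemize}
    \item The bracket is smooth in $(u,\underline{w})$ with values in $W^{k,p}_\mu$. This follows from \cref{lem: Parametrised master lemma} with $w = \chi_{\phi_0}(\underline{w})$, noting that $\underline{w} \mapsto \chi_{\phi_0}(\underline{w}) \in C^k_0$ is smooth: it is the restriction of a linear map to a finite-dimensional submanifold of smooth sections. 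The hypothesis $p > m$ versus the lemma's $p > 2m/k$ may need $k$ large; otherwise one uses the remark that $p > m/k$ suffices.
    \item For $\sigma(\underline{w})$, on $\Upsilon_s(\Ball_{R/4})$ we have $\chi = 1$. There $\bmap{f}(w_s) = \exp_{\phi_1}^{-1}\exp_{\varphi^0_s}(w_s)$ and $\chi_{\phi_1}(\underline{w}') = w'_s = \exp_{\varphi^1_s}^{-1}\exp_{\varphi^0_s}(w_s)$. These agree because the ends of $\phi_1$ are exactly $\varphi^1_s \circ \pr$.
\end{itemize}
So $\sigma(\underline{w})$ vanishes near $S$. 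It depends smoothly on finitely many parameters as a smooth section supported away from $S$, and is therefore smooth into $W^{k,p}_\mu$.

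\textbf{Step 3: openness.} Openness of the overlap follows because the conditions are open: $\underline{w}$ must lie in $\exp^{-1}$ of the target chart, and $\|u'\|_{C^0} < \tfrac14\inj(N)$. The latter is open since $u'$ depends continuously on $(u,\underline{w})$ into $W^{k,p}_\mu \hookrightarrow C^0$. Smoothness of the inverse transition map follows by symmetry. The remaining case, where $\phi_0$ and $\phi_1$ have tangent maps that are not close, does not occur when the overlap is nonempty, up to shrinking $R$ (taking the minimum of the two radii).

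\textbf{Main obstacle.} I expect the hardest step to be verifying that $\bmap{f}$ is genuinely defined with dilation-invariant ends on a domain containing all relevant sections. This requires checking the uniform distance bound between $\phi_0$ and $\phi_1$ away from the cone regions, possibly after shrinking the charts ${}^{\varphi_s}\mathcal{W}_s$. The exact cancellation defining $\sigma$ near $S$ also needs care.
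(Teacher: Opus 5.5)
Your proposal is correct and follows the paper's proof essentially line by line. It uses the same finite-dimensional component $\Xi_s = \exp_{\varphi_{1,s}}^{-1}\circ\exp_{\varphi_{0,s}}$, the same bundle map $\bmap{f}$ with dilation-invariant ends satisfying $\exp_{\phi_0}(v)=\exp_{\phi_1}(\bmap{f}(v))$, the same splitting of the function component into a term handled by \cref{lem: Parametrised master lemma} plus the term $\bmap{f}(\chi_{\phi_0}(\underline{w}))-\chi_{\phi_1}(\Xi(\underline{w}))$ supported away from $S$, and the same $C^0$-based openness argument. Your hedge about the hypothesis $p>2m/k$ is unnecessary, since $k\geq 2$ and $p>m$ already give $p>m\geq 2m/k$.
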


\begin{proof}
    Let us prove the openness first. Remark first that $\phi \in \Map^{k,p}_\mu(M,N;\mathfrak{S})$ is in $\prescript{\phi_0}{}{\mathcal{V}}^{k,p}_\mu$ if and only if the following conditions are satisfied:
    \begin{itemize}
        \item the tangent maps of $\phi$ are of the form $\exp_{\phi_s}(w_s)$, where $\phi_s$ are the tangent maps of $\phi_0$ and $(w_s)_{s \in S} \in \prescript{\phi_0}{}{\mathcal{W}}$, and
        \item $\mathrm{dist}(\phi, \exp_{\phi_0}(\chi_\phi(\underline{w}))) < \frac{1}{4}\inj(N)$, where $\mathrm{dist}(\phi,\psi) \coloneqq \inf_{x \in M \setminus S} \mathrm{dist}_N(\phi(x),\psi(x))$ for any continuous map $\psi \col M \setminus S \to N$. 
    \end{itemize}
    Since for $i = 1,2$ the image of $\prescript{\phi_i}{}{\mathcal{W}}$ in $\mathcal{M}$ is open and there is a sequence of continuous embeddings $W^{k,p}_\mu(\prescript{\phi_i}{}{TN}) \hookrightarrow C^{k-1}_\mu(\prescript{\phi_i}{}{TN}) \hookrightarrow C^0_0(\prescript{\phi_i}{}{TN})$, the triangular inequality easily implies the openness of $(\prescript{\phi_0}{}{\mathcal{U}}^{k,p}_\mu \times \prescript{\phi_0}{}{\mathcal{W}}) \cap \Psi_{\phi_0}^{-1}(\prescript{\phi_1}{}{\mathcal{V}}^{k,p}_\mu)$.
    
    Next, we show that the transition function is smooth. Let us denote by $(\varphi_{i,s})_{s \in S} \in \mathcal{M}$ the tangent maps of $\phi_i$ for $i = 0,1$. We first remark that for any $(u,\underline{w}) \in (\prescript{\phi_0}{}{\mathcal{U}}^{k,p}_\mu \times \prescript{\phi_0}{}{\mathcal{W}}) \cap \Psi_{\phi_0}^{-1}(\prescript{\phi_1}{}{\mathcal{V}}^{k,p}_\mu)$ , we can express the transition function as
    \begin{equation*}
        \Psi_{\phi_1}^{-1} \circ \Psi_{\phi_0}(u,\underline{w}) = (\mathcal{F}(u,\underline{w}),\Xi(\underline{w})) \in \prescript{\phi_1}{}{\mathcal{U}}^{k,p}_\mu \times \prescript{\phi_1}{}{\mathcal{W}}
    \end{equation*}
    where $\Xi(\underline{w}) = (\Xi_s(w_s))_{s \in S}$ and $\Xi_s \col \exp_{\varphi_{1,s}}^{-1} \circ \exp_{\varphi_{0,s}}$ is the transition function between the charts $\prescript{\varphi_{0,s}}{}{\mathcal{W}}$ and $\prescript{\varphi_{1,s}}{}{\mathcal{W}}$ in $\mathcal{M}_s$. Thus the map $\Xi$ is smooth.

    To prove that the first component $\mathcal{F}$ of the transition function is smooth, we will use the notion of bundle map with dilation-invariant ends introduced in \cref{subsubsec: Bundle maps with dilation-invariant ends}. Recall that we introduced the open subbundles $\prescript{\phi_i}{}{U} = \{ u \in \prescript{\phi_i}{}{TN} \mid |u|_{h_{\phi_i}} < \inj(N) \}$ in \cref{subsubsec: pulled-back tangent bundles}. Since $\prescript{\phi_0}{}{\mathcal{V}}^{k,p}_\mu \cap \prescript{\phi_1}{}{\mathcal{V}}^{k,p}_\mu \neq \emptyset$, we have in particular $\mathrm{dist}(\phi_0,\phi_1) < \inj(N)$. Moreover, there exists a non-linear bundle map 
    \begin{equation*}
        \bmap{f} \col (U \subset \prescript{\phi_0}{}{TN}) \to \prescript{\phi_1}{}{TN} ,
    \end{equation*} 
    defined on the open sub-bundle whose fibre at $x \in M$ is 
    \begin{equation*}
        U_x \coloneqq \{ u \in \prescript{\phi_0}{}{U_x} \mid \mathrm{dist}_N(\exp_{\phi_0(x)}(v),\phi_1(x)) < \inj(N) \},
    \end{equation*} 
    such that $\exp_{\phi_0}(v) = \exp_{\phi_1}(\bmap{f}(v))$ for any $v \in \Gamma(U)$.
    This bundle map has dilation-invariant ends in the sense of \cref{Def: non-linear bundle map with dilation invariant ends} because $\phi_0,\phi_1$ are end-conical. Now for any $(u,\underline{w})$ in the domain of the transition function, $\chi_{\phi_0}(\underline{w})$ and $u + \chi_{\phi_0}(\underline{w})$ are sections of $U$, and moreover we have
    \begin{align*}
        \mathcal{F}(u,\underline{w}) & = \bmap{f}(u+\chi_{\phi_0}(\underline{w})) - \chi_{\phi_1}(\Xi(\underline{w})) \\ 
            & = (\bmap{f}(u+\chi_{\phi_0}(\underline{w})) - \bmap{f}(\chi_{\phi_0}(\underline{w}))) + (\bmap{f}(\chi_{\phi_0}(\underline{w})) - \chi_{\phi_1}(\Xi(\underline{w}))) .
    \end{align*}
    Since $\prescript{\phi_0}{}{\mathcal{W}}$ is finite-dimensional, the map $\chi_{\phi_0}$ induces a smooth map $\prescript{\phi_0}{}{\mathcal{W}} \to C^{k}_0(\prescript{\phi_0}{}{U}) \subset C^k_0(\prescript{\phi_0}{}{TN})$, and therefore by \cref{lem: Parametrised master lemma} the map $(u,\underline{w}) \to \bmap{f}(u+\chi_{\phi_0}(\underline{w}))-\bmap{f}(\chi_{\phi_0}(\underline{w}))$ defines a smooth map from the domain of the transition function into $W^{k,p}_\mu(\prescript{\phi_1}{}{TN})$. On the other hand, for sufficiently small $R \in (0,1)$, the section $\bmap{f}(\chi_{\phi_0}(\underline{w})) - \chi_{\phi_1}(\Xi(\underline{w})) \in \Gamma(\prescript{\phi_1}{}{TN})$ is compactly supported inside $M \setminus (\cup_{s \in S} \Upsilon_s(\Ball_R))$ for any $\underline{w} \in \prescript{\phi_0}{}{\mathcal{W}}$, and therefore it is clear that this expression defines a smooth map $\prescript{\phi_0}{}{\mathcal{W}} \to W^{k,p}_\mu(\prescript{\phi_1}{}{TN})$. Thus  $\mathcal{F}$ is a smooth Banach map.
\end{proof}

\begin{rem}
    It is clear from the construction of the atlas that the map $\Map^{k,p}_\mu(M,N;\mathfrak{S}) \to \mathcal{M}$ which assigns to a conically singular map of regularity $W^{k,p}_\mu$ its tangent maps is a smooth Banach submersion, since in the charts $\prescript{\phi_0}{}{\mathcal{U}}^{k,p}_\mu \times \prescript{\phi_0}{}{\mathcal{W}}$ of $\Map^{k,p}(M,N;\mathfrak{S})$ and $\prescript{\phi_0}{}{\mathcal{W}}$ of $\mathcal{M}$ it corresponds to the projection onto the second factor.
\end{rem}

\begin{rem}
    It is not difficult to see that the topology of $\Map^{k,p}_\mu(M,N;\mathfrak{S})$ is metrisable; in particular, it is a paracompact space.
\end{rem}

    \subsubsection{A family of Banach vector bundles}   \label{subsubsec: Banach vector bundles}

In this part, we let $k \geq 2$ and proceed to define a useful class of Banach vector bundles over the Banach manifold $\Map^{k,p}_\mu(M,N;\mathfrak{S})$. Let us first point out that if $\phi_0 \in \Map^\infty_\mu(M,N;\mathfrak{S})$ and $\phi \in \Map^{k,p}_\mu(M,N;\mathfrak{S})$ can be written as $\phi = \exp_{\phi_0}(u)$ for a section $u \in \prescript{\phi_0}{}{\mathcal{U}}^{k,p}_\mu$, the continuous embedding $W^{k,p}_\mu(\prescript{\phi_0}{}{TN}) \hookrightarrow C^{k-1}_\mu(\prescript{\phi_0}{}{TN})$ implies that $\prescript{\phi}{}{TN}$ has the structure of a vector bundle of class $C^{k-1}$ over $M \setminus S$ and $\prescript{\phi_0}{}{\Theta}_u \col \prescript{\phi_0}{}{TN} \to \prescript{\phi}{}{TN}$ is an isomorphism of $C^{k-1}$ vector bundles. Moreover, \cref{lem: Bundle maps of end-conical maps} and \cref{lem: Derivative of dilation-invariant bundle maps} imply that $h_\phi$ is a $C^{k-1}$-bundle metric and $\prescript{\phi}{}{\nabla}$ is a $C^{k-2}$ connection on $\prescript{\phi}{}{TN}$, and near the singular set $S$,
\begin{equation*}
    |\prescript{\phi_0}{}{\nabla}^\ell(\prescript{\phi_0}{}{\Theta}_u^* h_\phi - h_{\phi_0})| = \mathcal{O}(r^{\mu-\ell}), ~~~~ \forall 0 \leq \ell \leq k-1,
\end{equation*}
and
\begin{equation*}
    |\prescript{\phi_0}{}{\nabla}^\ell(\prescript{\phi_0}{}{\Theta}_u^* (\prescript{\phi}{}{\nabla}) - \prescript{\phi_0}{}{\nabla})| = \mathcal{O}(r^{-1+\mu-\ell}), ~~~~ \forall 0 \leq \ell \leq k-2 .
\end{equation*}
This bounds show that $\prescript{\phi}{}{TN}$ as the structure of an `adapted $C^{k-1}$-bundle' (the defining properties of an adapted bundle are satisfied up to order $k-1$), which is enough to define the weighted Sobolev spaces $W^{\ell,p}_\nu(\prescript{\phi}{}{TN})$ for any $\nu \in \R$ and $0 \leq \ell \leq k-1$, as in \cref{Def: weighted spaces of adapted bundles}. For any such $\ell$, we shall define a Banach vector bundle $\mathcal{E}^{\ell,p}_\nu \to \Map^{k,p}_\mu(M,N;\mathfrak{S})$ whose fibre above $\phi \in \Map^{k,p}_\nu(M,N;\mathfrak{S})$ can be identified with $W^{\ell,p}_\nu(\prescript{\phi}{}{TN})$.

Just like for finite-dimensional manifolds, Banach vector bundles over a Banach manifold can be defined through a collection of local trivialisations and gluing functions satisfying the usual cocycle conditions. To define a vector bundle $\mathcal{E}^{\ell,p}_\nu$ satisfying the desired property, we can use the atlas constructed in the previous part and start with the collection of trivial vector bundles
\begin{equation}        \label{eq: local trivialisations}
    \{ \prescript{\phi_0}{}{\mathcal{U}}^{k,p}_\mu \times \prescript{\phi_0}{}{\mathcal{W}} \times W^{\ell,p}_\nu(\prescript{\phi_0}{}{TN}) \}_{\phi_0 \in \Map^\infty_{\mathrm{ec}}(M,N;\mathfrak{S})} .
\end{equation}
For any end-conical map $\phi_0 \in \Map^\infty_{\mathrm{ec}}(M,N;\mathfrak{S})$ and $\phi = \exp_{\phi_0}(u+\chi_{\phi_0}(\underline{w})) \in \prescript{\phi_0}{}{\mathcal{V}}^{k,p}_\nu$, we can argue just as in the proof of \cref{lem: theta induces iso of Banach spaces} that the isomorphism of $C^{k-1}$-vector bundles
\begin{equation*}
    \prescript{\phi_0}{}{\Theta}_{u+\chi_{\phi_0}(\underline{w})} \col \prescript{\phi_0}{}{TN} \to \prescript{\phi}{}{TN}
\end{equation*}
induces an isomorphism of Banach spaces $W^{\ell,p}_\nu(\prescript{\phi_0}{}{TN}) \simeq W^{\ell,p}_\nu(\prescript{\phi}{}{TN})$ for any $\nu \in \R$ and $0 \leq \ell \leq k-1$. Thus it is natural to choose the gluing functions of $\mathcal{E}^{\ell,p}_\nu$ to be defined as follows, for any $\phi_0,\phi_1 \in \Map^\infty_{\mathrm{ec}}(M,N;\mathfrak{S})$ such that $\prescript{\phi_0}{}{\mathcal{V}}^{k,p}_\nu \cap \prescript{\phi_1}{}{\mathcal{V}}^{k,p}_\nu \neq \emptyset$:
\begin{equation}       \label{eq: gluing functions}
    \mathcal{G}_{\phi_1,\phi_0}(u_0,\underline{w}_0)\{u\} = \prescript{\phi_1}{}{\Theta}_{u_1+\chi_{\phi_1}(\underline{w}_1)}^{-1} \circ \prescript{\phi_0}{}{\Theta}_{u_0+\chi_{\phi_0}(\underline{w}_0)} \{u\}, ~~~~ \forall u \in W^{\ell,p}_\nu(\prescript{\phi_0}{}{TN})
\end{equation}
for any $(u_i,\underline{w}_i) \in \prescript{\phi_i}{}{\mathcal{U}}^{k,p}_\mu \times \prescript{\phi_i}{}{\mathcal{W}}$ such that $\Psi_{\phi_0}(u_0,\underline{w}_0) = \Psi_{\phi_1}(u_1,\underline{w}_1)$. These gluing functions obviously satisfy the cocycle condition, and thus we only need to prove that they indeed define a smooth Banach map from $(\prescript{\phi_0}{}{\mathcal{U}}^{k,p}_\nu \times \prescript{\phi_0}{}{\mathcal{W}}) \cap \Psi_{\phi_0}^{-1}(\prescript{\phi_1}{}{\mathcal{V}}^{k,p}_\mu)$ to the Banach space of bounded linear maps $\mathcal{B}(W^{\ell,p}_\nu(\prescript{\phi_0}{}{TN}),W^{\ell,p}_\nu(\prescript{\phi_1}{}{TN}))$. 

To see this, we proceed as in the proof of \cref{prop: banach manifold atlas}: it is not difficult to see that $\mathcal{G}_{\phi_1,\phi_0}$ can be expressed as a non-linear bundle map with dilation-invariant ends taking values in $\Hom(\prescript{\phi_0}{}{TN},\prescript{\phi_1}{}{TN})$. In particular, it follows from \cref{lem: Master lemma} that $\mathcal{G}_{\phi_1,\phi_0}(u,\underline{w}) - \mathcal{G}_{\phi_1,\phi_0}(0,\underline{w})$ defines of smooth Banach map from the domain of definition of the gluing functions into $W^{k,p}_\mu(\Hom(\prescript{\phi_0}{}{TN},\prescript{\phi_1}{}{TN})) \subset W^{k,p}_0(\Hom(\prescript{\phi_0}{}{TN},\prescript{\phi_1}{}{TN}))$, and thus by weighted Sobolev multiplication (\cref{prop: Sobolev embedding and multiplication}) this defines a smooth Banach map into $\mathcal{B}(W^{\ell,p}_\nu(\prescript{\phi_0}{}{TN}),W^{\ell,p}_\nu(\prescript{\phi_1}{}{TN}))$ for any $0 \leq \ell \leq k$. On the other hand, $\mathcal{G}_{\phi_0,\phi_1}(0,\underline{w})$ defines a smooth map into $C^k_0(\Hom(\prescript{\phi_0}{}{TN},\prescript{\phi_1}{}{TN})$, which also smoothly embeds into $\mathcal{B}(W^{\ell,p}_\nu(\prescript{\phi_0}{}{TN}),W^{\ell,p}_\nu(\prescript{\phi_1}{}{TN}))$. This proves that we can even define the vector bundles $\mathcal{E}^{\ell,p}_\nu$ in the case where $\ell = k$. We gather the outcomes of the above discussion in the following definitions:

\begin{Def}
    For any $\ell \in \{0,1,\ldots,k\}$ and $\nu \in \R$, we shall denote by $\mathcal{E}^{\ell,p}_\nu \to \Map^{k,p}_\mu(M,N;\mathfrak{S})$ the Banach vector bundle obtained from the collection of local trivialisations \eqref{eq: local trivialisations} and the gluing functions \eqref{eq: gluing functions}. By construction, these bundles have the following properties:
    \begin{enumerate}
        \item If $\ell \leq k-1$, the fibre $\mathcal{E}^{\ell,p}_{\nu,\phi}$ can be identified with $W^{\ell,p}_\nu(\prescript{\phi}{}{TN})$ for any $\phi \in \Map^{k,p}_\nu(M,N;\mathfrak{S})$. 

        \item In the case $k = \ell$, we still have an identification $\mathcal{E}^{k,p}_\nu \simeq W^{k,p}_\nu(\prescript{\phi}{}{TN})$ for any $\phi$ sufficiently regular, for instance when $\phi \in \Map^\infty_\mu(M,N;\mathfrak{S})$.
    \end{enumerate}
\end{Def}

\begin{rem}
    It is clear from the definitions that the vertical space of the smooth Banach submersion $\Map^{k,p}_\mu(M,N;\mathfrak{S}) \to \mathcal{M}$, defined as the sub-bundle of $T\Map^{k,p}_\mu(M,N;\mathfrak{S})$ tangent to the fibres, is naturally isomorphic to the Banach vector bundle $\mathcal{E}^{k,p}_\mu$. In particular for any $\phi \in \Map^\infty_\mu(M,N;\mathfrak{S})$ the vertical space is isomorphic to $W^{k,p}_\mu(\prescript{\phi}{}{TN})$, as one should naturally expect.
\end{rem}

    \subsubsection{Moving the singular points}     \label{subsubsec: Moving the singular points}

In order to prove \cref{thm: main theorem introduction}, we will see in \cref{sec: The deformation theorem} that it is necessary to allow the points of the singular set $S$ to move. Indeed, it turns out that the deformation problem for conically singular maps with a fixed singular set is always obstructed (this is related to the fact that $\nu=-1$ is always a critical rate of the Jacobi operator, as observed in \cref{subsubsec: further remarks}). Thus a naive idea would be to extend previous construction of the Banach manifolds of conically singular maps of regularity $W^{k,p}_\mu$, and allow for deformations not just of the tangent maps but of the singular set as well. An idea to try and construct an atlas of Banach charts for such a manifold would be to move around the singular points by pre-composing a conically singular map $\phi \col M\setminus S \to N$ with a diffeomorphism $f \col M \to M$ that realises a given translation at any $s \in S$. However, this runs into the issue that the pre-composition map
\begin{align*}
    W^{k,p}(M,N) \times \mathrm{Diff}^k(M) &\to W^{k,p}(M,N) \\
    (\phi,f) &\mapsto \phi \circ f
\end{align*}
is not even a continuous operation\footnote{As a side remark, one could try to use (weighted) H\"older spaces instead or use diffeomorphisms of higher regularity (even smooth), but this does not resolve the issue since the composition map $C^{k,\alpha}(M,N) \times \mathrm{Diff}^\ell(M) \to C^{k,\alpha}(M,N)$ is also not continuous, for any $\alpha \in (0,1)$ and $\ell \geq k+1$.}, so that there is no hope of constructing a Banach atlas in this way. Because of this, we will instead work with the Banach manifold $\Map^{k,p}_\mu(M,N;\mathfrak{S})$ of conically singular maps whose singularities remain fixed, and use a parametrised family of diffeomorphisms to realise (local) variations of the singular points by deforming the Riemannian metrics on $M$ instead of the conically singular maps. For the purpose of deforming harmonic maps this is of course equivalent, since $\phi \circ f^{-1}$ is harmonic with respect to $g$ if and only if $\phi$ is harmonic with respect to $f^*g$.

In order to do this, we shall first construct two families of vector fields on $M$ as follows. By our assumptions on the set of charts $\{\Upsilon_s \col \Ball_1 \to M\}_{s \in S}$, there exists $\delta > 0$ and a family of smooth extensions $\{\Upsilon_s \col \Ball_{1+\delta} \to M\}_{s \in S}$ such that $\Upsilon_s(\Ball_{1+\delta}) \cap \Upsilon_{s^\prime}(\Ball_{1+\delta}) = \emptyset$ if $s \neq s^\prime$. Let us pick a smooth cutoff function $\chi \col [0,+\infty) \to [0,1]$ such that $\chi \equiv 1$ on $[0,1]$ and $\chi \equiv 0$ on $[1+\delta,+\infty)$. In order to move around the singular set $S$, let us consider the smooth family of vector fields
\begin{align*}
        \xi_1 \col \Ball_1^S \to \mathfrak{X}(M)
\end{align*}
defined for any $\underline{\mathfrak{t}} = (\mathfrak{t}_s)_{s \in S} \in \Ball_1^S$ as
\begin{equation}
    \xi_1(\underline{\mathfrak{t}}) = \begin{cases} 0 ~~~~~~~~~~~~~~~~~~~~~~~~~\text{on} ~~ M \setminus (\cup_s \Upsilon_s(\Ball_{1+\delta})), \\
    (\Upsilon_s)_*(\chi(|x|) \mathfrak{t}_s) ~~~~~~~ \text{on} ~~ \Upsilon_s(\Ball_{1+\delta}) .
    \end{cases}
\end{equation}
Note that for all $\underline{\mathfrak{t}} \in \Ball_1^S$ the flow at time $1$ of $\xi_1(\underline{\mathfrak{t}})$ satisfies 
\begin{equation*}
    \textup{Flow}^{\xi_1(\underline{\mathfrak{t}})}_1(s) = \Upsilon_s(\mathfrak{t}_s), ~~~~ \forall s \in S.
\end{equation*}
We also consider a second smooth family of vector fields
\begin{align*}
    \xi_2 \col (\mathrm{Sym}^+_m(\R))^S \to \mathfrak{X}(M) ,
\end{align*}
where $\mathrm{Sym}^+_m(\R)$ is the set of positive-definite symmetric $m \times m$ matrices, such that for any $\underline{A} = (A_s)_{s \in S} \in (\mathrm{Sym}^+_m(\R))^S$ we have
\begin{equation}
    \xi_2(\underline{A}) = \begin{cases} 0 ~~~~~~~~~~~~~~~~~~~~~~~~~~~~~~~~~~~~~~~ \text{on} ~~ M \setminus (\cup_s \Upsilon_s(\Ball_{1+\delta})), \\
    (\Upsilon_s)_*(- \tfrac{1}{2} \chi(|x|) \log(A) \cdot x) ~~~~~~ \text{on} ~~ \Upsilon_s(\Ball_{1+\delta}),
    \end{cases}
\end{equation}
where $\log(A) \in \mathrm{Sym}_m(\R)$ is the logarithm of a positive-definite symmetric matrix. Its is clear that the flow of $\xi_2(\underline{A})$ fixes the singular set. Moreover, for any $\underline{A} \in (\mathrm{Sym}^+_m(\R))^S$ there exists $\varepsilon \in (0,1)$ such that the flow at time $1$ satisfies
\begin{equation*}
    \Upsilon_s^{-1} \circ \textup{Flow}^{\xi_2(\underline{A})}_1 \circ \Upsilon_s(x) = A_s^{-\frac{1}{2}} x, ~~~~ \forall s \in S, ~ \forall x \in \Ball_\varepsilon \subset \Ball_1.
\end{equation*}

Before explaining how to use the above vector fields to resolve the difficulties pointed out earlier, we need to introduce some notations. It will be convenient to denote by $\Met^{\ell}(M)$ the space of Riemannian metrics on $M$ of regularity $C^{\ell}$, and by $\Met^\ell(M;\mathfrak{S}) \subset \Met^\ell(M)$ the space of metrics of regularity $C^\ell$ such that $\Upsilon_s^* g(0) = g_{\R^m}$ for any $s \in S$. Note that $\Met^\ell(M;\mathfrak{S})$ is the intersection of $\Met^\ell(M)$ with a closed affine subspace of $C^\ell(\mathrm{Sym}^2_+(T^*M))$ and therefore $\Met^\ell(M;\mathfrak{S})$ is an open subset of a Banach space. We also define an evaluation map
\begin{equation*}
    \mathrm{ev}_S \col \Ball_1^S \times \Met^0(M) \to (\mathrm{Sym}^+_m(\R))^S
\end{equation*}
such that for any $(\underline{\mathfrak{t}},g) \in \Ball_1^S \times \Met^0(M)$, $\mathrm{ev}_S(\underline{\mathfrak{t}},g) = (A_s(\mathfrak{t}_s,g))_{s \in S}$ where $A_s(\mathfrak{t}_s,g)$ is the matrix of the coefficients of the quadratic form $(\Upsilon_s^*g)_{\mathfrak{t}_s}$ in the canonical basis of $\R^m$. We are now ready to define an appropriate family of diffeomorphisms:

\begin{Def}     \label{Def: diffeos f}
    For any $(\underline{\mathfrak{t}},g) \in \Ball_1^S \times \Met^0(M)$, we define the diffeomorphism $f_{\underline{\mathfrak{t}},g}$ by
    \begin{equation*}
        f_{\underline{\mathfrak{t}},g} = \textup{Flow}_1^{\xi_1(\underline{\mathfrak{t}})} \circ \textup{Flow}_1^{\xi_2(\mathrm{ev}_S(\underline{\mathfrak{t}},g))} \in \mathrm{Diff}(M).
    \end{equation*}
\end{Def}

\begin{prop}     \label{prop: regularity of pull-back of metrics}
    Let $k, \ell \in \N_0$.
    \begin{enumerate}[(i)]
        \item For any $\underline{\mathfrak{t}} \in \Ball_1^S$ and any $g \in \Met^{k}(M)$, $f_{\underline{\mathfrak{t}},g}^*g \in \Met^{k}(M;\mathfrak{S})$.

        \item If $\ell \geq 1$, then the induced map
        \begin{equation*}
            \mathcal{F} \col \Met^{k+\ell}(M) \times \Ball_1^S \to \Met^k(M;\mathfrak{S}), ~~ (\underline{\mathfrak{t}},g) \mapsto f_{\underline{\mathfrak{t}},g}^* g
        \end{equation*}
        is a Banach map of class $C^\ell$. Moreover, the derivative of this map satisfies
        \begin{equation*}
            D_{(\underline{0},g)} \mathcal{F} \{ \dot{\underline{\mathfrak{t}}} \} = f_{\underline{0},g}^*(\pounds_{\xi_g(\dot{\underline{\mathfrak{t}}})}g), ~~~ \forall g \in \Met^{k+\ell}(M), \forall \dot{\underline{\mathfrak{t}}} \in (\R^m)^S,
        \end{equation*}
        where $\pounds$ is the Lie derivative and $\xi_g(\cdot) \col (\R^m)^S \to \mathfrak{X}(M)$ is a family of vector fields such that for any $\underline{\dot{\mathfrak{t}}} \in (\R^m)^S$ and $s \in S$,
        \begin{equation*}
            \xi_g(\dot{\underline{\mathfrak{t}}})_s = (\Upsilon_s)_* \dot{\mathfrak{t}}_s .
        \end{equation*}
    \end{enumerate}
\end{prop}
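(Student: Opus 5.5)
Part (i) is a direct check at each $s\in S$. By construction, $f = f_{\underline{\mathfrak{t}},g}$ satisfies $f(s) = \mathrm{Flow}^{\xi_1(\underline{\mathfrak{t}})}_1(s) = \Upsilon_s(\mathfrak{t}_s)$, because $\mathrm{Flow}^{\xi_2}_1$ fixes $s$. Near $0$, $\Upsilon_s^{-1}\circ \mathrm{Flow}^{\xi_2(\underline{A})}_1\circ\Upsilon_s(x) = A_s^{-1/2}x$. Near $\Upsilon_s(0)$, the vector field $\xi_1(\underline{\mathfrak{t}})$ is the constant field $\mathfrak{t}_s$ in the chart $\Upsilon_s$, since $\chi\equiv1$ on $\Ball_1$ and $|\mathfrak{t}_s|<1$. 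Hence its time-one flow is the translation $x\mapsto x+\mathfrak{t}_s$ near $0$. (One should check that the whole flow line from $0$ stays in $\Ball_1$; it does, since it is the segment $t\mapsto t\mathfrak{t}_s$.) Therefore, in the chart, $d f_s(0) = A_s^{-1/2}$. It follows that $(\Upsilon_s^* f^*g)(0) = (A_s^{-1/2})^T A_s(\mathfrak{t}_s,g) A_s^{-1/2} = I$, using that $A_s$ is symmetric. The regularity is preserved because $f$ is smooth for fixed $(\underline{\mathfrak{t}},g)$. Indeed, $\mathrm{ev}_S$ only produces finitely many matrices, and the fields $\xi_i$ are smooth in $x$.

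For (ii), write $\mathcal{F}(\underline{\mathfrak{t}},g) = \Phi(\Xi(\underline{\mathfrak{t}},g), g)$. Here $\Xi(\underline{\mathfrak{t}},g) = (\underline{\mathfrak{t}}, \mathrm{ev}_S(\underline{\mathfrak{t}},g))$ takes values in the finite-dimensional space $\Ball_1^S\times(\mathrm{Sym}_m^+)^S$, and $\Phi(\underline{\mathfrak{t}},\underline{A},g) = (\mathrm{Flow}^{\xi_1(\underline{\mathfrak{t}})}_1\circ\mathrm{Flow}^{\xi_2(\underline{A})}_1)^*g$. The finite-dimensional family of diffeomorphisms $(\underline{\mathfrak{t}},\underline{A})\mapsto f$ depends smoothly on its parameters in $C^\infty$, because the $\xi_i$ depend smoothly on them; $\log$ is smooth on $\mathrm{Sym}^+_m$. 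The evaluation map $\mathrm{ev}_S$ is $C^\ell$ on $\Ball_1^S\times\Met^{k+\ell}(M)$, being linear in $g$ and $C^{k+\ell}$ in $\underline{\mathfrak{t}}$, with $k+\ell\ge\ell$. The key step, which I expect to be the main obstacle, is the loss of derivatives. The map $(f,g)\mapsto f^*g = (g\circ f)(df\cdot,df\cdot)$ from $(\text{smooth finite-dim family})\times C^{k+\ell}\to C^k$ is $C^\ell$, because each derivative in the diffeomorphism parameter hits $g\circ f$ and costs one derivative of $g$. Dependence on $g$ is linear, hence harmless. I would prove this with a standard lemma. If $F(\theta,x)$ is a smooth family of diffeomorphisms with $\theta$ in a finite-dimensional manifold, then $(\theta,g)\mapsto g\circ F_\theta$ is $C^\ell$ from $C^{k+\ell}$ to $C^k$. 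The argument is induction on $\ell$, using the mean value theorem in $\theta$ together with uniform continuity of the $(k+\ell)$-th derivatives of $g$ on compact $M$. Composing the $C^\ell$ maps then gives the $C^\ell$ regularity of $\mathcal{F}$.

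For the derivative, take $\underline{\mathfrak{t}}=0$, so that $f_0 = \mathrm{Flow}^{\xi_2(\mathrm{ev}_S(0,g))}_1$ with $\mathrm{ev}_S(0,g) = (I)_s$. Thus $\xi_2 = 0$ and $f_0 = \mathrm{id}$, though I would keep the notation $f_{\underline 0,g}^*$. Differentiate $f_{\underline{\mathfrak{t}},g} = \mathrm{Flow}^{\xi_1(\underline{\mathfrak{t}})}_1\circ \mathrm{Flow}^{\xi_2(\mathrm{ev}_S(\underline{\mathfrak{t}},g))}_1$ in direction $\dot{\underline{\mathfrak{t}}}$. The first factor contributes the vector field $\xi_1(\dot{\underline{\mathfrak{t}}})$, by linearity of $\xi_1$ and the flow of $0$ being the identity. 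The second contributes $\xi_2$ applied to the derivative of $\log(\mathrm{ev}_S)$, which is a linear field $-\tfrac12 B_s x$ near $s$ with $B_s = \partial_{\dot{\mathfrak t}_s}(\Upsilon_s^*g)(0)$. Their sum is a vector field $\xi_g(\dot{\underline{\mathfrak{t}}})$ whose value at $s$ is $(\Upsilon_s)_*\dot{\mathfrak{t}}_s$, since the linear part vanishes at $x=0$. This is exactly the stated property. The formula $D\mathcal{F}\{\dot{\underline{\mathfrak{t}}}\} = f_{\underline 0,g}^*\pounds_{\xi_g}g$ then follows from $\tfrac{d}{d\epsilon}(\psi_\epsilon)^*g = \psi_0^*\pounds_X g$ for a $C^1$ family $\psi_\epsilon$ of diffeomorphisms whose variation is $X$. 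This needs $\ell\ge1$ so that $\pounds_X g\in C^{k}$ when $g\in C^{k+1}$.
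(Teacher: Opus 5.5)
Your approach is the paper's, and parts (i) and the $C^\ell$ regularity in (ii) are fine. For (i) you use the local normal form $\Upsilon_s^{-1}\circ f_{\underline{\mathfrak t},g}\circ\Upsilon_s(x)=A_s^{-1/2}x+\mathfrak t_s$. For (ii) you combine a smooth finite-dimensional family of diffeomorphisms with a pull-back that loses $\ell$ derivatives, and you get the derivative from the Lie-derivative formula. Proving the loss-of-derivatives lemma directly for a finite-dimensional smooth family, instead of going through the Banach manifold $\mathrm{Diff}^{k+1}(M)$ and quoting Eells--Lemaire, is a harmless and slightly more self-contained variant.

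There is one false step in your derivative computation. You assert $\mathrm{ev}_S(\underline 0,g)=(I)_s$, hence $\xi_2(\mathrm{ev}_S(\underline 0,g))=0$ and $f_{\underline 0,g}=\id$. That holds only for $g\in\Met(M;\mathfrak S)$. The statement is for arbitrary $g\in\Met^{k+\ell}(M)$, where $A_0\coloneqq\mathrm{ev}_S(\underline 0,g)$ is just the matrix of $(\Upsilon_s^*g)_0$. Then $f_{\underline 0,g}$ is a non-trivial diffeomorphism, which is exactly why the formula carries the factor $f_{\underline 0,g}^*$. When $\xi_2(A_0)\neq 0$, the $\epsilon$-derivative of $\textup{Flow}_1^{\xi_2(A_\epsilon)}$ is not the field $\xi_2$ evaluated on $D\log_{A_0}[\dot A]$. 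In the chart it comes from $\partial_\epsilon A_\epsilon^{-1/2}$, which differs in general when $A_0$ and $\dot A$ do not commute. So your description of the second contribution, and of the sum as $\xi_g$, is wrong for general $g$.

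The repair is short, and it is what the paper does. Define $\xi_g(\dot{\underline{\mathfrak t}})$ by $\partial_\epsilon|_{\epsilon=0}f_{\epsilon\dot{\underline{\mathfrak t}},g}=\xi_g(\dot{\underline{\mathfrak t}})\circ f_{\underline 0,g}$. Writing $f_{\epsilon\dot{\underline{\mathfrak t}},g}=(f_{\epsilon\dot{\underline{\mathfrak t}},g}\circ f_{\underline 0,g}^{-1})\circ f_{\underline 0,g}$ then gives $\partial_\epsilon (f_{\epsilon\dot{\underline{\mathfrak t}},g})^*g=f_{\underline 0,g}^*\pounds_{\xi_g(\dot{\underline{\mathfrak t}})}g$. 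Every $\textup{Flow}_1^{\xi_2(\underline A)}$ fixes each $s$, so $f_{\underline 0,g}(s)=s$. Hence $\xi_g(\dot{\underline{\mathfrak t}})_s=\partial_\epsilon\textup{Flow}_1^{\xi_1(\epsilon\dot{\underline{\mathfrak t}})}(s)=\partial_\epsilon\Upsilon_s(\epsilon\dot{\mathfrak t}_s)=(\Upsilon_s)_*\dot{\mathfrak t}_s$. You never need to compute the second flow's contribution. Your shortcut is harmless in the case used later, $g\in\Met(M;\mathfrak S)$, where indeed $f_{\underline 0,g}=\id$. As written, though, it does not prove the statement for general $g$.
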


\begin{proof}
    For the first point, let $(\underline{\mathfrak{t}},g) \in \Ball_1^S \times \Met^{k+\ell}(M)$, and let us simply denote $\mathrm{ev}_S(\underline{\mathfrak{t}},g)$ by $\underline{A} = \underline{A}(\underline{\mathfrak{t}},g) \in (\mathrm{Sym}^+_m(\R))^S$, so that $f_{\underline{\mathfrak{t}},g} = \textup{Flow}_1^{\xi_1(\underline{\mathfrak{t}})} \circ \textup{Flow}_1^{\xi_2(\underline{A})}$. Since $f_{\underline{\mathfrak{t}},g}$ is a smooth diffeomorphism, $f_{\underline{\mathfrak{t}},g}^* g \in \Met^{k+\ell}(M)$ is a metric of regularity $C^{k+\ell}$. On the other hand, using the local description of the flows of $\xi_1(\underline{\mathfrak{t}})$ and $\xi_2(\underline{A})$ near $S$, we see that there exists $\varepsilon \in (0,1)$ such that for any $s \in S$
    \begin{equation}        \label{eq: local flow}
        \Upsilon_{s}^{-1} \circ f_{\mathfrak{t},g} \circ \Upsilon_s(x) = A_s(\underline{\mathfrak{t}},g)^{-1/2} x + \mathfrak{t}_s , ~~~~ \forall x \in \Ball_\varepsilon.
    \end{equation}
    Since $A_s = A_s(\underline{\mathfrak{t}},g)$ is by definition the matrix of coefficients of the quadratic form $(\Upsilon_s^*g)_{\mathfrak{t}_s}$, it immediately follows that $(\Upsilon_s^* f_{\mathfrak{t},g}^*g)_{0} = g_{\R^m}$, that is, $f_{\mathfrak{t},g}^*g \in \Met^{k+\ell}(M;\mathfrak{S})$.

    For the second point, let us first remark that the composition of the flows at time $1$, $\textup{Flow}_1^{\xi_1(\underline{\mathfrak{t}})} \circ \textup{Flow}^{\xi_2(\underline{A})}_1$, defines a smooth family of diffeomorphisms $\Ball_1^S \times (\mathrm{Sym}^+_m(\R))^S \times M \to M$, and therefore it induces a smooth map $\Ball_1^S \times (\mathrm{Sym}^+_m(\R))^S \to \mathrm{Diff}^{k+1}(M)$ into the Banach manifold\footnote{The construction of a Banach manifold structure on $\mathrm{Diff}^{k+1}(M)$ is classical, for instance by fixing a background metric on $M$ and defining local charts modelled on open subsets of $C^{k+1}(TM)$ using the associated exponential map. Note that the composition of diffeomorphisms is continuous but not differentiable on $\mathrm{Diff}^{k+1}(M)$.} of diffeomorphisms of class $C^{k+1}$. On the other hand, the pull-back
    \begin{equation}        \label{eq: pb map}
        \Met^{k+\ell}(M) \times \mathrm{Diff}^{k+1}(M) \to \Met^{k}(M), ~~ (g,f) \mapsto f^* g
    \end{equation}
    is a Banach map of class $C^\ell$ (but no more). This essentially follows from the fact that if $X,Y,Z$ are compact manifolds and $k,\ell,\ell^\prime \in \N_0$, the composition $\Map^{k+\ell}(X,Y) \times \Map^{k+\ell^\prime}(Y,Z) \to \Map^{k}(X,Z)$ is of class $C^{\ell}$ but no more, no matter how large $\ell^\prime$ is (c.f. for instance the discussion of \cite[\S5]{eells1981deformations}). Finally, remark that the evaluation map
    \begin{equation*}
        \mathrm{ev}_S \col \Ball_1^S \times \Met^{k+\ell}(M) \to (\mathrm{Sym}^+_m(\R))^S
    \end{equation*}
    is obviously of class $C^{k+\ell}$. Gathering the previous observations and using the composition rules for Banach maps, we deduce that $\mathcal{F}$ is of class $C^\ell$. 

    To prove the statement on the derivative when $\ell \geq 1$, remark first that the derivative of the pull-back map \eqref{eq: pb map} along path of diffeomorphism is obviously given by the Lie derivative with respect to the corresponding family of vector fields. Now examining \eqref{eq: local flow}, it is clear that the vector field defined as $\left. \frac{\diff}{\diff t} \right|_{t = 0} f_{t \cdot \underline{\dot{\mathfrak{t}}},g} = \xi_g(\dot{\underline{\mathfrak{t}}}) \circ f_{\underline{0},g}$ satisfies $\xi_g(\underline{\dot{\mathfrak{t}}})_s = (\Upsilon_s)_* \dot{\mathfrak{t}}_s$, which finishes the proof of the proposition.
\end{proof}

    \subsection{Variations of the tension}      \label{subsec: variations of the tension}

The goal of this section is to show that, under the additional assumption that all the tangent maps are harmonic, the tension of a conically singular Sobolev map can be regarded as a continuously differentiable section of the Banach vector bundle $\mathcal{E}^{k,p}_{\mu-2}$ over the parameter space $\Map^{k+2,p}_{\mu}(M,N;\mathfrak{S}) \times \Ball_1^S \times \Met^{k+2}(M)$, and to compute its derivative at a conically singular harmonic map. \cref{subsubsec: tension as section} makes this statement precise (cf. \cref{thm: Variations of the tension}), and the remainder of this part is dedicated its proof. In \cref{subsubsec: expressions hessian}, we express the hessian of a map in terms of bundle maps with dilation-invariant ends, in order to use the formalism developed in \cref{subsubsec: Bundle maps with dilation-invariant ends} to prove \cref{thm: Variations of the tension} in \cref{subsec: proof of the main proposition}.

    \subsubsection{The tension as a section of a Banach vector bundle}      \label{subsubsec: tension as section}

In this section, we let $\mu \in (0,1)$, $p \in (m,\infty)$, $k \geq 0$ and $\mathfrak{S} = \{S,\{\Upsilon_s\}_{s \in S}, \{\mathcal{M}_s\}_{s \in S}\}$ be a data set as in the previous part, with the additional
\begin{hyp}
    For all $s \in S$, any $\varphi_s \in \mathcal{M}_s$ is harmonic with respect to the standard round metric on $\Sp^{m-1}$; thus the dilation-invariant map $\phi_s = \varphi_s \circ \pr$ is harmonic with respect to the standard Euclidean metric $g_{\R^m}$ on $\R^m$.
\end{hyp}
Because of this assumption, if $g$ is a Riemannian metric such that $\Upsilon_s^* g(0) = g_{\R^m}$ for all $s \in S$ (that is $g \in \Met^\infty(M;\mathfrak{S})$ in the notations introduced in the previous section) and $\phi \col M \setminus S \to N$ is a conically singular map whose tangent maps $\phi_s = \varphi_s \circ \pr$ satisfy $(\varphi_s)_{s \in S} \in \mathcal{M}$, the tension field $\tau(g,\phi) \in \Gamma(\prescript{\phi}{}{TN})$ has slower growth than the expected $\mathcal{O}(r^{-2})$ rate near the singular set. In fact if $\phi \in \Map^\infty_\mu(M,N;\mathfrak{S})$ and $g \in \Met^\infty(M;\mathfrak{S})$ we will see in a moment that the tension $\tau(g,\phi)$ is bounded by $\mathcal{O}(r^{\mu-2})$ near the singular set $S$. In particular, this holds for $g = f^*_{\underline{\mathfrak{t}},g^\prime} g^\prime$ where $\underline{\mathfrak{t}} \in \Ball_1^S$ any $g^\prime$ is any Riemannian metric, where $f^*_{\underline{\mathfrak{t}},g^\prime}$ is the diffeomorphism of \cref{Def: diffeos f}. 

The idea for proving the deformation theorem will therefore be to show that the tension fields $\tau(f_{\underline{\mathfrak{t}},g}^*g,\phi)$ fit together as a section of the Banach vector bundle $\mathcal{E}^{k,p}_{\mu-2}$ over the parameter space 
\begin{equation*}
    \Map^{k+2,p}_\mu(M,N;\mathfrak{S}) \times \Ball_1^S \times \Met^{k+2}(M) 
\end{equation*}
and to examine the conditions for the derivative of this section to be invertible, so as to use the Implicit Function Theorem. By means of writing the derivative of the tension in a reasonably concise way, let us define for any $\phi \in \Map^\infty_\mu(M,N;\mathfrak{S})$ and $g \in \Met^\infty(M)$ the finite-dimensional vector bundles
\begin{align}
    \prescript{\phi}{}{W} & \coloneqq \{ \prescript{\phi_0}{}{\Theta}_u(\chi_{\phi_0}(\dot{\underline{w}})) \mid \dot{\underline{w}} \in T_{\underline{\varphi}} \mathcal{M} \subset \times_s \Gamma(\prescript{\varphi_s}{}{TN})\} \subset \Gamma(\prescript{\phi}{}{TN}), \\
    \prescript{\phi}{}{V_g} & \coloneqq \{ \diff \phi(\xi_g(\dot{\underline{\mathfrak{t}}})) \mid \dot{\underline{\mathfrak{t}}} \in (\R^m)^S\} \subset \Gamma(\prescript{\phi}{}{TN}),
\end{align}
where $\phi_0$ is an end-conical map with the same tangent maps as $\phi$, $u \in \prescript{\phi_0}{}{\mathcal{U}}^{k,p}_\mu \cap C^\infty_\mu(\prescript{\phi_0}{}{TN})$ is a section such that $\phi = \exp_{\phi_0}(u)$, and $\xi_g(\underline{\mathfrak{t}})$ is the vector field introduced in \cref{prop: regularity of pull-back of metrics}. Note that in particular the properties of the family of vector fields $\xi_g(\cdot)$ imply that
\begin{equation*}
    \prescript{\phi}{}{V_g} \simeq (\R^m)^S \simeq T_{\underline{0}} \Ball_1^S. 
\end{equation*}
Moreover it is not difficult to see that $\prescript{\phi}{}{W} \subset C^{\infty}_0(\prescript{\phi}{}{TN})$ and $\prescript{\phi}{}{V_g} \subset C^\infty_{-1}(\prescript{\phi}{}{TN})$, and using the chart $\Psi_{\phi_0} \col \prescript{\phi_0}{}{\mathcal{U}}^{k,p}_\mu \times \prescript{\phi_0}{}{\mathcal{W}} \to \Map^{k,p}_\mu(M,N;\mathfrak{S})$ we obtain an identification\footnote{This identification is not canonical, since the subbundle $\mathcal{E}^{k+2,p}_\mu \subset T \Map^{k+2,p}_\mu(M,N;\mathfrak{S})$ has no canonical complement. This is why we needed to choose an end-conical map $\phi_0$ in order to define $\prescript{\phi}{}{W}_{\mathfrak{S}}$.}
\begin{equation}        \label{eq: tangent bundle of Map times B}
    T_{(\phi,\underline{0})}(\Map^{k+2,p}_\mu(M,N;\mathfrak{S}) \times \Ball_1^S) \simeq W^{k+2,p}_\mu(\prescript{\phi}{}{TN}) \oplus \prescript{\phi}{}{W} \oplus \prescript{\phi}{}{V_g} .
\end{equation}
Using this decomposition, the main theorem of this section can be stated as follows:

\begin{thm}     \label{thm: Variations of the tension}
    For any $\phi \in \Map^{k+2,p}_\mu(M,N;\mathfrak{S})$, $\underline{\mathfrak{t}} \in \Ball_1^S$ and $g \in \Met^{k+2}(M)$,
    \begin{equation*}
        \tau(f_{\underline{\mathfrak{t}},g}^*g,\phi) \in W^{k,p}_{\mu-2}(\prescript{\phi}{}{TN}),
    \end{equation*}
    and this defines a continuously differentiable Banach section
    \begin{equation*}
        \mathcal{T} \col \Map^{k+2,p}_{\mu}(M,N;\mathfrak{S}) \times \Ball_1^S \times \Met^{k+2}(M) \to \mathcal{E}^{k,p}_{\mu-2} .
    \end{equation*}
    Moreover, if $\phi \in \Map^\infty_\mu(M,N;\mathfrak{S})$ is harmonic with respect to $g \in \Met^\infty(M;\mathfrak{S})$ and we use \eqref{eq: tangent bundle of Map times B} to decompose the tangent space of $\Map^{k+2,p}_\mu(M,N;\mathfrak{S}) \times \Ball_1^S$, then the derivative\footnote{Recall that for any continuously differentiable section $\sigma$ of a Banach vector bundle $E \to B$ vanishing at $b \in B$, there is a well-defined (i.e. independent of a choice of local trivialisation) notion of derivative as a bounded linear map $D_b \sigma \col T_p B \to E_b$, essentially because the tangent space space of the total space of $E$ has a canonical splitting between vertical and horizontal parts along the zero section of $E$. This is not true when $\sigma(b)\neq 0$.} of $\mathcal{T}$ satisfies
    \begin{equation*}
        D_{(\phi,\underline{0},g)}\mathcal{T}\{\dot u + \dot w + \dot v\} = J^\phi_g(\dot u + \dot w + \dot v), ~~~~ \forall (\dot u, \dot w, \dot v) \in W^{k+2,p}_\mu(\prescript{\phi}{}{TN}) \oplus \prescript{\phi}{}{W} \oplus \prescript{\phi}{}{V_g} .
    \end{equation*}
\end{thm}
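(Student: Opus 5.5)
The plan is to work locally in the charts $\Psi_{\phi_0}$ of \cref{prop: banach manifold atlas} and in the corresponding trivialisations of $\mathcal{E}^{k,p}_{\mu-2}$. In these coordinates the section $\mathcal{T}$ becomes the map
\begin{equation*}
    (u,\underline{w},\underline{\mathfrak{t}},g) \longmapsto \prescript{\phi_0}{}{\Theta}^{-1}_{u+\chi_{\phi_0}(\underline{w})}\, \tau\big(f^*_{\underline{\mathfrak{t}},g} g, \exp_{\phi_0}(u+\chi_{\phi_0}(\underline{w}))\big).
\end{equation*}
We must show that this map takes values in $W^{k,p}_{\mu-2}(\prescript{\phi_0}{}{TN})$ and is $C^1$.

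First we split off the metric. By \cref{prop: regularity of pull-back of metrics} with $\ell=1$, the map $(\underline{\mathfrak{t}},g)\mapsto g' = f^*_{\underline{\mathfrak{t}},g}g$ is $C^1$ from $\Ball_1^S\times\Met^{k+2}(M)$ into $\Met^{k+1}(M;\mathfrak{S})$. Since $\Upsilon_s^*g'(0)=g_{\R^m}$, Taylor expansion gives, near each $s$,
\begin{equation*}
    \Upsilon_s^*g' - g_{\R^m} \in C^{k+1}_1, \qquad \Gamma(g') \in C^{k}_0,
\end{equation*}
and both depend linearly and continuously on $g'$. Writing $\Hess_{g'}(\phi) = \Hess_{\mathrm{flat}}(\phi) - \diff\phi(\Gamma(g')-\Gamma_{\mathrm{flat}})$ near $S$ and tracing, we obtain
\begin{equation*}
    \tau(g',\phi) = \tau_{\mathrm{flat}}(\phi) + (g'^{-1}-g_{\R^m}^{-1})\ast\Hess(\phi) + g'^{-1}\ast\Gamma\ast\diff\phi .
\end{equation*}
Using $|\Hess\phi|=\mathcal{O}(r^{-2})$ and $|\diff\phi|=\mathcal{O}(r^{-1})$, the error terms lie in $C^k_{-1}\subset W^{k,p}_{\mu-2}$. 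This uses $\mu<1$ and the embedding of \cref{lem: Natural properties of weighted spaces}(iii). Away from $S$ everything is a standard smooth Sobolev computation. To get $C^1$ dependence on $(u,\underline{w})$ in these error terms, we express $\Theta^{-1}\Hess$ and $\Theta^{-1}\diff\phi$ via bundle maps with dilation-invariant ends, as in \cref{lem: Bundle maps of end-conical maps}. Then \cref{lem: Parametrised master lemma} and weighted Sobolev multiplication apply, and the bilinear dependence on the $C^1$ metric coefficients gives joint $C^1$ regularity. The loss of one derivative in $g$ is exactly why we need $\Met^{k+2}$.

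Second, the main term. Fix $\underline{w}$ and write $\Theta^{-1}\tau_{\mathrm{flat}}(\exp_{\phi_0}(v))$ with $v = u+\chi_{\phi_0}(\underline{w})$. As in the proof of \cref{prop: The pulled-back bundle has dilation-invariant ends}, this has the form
\begin{equation*}
    \bmap{B}(v)\{\nabla^2 v\} + r^{-1}\bmap{C}(v)\{\nabla v, \nabla v\} + r^{-1}\bmap{C}'(v)\{\nabla v\} + r^{-2}\bmap{A}(v),
\end{equation*}
where all coefficients are bundle maps with dilation-invariant ends. We subtract the value at $v=\chi_{\phi_0}(\underline{w})$. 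Near $S$, $\exp_{\phi_0}(\chi_{\phi_0}(\underline{w}))$ is the cone over $\exp_{\varphi_s}(w_s)\in\mathcal{M}_s$, which is harmonic by the standing assumption, so its flat tension vanishes near $S$. Consequently $\tau(\exp_{\phi_0}\chi_{\phi_0}(\underline{w}))$ is compactly supported in $M\setminus S$ and depends smoothly on the finite-dimensional parameter $\underline{w}$. The difference $\tau(v)-\tau(\chi_{\phi_0}(\underline{w}))$ is then handled term by term by \cref{lem: Parametrised master lemma}, with $u\in W^{k+2,p}_\mu$ and $w\in C^{k+2}_0$. Here the $\nabla^2 u$ and $r^{-1}\nabla u$ factors land in $W^{k,p}_{\mu-2}$ once multiplied by $C^k_0$ coefficients. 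This step is the main obstacle, since it requires carefully tracking the powers of $r$ and the split between low- and high-order derivatives as in \cref{cor: Estimates on derivatives for dilation-invariant maps}. I expect it to go through, because it is the same mechanism as in the proof of \cref{lem: Master lemma}.

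Finally, the derivative at a harmonic $(\phi,\underline{0},g)$ with $g\in\Met^\infty(M;\mathfrak{S})$, so that $f_{\underline{0},g}=\id$. By \cref{rem: Derivative of Banach maps}, it suffices to compute directional derivatives pointwise on $M\setminus S$. Along $\dot u$ and $\dot w$ (the latter realised by the variation $\Theta_u\chi_{\phi_0}(\dot{\underline{w}})$), the first variation of the tension is the Jacobi operator by the classical formula. Along $\dot{\underline{\mathfrak{t}}}$ we use naturality,
\begin{equation*}
    \tau(f_t^*g,\phi) = f_t^*\big(\tau(g,\phi\circ f_t^{-1})\big),
\end{equation*}
and the identity $\frac{\diff}{\diff t}(\phi\circ f_t^{-1}) = -\diff\phi(\xi_g(\dot{\underline{\mathfrak{t}}}))$ from \cref{prop: regularity of pull-back of metrics}. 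Because $\tau(g,\phi)=0$, the pull-back produces no extra terms. The derivative is therefore the Jacobi operator applied to $\diff\phi(\xi_g(\dot{\underline{\mathfrak{t}}}))\in\prescript{\phi}{}{V_g}$, with signs matching the conventions fixed in \eqref{eq: tangent bundle of Map times B}. Summing the three contributions gives $J^\phi_g(\dot u+\dot w+\dot v)$.
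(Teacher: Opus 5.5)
Your proposal is correct and follows essentially the paper's argument. It uses the same splitting into the end-conical piece at $\chi_{\phi_0}(\underline{w})$, whose $g_0$-tension is compactly supported away from $S$ because the tangent maps are harmonic, plus a difference controlled by \cref{lem: Parametrised master lemma}; the same bound $\Upsilon_s^*g'-g_{\R^m}\in C^{k+1}_1$ for the metric error; the same $C^1$ map $(\underline{\mathfrak{t}},g)\mapsto f^*_{\underline{\mathfrak{t}},g}g$; and the same computation of the derivative as a local directional derivative via \cref{rem: Derivative of Banach maps}. The only difference is the order: you peel off the metric before the map, whereas the paper subtracts $\Theta_w^{-1}\tau(g,\phi_w)$ first and handles the metric only on the end-conical piece.

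Two small slips. First, by dilation-invariance the quadratic term must be $\bmap{C}(v)\{\nabla v,\nabla v\}$ with no factor $r^{-1}$; as you wrote it, the cross term $(\bmap{C}(u+w)-\bmap{C}(w))\{\nabla w,\nabla w\}$ would only lie in $W^{k,p}_{\mu-3}$. Second, for $\phi$ of regularity $W^{k+2,p}_\mu$ the metric error terms lie in $C^k_{-1}+W^{k,p}_{\mu-1}$ rather than $C^k_{-1}$ (e.g.\ $(g'^{-1}-g_{\R^m}^{-1})\ast\nabla^2u$); this is still contained in $W^{k,p}_{\mu-2}$ since $\mu<1$.
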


In a local trivialisation of the vector bundle $\mathcal{E}^{k,p}_{\mu-2}$ as described in \cref{subsubsec: Banach vector bundles}, the previous theorem is equivalent to the following statement:

\begin{prop}        \label{prop: variations of the tension}
    Let $\phi_0 \in \Map^\infty_{\mathrm{ec}}(M,N;\mathfrak{S})$ be an end-conical map. Then for any $(u,\underline{w},\underline{\mathfrak{t}},g) \in \prescript{\phi_0}{}{\mathcal{U}}^{k+2,p}_\mu \times \prescript{\phi_0}{}{\mathcal{W}} \times \Ball_1^S \times \Met^{k+2}(M)$,
    \begin{equation*}
        \prescript{\phi_0}{}{\mathcal{T}}(u,\underline{w},\underline{\mathfrak{t}},g)  \coloneqq \prescript{\phi_0}{}{\Theta}_{u+\chi_{\phi_0}(\underline{w})}^{-1} \tau(f_{\underline{\mathfrak{t}},g}^* g, \phi_{u+\chi_{\phi_0}(\underline{w})}) \in W^{k,p}_{\mu-2}(\prescript{\phi_0}{}{TN}) ,
    \end{equation*}
    and this induces a continuously differentiable Banach map
    \begin{equation*}
        \prescript{\phi_0}{}{\mathcal{T}} \col \prescript{\phi_0}{}{\mathcal{U}}^{k+2,p}_\mu \times \prescript{\phi_0}{}{\mathcal{W}} \times \Ball_1^S \times \Met^{k+2}(M) \to W^{k,p}_{\mu-2}(\prescript{\phi_0}{}{TN}) .
    \end{equation*}
    If moreover $u \in \prescript{\phi_0}{}{\mathcal{U}}^{k,p}_\mu \cap C^\infty_\mu(\prescript{\phi_0}{}{TN})$ is such that $\phi = \phi_{u}$ is harmonic with respect to some $g \in \Met^\infty(M;\mathfrak{S})$, then the derivative of $\prescript{\phi_0}{}{\mathcal{T}}$ at the point $(u,\underline{0},\underline{0},g)$ satisfies
    \begin{equation*}
        D_{(u,\underline{0},\underline{0},g)} \mathcal{T}\{\dot u+ \dot{\underline{w}} +\dot{\underline{\mathfrak{t}}}\} = \prescript{\phi_0}{}{\Theta}_{u}^{-1} \circ  J^{\phi}_g \circ \prescript{\phi_0}{}{\Theta}_{u} \{ \dot u+\chi_{\phi_0}(\dot{\underline{w}}) + \diff \phi (\xi_g(\dot{\underline{\mathfrak{t}}}))\}
    \end{equation*}
    where $J^\phi_g$ is the Jacobi operator of $\phi$ with respect to the metric $g$ and $\xi_g(\dot{\underline{\mathfrak{t}}}) \in \mathfrak{X}(M)$ the vector field defined in part (ii) of \cref{prop: regularity of pull-back of metrics}.
\end{prop}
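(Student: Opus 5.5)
We reduce everything to the formalism of bundle maps with dilation-invariant ends (\cref{lem: Master lemma} and \cref{lem: Parametrised master lemma}). The first step is to write the pulled-back tension $\prescript{\phi_0}{}{\Theta}_{v}^{-1}\tau(\tilde g,\phi_v)$, with $v = u+\chi_{\phi_0}(\underline{w})$ and $\tilde g = f^*_{\underline{\mathfrak{t}},g}g$, in terms of $\prescript{\phi_0}{}{\nabla}$. Combining \cref{lem: Bundle maps of end-conical maps} with the analogous identity for $\diff\phi_v = \prescript{\phi_0}{}{\Theta}_v(\diff\phi_0 + \prescript{\phi_0}{}{\nabla} v) + \ldots$, we expect
\begin{equation*}
    \prescript{\phi_0}{}{\Theta}_v^{-1}\tau(\tilde g,\phi_v) = \tr_{\tilde g}\Big( \bmap{H}_2(v)\{\prescript{\phi_0}{}{\nabla}^2 v\} + \bmap{H}_{1}(v)\{\prescript{\phi_0}{}{\nabla} v\otimes \prescript{\phi_0}{}{\nabla} v\} + r^{-1}\bmap{H}_1'(v)\{\prescript{\phi_0}{}{\nabla} v\} + r^{-2}\bmap{H}_0(v)\Big) - \bmap{E}(v)\{\Gamma(\tilde g)\}.
\end{equation*}
Here the $\bmap{H}_i$ are bundle maps with dilation-invariant ends (the $r^{-1},r^{-2}$ factors are forced by \cref{rem: Factors of r}), and the last term encodes the Christoffel symbols of $\tilde g$ relative to the background $g_0$ (flat near $S$) contracted with $\diff\phi_v$. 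The tension is then a polynomial expression in $(\tilde g,\tilde g^{-1},\nabla^{g_0}\tilde g)$ with coefficients given by these bundle maps.

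The second step is the decay/membership claim. Write $\tilde g = g_{\R^m} + (\tilde g - g_{\R^m})$ near each $s$. Since $\tilde g\in\Met^{k+2}(M;\mathfrak{S})$ by \cref{prop: regularity of pull-back of metrics}(i), the difference vanishes at $s$, so it lies in $C^{k+1}_1\subset C^{k+1}_\mu$ and its first derivative in $C^{k+1}_0$. For the flat metric $g_{\R^m}$ and $v=0$, the model expression reduces to $r^{-2}\tr\bmap{H}_0(0)$, which is the tension of the tangent maps $\exp_{\phi_s}(\chi(|x|/R)w_s)=\varphi'_s\circ\pr$ near $s$. This vanishes because every element of $\mathcal{M}_s$ is harmonic, which is exactly where the standing assumption enters. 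Hence near $S$ the full expression splits into three kinds of terms:
\begin{itemize}
    \item terms like $\bmap{f}(v)-\bmap{f}(\chi_{\phi_0}(\underline{w}))$, handled by \cref{lem: Parametrised master lemma} in $W^{k,p}_{\mu-2}$ after multiplying by $r^2$;
    \item terms linear in $\prescript{\phi_0}{}{\nabla}^j u$, which lie in $W^{k+2-j,p}_{\mu-j}$;
    \item terms carrying a factor $\tilde g-g_{\R^m}$ or $\partial\tilde g$. These gain at least $r^{\mu}$ against $r^{-2}$ (respectively $r^{-1}$), so they land in $W^{k,p}_{\mu-2}$ by \cref{lem: Natural properties of weighted spaces} and \cref{prop: Sobolev embedding and multiplication}, with $p>m$ and the $k+2$ derivatives of $u$.
\end{itemize}

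The third step is $C^1$ regularity. By \cref{prop: regularity of pull-back of metrics}(ii) with $\ell=1$, the map $(\underline{\mathfrak{t}},g)\mapsto\tilde g$ is $C^1$ into $\Met^{k+1}$. The tension needs one derivative of $\tilde g$, so we need $\tilde g\in C^{k+1}$ controlling $k$ derivatives of $\Gamma$; this is exactly the reason for taking $g\in\Met^{k+2}$ (\cref{rem: derivatives of the metric}). The dependence on $(u,\underline{w})$ is smooth by the master lemmas, and the finite-dimensional dependence on $\underline{w}$ through $\chi_{\phi_0}$ is smooth into $C^{k+2}_0$. The expression is multilinear in $\tilde g^{\pm1},\partial\tilde g$ with these smooth coefficients, so $\prescript{\phi_0}{}{\mathcal{T}}$ is $C^1$ by the chain rule. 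The main obstacle I expect is the mixed term: one must check that $\tilde g - g_{\R^m}\in C^{k+1}_\mu$ varies continuously as a Banach map, i.e. that vanishing at $s$ is preserved uniformly. This should follow because the affine constraint defining $\Met(M;\mathfrak{S})$ is closed, and a $C^{k+1}$ tensor vanishing at $s$ is bounded in $C^{k+1}_1$ by its $C^{k+1}$ norm (Taylor's theorem).

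Finally, the derivative at a smooth harmonic $(\phi,g)$ is computed by local directional derivatives, as \cref{rem: Derivative of Banach maps} permits. The $\dot u$ and $\dot{\underline w}$ directions are variations of the map, so by the classical second-variation formula $\tfrac{d}{dt}\tau(g,\exp_\phi(t\xi)) = -J^\phi_g\xi$ (up to the sign convention matching the statement) with $\xi=\prescript{\phi_0}{}{\Theta}_u(\dot u+\chi_{\phi_0}(\dot{\underline w}))$; the $\Theta$ terms drop out since $\tau=0$. For the $\dot{\underline{\mathfrak t}}$ direction, \cref{prop: regularity of pull-back of metrics}(ii) gives $\dot{\tilde g}=\pounds_{\xi}g$ with $f_{\underline0,g}=\id$ because $g\in\Met(M;\mathfrak{S})$. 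Naturality $\tau(F_t^*g,\phi)=F_t^*\tau(g,\phi\circ F_t^{-1})$ along the flow $F_t$ of $\xi$ then turns this metric variation into the map variation $\diff\phi(\xi)$, again using $\tau=0$. This yields the Jacobi operator applied to $\diff\phi(\xi_g(\dot{\underline{\mathfrak t}}))$.
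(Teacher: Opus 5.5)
Your proposal is correct and follows the paper's route. The paper likewise splits the pulled-back tension into two pieces: the $\underline{w}$-part $\Theta^{-1}_{\chi_{\phi_0}(\underline{w})}\tau(\tilde g,\phi_{\chi_{\phi_0}(\underline{w})})$, placed in $C^k_{-1}$ using $\tilde g - g_0 \in C^{k+1}_1$ and the harmonicity of elements of $\mathcal{M}_s$ for the flat model metric, and a $u$-difference handled by \cref{lem: Parametrised master lemma}; it then gets $C^1$-dependence from \cref{prop: regularity of pull-back of metrics} and computes the derivative by local directional derivatives.

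Your sign hedge is warranted. With the paper's conventions $\tfrac{\diff}{\diff t}\tau(g,\exp_\phi(t\zeta)) = -J^\phi_g\zeta$, and in your naturality argument $\phi\circ F_t^{-1}$ has variation $-\diff\phi(\xi)$, so the $\dot u,\dot{\underline{w}}$ directions contribute $-J^\phi_g$ while the $\dot{\underline{\mathfrak{t}}}$ direction contributes $+J^\phi_g(\diff\phi(\xi_g(\dot{\underline{\mathfrak{t}}})))$; the displayed formula holds only up to these signs, which is harmless for the surjectivity argument that uses it.
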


The rest of this section is dedicated to the proof of \cref{prop: variations of the tension}. The idea is to rewrite the pull-back of the tension in terms of bundle maps with dilation-invariant ends and use the formalism developed in \cref{subsubsec: Bundle maps with dilation-invariant ends} in order to prove that $\mathcal{T}$ has the correct regularity and that its derivative is indeed given by the `naive' directional derivative, which corresponds to the claimed expression. 

    \subsubsection{Intrinsic expressions for the Hessian}       \label{subsubsec: expressions hessian}

In the remainder of this section, we shall fix an end-conical map $\phi_0 \in \Map^\infty_{\mathrm{ec}}(M,N;\mathfrak{S})$. For convenience, we shall also choose a background Riemannian metric $g_0$ on $M$ such that $\Upsilon^*_s g_0 = g_{\R^m}$ for $s \in S$ (remark that we require this property to hold on $\Ball_1$ and not just at the origin). Given the local expressions for the Christoffel symbols of the Levi-Civita connection, the following lemma is obvious:

\begin{lem}     \label{lem: Bundle map LC}
    Let us denote by $\mathrm{Sym}^2_+(T^*M) \subset \mathrm{Sym}^2(T^*M)$ the open sub-bundle of positive-definite quadratic forms. Then there exists a non-linear bundle map
    \begin{equation*}
        \bmap{D} \col \mathrm{Sym}_+^2(T^*M) \to \Hom(T^*M \otimes \mathrm{Sym}^2(T^*M) , T^* M \otimes \End(TM))
    \end{equation*}
    such that the Levi-Civita connections satisfy
    \begin{equation*}
        \nabla^{g} = \nabla^{g_0} + \bmap{D}(g)\{\nabla^{g_0} g\}, ~~~ \forall g \in \Met^1(M) .
    \end{equation*}
\end{lem}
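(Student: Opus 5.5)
The plan is to write down the difference of the two Levi-Civita connections explicitly via the Koszul formula and read off its dependence on $g$ and $\nabla^{g_0} g$. The difference $\nabla^g - \nabla^{g_0}$ of two torsion-free connections is a tensor $\Gamma \in \Gamma(T^*M \otimes \End(TM))$, symmetric in its two lower slots. To compute it, I would start from the Koszul formula for $\nabla^g$ and use $\nabla^{g_0}$ in place of coordinate derivatives. This is legitimate because $\nabla^{g_0}$ is torsion free, so one can replace Lie brackets by $\nabla^{g_0}_XY-\nabla^{g_0}_YX$.

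The result is the standard identity
\begin{equation*}
    g(\Gamma(X)Y, Z) = \tfrac{1}{2}\big( (\nabla^{g_0}_X g)(Y,Z) + (\nabla^{g_0}_Y g)(X,Z) - (\nabla^{g_0}_Z g)(X,Y) \big)
\end{equation*}
for all vector fields $X,Y,Z$. To verify it, I would fix a point $x$ and a local frame with $(\nabla^{g_0} e_i)_x = 0$, as in the proof of \cref{lem: First derivative of bundle map}. In that frame the identity is exactly the usual Christoffel formula $\Gamma^k_{ij} = \tfrac12 g^{kl}(\partial_i g_{jl} + \partial_j g_{il} - \partial_l g_{ij})$ at $x$. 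Alternatively, one checks directly that the connection $\nabla^{g_0}+\Gamma$ with $\Gamma$ so defined is torsion free and $g$-compatible, and then appeals to uniqueness of the Levi-Civita connection.

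Next I would define the bundle map. For a fibre point $q \in \mathrm{Sym}^2_+(T_x^*M)$ and $T \in T_x^*M \otimes \mathrm{Sym}^2(T_x^*M)$, set $\bmap{D}(q)\{T\}$ to be the element of $T^*_xM\otimes\End(T_xM)$ given by
\begin{equation*}
    X \otimes Y \mapsto \tfrac12 q^{-1}\big(T(X;Y,\cdot) + T(Y;X,\cdot) - T(\cdot\,;X,Y)\big),
\end{equation*}
where $q^{-1}$ denotes the index-raising isomorphism $T_x^*M \to T_xM$ determined by $q$. This map is linear in $T$ and depends smoothly (indeed rationally) on $q$, since $q \mapsto q^{-1}$ is smooth on positive-definite forms. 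It is base-point preserving, so it is a smooth bundle map in the sense of \cref{subsubsec: Dilation-invariant bundle maps}. Evaluating at $q = g$ and $T = \nabla^{g_0} g$ gives $\Gamma$, which is the claimed formula.

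Only one derivative of $g$ enters, so the identity holds pointwise for $g \in \Met^1(M)$. There is no real obstacle here. The only step needing care is the frame or uniqueness check of the Koszul-type identity, which is routine.
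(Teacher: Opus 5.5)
Your proposal is correct and is essentially the paper's argument: the paper only remarks that the lemma follows from the local expression of the Christoffel symbols. Your formula
\[
g(\Gamma(X)Y,Z)=\tfrac12\bigl((\nabla^{g_0}_X g)(Y,Z)+(\nabla^{g_0}_Y g)(X,Z)-(\nabla^{g_0}_Z g)(X,Y)\bigr),
\]
with $\bmap{D}(q)$ defined by raising the index with $q^{-1}$, makes that one-line justification explicit and checks out.
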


We aim to write the Hessian of a map intrinsically in terms of bundle maps with dilation-invariant ends, in order to prove  \cref{prop: variations of the tension} in th next section. Before this, we remark that

\begin{lem}     \label{lem: Bundle map diff exp}
    There exists a non-linear bundle map with dilation-invariant ends
    \begin{equation*}
        \bmap{f} \col (\prescript{\phi_0}{}{U} \subset \prescript{\phi_0}{}{TN}) \to T^*M \otimes \prescript{\phi_0}{}{TN}
    \end{equation*}
    such that for any $u \in C^1_{\mathrm{loc}}(\prescript{\phi_0}{}{U})$, the map $\phi_u = \exp_{\phi_0}(u)$ satisfies
    \begin{equation*}
        \Theta_u^{-1} \diff \phi_u = \diff \phi_0 + \prescript{\phi_0}{}{\nabla} u + r^{-1} \bmap{f}(u),
    \end{equation*}
    where we write $\Theta_u \coloneqq \prescript{\phi_0}{}{\Theta}_u$ to make notations lighter.
\end{lem}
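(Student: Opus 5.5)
The plan is to follow the pattern of \cref{prop: The pulled-back bundle has dilation-invariant ends}: obtain $\bmap{f}$ by a pointwise chain rule for the exponential map, and then check dilation-invariance through the scaling remark (\cref{rem: Factors of r}).

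First, the candidate bundle map. Consider the smooth map $E \col \prescript{\phi_0}{}{U} \to N$ given by $(x,v) \mapsto \exp_{\phi_0(x)}(v)$. For a $C^1$ section $u$ we have $\phi_u = E \circ u$, so the chain rule gives $\diff\phi_u = \diff E \circ \diff u$. Split $\diff u$ into horizontal and vertical parts using the connection $\prescript{\phi_0}{}{\nabla}$. The vertical part of $\diff E$ at $u$ is exactly $D_u\exp_{\phi_0} = \Theta_u$, so
\begin{equation*}
    \diff\phi_u = \Theta_u\{\prescript{\phi_0}{}{\nabla} u\} + \bmap{H}(u),
\end{equation*}
where $\bmap{H}(u)(X) = \diff E(X^{\mathrm{hor}}_u)$ is the differential of $E$ along the $\prescript{\phi_0}{}{\nabla}$-horizontal lift of $X \in T_xM$ at the point $u(x)$. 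This $\bmap{H}$ is a smooth, nonlinear, base-point preserving bundle map $\prescript{\phi_0}{}{U} \to T^*M \otimes \prescript{\phi_u}{}{TN}$. The horizontal lift is obtained by parallel transport of $u(x)$ along $\phi_0$. Hence, in local coordinates as in \cref{lem: First derivative of bundle map}, $\bmap{H}$ depends only on $x$, on the fibre variable $u(x)$, and on $\diff\phi_0(x)$.

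Now set $r^{-1}\bmap{f}(u) \coloneqq \Theta_u^{-1}\bmap{H}(u) - \diff\phi_0$, which yields the identity in the statement. This is a smooth bundle map because $\Theta_u$ is invertible on $\prescript{\phi_0}{}{U}$. On the zero section it vanishes, since $\bmap{H}(0) = \diff\phi_0$ and $\Theta_0 = \id$. That property is not needed here, but it is useful later.

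Second, dilation-invariant ends. Work near $s$, where $\phi_0\circ\Upsilon_s = \varphi_s\circ\pr$ on $\Ball_R$ and the relevant bundles are pulled back from $\Sp^{m-1}$. Take a dilation-invariant section $u = \pr^* u_0$. Then $\phi_u = \varphi_{u_0}\circ\pr$ with $\varphi_{u_0} = \exp_{\varphi_s}(u_0)$, and $\Theta_u = \pr^*\Theta_{u_0}$. Consequently $\Theta_u^{-1}\diff\phi_u - \diff\phi_0 - \prescript{\phi_0}{}{\nabla}u$ is the pull-back by $\pr$ of an $\prescript{\varphi_s}{}{TN}$-valued $1$-form on $\Sp^{m-1}$. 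By \cref{rem: Factors of r}, multiplying it by $r$ produces a dilation-invariant section of $T^*\R^m \otimes \prescript{\phi_0}{}{TN}$ with respect to the radially parallel structure of $T^*M$ fixed via $g_0$.

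Since this holds for every dilation-invariant $u$, and $\bmap{f}$ is a pointwise function of $(x, u(x))$, we can argue as in the proof of \cref{lem: Derivative of dilation-invariant bundle maps}. Every point of the fibre is attained by some dilation-invariant section, so $\bmap{f} = r(\Theta^{-1}\bmap{H} - \diff\phi_0)$ coincides on the end with $\pr^*$ of a bundle map over $\Sp^{m-1}$. This is exactly the statement that $\bmap{f}$ has dilation-invariant ends.

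The main obstacle is making sure that $\bmap{f}$ genuinely depends only on the value $u(x)$ and not on its derivatives. This is what the horizontal/vertical splitting guarantees, and it must be phrased intrinsically, for instance through the Jacobi-field description of $\diff\exp$, so that the scaling argument can be applied fibrewise.
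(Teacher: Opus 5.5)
Your proposal is correct and follows essentially the paper's proof. You split $\diff(\exp_{\phi_0}(u))$ via $\prescript{\phi_0}{}{\nabla}$ into the vertical part $D_u\exp_{\phi_0}\{\prescript{\phi_0}{}{\nabla}u\} = \Theta_u\{\prescript{\phi_0}{}{\nabla}u\}$ plus a horizontal term depending only on $u(x)$, then obtain dilation-invariant ends by testing $r(\Theta_u^{-1}\diff\phi_u - \prescript{\phi_0}{}{\nabla}u - \diff\phi_0)$ on dilation-invariant sections, exactly as in the proof of the lemma on derivatives of dilation-invariant bundle maps; your explicit subtraction of $\diff\phi_0$ is even slightly cleaner than the paper's write-up, which absorbs that term into $\bmap{f}^\prime$ (harmless, since $r\,\diff\phi_0$ is itself dilation-invariant on the ends).
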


\begin{proof}
    By definition, $\phi_u = \exp_{\phi_0}(u)$, and as in \cref{lem: First derivative of bundle map} we can decompose the differential of the exponential map into a vertical component, 
    \begin{equation*}
        \diff^{\mathrm{v}}_u \exp_{\phi_0}\{v\} = \left. \tfrac{\diff}{\diff t} \right|_{t = 0} \exp_{\phi_0}(u+tv) = D_u \exp_{\phi_0} \{v\},
    \end{equation*} 
    and a horizontal component, $\diff^h \exp_{\phi_0}$, characterised by the property that
    \begin{equation*}
        \diff (\exp_{\phi_0}(u)) = D_u \exp_{\phi_0} \{\prescript{\phi_0}{}{\nabla} u\} + (\diff^h \exp_{\phi_0})(u), ~~~~ \forall u \in \Gamma(\prescript{\phi_0}{}{TN}) .
    \end{equation*}
    In particular, since $\Theta_u = D_u \exp_{\phi_0}$ for $u \in \Gamma(\prescript{\phi_0}{}{U})$, it follows that $\Theta_u^{-1} \diff \phi_u = \prescript{\phi_0}{}{\nabla} u + \bmap{f}^\prime(u)$ for some bundle map $\bmap{f}^\prime \col (\prescript{\phi_0}{}{U} \subset \prescript{\phi_0}{}{TN}) \to T^*M \otimes \prescript{\phi_0}{}{TN}$. To prove that $\bmap{f} \coloneqq r\bmap{f}^\prime$ has dilation-invariant ends, we argue as in \cref{lem: Derivative of dilation-invariant bundle maps}: for any section $u \in \Gamma(\prescript{\phi_0}{}{U})$ with dilation-invariant ends, $r \prescript{\phi_0}{}{\nabla} u$ and $r \Theta_u^{-1} \diff \phi_u$ must have dilation-invariant ends, and thus $r \bmap{f}^\prime(u)$ has dilation-invariant ends.
\end{proof}

Using \cref{lem: Bundle maps of end-conical maps}, \cref{lem: Bundle map LC} and \cref{lem: Bundle map diff exp}, it is straightforward to deduce

\begin{lem}     \label{lem: Expression of the Hessian}
    There are bundle maps with dilation-invariant ends, $\bmap{C} \col \prescript{\phi_0}{}{U} \to \prescript{\phi_0}{}{TN}$, $\bmap{E} \col \prescript{\phi_0}{}{U} \to T^*M^{\otimes 2} \otimes \prescript{\phi_0}{}{TN}$ and $\bmap{F} \col \prescript{\phi_0}{}{U} \to T^*M \otimes \prescript{\phi_0}{}{TN}$ such that for any $u \in C^2_{\mathrm{loc}}(\prescript{\phi_0}{}{U})$ and $g \in \Met^1(M)$, the Hessian of $\phi_u$ satisfies
    \begin{align*}
        \Theta_u^{-1} \mathrm{Hess}_g(\phi_u) & = \mathrm{Hess}_{g_0}(\phi_0) + \prescript{\phi_0}{}{\nabla}^2 u + r^{-2} \boldsymbol{\mathrm{E}}(u) + r^{-1} \boldsymbol{\mathrm{F}}(u) \{\prescript{\phi_0}{}{\nabla u}\} + \boldsymbol{\mathrm{C}}(u) \{\prescript{\phi_0}{}{\nabla}u \otimes \prescript{\phi_0}{}{\nabla}u\} \\ 
            & ~~~~ + \boldsymbol{\mathrm{D}}(g) \{\nabla^{g_0} g\} \{ \diff \phi_0 + r^{-1} \boldsymbol{\mathrm{A}}(u) + \boldsymbol{\mathrm{B}}(u) \{ \prescript{\phi_0}{}{\nabla}u\}\}
    \end{align*}
    where the bundle maps $\bmap{A},\bmap{B},\bmap{D}$ are as in \cref{lem: Bundle maps of end-conical maps} and \cref{lem: Bundle map LC}.
\end{lem}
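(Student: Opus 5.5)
The plan is to compute $\Theta_u^{-1}\mathrm{Hess}_g(\phi_u)$ by splitting it into the part coming from the pulled-back connection and the part coming from the change of Levi-Civita connection. By definition,
\begin{equation*}
    \mathrm{Hess}_g(\phi_u)(X,Y) = (\prescript{\phi_u}{}{\nabla}_X \diff\phi_u)(Y) - \diff\phi_u(\nabla^g_X Y),
\end{equation*}
and by \cref{lem: Bundle map LC} we may write $\nabla^g = \nabla^{g_0} + \bmap{D}(g)\{\nabla^{g_0} g\}$. This gives
\begin{equation*}
    \mathrm{Hess}_g(\phi_u) = \prescript{\phi_u}{}{\nabla}^{g_0}\diff\phi_u - \diff\phi_u \circ \bmap{D}(g)\{\nabla^{g_0}g\}.
\end{equation*}
For the second term, \cref{lem: Bundle map diff exp} yields $\Theta_u^{-1}\diff\phi_u = \diff\phi_0 + \prescript{\phi_0}{}{\nabla}u + r^{-1}\bmap{f}(u)$. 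This term is linear in $\Theta_u^{-1}\diff\phi_u$ contracted against $\bmap{D}(g)\{\nabla^{g_0} g\}$, so it produces the last line of the claimed formula once the $1$-form part is regrouped. There is one step here that I would verify before writing it down. The formula in the statement has $r^{-1}\bmap{A}(u) + \bmap{B}(u)\{\prescript{\phi_0}{}{\nabla}u\}$ inside the braces, rather than $\prescript{\phi_0}{}{\nabla}u + r^{-1}\bmap{f}(u)$. I would try to absorb $\prescript{\phi_0}{}{\nabla}u + r^{-1}\bmap{f}(u)$ into terms of that shape, allowing redefinitions of the bundle maps named $\bmap{A},\bmap{B}$ by dilation-invariant ones (the identity map on $\prescript{\phi_0}{}{\nabla}u$ is certainly of $\bmap{B}$-type). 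Any mismatch is a relabelling of dilation-invariant coefficients, and I would accept it as such.

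For the first term, I conjugate by $\Theta_u$:
\begin{equation*}
    \Theta_u^{-1}\bigl(\prescript{\phi_u}{}{\nabla}^{g_0}\diff\phi_u\bigr) = \bigl(\Theta_u^*\prescript{\phi_u}{}{\nabla}\bigr)^{g_0}\bigl(\Theta_u^{-1}\diff\phi_u\bigr).
\end{equation*}
By \cref{lem: Bundle maps of end-conical maps}, $\Theta_u^*\prescript{\phi_u}{}{\nabla} = \prescript{\phi_0}{}{\nabla} + r^{-1}\bmap{A}(u) + \bmap{B}(u)\{\prescript{\phi_0}{}{\nabla}u\}$. I apply this operator to $\diff\phi_0 + \prescript{\phi_0}{}{\nabla}u + r^{-1}\bmap{f}(u)$ and expand term by term, producing the following pieces.
\begin{itemize}
    \item $\prescript{\phi_0}{}{\nabla}^{g_0}\diff\phi_0 = \mathrm{Hess}_{g_0}(\phi_0)$ and $\prescript{\phi_0}{}{\nabla}^2 u$.
    \item $\prescript{\phi_0}{}{\nabla}(r^{-1}\bmap{f}(u))$. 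By \cref{lem: First derivative of bundle map} and \cref{lem: Derivative of dilation-invariant bundle maps}, this equals $r^{-2}(\cdots)(u) + r^{-1}D^{\mathrm{v}}\bmap{f}(u)\{\prescript{\phi_0}{}{\nabla}u\}$, where $r\,\diff(r^{-1})$, $rD^{\mathrm{h}}\bmap{f}$ and $D^{\mathrm{v}}\bmap{f}$ all have dilation-invariant ends.
    \item The connection-difference terms acting on $\diff\phi_0$. Since $r\,\diff\phi_0$ has dilation-invariant ends, these give $r^{-2}\bmap{A}(u)\{r\diff\phi_0\}$ and $r^{-1}\bmap{B}(u)\{\prescript{\phi_0}{}{\nabla}u\}\{r\diff\phi_0\}$, which are $r^{-2}\bmap{E}$-type and $r^{-1}\bmap{F}$-type terms.
    \item The connection-difference terms acting on $\prescript{\phi_0}{}{\nabla}u$. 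These give an $r^{-1}\bmap{F}$-type term and a $\bmap{C}$-type quadratic term.
    \item The connection-difference terms acting on $r^{-1}\bmap{f}(u)$. These are again of the $r^{-2}\bmap{E}$ and $r^{-1}\bmap{F}$ types.
\end{itemize}
Away from $S$, all coefficients are smooth bundle maps, so they can be glued using a cutoff.

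The main point to check is that each grouped coefficient genuinely has dilation-invariant ends, with exactly the stated powers of $r$. I would settle this with the scaling argument of \cref{lem: Derivative of dilation-invariant bundle maps} and \cref{rem: Factors of r}. Near $S$, where $g_0$ is flat and $\phi_0$ is dilation-invariant, take $u$ dilation-invariant. Then $r^2\,\Theta_u^{-1}\mathrm{Hess}_{g_0}(\phi_u)$, $r\,\prescript{\phi_0}{}{\nabla}u$ and $r^2\,\prescript{\phi_0}{}{\nabla}^2 u$ are all dilation-invariant, since $\phi_u$ is itself dilation-invariant. Consequently the total zeroth-order remainder, multiplied by $r^2$, is dilation-invariant as a bundle map of $u$. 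Since the $\bmap{C}$ and $\bmap{F}$ terms are determined by their dependence on $\prescript{\phi_0}{}{\nabla}u$ at each point, they are forced to scale correctly as well. The remaining bookkeeping is routine.
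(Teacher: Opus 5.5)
This is correct and is exactly the deduction the paper intends: its proof is only the remark that the lemma follows from \cref{lem: Bundle maps of end-conical maps}, \cref{lem: Bundle map LC} and \cref{lem: Bundle map diff exp}. Your reading of the last line is also right: the $\bmap{A},\bmap{B}$ of \cref{lem: Bundle maps of end-conical maps} are $\End(\prescript{\phi_0}{}{TN})$-valued, so the bracket in the statement is a slip for $\Theta_u^{-1}\diff\phi_u=\diff\phi_0+\prescript{\phi_0}{}{\nabla}u+r^{-1}\bmap{f}(u)$, with a minus sign in front of $\bmap{D}$, which is what you derive and is all that is used later.

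Your term-by-term expansion already shows that each coefficient has dilation-invariant ends (the relevant factor is $r^2\diff(r^{-1})=-\diff r$, not $r\,\diff(r^{-1})$), so you can drop the closing scaling argument. As phrased it would not suffice anyway: dilation-invariant sections satisfy $\nabla_{\partial_r}u=0$, so they cannot detect how $\bmap{F}$ and $\bmap{C}$ act on radial derivatives.
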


\begin{rem}
    The precise nature of the bundle maps $\bmap{A}, \bmap{B}, \ldots , \bmap{F}$ is irrelevant for our purpose, the point being that the dilation-invariance of these maps near the singular set $S$ will be enough to deduce that the Banach section defined by the tension field has appropriate regularity and asymptotics near $S$ using the results of \cref{subsubsec: Bundle maps with dilation-invariant ends}.
\end{rem}

    \subsubsection{Proof of the main proposition}       \label{subsec: proof of the main proposition}

Given the preparatory work of the previous section, we shall deduce the the proof of \cref{prop: variations of the tension} as a consequence of the following two lemmas:

\begin{lem}     \label{lem: regularity w component}
    For any $(\underline{w},g) \in \prescript{\phi_0}{}{\mathcal{W}} \times \Met^{k+1}(M;\mathfrak{S})$,
    \begin{equation*}
        \Theta_{\chi_{0}(\underline{w})}^{-1} \tau(g,\phi_{\chi_{0}(\underline{w})}) \in C^{k}_{-1}(\prescript{\phi_0}{}{TN}) \subset W^{k,p}_{\mu-2}(\prescript{\phi_0}{}{TN}),
    \end{equation*}
    where we write $\chi_0 \coloneqq \chi_{\phi_0}$ to make notations lighter. Moreover, the induced map $\prescript{\phi_0}{}{\mathcal{W}} \times \Met^{k+1}(M) \to W^{k,p}_{\mu-2}(\prescript{\phi_0}{}{TN})$ is a smooth Banach map.
\end{lem}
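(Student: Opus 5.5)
Write $w \coloneqq \chi_0(\underline{w})$, so that $\phi_w = \exp_{\phi_0}(w)$. The map $\phi_w$ is itself end-conical: on $\Upsilon_s(\Ball_{R/4})$ the cutoff equals $1$, hence $\phi_w \circ \Upsilon_s = \exp_{\varphi_s}(w_s) \circ \pr$, and $\exp_{\varphi_s}(w_s) \in \mathcal{M}_s$ is harmonic for the round metric. I will split the tension into a part near $S$ and a part supported in the compact region $M \setminus \cup_s \Upsilon_s(\Ball_{R/8})$.

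\textbf{Compact region.} Here everything is smooth. By \cref{lem: Expression of the Hessian}, $\Theta_w^{-1}\mathrm{Hess}_g(\phi_w)$ is a polynomial expression in $w$, $\nabla w$, $\nabla^2 w$, $g$ and $\nabla^{g_0} g$. Tracing with $g$ involves only $g$ itself. So the result is $C^k$ in $x$ when $g \in C^{k+1}$, and it depends smoothly on $(\underline{w},g) \in \prescript{\phi_0}{}{\mathcal{W}} \times \Met^{k+1}(M)$. This holds because $\underline{w}$ ranges over a finite-dimensional manifold of smooth sections, and composition with the fixed smooth bundle maps is smooth in the $C^k$ topology on compact sets. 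The weights $r^{\ell-\nu}$ are bounded above and below there, so this piece lands smoothly in $W^{k,p}_{\mu-2}$.

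\textbf{Near $S$.} Pull back by $\Upsilon_s$ and set $\psi = \exp_{\varphi_s}(w_s) \circ \pr$, which is dilation-invariant and harmonic for $g_{\R^m}$. Write $g_s = \Upsilon_s^* g$. By \cref{lem: Bundle map LC},
\begin{equation*}
    \tau(g,\psi) = \tr_{g_s}\bigl(\nabla^{g_{\R^m}}\diff\psi\bigr) - \tr_{g_s}\bigl(\bmap{D}(g_s)\{\nabla g_s\}\{\diff\psi\}\bigr).
\end{equation*}
Because $\tr_{g_{\R^m}}\mathrm{Hess}_{g_{\R^m}}\psi = 0$, the first term equals $(g_s^{-1}-g_{\R^m}^{-1}) \cdot \mathrm{Hess}(\psi)$.

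The expected problem is that $g \in \Met^{k+1}(M)$ need not satisfy $g_s(0) = g_{\R^m}$. For such $g$, the term $(g_s^{-1}-g_{\R^m}^{-1})\cdot\mathrm{Hess}\,\psi$ is only $O(r^{-2})$, so the claimed decay can fail. I will therefore use the stated domain $\Met^{k+1}(M;\mathfrak{S})$, where $g_s(0)=g_{\R^m}$. Then $g_s^{-1}-g_{\R^m}^{-1} = O(r)$ together with $k$ derivatives in the weighted sense: it lies in $C^k_1$ because $g \in C^{k+1}$. The Hessian $\mathrm{Hess}\,\psi$ is dilation-invariant of rate $-2$, so the product lies in $C^k_{-1}$ by \cref{lem: Natural properties of weighted spaces}. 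Similarly, $\nabla g_s \in C^k_0$ and $\diff\psi \in C^\infty_{-1}$, so the second term also lies in $C^k_{-1}$.

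Both terms are bilinear, or smooth, expressions in $(g_s^{-1}-g_{\R^m}^{-1}, \nabla g_s)$. These depend smoothly on $g$ in $C^k_1 \times C^k_0$, and the coefficients coming from $\psi$ depend smoothly on $w_s$, by the finite-dimensionality argument above. Applying the dilation-invariant isomorphism $\Theta_w^{-1}$ preserves these spaces; this follows from \cref{lem: theta induces iso of Banach spaces} and the $C^k$ analogue of \cref{lem: Natural properties of weighted spaces}. Finally, $C^k_{-1} \hookrightarrow W^{k,p}_{\mu-2}$ continuously because $-1 > \mu-2$, by \cref{lem: Natural properties of weighted spaces}(iii). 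Combining the two pieces with a partition of unity gives the claim.

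The main care point is the smooth dependence on $\underline{w}$ in weighted norms, because the ends move with $\underline{w}$. I would handle it by noting that on $\Ball_{R/4}$ all $\underline{w}$-dependence enters through the smooth finite-dimensional family $\exp_{\varphi_s}(w_s)$ on $\Sp^{m-1}$. The resulting rate $-2$ or $-1$ dilation-invariant coefficients are therefore smooth functions of $\underline{w}$ with values in $C^\infty$ on the sphere, which is exactly what the weighted estimates require.
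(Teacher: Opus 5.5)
Your proof is correct and follows essentially the paper's argument. The paper does not localise with a partition of unity. Instead it works globally with the background metric $g_0$, which is flat on the charts, and splits $\Theta_{\chi_0(\underline{w})}^{-1}\tau(g,\phi_{\chi_0(\underline{w})})$ into three pieces:
\begin{itemize}
\item $(\tr_g-\tr_{g_0})$ applied to the Hessian, which lies in $C^k_{-1}$ because $g-g_0\in C^{k+1}_1$ for $g\in\Met^{k+1}(M;\mathfrak{S})$;
\item the Christoffel correction $\tr_{g_0}\bmap{D}(g)\{\nabla^{g_0}g\}\{\cdots\}$, which also lies in $C^k_{-1}$;
\item $\Theta_{\chi_0(\underline{w})}^{-1}\tau(g_0,\phi_{\chi_0(\underline{w})})$, which is compactly supported away from $S$, uniformly in $\underline{w}$, because the tangent maps are harmonic.
\end{itemize}
The third piece is your near-$S$ mechanism, $\tr_{g_{\R^m}}\mathrm{Hess}\,\psi=0$. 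You are also right that the map can only be defined on $\prescript{\phi_0}{}{\mathcal{W}}\times\Met^{k+1}(M;\mathfrak{S})$, and that is the domain the paper's proof actually uses.
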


\begin{proof}
    We can obviously write 
    \begin{equation*}
        \Theta_{\chi_{0}(\underline{w})}^{-1} \tau(g,\phi_{\chi_{0}(\underline{w})}) = (\tr_{g}-\tr_{g_0})(\Theta_{\chi_{0}(\underline{w})}^{-1} \mathrm{Hess}_{g}(\phi_{\chi_{0}(\underline{w})})) + \tr_{g_0}(\Theta_{\chi_{0}(\underline{w})}^{-1} \mathrm{Hess}_{g}(\phi_{\chi_{0}(\underline{w})})) .
    \end{equation*}
    We deal with each term separately. 
    
    On the one hand, it is clear that the map $\chi_{0}$ defines a smooth map $\prescript{\phi_0}{}{\mathcal{W}} \to C^{k+2}_0(\prescript{\phi_0}{}{TN})$, and examining the expression of the Hessian given in \cref{lem: Expression of the Hessian} and using the results of \cref{subsubsec: Bundle maps with dilation-invariant ends} we see that $\Theta_{\chi_{0}(\underline{w})}^{-1} \mathrm{Hess}_g(\phi_{\chi_{0}(\underline{w})})$ defines a smooth Banach map $\prescript{\phi_0}{}{\mathcal{W}} \times \Met^{k+1}(M;\mathfrak{S}) \to C^{k}_{-2}(T^*M^{\otimes 2} \otimes \prescript{\phi_0}{}{TN})$. On the other hand, by definition for any $g \in \Met^{k+1}(M;\mathfrak{S})$ we have $g_s = g_{0,s}$ for any $s \in S$, and thus $\Met^{k+1}(M;\mathfrak{S}) \subset g_0 + C^{k+1}_1(\mathrm{Sym}^2(T^*M))$. Given that we can express the trace with respect to a metric as a bundle map defined on $\mathrm{Sym}^2(T^*M)$, it follows from \cref{lem: Natural properties of weighted spaces} and \cref{lem: Master lemma} that the expression $(\tr_{g}-\tr_{g_0})(\Theta_{\chi_{0}(\underline{w})}^{-1} \mathrm{Hess}_{g}(\phi_{\chi_{0}(\underline{w})}))$ defines a smooth Banach map into $C^{k}_{-1}(T^*M^{\otimes 2} \otimes \prescript{\phi_0}{}{TN}) \subset W^{k,p}_{\mu-2}(T^*M^{\otimes 2} \otimes \prescript{\phi_0}{}{TN})$ since $\mu \in (0,1)$.
    
    To deal with the second term, another examination of the expression of the Hessian in \cref{lem: Expression of the Hessian} shows that
    \begin{multline*}
        \tr_{g_0}(\Theta_{\chi_{0}(\underline{w})}^{-1} \mathrm{Hess}_{g}(\phi_{\chi_{0}(\underline{w})})) = \Theta_{\chi_{0}(\underline{w})}^{-1} \tau(g_0,\phi_{\chi_{0}(\underline{w})}) \\ + \tr_{g_0}\bmap{D}(g)\{\nabla^{g_0}g\}\{\diff \phi_0 + r^{-1} \bmap{A}(\chi_0(\underline{w})) + \bmap{B}(\chi_0(\underline{w}))\{\prescript{\phi_0}{}{\nabla\chi_0(\underline{w})}\}\} .
    \end{multline*}
    Since $\chi_0$ defines a smooth Banach map $\prescript{\phi_0}{}{\mathcal{W}} \to C^{k+2}_0(\prescript{\phi_0}{}{TN})$, it is clear that the second term defines a smooth Banach map into $C^{k}_{-1}(\prescript{\phi_0}{}{TN}) \subset W^{k,p}_{\mu-2}(\prescript{\phi_0}{}{TN})$. In fact, so far we have only used the fact that $\chi_0(\underline{w}) \in C^{k+2}_0(\prescript{\phi_0}{}{TN})$, not the specific properties of $\prescript{\varphi_s}{}{\mathcal{W}}_s$. The only place where it plays a role is in the term $\tau(g_0,\phi_{\chi_0(\underline{w})})$: indeed by construction $\phi_{\chi_0(\underline{w})}$ is harmonic with respect to $g_0$ in a neighbourhood of $S$, and therefore $\tau(g_0,\phi_{\chi_0(\underline{w})})$ is uniformly compactly supported away form the singular locus $S$. Since $\prescript{\phi_0}{}{\mathcal{W}}$ is finite-dimensional, it is therefore clear that $\Theta_{\chi_0(\underline{w})}^{-1}\tau(g_0,\phi_{\chi_0(\underline{w})})$ defines a smooth Banach map $\prescript{\phi_0}{}{\mathcal{W}} \to C^{k}_\nu(\prescript{\phi_0}{}{TN})$ for any $\nu \in \R$, and in particular for $\nu = -1$. This finishes the proof.
\end{proof}

Before stating the next lemma, recall that we defined the open subbundle $\prescript{\phi_0}{}{U} \subset \prescript{\phi}{}{TN}$ of sections satisfying $|u|_{h_{\phi_0}} < \inj(N)$. We shall denote by $\prescript{\phi_0}{}{\tilde{U}} \subset \prescript{\phi_0}{}{U}$ the open subbundle of sections such that $|u|_{h_{\phi_0}} < \frac{1}{4}\inj(N)$.

\begin{lem}     \label{lem: restricted regularity of tension}
    For any $(u,w,g) \in \prescript{\phi_0}{}{\mathcal{U}}^{k+2,p}_\mu \times C^{k+2}_0(\prescript{\phi_0}{}{\tilde{U}}) \times \Met^{k+1}(M)$,
    \begin{equation*}
        \Theta_{u+w}^{-1} \tau(g,\phi_{u+w}) - \Theta_w^{-1} \tau(g,\phi_w) \in W^{k,p}_{\mu-2}(\prescript{\phi_0}{}{TN}).
    \end{equation*}
    Moreover, the induced map 
    \begin{equation*}
        \prescript{\phi_0}{}{\mathcal{U}}^{k+2,p}_\mu \times C^{k+2}_0(\prescript{\phi_0}{}{\tilde{U}}) \times \Met^{k+1}(M) \to W^{k,p}_{\mu-2}(\prescript{\phi_0}{}{TN})
    \end{equation*}
    is a smooth Banach map.
\end{lem}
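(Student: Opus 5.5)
We start from the expression of the Hessian in \cref{lem: Expression of the Hessian}, applied at both $u+w$ and $w$, and take the difference. Tracing with respect to $g$, we write
\begin{equation*}
    \Theta_{u+w}^{-1}\tau(g,\phi_{u+w}) - \Theta_w^{-1}\tau(g,\phi_w) = \tr_g\big(\Theta_{u+w}^{-1}\mathrm{Hess}_g(\phi_{u+w}) - \Theta_w^{-1}\mathrm{Hess}_g(\phi_w)\big),
\end{equation*}
where the trace $\tr_g$ is a smooth bundle map of $g \in C^{k+1}$, and in particular a $C^k_0$ coefficient depending smoothly on $g \in \Met^{k+1}(M)$. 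The term $\mathrm{Hess}_{g_0}(\phi_0)$ cancels in the difference. We then group the remaining terms as follows:
\begin{itemize}
    \item \textbf{linear terms} $\prescript{\phi_0}{}{\nabla}^2 u$;
    \item \textbf{zeroth-order differences} $r^{-2}(\bmap{E}(u+w)-\bmap{E}(w))$;
    \item \textbf{first-order terms} $r^{-1}\big(\bmap{F}(u+w)\{\nabla(u+w)\} - \bmap{F}(w)\{\nabla w\}\big)$;
    \item \textbf{quadratic terms} $\bmap{C}(u+w)\{\nabla(u+w)^{\otimes 2}\} - \bmap{C}(w)\{\nabla w^{\otimes 2}\}$;
    \item \textbf{metric terms} $\bmap{D}(g)\{\nabla^{g_0}g\}\{\cdots\}$.
\end{itemize}

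Each group is handled by \cref{lem: Parametrised master lemma}, combined with weighted Sobolev multiplication (\cref{prop: Sobolev embedding and multiplication}) and \cref{lem: Natural properties of weighted spaces}.

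For the linear term, $u \mapsto \nabla^2 u$ is bounded $W^{k+2,p}_\mu \to W^{k,p}_{\mu-2}$ by \cref{lem: Covariant derivatives in weighted spaces}.

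For the zeroth-order differences, \cref{lem: Parametrised master lemma} applied to $\bmap{E}$ gives a smooth map into $W^{k+2,p}_\mu$, hence into $W^{k,p}_{\mu-2}$ after multiplication by $r^{-2}$.

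For the first-order and quadratic terms, we expand, for instance,
\begin{equation*}
    \bmap{F}(u+w)\{\nabla u\} + \big(\bmap{F}(u+w)-\bmap{F}(w)\big)\{\nabla w\}.
\end{equation*}
Here $\bmap{F}(u+w) = \bmap{F}(w) + (\text{difference})$, where $\bmap{F}(w) \in C^{k+2}_0$ smoothly in $w$ and the difference lies in $W^{k+2,p}_\mu$. Moreover $r\nabla u \in W^{k+1,p}_\mu$ and $r\nabla w \in C^{k+1}_0$. The products land in $W^{k+1,p}_\mu$ by \cref{prop: Sobolev embedding and multiplication} (note $p>m\ge 2m/(k+1)$ for $k\ge1$) together with \cref{lem: Natural properties of weighted spaces}; for $k=0$ we instead use the $C^0$ embedding directly. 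Dividing by the appropriate powers of $r$ then gives smooth maps into $W^{k,p}_{\mu-2}$. The quadratic $\bmap{C}$ terms are treated identically, expanding $\nabla(u+w)^{\otimes2}-\nabla w^{\otimes 2}$ into terms each containing at least one factor $\nabla u$.

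For the metric terms, $\nabla^{g_0}g \in C^k_0$ depends smoothly (affinely) on $g\in\Met^{k+1}$, and $\bmap{D}(g)$ is smooth in $g$. The bracket $r^{-1}(\bmap{A}(u+w)-\bmap{A}(w)) + (\bmap{B}(u+w)\{\nabla(u+w)\}-\bmap{B}(w)\{\nabla w\})$ lies in $W^{k+1,p}_{\mu-1}$ by the same argument as above, so multiplying by a $C^k_0$ coefficient lands in $W^{k,p}_{\mu-1}\subset W^{k,p}_{\mu-2}$. The remaining $\diff\phi_0$ term cancels in the difference. Smoothness in all variables follows because every step is either a bounded multilinear map or a smooth map furnished by the master lemmas.

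The main obstacle is the bookkeeping of regularity in the products for low $k$ (especially $k=0$), where $g$ is only $C^{k+1}$, so its Christoffel symbols are merely $C^k$, and the factors $\nabla u$ lose one derivative. I would handle this by estimating the top-order derivatives directly, placing the $L^p_\mu$-controlled factor in $L^p$ and the others in $C^0_0$ via $W^{k+2,p}_\mu\hookrightarrow C^{k+1}_\mu$ (valid since $p>m$), mirroring the proof of \cref{lem: Parametrised master lemma} rather than invoking the cruder multiplication rule.
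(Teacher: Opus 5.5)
Your proposal is correct and follows essentially the paper's own proof: expand the difference of pulled-back Hessians using the expression of the Hessian in terms of bundle maps with dilation-invariant ends, control each group of terms with the parametrised master lemma and weighted Sobolev multiplication, and then trace with respect to $g$ (which commutes with $\Theta^{-1}$). Two of your points are more precise than the paper's write-up. First, the paper's multiplication proposition requires $p>2m/k$, so it does not literally cover products of $W^{k+1,p}$ factors when $k=0$, whereas your direct Leibniz argument using $p>m$ does. Second, because the coefficient $\bmap{D}(g)\{\nabla^{g_0}g\}$ is only $C^k$, the metric term lands in $W^{k,p}_{\mu-1}$, as you say, rather than in $W^{k+1,p}_{\mu-1}$ as the paper states; the conclusion is unaffected.
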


\begin{proof}
    We will in fact prove a stronger statement: if $(u,w,g) \in \prescript{\phi_0}{}{\mathcal{U}}^{k+2,p}_\mu \times C^{k+2}_0(\prescript{\phi_0}{}{\tilde{U}}) \times \Met^{k+1}(M)$,
    \begin{equation*}
        \Theta_{u+w}^{-1} \mathrm{Hess}_g(\phi_{u+w}) - \Theta_w^{-1} \mathrm{Hess}_g(\phi_w) \in W^{k,p}_{\mu-2}(T^*M^{\otimes 2} \otimes \prescript{\phi_0}{}{TN}),
    \end{equation*}
    and the induced map $\prescript{\phi_0}{}{\mathcal{U}}^{k+2,p}_\mu \times C^{k+2}_0(\prescript{\phi_0}{}{\tilde{U}}) \times \Met^{k+1}(M) \to W^{k,p}_{\mu-2}(T^*M^{\otimes 2} \otimes \prescript{\phi_0}{}{TN})$ is a smooth Banach map. Since the trace $\Met^{k+1}(M) \times W^{k,p}_{\mu-2}(T^*M^{\otimes 2} \otimes \prescript{\phi_0}{}{TN}) \to W^{k,p}_{\mu-2}(\prescript{\phi_0}{}{TN})$ evidently defines a smooth Banach map this implies the lemma.
    
    Using the expression of the Hessian given in \cref{lem: Expression of the Hessian}, we can write
    \begin{equation}      \label{eq: pulled-back hessian}
        \begin{aligned}
            \Theta_{u+w}^{-1} & \mathrm{Hess}_{g}(\phi_{u+w}) - \Theta_w^{-1}\mathrm{Hess}_{g}(\phi_w) =  \prescript{\phi_0}{}{\nabla}^2 u + r^{-2}\widetilde{\bmap{E}}(u,w) \\
            & ~~~~ + r^{-1}(\bmap{F}(u+w) \{ \prescript{\phi_0}{}{\nabla}u\} + \widetilde{\bmap{F}}(u,w)\{\prescript{\phi_0}{}{\nabla} w\}) \\
            & ~~~~ + r^{-2}(\bmap{C}(u+w) \{ \prescript{\phi_0}{}{\nabla}u \otimes \prescript{\phi_0}{}{\nabla}u + 2 \prescript{\phi_0}{}{\nabla}w \otimes \prescript{\phi_0}{}{\nabla}u\} + \widetilde{\bmap{C}}(u,w)\{\prescript{\phi_0}{}{\nabla}w \otimes \prescript{\phi_0}{}{\nabla}w\}) \\
            & ~~~~ + \bmap{D}(g)\{\nabla^{g_0}g\}\{r^{-1} (\bmap{A}(u+w)-\bmap{A}(w)) + \bmap{B}(u+w)\{\prescript{\phi_0}{}{\nabla}(u+w)\} - \bmap{B}(w)\{\prescript{\phi_0}{}{\nabla}w\}\},
        \end{aligned}
    \end{equation}
    where
    \begin{equation*}
        \widetilde{\bmap{C}}(u,w) = \bmap{C}(u+w) - \bmap{C}(w), ~~ \widetilde{\bmap{E}}(u,w) = \bmap{E}(u+w) - \bmap{E}(w), ~~ \widetilde{\bmap{F}}(u,w) = \bmap{F}(u+w) - \bmap{F}(w) .
    \end{equation*}
    Using \cref{lem: Parametrised master lemma} and weighted Sobolev multiplication (\cref{prop: Sobolev embedding and multiplication}), it is clear that the terms on the first three lines of the right-hand side of \eqref{eq: pulled-back hessian} define smooth maps $\prescript{\phi_0}{}{\mathcal{U}}^{k+2,p}_\mu \times C^{k+2}_0(\prescript{\phi_0}{}{\tilde{U}}) \to W^{k,p}_{\mu-2}(T^*M^{\otimes 2} \otimes \prescript{\phi_0}{}{TN})$. As for the last line, the map $g \mapsto \bmap{D}(g)\{\nabla^{g_0}g\}$ defines a smooth Banach map $\Met^{k+1}(M) \to C^{k}(T^*M \otimes \End(TM)) \subset C^{k}_0(T^*M \otimes \End(TM))$, while the expression involving $u,w$ inside the brackets defines a smooth Banach map $\prescript{\phi_0}{}{\mathcal{U}}^{k+2,p}_\mu \times C^{k+2}_0(\prescript{\phi_0}{}{\tilde{U}}) \to W^{k+1,p}_{\mu-1}(T^*M \otimes \prescript{\phi_0}{}{TN})$ by \cref{lem: Parametrised master lemma} and weighted Sobolev multiplication. It follows that the last line defines a smooth map $\prescript{\phi_0}{}{\mathcal{U}}^{k+2,p}_\mu \times C^{k+2}_0(\prescript{\phi_0}{}{\tilde{U}}) \times \Met^{k+1}(M) \to W^{k+1,p}_{\mu-1}(\prescript{\phi_0}{}{TN}) \hookrightarrow W^{k,p}_{\mu-2}(\prescript{\phi_0}{}{TN})$.
\end{proof}

We are now finally equipped to prove \cref{prop: variations of the tension}, and thereby \cref{thm: Variations of the tension}:

\begin{proof}[Proof of \cref{prop: variations of the tension}]
    If $(u,\underline{w},\underline{{\mathfrak{t}}},g) \in \prescript{\phi_0}{}{\mathcal{U}}^{k+2,p}_\mu \times \prescript{\phi_0}{}{\mathcal{W}} \times \Ball_1^S \times \Met^{k+2}(M)$, then $\chi_0(\underline{w}) \in C^{k+2}_0(\prescript{\phi_0}{}{\tilde{U}})$ and by \cref{prop: regularity of pull-back of metrics}, $f_{\underline{\mathfrak{t}},g}^*g \in \Met^{k+1}(M;\mathfrak{S})$. Writing 
    \begin{multline*}
        \Theta_{u+\chi_0(\underline{w})}^{-1} \tau(f_{\underline{\mathfrak{t}},g}^*g,\phi_{u+\chi_0(\underline{w})}) = (\Theta_{u+\chi_0(\underline{w})}^{-1} \tau(f_{\underline{\mathfrak{t}},g}^*g,\phi_{u+\chi_0(\underline{w})}) - \Theta_{\chi_0(\underline{w})}^{-1} \tau(f_{\underline{\mathfrak{t}},g}^*g,\phi_{\chi_0(\underline{w})})) \\ + \Theta_{\chi_0(\underline{w})}^{-1} \tau(f_{\underline{\mathfrak{t}},g}^*g,\phi_{\chi_0(\underline{w})})
    \end{multline*}
    and using \cref{lem: regularity w component} and \cref{lem: restricted regularity of tension}, this implies that $\Theta_{u+\chi_0(\underline{w})}^{-1} \tau(f_{\underline{\mathfrak{t}},g}^* g, \phi_{u+\chi_0(\underline{w})}) \in W^{k,p}_{\mu-2}(\prescript{\phi_0}{}{TN})$. Since moreover the map $(\underline{\mathfrak{t}},g) \in \Ball_1^S \times \Met^{k+2}(M) \mapsto f_{\underline{\mathfrak{t}},g}^*g \in \Met^{k+1}(M;\mathfrak{S})$ is of class $C^1$ by \cref{prop: regularity of pull-back of metrics}, we deduce that $\prescript{\phi_0}{}{\mathcal{T}}$ defines a continuously differentiable Banach map from $\prescript{\phi_0}{}{\mathcal{U}}^{k+2,p}_\mu \times \prescript{\phi_0}{}{\mathcal{W}} \times \Ball_1^S \times \Met^{k+2}(M)$ to $W^{k,p}_{\mu-2}(\prescript{\phi_0}{}{TN})$.
    
    For the expression of the derivative of $\prescript{\phi_0}{}{\mathcal{T}}$, remark that by the results of \cref{subsubsec: Bundle maps with dilation-invariant ends} the Banach derivative of $\prescript{\phi_0}{}{\mathcal{T}}$ coincides with the naive directional derivative, which can be computed locally (c.f. \cref{rem: Derivative of Banach maps}). Hence it is immediate to deduce that if $u \in \prescript{\phi_0}{}{\mathcal{U}}^{k,p}_\mu \cap C^\infty_\mu(\prescript{\phi_0}{}{TN})$ is such that the map $\phi = \phi_{u}$ is harmonic with respect to some $g \in \Met^{k+2}(M;\mathfrak{S})$ and $(\dot u, \underline{\dot w}) \in W^{k+2,p}_\mu(\prescript{\phi_0}{}{TN}) \times T_{\underline{0}} \prescript{\phi_0}{}{\mathcal{W}}$, 
    \begin{equation*}
        D_{(u,\underline{0},\underline{0},g)} \prescript{\phi_0}{}{\mathcal{T}}\{ \dot u + \underline{\dot w}\} = \Theta_{u}^{-1} \circ J^{\phi}_g \circ \Theta_{u}(\dot u + \chi_0(\underline{\dot w})) .
    \end{equation*}
    As for the derivative in the direction of $\dot{\underline{\mathfrak{t}}} \in (\R^m)^S$, since $f_{\underline{0},g} = \mathrm{id}_M$ because $g \in \Met^{k+2}(M;\mathfrak{S})$ it follows from \cref{prop: regularity of pull-back of metrics} that $\left. \frac{\diff}{\diff t} \right|_{t = 0} f_{t \cdot \underline{\mathfrak{t}},g}^* g = \pounds_{\xi_g(\dot{\underline{\mathfrak{t}}})}g$. On the other hand since $\phi$ is smooth on $M \setminus S$ the relation $\left. \frac{\diff}{\diff t} \right|_{t = 0}\tau(f_{t \cdot \underline{\mathfrak{t}},g}^* g,\phi) = J^{\phi}_g(\diff \phi(\xi_g(\dot{\underline{\mathfrak{t}}})))$ must hold locally away from the singular set. Again, by \cref{rem: Derivative of Banach maps} this shows that
    \begin{equation*}
        D_{(u,0,\underline{0},g)} \prescript{\phi_0}{}{\mathcal{T}}\{ \dot{\underline{\mathfrak{t}}}\} = \Theta_{u}^{-1} J^{\phi}_g(\diff \phi(\xi_g(\dot{\underline{\mathfrak{t}}}))),
    \end{equation*}
    as claimed.
\end{proof}

\begin{rem}     \label{rem: derivatives of the metric}
    Examining the above proof, the only place where we need to use metrics of regularity $C^{k+2}$ instead of $C^{k+1}$ is when we use the fact that the map $(\underline{\mathfrak{t}},g) \in \Ball_1^S \times \Met^{k+2}(M) \to \Met^{k+1}(M;\mathfrak{S})$ is of class $C^1$. Indeed if we started with a metric $g \in \Met^{k+1}(M)$, the same proof would show that $\tau(f_{\underline{\mathfrak{t}},g}^*g,\phi) \in W^{k,p}_{\mu-2}(\prescript{\phi}{}{TN})$ for any $\phi \in \Map^{k+2,p}_\mu(M,N;\mathfrak{S})$, but the induced section $\Map^{k+2,p}_\mu(M,N;\mathfrak{S}) \times \Ball^S_1 \times \Met^{k+1}(M) \to \mathcal{E}^{k,p}_{\mu-2}$ would a priori only be continuous and not differentiable. This is eventually why we can only prove \cref{thm: main theorem introduction} for small $C^2$-variations of the metric, as opposed to small $C^1$-variations.
\end{rem}

%% file: section4_proof.tex
    \section{The deformation theorem}       \label{sec: The deformation theorem}

In this section we will prove with \cref{thm: main theorem introduction} our first main result of this article. For this, recall that we have introduced the continuously differentiable Banach section 
\begin{equation*}
    \mathcal{T} \col \Map^{k+2,p}_{\mu}(M,N;\mathfrak{S}) \times \Ball_1^S \times \Met^{k+2}(M) \to \mathcal{E}^{k,p}_{\mu-2}.
\end{equation*}
In order to prove \cref{thm: main theorem introduction} we want to apply the implicit function theorem and therefore we need to prove that the linearisation of $\mathcal{T}$ with respect to the first two variables is surjective (for an appropriate choice of singularity data $\mathfrak{S}$ and corresponding Banach manifold $\Map^{k+2,p}_{\mu}(M,N;\mathfrak{S})$). Note, however, that in the presence of non-trivial Killing vector fields on $N$ the linearisation of $\mathcal{T}$ has no chance of being surjective when considered as a section of $\mathcal{E}^{k,p}_{\mu-2}$. Indeed, \cref{prop: mapping properties Jacobi operator} implies that this linearisation is Fredholm of index zero (cf. the discussion prior to \cref{lem: image of augmented deformation operator}). Moreover, the deformations induced by the Killing vector fields contribute to the kernel and hence to the cokernel of this operator.

In order to remedy this, we will define in \cref{subsec: the tension revisited} a Banach subbundle $\mathcal{S}^{k,p}_{\mu-2} \subset \mathcal{E}^{k,p}_{\mu-2}$ that consists of sections perpendicular to the Killing vector fields. We will then show that the tension $\mathcal{T}$ actually maps into $\mathcal{S}^{k,p}_{\mu-2}$, thus overcoming the part of the cokernel of $\mathcal{T}$ spanned by these Killing fields. With this at hand, we will state and prove a more precise version of \cref{thm: main theorem introduction} in \cref{subsec: the deformation theorem}.

    \subsection{Orthogonality to the Killing fields}      \label{subsec: the tension revisited}

As in the previous section, we will assume that $\dim M = m \geq 4$ and fix a singular data set $\mathfrak{S} =(S,(\Upsilon_s)_{s\in S},(\mathcal{M}_s)_{s\in S})$ consisting of:
\begin{enumerate}
    \item a finite subset $S \subset M$,
    \item a set of adapted charts $\{\Upsilon_s \col (\mathbb{B}_1 \subset \mathbb{R}^m) \to M\}_{s\in S}$, 
    \item for each $s\in S$ a finite dimensional submanifold $\mathcal{M}_s \subset C^\infty(\Sp^{m-1},N)$ of maps harmonic with respect to the standard round metric on $\Sp^{m-1}$.
\end{enumerate}
Moreover, we fix $k\in \mathbb{N}_0$, $p\in (m,\infty)$, and $\mu \in (0,1)$ and let $\Map^{k+2,p}_\mu(M,N;\mathfrak{S})$, $\mathcal{E}^{k,p}_{\mu-2}$ be as defined in the previous section. Our goal for this section is to show that the tension $\mathcal{T}$, regarded as a section of $\mathcal{E}^{k,p}_{\mu-2}$, lies orthogonal to the space of (pulled back) Killing vector fields on $(N,h)$. For this, we first show that the set of pulled back Killing vector fields on $(N,h)$ span a subbundle of $\mathcal{E}^{k,p}_{\mu-2}$, at least on an appropriate open subset of the parameter space.

\begin{lem}    \label{lem: subbundle of pulled back Killing fields}
    Let $\widetilde{\mathcal{S}}_{\phi}^{k,p}\subset (\mathcal{E}^{k,p}_{\mu-2})_{\phi}$ for every $\phi\in \Map^{k+2,p}_\mu(M,N;\mathfrak{S})$ be defined as
    \begin{equation*}
        \widetilde{\mathcal{S}}_{\phi}^{k,p} \coloneqq \phi^*\mathfrak{iso}(M,h) \subset C^{k+1}_0(\prescript{\phi}{}{TN})\subset W^{k,p}_{\mu-2}(\prescript{\phi}{}{TN}) = (\mathcal{E}^{k,p}_{\mu-2})_{\phi} ,
    \end{equation*} 
    where $\mathfrak{iso}(M,h) \subset \mathfrak{X}(N)$ denotes the vector space of Killing vector fields on $(N,h)$. Then, on the open subset where $\dim \widetilde{\mathcal{S}}_{\phi}^{k,p} = \dim \mathfrak{iso}(M,h)$, $\widetilde{\mathcal{S}}^{k,p} \coloneqq \sqcup \widetilde{\mathcal{S}}_{\phi}^{k,p}$ forms a smooth vector subbundle of $\mathcal{E}^{k,p}_{\mu-2}$.
\end{lem}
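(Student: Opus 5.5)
The plan is to write down explicit local trivialisations of $\widetilde{\mathcal{S}}^{k,p}$ over the charts $\Psi_{\phi_0}$, and to check that in these trivialisations the fibres are spanned by finitely many sections depending smoothly on the base point. Fix a basis $\xi_1,\dots,\xi_d$ of $\mathfrak{iso}(N,h)$, where $d=\dim\mathfrak{iso}(N,h)$. For an end-conical $\phi_0$ and $(u,\underline{w})\in \prescript{\phi_0}{}{\mathcal{U}}^{k+2,p}_\mu\times\prescript{\phi_0}{}{\mathcal{W}}$, write $v=u+\chi_{\phi_0}(\underline{w})$ and $\phi=\phi_v$. In the local trivialisation \eqref{eq: local trivialisations}, the fibre $\widetilde{\mathcal{S}}^{k,p}_\phi$ corresponds to
\begin{equation*}
    \operatorname{span}\{\sigma_i(u,\underline{w})\}_{i=1}^d, \qquad \sigma_i(u,\underline{w}) \coloneqq \prescript{\phi_0}{}{\Theta}_{v}^{-1}(\xi_i\circ\exp_{\phi_0}(v)) \subset W^{k,p}_{\mu-2}(\prescript{\phi_0}{}{TN}).
\end{equation*}

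The first step is to show that each $\sigma_i$ is a smooth (or at least $C^1$) Banach map from the chart domain into $W^{k,p}_{\mu-2}(\prescript{\phi_0}{}{TN})$. The expression $\prescript{\phi_0}{}{\Theta}_v^{-1}(\xi_i\circ\exp_{\phi_0}(v))$ is a non-linear bundle map $\bmap{k}_i\col \prescript{\phi_0}{}{U}\to\prescript{\phi_0}{}{TN}$ applied to $v$. Since $\phi_0$ is end-conical and everything is built from $\exp$ on $N$, $\bmap{k}_i$ has dilation-invariant ends, so we can use the decomposition
\begin{equation*}
    \sigma_i = \bigl(\bmap{k}_i(u+\chi_{\phi_0}(\underline{w}))-\bmap{k}_i(\chi_{\phi_0}(\underline{w}))\bigr) + \bmap{k}_i(\chi_{\phi_0}(\underline{w})).
\end{equation*}
By \cref{lem: Parametrised master lemma}, the first term is smooth into $W^{k+2,p}_\mu\hookrightarrow W^{k,p}_{\mu-2}$. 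For the second term, $\chi_{\phi_0}$ is smooth from the finite-dimensional $\prescript{\phi_0}{}{\mathcal{W}}$ into $C^{k+2}_0$, and $\bmap{k}_i$ induces a smooth map $C^{k}_0\to C^k_0$. Combined with the embedding $C^k_0\hookrightarrow W^{k,p}_{\mu-2}$ of \cref{lem: Natural properties of weighted spaces}(ii),(iii), which holds because $0>\mu-2$, the second term is smooth as well. The same argument also confirms the inclusion $\phi^*\mathfrak{iso}\subset C^{k+1}_0$ claimed in the statement.

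The second step handles the open set and the rank. Evaluation at finitely many points of $M\setminus S$ is continuous, so the rank condition can be tested by linear independence of the $\sigma_i$. More robustly, the Gram matrix $G_{ij}=\langle\sigma_i,\sigma_j\rangle$ in a fixed continuous inner product on $W^{k,p}_{\mu-2}$ (via the embedding into $C^0_{\mu-2}$ restricted to a compact set, or simply the $L^2(\vol_{g_0})$ pairing on a compact subset) depends continuously on the base point. The condition $\dim=d$ is equivalent to the $\sigma_i$ being linearly independent. Linear independence of finitely many vectors depending continuously on a parameter is an open condition: one can pick points $x_1,\dots,x_d\in M\setminus S$ or a compact set on which the pairing matrix is invertible, and this persists nearby. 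This shows the locus is open.

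The final step is the standard fact that a family of $d$ linearly independent vectors in a Banach space $E$, depending smoothly on a parameter, spans a smooth subbundle of the trivial bundle. The subspaces are finite-dimensional and hence complemented. Concretely, choose $\ell_1,\dots,\ell_d\in E^*$ with $\det(\ell_j(\sigma_i(b_0)))\neq0$ and complement $C=\bigcap\ker\ell_j$. The map $(b,a,c)\mapsto (b,\sum a_i\sigma_i(b)+c)$ is then a local isomorphism of bundles straightening $\widetilde{\mathcal{S}}$ to $\R^d\times\{0\}$. Compatibility across charts is automatic, since the gluing maps $\mathcal{G}_{\phi_1,\phi_0}$ send $\sigma_i$ computed from $\phi_0$ to $\sigma_i$ computed from $\phi_1$, both being the same pulled-back Killing field. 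The main obstacle I expect is the first step's bookkeeping near $S$: checking that $\bmap{k}_i$ really has dilation-invariant ends, i.e.\ that $\xi_i\circ\exp_{\phi_s}$ is radially constant. This holds because $\xi_i$ lives on $N$, and $\phi_s$, $\chi_{\phi_0}(\underline{w})$ are dilation-invariant near $S$.
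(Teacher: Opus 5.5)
Your proposal is correct and follows essentially the same route as the paper. Both arguments show that $(u,\underline{w},\xi)\mapsto \Theta_{v}^{-1}(\xi\circ\exp_{\phi_0}(v))$, with $v=u+\chi_{\phi_0}(\underline{w})$, is smooth in the local trivialisations via bundle maps with dilation-invariant ends and is linear in the finite-dimensional variable $\xi$, and then use that finite-dimensional subspaces split; the paper cites Lang's Proposition III.3.1, which your explicit straightening map reproves.

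Your decomposition into $\bmap{k}_i(u+\chi_{\phi_0}(\underline{w}))-\bmap{k}_i(\chi_{\phi_0}(\underline{w}))$, handled by \cref{lem: Parametrised master lemma}, plus a $C^k_0$-term is in fact more careful than the paper's direct appeal to \cref{lem: Master lemma}. That lemma formally requires $\bmap{f}(0_E)=0_F$, and its claimed target $W^{k,p}_0$ does not contain non-decaying sections such as $\xi\circ\phi_0$. You also justify openness of the full-rank locus, which the paper only asserts.
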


\begin{proof}
    We define the following map: 
    \begin{align*}
        \mathcal{I} \col \Map^{k+2,p}_{\mu}(M,N;\mathfrak{S}) \times \mathfrak{iso}(N,h) &\to \widetilde{\mathcal{S}}^{k,p}\subset \mathcal{E}^{k,p}_{\mu-2} \\
        (\phi,\xi) &\mapsto \phi^*\xi \in \widetilde{\mathcal{S}}_{\phi}^{k,p}.
    \end{align*}
    Since $\mathfrak{iso}(N,h)$ is finite-dimensional and finite dimensional subspaces in Banach spaces always split, by \cite[Proposition~3.1 in Chapter~3]{Lang1999DifferentialGeometry} it is enough to prove that $\mathcal{I}$ is a smooth vector bundle homomorphism. 

    Let us pick an end-conical map $\phi_0 \in \Map^\infty_{\mathrm{ec}}(M,N;\mathfrak{S})$ and let $\Psi_{\phi_0} \col \prescript{\phi_0}{}{\mathcal{U}}^{k+2,p}_{\mu} \times \prescript{\phi_0}{}{\mathcal{W}} \to \prescript{\phi_0}{}{\mathcal{V}}^{k+2,p}_\mu \subset \Map^{k+2,p}_\mu(M,N;\mathfrak{S})$ be the local chart centred around $\phi_0$ as in \cref{prop: banach manifold atlas}. Moreover, for any $u \in W^{k+2,p}_{\mu}(\prescript{\phi_0}{}{TN}) \subset C^0(\prescript{\phi_0}{}{TN})$ with $\vert u \vert_{h_{\phi_0}}< \textup{inj}(N)$ we set $\phi_u \coloneqq \exp_{\phi_0}(u)$ and let $\prescript{\phi_0}{}{\Theta_u} \col \prescript{\phi_0}{}{TN} \to \prescript{\phi_u}{}{TN}$ be the isomorphism in \cref{Def: Bundle isomorphism Theta}. Using the local trivialisations 
    \begin{equation*}
        \left. \mathcal{E}^{k,p}_{\mu-2} \right|_{\prescript{\phi_0}{}{\mathcal{V}^{k+2,p}_\mu}} \simeq \prescript{\phi_0}{}{\mathcal{U}}^{k+2,p}_{\mu} \times \prescript{\phi_0}{}{\mathcal{W}} \times W^{k,p}_{\mu-2}\big(\prescript{\phi_0}{}{TN}\big) .
    \end{equation*}
    as in \cref{subsubsec: Banach vector bundles}, we need to show that the map 
    \begin{align*}
        \prescript{\phi_0}{}{\mathcal{U}}^{k+2,p}_{\mu} \times \prescript{\phi_0}{}{\mathcal{W}} \times \mathfrak{iso}(N,h) & \to  W^{k,p}_{\mu-2}(\prescript{\phi_0}{}{TN}) \\
        (u,\underline{w},\xi) &\mapsto (\prescript{\phi_0}{}{\Theta}_{u+\chi_{\phi_0}(\underline{w})})^{-1}(\phi_{u+\chi_{\phi_0}(\underline{w})}^*\xi)
    \end{align*}
    is smooth. To see this, first note that for every fixed $\xi \in \mathfrak{iso}(N,h)$ the expression on the right-hand side arises from a non-linear bundle map with dilation invariant ends as in \cref{lem: Master lemma}. Thus, the aforementioned lemma implies that for fixed $\xi \in \mathfrak{iso}(N,h)$ the expression on the right-hand side defines a smooth map $\prescript{\phi_0}{}{\mathcal{U}}^{k+2,p}_{\mu} \times \prescript{\phi_0}{}{\mathcal{W}} \to W^{k,p}_{0}(\prescript{\phi_0}{}{TN}) \subset W^{k,p}_{\mu-2}(\prescript{\phi_0}{}{TN})$. Since $\mathcal{I}$ depends linearly on $\xi \in \mathfrak{iso}(N,h)$ and the latter space is finite-dimensional, this proves the smoothness of $\mathcal{I}$ after choosing a basis of $\mathfrak{iso}(N,h)$.
\end{proof}

In the following, we will equip $\mathcal{E}^{k,p}_{\mu-2} \to \Map^{k+2,p}_{\mu}(M,N;\mathfrak{S}) \times (\mathbb{B}_1)^S\times \Met^{k+2}(M)$ with a bundle metric $\pairl \cdot,\cdot \pairr$ as follows: for any $(\phi, \underline{\mathfrak{t}},g)$ and $v_1,v_2 \in (\mathcal{E}^{k,p}_{\mu-2})_{(\phi,\underline{\mathfrak{t}},g)} \cong W^{k,p}_{\mu-2}(\prescript{\phi}{}{TN})$,
\begin{equation*}
    \pairl v_1,v_2 \pairr \coloneqq \int_M \langle v_1,v_2 \rangle_{h_{\phi}} \vol_{(f_{\underline{\mathfrak{t}},g})^*g} . 
\end{equation*}
Note that because $\mu>0$ and because we assume that $\dim M \geq 4$, this is indeed well-defined.

\begin{lem}
    The bundle metric $\pairl \cdot, \cdot \pairr$ on $\mathcal{E}^{k,p}_{\mu-2} \to \Map^{k+2,p}_\mu(M,N;\mathfrak{S})\times (\mathbb{B}_1)^S\times \Met^{k+2}(M)$ is of regularity $C^{k+2}$.
\end{lem}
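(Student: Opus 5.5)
We work in the local trivialisations of $\mathcal{E}^{k,p}_{\mu-2}$ introduced in \cref{subsubsec: Banach vector bundles}, and show that the bundle metric, read in each chart, is a $C^{k+2}$ map of the base point. Fix an end-conical map $\phi_0$ and the chart $\Psi_{\phi_0}$, and write $\Theta = \prescript{\phi_0}{}{\Theta}_{u+\chi_{\phi_0}(\underline{w})}$. For $v_1,v_2 \in W^{k,p}_{\mu-2}(\prescript{\phi_0}{}{TN})$ the local expression of the metric is
\begin{equation*}
    \pairl v_1,v_2\pairr_{(u,\underline{w},\underline{\mathfrak{t}},g)} = \int_M \big(h_{\phi_0} + \bmap{h}_{\phi_0}(u+\chi_{\phi_0}(\underline{w}))\big)\{v_1,v_2\}\, \rho(\underline{\mathfrak{t}},g)\, \vol_{g_0},
\end{equation*}
using \cref{lem: Bundle maps of end-conical maps}. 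Here $\rho(\underline{\mathfrak{t}},g) = \vol_{f_{\underline{\mathfrak{t}},g}^*g}/\vol_{g_0}$ is a positive density ratio. It therefore suffices to show that the map $(u,\underline{w},\underline{\mathfrak{t}},g) \mapsto \pairl\cdot,\cdot\pairr$ into the space of bounded bilinear forms on $W^{k,p}_{\mu-2}$ has the claimed regularity.

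\textbf{Step 1 (the integral is bounded).} The pairing $W^{k,p}_{\mu-2}\times W^{k,p}_{\mu-2} \to \R$, $(v_1,v_2)\mapsto \int_M a\{v_1,v_2\}\vol_{g_0}$, is bounded for $a \in C^0_0(\mathrm{Sym}^2(\prescript{\phi_0}{}{TN})^*)$. Indeed, by the weighted Sobolev embedding (\cref{prop: Sobolev embedding and multiplication}) we have $v_i \in C^0_{\mu-2}$, so the integrand is $O(r^{2\mu-4})$. This is integrable near $S$ because $m\ge 4$ and $\mu>0$ give $2\mu-4>-m$. Moreover the map $a \mapsto \int a\{\cdot,\cdot\}$ is linear and continuous from $C^0_0$ into bilinear forms.

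\textbf{Step 2 (the fibre-metric factor).} The map $(u,\underline{w}) \mapsto h_{\phi_0}+\bmap{h}_{\phi_0}(u+\chi_{\phi_0}(\underline{w}))$ into $C^0_0$ is smooth. To see this, split off $\bmap{h}_{\phi_0}(\chi_{\phi_0}(\underline{w}))$, which depends smoothly on finitely many parameters. The difference $\bmap{h}_{\phi_0}(u+\chi_{\phi_0}(\underline w))-\bmap{h}_{\phi_0}(\chi_{\phi_0}(\underline w))$ is handled by \cref{lem: Parametrised master lemma} together with the embedding $W^{k+2,p}_\mu\hookrightarrow C^0_0$. This is the same argument as for the gluing functions.

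\textbf{Step 3 (the volume factor, expected to be the main obstacle).} We must show that $(\underline{\mathfrak{t}},g)\mapsto \rho(\underline{\mathfrak{t}},g)\in C^0(M)$ is $C^{k+2}$ on $\Ball_1^S\times\Met^{k+2}(M)$. Write $\vol_{f^*g} = f^*\vol_g$. Then $\rho$ is the composition of the smooth map $g\mapsto \vol_g/\vol_{g_0}\in C^{k+2}(M)$ with the pullback by $f_{\underline{\mathfrak{t}},g}$, followed by multiplication by the Jacobian of $f_{\underline{\mathfrak{t}},g}$. The diffeomorphism depends smoothly on $(\underline{\mathfrak{t}},\mathrm{ev}_S(\underline{\mathfrak{t}},g))$, hence is $C^{k+2}$ in $(\underline{\mathfrak{t}},g)$, since $\mathrm{ev}_S$ is $C^{k+2}$ as in the proof of \cref{prop: regularity of pull-back of metrics}.

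The delicate point is the composition $C^{k+2}(M)\times\mathrm{Diff}\to C^0(M)$, $(F,f)\mapsto F\circ f$. As recalled in the proof of \cref{prop: regularity of pull-back of metrics}, this composition loses one order of differentiability per derivative. Since we land in $C^0$, that is with $k+2$ derivatives to spare, we obtain class $C^{k+2}$. This count is exactly what produces the regularity claimed in the lemma, and I would check it carefully using the Eells--Lemaire composition statement.

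\textbf{Conclusion.} Multiplying the factors from Steps 2 and 3, a product of a smooth and a $C^{k+2}$ map into $C^0_0$, and composing with the bounded linear pairing of Step 1, shows that the local representative of $\pairl\cdot,\cdot\pairr$ is $C^{k+2}$. This regularity is preserved under the gluing functions, since those are smooth.
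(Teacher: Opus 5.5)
Your proposal is correct and follows essentially the paper's route. In the chart $\Psi_{\phi_0}$ the paper treats the fibre-metric factor $\langle D\exp_{\phi_0}(\cdot),D\exp_{\phi_0}(\cdot)\rangle_h$ as a bundle map with dilation-invariant ends (\cref{lem: Master lemma}). It locates the only loss of regularity in the volume form, taking from \cref{prop: regularity of pull-back of metrics} that $(\underline{\mathfrak{t}},g)\mapsto f^*_{\underline{\mathfrak{t}},g}g\in\Met^0(M)$ is $C^{k+2}$ and composing with the smooth map $g\mapsto\vol_g$. This is exactly the derivative count you redo by hand in Step~3. Your choice to keep the coefficients in $C^0_0$ and integrate directly, rather than multiplying into $W^{k,p}_{2\mu-4}$ first, is an inessential repackaging.

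One small correction to Step~1. The lemma is needed with $k=0$, since that is the case invoked in the proof of \cref{thm: main theorem introduction}. Then $W^{0,p}_{\mu-2}=L^p_{\mu-2}$ does not embed into $C^0_{\mu-2}$, so the weighted Sobolev embedding does not give $v_i\in C^0_{\mu-2}$. Boundedness of the pairing still holds by the weighted H\"older inequality with exponents $p,p,\frac{p}{p-2}$ (using $p>2$). Near $S$ this requires
\[
\int_0 r^{\frac{p}{p-2}\left(2(\mu-2)+\frac{2m}{p}\right)}\,r^{m-1}\,\diff r<\infty,
\]
which is equivalent to $2(\mu-2)+m>0$ and hence true for $m\ge4$ and $\mu>0$.
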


\begin{proof}
    As in the proof of the previous proposition, we fix an end-conical map $\phi_0 \in \Map^{\infty}_{\textup{ec}}(M,N;\mathfrak{S})$ and use the local trivialisation
    \begin{align*}
        \left. \mathcal{E}^{k,p}_{\mu-2}\right|_{\prescript{\phi_0}{}{\mathcal{V}}^{k+2,p}_\mu} \cong \left(\prescript{\phi_0}{}{\mathcal{U}}^{k+2,p}_\mu \times \prescript{\phi_0}{}{\mathcal{W}} \right) \times W^{k,p}_{\mu-2}(\prescript{\phi_0}{}{TN}) .
    \end{align*}
    There, the metric $\pairl \cdot,\cdot \pairr$ becomes
    \begin{equation*}
        \pairl v_1,v_2\pairr_{(u,\underline{w},\underline{\mathfrak{t}},g)} = \int_M \big\langle \prescript{\phi_0}{}{\Theta_{u+\chi_{\phi_0}(\underline{w})}}(v_1),\prescript{\phi_0}{}{\Theta_{u+\chi_{\phi_0}(\underline{w})}}(v_2) \big\rangle_{h_{\phi_{u+\chi_{\phi_0}(\underline{w})}}} \vol_{(f_{\underline{\mathfrak{t}},g})^*g} , 
    \end{equation*} 
    where $u\in \prescript{\phi_0}{}{\mathcal{U}}^{k+2,p}_\mu$, $\underline{w} \in \prescript{\phi_0}{}{\mathcal{W}}$, $\underline{\mathfrak{t}}\in (\mathbb{B}_1)^S$, $g\in \Met^{k+2}(M)$, and $v_1,v_2 \in W^{k,p}_{\mu-2}(\prescript{\phi_0}{}{TN})$). 
    
    Now observe that 
    \begin{align*}
        \prescript{\phi_0}{}{\mathcal{U}}^{k+2,p}_\mu \times \prescript{\phi_0}{}{\mathcal{W}}  &\to W^{k+2,p}_0\big(\Hom(\prescript{\phi_0}{}{TN}\otimes \prescript{\phi_0}{}{TN},\mathbb{R})\big) \\
        (u,\underline{w}) &\mapsto \big\langle \prescript{\phi_0}{}{\Theta_{u+\chi_{\phi_0}(\underline{w})}}(\cdot),\prescript{\phi_0}{}{\Theta_{u+\chi_{\phi_0}(\underline{w})}}(\cdot) \big\rangle_{h_{\phi_{u+\chi_{\phi_0}(\underline{w})}}} \\
        & \qquad = \big\langle D_{u+\chi_{\phi_0}(\underline{w})} \exp_{\phi_0}(\cdot),D_{u+\chi_{\phi_0}(\underline{w})} \exp_{\phi_0}(\cdot) \big\rangle_{h}
    \end{align*}
    arises via a non-linear bundle map with dilation-invariant ends and is therefore smooth by \cref{lem: Master lemma} (and maps indeed into $W^{k+2,p}_0$). By \cref{prop: Sobolev embedding and multiplication}, it yields a smooth map 
    \begin{equation*}
        \prescript{\phi_0}{}{\mathcal{U}}^{k+2,p}_\mu \times \prescript{\phi_0}{}{\mathcal{W}} \to \Hom\big(W^{k,p}_{\mu-2}(\prescript{\phi_0}{}{TN})\otimes W^{k,p}_{\mu-2}(\prescript{\phi_0}{}{TN}),W^{k,p}_{2\mu-4}(\mathbb{R})\big).
    \end{equation*}
    Moreover, the mapping $g \mapsto \vol_g$ defines a smooth map $\Met^0(M) \to C^{0}(\Lambda^m T^*M)$ and that integration is smooth as well. So far, all maps have been smooth maps between Banach spaces. However, the map 
    \begin{align*}
        (\mathbb{B}_1)^S\times \Met^{k+2}(M) &\to \Met^0(M)\\
        (\underline{\mathfrak{t}},g)&\mapsto f_{\underline{\mathfrak{t}},g}^*g
    \end{align*} 
    is only of regularity $C^{k+2}$ (cf. \cref{prop: regularity of pull-back of metrics}) and therefore $\pairl \cdot,\cdot\pairr$ is also only of regularity $C^{k+2}$.
\end{proof}

\begin{Def}     \label{Def: Banach subbundle}
    Let $\mathcal{S}^{k,p}_{\mu-2} \subset \mathcal{E}^{k,p}_{\mu-2}$ be the orthogonal complement of $\widetilde{\mathcal{S}}^{k,p}$ with respect to $\pairl \cdot,\cdot \pairr$. By the above lemma, this is a Banach subbundle of $\mathcal{E}^{k,p}_{\mu-2}$ of regularity $C^{k+2}$ over the open subset of parameters $\mathcal{P}^{k+2,p}_\mu$ defined as
    \begin{equation*}
        \mathcal{P}^{k+2,p}_\mu \coloneqq \{ (\phi,\underline{\mathfrak{t}},g) \in \Map^{k+2,p}_\mu(M,N;\mathfrak{S}) \times \Ball_1^S \times \Met^{k+2}(M) \mid \dim \phi^* \mathfrak{iso}(N) = \dim \mathfrak{iso}(N) \}.
    \end{equation*}
\end{Def}

Recall now that the $C^1$-section $\mathcal{T} \col \Map^{k+2,p}_{\mu}(M,N;\mathfrak{S}) \times (\mathbb{B}_1)^S \times \Met^{k+2}(M) \to \mathcal{E}^{k,p}_{\mu-2}$ is given for every $(\phi,\underline{\mathfrak{t}},g)$ by 
\begin{equation*}
    \mathcal{T}(\phi,g,\underline{\mathfrak{t}}) = \tau((f_{g,\underline{\mathfrak{t}}})^*g,\phi)) \in W^{k,p}_{\mu-2}(\prescript{\phi}{}{TN}) = (\mathcal{E}^{k,p}_{\mu-2})_{(\phi,\underline{\mathfrak{t}},g)}  
\end{equation*}
as defined in \cref{thm: Variations of the tension}.

\begin{prop}        \label{prop: tension lies in subbundle}
    For every $(\phi,\underline{\mathfrak{t}},g) \in \mathcal{P}^{k+2,p}_\mu \subset \Map^{k+2,p}_{\mu}(M,N;\mathfrak{S}) \times (\mathbb{B}_1)^S \times \Met^{k+2}(M)$ we have 
    \begin{equation*}
        \mathcal{T}(\phi,\underline{\mathfrak{t}},g) \in (\mathcal{S}^{k,p}_{\mu-2})_{(\phi,\underline{\mathfrak{t}},g)}.
    \end{equation*} 
    Therefore $\mathcal{T}$ is naturally a continuously differentiable section of $\mathcal{S}^{k,p}_{\mu-2}$ over $\mathcal{P}^{k+2,p}_\mu$.
\end{prop}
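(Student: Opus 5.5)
The plan is to show that for every Killing field $\xi \in \mathfrak{iso}(N,h)$ one has
\begin{equation*}
    \pairl \mathcal{T}(\phi,\underline{\mathfrak{t}},g), \phi^*\xi \pairr = \int_M \langle \tau(\tilde g,\phi), \xi\circ\phi \rangle_{h_\phi} \vol_{\tilde g} = 0, \qquad \tilde g \coloneqq f_{\underline{\mathfrak{t}},g}^*g .
\end{equation*}
This is a Noether-type identity for the target symmetries. Let $\psi_t$ denote the flow of $\xi$, which consists of isometries of $h$. Then $E_{\tilde g}(\psi_t\circ\phi) = E_{\tilde g}(\phi)$ for all $t$, because $|\diff(\psi_t\circ\phi)|^2 = |\diff\phi|^2$ pointwise. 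Differentiating at $t=0$ and using the first variation formula $D_\phi E(u) = -\int\langle\tau,u\rangle$ with $u = \xi\circ\phi$ should then give the claim. The work lies in justifying this formula for the singular map $\phi$, which is only of class $W^{k+2,p}_\mu$ away from $S$.

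I will carry out the argument pointwise and integrate by parts with a cutoff, so as to avoid differentiating the energy. The first step is to compute $\langle \tau(\tilde g,\phi), \xi\circ\phi\rangle$. Since $\tau = \tr_{\tilde g}\prescript{\phi}{}{\nabla}\diff\phi = -\prescript{\phi}{}{\nabla}^*\diff\phi$, we have on $M\setminus S$
\begin{equation*}
    \langle \tau, \xi\circ\phi\rangle = -\divergence_{\tilde g}\bigl(\langle \diff\phi(\cdot), \xi\circ\phi\rangle\bigr) + \langle \diff\phi, \prescript{\phi}{}{\nabla}(\xi\circ\phi)\rangle .
\end{equation*}
The second term is $\sum_i h(\nabla^h_{\diff\phi(e_i)}\xi, \diff\phi(e_i))$, which vanishes because $\nabla^h\xi$ is skew-symmetric for a Killing field. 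It follows that $\langle\tau,\xi\circ\phi\rangle = -\divergence_{\tilde g} X$, where $X$ is the vector field dual to the $1$-form $\langle\diff\phi(\cdot),\xi\circ\phi\rangle$.

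The second step is to integrate this identity against $1-\chi_\varepsilon(r)$, where $\chi_\varepsilon(r) = \chi(r/\varepsilon)$ is the cutoff used earlier. Integration by parts gives
\begin{equation*}
    \int_M (1-\chi_\varepsilon)\langle\tau,\xi\circ\phi\rangle\vol_{\tilde g} = -\int_M \chi_\varepsilon'(r/\varepsilon)\,\varepsilon^{-1}\,\diff r(X)\vol_{\tilde g} .
\end{equation*}
To estimate the right-hand side, I use $\phi\in W^{k+2,p}_\mu \hookrightarrow C^1$-type weighted control, so that $|\diff\phi| = \mathcal{O}(r^{-1})$ and $|X| = \mathcal{O}(r^{-1})$. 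The integrand is then supported in $\{r\sim\varepsilon\}$ and bounded by $\mathcal{O}(\varepsilon^{-2})$ on a region of volume $\mathcal{O}(\varepsilon^m)$, so it is $\mathcal{O}(\varepsilon^{m-2})$. This tends to $0$ because $m\ge 4$; in fact $m\ge 3$ would already suffice.

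The left-hand side converges to $\pairl\mathcal{T},\phi^*\xi\pairr$ by dominated convergence. Here I need $\tau\in W^{k,p}_{\mu-2}\subset L^1$, which holds since $|\tau| = \mathcal{O}(r^{\mu-2})$ is integrable in dimension $m\ge 4$, consistent with the well-definedness remark on $\pairl\cdot,\cdot\pairr$. The pointwise bound on $\tau$ can be taken from the Sobolev embedding for $k$ large, or else argued via density of smooth elements combined with continuity of both sides in the Banach topology. I expect this regularity bookkeeping to be the main technical point. For small $k$, $\tau$ is only in $W^{k,p}_{\mu-2}$ with $p>m$, which still embeds in $L^p_{\mu-2}$, and Hölder's inequality with the weight $r^{-m}$ then gives integrability because $\mu-2 > -m$.

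Finally, for the concluding sentence of the proposition: $\mathcal{S}^{k,p}_{\mu-2}$ is a $C^{k+2}$ (in particular $C^1$) subbundle over the open set $\mathcal{P}^{k+2,p}_\mu$, and $\mathcal{T}$ is a $C^1$ section of $\mathcal{E}^{k,p}_{\mu-2}$ by \cref{thm: Variations of the tension}. A $C^1$ section of the ambient bundle taking values in the subbundle is a $C^1$ section of that subbundle, which can be checked in local splittings, using $\mathcal{E}=\mathcal{S}\oplus\widetilde{\mathcal{S}}$ via the $C^{k+2}$ orthogonal projection.
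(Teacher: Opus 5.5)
Your proof is correct. It rests on the same idea as the paper's: the energy is invariant under the isometric flow of $\xi$. Where you differ is in how you implement it.

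\textbf{The paper's route.} The paper argues globally. It notes that $E(\textup{Flow}^\xi_t\circ\phi)$ is constant in $t$, and then says that ``by the definition of the tension'' its derivative equals $-\int_M\langle\tau,\phi^*\xi\rangle\vol$. That last step is the first variation formula. Here it is applied to a variation field $\xi\circ\phi$ that does not vanish near $S$, and to a map that is only in $\Map^{k+2,p}_\mu$. The paper does not check that the integration by parts near the singular set produces no boundary term.

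\textbf{Your route.} You use the infinitesimal form of the invariance instead. You write $\langle\tau,\xi\circ\phi\rangle$ as a divergence, and the remainder $\sum_i h(\nabla^h_{\diff\phi(e_i)}\xi,\diff\phi(e_i))$ vanishes by the Killing equation. You then control exactly the boundary term the paper glosses over, using two estimates:
\begin{itemize}
\item $|\diff\phi|=\mathcal{O}(r^{-1})$, which follows from $W^{k+2,p}_\mu\hookrightarrow C^{1}_\mu$ for $p>m$ together with the end-conical structure;
\item the weighted H\"older bound $L^p_{\mu-2}\subset L^1$, which needs only $\mu-2>-m$.
\end{itemize}

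\textbf{Comparison.} The paper's version is shorter and conceptually cleaner. Yours is the one that actually closes the argument for a singular $\phi$, and it shows that $m\geq 3$ already suffices for this step. Your remark about the $C^{k+2}$ orthogonal projection onto the finite-dimensional $\widetilde{\mathcal{S}}$ also makes precise the word ``naturally'' in the paper's last sentence.

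\textbf{Minor slips.}
\begin{itemize}
\item With $\omega=\langle\diff\phi(\cdot),V\rangle$ and $V=\xi\circ\phi$, the pointwise identity should read $\langle\tau,V\rangle=\divergence_{\tilde g}\omega-\langle\diff\phi,\prescript{\phi}{}{\nabla}V\rangle$. Both of your signs are flipped, which is harmless because the second term vanishes.
\item In the cutoff term, $\chi_\varepsilon'(r/\varepsilon)$ should be $\chi'(r/\varepsilon)$.
\item The hedge about Sobolev embedding for large $k$ or a density argument is unnecessary. Your H\"older argument already works for every $k\geq 0$.
\end{itemize}
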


\begin{proof}
    It suffices to show that the tension $\tau((f_{g,\underline{\mathfrak{t}}})^*g,\phi))$ is $L^2$-perpendicular to $\phi^*\xi$ for any Killing vector field $\xi \in \mathfrak{iso}(N,h)$. Let $\textup{Flow}_t^{\xi} \col N \to N$ for any $t \in \mathbb{R}$ be the time-$t$ flow of $\xi$. Since $\textup{Flow}_t^{\xi}$ is an isometry of $(N,h)$, the Dirichlet energy $E(\textup{Flow}_t^{\xi} \circ \phi)$ (with respect to the metrics $(f_{\underline{\mathfrak{t}},g})^*g \in \Met^{k+2}(M)$ and $h \in \Met^\infty(N)$) is $t$-independent. By the definition of the tension, this implies 
    \begin{equation*}
        - \int_M \big\langle \tau((f_{g,\underline{\mathfrak{t}}})^*g,\phi)), \phi^*\xi \big\rangle_{h_\phi} \vol_{(f_{g,\underline{\mathfrak{t}}})^*g} = \tfrac{\diff}{\diff t} E(\textup{Flow}_t^{\xi} \circ \phi)_{\vert t=0} = 0. \qedhere
    \end{equation*}
\end{proof}

    \subsection{Proof of the deformation theorem}        \label{subsec: the deformation theorem}

The following is an immediate consequence of the Implicit Function Theorem:

\begin{prop}    \label{prop: deformation theorem via IFT}
    Let $(\phi,g) \in \Map^{k+2,p}_\mu(M,N;\mathfrak{S}) \times \Met^{k+2}(M)$, such that $\phi$ is harmonic with respect to $g$ and $\phi^*\mathfrak{iso}(N,h) \cong \mathfrak{iso}(N,h)$. Moreover, assume that the partial derivative 
    \begin{align*}
        (D_1 \mathcal{T})_{(\phi,\underline{0},g)} \col T_{(\phi,\underline{0})} \big(\Map^{k+2,p}_\mu(M,N;\mathfrak{S}) \times (\mathbb{B}_1)^S\big) \to (\mathcal{S}^{k,p}_{\mu-2})_{(\phi,\underline{0},g)}
    \end{align*}
    is surjective with finite-dimensional kernel $\mathfrak{Ker}$. Then there exists an open neighbourhood $\widetilde{U}_1 \subset \mathfrak{Ker}$ of zero, an open neighbourhood $\widetilde{U}_2 \subset \Met^{k+2}(M)$ of $g$ and a $C^1$-map 
    \begin{align*}
         \mathfrak{h} \col \widetilde{U}_1 \times \widetilde{U}_2 &\to \Map^{k+2,p}_\mu(M,N;\mathfrak{S}) \times (\mathbb{B}_1)^S \\
         (\mathfrak{p},\tilde{g}) &\mapsto (\phi_{(\mathfrak{p},\tilde{g})},\underline{\mathfrak{t}}_{(\mathfrak{p},\tilde{g})})
    \end{align*} 
    with $\mathfrak{h}(0,g) = (\phi,\underline{0})$, such that $\mathcal{T}(\mathfrak{h}(\mathfrak{p},\tilde{g}),\tilde{g}\big) = 0$ for every $(\mathfrak{p},\tilde{g}) \in \widetilde{U}_1 \times \widetilde{U}_2$. In particular, for every $\tilde{g} \in \widetilde{U}_2 \subset \Met^{k+2}(M)$ the map $\phi_{(0,\tilde{g})} \circ f_{\tilde{g},\underline{\mathfrak{t}}_{(0,\tilde{g})}}^{-1}$ is harmonic with respect to $\tilde{g}$ and conically singular with singular set $\{\Upsilon_s((\mathfrak{t}_{(0,\tilde{g})})_s)\}_{s\in S}$ and tangent maps in $(\mathcal{M}_s)_{s\in S}$.
\end{prop}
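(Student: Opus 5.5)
We apply the Implicit Function Theorem for $C^1$ Fredholm sections of Banach vector bundles, in the version allowing a finite-dimensional kernel. First we localise. Choose an end-conical map $\phi_0$ with $\phi = \exp_{\phi_0}(u_0)$. Use the chart $\Psi_{\phi_0}$ of \cref{prop: banach manifold atlas} and the trivialisation of $\mathcal{E}^{k,p}_{\mu-2}$ from \cref{subsubsec: Banach vector bundles}. The condition $\dim \phi^*\mathfrak{iso}(N)=\dim\mathfrak{iso}(N)$ is open by \cref{lem: subbundle of pulled back Killing fields}, so $(\phi,\underline 0,g)$ has a neighbourhood inside $\mathcal{P}^{k+2,p}_\mu$. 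There $\mathcal{S}^{k,p}_{\mu-2}$ is a $C^{k+2}$ (in particular $C^1$) Banach subbundle. We pick a $C^1$ local trivialisation of it: orthogonal projection with respect to $\pairl\cdot,\cdot\pairr$ maps the fixed fibre $(\mathcal{S}^{k,p}_{\mu-2})_{(\phi,\underline 0,g)}$ isomorphically onto nearby fibres, and this projection depends $C^1$-ly on the base point. By \cref{prop: tension lies in subbundle}, $\mathcal{T}$ then becomes a $C^1$ map
\[
F\colon X\times Y\to Z,\qquad X \text{ open in } W^{k+2,p}_\mu(\prescript{\phi_0}{}{TN})\times \prescript{\phi_0}{}{\mathcal W}\times \Ball_1^S,\quad Y\subset \Met^{k+2}(M),
\]
where $Z$ is the fixed fibre and $F(\phi,\underline 0,g)=0$.

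Next we split the domain. By hypothesis $D_1F$ at the base point is surjective with finite-dimensional kernel $\mathfrak{Ker}$, and $\mathfrak{Ker}$ has a closed complement $\mathfrak{C}$ because finite-dimensional subspaces split. Then $D_1F|_{\mathfrak C}\colon \mathfrak C\to Z$ is a bounded bijection, hence an isomorphism by the open mapping theorem. We apply the standard IFT to $G(\mathfrak p,\mathfrak c,\tilde g)=F(x_0+\mathfrak p+\mathfrak c,\tilde g)$, solving for $\mathfrak c$ as a $C^1$ function of $(\mathfrak p,\tilde g)$ near $(0,g)$. Setting $\mathfrak h(\mathfrak p,\tilde g)=(\text{chart})^{-1}(x_0+\mathfrak p+\mathfrak c(\mathfrak p,\tilde g))$ gives the desired $C^1$ map with $\mathfrak h(0,g)=(\phi,\underline 0)$. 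Since $\Psi_{\phi_0}$ is smooth and the trivialisation is $C^1$, vanishing of the trivialised section is equivalent to $\mathcal T=0$.

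Finally we interpret the solution. Write $(\tilde\phi,\underline{\mathfrak t})=\mathfrak h(0,\tilde g)$. Then $\tau(f^*_{\underline{\mathfrak t},\tilde g}\tilde g,\tilde\phi)=0$ on $M\setminus S$. Since $f=f_{\underline{\mathfrak t},\tilde g}$ is a smooth diffeomorphism, naturality of the tension gives that $\tilde\phi\circ f^{-1}$ is harmonic for $\tilde g$ on $M\setminus f(S)$. By \eqref{eq: local flow}, $f(s)=\Upsilon_s(\mathfrak t_s)$, and near $s$ the map $\Upsilon_s^{-1}\circ f\circ\Upsilon_s$ is affine with linear part $A_s^{-1/2}$. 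The tangent map of $\tilde\phi\circ f^{-1}$ at $\Upsilon_s(\mathfrak t_s)$ is therefore $\tilde\phi_s\circ A_s^{1/2}$, with $\tilde\phi_s\in\mathcal M_s$, and polynomial decay is preserved. Conical singularity then follows from \cref{rem: independence of conically singular from choice of framing}, provided one reads ``tangent maps in $\mathcal M_s$'' relative to the adapted chart $\Upsilon_s(\mathfrak t_s+A_s^{-1/2}\,\cdot)$, which is compatible with $\tilde g$.

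The main obstacle is regularity bookkeeping. Because $\mathcal T$ is only $C^1$ and $\mathcal S^{k,p}_{\mu-2}$ is only a $C^{k+2}$ subbundle with $k\ge 0$, we must check that the trivialisation of the subbundle is at least $C^1$, so that the composite map stays $C^1$ and the IFT for $C^1$ maps applies. We handle this through the explicit finite-rank projection
\[
v\mapsto v-\sum_{i,j}\pairl v,\phi^*\xi_i\pairr\, G^{ij}\,\phi^*\xi_j ,
\]
where $G^{ij}$ is the inverse Gram matrix of a basis $\phi^*\xi_i$; its entries are $C^{k+2}$ in the base point, hence $C^1$.
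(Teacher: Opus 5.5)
Your proposal is correct and is the Implicit Function Theorem argument the paper has in mind; the paper only declares the proposition an immediate consequence of the IFT, and your proposal fills in the omitted details: the $C^1$ trivialisation of $\mathcal{S}^{k,p}_{\mu-2}$ via the explicit Gram-matrix projection, the splitting $\mathfrak{Ker}\oplus\mathfrak{C}$, and the identification of the tangent maps through the $\tilde g$-compatible chart $\Upsilon_s(\mathfrak t_s+A_s^{-1/2}\,\cdot\,)$. One point deserves stating explicitly: $F(\phi,\underline 0,g)=0$ uses $f_{\underline 0,g}=\mathrm{id}_M$, i.e.\ that $g\in\Met^{k+2}(M;\mathfrak S)$ is compatible with the charts $\Upsilon_s$, since otherwise $\mathcal T(\phi,\underline 0,g)=\tau(g,\phi\circ f_{\underline 0,g}^{-1})\circ f_{\underline 0,g}$ need not vanish; the paper tacitly assumes this as well.
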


In the following, we will derive sufficient conditions for the partial derivative $D_1\mathcal{T}$ to be surjective. Let $(\phi,g) \in \Map^\infty_\mu(M,N;\mathfrak{S}) \times \Met^{\infty}(M)$ such that $\phi$ is harmonic with respect to $g$ and $\dim \phi^* \mathfrak{iso}(N) = \dim \mathfrak{iso}(N)$. Let us moreover pick an end-conical map $\phi_0 \in \Map^\infty_{\mathrm{ec}}(M,N;\mathfrak{S})$ that has the same tangent maps as $\phi$, such that $\phi \in \prescript{\phi_0}{}{\mathcal{V}}^{k+2,p}_\mu$, and let $u \in C^\infty_\mu(\prescript{\phi_0}{}{TN})$ such that $\phi_0 = \phi_u$. Recall that in \cref{subsubsec: tension as section} we used $\phi_0$ to define a splitting
\begin{equation*}
    T_{(\phi,\underline{0})}(\Map^{k+2,p}_\mu(M,N;\mathfrak{S}) \times \Ball_1^S) \simeq W^{k+2,p}_\mu(\prescript{\phi}{}{TN}) \oplus \prescript{\phi}{}{W} \oplus \prescript{\phi}{}{V_g} ,
\end{equation*}
where $T_{\underline{\varphi}} \mathcal{M} \simeq \prescript{\phi}{}{W} = \{\prescript{\phi_0}{}{\Theta}_u (\chi_{\phi_0}(\dot{\underline{w}})) \mid \dot{\underline{w}} \in T_{\underline{\varphi}} \mathcal{M}\} \subset C^\infty_0(\prescript{\phi}{}{TN})$ and $(\R^m)^S \simeq \prescript{\phi}{}{V}_g = \{ \diff \phi(\xi_g(\dot{\underline{\mathfrak{t}}})) \mid \dot{\underline{\mathfrak{t}}} \in (\R^m)^S\} \subset C^\infty_{-1}(\prescript{\phi}{}{TN})$ are finite-dimensional. Moreover, for all $(\dot u, \dot w, \dot v) \in W^{k+2,p}_\mu(\prescript{\phi}{}{TN}) \oplus \prescript{\phi}{}{W} \oplus \prescript{\phi}{}{V_g}$,
\begin{equation*}
    (D_1 \mathcal{T})_{(\phi,\underline{0},g)}\{\dot u+ \dot w +\dot v\} = J^\phi_g \{\dot u+ \dot w +\dot v\}  \in W^{k,p}_{\mu-2}(\prescript{\phi}{}{TN}) \cong (\mathcal{E}^{k,p}_{\mu-2})_{\phi},
\end{equation*}
where $J^\phi_g$ is the Jacobi operator of $\phi$ with respect to $g$. In fact, using either the fact that $\mathcal{T}$ is a $C^1$ section of $\mathcal{S}^{k,p}_{\mu-2}$ or \cref{cor: obstruction pairing}, we see that $(D_1 \mathcal{T})_{(\phi,\underline{0},g)}$ maps into the closed subspace $(\mathcal{S}^{k,p}_{\mu-2})_{(\phi,\underline{0},g)}$, that is, the $L^2$-orthogonal subspace of $\phi^*\mathfrak{iso}(N)$ in $W^{k,p}_{\mu-2}(\prescript{\phi}{}{TN})$. In this setting, the following lemma is an immediate consequence of Lockhart-McOwen theory \cref{prop: mapping properties Jacobi operator}:

\begin{lem}    \label{lem: image of augmented deformation operator}
    Let $(\phi,g) \in \Map^\infty_\mu(M,N;\mathfrak{S}) \times \Met^{\infty}(M)$ be as above. Then if $\mu$ is not an indicial root of the Jacobi operator $J^\phi_g$, the map
    \begin{equation*}
        (D_1 \mathcal{T})_{(\phi,\underline{0},g)} \col T_{(\phi,\underline{0})}(\Map^{k+2,p}_\mu(M,N;\mathfrak{S}) \times \Ball_1^S) \to (\mathcal{S}^{k,p}_{\mu-2})_{(\phi,\underline{0},g)}
    \end{equation*}
    is Fredholm. Moreover, it is surjective if and only if for every 
    \begin{equation*}
        z \in \ker\big(J^{\phi}_g \big) \cap C^\infty_{-m+2-\mu}(\prescript{\phi}{}{TN}) \cap \big(\phi^*\mathfrak{iso}(N,h)\big)^{\perp}
    \end{equation*}
    there exist $(\dot w, \dot v) \in \prescript{\phi}{}{W} \oplus \prescript{\phi}{}{V}_{g}$ such that
    \begin{equation*}
        \int_M \big\langle J^{\phi}_g\{\dot w + \dot v\}, z  \big\rangle_{h_\phi} \vol_{g} \neq 0.
    \end{equation*}
\end{lem}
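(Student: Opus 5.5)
The plan is to treat $(D_1\mathcal{T})_{(\phi,\underline{0},g)}$ as a finite-dimensional extension of the Jacobi operator between weighted spaces, and then read surjectivity off the cokernel description in \cref{prop: mapping properties Jacobi operator}.

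First, Fredholmness. On the first summand $W^{k+2,p}_\mu(\prescript{\phi}{}{TN})$ the derivative is $J^\phi_\mu \col W^{k+2,p}_\mu \to W^{k,p}_{\mu-2}$, which is Fredholm by \cref{prop: mapping properties Jacobi operator}(i) because $\mu$ is not an indicial root. The finite-dimensional summands $\prescript{\phi}{}{W} \oplus \prescript{\phi}{}{V_g}$ are sent by $J^\phi_g$ into $W^{k,p}_{\mu-2}$. This is checked using the expansions: $\dot w$ equals $\Psi_s(\chi w_s)$ plus an $O(r^\mu)$ term near $s$, with $w_s \in \ker J^{\varphi_s}$, so $J^{\varphi_s}_C(w_s)=0$. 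Likewise $\dot v$ is $r^{-1}$ times an element of $\ker(J^{\varphi_s}+m-3)$ up to higher-order terms, so $J^\phi \dot v$ also has the right decay (alternatively, this follows from \cref{thm: Variations of the tension}). A Fredholm operator plus a finite-rank extension of the domain is Fredholm into $W^{k,p}_{\mu-2}$. Intersecting the target with the closed finite-codimension subspace $(\mathcal{S}^{k,p}_{\mu-2})$, which contains the image by \cref{prop: tension lies in subbundle}, preserves Fredholmness.

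Second, the surjectivity criterion. By \cref{prop: mapping properties Jacobi operator}(ii), $\im J^\phi_\mu$ is the $L^2$-orthogonal complement in $W^{k,p}_{\mu-2}$ of $K \coloneqq \ker(J^\phi)\cap C^\infty_{-m+2-\mu}$, which is finite dimensional. Killing pullbacks $\phi^*\xi$ lie in $K$, since they are bounded Jacobi fields. The space $\mathcal{S}$ is the orthogonal complement of $\phi^*\mathfrak{iso}(N)$. Hence $\im J^\phi_\mu \subset \mathcal{S}$, and the quotient $\mathcal{S}/\im J^\phi_\mu$ is dual, via the $L^2$ pairing, to $K' \coloneqq K \cap (\phi^*\mathfrak{iso}(N))^\perp$. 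Here the $L^2$ pairing is well-defined by the decay rates, as in the remark following \cref{prop: mapping properties Jacobi operator}. To check this, decompose $K = \phi^*\mathfrak{iso}(N) \oplus K'$ orthogonally. Then $W^{k,p}_{\mu-2} = \im J^\phi_\mu \oplus F$, where $F$ is a finite-dimensional space on which the pairing with $K$ is perfect. Restricting to $\mathcal{S}$ shows that elements of $\mathcal{S}$ modulo the image are detected exactly by pairing with $K'$.

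Consequently, the full map is surjective onto $\mathcal{S}$ if and only if the images $J^\phi_g(\dot w+\dot v)$ span $\mathcal{S}$ modulo $\im J^\phi_\mu$. Equivalently, no nonzero $z\in K'$ annihilates all $J^\phi_g(\dot w+\dot v)$. This is the stated criterion, read as: for each nonzero $z$ some pair pairs nontrivially.

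The step I expect to be delicate is justifying that pairings of $J^\phi(\dot w+\dot v)$ with $z$ make sense, and that $\im J^\phi_\mu$ is exactly the annihilator within $\mathcal{S}$. This needs integrability of $\langle J^\phi(\dot w+\dot v),z\rangle$, which follows from the growth estimates $J^\phi(\dot w+\dot v)\in W^{k,p}_{\mu-2}$ and $z=O(r^{-m+2-\mu+\varepsilon})$ (after improving decay by \cref{lem: kernels at different rates and leading order term}). It also needs the elementary linear algebra above, using the finite dimensionality of $K$.
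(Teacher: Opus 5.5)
Your proposal is correct and follows the paper's route. The paper simply calls the lemma an immediate consequence of \cref{prop: mapping properties Jacobi operator}: Fredholmness comes from part (i) plus a finite-rank enlargement of the domain, and the surjectivity criterion comes from the cokernel description in part (ii) once $\phi^*\mathfrak{iso}(N,h)$ is split off; you are right to read the quantifier as ranging over nonzero $z$. One caution: the expansion of $\dot v$ at rate $-1$, with remainder in $C^\infty_{-1+\varepsilon}$, only gives $J^\phi_g \dot v = \mathcal{O}(r^{-3+\varepsilon})$, so the fact that $J^\phi_g \dot v \in W^{k,p}_{\mu-2}$ has to rest on \cref{thm: Variations of the tension} (your stated alternative) and not on that expansion argument.
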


Henceforth, we shall denote the tangent maps of $\phi$ by $(\varphi_s)_{s \in S} \in \mathcal{M}$. In order to apply this proposition in the following, we can use the following properties of $\prescript{\phi}{}{W}$ and $\prescript{\phi}{}{V}_{g}$:

\begin{lem}    \label{lem: expansion of extra terms in augmented operator}
    There are injective linear maps
    \begin{equation*}
        \prescript{\phi}{}{\underline{\iota}} = (\prescript{\phi}{}{\iota}_s)_{s \in S} \col \prescript{\phi}{}{W} \to \bigoplus_{s \in S} \ker(J^{\varphi_s}), ~~ \prescript{\phi}{}{\underline{\jmath}} = (\prescript{\phi}{}{\jmath}_s)_{s \in S} \col \prescript{\phi}{}{V}_{g} \to \bigoplus_{s \in S} \ker(J^{\varphi_s}+m-3)
    \end{equation*}
    such that:
    \begin{enumerate}[(i)]
        \item for all $\dot w \in \prescript{\phi}{}{W}$ and $s \in S$, $\dot w - \Psi_s(\iota_s(\dot w)) \in C^\infty_{\varepsilon}(\prescript{\phi}{}{TN})$ in a neighbourhood of $s$, and
        \item for all $\dot v \in \prescript{\phi}{}{W}$ and $s \in S$, $\dot v - r^{-1}\Psi_s(\jmath_s(\dot v)) \in C^\infty_{-1+\varepsilon}(\prescript{\phi}{}{TN})$
    \end{enumerate}
    for some $\varepsilon > 0$, where $\Psi_s$ was defined in \cref{prop: The pulled-back bundle has dilation-invariant ends} and $r$ is the distance to the singular locus.
\end{lem}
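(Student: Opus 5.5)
The plan is to construct both maps explicitly from the local structure of $\phi$ near each $s \in S$ and then check the asymptotics and injectivity. Throughout, I work in the adapted charts $\Upsilon_s$ compatible with $g$ (so $\Upsilon_s^*g(0)=g_{\R^m}$), and write $\phi = \exp_{\phi_0}(u)$ with $u \in C^\infty_\mu(\prescript{\phi_0}{}{TN})$. Near $s$, $\phi_0\circ\Upsilon_s = \phi_s = \varphi_s\circ\pr$.

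\emph{The map $\iota$.} For $\dot w = \prescript{\phi_0}{}{\Theta}_u(\chi_{\phi_0}(\dot{\underline w}))$ with $\dot{\underline w}=(\dot w_s)_s \in T_{\underline\varphi}\mathcal{M}$, set $\iota_s(\dot w) \coloneqq \dot w_s$. This lies in $\ker(J^{\varphi_s})$ because $\mathcal{M}_s$ consists of harmonic maps: differentiating $\tau(\varphi_s(t))=0$ at a harmonic map gives $J^{\varphi_s}\dot w_s = 0$. The map is well defined and linear. It is injective because $\dot{\underline w}\mapsto \chi_{\phi_0}(\dot{\underline w})$ recovers $\dot w_s$ on $\Ball_{R/4}$, and $\Theta_u$ is an isomorphism.

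For the expansion, near $s$ we have $\chi_{\phi_0}(\dot{\underline w}) = \pr^*\dot w_s$. Then
\[
\dot w - \Psi_s(\dot w_s) = \bigl(\prescript{\phi_0}{}{\Theta}_u - D_{u_s}\exp_{\phi_s}\bigr)\pr^*\dot w_s .
\]
I expect $\Psi_s$ and $\prescript{\phi_0}{}{\Theta}_u$ to coincide near $s$, since both are $D_u\exp_{\phi_s}$ with $u_s=u$ there. If the conventions differ, they differ by a dilation-invariant bundle map applied to a section in $C^\infty_\mu$, which by \cref{cor: Estimates on derivatives for dilation-invariant maps} is $O(r^\mu)$ with all scaled derivatives. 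Either way $\dot w - \Psi_s(\iota_s\dot w) \in C^\infty_\varepsilon$ for any $\varepsilon\le\mu$.

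\emph{The map $\jmath$.} For $\dot v = \diff\phi(\xi_g(\dot{\underline{\mathfrak t}}))$, recall that near $s$, $\xi_g(\dot{\underline{\mathfrak t}}) = (\Upsilon_s)_*\dot{\mathfrak t}_s$ is a constant vector field. Write $\diff\phi = \Theta_u(\diff\phi_0 + \nabla u + r^{-1}\bmap f(u))$ using \cref{lem: Bundle map diff exp}. Since $\bmap f(0)=0$ and $u\in C^\infty_\mu$, the last two terms are $O(r^{\mu-1})$ in $C^\infty_{\mu-1}$. The main term is $\diff\phi_s(\dot{\mathfrak t}_s) = r^{-1}\pr^*\bigl(\diff\varphi_s(\dot{\mathfrak t}_s^\perp)\bigr)$, homogeneous of degree $-1$. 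So define
\[
\jmath_s(\dot v) \coloneqq \diff\varphi_s(\dot{\mathfrak t}_s^{\perp}).
\]
By the discussion in \cref{subsubsec: further remarks}, this lies in $\ker(J^{\varphi_s}+m-3)$: infinitesimal translations give Jacobi fields of the cone that are homogeneous of degree $-1$, so $\lambda_{-1} = -(m-3)$. The same $\Theta_u$ versus $\Psi_s$ comparison as before gives $\dot v - r^{-1}\Psi_s(\jmath_s\dot v) \in C^\infty_{-1+\varepsilon}$. (Here the statement's ``$\dot v\in\prescript{\phi}{}{W}$'' in (ii) should read $\prescript{\phi}{}{V}_g$.)

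For injectivity of $\jmath$: the linear map $v\mapsto \diff\varphi_s(v^\perp)$ from $\R^m$ is injective, by the cited result of El Soufi, since $\varphi_s$ is non-constant. A constant $\varphi_s$ would make $\phi_s$ constant, excluded under \cref{assumption: Eigenvalues of varphi}. Taking the direct sum over $s$ preserves injectivity.

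The only real subtlety I anticipate is this bookkeeping: identifying $\Psi_s$ with $\Theta_u$ up to $C^\infty_\mu$ corrections, and checking that the error terms sit in the weighted Hölder classes with every derivative controlled. Both should follow directly from \cref{cor: Estimates on derivatives for dilation-invariant maps} and \cref{lem: Bundle maps of end-conical maps}.
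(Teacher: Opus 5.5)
Your proposal follows the paper's route: $\iota_s$ reads off the tangent-map variation $\dot w_s$, and $\jmath_s$ reads off $\diff\varphi_s(\dot{\mathfrak t}_s^\perp)$. Injectivity comes from the embedding of $\mathcal{M}_s$ and from El Soufi's observation for non-constant $\varphi_s$, and you spell out the details the paper calls obvious; in fact $\Psi_s = \prescript{\phi_0}{}{\Theta}_u$ exactly on a neighbourhood of $s$, so the difference in (i) vanishes there. One small correction: $\xi_g(\dot{\underline{\mathfrak t}})$ equals $(\Upsilon_s)_*\dot{\mathfrak t}_s$ only at the point $s$, and by \eqref{eq: local flow} it is $(\Upsilon_s)_*\bigl(\dot{\mathfrak t}_s - \tfrac12 (\partial_{\dot{\mathfrak t}_s}\Upsilon_s^*g)(0)\, x\bigr)$ near $s$, but the linear part contributes only a term of order $r^0$, lying in $C^\infty_0 \subset C^\infty_{-1+\varepsilon}$, so your conclusion in (ii) is unaffected.
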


\begin{proof}
    The existence of linear maps satisfying (i) and (ii) is obvious from construction of $\prescript{\phi}{}{W}$ and $\prescript{\phi}{}{V}_{g}$; the former is the space of infinitesimal deformations of the tangent maps of $\phi$ along $\mathcal{M}$, and the latter is the space of infinitesimal translations of the singular set. On the other hand, the injectivity of $\prescript{\phi}{}{\underline{\iota}}$ follows from our assumption that $\mathcal{M}_s$ is an embedded submanifold of $C^\infty(\Sp^{m-1},N)$ for all $s \in S$. As for $\underline{\jmath}$, this follows from the observation that the space of translation in $\R^m$ generate an $m$-dimensional subspace of $\ker(J^{\varphi_s}+m-3)$ for any $s \in S$ (cf. \cref{subsubsec: further remarks}).
\end{proof}

\begin{cor}     \label{cor: surjectivity of derivative}
    Let us choose $\mu \in (0,1)$ smaller than the lowest positive critical rate of $J^\varphi_g$. Then under the assumptions of \cref{lem: image of augmented deformation operator}, $(D_1 \mathcal{T})_{(\phi,\underline{0},g)}$ is surjective if and only if:
    \begin{enumerate}[(i)]
        \item For any $s \in S$, $-(m-3)$ and $0$ are the only non-positive eigenvalues of $J^{\varphi_s}$.
        \item $\dim \ker(J^{\varphi_s}+m-3) = m$ for all $s \in S$.
        \item $T_{\underline{\varphi}} \mathcal{M} + \underline{\varphi}^* \mathfrak{iso}(N) = \oplus_{s \in S} \ker(J^{\varphi_s})$.
        \item $\ker(J^\phi_g) \cap C^\infty_{-1}(\prescript{\phi}{}{TN}) = \phi^* \mathfrak{iso}(N)$.
    \end{enumerate}
    Moreover, if these conditions hold and (iii) is strengthened to $T_{\underline{\varphi}} \mathcal{M} = \oplus_{s \in S} \ker(J^{\varphi_s})$, then 
    \begin{equation*}
        \ker (D_1 \mathcal{T})_{(\phi,\underline{0},g)} \simeq \phi^* \mathfrak{iso}(N) .
    \end{equation*}
\end{cor}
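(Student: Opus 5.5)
The plan is to reduce everything to \cref{lem: image of augmented deformation operator} and to analyse the obstruction space
\begin{equation*}
    \mathcal{O} \coloneqq \ker(J^\phi_g) \cap C^\infty_{-m+2-\mu}(\prescript{\phi}{}{TN}) \cap (\phi^*\mathfrak{iso}(N))^\perp
\end{equation*}
using the asymptotic expansions of \cref{lem: kernels at different rates and leading order term} and the pairing formulas of \cref{cor: obstruction pairing}. The argument has two parts: an index count, which shows that (i) and (ii) are necessary, and a pairing argument, which shows that (iii) and (iv) are necessary and that (i)--(iv) together are sufficient.

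\textbf{Index count.} By the duality in \cref{prop: mapping properties Jacobi operator}(ii), we have $\Ind(J^\phi_\nu) = -\Ind(J^\phi_{2-m-\nu})$. Combined with the jump formula (iii), this gives
\begin{equation*}
    \Ind(J^\phi_\mu) = -\tfrac{1}{2}\sum_{s \in S} \sum_{\nu \in (2-m-\mu,\mu)} \dim \mathcal{K}(J^{\varphi_s}_C)_\nu .
\end{equation*}
The roots in this window are paired by $\nu \leftrightarrow 2-m-\nu$. Because $\mu$ is smaller than the lowest positive critical rate, the contributions are as follows.
\begin{itemize}
    \item The pair $0,\,-(m-2)$ contributes $\dim\ker J^{\varphi_s}$ each.
    \item The pair $-1,\,-(m-3)$ contributes $\dim\ker(J^{\varphi_s}+m-3)$ each, or a double root when $m=4$.
    \item Every additional negative eigenvalue contributes a further pair, or a complex pair at rate $-\tfrac{m-2}{2}$.
\end{itemize}
Hence $\Ind(J^\phi_\mu) \le -\sum_s\bigl(\dim\ker J^{\varphi_s} + \dim\ker(J^{\varphi_s}+m-3)\bigr)$, with equality exactly under (i).

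The operator $D_1\mathcal{T}$ gains $\dim W + \dim V = \sum_s \dim T_{\varphi_s}\mathcal{M}_s + m|S|$ from the extra directions, and $\dim \mathfrak{iso}(N)$ from the restriction of the target to $\mathcal{S}^{k,p}_{\mu-2}$. Its kernel always contains $\phi^*\mathfrak{iso}(N)$: these sections are Jacobi fields with a dilation-invariant leading term, so they lie in $W^{k+2,p}_\mu \oplus W$ up to their tangent-map component. Therefore surjectivity forces the index to be at least $\dim\mathfrak{iso}(N)$. This is impossible if there are extra negative eigenvalues or if $\dim\ker(J^{\varphi_s}+m-3) > m$, because $V$ only supplies $m$ dimensions per singular point. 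This establishes the necessity of (i) and (ii).

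\textbf{Pairing argument, assuming (i) and (ii).} Take $z \in \mathcal{O}$. By \cref{lem: kernels at different rates and leading order term}, near each $s$ it has an expansion
\begin{equation*}
    z \sim r^{2-m}u_s + r^{3-m}u'_s + \cdots,
\end{equation*}
with $u_s \in \ker J^{\varphi_s}$ and $u'_s \in \ker(J^{\varphi_s}+m-3)$; when $m=4$, the second term is replaced by $r^{-1}(u^0_s + \log(r)\, u^1_s)$.

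\cref{cor: obstruction pairing} gives the pairings with the extra directions:
\begin{align*}
    \int_M \langle J^\phi \dot w, z\rangle &= -(m-2)\sum_{s \in S}\langle \iota_s\dot w, u_s\rangle_{L^2}, \\
    \int_M \langle J^\phi \dot v, z\rangle &= -(m-4)\sum_{s \in S}\langle \jmath_s\dot v, u'_s\rangle_{L^2} \quad (m \geq 5),
\end{align*}
and for $m=4$ the second pairing is $-\sum_s\langle u^1_s, \jmath_s\dot v\rangle_{L^2}$. Applying part (i) of the same corollary to $v = \phi^*\xi$, which is a Jacobi field, yields
\begin{equation*}
    0 = \sum_{s \in S}\langle \xi\circ\varphi_s, u_s\rangle_{L^2} \quad \text{for all } \xi \in \mathfrak{iso}(N).
\end{equation*}

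Now suppose all pairings of $z$ with $W \oplus V$ vanish.
\begin{itemize}
    \item The vector $(u_s)_s$ is then orthogonal to $T_{\underline\varphi}\mathcal{M} + \underline{\varphi}^*\mathfrak{iso}(N)$, which by (iii) is all of $\oplus_s \ker J^{\varphi_s}$. Hence $u_s = 0$.
    \item By (ii), $\jmath$ is an isomorphism onto $\oplus_s \ker(J^{\varphi_s}+m-3)$, so $u'_s = 0$ (respectively $u^1_s = 0$ when $m=4$).
    \item Since there are no roots strictly between $-(m-3)$ and $-1$, \cref{lem: kernels at different rates and leading order term}(i) then places $z$ in $C^\infty_{-1}$. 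For $m=4$ the vanishing of the log coefficient is exactly what is needed here.
    \item Condition (iv) now gives $z \in \phi^*\mathfrak{iso}(N)$, and $z \perp \phi^*\mathfrak{iso}(N)$, so $z = 0$.
\end{itemize}
So under (i)--(iv), $\mathcal{O}$ pairs nondegenerately against $W \oplus V$, and \cref{lem: image of augmented deformation operator} gives surjectivity.

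For the converse, each failure of (iii) or (iv) produces a nonzero $z \in \mathcal{O}$ annihilated by $W \oplus V$.
\begin{itemize}
    \item If (iv) fails, take $z$ in $\ker(J^\phi_g) \cap C^\infty_{-1}$ orthogonal to $\phi^*\mathfrak{iso}(N)$. Its $r^{2-m}$ and $r^{3-m}$ coefficients vanish, so both pairings are zero.
    \item If (iii) fails, a dimension count using the index formula above produces such a $z$ with prescribed $u_s$ orthogonal to $T\mathcal{M} + \underline{\varphi}^*\mathfrak{iso}(N)$. This is the step I expect to be most delicate, because it requires realising prescribed leading coefficients by actual kernel elements, that is, surjectivity of the map $\kappa_{2-m}$ modulo the correct subspace. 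I would obtain this from the index jump formula: the index changes by the dimension of the leading-coefficient space when crossing the root, so the coefficient map must have large enough image.
\end{itemize}

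\textbf{Kernel statement.} Suppose (iii) is strengthened to $T_{\underline\varphi}\mathcal{M} = \oplus_s \ker J^{\varphi_s}$. An element $\dot u + \dot w + \dot v$ of the kernel of $D_1\mathcal{T}$ is a Jacobi field in $C^\infty_{-1}$, hence lies in $\phi^*\mathfrak{iso}(N)$ by (iv). Conversely, every $\phi^*\xi$ lies in the domain, since its tangent component $\xi\circ\varphi_s$ now belongs to $T\mathcal{M}$. This gives $\ker(D_1\mathcal{T})_{(\phi,\underline{0},g)} \simeq \phi^*\mathfrak{iso}(N)$.
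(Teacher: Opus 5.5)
\textbf{There is a genuine gap in your necessity argument for (i) and (ii).} You claim that $\ker(D_1\mathcal{T})_{(\phi,\underline{0},g)}$ always contains $\phi^*\mathfrak{iso}(N)$. This is not true in general.
\begin{itemize}
    \item The section $\phi^*\xi$ has no $r^{-1}$ term, so its $\prescript{\phi}{}{V}_g$-component must vanish by injectivity of $\underline{\jmath}$.
    \item Hence $\phi^*\xi$ lies in $W^{k+2,p}_\mu(\prescript{\phi}{}{TN})\oplus\prescript{\phi}{}{W}\oplus\prescript{\phi}{}{V}_g$ only if its rate-$0$ coefficient $\underline{\varphi}^*\xi=(\xi\circ\varphi_s)_{s}$ lies in $\underline{\iota}(\prescript{\phi}{}{W})=T_{\underline{\varphi}}\mathcal{M}$. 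This is exactly what you use yourself in your final paragraph.
\end{itemize}
The phrase ``up to their tangent-map component'' hides this, and the resulting implication ``surjective $\Rightarrow$ index $\geq\dim\mathfrak{iso}(N)$'' is false. Let $n^-_s$ denote the total multiplicity of the negative eigenvalues of $J^{\varphi_s}$. Your (correct) formula gives $\Ind(J^\phi_\mu)=-\sum_s(n^-_s+\dim\ker J^{\varphi_s})$, so the implication would yield $\dim T_{\underline\varphi}\mathcal M\geq\sum_s\dim\ker J^{\varphi_s}+\sum_s(n^-_s-m)$. That would force the \emph{strengthened} form of (iii). This contradicts your own sufficiency argument whenever $T_{\underline\varphi}\mathcal{M}$ is, say, a complement of $\underline\varphi^*\mathfrak{iso}(N)$ in $\oplus_s\ker J^{\varphi_s}$, which (iii) allows. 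In that case $D_1\mathcal{T}$ is surjective with index $\dim\mathfrak{iso}(N)-\dim\underline\varphi^*\mathfrak{iso}(N)$, which is $<\dim\mathfrak{iso}(N)$ as soon as $\underline\varphi^*\mathfrak{iso}(N)\neq 0$.

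\textbf{Repair.} Count only $\phi^*\mathfrak{k}\subset\ker(D_1\mathcal T)_{(\phi,\underline 0,g)}$, where $\mathfrak{k}=\{\xi\in\mathfrak{iso}(N)\mid\underline\varphi^*\xi\in T_{\underline\varphi}\mathcal M\}$. Its dimension is $\dim\mathfrak{iso}(N)-\dim\underline\varphi^*\mathfrak{iso}(N)+\dim(T_{\underline\varphi}\mathcal M\cap\underline\varphi^*\mathfrak{iso}(N))$. Surjectivity then gives
\begin{equation*}
    \sum_{s\in S}(n^-_s-m)\leq\dim\bigl(T_{\underline\varphi}\mathcal M+\underline\varphi^*\mathfrak{iso}(N)\bigr)-\sum_{s\in S}\dim\ker J^{\varphi_s}\leq 0 .
\end{equation*}
Since $n^-_s\geq\dim\ker(J^{\varphi_s}+m-3)\geq m$, both sides vanish. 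This yields (i), (ii) \emph{and} (iii) simultaneously.

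This count also replaces your sketch for the necessity of (iii), which is incomplete as it stands. Prescribing the $r^{2-m}$ coefficients of $z$ only kills the $\prescript{\phi}{}{W}$-pairings. You would still have to subtract an element of your obstruction space $\mathcal{O}$ with vanishing $r^{2-m}$ coefficients in order to kill the $\prescript{\phi}{}{V}_g$-pairings. Moreover, surjectivity of $\kappa_{2-m}$ onto $(\underline\varphi^*\mathfrak{iso}(N))^\perp$ itself requires a dimension count under (i), (ii) and (iv).

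\textbf{The remaining parts are correct} and follow the paper's pairing strategy:
\begin{itemize}
    \item the sufficiency argument, where your ordering correctly invokes the $\prescript{\phi}{}{V}_g$-pairing formula of \cref{cor: obstruction pairing} only after the $r^{2-m}$ coefficients vanish, since that formula requires $z\in\ker(J^\phi_{-m+3-\varepsilon})$;
    \item the direct proof that (iv) is necessary (for $m=4$ it is only the $\log$ coefficient that vanishes, which is what the pairing sees);
    \item the kernel statement.
\end{itemize}
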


\begin{proof}
    Let us fix $z \in \ker\big(J^{\phi}_{-m+2-\mu} \big) \cap \big(\phi^*\mathfrak{iso}(N,h)\big)^{\perp}$. We claim that, if conditions (i) and (iv) hold, then we can find an $s \in S$ and either a non-trivial $w_s \in \ker(J^{\varphi_s})$ or a non-trivial $v_s \in \ker(J^{\varphi_s}+(m-3))$ such that
    \begin{itemize}
        \item if $m \geq 5$ then
        \begin{equation*}
        z - \Psi_s(r^{-m+2} w_s) \in C^{\infty}_{-m+2+\varepsilon}(\prescript{\phi_u}{}{TN}) \quad \textup{or} \quad z - \Psi_s(r^{-m+3} v_s) \in C^{\infty}_{-m+3+\varepsilon}(\prescript{\phi_u}{}{TN})
        \end{equation*} 
    
        \item if $m = 4$ then 
        \begin{equation*}
        z - \Psi_s(r^{-2} w_s) \in C^{\infty}_{-2+\varepsilon}(\prescript{\phi_u}{}{TN}) \quad \textup{or} \quad z - \Psi_s(r^{-1} \log(r) v_s) \in C^{\infty}_{-1}(\prescript{\phi_u}{}{TN}) .
        \end{equation*}
    \end{itemize}
    
    Indeed, since $J^{\phi}_g$ is formally self-adjoint, one can conclude from \cref{prop: mapping properties Jacobi operator} together with \cref{prop: indicial roots of model operator}, \cref{lem: the space of polyhomogeneous kernel elements}, and condition (i) that 
    \begin{align*}
        \mathrm{Ind}(J^\phi_{-1+\varepsilon}) &= -\textstyle{\sum}_s \dim \ker(J^{\varphi_s}+(m-3)), \quad \textup{and} \\
    \mathrm{Ind}(J^{\phi}_\mu) &= -\textstyle{\sum}_s \dim \ker(J^{\varphi_s}+(m-3)) + \dim \ker(J^{\varphi_s}).
    \end{align*}
    This calculation together with condition (iv) implies
    \begin{align*}
        \dim \ker (J_{-m+3-\varepsilon}^{\phi}) &= \dim \mathfrak{iso}(N,h) + \textstyle{\sum}_s \dim \ker(J^{\varphi_s}+(m-3)) \\
        \dim \ker (J_{-m+2-\mu}^{\phi}) &= \textstyle{\sum}_s \dim \ker(J^{\varphi_s}+(m-3)) + \dim \ker(J^{\varphi_s}).
    \end{align*}
    
    Now recall from \cref{lem: kernels at different rates and leading order term} that there is a linear map $\kappa_{-m+3} \col \ker (J_{-m+3-\varepsilon}^{\phi}) \to \oplus_s \mathcal{K}(J^{\varphi_s}_C)_{-m+3}$ such that $v-\Psi_s(\kappa_{-m+3}(v)) \in C^{\infty}_{-m+3+\varepsilon}(\prescript{\phi}{}{TN})$ for every $v \in \ker (J_{-m+3-\varepsilon}^{\phi})$. Moreover, our fourth assumption implies that $\ker(\kappa_{-m+3}) = \ker (J_{-m+3+\varepsilon}^{\phi}) = \phi^* \mathfrak{iso}(N,h)$. Thus, $\kappa_{-m+3}$ restricts to an injective map on $\ker\big(J^{\phi}_{-m+3-\varepsilon} \big) \cap \big(\phi^*\mathfrak{iso}(N,h)\big)^{\perp}$. Similarly, we obtain that the analogously defined map $\kappa_{-m+2} \col \ker (J_{-m+2-\mu}^{\phi}) \to \oplus_s \mathcal{K}(J^{\varphi_s}_C)_{-m+2}$ restricts to an injective map on any complementary subspace of $\ker (J_{-m+3-\varepsilon}^{\phi}) \subset \ker (J_{-m+2-\mu}^{\phi})$. For every $z \in \ker(J^{\phi}_{-m+2-\mu})$ perpendicular to $\phi^*\mathfrak{iso}(N,h)$ there therefore exists an $s\in S$ such that the leading order term of $z$ close to $s$ is a non-trivial element of $\mathcal{K}(J^{\varphi_s}_C)_{-m+2}$ or $\mathcal{K}(J^{\varphi_s}_C)_{-m+3}$.

    Moreover, by \cref{lem: the space of polyhomogeneous kernel elements} we have
    \begin{align*}
        \mathcal{K}(J^{\varphi_s}_C)_{-m+2} = \big\{ r^{-m+2} w_s \mid w_s \in \ker(J^{\varphi_s}) \big\}
    \end{align*}
    and
    \begin{align*}
        \mathcal{K}(J^{\varphi_s}_C)_{-m+3} = 
        \begin{cases}
            \big\{ r^{-m+3} v_s \mid v_s \in \ker(J^{\varphi_s} + m -3) \big\} \quad &\textup{if $m>4$} \\
            \big\{ r^{-1} \big(\log(r)v_s + v^\prime_s)\big) \mid v_s, v^\prime_s \in \ker(J^{\varphi_s} + m -3)\big\} \quad &\textup{if $m=4$.}
        \end{cases}
    \end{align*}
    This proves our claim if $m>4$. For $m=4$ we additionally observe that by condition (iv), any $v \in \ker(J^{\phi}_{-1-\varepsilon}) \cap \phi^*\mathfrak{iso}(N,h)^{\perp}$ must have a point $s \in S$ at which the $r^{-1} \log(r) v_s$-term in its expansion is non-trivial.

    Using this, let us show if (i)--(iv) hold, then there exists $(\dot w,\dot v) \in \prescript{\phi}{}{W} \oplus \prescript{\phi}{}{V}_g$ such that
    \begin{equation*}
        \int_M \big\langle J^{\phi}_g\{\dot w + \dot v\}, z  \big\rangle_{h_\phi} \vol_{g} \neq 0.
    \end{equation*}
    First assume that we are in the situation where a non-trivial $w_{s_0} \in \ker(J^{\varphi_{s_0}})$ appears as the leading order term of $v$ close to $s_0$. Then the term of order $r^{-m+2}$ defines a non-trivial element $\underline{w} \in \oplus_s \ker(J^{\varphi_s})$, which by \cref{cor: obstruction pairing} must be orthogonal to $\underline{\varphi}^*\mathfrak{iso}(N,h)$ since $z$ is orthogonal $\mathfrak{iso}(N,h)$. On the other hand, condition (ii) is equivalent to saying that $\underline{\iota}$ maps surjectively onto a complement of $\underline{\varphi}^*\mathfrak{iso}(N,h)$. Thus \cref{cor: obstruction pairing} implies that we can find $\dot w \in \prescript{\phi}{}{W}$ such that $\int \langle J^\phi_g \{\dot w\}, z \rangle_{h_\phi} \vol_g \neq 0$.
    
    On the other hand, condition (iii) is equivalent to saying that $\underline{\jmath}$ is an isomorphism, and thus if the leading order term of $v$ is of the form $r^{-1}\log(r) v_s$, \cref{cor: obstruction pairing} again allows us to find some $\dot v\in \prescript{\phi}{}{V}_g$ such that $\int \langle J^\phi_g \{\dot w\}, z \rangle_{h_\phi} \vol_g \neq 0$.

    By \cref{lem: image of augmented deformation operator}, this proves that $(D_1 \mathcal{T})_{(\phi,\underline{0},g)}$ is indeed surjective if (i)--(iv) hold. In fact, the first part of the argument shows that, if (i) and (iv) hold, then the leading-order term at rates $r^{-m+2}$ and $r^{-1}\log(r)$ define an isomorphism of $\ker\big(J^{\phi}_g \big) \cap C^\infty_{-m+2-\mu}(\prescript{\phi}{}{TN}) \cap \big(\phi^*\mathfrak{iso}(N,h)\big)^{\perp}$ onto 
    \begin{equation*}
        (\oplus_s \ker(J^{\varphi_s})) \cap \underline{\varphi}^*\mathfrak{iso}(N,h)^\perp \oplus (\oplus_s \ker(J^{\varphi_s}+m-3)).
    \end{equation*}
    Thus if (iii) is strengthened to $T_{\underline{\varphi}} \mathcal{M} = \oplus_{s \in S} \ker(J^{\varphi_s})$, the statement on the kernel follows. On the other hand, if either of the conditions (i)--(iv) fails, it is easy to deduce from the index formulas that the dimension of the space of obstructions will be strictly greater than the dimension of $\prescript{\phi}{}{W} \oplus \prescript{\phi}{}{V}_g$. Thus these conditions are also necessary for $(D_1 \mathcal{T})_{(\phi,\underline{0},g)}$ to be surjective.
\end{proof}

We can now finally prove the main deformation theorem:

\begin{thm}     \label{thm: deformation theorem with sobolev regularity}
    Let $(\phi,g) \in \Map^\infty(M,N;\mathfrak{S}) \times \Met^\infty(M)$ such that $\phi$ is harmonic with respect to $g$, has tangent maps $\underline{\varphi} \in \mathcal{M}$ and $\dim \phi^*\mathfrak{iso}(N) = \dim \mathfrak{iso}(N)$. Let us pick $\mu \in (0,1)$ smaller than the lowest positive critical rate of $J^\phi_g$ and moreover assume:
    \begin{enumerate}
        \item For all $s \in S$, $-(m-3)$ and $0$ are the only non-positive eigenvalues of $\varphi_s$.
        \item For all $s \in S$, $\dim \ker(J^{\varphi_s}+(m-3)) = m$.
        \item For all $s \in S$, every Jacobi field $w_s \in \ker(J^{\varphi_s})$ is integrable and lies in $T_{\varphi_s} \mathcal{M}_s$.
        \item $\ker\big(J^{\phi}_g\big) \cap C^{\infty}_{-1}(\prescript{\phi}{}{TN}) = \phi^*\mathfrak{iso}(N,h)$.
    \end{enumerate}
    Then there exist
    \begin{itemize}
        \item a neighbourhood $\widetilde{U} \subset \Met^{k+2}(M)$ of $g$, and
        \item a $C^1$-family $(\phi_{\tilde{g}},\underline{\mathfrak{t}}_{\tilde{g}})_{\tilde{g}\in U} \subset \Map^{k+2,p}_{\mu}(M,N;\mathfrak{S}) \times \Ball_1^S$ with $(\phi_{g},\underline{\mathfrak{t}}_{g}) = (\phi,\underline{0})$,
    \end{itemize} 
    such that $\phi_{\tilde{g}} \circ f_{\tilde{g},\underline{\mathfrak{t}}(\tilde{g})}^{-1}$ is harmonic with respect to $\tilde{g} \in \widetilde{U}$ and conically singular along $\{\Upsilon_s((\mathfrak{t}_{(0,\tilde{g})})_s)\}_{s\in S}$ with tangent maps in $(\mathcal{M}_s)_{s\in S}$. Moreover, such $\phi_{\tilde{g}}$ is unique up to the action of $\mathrm{Iso}(N,h)$ in some $\mathrm{Iso}(N,h)$-equivariant neighbourhood of $\phi$.
\end{thm}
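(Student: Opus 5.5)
The plan is to assemble the theorem from \cref{prop: deformation theorem via IFT} and \cref{cor: surjectivity of derivative}, after choosing the data set $\mathfrak{S}$ correctly. Fix the singular set $S$ of $\phi$ and an adapted set of charts compatible with $g$, so that $g \in \Met^\infty(M;\mathfrak{S})$. By assumption 3, each $\varphi_s$ has integrable Jacobi fields. Hence there is a finite-dimensional embedded submanifold $\mathcal{M}_s \subset C^\infty(\Sp^{m-1},N)$ of harmonic maps through $\varphi_s$ with $T_{\varphi_s}\mathcal{M}_s = \ker(J^{\varphi_s})$; shrinking it if necessary, it is embedded near $\varphi_s$. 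This choice of $\mathcal{M}_s$ satisfies the standing harmonicity assumption of \cref{subsubsec: tension as section}. Since $\phi$ is smooth and conically singular with tangent maps $\underline{\varphi}\in\mathcal{M}$, we have $\phi \in \Map^\infty_\mu(M,N;\mathfrak{S}) \subset \Map^{k+2,p}_\mu(M,N;\mathfrak{S})$. Taking $\mu$ below the lowest positive critical rate, the decay of $u_s$ can be weakened to rate $\mu$. Moreover $\mu$ is then not an indicial root, since by \cref{prop: indicial roots of model operator} there are no critical rates in $(0,\mu]$.

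Next, I will check the hypotheses of \cref{cor: surjectivity of derivative}. Conditions (i), (ii) and (iv) are assumptions 1, 2 and 4 verbatim. Condition (iii), in its strengthened form $T_{\underline{\varphi}}\mathcal{M} = \oplus_s \ker(J^{\varphi_s})$, holds by the construction of $\mathcal{M}_s$. Also $(\phi,\underline{0},g) \in \mathcal{P}^{k+2,p}_\mu$ because $\dim\phi^*\mathfrak{iso}(N)=\dim\mathfrak{iso}(N)$. Therefore $(D_1\mathcal{T})_{(\phi,\underline{0},g)}$ is surjective onto $(\mathcal{S}^{k,p}_{\mu-2})_{(\phi,\underline{0},g)}$, and its kernel $\mathfrak{Ker}\simeq\phi^*\mathfrak{iso}(N)$ is finite-dimensional. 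Its kernel is complemented, being finite-dimensional. \cref{prop: tension lies in subbundle} gives that $\mathcal{T}$ is a $C^1$ section of $\mathcal{S}^{k,p}_{\mu-2}$ near this point; the subbundle is $C^{k+2}$, hence at least $C^1$, so it can be locally trivialised in a $C^1$ way.

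Applying \cref{prop: deformation theorem via IFT} gives the $C^1$ map $\mathfrak{h}$. Restricting to $\mathfrak{p}=0$ yields the family $(\phi_{\tilde g},\underline{\mathfrak{t}}_{\tilde g})$ with $\mathfrak{h}(0,g)=(\phi,\underline{0})$. By construction $\mathcal{T}(\phi_{\tilde g},\underline{\mathfrak{t}}_{\tilde g},\tilde g)=0$, that is, $\phi_{\tilde g}$ is harmonic for $f^*_{\underline{\mathfrak{t}}_{\tilde g},\tilde g}\tilde g$. Pulling back by the diffeomorphism of \cref{Def: diffeos f} then gives a $\tilde g$-harmonic map singular at $\Upsilon_s(\mathfrak{t}_s)$, using the local formula \eqref{eq: local flow}.

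Conical singularity requires a little care. An element of $\Map^{k+2,p}_\mu$ only has Sobolev regularity. Since it is harmonic for a $C^{k+2}$ metric, elliptic regularity in weighted spaces (the a priori estimates of \cref{subsubsec: Mapping properties}, applied inductively) improves the decay to all orders up to the metric's regularity. For smooth $\tilde g$ this gives the polynomial decay of all derivatives required in \cref{Def: Conically singular maps}. I expect this bootstrap to be the main technical obstacle. It needs the smoothness of $\mathcal{T}$ in $u$ at fixed metric, and then an induction on $k$ using the mapping properties of $J^\phi$ between $W^{k+2,p}_\mu$ and $W^{k,p}_{\mu-2}$.

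For uniqueness, the isometry group acts on the zero set: $\mathcal{T}(\mathrm{Fl}^\xi\circ\phi',\cdot)=0$ whenever $\mathcal{T}(\phi',\cdot)=0$. The implicit function theorem shows the zero set near $(\phi,\underline 0,g)$ is a $C^1$ graph over $\widetilde U_1\times\widetilde U_2$, with $\widetilde U_1\subset\mathfrak{Ker}=\phi^*\mathfrak{iso}(N)$. Since the orbit map $\xi\mapsto \exp(\xi)\circ\phi_{\tilde g}$ has injective differential (by the dimension assumption), it is a local diffeomorphism onto this graph. Hence any nearby solution is obtained from $\phi_{\tilde g}$ by an isometry, which gives uniqueness modulo $\mathrm{Iso}(N,h)$ in an equivariant neighbourhood.
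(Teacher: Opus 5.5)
Your proposal is correct and follows the paper's proof. Existence comes from combining \cref{prop: deformation theorem via IFT} with \cref{cor: surjectivity of derivative}, where the strengthened form of (iii) gives $\ker(D_1\mathcal{T})\simeq\phi^*\mathfrak{iso}(N)$; uniqueness modulo $\mathrm{Iso}(N,h)$ comes from this kernel being tangent to the isometry action, which is smooth on $\Map^{k+2,p}_\mu(M,N;\mathfrak{S})$ because it is post-composition, exactly as the paper argues.

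The only difference is the weighted elliptic bootstrap you flag as the main obstacle, and it is not needed here. In this statement ``conically singular'' means membership in $\Map^{k+2,p}_\mu(M,N;\mathfrak{S})$, which the implicit function theorem delivers directly. The paper upgrades to genuine smoothness only later, in the proof of \cref{thm: main theorem introduction}, by applying this theorem for every $k$ together with its uniqueness clause.
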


\begin{proof}
    The existence of the $C^1$-family $(\phi_{\tilde{g}},\underline{\mathfrak{t}}_{\tilde{g}})_{\tilde{g}\in \widetilde{U}}$ follows from \cref{prop: deformation theorem via IFT} and \cref{cor: surjectivity of derivative}. As for the uniqueness modulo the action of isometries on $N$, it follows from the fact that the action of $\mathrm{Iso}(N,h)$ on $\Map^{k+2,p}_\mu(M,N;\mathfrak{S})$ is smooth, and the kernel of $(D_1 \mathcal{T})_{(\phi,\underline{0},g)}$ is tangent to this action. The smoothness can be seen either in local charts, using the fact that post-composition by diffeomorphisms is a smooth operation (contrary to pre-composition), or proving essentially as in \cref{lem: subbundle of pulled back Killing fields} that the Killing fields of $N$ locally lift to a smooth sub-bundle of $T\Map^{k+2,p}_\mu(M,N;\mathfrak{S})$ and using the Picard--Lindel\"of theorem for ODEs in Banach spaces.
\end{proof}

We finish this section with the proof of \cref{thm: main theorem introduction}, which almost immediately follows from \cref{thm: deformation theorem with sobolev regularity}:

\begin{proof}[Proof of \cref{thm: main theorem introduction}]
    Let us first remark that, if condition 3 in \cref{thm: main theorem introduction} holds, then there are submanifolds $\mathcal{M}_s \subset C^\infty(\Sp^{m-1},N)$ consisting of harmonic maps and containing the tangent maps $\varphi_s$ of $\phi$, and such that $\ker(J^{\varphi_s}) = T_{\varphi_s} \mathcal{M}_s$. Therefore we can construct the Banach manifolds $\Map^{k,p}_\mu(M,N;\mathfrak{S})$ using the family of manifolds $\{\mathcal{M}_s\}_{s \in S}$ as part of the singular data set $\mathfrak{S}$. Hence conditions 1--3 in \cref{thm: deformation theorem with sobolev regularity} are satisfied. On the other hand, using Lockhart--McOwen theory, condition 4 in \cref{thm: main theorem introduction} is equivalent to condition 4 in \cref{thm: deformation theorem with sobolev regularity} (cf. \cref{lem: kernels at different rates and leading order term}).

    We may therefore apply \cref{thm: deformation theorem with sobolev regularity} for $k=0$, $p > m$ and $\mu \in (0,1)$ sufficiently small. This in particular implies that for any smooth Riemannian metric $\tilde{g}$, there exists $\phi_{\tilde{g}} \in \Map^{2,p}_\mu(M,N;\mathfrak{S})$ close to $\phi$ such that $\tilde{\phi} \coloneqq \phi_{\tilde{g}} \circ f_{\tilde{g},\mathfrak{t}(g)}^{-1}$ is harmonic and conically singular along $\tilde{S} \coloneqq \{\Upsilon_s((\mathfrak{t}_{(0,\tilde{g})})_s)\}_{s\in S} \subset M$. On the other hand, since $p > m$, the map $\tilde{\phi}$ is $C^1$ on $M \setminus \tilde{S}$, and since it is harmonic and $\tilde{g}$ is smooth this implies that $\tilde{\phi}$ is $C^\infty$ on $M \setminus \tilde{S}$ (see \cite[Th. 9.4.1]{Jost2011GeometricAnalysisBook} for instance). Finally, in order to see that $\tilde{\phi}$ is conically singular in the sense of \cref{Def: Conically singular maps}, one can apply the same reasoning for every $k \geq 1$ and use the uniqueness modulo isometries part of the previous theorem to deduce that $\phi_{\tilde{g}} \in \Map^\infty_\mu(M,N;\mathfrak{S})$, which finishes the proof since the diffeomorphism $f_{\tilde{g},\mathfrak{t}(g)}$ is smooth.
\end{proof}

%% file: section5_modelmaps.tex
    \section{Models for the tangent maps}       \label{sec: Models for the tangent maps}

In this section, we study several model tangent maps $\varphi \col (\Sp^{m-1}, g_{\Sp{m-1}}) \to (N,h)$ satisfying conditions 1--3 in \cref{thm: main theorem introduction}. We begin with a few preliminaries on harmonic fibrations in \cref{subsec: harmonic fibrations}. \cref{subsec: The identity map} and \cref{subsec: Hopf fibration as model map} respectively study the identity map $\Sp^{m-1} \to \Sp^{m-1}$ and the Hopf fibration $\Sp^3 \to \Sp^2$ and gather results that will be useful in order to construct explicit examples of harmonic maps satisfying the assumptions of \cref{thm: main theorem introduction} in \cref{sec: Examples from deformed suspensions}. Finally, \cref{subsec: higher Hopf fibrations} considers the higher-dimensional Hopf fibrations and proves \cref{thm: hopf fibrations intro}.

    \subsection{Generalities on harmonic fibrations}        \label{subsec: harmonic fibrations}

A special class of harmonic maps $\varphi \col \Sp^{m-1} \rightarrow N^n$ is provided by harmonic fibrations, that is, harmonic maps which are also Riemannian fibrations. It is well-known that a Riemannian fibration is harmonic if and only if its fibres are minimal \cite{eells1964harmonic}, and thus there are many examples of such harmonic fibrations: the identity map $\Sp^{m-1} \to \Sp^{m-1}$, the complex Hopf fibrations $\Sp^{2n+1} \to \C\P^{n}$, or the quaternionic Hopf fibrations $\Sp^{4n+3} \to \mathbb{H}\P^{n}$ for instance. Let us assume that $\varphi \col \Sp^{m-1} \to N$ is a harmonic fibration over a compact Einstein manifold $(N,h)$ with Einstein constant $\kappa > 0$:
\begin{equation*}
    \Ric(h) = \kappa h .
\end{equation*}
For such a map, it is straightforward to deduce from the general formula \eqref{eq: Jacobi operator in general} that the Jacobi operator of $\varphi$ with respect to the standard round metric (with constant sectional curvature $1$) on $\Sp^{m-1}$ takes the form
\begin{equation}        \label{eq: Jacobi for fibration over Einstein manifold}
    J^\varphi u = \prescript{\varphi}{}{\nabla}^* \prescript{\varphi}{}{\nabla} u - \kappa u, ~~~~ \forall u \in \Gamma(\prescript{\varphi}{}{TN}) .
\end{equation}
In order to write the connection $\prescript{\varphi}{}{\nabla}$ in a convenient way, recall that the structure of Riemannian fibration induces a splitting $T\Sp^{m-1} = \mathscr H \oplus \mathscr V$ into horizontal and vertical components; where $\mathscr V = \ker \diff \varphi$ and $\mathscr H$ is its orthogonal complement. Thus $\prescript{\varphi}{}{T\Sp^{m-1}}$ can be isometrically identified with $\mathscr H$ and $\diff \varphi \col T\Sp^{m-1} \rightarrow \mathscr H$ is the horizontal projection (orthogonally to the vertical space $\mathscr V$). 

Using this decomposition, the connection $\prescript{\varphi}{}{\nabla}$ for the Hopf fibrations can be written as follows:

\begin{lem}    \label{lem:  nablaphicircle}
    Let $m = 2(n+1)$ and $\varphi \col \Sp^{m-1} = \Sp^{2n+1} \rightarrow \C\P^{n}$ be the complex Hopf fibration, where we regard $\Sp^{2n+1}$ as the unit ball in $\C^{n+1}$. Let $\xi = I \partial_r$ be the vector field generating the $\mathrm{U}(1)$-action and let $\alpha = g(\xi, \cdot)$ be the dual $1$-form with respect to the round metric. Then for any $u \in \Gamma(\mathscr H)$,
    \begin{equation*}
        \prescript{\varphi}{}{\nabla} u = \mathscr H(\nabla u) - \alpha \otimes \nabla_u \xi,
    \end{equation*}
    where $\nabla$ denotes the Levi-Civita connection associated with the round metric on $\Sp^{2n+1}$.
\end{lem}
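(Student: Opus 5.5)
The plan is to compute the pull-back connection directly from the O'Neill formalism for the Riemannian submersion $\varphi \col \Sp^{2n+1} \to \C\P^n$, identifying $\prescript{\varphi}{}{TN}$ with $\mathscr H$ via $\diff\varphi$. For a horizontal section $u$ and any tangent vector $X$, the pull-back connection is characterised by $\diff\varphi(\prescript{\varphi}{}{\nabla}_X u) = \nabla^{h}_{\diff\varphi(X)}(\diff\varphi\, u)$. By naturality it suffices to check this for $u$ a basic horizontal field (the horizontal lift of a vector field $U$ on $\C\P^n$) and then extend by the Leibniz rule, since both sides of the claimed formula are connections on $\mathscr H$: the right-hand side satisfies $\mathscr H(\nabla(fu)) - \alpha\otimes\nabla_{fu}\xi = \diff f\otimes u + f(\cdots)$. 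Basic fields span $\mathscr H$ locally over $C^\infty$.

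I will split $X$ into horizontal and vertical parts. For $X$ horizontal, O'Neill's formula gives $\mathscr H(\nabla_X u) = \widetilde{\nabla^h_{\diff\varphi X} U}$ for basic $u$. Since $\alpha(X)=0$, the formula reduces to $\prescript{\varphi}{}{\nabla}_X u = \mathscr H(\nabla_X u)$, as required.

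For $X = \xi$ vertical, note that $\diff\varphi(\xi)=0$, so for a basic field the pull-back of a fixed vector field along the fibre gives $\prescript{\varphi}{}{\nabla}_\xi u = 0$: along the fibre, $\diff\varphi\, u = U\circ\varphi$ is constant. Hence I must show $\mathscr H(\nabla_\xi u) = \alpha(\xi)\nabla_u\xi = \nabla_u\xi$, using $|\xi|=1$. Basic fields are $\mathrm U(1)$-invariant, so $[\xi,u]=0$ and $\nabla_\xi u = \nabla_u \xi$. It then remains to check that $\nabla_u\xi$ is horizontal. This holds because $\xi$ is a unit Killing field: $g(\nabla_u\xi,\xi) = \tfrac12 u(|\xi|^2)=0$. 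In fact, explicitly $\nabla_u \xi = I u$ (restricted tangentially), which is horizontal.

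The main obstacle is keeping the identifications consistent, namely that $\diff\varphi$ restricted to $\mathscr H$ is an isometry intertwining the connections exactly, and extending from basic fields to general sections. A secondary check is that $\xi = I\partial_r$ has unit length and is Killing on the round sphere, which is immediate because it generates the isometric $\mathrm U(1)$-action. Finally, for $X$ a general combination $X = X^{\mathscr H} + \alpha(X)\xi$, linearity in $X$ combines the two cases into the stated formula.
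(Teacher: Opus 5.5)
Your proposal is correct and follows essentially the same route as the paper. The paper likewise treats the right-hand side as a connection $\overline{\nabla}$ on $\mathscr H$, using $|\xi| = 1$ for horizontality, and checks agreement with $\prescript{\varphi}{}{\nabla}$ on horizontal lifts. For horizontal $X$ it invokes O'Neill's formula; for $X = \xi$ it uses $[\xi,u]=0$ to get $\nabla_\xi u = \nabla_u \xi$, which is horizontal, so both sides vanish.
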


\begin{proof}
    For any $X \in \mathfrak{X}(\Sp^{2n+1})$ and $u \in \Gamma(\mathscr{H})$, let us define
    \begin{equation*}
        \overline{\nabla}_X u \coloneqq \mathscr{H}(\nabla_X u) - \alpha(X) \nabla_u \xi .
    \end{equation*}
    Since $\xi$ has constant unit length it is easy to see that the right-hand side is a horizontal vector field. Thus $\overline{\nabla}$ defines an affine connection on $\mathscr H$, and it is enough to show that $\overline{\nabla}_X u = \prescript{\varphi}{}{\nabla}_X u$ when $u$ is the horizontal lift of a vector field on $\C\P^n$, and either $X$ is a horizontal lift or $X = \xi$.

    Assume first that both are horizontal lifts. Then $\mathscr H(\nabla_X u)$ is the horizontal lift of $\nabla^{\textup{FS}}_{\varphi_* (X)} \varphi_*(u)$ (this is a general fact for Riemannian fibrations, see \cite{oneil1966fundamental} or \cite[Ch. 9]{besse1987einstein}), where $\nabla^{\textup{FS}}$ is the Levi-Civita connection of the Fubini--Study metric on $\C\P^n$, which by definition coincides with $\prescript{\varphi}{}{\nabla}_X u$. Hence $\prescript{\varphi}{}{\nabla}_X u = \overline{\nabla}_X u$ when $X$ and $u$ are horizontal lifts. 

    Now assume that $X = \xi$. Then $\prescript{\varphi}{}{\nabla}_\xi u = \nabla^{\textup{FS}}_{\varphi_*(\xi)} \varphi_*(u) = 0$ since $\xi$ is vertical. On the other hand,
    \begin{equation*}
        \nabla_\xi u = [\xi,u] + \nabla_u \xi = \nabla_u \xi
    \end{equation*}
    since $u$ is $\mathrm{U}(1)$-invariant (as a horizontal lift) and thus $[\xi,u] = 0$. Moreover $\nabla_u \xi$ is horizontal and thus we obtain $\overline{\nabla}_\xi u = 0 = \prescript{\varphi}{}{\nabla}_\xi u$ in this case too. This completes the proof.
\end{proof}

    \subsection{The identity map $\mathbb{S}^{m-1} \to \mathbb{S}^{m-1}$}      \label{subsec: The identity map}

Let us consider the sphere $\mathbb{S}^{m-1}$ with $m \geq 4$, endowed with the round metric (with constant sectional curvature $1$). The identity map $\id \col \mathbb{S}^{m-1} \to \Sp^{m-1}$ is harmonic, and since the round metric is Einstein with constant $\kappa =m-2$ it follows from \eqref{eq: Jacobi for fibration over Einstein manifold} that the Jacobi operator is given by
\begin{equation*}
    J^{\id} u = \nabla^*\nabla u - (m-2) u, ~~~ \forall u \in \mathfrak{X}(\mathbb{S}^{m-1}),
\end{equation*}
where $\nabla$ is the Levi-Civita connection of the round metric. On the other hand, the Hodge-de Rham Laplacian $\Delta_{\mathbb{S}^{m-1}}$ on $1$-forms is given by
\begin{equation*}
    \Delta_{\mathbb{S}^{m-1}} \alpha = \nabla^*\nabla \alpha + \Ric_{\mathbb{S}^{m-1}}(\alpha) = \nabla^*\nabla \alpha + (m-2) \alpha ,  ~~~ \forall \alpha \in \Omega^1(\mathbb{S}^{m-1}). 
\end{equation*}
Thus if we identify $\mathfrak{X}(\mathbb{S}^{m-1}) \simeq \Omega^1(\mathbb{S}^{m-1})$ using the round metric, we have
\begin{equation*}
    J^{\id} u = \Delta_{\mathbb{S}^{m-1}} u - 2(m-2) u, ~~~ \forall u \in \mathfrak{X}(\mathbb{S}^{m-1}) .
\end{equation*}
Note that since the natural action of $\mathrm{SO}(m)$ on $\Sp^{m-1}$ commutes with the Laplacian, all eigenspaces of $\Delta_{\Sp^{m-1}}$ are (finite-dimensional) representations of $\mathrm{SO}(m)$. The spectrum of the Laplacian on the round sphere and the corresponding eigenspace decomposition have been determined in \cite[Th. C]{folland1989harmonic}: there is an orthogonal decomposition
\begin{equation}        \label{eq: Eigenspace decomposition of the Laplacian on spheres}
    L^2(T^*\mathbb{S}^{m-1}) = \bigoplus_{\ell=1}^\infty (V_\ell \oplus V^\prime_\ell)
\end{equation}
where each $V_\ell$, $V^\prime_\ell$ is an irreducible representation of $\mathrm{SO}(m)$, such that $V_\ell$ is an eigenspace of $\Delta_{\mathbb{S}^{m-1}}$ with eigenvalue $\ell(m+\ell-2)$ and $V^\prime_\ell$ is an eigenspace of $\Delta_{\mathbb{S}^{m-1}}$ with eigenvalue  and $(\ell+1)(m+\ell-3)$. The lowest eigenvalue is $m-1$, with associated eigenspace $V_1$, and the second lowest eigenvalue is $2(m-2)$, with associated eigenspace $V^\prime_1$; all the other eigenvalues of $\Delta_{\mathbb{S}^{m-1}}$ are greater or equal to $2m$. For later use, we gather the following properties of the representations $V_\ell$, $V^\prime_\ell$, which can all be deduced from \cite{folland1989harmonic}:
\begin{itemize}
    \item For any $\ell \geq 1$, $V_\ell = \iota^* F_\ell$ where $\iota \col \Sp^{m-1} \hookrightarrow \R^m$ is the canonical inclusion and $F_\ell$ is a finite-dimensional vector space of $1$-forms on $\R^m$ whose coefficients are homogeneous harmonic polynomials of degree $\ell -1$.

    \item For any $\ell \geq 1$, $V^\prime_\ell = \iota^* F^\prime_\ell$ where $F^\prime_\ell$ is a finite-dimensional vector space of $1$-forms on $\R^m$ whose coefficients are homogeneous harmonic polynomials of degree $\ell$.

    \item For $\ell = 1$,
    \begin{equation*}
        F_1 = \left\{ \sum a_i \diff x_i \mid a_i \in \R \right\} \simeq (\R^m)^* .
    \end{equation*}
    In particular, $\dim(V_1) = m$.

    \item For $\ell = 1$,
    \begin{equation*}
        F^\prime_1 = \left\{ \sum a_{ij} x_i \diff x_j \mid a_{ji} = - a_{ij} \right\} \simeq \Lambda^2(\R^m)^* \simeq \mathfrak{so}_m.
    \end{equation*}
    In particular, $\dim(V^\prime_1) = \tfrac{n(n-1)}{2}$. 

    \item For $\ell = 2$,
    \begin{equation*}
        F_2 = \left\{ \sum a_{ij} x_i \diff x_j \mid a_{ji} = a_{ij}, \sum a_{ii} = 0 \right\},
    \end{equation*}
    that is, $F_2$ is isomorphic to the space of traceless symmetric matrices as a representation of $\mathrm{SO}(m)$. 
\end{itemize}

After identifying $\mathfrak{X}(\mathbb{S}^{m-1})$ and $\Omega^1(\mathbb{S}^{m-1})$, this implies that \eqref{eq: Eigenspace decomposition of the Laplacian on spheres} is the eigenspace decomposition of the Jacobi operator $J^{\id}_{\mathbb{S}^{m-1}}$; moreover, $V_\ell$ is associated with the eigenvalue $\ell^2 + (\ell-2)(m-2)$, and $V^\prime_\ell$ is associated with the eigenvalue $(\ell-1)(m+\ell-1)$. In particular,
\begin{itemize}
    \item the lowest eigenvalue of $J^{\id}_{\mathbb{S}^{m-1}}$ is $-m+3$, with corresponding eigenspace $V_1$ corresponding to the orthogonal projection of constant vector fields of $\R^m$ onto $\mathbb{S}^{m-1}$,
    \item the second lowest eigenvalue of $J^{\id}_{\mathbb{S}^{m-1}}$ is $0$, with eigenspace $V^\prime_1$ corresponding to the space of Killing fields on the sphere (one inclusion is clear, and equality follows by dimensionality), and
    \item all the other eigenvalues of $J^{\id}_{\mathbb{S}^{m-1}}$ are greater or equal to $2$.
\end{itemize}

    \subsection{The Hopf fibration $\mathbb{S}^3 \rightarrow \mathbb{S}^2$}     \label{subsec: Hopf fibration as model map}

In this section, we study the Hopf fibration $\Sp^3 \to \Sp^2$. We first derive a few key identities \cref{subsubsec: Computations for Hopf fibration} before giving an explicit expression of the Jacobi operator with respect to the round metric on $\Sp^3$ and describing its spectrum in \cref{subsubsec: Jacobi-Spectrum for Hopf fibration}. We then analyse the effect certain left-invariant deformations of the domain metric in \cref{subsubsec: The Jacobi operator for left-invariant metrics}, for later purpose in \cref{sec: Examples from deformed suspensions}.

    \subsubsection{Basic computations} \label{subsubsec: Computations for Hopf fibration}

In this section, we consider the Hopf fibration $\varphi \col \Sp^3 \rightarrow \Sp^2$, which is harmonic with respect to the round metrics on the spheres. On $\Sp^3 \simeq \mathrm{Sp}(1)$, we consider the action of $\Sp^3$ by left multiplication. There is an orthonormal basis $X_1,X_2,X_3$ of left-invariant vector fields for the round metric $g_{1} \coloneqq g_{\Sp^3}$ with constant sectional curvature $1$, such that $[X_i,X_{i+1}] = 2 X_{i+2}$ with cyclic indices. The vector field $\xi = X_3$ generates an isometric $\mathrm{U}(1)$-action \emph{on the right}, and we might assume that $\varphi$ is the quotient of $\Sp^3$ by this action. Therefore, the vector fields $X_1,X_2$ form a global frame of the vector bundle $\prescript{\varphi}{}{T\Sp^2}$, and moreover $\varphi$ is a Riemannian fibration with respect to the rescaled round metric $\tfrac{1}{4}g_{\Sp^2}$ on $\Sp^2$. Since $\tfrac{1}{4}g_{\Sp^2}$ is Einstein constant $\kappa = 4$, it follows from \eqref{eq: Jacobi for fibration over Einstein manifold} that the Jacobi operator satisfies

\begin{lem}
    With respect to the standard metrics, the Jacobi operator $J^\varphi$ of the Hopf fibration $\varphi \col \Sp^3 \rightarrow \Sp^2$ is given by
    \begin{equation*}
        J^\varphi u = \prescript{\varphi}{}{\nabla}^* \prescript{\varphi}{}{\nabla} u - 4 u, ~~~ \forall u \in \Gamma (\prescript{\varphi}{}{T\Sp^2}) .
    \end{equation*}
\end{lem}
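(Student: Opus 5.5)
The statement is an immediate consequence of \eqref{eq: Jacobi for fibration over Einstein manifold}, once we check that its hypotheses hold after the appropriate rescaling of the target metric. The plan is as follows.

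First we identify the geometry. With the left-invariant orthonormal frame $X_1,X_2,X_3$ satisfying $[X_i,X_{i+1}]=2X_{i+2}$, the fibres of $\varphi$ are the orbits of the right $\mathrm{U}(1)$-action generated by $\xi=X_3$. These orbits are great circles, hence geodesics and in particular minimal. The horizontal distribution $\mathscr H=\mathrm{span}(X_1,X_2)$ is isometric, via $\diff\varphi$, to $T\Sp^2$ equipped with $\tfrac14 g_{\Sp^2}$. This can be seen by noting that the $\mathrm{O'Neill}$ formula (or the curvature of the quotient) gives horizontal sectional curvature $1+3|A|^2=4$. Equivalently, the quotient $\Sp^3/\mathrm{U}(1)$ is the sphere of radius $\tfrac12$. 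Therefore $\varphi\colon(\Sp^3,g_1)\to(\Sp^2,\tfrac14g_{\Sp^2})$ is a Riemannian fibration with minimal fibres, and hence harmonic.

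Next we apply the general formula \eqref{eq: Jacobi operator in general}. For a Riemannian fibration, $\tr_g(\mathrm{Rm}_h(u,\diff\varphi)\diff\varphi)$ is computed in an orthonormal frame in which the vertical vector contributes nothing. The horizontal vectors $X_1,X_2$ map to an orthonormal frame $e_1,e_2$ of the target, so the trace equals $\sum_i \mathrm{Rm}_h(u,e_i)e_i=-\Ric_h(u)=-\kappa u$ (with the paper's sign conventions). For $h=\tfrac14g_{\Sp^2}$, which has constant curvature $4$ on a surface, we get $\Ric=4h$, so $\kappa=4$. Substituting gives $J^\varphi u=\prescript{\varphi}{}{\nabla}^*\prescript{\varphi}{}{\nabla}u-4u$.

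Finally, we address the normalisation of the target metric. The Jacobi operator with target $g_{\Sp^2}$ and with target $\tfrac14g_{\Sp^2}$ coincide: the Levi-Civita connection is scale-invariant, and the curvature term $\mathrm{Rm}_h(u,\diff\varphi)\diff\varphi$ as an endomorphism-valued expression is invariant under constant rescaling $h\mapsto c h$, since the $(1,3)$-tensor is unchanged. The only subtle point, and so the main check, is this scale-invariance together with the O'Neill identification of the base curvature as $4$. Both are routine.
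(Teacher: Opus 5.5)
Your argument is the paper's. You show that $\varphi$ is a Riemannian fibration with geodesic (hence minimal) fibres onto $(\Sp^2,\tfrac14 g_{\Sp^2})$, which is Einstein with $\kappa=4$, and then apply \eqref{eq: Jacobi for fibration over Einstein manifold}. Your O'Neill check $1+3|A_{X_1}X_2|^2=4$ is correct. So is your observation that $J^\varphi$ is unchanged under constant rescaling of the target metric, which the paper uses implicitly when it speaks of the standard metrics.

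There is, however, a sign error in your curvature step. With the paper's convention $\mathrm{Rm}_h(U,V)W=\nabla_U\nabla_VW-\nabla_V\nabla_UW-\nabla_{[U,V]}W$, one has $\sum_i\mathrm{Rm}_h(u,e_i)e_i=+\Ric_h(u)=\kappa u$, not $-\Ric_h(u)$. For constant curvature $4$, $\mathrm{Rm}_h(u,e)e=4(|e|^2u-\langle u,e\rangle e)$, so the trace is $4u_2X_2+4u_1X_1=4u$. This matches the proof of \cref{lem: Jacobi operator for Hopf map with perturbation of metric} with $\beta_1=\beta_2=1$.

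As written, substituting your $-\kappa u$ into $J^\varphi u=\prescript{\varphi}{}{\nabla}^*\prescript{\varphi}{}{\nabla}u-\tr_g(\mathrm{Rm}_h(u,\diff\varphi)\diff\varphi)$ would give $+4u$, contradicting your stated conclusion. With the sign corrected, the proof is complete.
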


In order to understand the connection $\prescript{\varphi}{}{\nabla}$, let us write a global section of $\prescript{\varphi}{}{T\Sp^2}$ as $u = u_1 X_1 + u_2 X_2$, where $u_1,u_2 \in C^\infty(\Sp^3,\R)$.

\begin{lem}     \label{lem: Connection of Hopf fibration}
    If $u = u_1 X_1 + u_2 X_2$ then
    \begin{equation*}
        \prescript{\varphi}{}{\nabla} u = \diff u_1 \otimes X_1 + \diff u_2 \otimes X_2 + 2 u_1 \alpha \otimes X_2 - 2u_2 \alpha \otimes X_1 .
    \end{equation*}
\end{lem}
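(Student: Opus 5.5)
We need to prove the formula for $\prescript{\varphi}{}{\nabla} u$ when $u = u_1 X_1 + u_2 X_2$. The plan is to reduce it to \cref{lem:  nablaphicircle}, which holds for the Hopf fibration with $n=1$ after identifying $\mathscr H$ with $\mathrm{span}\{X_1,X_2\}$ and $\xi = X_3$, and $\alpha = g_1(X_3,\cdot)$ the dual form. That lemma is stated with the round metric, and $\xi = X_3$ is a unit Killing field generating the right $\mathrm{U}(1)$-action, so it applies here. A normalisation remark may be needed, since the target here is $\tfrac14 g_{\Sp^2}$, but the pulled-back connection does not depend on such a constant rescaling.

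By the Leibniz rule,
\begin{equation*}
    \prescript{\varphi}{}{\nabla} u = \diff u_1 \otimes X_1 + \diff u_2 \otimes X_2 + u_1 \prescript{\varphi}{}{\nabla} X_1 + u_2 \prescript{\varphi}{}{\nabla} X_2 ,
\end{equation*}
so it suffices to compute $\prescript{\varphi}{}{\nabla} X_1 = 2\alpha \otimes X_2$ and $\prescript{\varphi}{}{\nabla} X_2 = -2\alpha\otimes X_1$. For these I apply \cref{lem:  nablaphicircle}: $\prescript{\varphi}{}{\nabla} X_i = \mathscr H(\nabla X_i) - \alpha \otimes \nabla_{X_i} X_3$. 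The left-invariant frame is orthonormal for a bi-invariant metric, so Koszul gives $\nabla_{X_i} X_j = \tfrac12 [X_i,X_j]$. This yields $\nabla_{X_a} X_b = \epsilon_{abc} X_c$ with the bracket normalisation $[X_i,X_{i+1}] = 2X_{i+2}$.

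Next I compute both terms. For the first, $\nabla_{X_a} X_1$ equals $0$, $-X_3$, $X_2$ for $a=1,2,3$, so its horizontal part is $\mathscr H(\nabla X_1) = \alpha \otimes X_2$ (the $X_3$ component is dropped). For the second, $\nabla_{X_1} X_3 = \tfrac12[X_1,X_3] = -X_2$, giving $-\alpha\otimes(-X_2) = \alpha\otimes X_2$. The total is $2\alpha\otimes X_2$. Symmetrically, $\nabla_{X_a} X_2$ equals $X_3$, $0$, $-X_1$, so the horizontal part is $-\alpha\otimes X_1$; also $\nabla_{X_2}X_3 = X_1$ contributes $-\alpha\otimes X_1$, for a total of $-2\alpha\otimes X_1$. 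Plugging these into the Leibniz expansion gives the claimed formula.

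The only delicate step is bookkeeping of signs and factors of $2$: checking $\nabla_X Y = \tfrac12[X,Y]$ for left-invariant fields under the bi-invariant round metric, and checking that the orientation conventions for $\xi = X_3$ match those in \cref{lem:  nablaphicircle}. As a consistency check, the result should be metric-compatible, i.e. skew in the frame, and it is.
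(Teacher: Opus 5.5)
Your proof is correct and follows the paper's argument. You expand by the Leibniz rule, apply $\prescript{\varphi}{}{\nabla} u = \mathscr H(\nabla u) - \alpha \otimes \nabla_u \xi$ to $X_1$ and $X_2$, and evaluate with $\nabla_{X_i}X_j = \tfrac12[X_i,X_j]$ to obtain $\prescript{\varphi}{}{\nabla}X_1 = 2\alpha\otimes X_2$ and $\prescript{\varphi}{}{\nabla}X_2 = -2\alpha\otimes X_1$. Your second term $-\alpha\otimes\nabla_{X_1}X_3$ is the right one; the paper's display writes $\nabla_\xi X_1$ there, but what it actually evaluates is $\tfrac12[X_1,X_3]$, so that is a typo on the paper's side.
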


\begin{proof}
    Using \cref{lem: nablaphicircle} and the fact that the Levi-Civita connection of the round metric on $\Sp^3$ satisfies $\nabla_{X_i} X_j = \frac{1}{2} [X_i,X_j]$, we have
    \begin{equation*}
        \prescript{\varphi}{}{\nabla} X_1 = \mathscr H(\nabla X_1) - \alpha \otimes \nabla_\xi X_1 = \frac{1}{2}(\mathscr H(\alpha \otimes [X_3,X_1] + X^2 \otimes [X_2,X_1]) - \alpha \otimes [X_1,X_3]) = 2 \alpha \otimes X_2
    \end{equation*}
    and similarly
    \begin{equation*}
        \prescript{\varphi}{}{\nabla} X_2 = -2 \alpha \otimes X_1 .
    \end{equation*}
    The expression of $\prescript{\varphi}{}{\nabla} u$ follows.
\end{proof}

We can therefore deduce a more explicit expression for the Jacobi operator:

\begin{prop}    \label{prop: Jacobi Operator Hopf fibration}
    For the Hopf fibration $\varphi \col \Sp^3 \rightarrow \Sp^2$,
    \begin{equation*}
        \prescript{\varphi}{}{\nabla}^*\prescript{\varphi}{}{\nabla} (u_1 X_1 + u_2 X_2) = (\Delta_{\mathbb{S}^3} u_1 + 4 u_1 + 4 \partial_\xi u_2) X_1 + (\Delta_{\mathbb{S}^3} u_2 + 4u_2 - 4 \partial_\xi u_1) X_2 .
    \end{equation*}
    In particular, the Jacobi operator of $\varphi$ with respect to the round metric is
    \begin{equation*}
        J^\varphi (u_1 X_1 + u_2 X_2) = (\Delta_{\mathbb{S}^3} u_1 + 4 \partial_\xi u_2) X_1 + (\Delta_{\mathbb{S}^3} u_2 - 4 \partial_\xi u_1) X_2 .
    \end{equation*}
    If we introduce the complex function $w = u_1+iu_2 \in C^\infty(\Sp^3,\C)$ this can be written as
    \begin{equation*}
        J^\varphi w = \Delta_{\mathbb{S}^3} w - 4 i \partial_\xi w .
    \end{equation*}
\end{prop}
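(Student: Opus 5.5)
The plan is to compute the rough Laplacian $\prescript{\varphi}{}{\nabla}^*\prescript{\varphi}{}{\nabla}$ directly from \cref{lem: Connection of Hopf fibration}. The Jacobi formula then follows from $J^\varphi = \prescript{\varphi}{}{\nabla}^*\prescript{\varphi}{}{\nabla} - 4$ (the previous lemma).

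Write $\prescript{\varphi}{}{\nabla} u = \beta_1 \otimes X_1 + \beta_2 \otimes X_2$, where
\begin{equation*}
    \beta_1 = \diff u_1 - 2u_2\alpha, \qquad \beta_2 = \diff u_2 + 2u_1\alpha .
\end{equation*}
For a bundle-valued $1$-form $\beta\otimes X_i$, the Leibniz rule for the formal adjoint gives
\begin{equation*}
    \prescript{\varphi}{}{\nabla}^*(\beta\otimes X_i) = (\diff^*\beta)X_i - \prescript{\varphi}{}{\nabla}_{\beta^\sharp}X_i .
\end{equation*}
Here $\diff^*$ is the codifferential for the round metric. By \cref{lem: Connection of Hopf fibration},
\begin{equation*}
    \prescript{\varphi}{}{\nabla}_{Y}X_1 = 2\alpha(Y)X_2, \qquad \prescript{\varphi}{}{\nabla}_{Y}X_2 = -2\alpha(Y)X_1 .
\end{equation*}
So I need two facts about $\alpha = g(\xi,\cdot)$: $\diff^*\alpha = 0$, which holds because $\xi$ is Killing and hence divergence-free, and $|\alpha|^2 = 1$. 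Using $\diff^*(f\alpha) = f\,\diff^*\alpha - \alpha(\nabla f) = -\partial_\xi f$, I get
\begin{equation*}
    \diff^*\beta_1 = \Delta u_1 + 2\partial_\xi u_2, \qquad \diff^*\beta_2 = \Delta u_2 - 2\partial_\xi u_1 ,
\end{equation*}
with $\Delta = \Delta_{\Sp^3} = \diff^*\diff$.

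For the connection terms, the $X_1$-component of $-\prescript{\varphi}{}{\nabla}_{\beta_2^\sharp}X_2$ is $2\alpha(\beta_2^\sharp) = 2\partial_\xi u_2 + 4u_1$. The $X_2$-component of $-\prescript{\varphi}{}{\nabla}_{\beta_1^\sharp}X_1$ is $-2\alpha(\beta_1^\sharp) = -2\partial_\xi u_1 + 4u_2$. Summing:
\begin{itemize}
    \item the $X_1$-coefficient is $\Delta u_1 + 4\partial_\xi u_2 + 4u_1$;
    \item the $X_2$-coefficient is $\Delta u_2 - 4\partial_\xi u_1 + 4u_2$.
\end{itemize}
This matches the claimed formula, and subtracting $4u$ gives $J^\varphi$. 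Setting $w = u_1 + iu_2$, the combination $4\partial_\xi u_2 - 4i\partial_\xi u_1 = -4i\partial_\xi w$ gives the complex form.

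The main place to be careful is the sign convention in the adjoint Leibniz formula and in $\diff^*(f\alpha)$. I will double-check these against the integration-by-parts definition, and verify $\diff^*\alpha = 0$ via $\operatorname{div}\xi = 0$.
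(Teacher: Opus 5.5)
Your computation is correct, and you reach the formula by a different route than the paper.

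\textbf{The paper's route.} The paper works with the quadratic form. It expands $\int_{\Sp^3}|\prescript{\varphi}{}{\nabla}u|^2$ using \cref{lem: Connection of Hopf fibration} and obtains
\begin{equation*}
\int(|\diff u_1|^2+|\diff u_2|^2+4u_1^2+4u_2^2)+4\int(u_1\partial_\xi u_2-u_2\partial_\xi u_1).
\end{equation*}
It writes this as $\int\langle Lu,u\rangle$ for the claimed operator $L$. It then concludes $\prescript{\varphi}{}{\nabla}^*\prescript{\varphi}{}{\nabla}=L$ by appealing to the formal self-adjointness of the first-order part $\partial_\xi u_2X_1-\partial_\xi u_1X_2$. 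That appeal is needed because a quadratic form only determines the symmetric part of an operator.

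\textbf{Your route.} You compute $\prescript{\varphi}{}{\nabla}^*$ pointwise through
\begin{equation*}
\prescript{\varphi}{}{\nabla}^*(\beta\otimes X_i)=(\diff^*\beta)X_i-\prescript{\varphi}{}{\nabla}_{\beta^\sharp}X_i .
\end{equation*}
This is valid because the pull-back connection is metric and $X_1,X_2$ is an orthonormal frame of $\prescript{\varphi}{}{T\Sp^2}$.

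\textbf{Comparison.} Both arguments use the same two inputs, $\operatorname{div}\xi=0$ and $|\xi|=1$. In the paper, the first of these is hidden in the self-adjointness of the first-order term. Your version is purely local and needs no polarisation or integration over the compact $\Sp^3$. It also makes the symmetry of the resulting operator an output rather than an input. The paper's version is quicker to write down, and it is the bilinear-form computation the paper reuses for the left-invariant metrics $g_\beta$ in \cref{lem: Jacobi operator for Hopf map with perturbation of metric}.

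Your sign checks are all correct: $\diff^*(f\alpha)=-\partial_\xi f$, $\beta_1(\xi)=\partial_\xi u_1-2u_2$, and $\beta_2(\xi)=\partial_\xi u_2+2u_1$. The final complex rewriting $J^\varphi w=\Delta_{\Sp^3}w-4i\partial_\xi w$ is also correct.
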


\begin{proof}
    Using the previous lemma, we can integrate by parts:
    \begin{align*}
        \int_{\Sp^3} \langle \prescript{\varphi}{}{\nabla} u, \prescript{\varphi}{}{\nabla} u \rangle \vol_{\Sp^3}  & = \int_{\Sp^3}  (|\diff u_1|^2 + |\diff v_2|^2 + 4 u_1^2 + 4u_2) \vol_{\Sp^3} + 4 \int_{\Sp^3} (u_1 \partial_\xi u_2 - u_2 \partial_\xi u_1) \vol_{\Sp^3}  \\
            & = \int_{\Sp^3} \langle (\Delta_{\mathbb{S}^3} u_1 + 4 u_1 + 4 \partial_\xi u_2) X_1 + (\Delta_{\mathbb{S}^3} u_2 + 4u_2 - 4 \partial_\xi u_1) X_2, u_1 X_1 + u_2 X_2 \rangle \vol_{\Sp^3}
    \end{align*}
    where $\Delta_{\mathbb{S}^3} = \diff^* \diff$ is the scalar Laplacian for the standard round metric on $\Sp^3$. This proves the result since the differential operator $P_\xi(u_1 X_1 + u_2X_2) = \partial_\xi u_2 X_1 - \partial_\xi u_1 X_2$ on $\prescript{\varphi}{}{T\Sp^2}$ is easily found to be formally self-adjoint (see for instance the next section where we exhibit an explicit eigenspace decomposition of $L^2(\prescript{\varphi}{}{T\Sp^2})$).
\end{proof}

    \subsubsection{Spectrum of the Jacobi operator}     \label{subsubsec: Jacobi-Spectrum for Hopf fibration}

In this section we use coordinates $(x_0,x_1,x_2,x_3)$ on $\R^4$, which can be identified with the space of quaternions where $q = x_0 + x_1 I + x_2 J + x_3 K$. Then $X_1 = qJ$, $X_2 = qK$ and $\xi = X_3 = qI$, which we can also extend as vector fields on $\R^4$. Explicitly,
\begin{align*}
    X_3 & = - x_1 \partial_0 + x_0 \partial_1 + x_3 \partial_2 - x_2 \partial_3 = \xi, \\
    X_1 & = - x_2 \partial_0 - x_3 \partial_1 + x_0 \partial_2 + x_1 \partial_3 , \\
    X_2 & = - x_3 \partial_0 + x_2 \partial_1 - x_1 \partial_2 + x_0 \partial_3 .
\end{align*}
It is well-known that the eigenfunctions of $\Delta_{\Sp^3}$ are obtained by restriction of the harmonic homogeneous polynomials on $\R^4$ of degree $k \geq 0$. More precisely, if we denote by $W_k \subset C^\infty(\Sp^3,\C)$ the space generated by the restriction of the (complex-valued) harmonic polynomials of degree $k$, then
\begin{equation*}
    \Delta_{\Sp^3} w = k(k+2) w, ~~~~ \forall w \in W_k.
\end{equation*}

Since the differential operator $\partial_\xi$ is invariant under the action of $\mathrm{Sp}(1) \simeq \Sp^3$, it commutes with $\Delta_{\mathbb{S}^3}$ (essentially because the Laplacian is the Casimir element of the action). In particular, $\partial_\xi$ leaves invariant the eigenspaces of $\Delta_{\mathbb{S}^3}$, and thus we can co-diagonalise $\Delta_{\mathbb{S}^3}$ and $\partial_\xi$ on $C^\infty(\Sp^3,\C)$. To see this explicitly, let us introduce complex coordinates $(z_1,z_2)$ defined as
\begin{equation*}
    z_1 = x_0 + i x_1, ~~~ z_2 = x_2 - ix_3
\end{equation*}
so that if we identify $i \sim I$ we have
\begin{equation*}
    q = z_1 + J z_2 .
\end{equation*}
Then the flow $\textup{Flow}_{t}^\xi$ generated by $\xi$ is just given by $\textup{Flow}_{t}^\xi(z_1,z_2) = (\mathrm{e}^{it}z_1,\mathrm{e}^{it}z_2)$. If $w$ is a homogeneous complex polynomials of degree $k$ with holomorphic degree $\ell$ and anti-holomorphic degree $k-\ell$, then
\begin{equation*}
    \partial_\xi w = (2\ell - k)i w.
\end{equation*}
Let us denote by $W_{k,\ell} \subset W_k$ the space obtained by restriction of complex homogeneous harmonic polynomials in $\R^4 \simeq \C^2$ with holomorphic degree $\ell \in \{0,\ldots,k\}$ and anti-holomorphic degree $k-\ell$. Then for any $w \in W_{k,\ell}$ we have
\begin{equation*}
    J^\varphi w = (k(k+2) - 4 i^2 (2\ell-k) = (k(k-2) + 8 \ell) w .
\end{equation*}

\begin{lem}
    For any integers $k \geq 0$ and $\ell = 0, 1, \ldots, k$,
    \begin{equation*}
        J^\varphi w = (k(k-2) + 8\ell) w, ~~~~ \forall w \in W_{k,\ell}.
    \end{equation*}
\end{lem}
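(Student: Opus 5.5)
The plan is to combine the formula $J^\varphi w = \Delta_{\Sp^3} w - 4i\,\partial_\xi w$ from \cref{prop: Jacobi Operator Hopf fibration} with two facts about $W_{k,\ell}$: that it is an eigenspace of $\Delta_{\Sp^3}$, and that $\partial_\xi$ acts on it as a scalar. The only care needed is in the identification of complex-valued functions $w = u_1 + iu_2$ with sections $u_1X_1+u_2X_2$.

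First, $W_{k,\ell}\subset W_k$. The Laplacian on $\Sp^3$ is real and complex-linear on $C^\infty(\Sp^3,\C)$, so it acts on the restrictions of complex harmonic homogeneous polynomials of degree $k$ with eigenvalue $k(k+2)$. Hence $\Delta_{\Sp^3} w = k(k+2)w$ for every $w \in W_{k,\ell}$.

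Second, I compute $\partial_\xi w$. The flow of $\xi$ is $(z_1,z_2)\mapsto(\mathrm{e}^{it}z_1,\mathrm{e}^{it}z_2)$, and it preserves $\Sp^3$. A monomial $z^\alpha\bar z^\beta$ with $|\alpha|=\ell$ and $|\beta|=k-\ell$ is therefore multiplied by $\mathrm{e}^{i(2\ell-k)t}$. Differentiating at $t=0$ gives $\partial_\xi w = (2\ell-k)i\,w$. This extends by linearity to every polynomial of bidegree $(\ell,k-\ell)$, and the restriction to $\Sp^3$ commutes with $\partial_\xi$ because $\xi$ is tangent to the sphere. I need to check that the coordinates $z_1 = x_0+ix_1$, $z_2 = x_2-ix_3$ match the explicit expression for $X_3$. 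For example, $\xi(z_1) = -x_1 + ix_0 = iz_1$ and $\xi(z_2) = x_3 + ix_2 = iz_2$, which confirms it.

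Third, I substitute: $J^\varphi w = k(k+2)w - 4i\cdot(2\ell-k)i\,w = \bigl(k(k+2)+4(2\ell-k)\bigr)w = \bigl(k(k-2)+8\ell\bigr)w$.

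The one subtle point, and the main obstacle, is that $J^\varphi$ in the complex form is only $\R$-linear a priori. Under $w=u_1+iu_2$, the real operator acts by $u_1\mapsto \Delta u_1 + 4\partial_\xi u_2$ and $u_2\mapsto \Delta u_2 - 4\partial_\xi u_1$. This is exactly $\Delta w - 4i\,\partial_\xi w$, with $\partial_\xi$ applied componentwise. Since $\Delta$ and $\partial_\xi$ are real operators, this expression is complex-linear. So applying it to complex eigenfunctions is legitimate, and the eigenvalue computation above is valid.
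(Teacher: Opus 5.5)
Your proof is correct and is essentially the paper's argument: it combines $J^\varphi w = \Delta_{\Sp^3} w - 4i\,\partial_\xi w$, the eigenvalue $k(k+2)$ of $\Delta_{\Sp^3}$ on $W_k$, and $\partial_\xi w = (2\ell-k)i\,w$ from the flow $(z_1,z_2)\mapsto(\mathrm{e}^{it}z_1,\mathrm{e}^{it}z_2)$. Your check that $\xi(z_j)=iz_j$ is correct, and the complex-linearity point you call the main obstacle is harmless but not really needed, since the identity $J^\varphi w = \Delta_{\Sp^3} w - 4i\,\partial_\xi w$ holds for every complex function $w$ and can be applied directly to each $w\in W_{k,\ell}$.
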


\begin{rem}
    Our calculations recovers a result of Urakawa \cite{urakawa1987stability}, whose computation of the spectrum of $J^\varphi$ coincides with ours up to a factor of $\tfrac{1}{8}$ which is due to different scaling conventions.
\end{rem}

The lowest eigenvalue of $J^\varphi$ is $-1$, corresponding to $k = 1$ and $\ell=0$, and all the other eigenvalues are non-negative. The eigenspace $W_{1,0}$ of $J^\varphi$ is $4$-dimensional, which corresponds to the $4$-dimensional space of homogeneous Jacobi fields of rate $-1$ for the operator $J^\varphi_C$: they correspond to the space of infinitesimal translations in $\R^4$.

The next eigenvalue is $0$, for which there is an $8$-dimensional eigenspace (the kernel of $J^\varphi$). A $2$-dimensional space coming from $(k,\ell) = (0,0)$: these correspond to pre-composing with multiplication \emph{on the right} by an element of $\Sp^3$ (there are only two independent directions because we quotient by the third). And a $6$-dimensional space coming from $(k,\ell) = (2,0)$: Three directions correspond to the space of rotations of $\Sp^2$ (or multiplication \emph{on the left} of $\Sp^3$, which commutes with the Hopf fibration) and because $\Sp^2$ is Kähler we get $3$ more directions of harmonic deformations of $\Sp^2$ by multiplying by the complex structure. Thus, every infinitesimal harmonic deformation of $\varphi$ corresponds to pre-composition by an isometry of $\Sp^3$ and post-composition by a conformal map on $\Sp^2$ and is therefore integrable. This shows that the Hopf fibration $\varphi \col \Sp^3 \to \Sp^2$ satisfies  conditions 1--3 in \cref{thm: main theorem introduction}.

    \subsubsection{The Jacobi operator for a family of left-invariant metrics on $\mathbb{S}^3$}     \label{subsubsec: The Jacobi operator for left-invariant metrics}

There exists a simple 3-dimensional family of metrics on $\Sp^3$ that contains the standard metric and for which the Hopf fibration is harmonic. In the subsequent section, we will use a carefully chosen path inside this class of metrics to construct harmonic maps $\Sp^4 \to \Sp^2$ and $\mathbb{CP}^2 \to \Sp^2$ satisfying the conditions in \cref{thm: main theorem introduction}. For this, we calculate the Jacobi operator of the Hopf fibration for these metrics and show that for certain metrics the only kernel elements are given by the pullback of Killing vector fields on $\Sp^2$. 

For $\beta_1,\beta_2,\beta_3>0$ we consider the metric 
\begin{equation}
    g_\beta \coloneqq \beta_1 (X^1)^2 + \beta_2 (X^2)^2 + \beta_3 (X^3)^2
\end{equation}
on $\Sp^3$. Dividing by $1/\beta_3$ we may assume that $\beta_3=1$. (Note that after a constant conformal rescaling $g \mapsto \lambda g$ the tension and the Jacobi operator scale $\tau_{\lambda g} (\phi) = (1/\lambda) \tau_g (\phi)$ and $J^\phi_{\lambda g} = (1/\lambda) J^\phi_g$.)

\begin{lem}     \label{lem: tension of Hopf fibration for deformed metric}
    The tension field of the Hopf fibration $\varphi \col \Sp^3 \to \Sp^2$ vanishes for any $\beta_1,\beta_2>0$, and hence the Hopf fibration remains a harmonic map with respect to the metric $g_\beta$ on $\Sp^3$.
\end{lem}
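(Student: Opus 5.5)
We will check harmonicity directly, computing the tension of $\varphi$ with respect to $g_\beta$ in the left-invariant frame. With respect to $g_\beta$ (normalised so that $\beta_3=1$) the rescaled fields $E_1 = \beta_1^{-1/2}X_1$, $E_2 = \beta_2^{-1/2}X_2$, $E_3 = X_3 = \xi$ form a global orthonormal frame. Since $\varphi$ is the quotient by the right $\mathrm{U}(1)$-action generated by $\xi$, we have $\diff\varphi(\xi)=0$, and $\diff\varphi(X_1), \diff\varphi(X_2)$ are identified with the frame $X_1,X_2$ of $\prescript{\varphi}{}{T\Sp^2}$ as in \cref{subsubsec: Computations for Hopf fibration}. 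The metric on the target stays the round one (rescaled by $\tfrac14$), so the pull-back connection $\prescript{\varphi}{}{\nabla}$ is exactly the one given by \cref{lem: Connection of Hopf fibration}; only the Levi-Civita connection of the domain changes.

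The tension is then
\begin{equation*}
    \tau_{g_\beta}(\varphi) = \sum_{i=1}^3 \Big( \prescript{\varphi}{}{\nabla}_{E_i}\big(\diff\varphi(E_i)\big) - \diff\varphi\big(\nabla^{g_\beta}_{E_i}E_i\big) \Big).
\end{equation*}
For $i=3$ the first term vanishes since $\diff\varphi(\xi)=0$. For $i=1,2$, the section $\diff\varphi(E_i) = \beta_i^{-1/2}X_i$ has constant coefficients, so by \cref{lem: Connection of Hopf fibration} we get $\prescript{\varphi}{}{\nabla}_{E_i}(\beta_i^{-1/2}X_i) = \pm 2\beta_i^{-1/2}\alpha(E_i)X_{j} = 0$, because $\alpha(E_1)=\alpha(E_2)=0$. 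It therefore remains to show that $\sum_i \nabla^{g_\beta}_{E_i}E_i$ is vertical (in fact zero). For a left-invariant metric and left-invariant fields, the Koszul formula gives
\begin{equation*}
    2 g_\beta(\nabla_{E_i}E_i, E_k) = 2 g_\beta([E_k,E_i],E_i),
\end{equation*}
and for $[X_a,X_b]=2X_c$ with a diagonal metric, $[E_k,E_i]$ is a multiple of the third basis field and so is $g_\beta$-orthogonal to $E_i$. Hence $\nabla_{E_i}E_i=0$ for each $i$ (this is unimodularity plus diagonal structure constants), and $\tau_{g_\beta}(\varphi)=0$.

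The only point needing care is the identification of $\prescript{\varphi}{}{\nabla}$: \cref{lem: Connection of Hopf fibration} was derived via \cref{lem:  nablaphicircle} using the round metric, but $\prescript{\varphi}{}{\nabla}$ is the pull-back of the target Levi-Civita connection and is independent of the domain metric. So the formula $\prescript{\varphi}{}{\nabla}X_1 = 2\alpha\otimes X_2$, $\prescript{\varphi}{}{\nabla}X_2 = -2\alpha\otimes X_1$ still holds, with $\alpha$ the fixed $1$-form dual to $X_3$ (i.e. $X^3$). This is the step to state explicitly. The rest is the short Koszul computation.
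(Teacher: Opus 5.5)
Your proposal is correct and follows essentially the paper's own argument. Like you, the paper uses that the pull-back connection of \cref{lem: Connection of Hopf fibration} is unchanged, so the $\alpha$-terms drop out of the trace, and that $\nabla^{g_\beta}_{X_i}X_i=0$ for the diagonal left-invariant metric. The only difference is that the paper obtains this last identity from the formula $\nabla_V W=\tfrac12\big([V,W]-\mathrm{ad}(V)^{*}W-\mathrm{ad}(W)^{*}V\big)$, whereas you take it directly from the Koszul formula, which amounts to the same computation.
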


\begin{proof}
    Since the metric $g_\beta$ on $\Sp^3 = \mathrm{Sp}(1)$ is left-invariant, its Levi--Civita connection is given for any two left-invariant vector fields $V,W \in \mathfrak{X}(\Sp^3)^{L}$ by \begin{align*} \nabla^\beta_{V} W = \tfrac{1}{2}\big( [V,W] - \mathrm{ad}(V)^{*_\beta}W - \mathrm{ad}(W)^{*_\beta}V \big) \end{align*} where $\mathrm{ad}(\cdot)^{*_\beta}$ denotes the (metric) adjoint of the adjoint action with respect to the metric $g_\beta$ on $\mathfrak{sp}(1)$. A simple calculation shows
    \begin{align*}
        \langle\mathrm{ad}(X_i)^{*_\beta}X_j,X_k\rangle_\beta = \langle X_j , [X_i,X_k] \rangle_\beta &=  \beta_j \langle X_j, [X_i,X_k] \rangle_{\Sp^3} = \beta_j \langle [X_j,X_i],X_k \rangle_{\Sp^3} \\
    &= \tfrac{\beta_j}{\beta_\ell} \langle [X_j,X_i],X_k \rangle_\beta
    \end{align*}
    where $(j,i,\ell)$ is a permutation of $(1,2,3)$ and where $\langle \cdot,\cdot\rangle_{\Sp^3}$ denotes the standard (bi-invariant) metric on $\Sp^3$. Thus, 
    \begin{align*} 
        \nabla^\beta_{X_i} X_j = \tfrac{1}{2}\big( [X_i,X_j] - \mathrm{ad}(X_i)^{*_\beta}X_j - \mathrm{ad}(X_j)^{*_\beta}X_i \big) = \tfrac{\beta_\ell + \beta_j - \beta_i}{2\beta_{\ell}} [X_i,X_j] 
    \end{align*}
    and in particular, $\nabla^\beta_{X_i}X_i = 0$.

    Since we do not change the metric on $\Sp^2$, we deduce from \cref{lem: Connection of Hopf fibration} that 
    \begin{align*}
        \prescript{\varphi}{}{\nabla}^\beta \diff \varphi = (\nabla^\beta X^1) \otimes X_1 + (\nabla^{\beta} X^2) \otimes X_2 + 2 X^1 \otimes \alpha \otimes X_2 - 2 X^2 \otimes \alpha \otimes X_1
    \end{align*} 
    where $\alpha = g_{\Sp^3}(X_3,\cdot)=X^3$. The trace of the last two terms in the expression of $\prescript{\varphi}{}{\nabla}^\beta \diff \varphi$ therefore vanishes. For the first two terms we note that $(\nabla^\beta_{X_i} X^j)(X_i) = -X^j(\nabla^\beta_{X_i}X_i) = 0$ for any two $i,j \in \{1,2,3\}$. The tension field of the Hopf fibration with respect to the metric $g_\beta$ is therefore given by 
    \begin{equation*}
        \tau_{g_\beta}(\varphi) = \sum \tfrac{1}{\beta_i} (\prescript{\varphi}{}{\nabla}^\beta \diff \varphi) (X_i,X_i) = 0. \qedhere 
    \end{equation*}
\end{proof}

\begin{lem}     \label{lem: Jacobi operator for Hopf map with perturbation of metric}
    Let $\beta = (\beta_1,\beta_2,1)$. Then for any section $u \in C^\infty(\prescript{\varphi}{}{T\Sp^2})$ written as $u = u_1 X_1 + u_2 X_2$, the Jacobi operator of $\varphi$ with respect to $g_\beta$ has the expression
    \begin{equation*}
        J^{\varphi}_\beta u = J^{\varphi} u + \left(\sum_{i=1}^2 \left(1-\frac{1}{\beta_i}\right) \partial^2_{X_i} u_1 + 4 \left(1-\frac{1}{\beta_2}\right)u_1\right) X_1 + \left(\sum_{i=1}^2 \left(1-\frac{1}{\beta_i}\right) \partial^2_{X_i} u_2 + 4 \left(1-\frac{1}{\beta_1}\right)u_2\right) X_2
    \end{equation*}
    where $J^\varphi$ is the Jacobi operator associated with the round metric on $\Sp^3$. That is, if we identify $u$ with the complex-valued function $w = u_1+iu_2$, 
    \begin{equation*}
        J^\varphi_\beta w = J^\varphi w + \left(1-\frac{1}{\beta_1} \right) \partial_{X_1}^2 w + \left(1-\frac{1}{\beta_2}\right) \partial_{X_2}^2 w  + 2 \left(2-\frac{1}{\beta_1} - \frac{1}{\beta_2}\right) w + 2\left(\frac{1}{\beta_1}-\frac{1}{\beta_2}\right) \overline{w} .
    \end{equation*}
\end{lem}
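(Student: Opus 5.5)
Since the target metric is unchanged, the curvature term of $J^\varphi_\beta$ is $-\tr_{g_\beta}(\mathrm{Rm}_h(u,\diff\varphi)\diff\varphi)$. Only the rough Laplacian $\prescript{\varphi}{}{\nabla}^{*_\beta}\prescript{\varphi}{}{\nabla}$ and the trace over the domain depend on $\beta$. I would compute both directly in the global left-invariant frame, using $g_\beta$-orthonormal vectors $X_i/\sqrt{\beta_i}$ and $\beta_3=1$.

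\textbf{Rough Laplacian.} By \cref{lem: Connection of Hopf fibration}, $\prescript{\varphi}{}{\nabla}$ is independent of the domain metric. Explicitly, $\prescript{\varphi}{}{\nabla}_{X_i}u = (\partial_{X_i}u_1)X_1 + (\partial_{X_i}u_2)X_2$ for $i=1,2$, and $\prescript{\varphi}{}{\nabla}_{X_3}u = (\partial_\xi u_1 - 2u_2)X_1 + (\partial_\xi u_2 + 2u_1)X_2$. Then
\[
\prescript{\varphi}{}{\nabla}^{*_\beta}\prescript{\varphi}{}{\nabla}u = -\sum_i \beta_i^{-1}\bigl(\prescript{\varphi}{}{\nabla}_{X_i}\prescript{\varphi}{}{\nabla}_{X_i}u - \prescript{\varphi}{}{\nabla}_{\nabla^\beta_{X_i}X_i}u\bigr).
\]
The proof of \cref{lem: tension of Hopf fibration for deformed metric} gives $\nabla^\beta_{X_i}X_i=0$, so the last term drops. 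Hence the rough Laplacian equals $-\sum_{i=1}^2\beta_i^{-1}\partial_{X_i}^2 u$ (componentwise) plus the $i=3$ term, and the $i=3$ term does not depend on $\beta$.

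Subtracting the round case $\beta=(1,1,1)$ gives the second-order difference
\[
\sum_{i=1}^2(1-\beta_i^{-1})\partial^2_{X_i}u_j .
\]
This is consistent with $\Delta_{\Sp^3} = -\sum_i\partial_{X_i}^2$ on functions, which holds because $\nabla_{X_i}X_i=0$ for the round metric. I would check the sign conventions against \cref{prop: Jacobi Operator Hopf fibration}.

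\textbf{Curvature term.} Here $\diff\varphi(X_1)=X_1$, $\diff\varphi(X_2)=X_2$ and $\diff\varphi(X_3)=0$, where $X_1,X_2$ are viewed in $\prescript{\varphi}{}{T\Sp^2}$ with the metric $\tfrac14 g_{\Sp^2}$, which has curvature $4$. The curvature term is therefore
\[
-\sum_{i=1}^2\beta_i^{-1}\,\mathrm{Rm}(u,X_i)X_i .
\]
For constant curvature $4$ and orthonormal $X_1,X_2$, we have $\mathrm{Rm}(u,X_i)X_i = -4(u - \langle u,X_i\rangle X_i)$ with the paper's sign convention. So $\mathrm{Rm}(u,X_1)X_1 = -4u_2X_2$ and $\mathrm{Rm}(u,X_2)X_2=-4u_1X_1$. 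The curvature term is then $4\beta_2^{-1}u_1X_1 + 4\beta_1^{-1}u_2X_2$, instead of $4u_1X_1+4u_2X_2$ in the round case, which matches the $-4u$ of the earlier lemma. The difference is $4(1-\beta_2^{-1})u_1X_1 + 4(1-\beta_1^{-1})u_2X_2$, which is exactly the claimed zeroth-order term.

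\textbf{Complex form.} Put $w=u_1+iu_2$. Then $4(1-\beta_2^{-1})u_1 + 4i(1-\beta_1^{-1})u_2$ rewrites, using $u_1=(w+\overline w)/2$ and $u_2=(w-\overline w)/(2i)$, as $2(2-\beta_1^{-1}-\beta_2^{-1})w + 2(\beta_1^{-1}-\beta_2^{-1})\overline w$.

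The only real obstacle is bookkeeping of signs: the curvature sign convention, and the fact that $\prescript{\varphi}{}{\nabla}$ (rather than the Levi-Civita connection of $g_\beta$) acts on the $X_j$. I would verify both against the round case, which must reproduce \cref{prop: Jacobi Operator Hopf fibration}.
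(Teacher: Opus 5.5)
Your argument is essentially the paper's. The curvature term is computed in the same way. For the rough Laplacian you use the pointwise frame formula $\prescript{\varphi}{}{\nabla}^{*_\beta}\prescript{\varphi}{}{\nabla}u=-\sum_i\beta_i^{-1}\prescript{\varphi}{}{\nabla}_{X_i}\prescript{\varphi}{}{\nabla}_{X_i}u$, valid because $\nabla^\beta_{X_i}X_i=0$. The paper instead integrates $\langle\prescript{\varphi}{}{\nabla}u,\prescript{\varphi}{}{\nabla}u'\rangle_\beta$ by parts and then uses $\nabla^\beta_{X_i}X_i=0$ to write $\Delta_\beta=-\sum_i\beta_i^{-1}X_iX_i$. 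Your route is slightly more direct, and your second-order term and complex rewriting are correct.

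There is, however, a sign slip in your curvature paragraph that you should fix. The paper's convention gives $\mathrm{Rm}(x,y)z=4g(y,z)x-4g(x,z)y$, as the paper itself records. So $\mathrm{Rm}(u,X_1)X_1=+4u_2X_2$ and $\mathrm{Rm}(u,X_2)X_2=+4u_1X_1$, not $-4u_2X_2$ and $-4u_1X_1$. Hence $\tr_{g_\beta}(\mathrm{Rm}(u,\diff\varphi)\diff\varphi)=4\beta_2^{-1}u_1X_1+4\beta_1^{-1}u_2X_2$, and this is \emph{subtracted} in $J^\varphi_\beta$. In the round case this gives $-4u$, and taking $\beta$ minus round gives $4(1-\beta_2^{-1})u_1X_1+4(1-\beta_1^{-1})u_2X_2$.

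Taken literally, your formulas (a negative $\mathrm{Rm}$, with curvature term $-\sum_i\beta_i^{-1}\mathrm{Rm}(u,X_i)X_i$) give $+4u$ in the round case. So the consistency check you invoke actually fails, and the difference of your two expressions is the negative of what you state. The two errors cancel, which is why your final formula is right. The write-up should use the positive curvature sign and $J=\prescript{\varphi}{}{\nabla}^*\prescript{\varphi}{}{\nabla}-\tr_g(\mathrm{Rm}(\cdot,\diff\varphi)\diff\varphi)$ consistently.
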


\begin{proof}
    The following is analogous to the derivation of the Jacobi operator for the standard metric on $\Sp^3$ in \cref{prop: Jacobi Operator Hopf fibration}. Assuming $\beta_3=1$ and using the expression for $\prescript{\varphi}{}{\nabla}$ derived in \cref{lem: Connection of Hopf fibration} we obtain for two global sections $u= u_1 X_1 + u_2 X_2$ and $u^\prime = u_1^\prime X_1 + u_2^\prime X_2$ of $\prescript{\varphi}{}{T\Sp^2}$ that
    \begin{align*}
        \int_{\Sp^3} \langle \prescript{\varphi}{}{\nabla} u, \prescript{\varphi}{}{\nabla} u^\prime \rangle_\beta \mathrm{vol}_\beta &= \int_{\Sp^3} \langle \diff u_1, \diff u_1^\prime \rangle_\beta + \langle \diff u_2, \diff u_2^\prime \rangle_\beta + 4 (u_1 u_1^\prime + u_2 u_2^\prime ) \\
        &\qquad + 2 (\partial_\xi u_2 \cdot u_1^\prime + u_1 \cdot \partial_\xi u_2^\prime - \partial_\xi u_1 \cdot u_2^\prime - u_2 \cdot \partial_\xi u_1^\prime) \vol_{\beta} \\
        &= \int_{\Sp^3} (\Delta_{\beta} u_1) u_1^\prime + (\Delta_{\beta} u_2) u_2^\prime + 4 (u_1u_1^\prime + u_2 u_2^\prime) \\
        & \qquad + 4( \partial_\xi u_2 u_1^\prime - \partial_\xi u_1 u_2^\prime) \vol_\beta.
    \end{align*}
    Thus, 
    \begin{align*} 
        \prescript{\varphi}{}{\nabla}^{*_\beta}\prescript{\varphi}{}{\nabla} u = (\Delta_\beta u_1 + 4u_1 + 4 \partial_\xi u_2) X_1 + (\Delta_\beta u_2 + 4u_2 - 4 \partial_\xi u_1) X_2. 
    \end{align*}

    Since $(\Sp^2,g_{\Sp^2})$ is a space-form of constant sectional curvature 4, we have $\mathrm{Rm}_{\Sp^2}(x,y)z = 4 g_{\Sp^2}(y,z)x - 4 g_{\Sp^2}(x,z)y$. (\textit{Caution}: $g_{\Sp^2}$ is in our conventions not the standard metric on $\Sp^2$ but a rescaling thereof such that $(\varphi^*g_{\Sp^2})(X_1,X_1)= (\varphi^*g_{\Sp^2})(X_2,X_2)=1$) and therefore
    \begin{align*} 
        \tr_{g_\beta}(\mathrm{Rm}_{\Sp^2}(u,\diff \varphi)\diff \varphi) = \sum \tfrac{1}{\beta_i}\mathrm{Rm}_{\Sp^2}(u,\diff\varphi(X_i))\diff\varphi(X_i) = \tfrac{4}{\beta_1} u_2 X_2 + \tfrac{4}{\beta_2} u_1 X_1.
    \end{align*}

    The full Jacobi operator is therefore given by 
    \begin{align*}
        J^{\varphi}_\beta u &= \prescript{\varphi}{}{\nabla}^{*_\beta}\prescript{\varphi}{}{\nabla} u - \tr_{g_\beta}(\mathrm{Rm}_{\Sp^2}(u,\diff\varphi)\diff\varphi) \\
        &= \big(\Delta_\beta u_1 + 4(1-\tfrac{1}{\beta_2}) u_1 + 4 \partial_\xi u_2\big) X_1 + \big( \Delta_\beta u_2 + 4(1- \tfrac{1}{\beta_1}) u_2 - 4 \partial_\xi u_1\big) X_2
    \end{align*}
    where 
    \begin{align*}
        \Delta_\beta f &= -\sum \tfrac{1}{\beta_i} i_{X_i} \nabla^{\beta}_{X_i} (\diff f(X_j) \cdot X^j) = - \sum \tfrac{1}{\beta_i} \big(X_iX_i f + \diff f (X_j) \cdot (\nabla^{\beta}_{X_i} X^j) (X_i)\big) \\
        &= - \sum \tfrac{1}{\beta_i} X_iX_i f
    \end{align*}
    because $(\nabla^{\beta}_{X_i} X^j) (X_i) = -X^j (\nabla^{\beta}_{X_i} X_i) = 0.$ We therefore have 
    \begin{align*} 
        J^{\varphi}_\beta u = J^{\varphi} u + \big(\textstyle{\sum} (1-\tfrac{1}{\beta_i}) \partial^2_{X_i} u_1 + 4 (1-\tfrac{1}{\beta_2})u_1\big) X_1 + \big(\textstyle{\sum} (1-\tfrac{1}{\beta_i}) \partial^2_{X_i} u_2 + 4 (1-\tfrac{1}{\beta_1})u_2\big) X_2  
    \end{align*} 
    where $J^\varphi$ is the Jacobi operator of the Hopf fibration associated to the standard metric on $\Sp^3$.
\end{proof}

\begin{rem}     \label{rem: Explicit Jacobi operator}
    Note that if $\beta_1 = \frac{1}{1-\varepsilon}$ and $\beta_2 = \frac{1}{1+\varepsilon}$ for some $\varepsilon \in [0,1)$, then the Jacobi operator takes the particularly simple form
    \begin{equation*}
        J^\varphi_\beta = J^\varphi w + \varepsilon(\partial_{X_1}^2w -\partial_{X_2}^2w - 4 \overline{w}).
    \end{equation*}
    This will be useful for making explicit computations.
\end{rem}

\begin{rem}     \label{rem: pertrubed Jacobi operator preserves Wk}
    For any $\beta$, the operator $J^\varphi_\beta$ leaves invariant the finite-dimensional vector spaces $W_k = \oplus_{\ell=0}^k W_{k,\ell}$ (recall that $W_{k,\ell}$ is the space of harmonic homogeneous polynomials on $\R^4 \simeq \C^2$ of degree $k$ and holomorphic degree $\ell$, and $W_k$ the space of harmonic homogeneous polynomials of degree $k$). This follows from the fact that the metrics $g_\beta$ are left-invariant, and therefore $J^\varphi_\beta$ commutes with the Laplacian $\Delta_{\mathbb{S}^3}$, whose eigenspaces are exactly the spaces $W_k$.
\end{rem}

The above observation notably implies:

\begin{prop}        \label{prop: Jacobi fields of left-invariant metrics on S3}
    If $\varepsilon \in (0,1)$ is small enough, the only Jacobi fields of the Hopf fibration with respect to the metric $g_{\beta_\varepsilon}$ where $\beta_{\varepsilon} = (\frac{1}{1-\varepsilon},\frac{1}{1+\varepsilon},1)$ are the horizontal lifts of Killing fields on $\Sp^2$.
\end{prop}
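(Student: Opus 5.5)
The operator $J^\varphi_{\beta_\varepsilon}$ preserves each finite-dimensional space $W_k$ (\cref{rem: pertrubed Jacobi operator preserves Wk}), so I would prove the statement separately on each $W_k$, treating it as a perturbation problem.

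By \cref{rem: Explicit Jacobi operator},
\begin{equation*}
    J^\varphi_{\beta_\varepsilon} = J^\varphi + \varepsilon P, \qquad P w = \partial_{X_1}^2 w - \partial_{X_2}^2 w - 4\overline{w}.
\end{equation*}
On $W_k$ we have $\|\partial_{X_i}^2 w\|_{L^2} \le \|\Delta_{\Sp^3} w\|_{L^2} = k(k+2)\|w\|_{L^2}$. Hence, as a real-linear operator, $P$ has norm at most $C(k^2+1)$ on $W_k$. The unperturbed eigenvalues on $W_k$ are $k(k-2)+8\ell \geq k(k-2)$.

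\textbf{Large $k$.} For $k\geq 3$ the unperturbed eigenvalues are at least $k(k-2)$. Since $P$ commutes with the real self-adjoint structure, $J^\varphi_\beta$ stays self-adjoint for the real $L^2$ product; the term $\overline{w}$ is symmetric because $\langle \overline{w},v\rangle_\R = \Re \int w v$ is symmetric. It would then suffice to check $k(k-2) > C\varepsilon (k^2+1)$ uniformly in $k \geq 3$. This holds once $\varepsilon < \tfrac{3}{10C}$, because $k(k-2)/(k^2+1) \geq 3/10$ for $k\geq3$. A cleaner route is to observe that
\begin{equation*}
    \partial_{X_1}^2-\partial_{X_2}^2 \ge -(-\partial_{X_1}^2-\partial_{X_2}^2) \ge -\Delta_{\Sp^3}+\partial_\xi^2,
\end{equation*}
which bounds the perturbation relative to $\Delta$ with constant $\varepsilon$. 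So for small $\varepsilon$ there is no kernel in $W_k$ for $k\ge3$, and also none in the pieces of $W_1,W_2$ whose unperturbed eigenvalues are nonzero (this uses only a finite-dimensional continuity argument).

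\textbf{Small $k$: the kernel.} The unperturbed kernel is $W_{0,0}\oplus W_{2,0}$, which is 8-dimensional. The horizontal lifts of Killing fields of $\Sp^2$ remain Jacobi fields for every $\beta$: left multiplication acts by isometries of $g_\beta$ commuting with $\varphi$, so this holds by \cref{lem: tension of Hopf fibration for deformed metric}. These lifts form a 3-dimensional subspace of $W_{2,0}$. It therefore remains to show that $J^\varphi_{\beta_\varepsilon}$ is injective on a complement of them, and first-order perturbation theory is the tool for this.

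The first-order change of eigenvalues equals the eigenvalues of the compression $\Pi P\Pi$, where $\Pi$ is the $L^2$-orthogonal projection onto $\ker J^\varphi$ (a real 8-dimensional space). I would compute $P$ explicitly on the bases $\{1, i\}$ of $W_{0,0}$ and $\{\bar z_1^2,\bar z_1\bar z_2,\bar z_2^2\}$ (and $i$ times these) of $W_{2,0}$. The required ingredients are $X_1, X_2$ written in terms of $z,\bar z$. On constants, $P w = -4\overline{w}$, so $P(1)=-4$ and $P(i)=4i$. Both are nonzero and the map is invertible there; moreover $P$ does not mix $W_0$ with $W_2$, since it preserves each $W_k$. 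On $W_{2,0}$, the compression should be a symmetric $6\times 6$ real matrix. I expect its kernel to be exactly the 3-dimensional Killing part, with nonzero eigenvalues on the complementary "$i$ times Killing" directions. This is plausible because the anisotropy breaks the $\mathrm{SO}(3)$-conformal symmetry.

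Given that computation, the remaining kernel directions split off from zero at order $\varepsilon$. The Killing part is exactly preserved, so it requires no perturbation argument. Hence for small $\varepsilon>0$ the kernel is exactly the lifts of Killing fields. The main obstacle is the explicit verification that $\Pi P \Pi$ is nondegenerate on the 3-dimensional complement of the Killing fields in $W_{2,0}$. If it degenerates, I would go to second-order perturbation theory, using the explicit action of $P$ into $W_{2,1}\oplus W_{2,2}$.
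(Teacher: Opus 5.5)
Your treatment of $W_k$ for $k\ge 3$ is fine, and the uniform bound there is a legitimate alternative to the paper's Fredholm-continuity argument. So is your treatment of the pieces of $W_1,W_2$ with nonzero unperturbed eigenvalue, and of $W_0$, where $P$ acts by $w\mapsto -4\overline{w}$. The gap is in the decisive step on $W_{2,0}$. The compression $\Pi P\Pi$, which you expect to be nondegenerate on the ``$i$ times Killing'' directions, is in fact identically zero on all of $W_{2,0}$.

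To see this, write $\partial_{X_1}^2-\partial_{X_2}^2=\tfrac12\big((X_1+iX_2)^2+(X_1-iX_2)^2\big)$. From $[X_i,X_{i+1}]=2X_{i+2}$ one gets $[\xi,X_1\pm iX_2]=\mp 2i(X_1\pm iX_2)$, so this operator shifts the $\partial_\xi$-weight by $\pm 4$. Meanwhile $w\mapsto\overline{w}$ sends weight $m$ to $-m$. Since $W_{2,0},W_{2,1},W_{2,2}$ have weights $-2,0,2$, it follows that $P(W_{2,0})\subset W_{2,2}$, which is $L^2$-orthogonal to $\ker J^\varphi$. The same fact is visible in the paper's $12\times 12$ matrix, whose diagonal block corresponding to $\ker J^\varphi$ is zero. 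Hence first-order perturbation theory produces no splitting at all, and the main line of your argument cannot work.

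The three non-Killing zero modes only move at order $\varepsilon^2$. The actual content of the proposition is therefore the computation you leave to a fallback.

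Since $P$ preserves $W_{2,1}$ and $W_{2,0}\oplus W_{2,2}$ separately, $W_{2,1}$ plays no role. On $W_{2,0}\oplus W_{2,2}$, the operator $J^\varphi_{\beta_\varepsilon}$ has the exact block form $\begin{pmatrix}0&\varepsilon B^*\\ \varepsilon B&16\end{pmatrix}$, where $B=P|_{W_{2,0}}\colon W_{2,0}\to W_{2,2}$. Elementary linear algebra shows its kernel is exactly $\ker B\oplus 0$ for every $\varepsilon\neq 0$; the small eigenvalues are governed by the Schur complement $-\tfrac{\varepsilon^2}{16}B^*B$. So no perturbation theory is needed.

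One must still prove $\operatorname{rank}B=3$, i.e.\ that $B$ is injective on a complement of the Killing lifts. The Killing lifts themselves lie in $\ker B$, because they are Jacobi fields for every $\varepsilon$. The paper checks the rank by computing the matrix explicitly, with $A=\mathrm{diag}(1,-1,-1,0,0,0)$. Without this computation, or a substitute argument for the injectivity of $B$ on that complement, your proof does not establish the statement.
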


\begin{proof}
    Since the operators $J^\varphi_{\beta_\varepsilon}$ leave invariant the spaces $W_k$, they induce by restriction a continuous family of bounded operators from the $L^2$-orthogonal complement of $W_0 \oplus W_2$ in $W^{2,2}(\prescript{\varphi}{}{T\Sp^2})$ to the $L^2$-orthogonal complement of $W_0 \oplus W_2$ in $L^2(\prescript{\varphi}{}{T\Sp^2})$ (where the $L^2$-inner product is taken with respect to the round metric on $\Sp^3$). For $\varepsilon = 0$ this map is invertible since $\ker(J^\varphi) \subset W_0 \oplus W_2$ and therefore the same is true for $J^\varphi_{\beta_\varepsilon}$ when $\varepsilon \in (0,1)$ small enough. Moreover, on $W_0 \subset \ker(J^\varphi)$, we see from \cref{rem: Explicit Jacobi operator} that $J^\varphi_{\beta_\varepsilon}$ acts as $w \mapsto -4\varepsilon \overline{w}$ which is invertible for $\varepsilon \neq 0$. Therefore, we deduce that for $\varepsilon > 0$ small enough, $\ker(J^\varphi_{\beta_\varepsilon}) \subset W_2$.
    
    Let us introduce the operator $Q w = \partial_{X_1}^2w -\partial_{X_2}^2w - 4 \overline{w}$, so that $J^\varphi_{\beta_\varepsilon} = J^\varphi + \varepsilon Q$. It is easy to see that $Q$ leaves invariant $W_{2,0} \oplus W_{2,2}$ and $W_{2,1}$ separately. Moreover, $J^\varphi$ is invertible on $W_{2,1}$, and therefore for $\varepsilon >0$ small enough it follows that $\ker(J^\varphi_{\beta_\varepsilon}) \subset W_{2,0} \oplus W_{2,2}$. Now in the (real) basis
    \begin{align*}
        &i \overline z_1^2 + i \overline z_2^2, ~ \overline z_1^2-\overline{z}_2^2, ~ \overline z_1 \overline z_2, ~ i\overline{z}_1^2-i\overline{z}_2^2, ~ i \overline z_1 \overline z_2, ~ \overline{z}_1^2 + \overline{z}_2^2, \\
        & iz_1^2+iz_2^2, ~ z_1^2- z_2^2, ~ z_1z_2, ~ iz_1^2-iz_2^2, ~ iz_1z_2, ~ z_1^2 + z_2^2,
    \end{align*}
    of $W_{2,0} \oplus W_{2,2}$ the matrix of $J^\varphi_{\beta_\varepsilon} = J^\varphi + \varepsilon Q$ can be written as
    \begin{equation*}
        16\begin{pmatrix}
             I &  \frac{1}{2}\varepsilon A \\ \frac{1}{2}\varepsilon  A & 0
        \end{pmatrix}
    \end{equation*}
    where the matrix $A$ takes the diagonal form
    \begin{equation*}
        A = \begin{pmatrix}
            1 & \\
            & -1 & \\
            & & -1 \\
            & & & 0 \\
            & & & & 0  \\
            & & & & & 0
        \end{pmatrix} \cdot
    \end{equation*}
    Thus the kernel of $J^\varphi_{\beta_\varepsilon}$ acting on $W_{2,0} \oplus W_{2,2}$ has dimension $3$ for $\varepsilon \neq 0$, and must therefore be spanned by the horizontal lifts of the Killing fields of $\Sp^2$.
\end{proof}

    \subsection{The higher-dimensional Hopf fibrations $\Sp^{2n+1} \to \C\mathbb{P}^n$}     \label{subsec: higher Hopf fibrations}

The goal of this part is to prove \cref{thm: hopf fibrations intro}, which in particular shows that all the complex Hopf fibrations $\Sp^{2n+1} \to \mathbb{CP}^n$ satisfy conditions 1--3 in \cref{thm: main theorem introduction}. In \cref{subsubsec: fundamental identities}, we use Sasaki--Einstein geometry in order to derive a few key identities, while \cref{subsubsec: index and nullity of hopf fibrations} carries out the representation-theoretical computations necessary to derive the index and nullity of the Hopf fibrations, and describe their infinitesimal harmonic deformations.

    \subsubsection{Fundamental identities}      \label{subsubsec: fundamental identities}

Computing the spectrum of the Hopf fibration $\varphi \col \Sp^{m-1} = \Sp^{2n+1} \to \C\P^{n}$ when $m = 2(n+1) \geq 6$ turns out to be much more difficult than for $\Sp^3 \to \Sp^2$. Since this is a homogeneous Riemannian fibration, there is a general formula for the Jacobi operator due to Urakawa \cite{urakawa1987stability} which boils down the computation of the spectrum to a representation-theoretical problem, but in practice it is still difficult to carry out the computations explicitly. To prove that the higher-dimensional Hopf fibrations do satisfy properties 1--3 in \cref{thm: main theorem introduction}, we will instead exploit the fact that $\Sp^{2n+1}$ (equipped with the standard round metric) is a \emph{Sasaki--Einstein manifold} (cf. \cite{sparks2011sasaki} for a comprehensive introduction). In particular, this property implies that
\begin{equation}        \label{eq: nablaisasaki}
    \nabla_u \xi = I u, ~~~~ \forall u \in \Gamma(\mathscr H),
\end{equation}
where $I \col \mathscr H \to \mathscr H$ is the endomorphism obtained by lifting the almost complex structure of $T\C\P^{n}$, under the identification $\mathscr{H} \simeq \prescript{\varphi}{}{T\C\P^{n}}$. In particular, $I^2 = -1$. Using this observation we obtain:

\begin{lem}     \label{lem: connection for higher Hopf fibrations}
    For any $u \in \Gamma(\mathscr{H})$,
    \begin{equation*}
        \prescript{\varphi}{}{\nabla} u = \nabla u - (Iu)^\sharp \otimes \xi - \alpha \otimes Iu
    \end{equation*}
    where $\nabla$ is the Levi-Civita connection of the round sphere on $\Sp^{2n+1}$ and $v^\sharp = g(v,\cdot)$ for any $v \in \mathfrak{X}(M)$.
\end{lem}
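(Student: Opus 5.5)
The plan is to start from \cref{lem:  nablaphicircle}, which already gives
\begin{equation*}
    \prescript{\varphi}{}{\nabla} u = \mathscr H(\nabla u) - \alpha \otimes \nabla_u \xi, ~~~~ \forall u \in \Gamma(\mathscr H).
\end{equation*}
By the Sasaki--Einstein identity \eqref{eq: nablaisasaki}, $\nabla_u \xi = Iu$ for horizontal $u$, so the second term is $-\alpha \otimes Iu$. It therefore remains to show that
\begin{equation*}
    \mathscr H(\nabla u) = \nabla u - (Iu)^\sharp \otimes \xi .
\end{equation*}
Since $\xi$ has unit length and spans $\mathscr V$, this amounts to showing that the vertical part $g(\nabla_X u,\xi)\,\xi$ equals $g(Iu,X)\,\xi$ for every $X \in \mathfrak{X}(\Sp^{2n+1})$.

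To compute the vertical component, differentiate the identity $g(u,\xi)=0$, which holds because $u$ is horizontal. This gives
\begin{equation*}
    g(\nabla_X u,\xi) = -g(u,\nabla_X \xi).
\end{equation*}
Split $X = X^{\mathscr H} + \alpha(X)\xi$.

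For the vertical direction, $\nabla_\xi \xi = 0$. This holds because $\xi$ is a unit Killing field whose orbits are the great circles of the Hopf fibration, so $g(\nabla_\xi\xi,Y) = -g(\nabla_Y\xi,\xi) = -\tfrac12 Y|\xi|^2 = 0$. Hence this direction contributes nothing.

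For the horizontal direction, \eqref{eq: nablaisasaki} gives $\nabla_{X^{\mathscr H}}\xi = I X^{\mathscr H}$. So
\begin{equation*}
    g(\nabla_X u,\xi) = -g(u, I X^{\mathscr H}) = g(Iu, X^{\mathscr H}) = g(Iu,X).
\end{equation*}
Here we use that $I$ is $g$-skew on $\mathscr H$, being the lift of the Kähler (hence Hermitian) complex structure of the Fubini--Study metric via the Riemannian submersion. The last equality uses that $Iu$ is horizontal, so $(Iu)^\sharp(\xi)=0$.

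Combining the two directions yields $\mathscr H(\nabla_X u) = \nabla_X u - (Iu)^\sharp(X)\,\xi$, and substituting into \cref{lem:  nablaphicircle} gives the claimed formula. The only point requiring care is sign conventions: the orientation of $I$ in \eqref{eq: nablaisasaki} relative to $\xi = I\partial_r$, and the skew-symmetry step. I expect this bookkeeping to be the main (mild) obstacle. As a sanity check, I would verify that the resulting expression is horizontal-valued; this is automatic for the right-hand side of \cref{lem:  nablaphicircle}. I would also compare with the case $n=1$ of \cref{lem: Connection of Hopf fibration}, keeping in mind the different normalisations used there.
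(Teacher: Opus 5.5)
Your proof is correct and follows essentially the paper's argument: start from the formula $\mathscr H(\nabla u) - \alpha\otimes\nabla_u\xi$ for the pulled-back connection, insert the Sasaki identity $\nabla_u\xi = Iu$, and compute the vertical part of $\nabla u$ by differentiating $g(u,\xi)=0$. The only difference is that the paper evaluates $g(u,\nabla_X\xi)$ in one step using the Killing equation, $g(\nabla_X\xi,u) = -g(X,\nabla_u\xi) = -g(X,Iu)$. This makes your splitting $X = X^{\mathscr H}+\alpha(X)\xi$, the check $\nabla_\xi\xi=0$, and the appeal to the skew-symmetry of $I$ unnecessary, since both facts are already encoded in the skew-symmetry of $\nabla\xi$.
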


\begin{proof}
    By \cref{lem: nablaphicircle} and \eqref{eq: nablaisasaki} we have
    \begin{equation*}
        \prescript{\varphi}{}{\nabla} u = \mathscr{H}(\nabla u) - \alpha \otimes I = \nabla u - g(\xi,\nabla u) \otimes \xi - \alpha \otimes I u .
    \end{equation*}
    Now for any $X \in \mathfrak{X}(M)$, the orthogonality of $u$ and $\xi$ implies $g(\xi,\nabla_X u) = - g(\nabla_X \xi, u)$ and since $\xi$ is a Killing field, $g(\nabla_X \xi, u) = - g(X,\nabla_u \xi) = - g(X,I\xi)$. This proves the lemma.
\end{proof}

By definition, the Hopf fibration $\varphi \col \Sp^{2n+1} \to \C\P^{n}$ is the quotient by the $\mathrm{U}(1)$-action generated by the flow $\textup{Flow}_{t}^\xi$ of $\xi$, and therefore $\varphi$ is invariant under the action of $\mathrm{U}(1)$. In particular, the Jacobi operator $J^\varphi$ (with respect to the round metric on $\Sp^{2n+1}$) commutes with $\textup{Flow}_{t}^\xi$, which means that $\mathrm{U}(1)$ leaves invariant the eigenspaces of $J^\varphi$. Since moreover $J^\varphi$ commutes with the almost-complex structure $I$, we deduce that $J^\varphi$ leaves invariant the eigenspaces of the action of $\mathrm{U}(1)$, defined as
\begin{equation*}
    H_k \coloneqq \{ u \in \Gamma(\mathscr H) \mid \pounds_\xi u = kIu \} \subset \Gamma(\mathscr H), ~~~~ \forall k \in \Z .
\end{equation*}
Thus there exists an orthogonal decomposition of $L^2(\mathscr H)$ as the Hilbert sum
\begin{equation}
    L^2(\mathscr H) = \bigoplus_{k \in \Z} \bigoplus_{\lambda \in \Spec(J^\varphi)} H_{k,\lambda}, ~~~~ H_{k,\lambda} \coloneqq H_k \cap \ker(J^\varphi - \lambda)
\end{equation}
and we may estimate the spectrum of $J^\varphi$ by considering the action of the Jacobi operator on each of the spaces $H_k \subset \Gamma(\mathscr H)$ separately. 

In the following lemma, we relate the Jacobi operator of the Hopf fibration to the Jacobi operator of the identify map on $\Sp^{2n+1}$:

\begin{lem}     \label{lem: Jacobi operator of Hopf vs identity}
    Let $u \in H_k$, that is, $u \in \Gamma(\mathscr H)$ and $\pounds_\xi u = kIu$. Then
    \begin{equation*}
        \int_{\Sp^{m-1}} \langle J^\varphi u, u \rangle \vol_{\Sp^{m-1}} = \int_{\Sp^{m-1}} (\langle J^{\mathrm{id}} u, u \rangle - 2(k+2) |u|^2) \vol_{\Sp^{m-1}}
    \end{equation*}
    where $J^{\mathrm{id}}$ is the Jacobi operator of the identity with respect to the round metric on $\Sp^{m-1} = \Sp^{2n+1}$.
\end{lem}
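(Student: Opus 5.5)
The plan is to compare the two quadratic forms directly. The identity in fact holds pointwise at the level of $|\prescript{\varphi}{}{\nabla} u|^2$ versus $|\nabla u|^2$, and the only integration by parts needed is the one defining the rough Laplacians.

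\emph{Reduction to a first-order identity.} By \eqref{eq: Jacobi for fibration over Einstein manifold}, $J^\varphi u = \prescript{\varphi}{}{\nabla}^*\prescript{\varphi}{}{\nabla} u - \kappa u$, where $\kappa$ is the Einstein constant of the metric on $\C\P^n$ making $\varphi$ a Riemannian submersion from the unit sphere. This is the Fubini--Study metric of holomorphic sectional curvature $4$, and by O'Neill's formulas (or directly) $\kappa = 2(n+1) = m$. On the other hand, \cref{subsec: The identity map} gives $J^{\mathrm{id}} u = \nabla^*\nabla u - (m-2)u$. Integrating by parts on the closed manifold $\Sp^{2n+1}$, the claim becomes
\begin{equation*}
    \int_{\Sp^{m-1}} |\prescript{\varphi}{}{\nabla} u|^2 \vol_{\Sp^{m-1}} = \int_{\Sp^{m-1}} \big(|\nabla u|^2 - 2(k+1)|u|^2\big) \vol_{\Sp^{m-1}} ,
\end{equation*}
since $-m = -(m-2) - 2(k+2) + 2(k+1)$. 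I will check the value of $\kappa$ first, as the constant in the statement depends on it.

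\emph{Pointwise expansion.} By \cref{lem: connection for higher Hopf fibrations}, $\prescript{\varphi}{}{\nabla} u = \nabla u - \mathcal{A}$ with $\mathcal{A} = (Iu)^\sharp\otimes\xi + \alpha\otimes Iu$. Hence
\begin{equation*}
    |\prescript{\varphi}{}{\nabla} u|^2 = |\nabla u|^2 - 2\langle \nabla u,\mathcal{A}\rangle + |\mathcal{A}|^2 .
\end{equation*}
The three terms are evaluated as follows.

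First, $|\mathcal{A}|^2 = 2|u|^2$. Each summand has norm $|Iu|\,|\xi| = |u|$, and the cross term is $g(Iu,\xi)\,g(\xi,Iu) = 0$ because $Iu$ is horizontal.

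Second, $\langle\nabla u,(Iu)^\sharp\otimes\xi\rangle = g(\nabla_{Iu}u,\xi)$. Since $g(u,\xi)=0$, this equals $-g(u,\nabla_{Iu}\xi)$. By \eqref{eq: nablaisasaki}, $\nabla_{Iu}\xi = I^2u = -u$, so the term is $|u|^2$.

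Third, $\langle\nabla u,\alpha\otimes Iu\rangle = g(\nabla_\xi u, Iu)$. Writing $\nabla_\xi u = [\xi,u] + \nabla_u\xi = \pounds_\xi u + Iu$ and using $u\in H_k$, we get $\nabla_\xi u = (k+1)Iu$. The term is therefore $(k+1)|u|^2$.

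Summing, $|\prescript{\varphi}{}{\nabla} u|^2 = |\nabla u|^2 - 2(k+2)|u|^2 + 2|u|^2 = |\nabla u|^2 - 2(k+1)|u|^2$ pointwise. Integrating gives the reduced identity and hence the lemma.

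\emph{Main obstacle.} The computation itself is short. The delicate points are sign and normalisation conventions: the orientation of $I$ relative to $\xi$ in \eqref{eq: nablaisasaki}, the sign convention in $H_k$ (that $\pounds_\xi u = kIu$ uses the same $I$), and the Einstein constant $\kappa = m$. I would verify all three consistently in the case $n=1$ against \cref{prop: Jacobi Operator Hopf fibration}, where $\kappa = 4 = m$ and the eigenvalues of $J^\varphi$ on $W_{k,\ell}$ are known explicitly.
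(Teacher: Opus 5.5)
Your proposal is correct and follows essentially the same route as the paper. It reduces via \eqref{eq: Jacobi for fibration over Einstein manifold} with $\kappa = m$ and $J^{\mathrm{id}} = \nabla^*\nabla - (m-2)$, then expands $|\prescript{\varphi}{}{\nabla} u|^2$ pointwise using \cref{lem: connection for higher Hopf fibrations}, evaluating the two cross terms as $(k+1)|u|^2$ (via $\nabla_\xi u = \pounds_\xi u + \nabla_u \xi$) and $|u|^2$ (via \eqref{eq: nablaisasaki} and $I^2=-1$). The only difference is cosmetic: you explicitly verify $|\mathcal{A}|^2 = 2|u|^2$, including the vanishing cross term $g(Iu,\xi)^2 = 0$, which the paper uses without comment.
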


\begin{proof}
    The Hopf fibration $\varphi \col \Sp^{2n+1} \to \C\P^{n}$ is a Riemannian fibration for a multiple of the Fubini--Study metric on $\C\P^{n}$ which is Einstein with constant $\kappa = 2(n+1) = m$ (the Einstein constant is always the same for regular Sasaki--Einstein manifolds, cf. \cite{sparks2011sasaki}), and thus from \eqref{eq: Jacobi for fibration over Einstein manifold} we obtain
    \begin{equation*}
        \int_{\Sp^{m-1}} \langle J^\varphi u, u \rangle \vol_{\Sp^{m-1}} = \int_{\Sp^{m-1}} (|\prescript{\varphi}{}{\nabla} u|^2 - m|u|^2)\vol_{\Sp^{m-1}} .
    \end{equation*}
    From \cref{lem: connection for higher Hopf fibrations} and taking into account that $|\xi|^2 = 1 = |\alpha|^2$ for the round metric, we deduce the following identity:
    \begin{equation*}
        |\prescript{\varphi}{}{\nabla}u|^2 = |\nabla u|^2 + 2 |u|^2 - 2 \langle \nabla u, \alpha \otimes Iu \rangle - 2 \langle \nabla u, (Iu)^\sharp \otimes \xi \rangle .
    \end{equation*}
    Now on the one hand we have
    \begin{equation*}
        \langle \nabla u, \alpha \otimes Iu \rangle = \langle \nabla_\xi u, I u \rangle = \langle \pounds_\xi u + \nabla_u \xi, I u \rangle = (k+1) |Iu|^2 = (k+1) |u|^2
    \end{equation*}
    since $\pounds_\xi u = k Iu$. On the other hand,
    \begin{equation*}
        \langle \nabla u, (Iu)^\sharp \otimes \xi \rangle = \langle \nabla_{Iu} u, \xi \rangle = - \langle u, \nabla_{Iu} \xi \rangle = - \langle u, I^2 u \rangle = |u|^2 
    \end{equation*}
    since $I^2 = -1$. Putting together the previous computations we obtain
    \begin{equation*}
        |\prescript{\varphi}{}{\nabla} u|^2 - m|u|^2 = |\nabla u|^2 - (m + 2(k+1)) |u|^2 .
    \end{equation*}
    But in \cref{subsec: The identity map} we showed that $J^{\mathrm{id}} = \nabla^* \nabla u - (m-2)|u|^2$, thus the result follows.
\end{proof}

    \subsubsection{Index and nullity}       \label{subsubsec: index and nullity of hopf fibrations}

In order to use the previous lemma in order to control the spectrum of $J^\varphi$, let us first make a few easy observations:
\begin{itemize}
    \item There is a $2(n+1)$-dimensional vector space $W_- \subset \ker(J^\varphi + 2n-1)$ spanned by the projections of the constant vector fields of $\R^m \simeq \C^{n+1}$ onto the orthogonal complement of $\textup{span}\{\xi,J\xi\}$. Indeed, as noted in \cref{subsubsec: further remarks}, the infinitesimal translations in $\R^m$ generate homogeneous Jacobi field of degree $-1$ for the dilation-invariant harmonic map $\R^m \setminus \{0\} \simeq \C^{n+1} \setminus \{0\} \to \C\P^{n}$ corresponding to the Hopf fibration, and thus the projection of those vector fields must be eigenvectors of $J^\varphi$ for the eigenvalue $-m+3 = -2n+1$. Note moreover that the complex structure $I$ on $\mathscr H$ leaves $W_-$ invariant.

    \item There is a finite-dimensional vector space $W_0 \subset \ker(J^\varphi)$, defined as the smallest $I$-invariant vector space containing all the projections onto the orthogonal complement of $\xi$ of the Killing fields generating the action of $\mathrm{SO}(m) = \mathrm{SO}(2(n+1))$ on $\Sp^{m-1} = \Sp^{2n+1}$. Indeed the projections of the Killing fields must definitely be Jacobi fields, and since $I$ commutes with $J^\varphi$ the whole space $W_0$ must be contained in $\ker(J^\varphi)$. To describe $W_0$ more precisely, remark that there is an orthogonal decomposition of the Lie algebra of $\mathrm{SO}(2(n+1))$ as
    \begin{equation*}
        \mathfrak{so}_{2(n+1)} = \mathfrak{u}_{n+1} \oplus \mathfrak{p}_{n+1} = \mathfrak{u}^\perp_{n+1} \oplus \R\xi \mathfrak{} \oplus \mathfrak{p}_{n+1}
    \end{equation*}
    where $\mathfrak{u}^\perp_{n+1}$ is the orthogonal complement of $\xi$ in $\mathfrak{u}_{n+1}$. In terms of dimensions, $\dim(\mathfrak{u}^\perp_{n+1}) = (n+1)^2-1 = n(n+2)$ and $\dim(\mathfrak{p}_{n+1}) = n(n+1)$. The space $\mathfrak{u}^\perp_{m+1}$ can in fact be identified with the Lie algebra of the group $\mathrm{PU}(n+1)$ of isometries of the Fubini--Study metric on $\C\P^n$, and its complexification $\mathfrak{u}^\perp_{n+1} \oplus i \mathfrak{u}^\perp_{n+1}$ is the Lie algebra of $\mathrm{PGL}(n+1,\C)$, the group of biholomorphisms of $\C\P^n$. On the other hand, it is not difficult to see that the vector space defined by the projection of elements of $\mathfrak{p}_{n+1}$ is invariant by $I$, and thus we obtain an isomorphism
    \begin{equation*}
        W_0 \simeq \mathfrak{u}^\perp_{n+1} \oplus I \mathfrak{u}^\perp_{n+1} \oplus \mathfrak{p}_{n+1} \simeq \mathfrak{pgl}_{n+1} \oplus \mathfrak{p}_{n+1}
    \end{equation*}
    where $\mathfrak{pgl}_{n+1}$ is embedded in $W_0$ by taking the horizontal lifts of holomorphic vector fields on $\C\P^n$. We deduce that (over $\R$) $\dim(W_0) = 2n(n+2) + n(n+1) = n(3n+5)$. Moreover, all the Jacobi fields of $W_0$ can be integrated to a harmonic deformation of $\varphi$, by pre-composing $\varphi$ by an isometry of $\Sp^{2n+1}$ and post-composing by a biholomorphism of $\C\P^n$.
\end{itemize}
By the above observations, to prove that $\varphi$ satisfies the desired properties it suffices to show that the restriction of the Jacobi operator $J^\varphi$ to the $L^2$-orthogonal complement of $W_- \oplus W_0$ has positive eigenvalues. To that end, let us denote by $H_k^\perp$ the $L^2$-orthogonal space of $W_- \oplus W_0$ in $H_k$. Then we have the following result:

\begin{lem}     \label{lem: projections}
    Let $k \in \Z$ and let $u \in H^\perp_k \subset L^2(\mathscr{H}) \subset L^2(T\Sp^{m-1})$. Let us denote, for any $\ell \geq 1$, by $u_\ell$ (respectively $u^\prime_\ell$) the orthogonal projection of $u$ onto the subspace $V_\ell \subset L^2(T\Sp^{2n+1})$ (resp. onto $V^\prime_\ell \subset L^2(T\Sp^{2n+1})$) defined in \cref{subsec: The identity map}. Then the following properties hold:
    \begin{enumerate}[(i)]
        \item $u_1 = 0 = u^\prime_1$.
        \item If $k \neq -2$ then $u_2 = 0$.
        \item For any $\ell \geq 3$, if $k \notin \{-\ell, -\ell+2, \cdots, \ell-2,\ell\}$ then $u_\ell = 0$.
        \item For any $\ell \geq 2$, if $k \notin \{-\ell-1,-\ell+1, \ldots, \ell-3,\ell-1\}$ then $u^\prime_\ell = 0$.
    \end{enumerate}
\end{lem}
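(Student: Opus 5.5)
The plan is to work with polynomial vector fields on $\R^{m}=\C^{n+1}$ and track their $\mathrm{U}(1)$-weights. The weight of $u$ is defined by $\pounds_\xi u = kIu$, where the flow of $\xi$ is $z\mapsto e^{it}z$ and $I$ is the lifted complex structure.

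By \cref{subsec: The identity map}, each $V_\ell$ (resp. $V^\prime_\ell$) is the restriction to $\Sp^{2n+1}$ of a space $F_\ell$ (resp. $F^\prime_\ell$) of $1$-forms, equivalently vector fields via the flat metric. Their coefficients are homogeneous harmonic polynomials of degree $\ell-1$ (resp. $\ell$), and these are projected tangentially to $\Sp^{2n+1}$. The key observation is that the $\mathrm{U}(1)$-action commutes with $\mathrm{SO}(2n+2)\supset \mathrm{U}(n+1)$ only partially, but it preserves $\Delta$ and the spaces $V_\ell,V^\prime_\ell$. Since $\pounds_\xi$ is skew-adjoint and preserves $\mathscr{H}$, the orthogonal projection onto $V_\ell$ and onto $V'_\ell$ commutes with $\pounds_\xi$. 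The projection to $\mathscr H$ also commutes with $\pounds_\xi$, because $\xi$ is Killing and $\pounds_\xi\xi=0$.

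Next I would complexify. Then $H_k\otimes\C$ corresponds to sections $u$ with $\pounds_\xi u=kIu$. Writing $u=v+\bar v$ with $v$ of type $(1,0)$ in $\mathscr H\otimes\C$, this becomes $\pounds_\xi v = ik v$. Comparing with the polynomial description, a vector field $P^j(z,\bar z)\partial_{z_j}$ with $P$ of bidegree $(a,b)$ has $\pounds_\xi$-eigenvalue $i(a-b-1)$, since $\partial_{z_j}$ itself has weight $-1$. For $V_\ell$ the degree is $a+b=\ell-1$, so $a-b-1$ ranges over $\{-\ell,-\ell+2,\dots,\ell-2\}$. The conjugate components $\bar P\,\partial_{\bar z_j}$ give the negatives; once the $I$-eigenvalue sign is absorbed, this yields the allowed set $\{-\ell,\dots,\ell\}$ in steps of $2$ claimed in (iii). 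For $V^\prime_\ell$ the degree is $\ell$, which gives $a-b-1\in\{-\ell-1,\dots,\ell-1\}$, matching (iv).

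So $u_\ell\neq0$ forces $k$ into the set in (iii), and $u^\prime_\ell\neq 0$ forces $k$ into the set in (iv). The main obstacle is the bookkeeping of the projection onto $\mathscr H$, and in particular making sure the $(1,0)$-part of $\mathscr H\otimes\C$ corresponds to $I=i$. I will check this convention on $W_-$. The projections of the constant fields $\partial_{z_j}$ lie in $V_1$ and have weight $-1$, while $\partial_{\bar z_j}$ gives the conjugate; this fixes which sign of $k$ occurs for $\ell=1$.

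For (i), $V_1$ consists exactly of the projections of the constant vector fields, and their horizontal parts span $W_-$. Since $u\perp W_-$ and $u$ is horizontal, we get $\langle u,c\rangle=\langle u,\mathscr H c\rangle=0$, so $u_1=0$. Likewise $V_1'$ is the Killing fields of $\mathfrak{so}_{2n+2}$, whose horizontal projections lie in $W_0$, so $u'_1=0$.

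For (ii), $V_2=\iota^*F_2$ comes from traceless symmetric matrices $A$, i.e. the fields $Ax$. Splitting $A$ into its complex-linear and complex-antilinear parts, the fields $Ax$ decompose into bidegree pieces with weights $0$ and $\pm 2$. I would show the weight-$0$ (complex-linear, Hermitian) part has horizontal projection equal to $I$ applied to a projected $\mathfrak u^\perp_{n+1}$-Killing field, since $Ax=I(-IA)x$ with $-IA\in\mathfrak u_{n+1}$ skew. That part therefore lies in $I\mathfrak u^\perp_{n+1}\subset W_0$, and the trace part corresponds to $\xi$, which is vertical. Hence only one weight survives orthogonally to $W_0$. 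By the sign convention fixed above, this surviving weight is $k=-2$, which gives (ii).
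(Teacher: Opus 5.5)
Your overall route is the paper's: track $\mathrm{U}(1)$-weights of polynomial vector fields, identify $\mathscr H(V_1)$ with $W_-$ and $\mathscr H(V'_1)$ with a subspace of $W_0$ for (i), and split $F_2$ into complex-linear and antilinear parts for (ii). Part (i) is fine.

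The gap is in how you obtain the \emph{asymmetric} sets in (ii) and (iv). A weight count by itself only shows that $u'_\ell\neq 0$ forces $\pm k$ to be a weight of $V'_\ell\otimes\C$, and that weight set is symmetric under $k\mapsto -k$. For instance, $z_1^{\ell-1}(z_1\partial_{\bar z_2}-z_2\partial_{\bar z_1})$ is tangent to the spheres, divergence-free, has harmonic coefficients of degree $\ell$, and has weight $\ell+1$. Likewise $\sum_j(\bar B z)_j\partial_{\bar z_j}$ gives weight $+2$ in $V_2\otimes\C$. Your own sentence that the conjugate components $\bar P\,\partial_{\bar z_j}$ ``give the negatives'' produces exactly these weights. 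Applied consistently, your argument therefore only yields $k\in\{-\ell-1,\dots,\ell+1\}$ in (iv) and $k\in\{0,\pm 2\}$ in (ii). You add the conjugates in (iii), where the claim is symmetric and weak enough to survive, and then silently drop them in (iv) and in the ``only one weight survives'' step of (ii). Checking a sign convention on $W_-$ cannot repair this, because a global sign flip maps a symmetric set to itself.

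The missing idea is the reason the $(0,1)$-components never contribute. Since $u$ is horizontal, $\langle u,w\rangle_{L^2}=\langle u,\mathscr H w\rangle_{L^2}$ for any ambient field $w$. Moreover, $\mathscr H$ is fibrewise the orthogonal projection onto the complex subspace $(\C z)^\perp\subset\C^{n+1}$, so it commutes with $J_0=I$ and preserves type. Take a real field $w=2\Re(\sum_j P_j\partial_{z_j})$ with $P_j$ of bidegree $(a,b)$. The $(1,0)$-part of $\mathscr H w$ is then $\mathscr H(\sum_j P_j\partial_{z_j})$, whose $\pounds_\xi$-eigenvalue is $i(a-b-1)$; that is, $\mathscr H w\in H_{a-b-1}$. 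The $\partial_{\bar z_j}$-terms are just the conjugate half of the same real field, not a new weight.

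Combine this with $H_j\perp H_k$ for $j\neq k$, which holds because $\pounds_\xi$ and $I$ are commuting skew-adjoint operators on $L^2(\mathscr H)$. This gives:
\begin{itemize}
\item $u_\ell\neq 0\Rightarrow k\in\{-\ell,\dots,\ell-2\}$, which is stronger than (iii);
\item $u'_\ell\neq 0\Rightarrow k\in\{-\ell-1,\dots,\ell-1\}$;
\item in (ii), the antilinear part of $F_2$ only meets $H_{-2}$, while the Hermitian part lands in $I\mathfrak u^\perp_{n+1}\subset W_0$.
\end{itemize}
No sign is left to choose, since $I=J_0$ is forced by $\xi=I\partial_r$ and $\xi(z)=iz$. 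The paper builds in the same one-sidedness by writing the dual form as $\iota^*\Re(\eta_\ell)$ with $\eta_\ell$ of type $(1,0)$ and tracking only the weight of $\eta_\ell$.
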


\begin{proof}
    Properties (iii) and (iv) in fact hold for any $\ell \geq 1$. Indeed, since $\pounds_\xi u = k Iu$ it follows that the flow $\textup{Flow}_{t}^\xi$ of $\xi$ acts by $(\textup{Flow}_{t}^\xi)^* u = \mathrm{e}^{tkI} u$ (where we use the exponential of $I$ seen as an endomorphism of $\mathscr H$), and since the eigenspace decomposition $L^2(T\Sp^{m-1}) = \oplus_\ell (V_\ell \otimes V^\prime_\ell)$ is invariant under the action of $\mathrm{SO}(m)$ if follows that $(\textup{Flow}_{t}^\xi)^* u_\ell = \mathrm{e}^{tkI} u_\ell$ and $(\textup{Flow}_{t}^\xi)^* u^\prime_\ell = \mathrm{e}^{tkI} u^\prime_\ell$ for all $\ell \geq 1$. In particular, the dual $1$-forms $\alpha_\ell = u^\sharp_\ell = g(u_\ell,\cdot)$ and $\alpha^\prime_\ell = (u^\prime)^\sharp_\ell = g(u^\prime_\ell,\cdot)$ satisfy
    \begin{equation*}
        (\textup{Flow}_{t}^\xi)^* \alpha_\ell = g(\mathrm{e}^{tkI} u_\ell,\cdot) = g(u_\ell, \mathrm{e}^{-tkI} \cdot) =  \alpha_\ell  \circ \mathrm{e}^{-tkI}, ~~ (\textup{Flow}_{t}^\xi)^* \alpha^\prime_\ell = \alpha^\prime_\ell  \circ \mathrm{e}^{-tkI},
    \end{equation*}
    for all $\ell \geq 1$. Now for any $\ell \geq 1$, there exists a unique $(1,0)$-form $\eta_\ell$ on $\R^m \simeq \C^{n+1}$ whose coefficients are (complex-valued) harmonic polynomials of degree $\ell-1$ such that $\alpha_\ell = \iota^* \Re(\eta_\ell)$ (cf. \cref{subsec: The identity map} and the fact that there is a $1$-to-$1$ correspondence between real-valued $1$-forms and $(1,0)$-forms on $\C^{n+1}$), and from the above computation we deduce that $(\textup{Flow}_{t}^\xi)^* \eta_\ell = \mathrm{e}^{-kit} \eta_\ell$. In the same way, there is a unique $(1,0)$-form on $\C^{n+1}$ whose coefficients are harmonic polynomials of degree $\ell$ and $\alpha^\prime_\ell = \iota^* \Re(\eta^\prime_\ell)$, and this $1$-form must satisfy $(\textup{Flow}_{t}^\xi)^*\alpha^\prime_\ell = \mathrm{e}^{-ikt} \eta^\prime_\ell$. Now, any constant $(1,0)$-form $\eta$ on $\C^{n+1}$ satisfies $(\textup{Flow}_{t}^\xi)^* \eta = \mathrm{e}^{it} \eta$, and from this observation it is easy to deduce the more general fact that if $\eta$ is a $(1,0)$-form on $\C^{m+1}$ whose coefficients are homogeneous polynomials of degree $d$ and $(\textup{Flow}_{t}^\xi)^*\eta = \mathrm{e}^{i\delta t} \eta$ then $\delta \in \{-d+1,-d+3,\ldots,d-1,d+1\}$ unless $\eta = 0$. Applying this with $d = \ell-1$, $\delta = -k$ and $\eta = \eta_\ell$ we obtain point (iii), and with $d = \ell$, $\delta = -k$ and $\eta = \eta^\prime_\ell$ we obtain point (iv).

    It remains to prove the first two points. The fact that $u_1 = 0$ is clear since $W_-$ is the projection of $V_1$ onto the horizontal distribution $\mathscr H$ and we assumed $u$ to be orthogonal to $W_- \oplus W_0$ for the $L^2$-inner product. On the other hand, we have seen that $W_0$ can be decomposed as $W_0 \simeq \mathfrak{u}_{n+1}^\perp \oplus \mathfrak{p}_{n+1} \oplus I \mathfrak{u}_{n+1}^\perp$, where $\mathfrak{u}_{n+1}^\perp \oplus \mathfrak{p}_{n+1} \simeq \mathfrak{so}_{2(n+1)} / \R \xi$ is isomorphic to the projection of $V^\prime_1$ onto $\mathscr H$. This proves $u_1^\prime = 0$. It is not difficult to see that $I\mathfrak{u}_{m+1}$ is identified with the subspace of $C^\infty(\mathscr H)$ spanned by the projection of the vector fields on $\Sp^{m-1}$ whose dual $1$ forms can be written as $\alpha = \iota^* \Re(\eta)$ where $\eta = \sum a_{ij} \overline{z}_i\diff z_j$ where $a_{ij}$ is a traceless hermitian $(n+1) \times (n+1)$ matrix. Such $1$-forms on $\C^{n+1}$ span a subspace of $F_2$ in the notations of \cref{subsec: The identity map}, whose complement is spanned by those $1$-forms which can be written as $\Re(\sum a_{ij} z_i \diff z_j)$ where $a_{ij}$ is a symmetric $(n+1) \times (n+1)$ complex matrix. These $1$-forms obviously satisfy $(\textup{Flow}_{t}^\xi)^* \eta = \mathrm{e}^{2it} \eta$, and thus arguing as in the previous paragraph we deduce property (ii). 
\end{proof}

We are finally equipped to prove:

\begin{thm}
    For any $n \geq 2$, the index of the Hopf fibration $\varphi \col \Sp^{2n+1} \to \C\P^n$ is $2(n+1)$, its nullity is $n(3n+5)$, and the lowest positive eigenvalue of $J^\varphi$ is greater or equal to $1$. Moreover, all the Jacobi fields of $\varphi$ can be integrated by pre-composing $\varphi$ with an isometry of $\Sp^{2n+1}$ and post-composing $\varphi$ with a biholomorphism of $\mathbb{CP}^n$.
\end{thm}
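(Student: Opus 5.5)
The plan is to combine \cref{lem: Jacobi operator of Hopf vs identity} with \cref{lem: projections} and the explicit spectrum of $J^{\mathrm{id}}$ from \cref{subsec: The identity map}. Since $J^\varphi$ preserves each $H_k$ and also preserves $W_- \oplus W_0$ (both are sums of eigenspaces, and the index and nullity count is already accounted for by them), it is enough to show that for every $k\in\Z$ and every nonzero $u \in H_k^\perp$,
\[
\int \langle J^\varphi u,u\rangle \ge \int |u|^2 .
\]
Granted this, the index is $\dim W_- = 2(n+1)$ and the nullity is $\dim W_0 = n(3n+5)$, the lowest positive eigenvalue is at least $1$, and integrability holds because $\ker J^\varphi = W_0$, whose elements integrate by the description given before \cref{lem: projections}.

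\textbf{Step 1: reduce to a Rayleigh bound for $J^{\mathrm{id}}$.} By \cref{lem: Jacobi operator of Hopf vs identity},
\[
\int \langle J^\varphi u,u\rangle = \sum_\ell \bigl(\lambda_\ell \|u_\ell\|^2 + \lambda'_\ell \|u'_\ell\|^2\bigr) - 2(k+2)\|u\|^2,
\]
with $\lambda_\ell = \ell^2+(\ell-2)(m-2)$ and $\lambda'_\ell = (\ell-1)(m+\ell-1)$. It therefore suffices to show that every surviving component has eigenvalue at least $2(k+2)+1 = 2k+5$.

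\textbf{Step 2: case analysis using \cref{lem: projections}.}
\begin{itemize}
\item By (i), the components with $\ell=1$ vanish.
\item For $u_2$, we have $k=-2$ by (ii), so the requirement is $\lambda_2 = 4 \ge 1$. This holds.
\item For $u_\ell$ with $\ell\ge 3$, (iii) gives $k\le \ell$, so the requirement is $2\ell+5 \le \ell^2+(\ell-2)(m-2)$. With $m\ge 6$ the right side is at least $\ell^2+4\ell-8$, and the inequality reduces to $\ell^2+2\ell-13\ge 0$. This holds for $\ell\ge 3$.
\item For $u'_\ell$ with $\ell\ge 2$, (iv) gives $k\le \ell-1$, so the requirement is $2\ell+3 \le (\ell-1)(m+\ell-1)$. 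For $\ell=2$ the left side is $7$ and the right side is $m+1\ge 7$. For larger $\ell$ the right side grows quadratically, so the inequality persists.
\end{itemize}
Summing over components gives
\[
\int \langle J^\varphi u,u\rangle \ge \|u\|^2 .
\]

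\textbf{Main obstacle.} The difficulty lies in the sharp cases: $\ell=2$ in $V'_2$ with the extremal $k=1$, and keeping track of the exact admissible range of $k$ in each case. The decisive input is that $m=2n+2\ge 6$. For $n=1$ the $V'_2$ bound degenerates, since $m+1 = 5 < 7$, which is why the $\Sp^3$ case was handled separately by explicit computation. I would double-check the parity and endpoint conventions in (iii) and (iv), because an off-by-one in the maximal $k$ would break the $V'_2$ case.

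One must also check that $H_k^\perp$ together with $W_-\oplus W_0$ exhausts $L^2(\mathscr H)$ and that $J^\varphi$ respects this splitting. This follows from self-adjointness of $J^\varphi$ and from $W_-$ and $W_0$ being $J^\varphi$-invariant and $I$-invariant, hence compatible with the $H_k$ decomposition.
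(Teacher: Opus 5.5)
Your proposal is correct and follows essentially the same route as the paper: both combine \cref{lem: Jacobi operator of Hopf vs identity} with the constraints on $k$ from \cref{lem: projections} and the spectrum of $J^{\mathrm{id}}$. Your component-by-component check against the threshold $2k+5$ repackages the paper's split into the cases $k=-2$ (where it gets $\lambda\ge 4$) and $k\neq -2$ (where it gets $\lambda\ge 1$). One small precision in your last paragraph: $W_-\oplus W_0$ is compatible with the $H_k$-splitting because it is invariant under the $\mathrm{U}(1)$-flow of $\xi$ together with $I$, not because of $I$-invariance alone (both invariances hold here).
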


\begin{proof}
    Let $k \in \Z$ and $u \in H_k^\perp \cap \ker(J^\varphi-\lambda)$, and assume that $u \neq 0$. We want to prove that $\lambda > 0$. On the one hand, $\int_{\Sp^{m-1}} \langle J^\varphi u, u \rangle \vol_{\Sp^{m-1}} = \lambda \int_{\Sp^{m-1}} |u|^2 \vol_{\Sp^{m-1}}$. On the other hand, by \cref{lem: Jacobi operator of Hopf vs identity} and \cref{lem: projections} we have
    \begin{align*}
        \int_{\Sp^{m-1}} \langle J^\varphi u, u \rangle \vol_{\Sp^{m-1}} & = \int_{\Sp^{m-1}} \langle (J^{\id} u, u \rangle - 2(k+2) |u|^2 ) \vol_{\Sp^{m-1}} \\
            & = \sum_{\ell \geq 2} (\ell^2 + (\ell-2)(m-2) - 2(k+2)) \|u_\ell\|^2_{L^2} \\
            & ~~~~~ + \sum_{\ell \geq 2} ((\ell-1)(m+\ell-1)-2(k+2)) \|u^\prime_\ell\|^2_{L^2} .
    \end{align*}
    When $k = -2$, we have $\ell^2 + (\ell-2)(m-2) \geq 4$ and $(\ell-1)(m+\ell-1) \geq m+1 \geq 7$ as $m \geq 6$, for any $\ell \geq 2$, and therefore we obtain $\int_{\Sp^{m-1}} \langle J^\varphi u, u \rangle \vol_{\Sp^{m-1}} \geq 4 \|u\|_{L^2}^2$, which implies that $\lambda \geq 4$ in this case. Now suppose that $k \neq -2$; then by the previous lemma, $u_2 = 0$. On the other hand, for any $\ell \geq 3$ such that $u_\ell \neq 0$, we know that $k \leq \ell$ by the previous lemma, and therefore 
    \begin{equation*}
        \ell^2 + (\ell-2)(m-2) - 2(k+2) \geq \ell^2 + (\ell-2)(m-2) - 2(\ell+2) = (\ell-2)(m-4) + \ell^2 - 8 \geq 1
    \end{equation*}
    since $\ell \geq 3$. In the same way, for any $\ell \geq 2$ such that $u^\prime_\ell \neq 0$, \cref{lem: projections} implies that $k \leq \ell - 1$ and therefore
    \begin{align*}
        (\ell-1)(m+\ell-1)-2(k+2) \geq (\ell-1)(m+\ell-1)-2(\ell+1) & = (m-2)(\ell-1) - 4 + (\ell-1)^2 \\
            & \geq (m-6) + (\ell-1)^2 \geq 1
    \end{align*}
    since $\ell \geq 1$ and $m \geq 6$ by assumption. Thus we deduce that $\int_{\Sp^{m-1}} \langle J^\varphi u, u \rangle \vol_{\Sp^{m-1}} \geq \|u\|_{L^2}^2$ if $k \neq -2$, and thus $\lambda \geq 1$ in this case. This proves that the eigenvalues of $J^\varphi$ on the $L^2$-orthogonal complement of $W_- \oplus W_0$ in $C^\infty(\mathscr H)$ are greater or equal to $1$, as desired.
\end{proof}

%% file: section6_examples.tex
    \section{Examples from deformed suspensions}        \label{sec: Examples from deformed suspensions}

In this section, we construct examples of metrics and harmonic maps satisfying the assumptions of \cref{thm: main theorem introduction}. The main difficulty consist in ensuring that the last assumption (condition 4) is satisfied. In order to do this, the idea is to consider a situation which is sufficiently symmetric for the kernel of the Jacobi operator (at the relevant rates) to be understandable, but sufficiently non-generic to be able to rule out potentially problematic Jacobi fields. We discuss our strategy in more detail in \cref{subsec: General stategy of examples}, before constructing explicit examples in the remainder of the section.

    \subsection{General strategy}       \label{subsec: General stategy of examples}

We will construct examples of conically singular harmonic maps satisfying the assumptions of \cref{thm: main theorem introduction} based on the following easy but useful observation.

\begin{prop}    \label{prop: t-independent harmonic maps on I x M}
    Let $I \subset \R$ be an open interval, let $\{g_t\}_{t \in I}$ be a family of Riemannian metrics on a compact manifold $\Sigma$ and let $M = I \times \Sigma$ be endowed with the metric $g = \diff t^2 + g_t$. We consider the map $\phi \col M \rightarrow N$ given by $\phi(t,x) = \varphi(x)$, where $\varphi \col \Sigma \rightarrow N$ is a smooth map (independent of $t$). Then the tension of $\phi$ satisfies
    \begin{equation*}
        \tau(\phi,g)_{(t,x)} = \tau(\varphi,g_t)_{x}, ~~~ \forall (t,x) \in I \times \Sigma.
    \end{equation*}
    Hence $\phi$ is harmonic if and only if $\varphi: \Sigma \rightarrow N$ is harmonic for the metric $g_t$ on $\Sigma$ for all $t \in I$.
\end{prop}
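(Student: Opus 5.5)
The plan is to compute the Hessian of $\phi$ directly in an adapted orthonormal frame and then take its trace. Fix a point $(t_0,x_0)\in I\times\Sigma$ and choose a local $g_{t_0}$-orthonormal frame $e_1,\dots,e_{m-1}$ of $T\Sigma$ near $x_0$. At the point $(t_0,x_0)$, the vectors $\partial_t, e_1,\dots,e_{m-1}$ form a $g$-orthonormal basis of $T_{(t_0,x_0)}M$; we regard the $e_i$ as $t$-independent vector fields on $M$. Hence
\begin{equation*}
    \tau_g(\phi) = \Hess_g(\phi)(\partial_t,\partial_t) + \sum_{i} \Hess_g(\phi)(e_i,e_i)
\end{equation*}
at that point, and the task reduces to computing each term.

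For the first term, $\diff\phi(\partial_t)=0$ because $\phi$ does not depend on $t$. Since $g=\diff t^2+g_t$, the curves $t\mapsto(t,x)$ are unit-speed geodesics orthogonal to the slices, so $\nabla^g_{\partial_t}\partial_t=0$. It follows that
\begin{equation*}
    \Hess_g(\phi)(\partial_t,\partial_t)=\prescript{\phi}{}{\nabla}_{\partial_t}(\diff\phi(\partial_t))-\diff\phi(\nabla^g_{\partial_t}\partial_t)=0.
\end{equation*}

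For the slice terms, write
\begin{equation*}
    \Hess_g(\phi)(e_i,e_i)=\prescript{\phi}{}{\nabla}_{e_i}(\diff\phi(e_i))-\diff\phi(\nabla^g_{e_i}e_i).
\end{equation*}
The first piece only involves derivatives tangent to the slice $\{t_0\}\times\Sigma$, where $\phi$ restricts to $\varphi$. Therefore it equals $\prescript{\varphi}{}{\nabla}_{e_i}(\diff\varphi(e_i))$ computed on $\Sigma$. For the second piece, decompose $\nabla^g_{e_i}e_i$ into its part tangent to the slice and its normal part. By the Gauss formula, the tangential part is $\nabla^{g_{t_0}}_{e_i}e_i$, because the induced metric on the slice is $g_{t_0}$. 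The normal part is a multiple of $\partial_t$, namely $-\mathrm{II}(e_i,e_i)\partial_t$ with $\mathrm{II}=\tfrac12\partial_t g_t$. This normal part is killed by $\diff\phi$. Consequently
\begin{equation*}
    \Hess_g(\phi)(e_i,e_i)=\Hess_{g_{t_0}}(\varphi)(e_i,e_i).
\end{equation*}
Summing over $i$ gives $\tau(\phi,g)_{(t_0,x_0)}=\tau(\varphi,g_{t_0})_{x_0}$. The equivalence with harmonicity then follows immediately.

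The only step that needs care is the slice computation, and specifically the justification that the normal component of $\nabla^g_{e_i}e_i$ is proportional to $\partial_t$. I would verify this via Koszul's formula, or simply by noting that $\partial_t$ spans the orthogonal complement of the slice for a metric of the form $\diff t^2+g_t$. Everything else is routine.
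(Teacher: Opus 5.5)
Your proof is correct and follows essentially the same route as the paper: the $\partial_t\partial_t$ term vanishes because $\diff\phi(\partial_t)=0$ (you also make explicit the fact $\nabla^g_{\partial_t}\partial_t=0$, which the paper uses only implicitly). For the slice terms, the Gauss formula splits $\nabla^g_{e_i}e_i$ into $\nabla^{g_t}_{e_i}e_i$ plus a component along $\partial_t$, and that normal component is annihilated by $\diff\phi$.
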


\begin{proof}
    For the sake of computing the tension of $\phi$, we may compute $\prescript{\phi}{}{\nabla} \diff \phi(X,Y)$ when $X = Y = \partial_t$ or when $X, Y$ are tangent to $\{ t \} \times M$ and are invariant under time translations. In the first case, we have 
	\begin{equation}
		\prescript{\phi}{}{\nabla} \diff \phi(\partial_t,\partial_t) = (\phi^* \nabla^h)_{\partial_t} \diff \phi(\partial_t) = 0
	\end{equation}
	since $\diff \phi(\partial_t) = 0$. In the other case, we see that
    \begin{equation*}
        \prescript{\phi}{}{\nabla} \diff \phi(X,Y) = (\phi^* \nabla^h)_X(\diff \phi(Y)) - \diff \phi(\nabla^{g}_X Y) = (\varphi^* \nabla^h)_X(\diff\varphi(Y)) - \diff\varphi(\nabla^{g_t}_X Y) - \diff \phi(\II_t(X,Y))
    \end{equation*}
    where $\II_t(X,Y)$ is the second fundamental form of $\{t\} \times \Sigma$ in $M$; it is colinear to $\partial_t$ and hence $\diff \phi(\II_t(X,Y)) = 0$ and the proposition follows.
\end{proof}

For such a harmonic map, the Jacobi operator takes a simple form:

\begin{prop}    \label{prop: Jacobi for warped product and constant map}
    In the above setup, let us assume that $\varphi \col \Sigma \rightarrow N$ is harmonic with respect to $g_t$ for any $t \in I$. Then for any $u \in C^\infty(\prescript{\phi}{}{TN}) \simeq C^\infty(I, C^{\infty}(\prescript{\varphi}{}{TN}))$, we have
    \begin{equation*}
        (J^\phi u)(t) = - u^{\prime\prime}(t) - \frac{1}{2} \tr_{g_t}(g^\prime_t) u^\prime(t) + J^\varphi_{g_t} u(t)
    \end{equation*}
    where $J^\varphi_{g_t}$ is the Jacobi operator of $\varphi$ for the metric $g_t$.
\end{prop}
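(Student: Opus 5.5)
The plan is to start from the general formula \eqref{eq: Jacobi operator in general}, $J^\phi u = \prescript{\phi}{}{\nabla}^*\prescript{\phi}{}{\nabla} u - \tr_g(\mathrm{Rm}_h(u,\diff\phi)\diff\phi)$, and treat its two terms separately, working with respect to the orthogonal splitting $TM = \R\partial_t \oplus T\Sigma$ induced by $g = \diff t^2 + g_t$.

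\emph{Curvature term.} Since $\diff\phi(\partial_t)=0$ and $\diff\phi|_{\{t\}\times\Sigma} = \diff\varphi$, taking the $g$-trace in a $g$-orthonormal frame $\{\partial_t, e_1,\ldots,e_{m-1}\}$ (with $e_i$ orthonormal for $g_t$) shows that the $\partial_t$-contribution drops out. We are left with $\tr_{g_t}(\mathrm{Rm}_h(u(t),\diff\varphi)\diff\varphi)$.

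\emph{Rough Laplacian.} The cleanest route is to compute the quadratic form and integrate by parts, as in the proofs of the cone formula (\cref{prop: Jacobi operator of a cone}) and \cref{prop: Jacobi Operator Hopf fibration}.

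1. The bundle $\prescript{\phi}{}{TN} = \pr_\Sigma^*\prescript{\varphi}{}{TN}$ carries the pulled-back connection. Hence $\prescript{\phi}{}{\nabla} u = \diff t\otimes u'(t) + \prescript{\varphi}{}{\nabla}u(t)$, where $u'$ is the plain $t$-derivative, the fibres $\prescript{\varphi}{}{TN}_x$ being $t$-independent.

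2. For compactly supported $u,v$ (in $t$) we get
\begin{equation*}
\int_M\langle\prescript{\phi}{}{\nabla}u,\prescript{\phi}{}{\nabla}v\rangle \vol_g = \int_I\int_\Sigma \big(\langle u',v'\rangle + \langle \prescript{\varphi}{}{\nabla}u,\prescript{\varphi}{}{\nabla}v\rangle_{g_t}\big)\vol_{g_t}\,\diff t .
\end{equation*}

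3. Next we integrate by parts in $t$, using $\partial_t \vol_{g_t} = \tfrac12\tr_{g_t}(g_t')\vol_{g_t}$. This turns the first term into $\int\langle -u'' - \tfrac12\tr_{g_t}(g_t')u', v\rangle\vol_{g_t}\diff t$, while the second term becomes $\int\langle \prescript{\varphi}{}{\nabla}^{*_{g_t}}\prescript{\varphi}{}{\nabla}u, v\rangle$ on each slice.

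Combining the two terms with the curvature computation gives exactly $-u''-\tfrac12\tr_{g_t}(g_t')u' + J^\varphi_{g_t}u(t)$. Note that harmonicity of $\varphi$ for every $g_t$ (\cref{prop: t-independent harmonic maps on I x M}) is only used so that $\phi$ is harmonic, which is what makes the Jacobi operator given by \eqref{eq: Jacobi operator in general} meaningful here.

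The main point needing care is step 3: we must check that the volume-form derivative yields precisely $\tfrac12\tr_{g_t}(g_t')$, and that no mixed terms arise from $\nabla^g_{\partial_t}$ acting on $T^*\Sigma$ components. The latter is avoided by working with the weak (quadratic form) formulation rather than computing $\nabla^g$ directly.
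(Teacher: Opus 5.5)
Your proposal is correct and follows the same route as the paper: you compute $\prescript{\phi}{}{\nabla}^*\prescript{\phi}{}{\nabla}$ weakly against test sections compactly supported in $t$. You integrate by parts in $t$ using $\partial_t \vol_{g_t} = \tfrac12\tr_{g_t}(g_t')\vol_{g_t}$, and you drop the $\partial_t$-contribution to the curvature trace because $\diff\phi(\partial_t)=0$. Your coefficient $\tfrac12\tr_{g_t}(g_t')$ is the right one: the paper's intermediate display for the rough Laplacian omits the $\tfrac12$, which is a typo there, and the statement and its later use in \cref{sec: Examples from deformed suspensions} carry the $\tfrac12$.
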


\begin{proof}
    We have $(\prescript{\phi}{}{\nabla} u)(t) = u^\prime(t) \diff t + \prescript{\varphi}{}{\nabla} u$ where $\prescript{\varphi}{}{\nabla}= \varphi^*(\nabla^h)$ is independent of $t$. Now suppose that $v \in C^\infty(I, C^{\infty}(\prescript{\varphi}{}{TN}))$ is supported in a compact subset of $I$. Then we have
    \begin{align*}
        & \int_I \int_\Sigma \langle u^\prime(t), v^\prime(t) \rangle_{h_\varphi} \vol_{g_t} \diff t + \int_I \int_\Sigma \langle \prescript{\varphi}{}{\nabla} u(t) , \prescript{\varphi}{}{\nabla} v(t) \rangle_{g_t,h_\varphi} \vol_{g_t} \diff t \\
        & = - \int_I \int_\Sigma \langle u^{\prime\prime}(t), v(t) \rangle_{h_\varphi} \vol_{g_t} \diff t - \int_I \int_\Sigma \langle u^\prime(t), v(t) \rangle_{h} (\vol_{g_t})^\prime \diff t \\
        & ~~~ + \int_I \int_\Sigma \langle (\prescript{\varphi}{}{\nabla})^{*_{g_t}}\prescript{\varphi}{}{\nabla} u(t) , v(t) \rangle_{g_t,h_\varphi} \vol_{g_t} \diff t .
    \end{align*}
    On the other hand, $(\vol_{g_t})^\prime = \frac{1}{2} \tr_{g_t}(g^\prime_t)\vol_{g_t}$, and therefore we obtain
    \begin{equation*}
        ((\prescript{\phi}{}{\nabla})^*\prescript{\phi}{}{\nabla} u)(t) = - u^{\prime\prime}(t) - \tr_{g_t}(g^\prime_t) u^\prime(t) + (\prescript{\varphi}{}{\nabla})^{*_{g_t}}\prescript{\varphi}{}{\nabla} u(t) .
    \end{equation*}
    To conclude the proof, remark that
    \begin{equation*}
        \tr_{g} (\mathrm{Rm}_h(u,\diff \phi)\diff \phi) = \mathrm{Rm}_h(u,\tfrac{\partial \phi}{\partial t})\tfrac{\partial \phi}{\partial t} + \tr_{g_t}(\mathrm{Rm}_h(u,\diff\varphi)\diff\varphi) = \tr_{g_t}(\mathrm{Rm}_h(u,\diff \varphi)\diff \varphi)
    \end{equation*}
    since $\frac{\partial \phi}{\partial t} = 0$.
\end{proof}

In the subsequent sections, we will construct harmonic maps $\Sp^m \to \Sp^{m-1}$ and $\Sp^4 \to \Sp^2$ with two, and $\mathbb{CP}^2 \to \Sp^2$ with one conical singularity. Our strategy will be to identify $\Sp^m \setminus \{s_1,s_2 \} \simeq I \times \Sp^{m-1}$ (for the first two examples)  and $\mathbb{CP}^2 \setminus \{ \mathbb{CP}^1,s\} \simeq I \times \Sp^3$ (for the last example). We then start with a given harmonic map $\varphi \col (\mathbb{S}^{m-1},g_{\mathbb{S}^{m-1}}) \rightarrow (N,h)$ and seek a family of metrics $g_t$ for $t \in I$ on $\mathbb{S}^{m-1}$ such that $\varphi$ remains harmonic with respect to each metric $g_t$. By \cref{prop: t-independent harmonic maps on I x M} above $\phi(t,p) = \varphi(p)$ defines a ($t$-independent) harmonic map on $(I \times \Sp^{m-1}, \diff t^2+ g_t)$. For a judicious choice of metrics, $\diff t^2 + g_t$ admits a smooth completion as a Riemannian metric onto the sphere $\Sp^m$ (or the projective space $\mathbb{CP}^2$), and the map $\phi(t,p) = \varphi(p)$ is a harmonic map with two conical singularities (in the last case, $\phi$ extends smoothly over $\mathbb{CP}^1 \subset \mathbb{CP}^2$ so that the resulting map is singular only in the single point $s$). Our choice of $\varphi$ and $(N,h)$ will imply that conditions 1--3 in \cref{thm: main theorem introduction} are immediately satisfied. 

In order to understand the kernel of the Jacobi operator of $\phi$, the idea is to choose $g_t$ such that the Jacobi operators $J^\varphi_{g_t}$ for every $t\in I$ leave invariant the eigenspaces of $J^\varphi_{g_{\Sp^{m-1}}}$ (or finite direct sums thereof). In this case, the Jacobi equation for the map $\phi$ reduces by the previous proposition to a countable collection of vector-valued linear ODEs. We can then identify which of these ODEs could potentially result in Jacobi fields that are of order $\mathcal{O}(r^{-1})$ close to each singular point, and carefully design the path of metrics to rule out those which are not the pull-back of a Jacobi field of $(N,h)$.

The following key lemma will enable us to rule out all but finitely many potential obstructions; this is essentially the maximum principle, specialised to a class of ODEs that we will need to consider:

\begin{lem}     \label{lem: Preparatory ODE}
    Fix $T > 0$, $m \geq 4$, $\lambda > 0$, and let $a, b \col (-T,T) \to \R$ be two functions such that $b(t) > 0$ for any $t \in (-T,T)$ and for $\varepsilon > 0$ small enough, $a(t) = -\frac{m-1}{T-t}$ and $b(t) = \frac{\lambda}{(T-t)^2}$ for $t \in (T-\varepsilon,T)$. Let $f \col (-T,T) \rightarrow \R$ be a solution of the ODE
    \begin{equation*}
        -f^{\prime\prime}(t) - a(t) f^\prime(t) + b(t) f(t) = 0 .
    \end{equation*}
    Suppose that there exists $\tau \in (-T,T)$ such that for any $t \in (-T,\tau]$, $f(t) > 0$ and $f^\prime(t) > 0$. Then:
    \begin{enumerate}[(i)]
        \item $f(t) > 0$ and $f^\prime(t) > 0$ for all $t \in (-T,T)$.
        \item There exists a constant $C > 0$ such that $f(t) \geq C (T-t)^{-\frac{m-2 + \sqrt{(m-2)^2+4\lambda}}{2}}$ for all $t \in [0,T)$.
    \end{enumerate}
\end{lem}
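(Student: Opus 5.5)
Part (i) is a maximum-principle argument, and part (ii) is an explicit Euler-equation analysis on the end interval $(T-\varepsilon,T)$ where the coefficients are prescribed.

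\textbf{Step 1: positivity propagates.} Set $t^* \coloneqq \sup\{ t \in (-T,T) \mid f>0 \text{ and } f^\prime>0 \text{ on } (-T,t] \}$, so $t^*\geq \tau$. Suppose $t^*<T$. On $(-T,t^*)$ the function $f$ is increasing and positive, so $f(t^*)\geq f(\tau)>0$. By continuity $f^\prime(t^*)=0$, since otherwise the set would extend past $t^*$. Evaluating the ODE at $t^*$ gives
\begin{equation*}
  f^{\prime\prime}(t^*) = -a(t^*)f^\prime(t^*) + b(t^*)f(t^*) = b(t^*)f(t^*)>0 .
\end{equation*}
Hence $f^\prime$ is strictly increasing near $t^*$, and so $f^\prime(t)<f^\prime(t^*)=0$ for $t<t^*$ close to $t^*$. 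This contradicts $f^\prime>0$ on $(-T,t^*)$. Therefore $t^*=T$, which proves (i). The only care needed is to use that the ODE is linear with continuous coefficients, so $f\in C^2$ and the evaluation at $t^*$ is legitimate.

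\textbf{Step 2: explicit solution near $T$.} Substitute $s=T-t$ on $(0,\varepsilon)$. Then $f^\prime=-\partial_s f$, and the ODE becomes
\begin{equation*}
  -\partial_s^2 f - \tfrac{m-1}{s}\,\partial_s f + \tfrac{\lambda}{s^2} f = 0 .
\end{equation*}
This is exactly the radial Jacobi ODE from the proof of \cref{prop: Description of the roots}. Its solutions are $f = A s^{\zeta^+} + B s^{\zeta^-}$ with
\begin{equation*}
  \zeta^\pm = \frac{-(m-2)\pm\sqrt{(m-2)^2+4\lambda}}{2},
\end{equation*}
and the two roots are distinct because $\lambda>0$. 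Moreover $\zeta^+>0$ and $\zeta^-<-(m-2)<0$.

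\textbf{Step 3: identify the growing mode.} Since $f>0$ and $f^\prime>0$ on $(T-\varepsilon,T)$, the function $f$ is increasing in $t$, i.e. decreasing in $s$, and positive as $s\to 0^+$. If $B=0$, then $f = A s^{\zeta^+}$ tends to $0$ as $s\to 0$, which is incompatible with $f$ positive and decreasing in $s$ (a positive function that decreases towards $s=0$ cannot tend to $0$ there). If $B<0$, then $f\to-\infty$, contradicting $f>0$. Hence $B>0$.

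\textbf{Step 4: the lower bound (ii).} With $B>0$, we have $f(t)\geq \tfrac{B}{2}(T-t)^{\zeta^-}$ for $t$ close to $T$, and $\zeta^- = -\tfrac{m-2+\sqrt{(m-2)^2+4\lambda}}{2}$ is the claimed exponent. On the remaining compact interval $[0,T-\delta]$, $f$ is positive and continuous, so it is bounded below by a positive constant, while $(T-t)^{\zeta^-}$ is bounded above there. Shrinking $C$ therefore gives the bound on all of $[0,T)$.

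The main obstacle is Step 3, namely excluding $B=0$ with the right sign conventions. The argument above handles it using monotonicity.
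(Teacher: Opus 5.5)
Your proof is correct and follows essentially the same route as the paper. For (i) you use a first-touching-point argument: you evaluate the ODE at $t^*$ to get $f''(t^*)=b(t^*)f(t^*)>0$, whereas the paper obtains $f''\geq A>0$ on a small interval $[\theta-\delta,\theta]$ and integrates. For (ii) you solve the Euler-type equation explicitly on $(T-\varepsilon,T)$ and force the coefficient of $(T-t)^{\zeta^-}$ to be positive using $f\geq f(\tau)>0$, as the paper does; the parenthetical in your Step 3 is worded ambiguously (the point is just that $f(s)\geq f(s_0)>0$ for $0<s<s_0$), and your explicit handling of $[0,T-\delta]$ spells out a step the paper leaves implicit.
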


\begin{proof}
    Let us define 
    \begin{equation*}
        \theta = \sup \{ t \in (-T,T) \mid \forall s \in (-T, t], ~ f^\prime(s) > 0 \} .
    \end{equation*}
    By our assumptions, it follows that $\theta \in (\tau, T]$, and we want to prove that $\theta = T$. By contradiction, let us assume that $\theta < T$. Then we must have $f^\prime(\theta) = 0$ and $f^\prime(t) > 0$ for any $t \in [\tau,\theta)$. In particular, $f(t) \geq f(\tau) > 0$ for any $t \in [\tau,\theta]$. By continuity, there exists $A, \delta > 0$ such that for $t \in [\theta-\delta,\theta]$ we have
    \begin{equation*}
        a(t) f^\prime(t) \leq A ~~~ \text{and} ~~~ b(t) f(t) \geq 2A
    \end{equation*}
    and from the ODE satisfied by $f$ we deduce that
    \begin{equation*}
        f^{\prime\prime}(t) \geq A > 0, ~~~ \forall t \in [\theta-\delta, \theta] .
    \end{equation*}
    This implies that $f^\prime(\theta) \geq f^\prime(\theta-\delta) > 0$, which yields a contradiction. Thus $\theta= T$, which proves point (i) of the lemma.

    This in particular implies that $f(t) \geq f(\tau) > 0$ for all $t \in [\tau,T)$. On the other hand, we can solve the ODE explicitly for $t \in [T-\varepsilon,T)$, and we must have
    \begin{equation*}
        f(t) = C_- (T-t)^{\nu_-} + C_+ (T-t)^{\nu_+}, ~~~ \forall t \in [T-1/2,T)
    \end{equation*}
    for some constants $C_-,C_+$, where
    \begin{equation*}
        \nu^-_\lambda = - \frac{m-2+ \sqrt{(m-2)^2+4\lambda}}{2} < -1 , ~~ \text{and} ~~ \nu^+_\lambda = \frac{-m+2 + \sqrt{(m-2)^2+4\lambda}}{2} > 0 .
    \end{equation*}
    Since $\lambda > 0$, the second term goes to $0$ as $t \rightarrow T$, and coupled with the condition that $f(t) \geq f(\tau)$ this implies that $C_- > 0$, which proves point (ii).
\end{proof}

    \subsection{Harmonic map from $\Sp^m$ to $\Sp^{m-1}$ with two conical singularities}        \label{subsec: map S^m to S^{m-1}}

To implement the strategy outlined in the previous section in a simple setting, let us fix a smooth, non-decreasing function $\kappa \col (0,\infty) \rightarrow (-\infty, 0]$, such that $\kappa(t) = \log(t)$ for $t \in (0,1/2]$ and $\kappa(t) = 0$ for $t \in [1,+\infty)$. For any $T \geq 2$, we may define $\kappa_T \col (-T,T) \rightarrow (-\infty,0]$ as $\kappa_T(t) \coloneqq \kappa(T+t)$ for $t \in (-T,0]$ and $\kappa_T(t) = \kappa(T-t) = \kappa(T-t)$ for $t \in [0,T)$. The simplest example of our ansatz is to consider the harmonic map $\varphi = \id \col \Sp^{m-1} \rightarrow \Sp^{m-1}$ and to define $\phi \col \Sp^m\setminus \{s_1,s_2\} \simeq (-T,T) \times \Sp^{m-1} \to \Sp^{m-1}$ as $\phi(t,\sigma) = \sigma$. If we equip $(-T,T) \times \Sp^{m-1}$ with the metric $\diff t^2 + \mathrm{e}^{2\kappa_T(t)}g_{\Sp^{m-1}}$, then $\phi$ is harmonic by \cref{prop: t-independent harmonic maps on I x M}. Moreover, this metric smoothly extends over the entire $\Sp^m$ and $\phi \col (\Sp^m, g_T \coloneqq \diff t^2 + \mathrm{e}^{2\kappa_T(t)}g_{\Sp^{m-1}}) \to (\Sp^{m-1},g_{\Sp^{m-1}})$ is a conically singular harmonic map with two singularities at $\{s_1,s_2\}$ modelled on $\varphi = \id$. By the discussion in \cref{subsec: The identity map}, $\phi$ satisfies conditions 1--3 in \cref{thm: main theorem introduction}.

We will now show that for a generic choice of $T$, the last condition in \cref{thm: main theorem introduction} is also satisfied. Diagonalising the Jacobi operator $J^{\id}_{g_{\Sp^{m-1}}}$ in an $L^2$-basis, the Jacobi equation reduces by \cref{prop: Jacobi for warped product and constant map} to the countable collection of ODEs
\begin{equation}    \label{eq: id example Jacobi ODE in Vk}
    - u_k^{\prime\prime}(t) - (m-1) \kappa_T^\prime(t) u_k^\prime(t) + \mathrm{e}^{-2\kappa_T(t)} \lambda_k u_k(t) = 0
\end{equation}
where $\lambda_k$ ranges across the eigenvalues of $J^{\id}_{g_{\Sp^{m-1}}}$. We denote the eigenspace corresponding to $\lambda_k$ by $V_{\lambda_k} \subset C^{\infty}(\prescript{\varphi}{}{T\Sp^{m-1}})$.

\begin{lem}
    Let $u$ be a Jacobi field such that $|u| = \mathcal{O}(r^{-1})$ close to each singular point (with $r$ denoting the distance to the respective singular point), then $u(t) \in V_{-(m-3)} \oplus V_0$ for any $t \in (-T,T)$.
\end{lem}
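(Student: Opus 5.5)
We decompose the Jacobi field $u$ along the $L^2$-eigenspace decomposition of $J^{\id}_{g_{\Sp^{m-1}}}$ and show that every component with positive eigenvalue vanishes identically. Concretely, for each eigenvalue $\lambda_k>0$ (by \cref{subsec: The identity map} these are all $\geq 2$) and each element $e$ of an $L^2$-orthonormal basis of $V_{\lambda_k}$, set $f(t) \coloneqq \langle u(t), e\rangle_{L^2}$. Since $J^{\id}$ is self-adjoint and $t$-independent, \cref{prop: Jacobi for warped product and constant map} shows that $f$ solves \eqref{eq: id example Jacobi ODE in Vk}. Writing this as $-f''-a f'+bf=0$ with $a=(m-1)\kappa_T'$ and $b=\mathrm{e}^{-2\kappa_T}\lambda_k>0$, we have $\kappa_T(t)=\log(T-t)$ near $t=T$. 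Hence $a(t)=-\tfrac{m-1}{T-t}$ and $b(t)=\tfrac{\lambda_k}{(T-t)^2}$ there, which is exactly the setting of \cref{lem: Preparatory ODE}. By the symmetry $t\mapsto -t$ the same holds at the left end.

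The decay hypothesis translates into a growth bound on $f$. Near the singular point at $t=T$ we have $r=T-t$, and $|u|=\mathcal{O}(r^{-1})$ gives $|f(t)|\leq C(T-t)^{-1}$ by Cauchy--Schwarz on the fixed sphere. The same holds near $t=-T$. Near each end, $f$ is an explicit combination $C_-r^{\nu^-}+C_+r^{\nu^+}$ with $\nu^-<-(m-2)<-1$ (as in \cref{prop: indicial roots of model operator}). The bound therefore forces $C_-=0$ at both ends, so $f(t)=C_+(T-t)^{\nu^+}$ near $T$ and $f(t)=C'_+(T+t)^{\nu^+}$ near $-T$, with $\nu^+>0$.

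We argue by contradiction, assuming $f\not\equiv 0$. Then $C'_+\neq 0$, and after replacing $f$ by $-f$ we may take $C'_+>0$. On some interval $(-T,\tau]$ we then have $f>0$ and $f'=C'_+\nu^+(T+t)^{\nu^+-1}>0$. \cref{lem: Preparatory ODE}(ii) then gives $f(t)\geq C(T-t)^{\nu^-}$ as $t\to T$, which blows up faster than $(T-t)^{-1}$ and contradicts the bound just established. Hence $f\equiv0$ for every eigenvector with $\lambda_k>0$.

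It follows that $u(t)$ lies in the sum of the non-positive eigenspaces, which by \cref{subsec: The identity map} is exactly $V_{-(m-3)}\oplus V_0$. The main step is the reduction to \cref{lem: Preparatory ODE}: checking that the initial sign and derivative condition is met near $-T$, which uses the explicit local solution together with the vanishing of the $r^{\nu^-}$ branch. A remaining technical point is justifying the projection: $u$ is smooth on the open cylinder, so $f$ is $C^2$ and the ODE holds pointwise.
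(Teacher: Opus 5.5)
Your proof is correct and follows essentially the same route as the paper. You project onto the positive eigenspaces of $J^{\id}$ and use the explicit Euler-type solutions near $t=-T$, with the decay bound there, to kill the $(T+t)^{\nu^-}$ branch; after normalising the sign, you invoke \cref{lem: Preparatory ODE} to force growth like $(T-t)^{\nu^-}$ at the other end, contradicting the $\mathcal{O}((T-t)^{-1})$ bound (your $b=\mathrm{e}^{-2\kappa_T}\lambda_k$ and local solution $C'_+(T+t)^{\nu^+}$ correctly state what the paper writes with small typos).
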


\begin{proof}
    Suppose that $\lambda_k > 0$. Then on $(-T,-T+1/2]$, the solutions of \eqref{eq: id example Jacobi ODE in Vk} read
\begin{equation*}
    u_k(t) = C_- (T+t)^{\nu^-_{\lambda_k}} + C_+ (T+t)^{\nu^+_{\lambda_k}}
\end{equation*}
where $\nu^-_{\lambda_k} < -1$ and $\nu^+_{\lambda_k} > 0$ are as in the proof of \cref{lem: Preparatory ODE}. Therefore, if $|u_k(t)| = \mathcal{O}((T+t)^{-1})$ we must have $C_- = 0$. After multiplying $u_k$ by $-1$ if necessary, we may assume that $C_+ \geq 0$. Thus for $t \in (-T,-T+\frac{1}{2})$ we have $u_k(t) = C_+ t^{\nu^+_{\lambda_k}} \geq 0$ and $u_k^\prime(t) = \nu^+_{\lambda_k} C_+ t^{\nu^+_{\lambda_k}-1} \geq 0$, where equality holds if and only if $C_+ = 0$. Thus it follows from \cref{lem: Preparatory ODE} (applied to the functions $a(t) = (m-1)\kappa^\prime_T(t)$ and $b(t) = e^{-2\kappa^\prime_T(t)}\lambda_k$) that $u_k(t)$ satisfies the bound $|u_k(t)| = \mathcal{O}((T-t)^{-1})$ if and only if $C_+ = 0$. This implies the lemma because $-(m-3)$ and $0$ are the only non-positive eigenvalues of $J^{\id}_{g_{\Sp^{m-1}}}$.
\end{proof}

In order to deal with the eigenspaces $V_{-(m-3)}$ and $V_0$, let us first introduce the following definition:

\begin{Def}
    We call $T \in [2,\infty)$ \emph{admissible} if the unique solution of the ODE
    \begin{equation*}
        f^{\prime\prime}(t) + (m-1)\kappa^\prime(t) f^\prime(t) + (m-3) \mathrm{e}^{-2\kappa(t)} f(t) = 0
    \end{equation*}
    such that $f(t) = t^{-1}$ on the interval $(0,1/2]$ satisfies $f(T) \neq 0$ and $f^\prime(T) \neq 0$. 
\end{Def}

The significance of this definition is the following:

\begin{prop}        \label{prop: Existence for identity}
    If $T \in [2,\infty)$ is admissible, the only Jacobi fields satisfying 
    \begin{equation*}
        |u(t)| = \mathcal{O}((T+t)^{-1}+(T-t)^{-1}) 
    \end{equation*}
    are the pull-back of Killing fields on $\Sp^{m-1}$.
\end{prop}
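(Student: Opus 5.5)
By the preceding lemma, any Jacobi field $u$ with the stated bound satisfies $u(t) \in V_{-(m-3)} \oplus V_0$ for all $t$. So it only remains to analyse the ODE \eqref{eq: id example Jacobi ODE in Vk} for $\lambda = -(m-3)$ and $\lambda = 0$, one component at a time. The component-wise reduction is legitimate because $J^{\id}$ acts diagonally on $V_{-(m-3)} = V_1$ and $V_0 = V^\prime_1$. In each component, the solution space of the second-order scalar ODE on $(-T,T)$ is two-dimensional. The task is to determine which solutions are $\mathcal{O}((T+t)^{-1})$ near $t=-T$ and simultaneously $\mathcal{O}((T-t)^{-1})$ near $t = T$.

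The case $\lambda = 0$ comes first. Near $t=-T$ we have $\kappa_T(t) = \log(T+t)$, and the ODE has solutions $C_1 + C_2 (T+t)^{-(m-2)}$. Since $m \geq 4$, the bound forces $C_2 = 0$. A constant solves the ODE everywhere, so the unique solution with this initial behaviour is constant on all of $(-T,T)$, and it is then automatically bounded near $t = T$. Constant sections with values in $V_0$ are exactly the pull-backs of Killing fields of $\Sp^{m-1}$, by the identification of $V^\prime_1$ with the Killing fields in \cref{subsec: The identity map}. So the $V_0$-part of $u$ is a pull-back of a Killing field, and this direction contributes nothing more.

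Now take $\lambda = -(m-3)$. Near $t=-T$ the indicial roots are $-1$ and $-(m-3)$. For $m \geq 5$ the bound $\mathcal{O}((T+t)^{-1})$ allows both, which would make the allowed space two-dimensional and break uniqueness. I therefore plan to match carefully with the definition of admissibility. First I rewrite the ODE in the variable $s = T+t$: on $(-T,0]$ we have $\kappa_T(t) = \kappa(s)$, so the equation becomes exactly the ODE in the definition, up to the sign convention $\lambda_k = -(m-3)$. In $s$ the local solutions near $s = 0$ are $s^{-1}$ and $s^{-(m-3)}$ for $m \geq 5$, and $s^{-1}, s^{-1}\log s$ for $m=4$. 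I expect the intended reading is that the decay condition singles out the branch $f(s) = s^{-1}$, which is the homogeneous degree $-1$ translation mode; I will treat the $s^{-(m-3)}$ branch as excluded. The main obstacle is justifying this selection, and I will address it by invoking \cref{lem: kernels at different rates and leading order term} to interpret $\mathcal{O}(r^{-1})$ in the Lockhart--McOwen sense at rate $-1$.

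With the branch fixed, the ODE is symmetric under $t\mapsto -t$, since $\kappa_T$ is even. The solution from the left end is $f(T+t)$ on $(-T,0]$, and any admissible solution from the right end is $c\, f(T-t)$ on $[0,T)$. Gluing these $C^1$ at $t=0$ requires $f(T) = c f(T)$ and $f^\prime(T) = -c f^\prime(T)$. Admissibility, that is $f(T)\neq 0$ and $f^\prime(T)\neq0$, then gives $c=1$ from the first condition and $c=-1$ from the second, a contradiction. Hence the only solution with the required behaviour at both ends is zero. So the $V_{-(m-3)}$-component vanishes, and $u$ is the pull-back of a Killing field.
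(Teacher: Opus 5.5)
Your argument has the same structure as the paper's proof. You use the preceding lemma to reduce to $V_{-(m-3)}\oplus V_0$. On $V_0$ you note that the branch $(T+t)^{-(m-2)}$ violates the bound, so that component is constant, i.e.\ a lifted Killing field. On $V_{-(m-3)}$ you glue $A\,f(T+t)$ to $c\,f(T-t)$ at $t=0$, where $f(T)\neq 0$ and $f'(T)\neq 0$ force both $c=1$ and $c=-1$. This last step is exactly the paper's observation that $(a(T),a'(T))$ and $(a(T),-a'(T))$ cannot be colinear when $T$ is admissible.

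However, your branch-selection step rests on a false claim. With $s=T+t$, you assert that for $m\geq 5$ the bound $\mathcal{O}(s^{-1})$ admits both $s^{-1}$ and $s^{-(m-3)}$. It does not. For $m\geq 5$ we have $-(m-3)\leq -2<-1$, so $s^{-(m-3)}$ is strictly more singular than $s^{-1}$. The inequality $|C_1 s^{-1}+C_2 s^{-(m-3)}|\leq C s^{-1}$ as $s\to 0$ therefore forces $C_2=0$. Likewise, for $m=4$ the second branch $s^{-1}\log s$ is excluded because $|\log s|\to\infty$.

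So the hypothesis itself makes the space of allowed local solutions at each end one-dimensional, spanned by the solution equal to $s^{-1}$ near $s=0$. There is no ``intended reading'' to choose and no obstacle to overcome. As written, your decision to ``treat the $s^{-(m-3)}$ branch as excluded'' by reinterpreting $\mathcal{O}(r^{-1})$ in Lockhart--McOwen terms is an unjustified step. It is also unnecessary: in the weighted-space reading the branch is excluded for the same reason, $-(m-3)<-1$. Replace that passage with the elementary comparison of exponents above and the proof is complete.
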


\begin{proof}
    Let us first prove that any such Jacobi field must take values in $V_0$. We only need to prove that if there is a solution $f \col (-T,T) \rightarrow \R$ of the scalar ODE
    \begin{equation*}
        f^{\prime\prime}(t) + (m-1) \kappa^\prime_T(t) f^\prime(t) + \mathrm{e}^{-2\kappa_T(t)} (m-3) f(t) = 0
    \end{equation*}
    such that $|f(t)| = \mathcal{O}((T+t)^{-1} + (T-t)^{-1})$, then $T$ is non-admissible. If such a solution exists, then there exist constants $A, B \neq 0$ such that $f(t) = A a(t+T)$ for $t \in (-T,0]$ and $f(t) = B a(T-t)$ for $t \in [0,T)$, where $a \col (0,\infty) \rightarrow \R$ is the unique solution of the ODE
    \begin{equation*}
        f^{\prime\prime}(t) + (m-1) \kappa^\prime(t) f^\prime(t) + \mathrm{e}^{-2\kappa(t)}(m-3) f(t) = 0
    \end{equation*}
    coinciding with $t^{-1}$ on $(0,1/2]$. Therefore, the couples
    \begin{equation*}
        (a(T),a^\prime(T)) = (f(0), f^\prime(0))/A ~~ \text{and} ~~ (a(T),-a^\prime(T)) = (f(0),f^\prime(0))/B
    \end{equation*}
    are colinear in $\R^2$, whence either $a(T) = 0$ or $a^\prime(T) = 0$. By definition, this does not occur when $T$ admissible.

    For the eigenvalue $0$, the Jacobi equation reduces to the scalar ODE
    \begin{equation*}
        f^{\prime\prime}(t) + (m-1) \kappa^\prime_T(t) f^\prime(t) = 0 
    \end{equation*}
    whose solutions on the interval $(-T,-T+1/2]$ are of the form
    \begin{equation*}
        f(t) = C + D (T+t)^{-(m-2)}
    \end{equation*}
    and since $m \geq 4$ it follows that the only solutions satisfying the required bounds are constant functions. The corresponding Jacobi fields are precisely the constant functions valued in the kernel of $J^{\id}_{g_{\Sp^{m-1}}}$, that is, the pull-backs of the Killing fields of $\Sp^{m-1}$.
\end{proof}

\begin{lem}     \label{lem: Point (i) of admissible values}
    For any $R \in [2,\infty)$, there is exactly one $T \in [R,R+\tfrac{\pi}{2\sqrt{m-3}})$ such that the unique solution of the ODE
    \begin{equation*}
            f^{\prime\prime}(t) + (m-1) \kappa^\prime(t) f^\prime(t) + (m-3) \mathrm{e}^{-2\kappa(t)} f(t) = 0 ,
    \end{equation*}
    such that $f(t) = t^{-1}$ on $(0,1/2]$ satisfies $f(T) = 0$ or $f^\prime(T) = 0$. Consequently, the admissible values are dense in $[2,\infty)$.
\end{lem}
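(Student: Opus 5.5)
The plan is to exploit that $\kappa \equiv 0$ on $[1,\infty)$, so that there the ODE becomes $f'' + (m-3) f = 0$. First, $f$ is defined on all of $(0,\infty)$: it is the unique solution of a linear ODE with smooth coefficients on $(0,\infty)$, prescribed on $(0,1/2]$. On $[1,\infty)$ we write
\begin{equation*}
    f(t) = A\cos(\omega t) + B\sin(\omega t) = \rho \cos(\omega t - \theta_0), \qquad \omega = \sqrt{m-3} > 0,
\end{equation*}
with $\rho = \sqrt{A^2+B^2}$. The main point to check is that $\rho \neq 0$, i.e.\ that $f$ does not vanish identically on $[1,\infty)$. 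This holds by uniqueness: if $f \equiv 0$ on $[1,\infty)$, then $f(1)=f'(1)=0$, forcing $f\equiv 0$ on $(0,\infty)$, which contradicts $f(t)=t^{-1}$ on $(0,1/2]$. (One could also use the positivity argument of \cref{lem: Preparatory ODE}, but uniqueness suffices.)

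With $\rho\neq 0$, we have $f'(t) = -\rho\omega \sin(\omega t - \theta_0)$. Hence $f(T)=0$ or $f'(T)=0$ if and only if $\cos(\omega T - \theta_0)\sin(\omega T-\theta_0) = 0$, i.e.\ $\sin(2(\omega T - \theta_0)) = 0$, i.e.
\begin{equation*}
    \omega T - \theta_0 \in \tfrac{\pi}{2}\Z .
\end{equation*}
This set of $T$ is an arithmetic progression with spacing $\frac{\pi}{2\omega} = \frac{\pi}{2\sqrt{m-3}}$. Any half-open interval $[R, R+\tfrac{\pi}{2\sqrt{m-3}})$ with $R\ge 2 \geq 1$ therefore contains exactly one point of it, which proves the first claim.

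For the density statement, the non-admissible values in $[2,\infty)$ form this discrete progression intersected with $[2,\infty)$, so they are closed and nowhere dense. Their complement, the admissible values, is therefore open and dense in $[2,\infty)$. In fact it is the complement of a discrete set, which is slightly stronger than what the statement requires.
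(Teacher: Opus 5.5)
Your proof is correct and follows the same route as the paper: on $[1,\infty)$ the ODE reduces to $f''+(m-3)f=0$, $f$ is a phase-shifted cosine, and the zeros of $f$ and $f'$ together form an arithmetic progression with spacing $\pi/(2\sqrt{m-3})$. Your uniqueness argument that $f\not\equiv 0$ on $[1,\infty)$ makes explicit a point the paper leaves implicit.
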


\begin{proof}
    On the interval $[1,\infty)$, the ODE reads
    \begin{equation*}
        f^{\prime\prime}(t) + (m-3)f(t) = 0
    \end{equation*}
    and the solutions are of the form $A\cos(\sqrt{m-3}t+\theta)$ for $A, \theta \in \R$. Therefore $f(t)$ or $f^\prime(t)$ vanish with periodicity $\tfrac{\pi}{2\sqrt{m-3}}$.
\end{proof}

This yields our first example of conically singular harmonic map satisfying the assumptions of the deformation theorem, \cref{thm: main theorem introduction}:

\begin{thm}
    For a dense subset of values $T \in [2,+\infty)$, the conically singular harmonic map $\phi \col (\Sp^m,g_T) \to (\Sp^{m-1},g_{\Sp^{m-1}})$ satisfies the assumptions of \cref{thm: main theorem introduction} with respect to the metric $g_T \coloneqq \diff t^2 + \mathrm{e}^{2\kappa_T(t)}g_{\Sp^{m-1}}$ on $\Sp^m$.
\end{thm}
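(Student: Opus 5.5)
The plan is to verify the four hypotheses of \cref{thm: main theorem introduction} for $\phi\col(\Sp^m,g_T)\to(\Sp^{m-1},g_{\Sp^{m-1}})$ whenever $T$ is admissible, and then to invoke \cref{lem: Point (i) of admissible values} for density.

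Harmonicity and the conical structure follow from \cref{prop: t-independent harmonic maps on I x M}, since $\id$ is harmonic for every metric $\mathrm{e}^{2\kappa_T(t)}g_{\Sp^{m-1}}$ (a constant rescaling of the round metric). Near each pole $\kappa_T$ equals $\log$ of the distance, so $g_T$ is exactly the flat metric $\diff r^2+r^2g_{\Sp^{m-1}}$ in polar coordinates. It therefore extends smoothly across the poles, and $\phi$ is exactly dilation-invariant there, modelled on $\id$ with $u_s=0$; in particular it is conically singular for any rate $\mu\in(0,1)$. Conditions 1--3 are read off from \cref{subsec: The identity map}. The only non-positive eigenvalues of $J^{\id}$ are $-(m-3)$, with eigenspace $V_1$ of dimension $m$, and $0$, with eigenspace $V_1'$ consisting of Killing fields. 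These Jacobi fields integrate by precomposing with isometries of $\Sp^{m-1}$, which gives the embedded finite-dimensional family $\mathcal{M}_s=\{\id\circ A\mid A\in\mathrm{SO}(m)\}$ with tangent space $\ker J^{\id}$.

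For condition 4, near a pole $r=T\pm t$, so the bound $|u|=\mathcal{O}(r^{-1})$ is exactly the hypothesis of \cref{prop: Existence for identity}. That proposition then shows that every such Jacobi field is $\phi^*\xi$ for a Killing field $\xi$ on $\Sp^{m-1}$; its proof relies on the preceding lemma, which reduces to the eigenspaces $V_{-(m-3)}\oplus V_0$.

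It remains to check that $\dim\phi^*\mathfrak{iso}(\Sp^{m-1})=\dim\mathfrak{iso}(\Sp^{m-1})$. This holds because $\phi$ is surjective on every slice $\{t\}\times\Sp^{m-1}$, so $\phi^*\xi=0$ forces $\xi=0$.

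Density then follows directly from \cref{lem: Point (i) of admissible values}: each interval $[R,R+\tfrac{\pi}{2\sqrt{m-3}})$ contains exactly one non-admissible value, so the non-admissible set is discrete and its complement is dense, in fact open and dense.

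The main point requiring care is that condition 4 in the statement quantifies over Jacobi fields of order $\mathcal{O}(r^{-1})$ defined on the whole of $\Sp^m\setminus\{N,S\}$. The reduction to the countable family of ODEs \eqref{eq: id example Jacobi ODE in Vk} needs the $L^2$-expansion in eigenspaces of $J^{\id}$ to be termwise valid, with each mode inheriting the $\mathcal{O}(r^{-1})$ bound. I would justify this by projecting $u(t)$ onto each eigenspace for every fixed $t$; the projection commutes with $J^\phi$ by \cref{prop: Jacobi for warped product and constant map}, and it does not increase the pointwise bound after integrating over $\Sp^{m-1}$, since the eigenspaces are finite-dimensional and consist of smooth sections. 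With this in place, the remaining steps are bookkeeping.
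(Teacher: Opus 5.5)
Your proposal is correct and follows the paper's own route: harmonicity from \cref{prop: t-independent harmonic maps on I x M}, conditions 1--3 from the spectrum of $J^{\id}$ in \cref{subsec: The identity map}, condition 4 from \cref{prop: Existence for identity} for admissible $T$, and density from \cref{lem: Point (i) of admissible values}. You also supply two details the paper leaves implicit, both of them correctly: the injectivity of $\xi\mapsto\phi^*\xi$, and the fact that projecting onto the finite-dimensional eigenspaces of $J^{\id}$ commutes with $J^\phi$ and preserves the $\mathcal{O}(r^{-1})$ bound.
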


    \subsection{Harmonic map from $\mathbb{S}^4$ to $\Sp^2$ with two conical singularities}     \label{sec: map S^4 to S^2}

We will now apply our strategy to construct a metric on $\Sp^4$ and a harmonic map $\Sp^4 \to \Sp^2$ with two conical singularities modelled on the Hopf fibration that satisfy the assumptions of \cref{thm: main theorem introduction}. For this, fix a smooth, non-decreasing function $\chi \col \R \rightarrow [0,1]$ such that $\chi(t) = 0$ if $t \in (-\infty,1]$ and $\chi(t) = 1$ for $t \in [2,+\infty)$. For $T\geq 2$ define $\chi_T \col (-T,T) \rightarrow [0,1]$, such that $\chi_T(t) = \chi(T+t)$ for $t \in (-T,0]$ and $\chi_T(t) = \chi_T(-t) = \chi(T-t)$ for $t \in [0,T)$. Moreover, let $\kappa_T \col (-T,T) \to (-\infty,0]$ be as in the previous section. For $\varepsilon \in [0,1)$ we now equip $\Sp^4 \setminus \{s_1,s_2\} \simeq (-T,T) \times \Sp^3$ with the metric $g_{\varepsilon,T} \coloneqq \diff t^2 + \mathrm{e}^{2\kappa_T(t)} \widetilde{g}_{\varepsilon,T}(t)$, where for each $t \in (-T,T)$
\begin{equation*}
    \widetilde{g}_{\varepsilon,T}(t) \coloneqq \frac{1}{1- \varepsilon \chi_T(t)} X^1 \otimes X^1 + \frac{1}{1+\varepsilon \chi_T(t)} X^2 \otimes X^2 + X^3 \otimes X^3
\end{equation*}
is a metric on $\Sp^3$ of the form considered in \cref{subsubsec: The Jacobi operator for left-invariant metrics}. As before, $g_{\varepsilon,T}$ extends smoothly over the completion $\Sp^4$. Let $\phi \col (-T,T) \times \Sp^3 \rightarrow \Sp^2$ be the map defined as $\phi(t,\sigma) = \varphi(\sigma)$ where $\varphi \col \Sp^3 \rightarrow \Sp^2$ is the Hopf fibration. Then $\phi$ is harmonic by \cref{prop: t-independent harmonic maps on I x M} and \cref{lem: tension of Hopf fibration for deformed metric} and therefore defines a conically singular harmonic map $\phi \col \Sp^4 \to \Sp^2$. We have seen in \cref{subsubsec: Computations for Hopf fibration} that the first three conditions of \cref{thm: main theorem introduction} are immediately satisfied and we only need to show that for a suitable choice of $T$, the only Jacobi fields of $\phi$ that are of order $\mathcal{O}(r^{-1})$ close to both singular points are the pullbacks of Killing vector fields on $\Sp^2$.

As in \cref{subsec: Hopf fibration as model map}, we shall regard a section $u \in \Gamma(\prescript{\phi}{}{T\Sp^2})$ as a complex-valued function $u \col (-T,T) \times \Sp^3 \rightarrow \C$. By \cref{prop: Jacobi for warped product and constant map} we see that the Jacobi operator may be written as
\begin{equation*}
    J^\phi_{\varepsilon,T} u = - \frac{\partial^2u}{\partial t^2} - (3 \kappa_T^\prime(t) + \rho_{\varepsilon,T}(t)) \frac{\partial u}{\partial t} + \mathrm{e}^{-2\kappa_T(t)}J^\varphi_{\varepsilon,T}(t) u
\end{equation*}
where the function $\rho_{\varepsilon,T} \col (-T,T) \rightarrow \R$ is defined as
\begin{equation}        \label{eq: rho epsilon T}
    \rho_{\varepsilon,T}(t) = (1-\varepsilon\chi_T(t)) \left( \frac{1}{1-\varepsilon \chi_T(t)}\right)^\prime + (1+\varepsilon\chi_T(t)) \left( \frac{1}{1+\varepsilon \chi_T(t)}\right)^\prime = \frac{2\varepsilon^2 \chi_T^\prime(t) \chi_T(t)}{1-\varepsilon^2 \chi^2_T(t)} 
\end{equation}
and $J^\varphi_{\varepsilon,T}(t)$ is the Jacobi operator of the Hopf fibration associated with the metric $\widetilde{g}_{\varepsilon,T}(t)$. As determined in \cref{lem: Jacobi operator for Hopf map with perturbation of metric}, the Jacobi operator $J^\varphi_{\varepsilon,T}$ takes a particularly simple form:
\begin{equation*}
    J^\varphi_{\varepsilon,T}u = J^\varphi u + \varepsilon \chi_T(t) (\partial_{X_1}^2u - \partial_{X_2}^2u - 4 \overline{u}) 
\end{equation*}
where $J^\varphi$ is the Jacobi operator associated with the round metric on $\Sp^3$. For later purpose, let us record two important properties of the function $\rho_{\varepsilon,T}$:
\begin{itemize}
    \item $\rho_{\varepsilon,T}$ vanishes at order $1$ in $\varepsilon$, that is, $\rho_{0,T}(t) = \left. \frac{\partial \rho_{\varepsilon,T}}{\partial \varepsilon} \right|_{\varepsilon=0}(t) = 0$, for $t \in (-T,T)$, and

    \item $\rho_{\varepsilon,T}(t) = \varepsilon^2 \chi_T(t) \psi_{\varepsilon,T}(t)$ for a smooth function $\psi_{\varepsilon,T} \col (-T,T) \rightarrow [0,\infty)$ which remains uniformly bounded if $\varepsilon \in [0,1/2]$.
\end{itemize}
It will also be convenient to introduce the function $\rho_\varepsilon \col \R \rightarrow \R$ defined as
\begin{equation*}
    \rho_\varepsilon(t) = \frac{2 \varepsilon^2 \chi^\prime(t) \chi(t)}{1-\varepsilon^2\chi^2(t)} \cdot
\end{equation*}
Then $\rho_{\varepsilon,T}(t) = \rho_\varepsilon(T+t)$ for $t \in (-T,0]$, and $\rho_{\varepsilon,T}(t) = - \rho_{\varepsilon,T}(-t) = - \rho_\varepsilon(T-t)$ for $t \in [0,T)$.

As explained in \cref{subsubsec: The Jacobi operator for left-invariant metrics}, the operators $J^\varphi_{\varepsilon,T}(t)$ leave invariant the eigenspaces $W_k \subset C^\infty(\Sp^3,\C)$ of harmonic homogeneous polynomials. Thus, if we write $u = \sum_k U_k(t)$ where $U_k \col (-T,T) \rightarrow W_k$ is a vector-valued function, the Jacobi equation can be decomposed into the linear second-order ODE systems
\begin{equation*}
    -U^{\prime\prime}_k(t) - (3\kappa_T^\prime(t) + \rho_{\varepsilon,T}(t)) U^\prime_k(t) + \mathrm{e}^{-2\kappa_T(t)} J^k_{\varepsilon,T}(t)U_k(t) = 0
\end{equation*}
where the endomorphisms $J^k_{\varepsilon,T}(t) \in \End(W_k)$ depend smoothly on $(t,\varepsilon,T)$.  

In order to control the kernel of the Jacobi operator, we need a different notion of admissibility than in the previous example:

\begin{Def}     \label{Def: Admissible values}
    We shall call $T \in [2,\infty)$ \emph{admissible} if the following properties are satisfied.
    \begin{enumerate}
        \item The unique solution $f \col (0,\infty) \rightarrow \R$ of ODE
        \begin{equation*}
            f^{\prime\prime}(t) + 3 \kappa^\prime(t) f^\prime(t) + \mathrm{e}^{-2\kappa(t)} f(t) = 0 ,
        \end{equation*}
        such that $f(t) = t^{-1}$ for $t \in (0,1/2]$ satisfies $f(T) \neq 0$ and $f^\prime(T) \neq 0$.

        \item The unique solutions $X_\pm(\varepsilon,\cdot), Y_\pm(\varepsilon,\cdot) \col [1,\infty) \rightarrow \R^2$ of the ODE
            \begin{equation*}
                - U^{\prime\prime}(t) - \rho_\varepsilon(t) U^\prime(t) + 16 \begin{pmatrix} 1 & \pm \frac{1}{2} \varepsilon \chi(t) \\ \pm \varepsilon \chi(t) & 0 \end{pmatrix} U(t) = 0
            \end{equation*}
            satisfying the initial conditions
            \begin{equation*}
                X_\pm(\varepsilon,1) = 
                \begin{pmatrix}
                    1 \\
                    0
                \end{pmatrix}, ~ X^\prime_\pm(\varepsilon,1) = 
                \begin{pmatrix}
                    v_0 \\
                    0
                \end{pmatrix}, ~~~ \text{and} ~~~ Y_\pm(\varepsilon,1) = 
                \begin{pmatrix}
                    0\\
                    1
                \end{pmatrix}, ~ Y^\prime_\pm(\varepsilon,1) = 
                \begin{pmatrix}
                    0\\
                    0
                \end{pmatrix}
            \end{equation*}
            have the property that for $\varepsilon > 0$ small enough, $X^\prime_\pm(\varepsilon,T)$ and $Y^\prime_\pm(\varepsilon,T)$ are linearly independent in $\R^2$. Here, $v_0 \coloneqq f^\prime(1)$, where $f \col (0,1] \to \mathbb{R}$ is the unique solution of the ODE
                \begin{equation*}
                    -f^{\prime\prime}(t) - 3 \kappa^\prime(t) f^\prime(t) + 16 \mathrm{e}^{-2\kappa(t)} f(t) = 0
                \end{equation*}
            that satisfies $f(1) = 1$ and stays bounded. Note that \cref{lem: Preparatory ODE} implies that such a solution exists and that $v_0 >0$. Indeed, on the interval $(0,1/2]$ the solutions are of the form
            \begin{equation*}
                C_- t^{-1 - \sqrt{17}} + C_+ t^{-1+\sqrt{17}}.
            \end{equation*}
            In particular, the unique solution $\tilde{f}$ coinciding with $t^{-1+\sqrt{1+\lambda}}$ on $(1,1/2]$ has $\tilde{f}(1/2) > 0$ and $\tilde{f}^\prime(1/2)>0$, and therefore by (the proof of) \cref{lem: Preparatory ODE} it follows that $\tilde{f}(1) > 0$ and $\tilde{f}^\prime(1) > 0$, so that $v_0 = \tilde{f}^\prime(1) / \tilde{f}(1) > 0$.
    \end{enumerate}
\end{Def}

\begin{prop}     \label{thm: S^4 to S^2 example}
    Let $T \in [2,\infty)$ be an admissible value. Then there exists $\varepsilon(T) > 0$ such that for any $\varepsilon \in (0,\varepsilon(T))$, the only Jacobi fields of the harmonic map $\phi \col \mathbb{S}^4 \rightarrow \Sp^2$ with respect to the metric $g_{\varepsilon,T}$ satisfying the bounds
    \begin{equation*}
        |u| = \mathcal{O}(r_1^{-1} + r_2^{-1})
    \end{equation*}
    where $r_i$ is the distance to $s_i$, for $i=1,2$, are the lifts of the Killing fields of $\Sp^2$.
\end{prop}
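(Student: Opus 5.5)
The plan is to decompose a Jacobi field $u$ with $|u| = \mathcal{O}(r_1^{-1}+r_2^{-1})$ as $u = \sum_k U_k(t)$ with $U_k \col (-T,T) \to W_k$, using that each $J^\varphi_{\varepsilon,T}(t)$ preserves $W_k$ (\cref{rem: pertrubed Jacobi operator preserves Wk}). The Jacobi equation then becomes the ODE systems $-U_k'' - (3\kappa_T' + \rho_{\varepsilon,T})U_k' + \mathrm{e}^{-2\kappa_T}J^k_{\varepsilon,T}(t)U_k = 0$. Near each end $\chi_T \equiv 0$, so the model there is the round Hopf fibration. The bound $\mathcal{O}(r^{-1})$ therefore kills the coefficient of $r^{\nu^-_\lambda}$ for every eigenvalue $\lambda>0$ of $J^\varphi$, and for $\lambda=0$ it kills $r^{-2}$.

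\textbf{Step 1: the spaces $W_k$ with $k\notin\{0,1,2\}$.} Here I want a maximum-principle argument as in \cref{lem: Preparatory ODE}, but it must be vectorial, since $J^k_{\varepsilon,T}$ is not scalar. For $\varepsilon=0$, $J^k$ is diagonal in $W_{k,\ell}$ with eigenvalues $k(k-2)+8\ell \geq 3$, so each component is a scalar ODE with $b>0$. \cref{lem: Preparatory ODE}(ii) then shows that a solution decaying at the left end blows up like $(T-t)^{\nu^-}$ at the right end, forcing it to vanish. For small $\varepsilon$ I would replace this with an energy or convexity argument. Take $F(t)=|U_k(t)|^2$ and aim for
\[
-F'' - (3\kappa_T'+\rho)F' + 2\mathrm{e}^{-2\kappa_T}\langle J^k U,U\rangle \leq 0 .
\]
Positivity of $J^k_{\varepsilon,T}(t) \geq c(k^2) > 0$ holds uniformly in $k\geq 3$, because the perturbation $\varepsilon\chi(\partial_{X_1}^2-\partial_{X_2}^2-4\bar{\cdot})$ is bounded by $C\varepsilon k^2$ while the eigenvalues grow like $k^2$. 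With this, the same argument as in \cref{lem: Preparatory ODE} applies to $F$, and hence $U_k \equiv 0$.

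The case $k=1$ uses that $W_1 = W_{1,0}\oplus W_{1,1}$. On $W_{1,1}$ the eigenvalue is $7>0$, which is handled as above. On $W_{1,0}$ the eigenvalue is $-1$; the perturbation should couple $W_{1,0}$ and $W_{1,1}$ only at order $\varepsilon$. Since admissibility condition 1 is an open condition, the $\varepsilon=0$ conclusion (no nonzero solution, by the matching argument of \cref{prop: Existence for identity}) persists for small $\varepsilon$ by continuity of the two-dimensional matching determinant.

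\textbf{Step 2: the spaces $W_0$ and $W_2$, which I expect to be the main obstacle.} On $W_0\oplus W_{2,0}\oplus W_{2,2}$ the operator $J^\varphi$ has eigenvalue $0$ (note $W_0$ and $W_{2,0}$ are in the kernel, while $W_{2,2}$ has eigenvalue $16$). From the matrix computed in \cref{prop: Jacobi fields of left-invariant metrics on S3}, the system splits into six $2\times 2$ blocks of the form
\[
16\begin{pmatrix} 1 & \pm\tfrac12\varepsilon\chi \\ \pm\tfrac12\varepsilon\chi & 0\end{pmatrix},
\]
together with three pure blocks that are the Killing fields. (The factor conventions must be reconciled with \cref{Def: Admissible values}.) On $(-T,-T+1]$ the admissible solutions are $r^0$ times kernel elements plus the decaying $r^{\nu^+}$ mode in the $W_{2,2}$ direction. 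Propagating to $t=-T+1$ gives initial data proportional to $X_\pm$ and $Y_\pm$. By the symmetry $t\mapsto -t$ (with $\rho$ odd and $\chi_T$ even), the same holds from the right. A solution bounded at both ends therefore exists iff a combination of $X_\pm,Y_\pm$ evaluated at the midpoint has zero derivative, i.e. iff $X'_\pm(\varepsilon,T)$ and $Y'_\pm(\varepsilon,T)$ are dependent; this needs care with the shift $T$ versus $T-1$. Admissibility condition 2 excludes this. Blocks where $A$ vanishes decouple: the $W_{2,2}$-direction is positive and is killed as in Step 1, and the $W_{2,0}$-direction is constant, giving exactly the Killing lifts. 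The $W_{2,1}$ part (eigenvalue $8$) and the $W_0$ part are handled by positivity. For $W_0$ in particular, $J$ acts as $-4\varepsilon\chi\bar w$, which is indefinite, so I would treat it as a $2\times2$ real system. For $\varepsilon=0$ only constants survive, and I would need to show that for small $\varepsilon>0$ the constants are destroyed. This last point is where I expect the real difficulty, likely requiring a first-order perturbation computation in $\varepsilon$.
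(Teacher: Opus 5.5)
Your overall plan matches the paper: decompose into the $W_k$, use positivity for the high modes, condition 1 plus continuity on $W_1$, and the block decomposition of $W_{2,0}\oplus W_{2,2}$ with condition 2. However, the proposal has a hole at $W_0$. You first count $W_0$ among the parts ``handled by positivity''. That is false: $J^\varphi$ vanishes on $W_0$, and the perturbation there is $-4\varepsilon\chi_T\overline{w}$. You then leave it open as the expected main difficulty.

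The step is in fact short, and there is no genuine $2\times 2$ coupling. In the real basis $\{1,i\}$ of $W_0$ one has $J^0_{\varepsilon,T}(t)=\mathrm{diag}(-4\varepsilon\chi_T(t),4\varepsilon\chi_T(t))$. So you get two scalar equations, each with a zeroth-order coefficient of fixed sign.
\begin{itemize}
\item On $(-T,-T+1]$ one has $\chi_T=\rho_{\varepsilon,T}=0$, and the equation is $(\mathrm{e}^{3\kappa_T}f')'=0$. The bound kills the $(T+t)^{-2}$ mode, so after normalising a non-zero solution, $f\equiv 1$ there and $f'(-T+1)=0$.
\item On the neck, $f''+\rho_{\varepsilon,T}f'=\mp 4\varepsilon\chi_T f$. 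For small $\varepsilon$, continuous dependence gives $f\geq\frac12$ and $|f'|\leq 1$. Since $\rho_{\varepsilon,T}=\mathcal{O}(\varepsilon^2\chi_T)$, $f''$ has the strict sign $\mp$ wherever $\chi_T>0$, hence $f'(T-1)\neq 0$.
\item On $[T-1,T)$ again $(\mathrm{e}^{3\kappa_T}f')'=0$, so $f'=f'(T-1)(T-t)^{-3}$ near $t=T$. The $(T-t)^{-2}$ mode is therefore present and the bound fails.
\end{itemize}
This is the paper's argument. It is a first-order effect in $\varepsilon$, as you guessed, but it is a sign argument, not the hard part. The genuinely degenerate case is the $P_i$ blocks, where the derivative determinant vanishes at $\varepsilon=0$ and becomes non-zero only at order $\varepsilon^2$. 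That is exactly what admissibility condition 2 is assumed to handle.

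Second, your matching criterion on the $P_i$ blocks is incomplete. Write a good solution as $AX_\pm(\varepsilon,T+t)+BY_\pm(\varepsilon,T+t)$ on $[-T+1,0]$ and $CX_\pm(\varepsilon,T-t)+DY_\pm(\varepsilon,T-t)$ on $[0,T-1]$. Matching at $t=0$ gives $(A-C)X_\pm+(B-D)Y_\pm=0$ and $(A+C)X'_\pm+(B+D)Y'_\pm=0$, evaluated at $(\varepsilon,T)$. So condition 2 only excludes the even solutions. To exclude the odd ones you also need $X_\pm(\varepsilon,T)$ and $Y_\pm(\varepsilon,T)$ to be independent. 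This holds for small $\varepsilon$, because at $\varepsilon=0$ they equal $(\cosh(4(T-1))+\tfrac{v_0}{4}\sinh(4(T-1)),0)$ and $(0,1)$.

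A similar imprecision occurs on $W_1$. There $\overline{w}$ couples $W_{1,0}$ and $W_{1,1}$, so what you must propagate by continuity is the transversality of the two spaces of good solutions in $W_1\oplus W_1$, as the paper does, not a $2\times2$ determinant. Transversality holds at $\varepsilon=0$, so your continuity argument survives.

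Your treatment of $k\geq 3$ is correct and takes a different route from the paper.
\begin{itemize}
\item The paper restricts $J^\phi_{\varepsilon,T}$ to $B^{3,p}_{-3/2}\to B^{1,p}_{-7/2}$, the sections orthogonal on each slice to $W_0\oplus W_1\oplus W_2$. It proves invertibility at $\varepsilon=0$ using \cref{lem: Preparatory ODE} and self-adjointness, and concludes by openness of invertibility.
\item You set $Q=\partial^2_{X_1}-\partial^2_{X_2}-4\overline{(\cdot)}$. From $0\leq-\partial^2_{X_i}\leq k(k+2)$ on $W_k$ you get $J^\varphi+\varepsilon\chi_T Q\geq(\frac13-3\varepsilon)k^2$, and then apply the maximum principle to $|U_k|^2$.
\end{itemize}
Your version is more elementary and gives an explicit $\varepsilon$. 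The paper's version needs no $k$-uniform estimate, only continuity in $\varepsilon$ of a Fredholm family.
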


To prove this proposition, we need a series of intermediate results:

\begin{lem}     \label{lem: Finite-dimension restriction}
    Fix $T \in [2,\infty)$. Then there exists $\varepsilon_0(T) > 0$ such that for any $\varepsilon \in [0,\varepsilon_0(T)]$ the following holds. Let $u$ be a Jacobi field with respect to the metric $g_{\varepsilon,T}$ such that
    \begin{equation*}
        |u| = \mathcal{O}((T+t)^{-3/2} + (T-t)^{-3/2}) .
    \end{equation*}
    Then $u(t) \in W_0 \oplus W_1 \oplus W_2$ for all $t \in (-T,T)$.
\end{lem}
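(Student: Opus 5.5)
The plan is to decompose the Jacobi field along the invariant subspaces $W_k$ and to kill each component with $k\geq 3$ by an energy (integration-by-parts) argument along the cylinder $(-T,T)$. The near-end asymptotics used to justify this come from \cref{prop: Description of the roots}.

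\textbf{Reduction to the ODE systems.} By \cref{rem: pertrubed Jacobi operator preserves Wk}, writing $u=\sum_k U_k(t)$ with $U_k(t)\in W_k$, each $U_k$ solves
\begin{equation*}
-U_k''-(3\kappa_T'+\rho_{\varepsilon,T})U_k'+\mathrm{e}^{-2\kappa_T}J^k_{\varepsilon,T}(t)U_k=0 .
\end{equation*}
The decay assumption passes to each $U_k$ by $L^2$-orthogonal projection on the slices. Two structural facts make this system manageable.
\begin{itemize}
\item The coefficient $\rho_{\varepsilon,T}$ equals $\tfrac{\diff}{\diff t}\log\big((1-\varepsilon^2\chi_T^2)^{-1}\big)$. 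Setting $w(t)\coloneqq \mathrm{e}^{3\kappa_T(t)}(1-\varepsilon^2\chi_T(t)^2)^{-1}$, the equation therefore takes the divergence form
\begin{equation*}
-(wU_k')'+w\,\mathrm{e}^{-2\kappa_T}J^k_{\varepsilon,T}U_k=0 .
\end{equation*}
\item The volume form of $\widetilde g_{\varepsilon,T}(t)$ is a spatially constant multiple of the round one. Hence $J^k_{\varepsilon,T}(t)$, which is formally self-adjoint for $\widetilde g_{\varepsilon,T}(t)$, is symmetric for the real $L^2$ inner product of the round metric on $W_k$. This is consistent with \cref{rem: Explicit Jacobi operator}, where the term $-4\overline{w}$ is real-symmetric.
\end{itemize}

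\textbf{Uniform positivity for $k\ge 3$.} On $W_{k,\ell}$ the round operator acts by $k(k-2)+8\ell\ge k(k-2)\ge ck(k+2)$ for $k\ge3$ with a universal $c>0$; for instance $c=1/5$ works. The perturbation is $\varepsilon\chi_T Q$ with $Qw=\partial_{X_1}^2w-\partial_{X_2}^2w-4\overline w$. On $W_k$ we have $\|\partial_{X_i}^2\|\le\|\Delta_{\Sp^3}\|=k(k+2)$, since $-\sum_i\partial_{X_i}^2=\Delta_{\Sp^3}$ is a sum of nonnegative operators commuting with it. Thus $\|Q|_{W_k}\|\le 2k(k+2)+4$. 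Choosing $\varepsilon_0(T)$ small, independently of $k$, gives
\begin{equation*}
J^k_{\varepsilon,T}(t)\ge \tfrac c2\,k(k+2)>0\quad\text{for all } t \text{ and all } k\ge3 .
\end{equation*}
This uniformity in $k$ is the main point to get right, and it is where the restriction $\varepsilon\le\varepsilon_0(T)$ enters.

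\textbf{Ends and energy identity.} Near $t=\pm T$ we have $\chi_T=0$ and $\kappa_T=\log(T\pm t)$. There the system is exactly the cone equation over the round Hopf operator, so by \cref{prop: Description of the roots} each eigencomponent with eigenvalue $\lambda\ge3$ behaves like $C_-r^{\nu^-_\lambda}+C_+r^{\nu^+_\lambda}$. Since $\nu^-_\lambda=-1-\sqrt{1+\lambda}\le-3<-\tfrac32$, the bound $|u|=\mathcal O(r^{-3/2})$ forces $C_-=0$ at both ends, so $U_k=\mathcal O(r^{\nu^+})$ and $U_k'=\mathcal O(r^{\nu^+-1})$ with $\nu^+>0$. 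Pairing the divergence-form equation with $U_k$ and integrating over $(-T+\delta,T-\delta)$ gives
\begin{equation*}
\int w\big(|U_k'|^2+\mathrm{e}^{-2\kappa_T}\langle J^k_{\varepsilon,T}U_k,U_k\rangle\big)\,\diff t=\big[w\langle U_k',U_k\rangle\big]_{-T+\delta}^{T-\delta}.
\end{equation*}
Since $w\sim r^3$, the boundary terms are $\mathcal O(\delta^{2\nu^++2})\to0$ as $\delta\to0$. The left-hand side is a sum of nonnegative terms, and the second is strictly positive unless $U_k\equiv0$, by the uniform positivity above. Hence $U_k\equiv 0$ for every $k\ge3$, and $u(t)\in W_0\oplus W_1\oplus W_2$ for all $t$.
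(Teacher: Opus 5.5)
Your argument is correct, but it takes a different route from the paper.

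\textbf{The paper's route.} The paper argues perturbatively at the PDE level. It restricts $J^\phi_{\varepsilon,T}\colon W^{3,p}_{-3/2}\to W^{1,p}_{-7/2}$ to the closed subspaces of sections that are slicewise $L^2$-orthogonal to $W_0\oplus W_1\oplus W_2$. Since $-3/2$ is not an indicial root, this is a continuous family of Fredholm operators in $\varepsilon$. Openness of invertibility, together with self-adjointness (the cokernel is the kernel at rate $-1/2$), then reduces everything to showing there is no decaying kernel at $\varepsilon=0$. At $\varepsilon=0$ the slice operator is the round $J^\varphi$, the system splits into scalar ODEs with $\lambda>0$, and these are excluded by the maximum principle of \cref{lem: Preparatory ODE}.

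\textbf{Your route.} You treat every small $\varepsilon$ directly with an energy identity. The two inputs are:
\begin{enumerate}
\item the uniform-in-$k$ coercivity $\langle J^k_{\varepsilon,T}(t)U,U\rangle\geq\tfrac{c}{2}k(k+2)|U|^2$, which follows from $k(k-2)+8\ell\geq\tfrac15 k(k+2)$ and the relative bound $\|Q|_{W_k}\|\leq 2k(k+2)+4$;
\item the vanishing of the boundary terms, forced by the indicial asymptotics.
\end{enumerate}
The argument does not depend on the exact first-order coefficient: any weight $w=\exp\int a$ works, and the natural choice is the slice volume density $e^{3\kappa_T}(1-\varepsilon^2\chi_T^2)^{-1/2}$.

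\textbf{Comparison.} Your route gives an elementary, self-contained proof with no Lockhart--McOwen input. It handles directly the vector-valued systems on $W_k$, which are coupled through $\partial_{X_1}^2-\partial_{X_2}^2$ and $\overline{w}$. A scalar maximum principle cannot do this, which is why the paper has to perturb from the decoupled case $\varepsilon=0$. Your route also yields an explicit $\varepsilon_0$ (e.g.\ $3/68$ with your constants) that is independent of $T$, not just of $k$. The paper's softer argument needs no sign structure or explicit relative bound on the perturbation; operator-norm continuity of the compactly supported perturbation does that job. It would therefore also cover non-coercive perturbations, at the cost of an inexplicit, $T$-dependent $\varepsilon_0(T)$.
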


\begin{proof}
    Let us fix a Sobolev exponent $p > 4$. Any Jacobi field satisfying this bounds in particular belongs to the weighted H\"older space $W^{3,p}_{-3/2}(\mathbb{S}^4,\C)$; notice that since (for fixed $T$) the metrics $g_{\varepsilon,T}$ only differ in compact subset supported away from $\{s_1,s_2\}$, defining these weighted spaces with respect to any of the metrics $g_{\varepsilon,T}$ for $\varepsilon \in [0,1]$ yields equivalent norms, so there is no ambiguity. Moreover, the Jacobi operators $J^\phi_{\varepsilon,T}\col W^{3,p}_{-3/2}(\mathbb{S}^4,\C) \rightarrow W^{1,p}_{-7/2}(\mathbb{S}^4,\C)$ form a continuous family of bounded Fredholm operators, since $-3/2$ is not an indicial root (cf. \cref{prop: indicial roots of model operator}).
    
    For any $\nu < 0$ and $k \geq 1$, let us consider the subspace $B^{k,p}_\nu \subset W^{k,p}_\nu((-T,T) \times \Sp^3,\C)$ of functions $u(t,\sigma)$ such that $u(t,\cdot)$ is orthogonal to $W_0 \oplus W_1 \oplus W_2$ for all $t \in (-T,T)$. Since $p > 4$, $k \geq 1$ and $\nu < 0$, we have a continuous embedding $W^{k,p}_\nu(\Sp^4,\C) \hookrightarrow C^0_0(\Sp^4,\C)$, and thus for any $t$, the map 
    \begin{equation*}
        u \in W^{k,p}_\nu((-T,T) \times \Sp^3,\C) \mapsto u(t,\cdot) \in C^0(\Sp^3) 
    \end{equation*}
    is continuous, and moreover the $L^2$-inner product is continuous on $C^0(\Sp^3)$, and therefore $B^{k,p}_\nu$ is the intersection of the kernels of continuous maps; therefore $B^{k,p}_\nu$ is closed for the $W^{k,p}_\nu$-norm. By restriction, the Jacobi operators $J^\phi_{\varepsilon,T}$ define a continuous family of bounded Fredholm operators $B^{3,p}_{-3/2} \rightarrow B^{1,p}_{-7/2}$. Therefore, to prove our claim it is enough to show that the restriction of $J^\phi_{0,T}$ defines an invertible map. On the other hand, since the operator is self-adjoint, the cokernel of $J^\phi_{0,T} \col B^{3,p}_{-3/2} \rightarrow B^{1,p}_{-7/2}$ is $\ker(J^\phi_{0,T}) \cap B^{3,p}_{-1/2} \subset B^{3,p}_{-3/2}$, and therefore it is enough to prove that $J^\phi_{0,T}$ has no kernel in $B^{3,p}_{-3/2}$.

    For $\varepsilon = 0$, $J^\varphi_{0,T}(t) = J^\varphi$ is the Jacobi operator of the Hopf fibration for the standard round metric on $\Sp^3$, which can be diagonalised and has eigenvalues $\lambda \geq 1$ on the orthogonal complement of $W_0 \oplus W_1 \oplus W_2$. Hence we need only prove that the only solutions of the real-valued scalar ODE
    \begin{equation*}
        - f^{\prime\prime}(t) - 3 \kappa^\prime(t) f^\prime(t) + \mathrm{e}^{-2\kappa(t)}\lambda f(t) = 0
    \end{equation*}
    satisfying the bound $|f(t)| = \mathcal{O}((T+t)^{-3/2} + (T-t)^{-3/2})$ vanish identically. If $f$ is such a solution, then on the interval $(-T,-T+1/2]$, we have
    \begin{equation*}
        f(t) = C_- (T+t)^{-1-\sqrt{1+\lambda}} + C_+ (T+t)^{-1+\sqrt{1+\lambda}}
    \end{equation*}
    for some constant $C_+,C_-$. For the bound to be satisfied, it is necessary that $C_-=0$. After exchanging $f$ and $-f$ (which satisfy the same ODE by linearity) if necessary, we may assume that $C_+ \geq 0$. In particular, $f(t), f^\prime(t) \geq 0$ on the interval $(-T,-T+\frac{1}{2}]$, with equality if and only if $C_+ = 0$. By \cref{lem: Preparatory ODE}, if $C_+ > 0$ then $f(t) \geq C (T-t)^{-1 - \sqrt{1+\lambda}}$ for some constant $C > 0$, which would contradict the bound $|f(t)| = \mathcal{O}((T-t)^{-3/2})$. Hence $C_+ = 0$ and $f$ vanishes identically, which finishes the proof.
\end{proof}

\begin{lem}     \label{lem: rule out W_1}
    If $T \in [2,\infty)$ is admissible, there exists $\varepsilon_1(T) > 0$ such that for any $\varepsilon \in [0,\varepsilon_1(T)]$ the following holds. If $u \in C^\infty((-T,T) \times \Sp^3,\C)$ is a Jacobi field with respect to the metric $g_{\varepsilon,T}$ satisfying the bound 
    \begin{equation*}
        |u(t)| = \mathcal{O}((T+t)^{-1}+(T-t)^{-1}) ,
    \end{equation*}
    then $u(t) \in W_0 \oplus W_2$ for any $t \in (-T,T)$.
\end{lem}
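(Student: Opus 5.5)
By \cref{lem: Finite-dimension restriction}, once $\varepsilon\le\varepsilon_0(T)$ a Jacobi field with the stated $\mathcal{O}(r^{-1})$ bound takes values in $W_0\oplus W_1\oplus W_2$. The Jacobi equation decouples along the invariant spaces $W_k$, so it suffices to show that the $W_1$-component $U_1\colon(-T,T)\to W_1$ vanishes. I would argue by perturbation from $\varepsilon=0$, using the first admissibility condition together with a continuity argument in $\varepsilon$.

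\textbf{Step 1: the case $\varepsilon=0$.} Here $J^\varphi_{0,T}(t)=J^\varphi$ acts on $W_1=W_{1,0}\oplus W_{1,1}$ diagonally, with eigenvalue $-1$ on $W_{1,0}$ and $7$ on $W_{1,1}$. On $W_{1,1}$, where $\lambda=7>0$, I would repeat the argument of \cref{lem: Finite-dimension restriction}. Near $-T$ the solution is $C_-(T+t)^{-1-\sqrt8}+C_+(T+t)^{-1+\sqrt8}$. The bound forces $C_-=0$, and \cref{lem: Preparatory ODE} then forces $C_+=0$. On $W_{1,0}$ each real scalar component solves $f''+3\kappa_T'f'+e^{-2\kappa_T}f=0$. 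Near each end the solutions are $r^{-1}(A+B\log r)$, and the bound forces $B=0$. Hence $f$ is a multiple of $a(T+t)$ near $-T$ and of $a(T-t)$ near $T$, where $a$ is the function in condition 1. As in \cref{prop: Existence for identity}, matching at $t=0$ forces $a(T)a'(T)=0$, which is excluded by admissibility. So at $\varepsilon=0$ there is no nonzero solution.

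\textbf{Step 2: stability for small $\varepsilon$.} I would recast Step 1 as the invertibility of a finite-dimensional matching map. Take a perturbation $\varepsilon\mapsto J^1_{\varepsilon,T}(t)$ on the finite-dimensional space $W_1$ that is continuous in $\varepsilon$ and supported where $\chi_T\ne0$, that is away from the ends; $\rho_{\varepsilon,T}$ is likewise supported away from the ends. Then near each end the ODE system is independent of $\varepsilon$. The space $\mathcal{L}_\varepsilon$ of solutions satisfying the bound at the left end is therefore spanned by solutions with fixed asymptotics near $-T$. Concretely, these are $(T+t)^{-1}e$ for $e\in W_{1,0}$ and $(T+t)^{-1+\sqrt8}e$ for $e\in W_{1,1}$; in particular $\dim\mathcal{L}_\varepsilon=\dim W_1$. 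Similarly define $\mathcal{R}_\varepsilon$ at the right end. The Cauchy data $(U(0),U'(0))\in W_1^2$ of these solutions depend continuously on $\varepsilon$, by continuous dependence of solutions on parameters over the compact interval $[-T+\tfrac12,\,T-\tfrac12]$. Step 1 says $\mathcal{L}_0\cap\mathcal{R}_0=0$, i.e. the square matrix formed from the two bases of Cauchy data is invertible. Invertibility is an open condition, so it persists for $\varepsilon\le\varepsilon_1(T)$.

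\textbf{Main obstacle.} The hard point is justifying that the bounded decaying solution spaces are exactly $\mathcal{L}_\varepsilon,\mathcal{R}_\varepsilon$ as described, i.e. that the bound $\mathcal{O}(r^{-1})$ excludes both the $r^{-1}\log r$ modes on $W_{1,0}$ and the $r^{-1-\sqrt8}$ modes on $W_{1,1}$. This follows because the equation is exactly the cone operator near the ends, so the explicit solution formula of \cref{prop: Description of the roots} applies there.
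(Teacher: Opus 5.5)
Your proposal is correct and follows the paper's proof: reduce to the $W_1$-component via \cref{lem: Finite-dimension restriction}, rule out nonzero solutions at $\varepsilon=0$ using the first admissibility condition as in \cref{prop: Existence for identity} (with \cref{lem: Preparatory ODE} handling $W_{1,1}$), and then perturb in $\varepsilon$. Your non-vanishing determinant of Cauchy data at $t=0$ is the same openness argument as the paper's transversality of the planes $V^\pm_{\varepsilon,T}$ in the Grassmannian, and your observation that the equation does not depend on $\varepsilon$ near either end is what justifies the continuity in $\varepsilon$ that the paper simply asserts.
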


\begin{proof}
    Given the previous lemma, it is enough to prove that for $T$ admissible and $\varepsilon > 0$ small enough, there are no solutions $u \col (-T,T) \rightarrow W_1$ of the ODE
    \begin{equation*}
        -u^{\prime\prime}(t) - (3 \kappa^\prime_T(t) + \rho_{\varepsilon,T}(t)) u^\prime(t)  + \mathrm{e}^{-2\kappa_T(t)}J^1_{\varepsilon,T}(t)u(t) = 0
    \end{equation*}
    satisfying the bounds
    \begin{equation*}
        |u(t)| = \mathcal{O}((T+t)^{-1}) + \mathcal{O}((T-t)^{-1}) .
    \end{equation*}
    When $\varepsilon = 0$, we can argue as in the proof of \cref{prop: Existence for identity}, since $T$ is admissible. It remains to argue by continuity for $\varepsilon > 0$. 
    
    Let us denote by $V^\pm_{\varepsilon,T} \subset W_1 \oplus W_1$ the $4$-plane spanned by the couples $(u(0),u^\prime(0))$ where $u(t)$ is a solution of the ODE such that $|u(t)| = \mathcal{O}((T \mp t))$ as $t \rightarrow \pm T$. Then they vary continuously in the Grassmannian of $4$-planes of $W_1 \oplus W_1$ with respect to $\varepsilon$, and moreover $V^+_{0,T} \cap V^-_{0,T} = 0$. By dimensionality, it follows that $V^+_{0,T} \oplus V^-_{0,T} = W_1 \oplus W_1$, and since this is an open property we deduce that $V^+_{\varepsilon,T} \oplus V^-_{\varepsilon,T} = W_1 \oplus W_1$ for $\varepsilon$ small enough. Using dimensionality again, we must have $V^+_{\varepsilon,T} \cap V^-_{\varepsilon,T} = 0$ for $\varepsilon$ close enough to $0$, which proves the result.
\end{proof}

From now on, let us fix an admissible $T \in [2,\infty)$. We have boiled down our problem to two finite-dimensional linear ODE systems on $W_0$ and $W_2$. Hence we are free to choose $\varepsilon > 0$ as small as we want, and we can regard the equation as a perturbation of the case $\varepsilon = 0$, which simplifies many considerations. 

\begin{lem}     \label{lem: rule out W_0}
    If $\varepsilon > 0$ is small enough, there are no solutions $u(t)$ of the ODE in $W_0$ satisfying the bounds
    \begin{equation*}
        |u(t)| = \mathcal{O}((T+t)^{-1} + (T-t)^{-1}) .
    \end{equation*}
\end{lem}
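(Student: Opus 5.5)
The plan is to compute the Jacobi operator explicitly on $W_0$, split the resulting ODE into two real scalar equations, and show by a flux argument that neither admits a nonzero solution with the required decay. On $W_0$ (the constant complex-valued functions on $\Sp^3$) we have $J^\varphi w = 0$, because $W_0 = W_{0,0}$ has eigenvalue $k(k-2)+8\ell = 0$. The derivative terms $\partial^2_{X_1}w - \partial^2_{X_2}w$ also vanish on constants. By \cref{rem: Explicit Jacobi operator}, $J^\varphi_{\varepsilon,T}(t)$ therefore acts on $W_0$ as $w \mapsto -4\varepsilon\chi_T(t)\overline{w}$. Writing $u(t) = a(t) + i b(t)$ with $a,b$ real, the ODE on $W_0$ decouples into
\begin{equation*}
    -a'' - (3\kappa_T' + \rho_{\varepsilon,T})a' - 4\varepsilon\chi_T \mathrm{e}^{-2\kappa_T} a = 0, \qquad -b'' - (3\kappa_T' + \rho_{\varepsilon,T})b' + 4\varepsilon\chi_T \mathrm{e}^{-2\kappa_T} b = 0 .
\end{equation*}
Both are of the form $(w f')' = \pm 4\varepsilon\chi_T \mathrm{e}^{-2\kappa_T} w f$, with the positive integrating factor $w(t) = \exp\bigl(\int (3\kappa_T' + \rho_{\varepsilon,T})\bigr)$.

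First I handle the ends. Near $t = \pm T$ we have $\chi_T = 0 = \rho_{\varepsilon,T}$ and $\kappa_T = \log(T \pm t)$. There the solutions are $C + D(T \pm t)^{-2}$, exactly as in the $\lambda = 0$ case of \cref{prop: Existence for identity}. The bound $\mathcal{O}((T+t)^{-1} + (T-t)^{-1})$ forces $D = 0$ at both ends, so $f$ is constant and $wf' = 0$ near each end. Hence a solution satisfying the bounds must have vanishing flux $wf'$ near $-T$ and near $+T$. Conversely, if a solution is constant near $-T$ but $wf' \neq 0$ somewhere near $+T$, then there $D \neq 0$ and the bound fails. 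It therefore suffices to show that $\int_{-T}^{T} \chi_T \mathrm{e}^{-2\kappa_T} w f \,\diff t \neq 0$ whenever $f$ is a nonzero solution that is constant near $-T$. Such an $f$ is a nonzero multiple of the solution equal to $1$ near $-T$, and I normalise to that one.

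For $b$ the potential has the good sign, and this works for every $\varepsilon > 0$. Start with $b \equiv 1$ near $-T$. As in the proof of \cref{lem: Preparatory ODE}, let $\theta$ be the supremum of the times up to which $b \geq 1$ and $wb' \geq 0$. On $(-T,\theta)$ the right-hand side of $(wb')' = 4\varepsilon\chi_T\mathrm{e}^{-2\kappa_T}wb$ is nonnegative, so $wb'$ is nondecreasing and $b$ stays $\geq 1$. Hence $\theta = T$, and $wb'$ is nondecreasing on the whole interval. Since $\chi_T > 0$ on a nonempty open set, the flux near $+T$ is strictly positive, which contradicts the bound.

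For $a$ the sign is bad, and here I use smallness of $\varepsilon$. The coefficients depend continuously on $\varepsilon$, and $\chi_T$ is supported in a compact subinterval $K \subset (-T,T)$. At $\varepsilon = 0$ the normalised solution is identically $1$, so for $\varepsilon$ small it satisfies $a \geq 1/2$ on $K$ by continuous dependence. Then the flux near $+T$ equals $-4\varepsilon\int_K \chi_T\mathrm{e}^{-2\kappa_T}wa < 0$, so $D \neq 0$ and the bound fails again.

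The only genuinely delicate point is the $a$-equation. There positivity is not automatic, and it must be transported across the neck region by a continuity argument, uniformly on the compact support of $\chi_T$. This is where the smallness of $\varepsilon$ (for fixed $T$) enters.
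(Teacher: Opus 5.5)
Your proof is correct and is essentially the paper's argument. It uses the same splitting of $W_0$ into two real scalar equations with potentials $\pm 4\varepsilon\chi_T$, the same normalisation to the solution equal to $1$ near $-T$, and the same continuity in $\varepsilon$ to keep that solution positive on the support of $\chi_T$, and it concludes in the same way that the derivative near $+T$ cannot vanish. Your divergence form $(wf')' = \pm 4\varepsilon\chi_T \mathrm{e}^{-2\kappa_T} w f$ absorbs the $\rho_{\varepsilon,T}$ term more cleanly than the paper, which compares its $\mathcal{O}(\varepsilon^2)$ size against the $\mathcal{O}(\varepsilon)$ potential. It also shows that the good-sign equation needs no smallness of $\varepsilon$ at all.
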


\begin{proof}
    The operators $J^0_{\varepsilon,T}(t) \in \End(W_0)$ are co-diagonalisable, and in an appropriate (real) basis of $W_0$ they take the form
    \begin{equation*}
        J^0_{\varepsilon,T}(t) =
            \begin{pmatrix}
                -4\varepsilon \chi_T(t) & 0 \\
                0 & 4\varepsilon \chi_T(t))
            \end{pmatrix} \cdot
    \end{equation*}
    Let us consider a non-zero solution of the scalar ODEs
    \begin{equation*}
        -f^{\prime\prime}(t) - (3 \kappa_T^\prime(t) +\rho_{\varepsilon,T}(t))f^\prime(t)  \pm 4 \varepsilon \chi_T(t) f(t) = 0 .
    \end{equation*}
    On the interval $(-T,-T+1/2]$ the solutions are of the form
    \begin{equation*}
        f(t) = C_+ + C_- (T+t)^{-2}
    \end{equation*}
    and for the bounds to be satisfied we must have $C_- = 0$. Since $f$ does not vanish identically, we may assume that $C_+ = 1$ after multiplying by an appropriate constant. We deduce that $f(t) = 1$ for any $t \in (-T,-T+1]$. In particular, $f^\prime(-T+1) = 0$, and on the interval $[-T+1,T-1]$ it satisfies
    \begin{equation*}
        f^{\prime\prime}(t) + \rho_{\varepsilon,T}(t) f^\prime(t) = \pm 4 \varepsilon \chi_T(t) f(t) .
    \end{equation*}
    By continuity of the resolvent of the ODE with respect to the parameter $\varepsilon$, if we choose $\varepsilon > 0$ small enough, we will have $f(t) \geq 1/2$ on $[-T+1,T-1]$, and $|f^\prime(t)| < 1$. Since $\rho_{\varepsilon,T} = \varepsilon^2 \chi_T(t) \cdot \psi_{\varepsilon,T}(t)$ for the function $\psi_{\varepsilon,T}(t) = \chi^\prime_T(t)/ (1-\varepsilon^2\chi_T(t))$ which remains uniformly bounded, it follows that for $\varepsilon > 0$ small enough, $f^{\prime\prime}(t)$ will be of the sign of $\pm \chi_T(t) f(t)$. 
    
    In particular, $f^{\prime\prime}$ will either be non-negative or non-positive on $[-T+1,T-1]$ and not identically vanishing, and thus $f^\prime(T-1) \neq 0$. This implies that on the interval $[T-1/2,T)$, $f$ must be of the form
    \begin{equation*}
        f(t) = D_+ + D_- (T-t)^{-2}
    \end{equation*}
    where $D_- \neq 0$, so that does not satisfy $|f(t)| = \mathcal{O}((T-t)^{-1})$.
\end{proof}

\begin{lem}     \label{lem: S4 Hopf example excluding non-Killing Jacobi fields into W2}
    If $\varepsilon > 0$ is small enough, the space of Jacobi fields in $W_2$ satisfying the bound 
    \begin{equation*}
        |u(t)| = \mathcal{O}((T+t)^{-1} + (T-t)^{-1})
    \end{equation*}
    has dimension $3$, and is therefore spanned by the the horizontal lifts of Killing fields on $\Sp^2$.
\end{lem}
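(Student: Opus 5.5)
The plan is to decompose the ODE system on $W_2$ according to the invariant splitting already used in the proof of \cref{prop: Jacobi fields of left-invariant metrics on S3}. There, $Q$ preserves $W_{2,1}$ and $W_{2,0}\oplus W_{2,2}$ separately. On $W_{2,1}$ the operator $J^\varphi$ has a positive eigenvalue ($k=2,\ell=1$ gives $8$). So for $\varepsilon=0$ the argument of \cref{lem: Finite-dimension restriction} via \cref{lem: Preparatory ODE} rules out bounded-by-$r^{-1}$ solutions. A continuity argument, identical to that of \cref{lem: rule out W_1} (transversality of the decaying subspaces $V^\pm$ at $\varepsilon=0$ is an open condition), then removes $W_{2,1}$ for small $\varepsilon$.

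On $W_{2,0}\oplus W_{2,2}$ I would use the real basis from \cref{prop: Jacobi fields of left-invariant metrics on S3}. In that basis $J^2_{\varepsilon,T}(t)$ is $16\begin{pmatrix} I & \tfrac12\varepsilon\chi_T A\\ \tfrac12\varepsilon\chi_T A & 0\end{pmatrix}$ with $A=\mathrm{diag}(1,-1,-1,0,0,0)$. The $12$-dimensional system therefore splits into six decoupled $2\times2$ systems, one for each pair (basis vector of $W_{2,0}$, corresponding vector of $W_{2,2}$). The three pairs with $A$-entry $0$ have the $W_{2,2}$ component satisfying the $\lambda=0$ scalar ODE. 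Exactly as in \cref{prop: Existence for identity}, only constants are allowed there, giving the three Killing lifts. Their $W_{2,0}$ component satisfies a $\lambda=16$ equation with zero forcing, so it must vanish by \cref{lem: Preparatory ODE}. That yields exactly three dimensions. For the remaining three pairs (entries $\pm1$) I must show that no nonzero solution is $\mathcal{O}(r^{-1})$ at both ends.

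For such a pair, near $t=-T$ (where $\chi_T=0$ on $(-T,-T+1]$) the decoupled equations are $\lambda=16$ for the first component and $\lambda=0$ for the second. Boundedness forces the first component to be a multiple of the decaying solution $f$ from \cref{Def: Admissible values} and the second to be constant. After rescaling, the data at $t=-T+1$ is a combination $aX+bY$, with $X,Y$ the normalised solutions $X_\pm,Y_\pm$ there (data $(1,0),(v_0,0)$ and $(0,1),(0,0)$). By the symmetry $t\mapsto -t$ of $\kappa_T,\chi_T$ (with $\rho$ odd), the solutions decaying at $+T$ are the reflections of the same two-dimensional family. A nonzero solution decaying at both ends is then equivalent to the midpoint data $(U(0),U'(0))$ of some $aX+bY$ lying in the reflected space. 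Reflection negates the derivative, so the two Wronskian-type conditions reduce to linear dependence of derivatives at the midpoint. Translating the time shift so that the midpoint corresponds to time $T$ on $[1,\infty)$, this is exactly the failure of linear independence of $X'_\pm(\varepsilon,T),Y'_\pm(\varepsilon,T)$, which admissibility (condition 2) excludes for small $\varepsilon$.

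The main obstacle I expect is this last bookkeeping: matching the reflected decaying space precisely with the condition stated in \cref{Def: Admissible values}, including the correct time parametrisation $T+t$ versus $t$ on $[1,\infty)$. Also, the $\varepsilon$-smallness must be uniform across the six blocks and the $W_{2,1}$ argument, which is fine since there are finitely many of them. Finally, the three surviving fields are identified with horizontal lifts of Killing fields because these lie in $W_2$ and are genuine Jacobi fields.
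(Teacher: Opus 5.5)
Your overall route is the paper's. You discard $W_{2,1}$ using positivity of $J^\varphi$ there plus continuity in $\varepsilon$; your finite-dimensional transversality argument is an equally valid substitute for the paper's Fredholm-continuity argument. You then split $W_{2,0}\oplus W_{2,2}$ via the block form into three parts: the Killing directions (constants), the decoupled eigenvalue-$16$ directions (killed by \cref{lem: Preparatory ODE}), and three coupled $2\times 2$ blocks. On those blocks you match, at $t=0$, a solution bounded at $t=-T$ with the reflection of one bounded at $t=T$. A harmless labelling slip: by $\lambda_{k,\ell}=k(k-2)+8\ell$, it is the $W_{2,0}$ component that carries eigenvalue $0$ and contains the Killing lifts, while $W_{2,2}$ carries $16$. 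You inherited the swap from the paper's ordering of its basis.

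The gap is in the matching step. Write the left solution as $aX_\pm(\varepsilon,T+t)+bY_\pm(\varepsilon,T+t)$ and the right one as $cX_\pm(\varepsilon,T-t)+dY_\pm(\varepsilon,T-t)$. Continuity of $U$ and $U'$ at $t=0$ gives
\begin{equation*}
(a-c)X_\pm(\varepsilon,T)+(b-d)Y_\pm(\varepsilon,T)=0, \qquad (a+c)X'_\pm(\varepsilon,T)+(b+d)Y'_\pm(\varepsilon,T)=0 .
\end{equation*}
Hence a nonzero two-sided solution exists if and only if \emph{either} $X'_\pm(\varepsilon,T),Y'_\pm(\varepsilon,T)$ are dependent (even solutions), \emph{or} $X_\pm(\varepsilon,T),Y_\pm(\varepsilon,T)$ are dependent (odd solutions). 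Your claim that both conditions reduce to dependence of the derivatives is not correct, since the first equation involves only values. Admissibility (condition 2 of \cref{Def: Admissible values}) excludes only the first alternative, so as written you have not ruled out odd solutions.

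The missing ingredient is the linear independence of $X_\pm(\varepsilon,T)$ and $Y_\pm(\varepsilon,T)$ for small $\varepsilon$. This is not part of admissibility, but it follows by continuity from $\varepsilon=0$. There $Y_\pm(0,t)\equiv(0,1)$, and $X_\pm(0,T)=\bigl(\cosh 4(T-1)+\tfrac{v_0}{4}\sinh 4(T-1),\,0\bigr)$ has nonzero first component because $v_0>0$. This is exactly how the paper concludes: admissibility gives $a=-c$ and $b=-d$, and then independence of the values forces $a=b=0$. The two conditions behave differently. The derivative condition genuinely needs $\varepsilon>0$, since $Y'_\pm(0,T)=0$. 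The value condition is open and inherited from $\varepsilon=0$. Your choice of small $\varepsilon$ must serve both.
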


\begin{proof}
    In the proof of \cref{prop: Jacobi fields of left-invariant metrics on S3} we have seen that $J^2_{\varepsilon,T}(t)$ leaves invariant $W_{2,1}$ and $W_{2,0} \oplus W_{2,2}$, and that $J^\varphi$ has positive eigenvalues on $W_{2,1}$. We can therefore argue as in the proof of \cref{lem: Finite-dimension restriction} that for $\varepsilon > 0$ small enough, any Jacobi field with these bounds has no component in $W_{2,1}$. It remains to deal with the $W_{2,0} \oplus W_{2,2}$ part.

    As we noted in the proof of \cref{prop: Jacobi fields of left-invariant metrics on S3}, in an appropriate basis of $W_{2,0} \oplus W_{2,2}$, the Jacobi operator takes the form
    \begin{equation*}
        J^2_{\varepsilon,T}(t) = 16 \begin{pmatrix}
            I & \frac{1}{2} \varepsilon \chi_T(t) A \\
        \frac{1}{2} \varepsilon \chi_T(t) A & 0
        \end{pmatrix}
    \end{equation*}
    where
    \begin{equation*}
        A = \begin{pmatrix}
            1  \\
            & -1  \\
            & & -1   \\
            & & & 0 \\
            & & & & 0 \\
            & & & & & 0
        \end{pmatrix} \cdot
    \end{equation*}
    In particular, the operators $J^2_{\varepsilon,T}(t)$ leave invariant the decomposition $W_{2,0} \oplus W_{2,2} = K \oplus H \oplus P_1 \oplus P_2 \oplus P_3$, where
    \begin{itemize}
        \item $K \simeq \R^3$ is the space of horizontal lifts of Killing fields of $\Sp^2$.
        \item $H \simeq \R^3$ have a basis of eigenvectors $J^2_{\varepsilon,T}(t)$ corresponding to the eigenvalues $16$.
        \item $P_i \simeq \R^2$ and restriction of the Jacobi operator read is given by the $2 \times 2$ matrices
        \begin{equation*}
            16\begin{pmatrix}
                1 & \pm \frac{1}{2} \varepsilon \chi_T(t) \\
                \pm \frac{1}{2} \varepsilon \chi_T(t) & 0
            \end{pmatrix} \cdot
        \end{equation*}
    \end{itemize}
    For $\varepsilon > 0$ small enough, a Jacobi field with the appropriate bounds will be a section of $K \oplus P_1 \oplus P_2 \oplus P_3$, and we want to prove that when $T$ is admissible there are no components along $P_i$. On $P_i$, the ODE satisfied by Jacobi fields becomes
    \begin{equation*}
        - U^{\prime\prime}(t) - (3\kappa_T^\prime(t)+\rho_{\varepsilon,T}(t)) U^\prime(t) + 16 \mathrm{e}^{-2\kappa_T(t)} \begin{pmatrix}
                1 & \pm \frac{1}{2} \varepsilon \chi_T(t) \\
                \pm \frac{1}{2} \varepsilon \chi_T(t) & 0
            \end{pmatrix} U(t) = 0
    \end{equation*}
    If this ODE has a solution such that $|U(t)| = \mathcal{O}((T+t)^{-1} + (T-t)^{-1})$, then $U(t)$ must be uniformly bounded, and we deduce that there are constants $A, B, C, D \in \R$ such that $U(t) = A X_\pm(\varepsilon,T+t) + BY_\pm(\varepsilon,T+t)$ for $t \in [-T+1,0]$ and $U(t) = C X_\pm(\varepsilon,T-t) + D Y_\pm(\varepsilon,T-t)$ for $t \in [0,T-1]$, where $X_\pm(\varepsilon,\cdot), Y_\pm(\varepsilon,\cdot) \col [1,\infty) \rightarrow \R^2$ are the unique solutions of the ODE
    \begin{equation*}
        - U^{\prime\prime}(t) - \rho_\varepsilon(t) U^\prime(t) + 16 \begin{pmatrix} 1 & \pm \frac{1}{2} \varepsilon \chi(t) \\ \pm\frac{1}{2} \varepsilon \chi(t) & 0 \end{pmatrix} U(t) = 0
    \end{equation*}
    satisfying the initial conditions
        \begin{equation*}
            X_\pm(\varepsilon,1) = 
            \begin{pmatrix}
                1 \\
                0
            \end{pmatrix}, ~ X^\prime_\pm(\varepsilon,1) = 
            \begin{pmatrix}
                v_0 \\
                0
            \end{pmatrix}, ~~~ \text{and} ~~~ Y_\pm(\varepsilon,1) = 
            \begin{pmatrix}
                0\\
                1
            \end{pmatrix}, ~ Y^\prime_\pm(\varepsilon,1) = 
            \begin{pmatrix}
                0\\
                0
            \end{pmatrix}.
        \end{equation*}
    In particular, it follows that, at $t = 0$, 
    \begin{align*}
        A X_\pm(\varepsilon,T) + BY(\varepsilon,T) & = C X_\pm(\varepsilon,T) + DY_\pm(\varepsilon,T), ~~ \text{and} \\
        A X^\prime_\pm(\varepsilon,T) + BY^\prime_\pm(\varepsilon,T) & = - C X^\prime_\pm(\varepsilon,T) - DY^\prime_\pm(\varepsilon,T) .
    \end{align*}
    When $T$ is admissible, $X^\prime_\pm(\varepsilon,T)$ and $Y^\prime_\pm(\varepsilon,T)$ are independent, whence $A = - C$ and $B = -D$. Therefore, it follows that $A X_\pm(\varepsilon,T) + B Y_\pm(\varepsilon,T) = 0$ for $\varepsilon > 0$ small enough. On the other hand, $X_\pm(0,T)$ and $Y_\pm(0,T)$ are linearly independent, and so must be $X_\pm(\varepsilon,T)$ and $Y_\pm(\varepsilon,T)$ when $\varepsilon > 0$ is small enough, so that $A = B = 0$.

    This proves that if $T$ is admissible and $\varepsilon > 0$ is small enough, any Jacobi field $u$ with the bounds $|u| = \mathcal{O}((T+t)^{-1} + (T-t)^{-1})$ must satisfy $u(t) \in K$ for all $t \in (-T,T)$. On $K$, the Jacobi equation reduces to
    \begin{equation*}
        -U^{\prime\prime}(t) - (3 \kappa^\prime_T(t)+\rho_{\varepsilon,T}(t)) U^\prime(t) = 0
    \end{equation*}
    whose solutions are of the form
    \begin{equation*}
        U(t) = U_+ \oplus (T+t)^{-2} U_-
    \end{equation*}
    on $(-T,-T+1/2]$, whence the only solutions with appropriate bounds are constant maps $(-T,T) \rightarrow W_0$. These are precisely the horizontal lifts of the Killing fields of $\Sp^2$.
\end{proof}

It remains to show that there are admissible values in the interval $[2,\infty)$. In fact we have:

\begin{prop}        \label{prop: Existence of admissible values}
    There exists $T_0 \in [2,\infty)$ such that the non-admissible values of $T$ are isolated in $[T_0,\infty)$. In particular, the admissible values of $T$ are dense in $[T_0,\infty)$.
\end{prop}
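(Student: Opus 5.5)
The plan is to treat the two conditions in \cref{Def: Admissible values} separately, using the fact that for $t \geq 2$ all coefficients in the relevant ODEs are constant: $\kappa \equiv 0$, $\chi \equiv 1$ and $\rho_\varepsilon \equiv 0$.

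\emph{Condition 1.} On $[1,\infty)$ the ODE becomes $f''+f=0$, so $f(t)=A\cos(t+\theta)$ with $A\neq 0$, because $f$ is not identically zero. Hence $f$ and $f'$ together vanish only on a discrete set of spacing $\pi/2$, exactly as in \cref{lem: Point (i) of admissible values}. The values of $T$ violating condition 1 are therefore isolated in $[2,\infty)$.

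\emph{Condition 2.} Set $D_\pm(\varepsilon,T)\coloneqq \det\big(X'_\pm(\varepsilon,T),Y'_\pm(\varepsilon,T)\big)$.

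At $\varepsilon=0$ we have $\rho_0=0$ and the system decouples. This gives $Y_\pm(0,t)=(0,1)$ and $X_\pm(0,t)=(x_0(t),0)$, where $x_0(t)=\cosh 4(t-1)+\tfrac{v_0}{4}\sinh 4(t-1)$. So $D_\pm(0,T)=0$ for every $T$, and the question is about the Taylor expansion of $D_\pm$ in $\varepsilon$.

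I will expand $X_\pm$ and $Y_\pm$ to second order in $\varepsilon$. This is legitimate because $\rho_\varepsilon=O(\varepsilon^2)$ and $\rho_\varepsilon$ multiplies derivatives that vanish at order zero for $Y$.
\begin{itemize}
\item The first-order correction of $Y$ is $(z,0)$, where $-z''+16z=\mp 8\chi$ and $z(1)=z'(1)=0$.
\item The first-order correction of $X$ is $(0,w)$, where $w''=\pm 8\chi x_0$ and $w(1)=w'(1)=0$.
\item The second component of the second-order correction of $Y$ is $y$, where $y''=\pm 16\chi z$ and $y(1)=y'(1)=0$.
\end{itemize}
Expanding the determinant, the $\varepsilon^1$ coefficient vanishes identically. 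The $\varepsilon^2$ coefficient is
\[
c_\pm(T)=\tfrac12\, x_0'(T)\,y'(T)\;-\;w'(T)\,z'(T).
\]
For $T$ with $c_\pm(T)\neq 0$ we get $D_\pm(\varepsilon,T)=\varepsilon^2\big(c_\pm(T)+O(\varepsilon)\big)\neq 0$ for all small $\varepsilon>0$, so condition 2 holds at that $T$.

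\emph{Non-vanishing of $c_\pm$.} On $[2,\infty)$, the functions $x_0,z,w,y$ solve constant-coefficient linear ODEs with constant forcing. Hence $c_\pm$ is an exponential polynomial in $T$ there, a finite sum of terms $p(T)e^{\lambda T}$ with $\lambda\in\{0,\pm4,\pm8\}$ and polynomial coefficients $p$. In particular $c_\pm$ is real-analytic on $(2,\infty)$, so its zeros are isolated as soon as $c_\pm\not\equiv 0$.

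To rule out $c_\pm\equiv 0$ and to locate $T_0$, I will compute the $e^{8T}$ coefficient of $c_\pm$. For large $t$ one has $z\sim \mp a\,e^{4t}$ with $a>0$, where the sign comes from the forcing $\mp 8$ and the positivity of the Green's function of $-\partial_t^2+16$. Then $y'\sim -4a\,e^{4t}$, while $x_0'\sim b\,e^{4t}$ with $b>0$ since $v_0>0$, and $w'\sim \pm b'\,e^{4t}$. The main obstacle is to check that these contributions do not cancel in the $e^{8T}$ coefficient. I would do this by explicitly solving the four ODEs on $[1,2]$ through variation of constants with the fixed cutoff $\chi$; the transition region only enters through signed integrals, whose signs should be determined by $\chi\geq 0$ and $v_0>0$. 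If the leading coefficient turned out to cancel, I would instead use the polynomial-times-exponential structure of $c_\pm$ and show that some lower-order coefficient is nonzero, which still gives $c_\pm\not\equiv 0$.

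Once the $e^{8T}$ coefficient is known to be nonzero, $c_\pm(T)\neq 0$ for all $T\geq T_0$ with $T_0$ large. Then condition 2 holds for every $T\geq T_0$. Non-admissibility on $[T_0,\infty)$ therefore comes only from condition 1, which fails only at isolated points, and so the admissible values are dense in $[T_0,\infty)$.
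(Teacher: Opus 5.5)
Your strategy is the paper's. You treat condition 1 via the periodicity of solutions of $f''+f=0$ (\cref{lem: Point (i) of admissible values}). You treat condition 2 via the order-$\varepsilon^2$ coefficient of $\det(X'_\pm,Y'_\pm)$, which is what \cref{lem: Point (ii) of admissible values} computes after the rescaling $\tau=4t$, and your formula $c_\pm=\tfrac12x_0'y'-w'z'$ is correct.

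\textbf{The gap.} The decisive step, $c_\pm(T)\neq0$ for large $T$, is left open, and the route you propose for it cannot succeed: the $e^{8T}$ coefficient of $c_\pm$ vanishes \emph{identically}. On $[2,\infty)$, where $\chi\equiv1$, use $x_0''=16x_0$ and $z''=16z\pm8$ to integrate the forcing terms in closed form:
\begin{equation*}
    w'(T)=\pm\tfrac12\,x_0'(T)+c_1,\qquad y'(T)=\pm z'(T)-8T+c_2,
\end{equation*}
where the constants $c_1,c_2$ depend only on the solutions over $[1,2]$. Both products $\tfrac12x_0'y'$ and $w'z'$ therefore contain the same term $\pm\tfrac12x_0'z'$. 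It cancels exactly, leaving
\begin{equation*}
    c_\pm(T)=-4T\,x_0'(T)+\tfrac{c_2}{2}\,x_0'(T)-c_1\,z'(T).
\end{equation*}
So no sign analysis of the integrals over $[1,2]$ can make the $e^{8T}$ coefficient nonzero. Your fallback (``some lower-order coefficient is nonzero'') is precisely what still has to be proved.

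\textbf{The missing idea.} You need this exact cancellation together with the surviving leading term. Since $x_0'(T)\sim\tfrac{4+v_0}{2}e^{4(T-1)}$ and $z'(T)=O(e^{4T})$, the term $-4Tx_0'(T)$ dominates, so
\begin{equation*}
    c_\pm(T)\sim-2(4+v_0)\,T\,e^{4(T-1)}\to-\infty,
\end{equation*}
using only $v_0>0$. This is the paper's computation: past the cutoff, $\Xi'-\Phi'=A-t$ and $\Psi'_v=f'_v+C_v$, giving $D_v(t)\sim-\tfrac{(1+v)t}{2}e^t$ in the rescaled variable. With this in place, the rest of your argument goes through and gives condition 2 for every $T\geq T_0$.

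A minor point: with zero data at $t=1$, $z(t)=\pm2\int_1^t\sinh(4(t-s))\chi(s)\,\mathrm{d}s$, so $z\sim\pm a\,e^{4t}$ rather than $\mp$. This does not affect the cancellation, which is independent of that sign.
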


The proof relies on the following lemma:

\begin{lem}     \label{lem: Point (ii) of admissible values}
    Let us fix $v > 0$, $\sigma \in \R \backslash \{0\}$ and a smooth function $\psi(\varepsilon,t) \col [0,1) \times [0,\infty) \rightarrow \R$ with all derivatives uniformly bounded and such that 
    \begin{equation*}
        \psi(0,t) = \frac{\partial \psi}{\partial \varepsilon}(0,t) = 0 .
    \end{equation*}
    Consider $X(\varepsilon,\cdot), Y(\varepsilon,\cdot) \col [0,\infty) \rightarrow \R^2$, the unique solutions of the ODE
    \begin{equation*}
        - U^{\prime\prime}(t) - \psi(\varepsilon,t) U^\prime(t) + \begin{pmatrix} 1 & \sigma \varepsilon \chi(t) \\ \sigma \varepsilon \chi(t) & 0 \end{pmatrix} U(t) = 0
    \end{equation*}
    satisfying the initial conditions
    \begin{equation*}
        X_\pm(\varepsilon,1) = 
        \begin{pmatrix}
            1 \\
            0
        \end{pmatrix}, ~ X^\prime_\pm(\varepsilon,1) = 
        \begin{pmatrix}
            v \\
            0
        \end{pmatrix}, ~~~ \text{and} ~~~ Y_\pm(\varepsilon,1) = 
        \begin{pmatrix}
            0\\
            1
        \end{pmatrix}, ~ Y^\prime_\pm(\varepsilon,1) = 
        \begin{pmatrix}
            0\\
            0
        \end{pmatrix}.
    \end{equation*}
    Then if $T \geq 0$ is large enough, there exists $\varepsilon(T) > 0$ such that for all $\varepsilon \in (0,\varepsilon(T))$, $X^\prime(\varepsilon,T)$ and $Y^\prime(\varepsilon,T)$ are linearly independent in $\R^2$.
\end{lem}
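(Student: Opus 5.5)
The plan is a perturbation expansion in $\varepsilon$ of the $2\times 2$ determinant
\begin{equation*}
    \Delta(\varepsilon,T) \coloneqq \det\big(X^\prime(\varepsilon,T),Y^\prime(\varepsilon,T)\big),
\end{equation*}
for fixed $T$. I will show that $\Delta(\varepsilon,T) = \varepsilon^2 \Delta_2(T) + \mathcal{O}_T(\varepsilon^3)$ with $\Delta_2(T) \neq 0$ for $T$ large. Since $X,Y$ depend smoothly on $\varepsilon$ (smooth dependence of solutions of linear ODEs on parameters, uniformly on the compact interval $[1,T]$), this gives $\varepsilon(T)>0$ as claimed.

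\textbf{Order zero and order one.} At $\varepsilon = 0$ we have $\psi(0,\cdot)=0$ and the system decouples. Hence $X(0,t) = (f(t),0)$ with $-f^{\prime\prime}+f=0$, $f(1)=1$, $f^\prime(1)=v$, and $Y(0,t) = (0,1)$. In particular $Y^\prime(0,T)=0$, so $\Delta(0,T)=0$.

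Differentiating the ODE in $\varepsilon$ at $\varepsilon=0$, the $\psi$-terms drop out because $\psi$ and $\partial_\varepsilon\psi$ vanish there. This gives $\partial_\varepsilon X = (0,a)$ with $a^{\prime\prime} = \sigma\chi f$, and $\partial_\varepsilon Y = (w,0)$ with $w^{\prime\prime}-w=\sigma\chi$, all with zero Cauchy data at $t=1$.

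Writing $\Delta = X_1^\prime Y_2^\prime - X_2^\prime Y_1^\prime$, one has $X_2^\prime, Y_1^\prime = \mathcal{O}(\varepsilon)$. The first-order coefficient of $Y_2^\prime$ also vanishes, because its equation at order $\varepsilon$ involves $Y_1(0,\cdot)=0$. Hence $\Delta = \mathcal{O}(\varepsilon^2)$: both columns point along $e_1$ to first order, which is why second order is needed.

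\textbf{Order two.} Let $b \coloneqq \tfrac12\partial_\varepsilon^2 Y_2|_{\varepsilon=0}$. Expanding the second component of the ODE to order $\varepsilon^2$, the term $\psi Y_2^\prime$ is $\mathcal{O}(\varepsilon^3)$, so
\begin{equation*}
    b^{\prime\prime} = \sigma\chi w, \qquad b(1)=b^\prime(1)=0 .
\end{equation*}
This yields
\begin{equation*}
    \Delta_2(T) = f^\prime(T)\, b^\prime(T) - a^\prime(T)\, w^\prime(T) = \sigma\Big( f^\prime(T)\int_1^T \chi w - w^\prime(T)\int_1^T \chi f\Big).
\end{equation*}

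\textbf{Asymptotics in $T$ (main obstacle).} For $t\ge 2$ we have $\chi \equiv 1$. Then $f = c\,\mathrm{e}^{t} + c^\prime \mathrm{e}^{-t}$ with $c = \tfrac{1+v}{2}$-type constant, more precisely $c = \tfrac12\mathrm{e}^{-1}(1+v) > 0$, using $v>0$. Also $w = -\sigma + d\,\mathrm{e}^{t} + d^\prime\mathrm{e}^{-t}$ for some constants $d,d^\prime$.

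The leading $\mathrm{e}^{2T}$ contributions, of size $cd\,\mathrm{e}^{2T}$, cancel between the two products. So one has to keep the next terms carefully, tracking constants $K_1,K_2$ from $\int_1^2$.

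The surviving dominant term comes from the constant part $-\sigma$ of $w$, integrated over $[2,T]$, against $f^\prime(T)\approx c\,\mathrm{e}^T$. This gives
\begin{equation*}
    \Delta_2(T) = -\sigma^2 c\, T \mathrm{e}^{T} + \mathcal{O}(\mathrm{e}^{T}),
\end{equation*}
which is nonzero for $T$ large since $\sigma\neq 0$ and $c>0$.

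I expect this cancellation bookkeeping to be the delicate step. I would verify it by writing $\int_1^T\chi w$ and $\int_1^T\chi f$ explicitly as $K + (\text{antiderivative on }[2,T])$ and expanding the products term by term.

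Finally, for each such $T$, Taylor's theorem in $\varepsilon$, with remainder bounded on $[1,T]$, gives $\Delta(\varepsilon,T)\neq 0$ for $0<\varepsilon<\varepsilon(T)$.
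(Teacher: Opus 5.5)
Your proposal is correct and follows the paper's proof: you expand $\det\big(X^\prime(\varepsilon,T),Y^\prime(\varepsilon,T)\big)$ to second order in $\varepsilon$, identify the coefficient $\sigma\big(f^\prime(T)\int\chi w - w^\prime(T)\int\chi f\big)$, which is exactly the paper's $\sigma^2(f_v^\prime\Xi^\prime-\Psi_v^\prime\Phi^\prime)$, and show it behaves like $-\sigma^2 c\,T\mathrm{e}^T$. The only cosmetic difference is how the linear-in-$T$ term is extracted. The paper uses $\Psi_v^\prime = f_v^\prime + C_v$ and the identity $(\Xi^\prime-\Phi^\prime)^\prime=-1$ on the region where $\chi\equiv 1$, whereas you do the exponential bookkeeping directly; that bookkeeping checks out, since the $\mathrm{e}^{2T}$ terms cancel exactly and the $T\mathrm{e}^T$ term comes only from the constant part $-\sigma$ of $w$.
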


\begin{proof}
    Since the resolvent of the ODE is a smooth function of $(\varepsilon,t) \in [0,1) \times [0,\infty)$, there are expansions
    \begin{align*}
        X(\varepsilon,t) & = X_0(t) + \varepsilon X_1(t) + \frac{\varepsilon^2}{2} X_2(t) + \varepsilon^3X^{(3)}(\varepsilon,t) , \\
        Y(\varepsilon,t) & = Y_0(t) + \varepsilon Y_2(t) + \frac{\varepsilon^2}{2} Y_2(t) + \varepsilon^3Y^{(3)}(\varepsilon,t)
    \end{align*}
    where $X^{(3)}(\varepsilon,t)$ and $Y^{(3)}(\varepsilon,t)$ are smooth functions of $(\varepsilon,t) \in [0,1) \times [0,\infty)$. Moreover, 
    \begin{equation*}
        X_1(0) = X_1^\prime(0) = X_2(0) = X_2^\prime(0) = \begin{pmatrix}0 \\ 0\end{pmatrix}, ~~~ Y_1(0) = Y_1^\prime(0) = Y_2(0) = Y_2^\prime(0) = \begin{pmatrix}0 \\ 0\end{pmatrix}
    \end{equation*}
    and
    \begin{equation*}
        X_0(t) = \begin{pmatrix}
            \cosh(t) + v \sinh(t) \\
            0
        \end{pmatrix}, 
        ~~
        Y_0(t) = \begin{pmatrix}
            0 \\ 1
        \end{pmatrix} \cdot
    \end{equation*}
    The next orders of the expansion may be determined as solutions of an ODE, by differentiating the original ODE with respect to $\varepsilon$. For $X_1(t)$, we obtain the ODE
    \begin{equation*}
        - X_1^{\prime\prime}(t) + \begin{pmatrix} 1 & 0 \\ 0 & 0\end{pmatrix}X_1(t) + \sigma \chi(t) \begin{pmatrix} 0 & 1 \\ 1 & 0\end{pmatrix} X_0(t) = 0 .
    \end{equation*}
    The unique solution of the ODE, with the correct initial conditions, is
    \begin{equation*}
        X_1(t) = \begin{pmatrix}
            0 \\ \sigma \Psi_v(t)
        \end{pmatrix}, ~~~ \Psi_v(t) = \int_0^t \int_0^s \chi(u)(\cosh(u) + v \sinh(u)) \mathrm{d}u\mathrm{d}s .
    \end{equation*}
    Similarly, for $Y_1$ we have the equation 
    \begin{equation*}
        - Y_1^{\prime\prime}(t) + \begin{pmatrix} 1 & 0 \\ 0 & 0\end{pmatrix}Y_1(t) + \sigma \chi(t) \begin{pmatrix} 0 & 1 \\ 1 & 0\end{pmatrix} Y_0(t) = 0 .
    \end{equation*}
    If we denote by $\Phi(t)$ the unique solution of the ODE
    \begin{equation*}
        f^{\prime\prime}(t) = f(t) + \chi(t), ~~~ f(0) = f^\prime(0) = 0,
    \end{equation*}
    then we have
    \begin{equation*}
        Y_1(t) = \begin{pmatrix}
            \sigma \Phi(t) \\ 0
        \end{pmatrix} \cdot
    \end{equation*}
    For our purpose, we only need to determine $Y_2(t)$, as $X_2(t)$ will not be relevant. It is determined by the ODE
    \begin{equation*}
        -Y_2^{\prime\prime}(t) + \frac{\partial^2\psi}{\partial\varepsilon^2}(0,t) Y_0^\prime(t) + \begin{pmatrix} 1 & 0 \\ 0 & 0 \end{pmatrix} Y_2(t) + 2 \sigma \chi(t) \begin{pmatrix} 0 & 1 \\ 1 & 0 \end{pmatrix} Y_1(t) = 0 .
    \end{equation*}
    Since $Y_0^\prime(t) = 0$ for all $t$, we can see that
    \begin{equation*}
        Y_2(t) = \begin{pmatrix}
            0 \\
            2 \sigma^2 \Xi(t)
        \end{pmatrix}, ~~~ \Xi(t) = \int_0^t \int_0^s \chi(u)\Phi(u)  \mathrm{d}u\mathrm{d}s .
    \end{equation*}

    Gathering the previous results, we obtain
    \begin{equation*}
        X(\varepsilon,t) = \begin{pmatrix}
            f_v(t) + \mathcal{O}(\varepsilon^2) \\ \varepsilon \sigma \Psi_v(t) + \mathcal{O}(\varepsilon^2)
        \end{pmatrix}, ~~~ f_v(t) = \cosh(t) + v \sinh(t)
    \end{equation*}
    and 
    \begin{equation*}
        Y(\varepsilon,t) = \begin{pmatrix}
            \varepsilon \sigma \Phi(t) + \mathcal{O}(\varepsilon^3) \\ 1 +\varepsilon^2 \sigma^2 \Xi(t) + \mathcal{O}(\varepsilon^3)
        \end{pmatrix} \cdot
    \end{equation*}
    Now we can compute
    \begin{align*}
        \det \begin{pmatrix} X^\prime(\varepsilon,t) & Y^\prime(\varepsilon,t) \end{pmatrix} & = \det \begin{pmatrix}
            f^\prime_v(t) + \mathcal{O}(\varepsilon^2) & \varepsilon \sigma \Phi^\prime(t) + \mathcal{O}(\varepsilon^2) \\ \varepsilon \sigma \Psi_v^\prime(t) + \mathcal{O}(\varepsilon^2) & \varepsilon^2 \sigma^2 \Xi^\prime(t) + \mathcal{O}(\varepsilon^3) 
        \end{pmatrix} \\
        & = \varepsilon^2 \sigma^2 (f^\prime_v(t) \Xi^\prime(t)-\Psi^\prime_v(t) \Phi^\prime(t)) + \mathcal{O}(\varepsilon^3) .
    \end{align*}
    Therefore, it is enough to prove that the quantity 
    \begin{equation*}
        D_v(t) \coloneqq f^\prime_v(t) \Xi^\prime(t)-\Psi^\prime_v(t) \Phi^\prime(t)
    \end{equation*}
    does not vanish for $t$ large enough. 

    First, let us remark that for $t \geq 1$
    \begin{align*}
        \Psi^\prime_v(t) & = \int_0^t \chi(s) (\cosh(s) + v \sinh(s)) \mathrm{d}s = \sinh(t) + v \cosh(t) + C_v = f^\prime_v(t) + C_v
    \end{align*}
    where 
    \begin{equation*}
        C_v = - \sinh(1) - v \cosh(1) + \int_0^1 \chi(s) (\cosh(s) + v \sinh(s)) \mathrm{d}s .
    \end{equation*}
    Therefore,
    \begin{equation*}
        D_v(t) = f^\prime_v(t) (\Xi^\prime(t)-\Phi^\prime(t)) + C_v \Phi^\prime(t) .
    \end{equation*}
    On the other hand, the quantity $\Xi^\prime(t) - \Phi^\prime(t)$ satisfies the ODE
    \begin{equation*}
        (\Xi^\prime(t)-\Phi^\prime(t))^\prime = \Xi^{\prime\prime}(t) - \Phi^{\prime\prime}(t) = \chi(t) \Phi(t) - \Phi^{\prime\prime}(t)
    \end{equation*}
    and by definition $\Phi^{\prime\prime}(t) = \Phi(t) + \chi(t)$, whence
    \begin{equation*}
        (\Xi^\prime(t)-\Phi^\prime(t))^\prime = (\chi(t)-1)\Phi(t) - \chi(t) .
    \end{equation*}
    For $t \geq 1$, $\chi(t) = 1$ and therefore for $t \geq 1$ we have
    \begin{equation*}
        \Xi^\prime(t)-\Phi^\prime(t) = A - t , ~~~ A = 1 + \Xi^\prime(1) - \Phi^\prime(1) .
    \end{equation*}
    It follows that for $t \geq 1$, 
    \begin{equation*}
        D_v(t) = (A- t) f^\prime_v(t) + C_v \Phi(t) .
    \end{equation*}
    As $t \rightarrow \infty$, $f^\prime_v(t) = \sinh(t) + v \cosh(t) = \frac{1+v}{2} \mathrm{e}^t + \mathcal{O}(\mathrm{e}^{-t})$. On the other hand, since 
    \begin{equation*}
        \Phi^{\prime\prime}(t) = \Phi(t) + \chi(t) \leq \Phi(t) + 1
    \end{equation*}
    and the unique solution of the ODE
    \begin{equation*}
        f^{\prime\prime}(t) = f(t) + 1, ~~~ f(0) = f^\prime(0) = 0
    \end{equation*}
    is $f(t) =\cosh(t)-1$, we have $0 \leq \Phi(t) \leq \cosh(t)-1 = \frac{1}{2}\mathrm{e}^t + \mathcal{O}(\mathrm{e}^{-t})$, and hence
    \begin{equation*}
        D_v(t) = (A-t) \frac{1+v}{2} \mathrm{e}^t + \mathcal{O}(\mathrm{e}^t) \sim -\frac{(1+v)t}{2} \mathrm{e}^t \rightarrow - \infty
    \end{equation*}
    as $t \rightarrow \infty$, which finishes the proof.
\end{proof}

\begin{proof}[Proof of \cref{prop: Existence of admissible values}]
    By Lemma \cref{lem: Point (i) of admissible values}, the values of $T \in [2,\infty)$ such that point (i) fails are isolated. On the other hand, if we introduce the variable $\tau = 4t$, the ODE
    \begin{equation*}
        - U^{\prime\prime}(t) - \rho_\varepsilon(t) U^\prime(t) + 16 \begin{pmatrix} 1 & \pm \frac{1}{2} \varepsilon \chi(t) \\ \pm\frac{1}{2} \varepsilon \chi(t) & 0 \end{pmatrix} U(t) = 0
    \end{equation*}
    becomes
    \begin{equation*}
        - U^{\prime\prime}(\tau) - \frac{1}{4}\rho_\varepsilon(4\tau) U^\prime(\tau) + \begin{pmatrix} 1 & \pm \frac{1}{2} \varepsilon \chi(4\tau) \\ \pm\frac{1}{2} \varepsilon \chi(4\tau) & 0 \end{pmatrix} U(\tau) = 0
    \end{equation*}
    and choose $v = 4v_0$, $\psi_{\varepsilon}(\tau) = \frac{1}{4}\rho_{\varepsilon}(4\tau)$, $\sigma = \pm 1/2$ and swap $\chi(\tau)$ and $\chi(4\tau)$, it follows from \cref{lem: Point (ii) of admissible values} that point (ii) holds for $T$ large enough.
\end{proof}

The following theorem summarises the findings of this section:

\begin{thm}
    For a dense subset of values $T \in [T_0,\infty)$, there exists $\varepsilon(T) > 0$ such that for all $\varepsilon \in (0,\varepsilon(T))$, the conically singular harmonic map $\phi \col (\Sp^4, g_{\varepsilon,T}) \to (\Sp^2,g_{\Sp^2})$ satisfies the assumptions of \cref{thm: main theorem introduction} with respect to the metric $g_{\varepsilon,T} \coloneqq \diff t^2 + \mathrm{e}^{2\kappa_T(t)} \widetilde{g}_{\varepsilon,T}(t)$ on $\Sp^4$.
\end{thm}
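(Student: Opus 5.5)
This theorem is an assembly of results already proved in this section. I would fix $T$ in the dense set supplied by \cref{prop: Existence of admissible values}, so that $T \geq T_0$ is admissible, and then take $\varepsilon(T)$ to be the minimum of the thresholds $\varepsilon_0(T)$, $\varepsilon_1(T)$ from \cref{lem: Finite-dimension restriction} and \cref{lem: rule out W_1}, the threshold in Definition \ref{Def: Admissible values}(2), and the thresholds implicit in \cref{lem: rule out W_0} and \cref{lem: S4 Hopf example excluding non-Killing Jacobi fields into W2}.

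Before checking the hypotheses, I would confirm that $\phi$ is a conically singular harmonic map for $g_{\varepsilon,T}$. Near each pole, $\chi_T \equiv 0$ and $\kappa_T(t)=\log(T\pm t)$, so $g_{\varepsilon,T}$ coincides with the flat cone metric $\diff r^2 + r^2 g_{\Sp^3}$ with $r = T\pm t$. Hence $\phi$ is exactly dilation invariant there, equal to the Hopf fibration composed with $\pr$. This gives rate $\mu$ arbitrary (indeed $u_s=0$), and an adapted chart compatible with $g_{\varepsilon,T}$. Harmonicity away from the poles follows from \cref{prop: t-independent harmonic maps on I x M} combined with \cref{lem: tension of Hopf fibration for deformed metric}.

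Conditions 1--3 concern only the tangent maps. These are the Hopf fibration with the round metric, since the deformation $\chi_T$ is supported away from the poles. The conditions then follow from \cref{subsubsec: Jacobi-Spectrum for Hopf fibration}: the eigenvalues are $-1 = -(m-3)$ with $4$-dimensional eigenspace $W_{1,0}$, and $0$, with all Jacobi fields integrable.

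For condition 4, the Jacobi-field statement is exactly \cref{thm: S^4 to S^2 example}. The remaining point is the identity $\dim \phi^*\mathfrak{iso}(\Sp^2) = 3$. The pullback map $\xi \mapsto \phi^*\xi$ is injective because $\phi$ is surjective onto $\Sp^2$, so a Killing field vanishing along the image is zero; equivalently, the three horizontal lifts in $K$ are independent. Writing out \cref{thm: S^4 to S^2 example} itself amounts to the chain \cref{lem: Finite-dimension restriction}, \cref{lem: rule out W_1}, \cref{lem: rule out W_0}, \cref{lem: S4 Hopf example excluding non-Killing Jacobi fields into W2}, carried out with the single common $\varepsilon(T)$ chosen above.

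I expect the main obstacle to be bookkeeping: checking that the decay bound $\mathcal O(r^{-1})$ in condition 4 matches the $t$-bounds used in those lemmas, which holds since $r_i = T \pm t$ near the poles, and that all $\varepsilon$-thresholds depend only on $T$.
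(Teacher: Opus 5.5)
Your proposal is correct and follows the paper's approach. The paper states this theorem without separate proof, as a summary of \cref{thm: S^4 to S^2 example}, \cref{prop: Existence of admissible values} and the spectral analysis of the Hopf fibration in \cref{subsubsec: Jacobi-Spectrum for Hopf fibration}, which is exactly how you assemble it; you also make explicit two points the paper leaves implicit, namely that the metric is exactly conical near the poles and that $\xi \mapsto \phi^*\xi$ is injective.
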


    \subsection{A harmonic map $\mathbb{CP}^2 \to \mathbb{CP}^1$ with one conical singularity}      \label{sec: map CP2 to CP1}

Our final example is a harmonic map $\mathbb{CP}^2 \to \mathbb{CP}^1$ with one conical singularity modelled on the Hopf fibration. It is based on the meromorphic function $\phi \col \mathbb{CP}^2 \setminus \{[0:0:1]\} \to \mathbb{CP}^1$ defined by 
\begin{equation*}
    \phi([z_0:z_1:z_2]) = [z_0:z_1] .
\end{equation*}
When both projective spaces are equipped with their respective Fubini--Study metrics, $\phi$ is harmonic due to holomorphicity and conically singular with tangent map the Hopf fibration. The first three conditions of \cref{thm: main theorem introduction} are therefore again satisfied by the discussion in \cref{subsec: Hopf fibration as model map}. However, since the pull-back of all infinitesimal biholomorphic deformations of $\mathbb{CP}^1$ give rise to Jacobi fields of order $\mathcal{O}(1)$ close to the singular point, $\phi$ cannot satisfy the last condition of \cref{thm: main theorem introduction} for the Fubini--Study metric on $\mathbb{CP}^2$. As in the previous examples, we will therefore modify the metric on $\mathbb{CP}^2$ in a way that preserves the harmonicity of $\phi$ while at the same time ruling out any Jacobi field of order $\mathcal{O}(r^{-1})$ which is not the pull-back of a Killing vector field.

For this, recall that we can identify 
\begin{equation*}
    \mathbb{CP}^2 \setminus (\mathbb{CP}^1 \cup \{[0:0:1]\}) \cong (0,\tfrac{\pi}{2}) \times \Sp^3
\end{equation*}
as follows: the group $\mathrm{SU}(2)$ acts on $\mathbb{CP}^2$ by restricting the canonical $\mathrm{SU}(3)$-action to $\mathrm{SU}(2) \subset \mathrm{SU}(3)$. With appropriate conventions, the quotient of this action is simply $\mathbb{CP}^2/\mathrm{SU}(2) = [0,\tfrac{\pi}{2}]$, where the fibre over $0$ is $\mathbb{CP}^1=\{[z_0:z_1:0] \in \mathbb{CP}^2\} \subset \mathbb{CP}^2$ and the fibre over $\frac{\pi}{2}$ is $\{[0:0:1]\}$. Over the inner values, the action of $\mathrm{SU}(2)\cong \Sp^3$ is free, so that $\mathbb{CP}^2 \setminus (\mathbb{CP}^1 \cup \{[0:0:1]\}) \cong (0,\tfrac{\pi}{2}) \times \Sp^3$. Note that on this product $\phi$ is given by $\phi(t,\sigma) = \varphi(\sigma)$ where $\varphi \col \Sp^3 \to \Sp^2 \simeq \mathbb{CP}^1$ is the Hopf fibration. In practice, this means that for any $T > 0$ we can obtain $\mathbb{CP}^2$ as a completion of $(-T,T) \times \Sp^3$, where we add a copy of $\mathbb{CP}^1$ at $t = -T$ by collapsing the circles generated by the action of $\xi = X_3$, and at $t = T$ we add a circle point by collapsing the whole sphere $\Sp^3$. Then the map $\phi(t,\sigma) = \varphi(\sigma)$ extends smoothly on $\mathbb{CP}^2 \setminus \{*\}$ and has a conical singularity modelled on the Hopf fibration along $\{*\}$.

In order to define a convenient family of metrics on $\mathbb{CP}^2$, we will fix as before two smooth non-decreasing functions: $\kappa \col (0,+\infty) \to (-\infty,0]$ such that $\kappa(t) = \log(t)$ if $t \leq \frac{1}{2}$ and $\kappa(t) = 0$ if $t \geq 1$; and $\chi \col \R \to [0,1]$ such that $\chi(t) = 0$ if $t \leq 1$ and $\chi(t) = 1$ if $t \geq 2$. As in the previous section, we also define, for any $T \geq 2$, a cutoff function $\chi_T \col (-T,T) \to [0,1]$ such that $\chi_T(t) = \chi(t+T)$ for $t \in (-T,0]$ and $\chi(t) = \chi(-t) = \chi(T-t)$ for $t \in [0,T)$. Moreover, we shall define two auxiliary functions $\kappa^+_T, \kappa^-_T \col (-T,T) \to (-\infty,0]$ by
\begin{equation*}
    \kappa^+_T(t) = \kappa(T-t), ~~ \kappa^-_T(t) = \kappa(T+t), ~~~~ \forall t \in (-T,T) .
\end{equation*}
With these notations, we define for any $\varepsilon \in [0,1)$ and $T \geq 2$ the metric $g_{\varepsilon,T} \coloneqq \diff t^2 + e^{2\kappa^+_T(t)} \tilde{g}_{\varepsilon,T}(t)$ on $(-T,T) \times \Sp^3$, where $\tilde{g}_{\varepsilon,T}(t)$ is the family of metrics on $\Sp^3$ defined as
\begin{equation*}
    \tilde{g}_{\varepsilon,T}(t) \coloneqq \frac{1}{1-\varepsilon\chi_T(t)} X^1 \otimes X^1 + \frac{1}{1 + \varepsilon\chi_T(t)} X^2 \otimes X^2 + e^{2\kappa^-_T(t)} X^3 \otimes X^3 .
\end{equation*}
It is not difficult to see that the metric $g_{\varepsilon,T}$ can be completed to a smooth metric on $\mathbb{CP}^2$, such that the map $t \col \mathbb{CP}^2 \to [-T,T]$ satisfies $t^{-1}(-T) \simeq \mathbb{CP}^1$ and $t^{-1}(T)$ is a singleton. Moreover, with respect to this metric, the map $\phi \col \mathbb{CP}^2 \to \mathbb{CP}^1 \simeq \Sp^2$ given by $\phi(t,\sigma) = \varphi(\sigma)$ for all $(t,\sigma) \in (-T,T) \times \Sp^3$ (where $\varphi \col \Sp^3 \to \Sp^2$ is the Hopf fibration) extends to a harmonic map on $\mathbb{CP}^2 \setminus \{t^{-1}(T)\}$ with a single conical singularity modelled on the Hopf fibration.

Using \cref{prop: Jacobi for warped product and constant map} and regarding sections $u \in \Gamma(\prescript{\phi}{}{T\Sp^2})$ as complex-valued functions $u \col (-T,T) \times \Sp^3 \rightarrow \C$, we see that the Jacobi operator of $\phi$ takes the form
\begin{equation}
    J^\phi_{g_{\varepsilon,T}} u = - \frac{\partial^2 u}{\partial t^2} - (3 \kappa^\prime_{+,T}(t) + \kappa^\prime_{-,T}(t) + \rho_{\varepsilon,T}(t)) \frac{\partial u}{\partial t} + e^{-2\kappa_{+,T}(t)} J^\varphi_{\varepsilon,T}(t) u
\end{equation}
where $\rho_{\varepsilon,T}(T)$ is as in the previous section (cf. \eqref{eq: rho epsilon T}) and $J^\varphi_{\varepsilon,T}(t)$ is the Jacobi operator of $\tilde{g}_{\varepsilon,T}(t)$, which reads
\begin{equation}
    J^\varphi_{\varepsilon,T}(t) u = e^{-2\kappa_{-,T}(t)} J^\varphi u + \varepsilon \chi_T(t) (\partial^2_{X_1}u - \partial^2_{X_2} u - 4 \overline{u}) + (e^{-2\kappa_{-,T}(t)} -1) (\partial_{X_1}^2 + \partial_{X_2}^2 + 4) u.
\end{equation}
In particular, the Jacobi operator associated with the metric $g_{\varepsilon,T}$ coincide with the one derived in the previous section on the domain $(-T+1,T) \times \Sp^3$, but differs from it on the domain $(-T,-T+1) \times \Sp^3$, reflecting the fact that we are only collapsing the circle generated by the action of $\xi = X_3$ as $t \to -T$, rather than the full sphere $\Sp^3$. 

In order to study the Jacobi fields on $(-T,T) \times \Sp^3$ which extend smoothly to $\mathbb{CP}^2 \setminus \{ *\}$, let us write the Jacobi operator more explicitly in the interval $(-T,-T+1)$. In this region, $\kappa_{+,T}(t) = 0$ and $\chi_T(t) = 0$. Using the identity $\partial_{X_1}^2 + \partial_{X_2}^2 = - \Delta_{\Sp^3} - \partial_\xi^2$ (where as before $\xi = X_3$), we see that $J^\varphi_{\varepsilon,T}(t)$ leaves invariant the eigenspaces $W_{k,\ell}$ on the interval $(-T,-T+\frac{1}{2})$, and the operator $4 + \partial_{X_1}^2 + \partial_{X_2}^2$ has eigenvalue $-\alpha_{k,\ell}$ where
\begin{equation*}
    \alpha_{k,\ell} = k(k+2) -4 - (2\ell-k)^2 = 2(k + 2\ell(k-\ell) - 2) .
\end{equation*}
Moreover, for any function $u \col (-T,-T+1) \to W_{k,\ell}$ the Jacobi equation takes the form
\begin{equation}       \label{eq: ODE with mukl 1}
    - u^{\prime\prime}(t) - \kappa_{-,T}^\prime(t) u^\prime(t) + \left(e^{-2\kappa_{-,T}(t)}\mu_{k,\ell} + \alpha_{k,\ell} \right) u(t) = 0 
\end{equation}
where, since $\lambda_{k,\ell} = k(k-2) + 8 \ell$ is the eigenvalue of $J^\varphi$ acting on $W_{k,\ell}$, one can compute
\begin{equation*}
    \mu_{k,\ell} \coloneqq \lambda_{k,\ell} - \alpha_{k,\ell} = k(k-2)+8\ell + 2(2\ell(\ell-k) - k + 2) = (k-2)^2 + 4\ell(\ell-k+2) \geq 0
\end{equation*}
since $\ell \in \{0,\ldots,k\}$. Moreover, note that $\mu_{k,\ell} = 0$ if and only if $\ell = \frac{k}{2}-1$. 

On the interval $(-T,-T+\frac{1}{2})$, \eqref{eq: ODE with mukl 1} takes the particularly simple form
\begin{equation}       \label{eq: ODE with mukl 2}
    - u^{\prime\prime}(t) - \frac{1}{T+t} u^\prime(t) + \left(\frac{\mu_{k,\ell}}{(T+t)^{-2}} + \alpha_{k,\ell} \right) u(t) = 0 
\end{equation}
In order to solve the ODE in a neighbourhood of $-T$, we can use Frobenius' method (see \cite[Section~4.2]{Teschl2012ODE_and_DynSystems}), which implies that the two fundamental solutions of \eqref{eq: ODE with mukl 2} are of the form 
\begin{align*}
    w_+(t) \coloneqq (T+t)^{\sqrt{\mu_{k,\ell}}} h_+(t) ~~ \textup{and} ~~ w_-(t) \coloneqq (T+t)^{-\sqrt{\mu_{k,\ell}}} h_-(t) ~~~~ \textup{if $2\sqrt{\mu_{k,\ell}} \notin \mathbb{N}_0$}
\end{align*}
and 
\begin{align*}
    w_+(t) \coloneqq (T+t)^{\sqrt{\mu_{k,\ell}}} h_+(t) ~~ \textup{and} ~~ w_-(t) \coloneqq (T+t)^{-\sqrt{\mu_{k,\ell}}} h_-(t) + c \log(T+t) w_+(t) ~~~ \textup{if $2\sqrt{\mu_{k,\ell}} \in \mathbb{N}_0$,}
\end{align*}
where in both cases $h_{\pm}$ are analytic functions at $t=-T$ that satisfy $h_{\pm}(-T)=1$. Moreover, $c\in \mathbb{R}$ is a constant which is non-zero whenever $\mu_{k,\ell} = 0$. This shows that for every eigenspace $W_{k,\ell}$ and every $U \in W_{k,\ell} \setminus \{0\}$ there exists (up to scaling) precisely one Jacobi field $u \col (-T,-T+\frac{1}{2}) \to W_{k,\ell}$ colinear to $U$ that remains bounded near $t=-T$. In particular, only one of the two solutions of \eqref{eq: ODE with mukl 2} (namely, $w_+(t)$) has a chance of extending smoothly over the $\mathbb{CP}^1$-orbit at $t=-T$.

Similarly to the previous sections, we want to define an admissible range of values for $T$ so that, at least for small enough values of $\varepsilon$, we can ensure that any Jacobi field associated with $g_{\varepsilon,T}$ which smoothly extends across the $\mathbb{CP}^1$-orbit at $t = -T$ and satisfies the bound $\mathcal{O}((T-t)^{-1})$ must be the pull-back of a Killing field. As before, particular attention must be paid to the eigenspaces $W_{2,0}$ and $W_{2,2}$. For $(k,\ell) = (2,0)$ we compute that $\alpha_{2,0} = 0$ and $\mu_{2,0} = \lambda_{2,0} = 0$, so that \eqref{eq: ODE with mukl 1} reduces to the simple scalar ODE
\begin{equation}
    - f^{\prime\prime}(t) - \kappa^\prime_{-,T}(t) f^\prime(t) = 0 .
\end{equation}
This implies that, up to scaling, the only solution of the ODE on the interval $(-T,-T+1)$ extending across $t = -T$ is $f(t) \equiv 1$.

On the other hand, for $(k,\ell) = (2,2)$, we have $\alpha_{2,2} = 0$ and $\mu_{2,2} = \lambda_{2,2} = 16$, so that \eqref{eq: ODE with mukl 1} and \eqref{eq: ODE with mukl 2} explicitly reduce to the scalar ODEs
\begin{equation}
    - f^{\prime\prime}(t) - \kappa^\prime_{-,T}(t) f^\prime(t) + 16 e^{-2\kappa_{-,T}(t)}\mu_{k,\ell}f(t) = 0 
\end{equation}  \label{eq: ODE for k,l equal to 2}
and
\begin{equation}
    - f^{\prime\prime}(t) - \frac{1}{T+t} f^\prime(t) + \frac{16}{(T+t)^{-2}} f(t) = 0 .
\end{equation}
By the previous discussion, the only fundamental solution of the previous ODE which smoothly extends across $t = -T$ is $f(t) = (T+t)^4$, which satisfies $f(t), f^\prime(t) > 0$ on the interval $(-T,-T+\frac{1}{2})$. Thus point (i) of \cref{lem: Preparatory ODE} implies that the unique solution of \eqref{eq: ODE for k,l equal to 2} such that $f(-T+1) = 1$ and $f$ extends smoothly across $t = -T$ has $v_1 \coloneqq f^\prime(-T+1) > 0$.

The above discussion leads us to the following definition:

\begin{Def}     \label{Def: Admissible values 2}
    We shall call $T \in [2,\infty)$ \emph{admissible} if the following properties are satisfied.
    \begin{enumerate}
        \item The unique solution $f \col (-T,T) \to \R$ of the ODE
        \begin{equation*}
            - f^{\prime\prime}(t) - (3 \kappa^\prime_{+,T}(t) + \kappa^\prime_{-,T}(t)) f^\prime(t) + e^{-2\kappa_{+,T}(t)} (e^{-2\kappa_{-,T}}(t) - 2) f(t) = 0
        \end{equation*}
        satisfying $f(t) = \mathcal{O}(T-t)$ near $t = T$ and $f(t) = \mathcal{O}(1)$ near $t = -T$ is $f(t) \equiv 0$.

        \item The unique solutions $X_\pm(\varepsilon,\cdot), Y_\pm(\varepsilon,\cdot), Z_\pm(\varepsilon,\cdot) \col [1,\infty) \rightarrow \R^2$ of the ODE
            \begin{equation*}
                - U^{\prime\prime}(t) - \rho_\varepsilon(t) U^\prime(t) + 16 \begin{pmatrix} 1 & \pm \frac{1}{2} \varepsilon \chi(t) \\ \pm \varepsilon \chi(t) & 0 \end{pmatrix} U(t) = 0
            \end{equation*}
            satisfying the initial conditions
            \begin{align*}
                X_\pm(\varepsilon,1) & = 
                \begin{pmatrix}
                    1 \\
                    0
                \end{pmatrix}, ~ X^\prime_\pm(\varepsilon,1) = 
                \begin{pmatrix}
                    v_0 \\
                    0
                \end{pmatrix},
                ~~~ Y_\pm(\varepsilon,1) = 
                \begin{pmatrix}
                    0\\
                    1
                \end{pmatrix}, ~ Y^\prime_\pm(\varepsilon,1) = 
                \begin{pmatrix}
                    0\\
                    0
                \end{pmatrix} \\
                & ~~~~~~~  \text{and} ~~~ Z_\pm(\varepsilon,1) =
                \begin{pmatrix}
                    1 \\
                    0
                \end{pmatrix}, ~~Z^\prime_\pm(\varepsilon,1) =
                \begin{pmatrix}
                    v_1 \\
                    0
                \end{pmatrix} ,
            \end{align*}
            where $v_0 > 0$ is as in \cref{Def: Admissible values}, have the property that for $\varepsilon > 0$ small enough, 
            \begin{equation*}
                \begin{pmatrix}
                    X_\pm(\varepsilon,1) \\
                    - X_\pm^\prime(\varepsilon,1)
                \end{pmatrix}, ~
                \begin{pmatrix}
                    Z_\pm(\varepsilon,1) \\
                    Z_\pm^\prime(\varepsilon,1)
                \end{pmatrix}, ~
                \begin{pmatrix}
                    Y_\pm(\varepsilon,1) \\
                    Y_\pm^\prime(\varepsilon,1)
                \end{pmatrix}, ~ \text{and} ~
                \begin{pmatrix}
                    Y_\pm(\varepsilon,1) \\
                    - Y_\pm^\prime(\varepsilon,1)
                \end{pmatrix}
            \end{equation*}
            are linearly independent vectors in $\R^4$.
    \end{enumerate}
\end{Def}

\begin{prop}
    Assume that $T\geq 5$ is admissible. Then there exists $\varepsilon(T)>0$ such that for any $\varepsilon \in (0,\varepsilon(T))$, the only Jacobi fields of the harmonic map $\phi \col \mathbb{CP}^2 \to \mathbb{CP}^1$ for $g_{\varepsilon,T}(t)$ that satisfy the bound $\vert u \vert= \mathcal{O}((T-t)^{-1})$ are the pull-back of the Killing vector fields on $\mathbb{CP}^1$.
\end{prop}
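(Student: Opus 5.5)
The argument follows the $\Sp^4 \to \Sp^2$ case (\cref{thm: S^4 to S^2 example}) closely. The only genuinely new ingredient is the left end $t=-T$. There the boundary condition is no longer a decay condition at a conical point. Instead, it is smoothness across the $\mathbb{CP}^1$-orbit, which by the Frobenius analysis above selects the regular solution $w_+$ in each $W_{k,\ell}$.

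First, since the metrics $g_{\varepsilon,T}$ are $\mathrm{SU}(2)$-invariant, each $J^\varphi_{\varepsilon,T}(t)$ preserves the spaces $W_k$. The Jacobi equation therefore decouples into finite-dimensional ODE systems on $W_k$, and on $(-T,-T+1)$ even on each $W_{k,\ell}$. I would repeat the Fredholm perturbation argument of \cref{lem: Finite-dimension restriction} on the closed subspace of sections orthogonal to $W_0\oplus W_1\oplus W_2$. The weights are imposed only at the conical point $t=T$; on the $\mathbb{CP}^1$ end the sections are smooth. For $\varepsilon=0$ there is no kernel: a nonzero solution in $W_{k,\ell}$ with $k\ge 3$ that is regular at $-T$ behaves like $w_+$, so after normalising the sign we have $f,f'>0$ near $-T$. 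The eigenvalue of $J^\varphi$ is $\lambda\ge 1$ and $\mu_{k,\ell},\alpha_{k,\ell}$ are nonnegative where needed, so the zeroth-order coefficient $e^{-2\kappa_+}(e^{-2\kappa_-}\mu+\alpha)$ stays positive. The maximum-principle argument of \cref{lem: Preparatory ODE} (whose proof only uses $b>0$ on the bulk and the exact cone form near $T$) then forces growth $(T-t)^{\nu^-}$, which contradicts $\mathcal{O}((T-t)^{-1})$. The few pairs with $\alpha_{k,\ell}<0$ would be checked by hand. By openness, the same conclusion holds for small $\varepsilon$.

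Second, the $W_1$ case: $W_1=W_{1,0}\oplus W_{1,1}$ with $\lambda_{1,0}=-1$. For $\varepsilon=0$ the relevant scalar ODE on $W_{1,0}$ is exactly the one in condition 1 of \cref{Def: Admissible values 2} ($\mu_{1,0}=1$, $\alpha_{1,0}=-2$). Admissibility excludes solutions there. The component in $W_{1,1}$ has $\lambda=7>0$ and is excluded as before. A transversality-of-subspaces argument as in \cref{lem: rule out W_1} then extends this to small $\varepsilon$. For $W_0$ we have $k=0,\ell=0$, so $\mu_{0,0}=4$ and $\alpha_{0,0}=-4$. The regular solution near $-T$ can be normalised, and on $[-T+1,T-1]$ the sign argument of \cref{lem: rule out W_0} gives $f'(T-1)\neq 0$. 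This forces a nonzero $(T-t)^{-2}$ term, which is excluded. I would double-check that the $\varepsilon$-independent part on $(-T,-T+1)$ does not spoil the normalisation $f\approx$ const at $t=-T+1$. If it does, I would compare with the $\varepsilon=0$ solution, which has $f' \ne 0$ already, and pass to small $\varepsilon$.

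Third, the main obstacle is $W_{2,0}\oplus W_{2,2}$. I would use the decomposition $K\oplus H\oplus P_1\oplus P_2\oplus P_3$ on $[-T+1,T)$. On $(-T,-T+1)$, however, the two coordinates of each $P_i\simeq\R^2$ sit in $W_{2,0}$ and $W_{2,2}$ separately. Regularity at $\mathbb{CP}^1$ forces the $W_{2,0}$-coordinate to be constant, giving data $(0,1)$ at $-T+1$, i.e.\ $Y$. It forces the $W_{2,2}$-coordinate to be the $(T+t)^4$-solution, giving data $(1,0),(v_1,0)$, i.e.\ $Z$. At the conical end, decay singles out $X$ (with reflected derivative) and $Y$ (reflected), exactly as in the $\Sp^4$ case. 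Matching at a common time therefore yields a $4\times4$ linear system in $(A,B,C,D)$. Its determinant is nonzero precisely by condition 2 of \cref{Def: Admissible values 2}, once the vectors are transported to the matching time (I would set this up at $t=0$, using the $T$-shifted solutions as in \cref{lem: S4 Hopf example excluding non-Killing Jacobi fields into W2}). Hence there is no component in $P_i$. For $\varepsilon$ small, $H$ and $W_{2,1}$ contribute nothing, since their eigenvalues are positive. On $K$ the equation is $-U''-(\cdots)U'=0$, and the regular, bounded solutions are constants, namely the lifts of Killing fields of $\mathbb{CP}^1$. This completes the plan. The delicate points are the bookkeeping of the matching conditions and ensuring that the $\varepsilon$-thresholds can be taken uniform over the finitely many remaining systems, which holds because only $W_0,W_1,W_2$ remain.
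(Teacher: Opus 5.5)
Your proposal is correct and follows essentially the same route as the paper's proof. The shared steps are:
\begin{itemize}
\item reduction to $W_0\oplus W_1\oplus W_2$, and removal of $W_{2,1}$ and $H$, via the Fredholm continuity argument and \cref{lem: Preparatory ODE};
\item condition~1 of \cref{Def: Admissible values 2} for $W_{1,0}$, with a transversality argument for small $\varepsilon$;
\item the two-case argument for $W_0$, according to whether the solution regular at $\mathbb{CP}^1$ has zero velocity at $t=-T+1$;
\item the $4\times 4$ matching problem on each $P_i$, with regular data $Y,Z$ on the left and decaying data $X,Y$ on the right.
\end{itemize}

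You are right to read condition~2 with the vectors evaluated at the matching time $T$ rather than at $1$ as printed, since at $t=1$ the two $Y$-vectors coincide. Also, the by-hand check you anticipate for pairs with $\alpha_{k,\ell}<0$ is vacuous: $\mu_{k,\ell}=(k-2-2\ell)^2$ and $\alpha_{k,\ell}\geq 2(k-2)>0$ for all $k\geq 3$.
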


\begin{proof}
    The argument is very similar to the proof of \cref{thm: S^4 to S^2 example}, so we will only explain how to adapt the arguments of the previous section.

    The first step is to show that, when $\varepsilon = 0$, the only Jacobi fields with respect $g_{0,T}$ which satisfy $|u| = \mathcal{O}((T-t)^{-1}$ must be functions $u \colon (-T,T) \to W_0 \oplus W_1 \oplus W_2$, so that the same continuity argument as in the proof of \cref{lem: Finite-dimension restriction} ensures that this condition still holds whenever $\varepsilon > 0$ is small enough. To see this, we can simply apply \cref{lem: Preparatory ODE} since $\alpha_{k,\ell} > 0$ for any $k \geq 3$ and $\ell \in \{0,\ldots,k\}$. In fact, since $J^\varphi_{\varepsilon,T}$ leave invariant the subspaces $W_{2,0} \oplus W_{2,2}$ and $W_{2,1}$ separately and $\alpha_{2,1} = \mu_{2,1} = 4 > 0$ we can also exclude the $W_{2,1}$-component in this way.

    Along $W_1$, remark first that for $\varepsilon = 0$, we can treat the components $W_{1,0}$ and $W_{1,1}$ separately. For $W_{1,1}$, the Jacobi field equation reduces to the scalar ODE
    \begin{equation*}
        - f^{\prime\prime}(t) - (3 \kappa^\prime_{+,T}(t) + \kappa^\prime_{-,T}(t)) f^\prime(t) + e^{-2\kappa_{+,T}(t)} (9e^{-2\kappa_{-,T}}(t) - 2) f(t) = 0
    \end{equation*}
    and we can rule out such solutions again using \cref{lem: Preparatory ODE} since $e^{-2\kappa_{+,T}(t)} (9e^{-2\kappa_{-,T}}(t) - 2) > 0$ on $(-T,T)$, since any non-trivial solution which remains bounded near $t = -T$ must satisfy $f(t)f^\prime(t) > 0$ in this region. On the other hand, on $W_{1,0}$ the Jacobi field equation reduces to the scalar ODE
    \begin{equation*}
        - f^{\prime\prime}(t) - (3 \kappa^\prime_{+,T}(t) + \kappa^\prime_{-,T}(t)) f^\prime(t) + e^{-2\kappa_{+,T}(t)} (e^{-2\kappa_{-,T}}(t) - 2) f(t) = 0
    \end{equation*}
    so that the problematic Jacobi fields along $W_{1,0}$ are ruled out by condition 1 when $\varepsilon = 0$. For small value of $\varepsilon > 0$, we can use the same stability argument as in the proof of \cref{lem: rule out W_1}.

    Finally, for the $W_0$-component, we note that for $\varepsilon = 0$ the Jacobi equation reduces to
    \begin{equation*}
        - f^{\prime\prime}(t) - (3 \kappa^\prime_{+,T}(t) + \kappa^\prime_{-,T}(t))f^\prime(t) + 4e^{-2\kappa_{+,T}(t)} (e^{-2\kappa_{-,T}(t)}-1) f(t) = 0 .
    \end{equation*}
    If the non-trivial solutions which are constant on the interval $(-T+1,T)$ do not remain bounded near $t = -T$, then this already rules out the problematic Jacobi fields (and we can use a stability argument for $\varepsilon > 0$ small enough). Otherwise, we can use the same argument as in the proof of \cref{lem: rule out W_0} to rule out problematic Jacobi fields for $\varepsilon > 0$ small enough, since we can make sure that any Jacobi field which is constant near $t = T$ will have non-trivial velocity at $t = -T + 1$, and therefore cannot remain bounded near $t = -T$.

    Finally, the matching problem for Jacobi fields along $W_{2,0} \oplus W_{2,2}$ is similar to the previous example, except that we loose the symmetry between $t$ and $-t$ of the problem, which implies that the velocities $v_0$ and $v_1$ have no reason to be equal. Instead, we want to prevent a solution of the form $A Z_\pm(\varepsilon,t-T) + B Y_\pm(\varepsilon,t-T)$ (coming from the left) from matching at $t = 0$ with a solution of the form $CX_\pm(\varepsilon,T-t) + DY_\pm(\varepsilon,T-t)$ (coming from the right). As in the proof of \cref{lem: S4 Hopf example excluding non-Killing Jacobi fields into W2}, condition 2 in the definition of admissibility is precisely what is needed to prevent such problematic matchings when $\varepsilon > 0$ is small enough. Thus we deduce in the same way that the only Jacobi fields in $W_{2,0} \oplus W_{2,2}$ are lifted from the Killing fields of $\mathbb{CP}^1$.
\end{proof}

\begin{lem}
    There exists $T_0 \geq 2$ such that the set of admissible values is a dense subset of $[T_0,\infty)$
\end{lem}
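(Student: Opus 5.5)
The plan is to verify the two conditions of \cref{Def: Admissible values 2} separately, showing each fails only on a discrete set of $T$ in $[T_0,\infty)$, as in the proof of \cref{prop: Existence of admissible values}.

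\emph{Condition 1.} On $(-T+1,T-1)$ we have $\kappa_{\pm,T}=0$, so the ODE reduces to $-f''-f=0$. Fix the solution $f_-$ that is bounded near $t=-T$; this is the Frobenius solution $w_+$ on $W_{1,0}$, where $\mu_{1,0}=1$. Since the equation is translation-invariant near the left end, the data $(f_-,f_-')$ at $t=-T+1$ is a fixed vector $(a,b)\neq 0$, independent of $T$. Likewise, the solution $f_+$ with $f_+=\mathcal{O}(T-t)$ near $t=T$ has fixed data $(c,d)\neq0$ at $t=T-1$. Condition 1 fails exactly when $f_-$ and $f_+$ are collinear. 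Propagating through the middle interval by the rotation $R(2T-2)$, this happens exactly when
\begin{equation*}
    \det\big( R(2T-2)(a,b)^{T}, (c,d)^{T} \big)=0 .
\end{equation*}
The left side is a nontrivial function of the form $A\sin(2T-2+\theta)$ with $A\neq 0$. Its zeros are therefore isolated, spaced by $\pi/2$, exactly as in \cref{lem: Point (i) of admissible values}.

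\emph{Condition 2.} After the rescaling $\tau=4t$, the solutions $X_\pm, Y_\pm, Z_\pm$ are those of \cref{lem: Point (ii) of admissible values}. I will reuse the expansions in $\varepsilon$ computed there, with $Z$ treated like $X$ but with initial velocity $v=4v_1$ instead of $4v_0$. To leading orders,
\begin{equation*}
    X=\big(f_{v_0}+\mathcal{O}(\varepsilon^2),\ \varepsilon\sigma\Psi_{v_0}\big),\qquad Z=\big(f_{v_1},\ \varepsilon\sigma\Psi_{v_1}\big),\qquad Y=\big(\varepsilon\sigma\Phi,\ 1+\varepsilon^2\sigma^2\Xi\big).
\end{equation*}
(The statement as printed evaluates the vectors at $1$, but the relevant evaluation is at the matching point, i.e. at $T$, as in \cref{lem: S4 Hopf example excluding non-Killing Jacobi fields into W2}.) The $4\times4$ determinant of the vectors $(X,-X')$, $(Z,Z')$, $(Y,Y')$, $(Y,-Y')$ then has an expansion $\varepsilon^2\sigma^2 D(T)+\mathcal{O}(\varepsilon^3)$. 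I will subtract the last two columns to reduce to a determinant involving $Y'$. The leading coefficient $D(T)$ should be an explicit combination of $f_v, f_v', \Psi_v, \Phi, \Xi$.

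\emph{Main obstacle.} The hard step is showing $D(T)\neq0$ for $T$ large. I expect $D(T)$ to be dominated by a term like $t\,e^{2t}\cdot(\text{const})$, using $\Xi'-\Phi'=A-t$ and $f_v'\sim\frac{1+v}{2}e^t$, as in the earlier lemma. Care is needed because $(X,-X')$ and $(Z,Z')$ both grow like $e^t$ and could nearly cancel. At order $\varepsilon^0$ their first components give $\det\begin{pmatrix} f_{v_0} & f_{v_1}\\ -f_{v_0}' & f_{v_1}'\end{pmatrix}\sim\frac{(1+v_0)(1+v_1)}{2}e^{2t}\neq0$, since $v_0,v_1>0$. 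Combined with the $Y$-block, I expect this to give a nonzero leading term.

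Since $D$ is real-analytic in $T$ on $[T_0,\infty)$ and nonzero, its zeros are isolated. For $T$ outside this set, condition 2 holds for all small $\varepsilon>0$. The union of the two discrete exceptional sets is discrete, so the admissible $T$ are dense in $[T_0,\infty)$.
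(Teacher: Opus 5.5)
Your route is the paper's: condition 1 of \cref{Def: Admissible values 2} fails only on a periodic set of $T$. Condition 2 is read off from the $\varepsilon^2$-coefficient of the $4\times4$ matching determinant, evaluated at $T$ (you are right that the printed evaluation at $1$ is a slip), using the expansions of \cref{lem: Point (ii) of admissible values}.

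Your condition 1 argument is correct and more explicit than the paper's one-line appeal to periodicity. On $(-T+1,T-1)$ the equation is $f''+f=0$, the end data are independent of $T$, and the Wronskian of the two end solutions is a sinusoid in $T$ with nonzero amplitude. (Read the bound at $t=T$ as $\mathcal{O}((T-t)^{-1})$. The indicial root $-1$ is double there, so taken literally only $f_+\equiv 0$ is $\mathcal{O}(T-t)$.)

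The gap is in condition 2. The non-vanishing of $D(T)$, which is the whole content of that half of the lemma, is only ``expected''. Your analyticity argument cannot start before $D\not\equiv 0$ is proved, and once it is proved by asymptotics, analyticity is no longer needed.

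The missing computation is short. Your column operation gives
\[
\det\big[(X,-X'),(Z,Z'),(Y,Y'),(Y,-Y')\big]=-2\det\big[(X,-X'),(Z,Z'),(Y,0),(0,Y')\big].
\]
Expand along the last two columns, using $Y=(\varepsilon\sigma\Phi,1)+\mathcal{O}(\varepsilon^2)$ and $Y'=(\varepsilon\sigma\Phi',\varepsilon^2\sigma^2\Xi')+\mathcal{O}(\varepsilon^3)$. Only the two terms containing the entry $1+\mathcal{O}(\varepsilon^2)$ of $Y$ survive at order $\varepsilon^2$. Up to a nonzero constant, the coefficient is $f_{v_0}D_{v_1}+f_{v_1}D_{v_0}$, with $D_v=f_v'\Xi'-\Psi_v'\Phi'$ exactly as in \cref{lem: Point (ii) of admissible values}.

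The cancellation to worry about is not between the $X$- and $Z$-columns. It is between the $\Xi'$- and $\Phi'$-terms, both of order $e^{3T}$. Past the cutoff region, $\Xi'-\Phi'=A-t$ and $\Psi_v'=f_v'+C_v$ remove it; evaluating at $t=T$,
\[
f_{v_0}D_{v_1}+f_{v_1}D_{v_0}=(A-T)\big(f_{v_0}f_{v_1}'+f_{v_0}'f_{v_1}\big)-\Phi'\big(C_{v_1}f_{v_0}+C_{v_0}f_{v_1}\big).
\]
So the $2\times2$ determinant you computed is precisely the surviving factor. The velocity flip in $(X,-X')$ is what makes it grow like $\tfrac{(1+v_0)(1+v_1)}{2}e^{2T}$ rather than being a constant Wronskian. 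Since $\Phi'=\mathcal{O}(e^{T})$, the last term is $\mathcal{O}(e^{2T})$.

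Hence $f_{v_0}D_{v_1}+f_{v_1}D_{v_0}\sim-\tfrac{(1+v_0)(1+v_1)}{2}\,Te^{2T}$, using $v_0,v_1>0$, and this is nonzero for all large $T$. Condition 2 therefore holds for every $T\ge T_0$ (with $\varepsilon<\varepsilon(T)$), which is what the paper obtains.
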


\begin{proof}
    As in the proof of \cref{lem: Point (i) of admissible values} in the previous section, it is easy to see that the first condition can only fail periodically. As for the second condition, the idea is again to compute the relevant $4 \times 4$ determinant at leading order in $\varepsilon > 0$. After a change of variables to get read of the factor of $16$ in the ODE (cf. the proof of \cref{prop: Existence of admissible values}) and some elementary transformations on the determinant, we can re-use the computations made in the proof of \cref{lem: Point (ii) of admissible values}: we want to prove that for $t$ large enough, the determinant
    \begin{equation*}
        \det 
        \begin{pmatrix}
            f_{\tilde{v}_0}(t) + \mathcal{O}(\varepsilon^2) & f_{\tilde{v}_1}(t) + \mathcal{O}(\varepsilon^2) & \varepsilon \sigma \Phi(t) + \mathcal{O}(\varepsilon^3) & 0 \\ 
            \varepsilon \sigma \Psi_{\tilde{v}_0}(t) + \mathcal{O}(\varepsilon^2) &  \varepsilon \sigma \Psi_{\tilde{v}_1}(t) + \mathcal{O}(\varepsilon^2) & 1 +\varepsilon^2 \sigma^2 \Xi(t) + \mathcal{O}(\varepsilon^3) & 0\\
            - f_{\tilde{v}_0}^\prime(t) + \mathcal{O}(\varepsilon^2) & f_{\tilde{v}_1}^\prime(t) + \mathcal{O}(\varepsilon^2) & 0 & \varepsilon \sigma \Phi^\prime(t) + \mathcal{O}(\varepsilon^3) \\ 
            - \varepsilon \sigma \Psi_{\tilde{v}_0}^\prime(t) + \mathcal{O}(\varepsilon^2) & \varepsilon \sigma \Psi_{\tilde{v}_1}^\prime(t) + \mathcal{O}(\varepsilon^2) & 0 & \varepsilon^2 \sigma^2 \Xi^\prime(t) + \mathcal{O}(\varepsilon^3)
        \end{pmatrix}
    \end{equation*}
    is non-vanishing, where $\tilde{v}_0 = 4v_0 > 0$, $\tilde{v}_1 = 4v_1 > 0$, and all the functions are as in the proof of \cref{lem: Point (ii) of admissible values}. It is not difficult to compute that this determinant is of the form
    \begin{multline*}
        \varepsilon^2\sigma^2 (f_{\tilde{v}_0}(t) (f^\prime_{\tilde{v}_1}(t)\Xi^\prime(t) - \Psi_{\tilde{v}_1}^\prime(t)\Phi^\prime(t)) + f_{\tilde{v}_1}(t) (f^\prime_{\tilde{v}_0}(t)\Xi^\prime(t) - \Psi_{\tilde{v}_0}^\prime(t)\Phi^\prime(t))) + \mathcal{O}(\varepsilon^3) \\ 
            = \varepsilon^2\sigma^2 (f_{\tilde{v}_0}(t) D_{\tilde{v}_1}(t) + f_{\tilde{v}_1}(t) D_{\tilde{v}_0}(t)) + \mathcal{O}(\varepsilon^3)
    \end{multline*}
    and from the computations made in the proof of \cref{lem: Point (ii) of admissible values} we see that this quantity $f_{\tilde{v}_0}(t) D_{\tilde{v}_1}(t) + f_{\tilde{v}_1}(t) D_{\tilde{v}_0}(t)$ tends to $-\infty$ as $t \to \infty$. 
\end{proof}

From this we finally obtain:

\begin{thm}
    For a dense subset of values $T \in [T_0,\infty)$, there exists $\varepsilon(T) > 0$ such that for all $\varepsilon \in (0,\varepsilon(T))$, the conically singular harmonic map $\phi \col (\mathbb{CP}^2, g_{\varepsilon,T}) \to (\mathbb{CP}^1, g_{\mathrm{FS}})$ satisfies the assumptions of \cref{thm: main theorem introduction} with respect to the metric $g_{\varepsilon,T} \coloneqq \diff t^2 + \mathrm{e}^{2\kappa_{+,T}(t)} \widetilde{g}_{\varepsilon,T}(t)$ on $\mathbb{CP}^2$.
\end{thm}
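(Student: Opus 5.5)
The statement to prove is the final theorem: for a dense set of $T\in[T_0,\infty)$ and all small $\varepsilon>0$, the map $\phi\col(\mathbb{CP}^2,g_{\varepsilon,T})\to(\mathbb{CP}^1,g_{\mathrm{FS}})$ satisfies all the assumptions of \cref{thm: main theorem introduction}. The plan is to check those hypotheses one at a time, using the two preceding results of this subsection: the proposition on Jacobi fields for admissible $T\geq 5$, and the lemma that admissible values are dense in $[T_0,\infty)$.

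First I check the structural hypotheses. The metric $g_{\varepsilon,T}$ extends smoothly over $\mathbb{CP}^2$; near the collapsed point $t^{-1}(T)$ we have $\chi_T\equiv 0$, $\kappa^-_T\equiv 0$ and $\kappa^+_T(t)=\log(T-t)$. So on the region $T-t<\tfrac12$ the metric is exactly $\diff t^2+(T-t)^2g_{\Sp^3}$, i.e. flat Euclidean in polar coordinates. The map is $\phi(t,\sigma)=\varphi(\sigma)$ there, so it is exactly dilation-invariant near the singular point. Hence it is conically singular with any rate $\mu$ (we may take $u_s\equiv 0$), and its tangent map is the Hopf fibration with respect to the round metric.

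Next, harmonicity. On the open cylinder it follows from \cref{prop: t-independent harmonic maps on I x M}. For that proposition I need $\varphi$ to be harmonic for each slice metric $\tilde g_{\varepsilon,T}(t)$. These slice metrics are left-invariant of the form $\beta_1(X^1)^2+\beta_2(X^2)^2+\beta_3(X^3)^2$, and the proof of \cref{lem: tension of Hopf fibration for deformed metric} does not really use $\beta_3=1$: it only uses $\nabla^\beta_{X_i}X_i=0$ and the trace cancellation. So harmonicity holds for all $\beta_3>0$, and I will note this. Harmonicity across the $\mathbb{CP}^1$ at $t=-T$ follows because the map is smooth there and $\tau=0$ on a dense set.

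Conditions 1--3 of \cref{thm: main theorem introduction} depend only on the tangent map, and were verified in \cref{subsubsec: Jacobi-Spectrum for Hopf fibration}: the Hopf fibration has eigenvalues $-1=-(m-3)$ with a $4$-dimensional eigenspace, nullity $8$, and all Jacobi fields integrable.

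Condition 4 has two parts. The first part is that every Jacobi field with $|u|=\mathcal{O}(r^{-1})$ is the pull-back of a Killing field. Near the singular point, $r=T-t$, so this is exactly the conclusion of the preceding proposition, valid for admissible $T\geq 5$ and $\varepsilon<\varepsilon(T)$. A Jacobi field on $\mathbb{CP}^2\setminus\{*\}$ is automatically smooth across $t^{-1}(-T)$. The second part is injectivity of $\xi\mapsto\phi^*\xi$ on $\mathfrak{iso}(\mathbb{CP}^1)$. This holds because $\phi$ is surjective: $\phi^*\xi=0$ would force $\xi\equiv 0$ on the image.

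Finally, density: by the preceding lemma, admissible $T$ are dense in $[T_0,\infty)$. Replacing $T_0$ by $\max(T_0,5)$, the theorem follows.

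The main obstacle, if any, is making sure the preceding proposition is genuinely applied in the $r^{-1}$ sense, and that the harmonicity lemma extends to $\beta_3\neq 1$. Everything else is bookkeeping.
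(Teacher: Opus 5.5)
Your proposal is correct and matches the paper, which obtains this theorem directly from the preceding proposition (admissible $T\geq 5$, small $\varepsilon$), the density lemma for admissible values, and the earlier verification of conditions 1--3 for the Hopf fibration. Your additional checks just make explicit bookkeeping the paper leaves implicit: the exactly flat conical structure near $t^{-1}(T)$, harmonicity of the slices for $\beta_3\neq 1$, injectivity of $\xi\mapsto\phi^*\xi$ via surjectivity of $\phi$, and the replacement $T_0\mapsto\max(T_0,5)$.
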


%% file: appendix_stationary.tex
    \section{Conically singular harmonic maps as stationary maps}       \label{app: cs as stationary maps}

The goal of this short appendix is to justify the claim made in \cref{subsubsec: cs harmonic maps} that if $\dim(M) \geq 4$, then for any conically singular map $\phi \col M \setminus S \to N$, the conditions of weak harmonicity, stationarity and vanishing of the tension field over the smooth locus are equivalent (\cref{prop: equivalent conditions cs harmonic}). This will be a consequence of \cref{lem: m geq 3} and \cref{lem: m geq 4} below.

Let us fix an isometric embedding $\iota \col (N,h) \hookrightarrow (\R^K,g_{\R^K})$ for some large enough $K \in \N$. Then we may give an extrinsic characterisation of conically singular maps: a map $\phi \col M \setminus S \to N$ is conically singular with rate $\mu > 0$ if an only if in an adapted set of charts $\{\Upsilon_s \col \Ball_R \to M\}_{s \in S}$, we can write
\begin{equation*}
    \iota \circ \phi \circ \Upsilon_s(x) = \varphi_s(\tfrac{x}{|x|}) + u_s(x), ~~~ \forall x \in \Ball_R \setminus \{0\}
\end{equation*}
where $\varphi_s \col \Sp^{m-1} \to \iota(N) \subset \R^K$ is a smooth map and the vector-valued function $u_s \col \Ball_R \setminus \{0\} \to \R^K$ satisfies $|\partial^a u_s| = \mathcal{O}(r^{\mu-|a|})$ as $r = |x| \to 0$, for all multi-indices $a = (a_1,\ldots,a_m)$ (where $\partial^au_s$ is a short-hand for $\partial_{x_1}^{a_1}\cdots \partial^{a_m}_{x_m} u_s$). In addition, remark that the tangent map $\phi_s = \varphi_s(x/|x|)$ satisfies $|\partial^a\phi_s| = \mathcal{O}(r^{-|a|})$ for any multi-index $a = (a_1,\ldots,a_m)$.

This extrinsic characterisation shows that when $m = \dim(M) \geq 3$, a conically singular map $\phi \col M \setminus S \to N$ may be seen as a Sobolev map from $M$ to $N$. Recall that, using the isometric embedding $\iota \col N \rightarrow \R^K$, the Lebesgue space $L^2(M,N)$ may be defined as
\begin{equation*}
    L^2(M,N) = \{ f \in L^2(M;\R^K) \mid f(p) \in \iota(N) ~ \text{for a.e.} ~ p \in M \} .
\end{equation*}
Similarly, we can define the Sobolev space $W^{1,2}(M,N)$ as
\begin{equation*}
    W^{1,2}(M,N) = \{ f \in W^{1,2}(M;\R^K) \mid f(p) \in \iota(N) ~ \text{for a.e.} ~ p \in M \} .
\end{equation*}

\begin{prop}
    Suppose that $m = \dim(M) \geq 3$, and let $\phi \col M \backslash S \rightarrow N$ be a conically singular map. Then $\phi \in W^{1,2}(M,N)$. 
\end{prop}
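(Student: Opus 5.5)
The plan is to show that $f \coloneqq \iota \circ \phi$, defined on $M \setminus S$ (a set of full measure), lies in $W^{1,2}(M;\R^K)$. It takes values in $\iota(N)$ everywhere off the null set $S$, so membership in $W^{1,2}(M,N)$ then follows immediately. Since $N$ is compact, $f$ is bounded, hence $f \in L^2(M;\R^K)$. Away from the adapted charts $f$ is smooth on a compact region, so the only issue is local: near each $s \in S$ we must control the gradient and justify that the classical derivative on $\Ball_R \setminus \{0\}$ is the weak derivative on $\Ball_R$.

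First, the gradient bound. Using the extrinsic characterisation $\iota\circ\phi\circ\Upsilon_s(x) = \varphi_s(x/|x|) + u_s(x)$, we have $|\partial(\varphi_s(x/|x|))| = \mathcal{O}(r^{-1})$ and $|\partial u_s| = \mathcal{O}(r^{\mu-1}) = \mathcal{O}(r^{-1})$. Hence $|\nabla f| \leq C r^{-1}$ near $s$. Integrating in polar coordinates gives $\int_{\Ball_R} r^{-2}\, r^{m-1} \diff r < \infty$ exactly when $m \geq 3$. Since the metric $\Upsilon_s^* g$ is comparable to the Euclidean metric on $\Ball_R$, this shows that the classical gradient of $f$ lies in $L^2$, and in fact in $L^1$.

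Second, and this is the main point, we show the classical derivative is the weak one across the point singularity. For a test function $\psi \in C^\infty_c(\Ball_R)$, integrate by parts on $\Ball_R \setminus \Ball_\varepsilon$:
\begin{equation*}
    \int_{\Ball_R\setminus\Ball_\varepsilon} f\,\partial_i\psi \, \diff x = -\int_{\Ball_R\setminus\Ball_\varepsilon} \partial_i f\,\psi \, \diff x - \int_{\partial\Ball_\varepsilon} f\psi\, \nu_i \, \diff\sigma .
\end{equation*}
The boundary term is bounded by $\|f\|_{\infty}\|\psi\|_\infty \,|\partial\Ball_\varepsilon| = \mathcal{O}(\varepsilon^{m-1}) \to 0$. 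The volume integrals converge by dominated convergence, since $f$ is bounded and $\partial_i f \in L^1$. So the weak derivative exists and equals the pointwise derivative on $\Ball_R \setminus\{0\}$. (Alternatively, one can use a cutoff $\eta_\varepsilon$ near $s$ and note that the $W^{1,2}$-capacity of a point vanishes for $m\geq 3$. The boundary-term argument is simpler.)

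Patching the charts with a partition of unity yields $f \in W^{1,2}(M;\R^K)$. I do not expect a real obstacle. The only place the dimension hypothesis enters is the integrability of $r^{-2}$, which forces $m \geq 3$; the removability of the point for weak derivatives needs only $m \geq 2$.
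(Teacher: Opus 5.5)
Your proof is correct and follows essentially the same route as the paper: boundedness of $\phi$ gives $L^2$, the extrinsic expansion gives $|\diff\phi| = \mathcal{O}(r^{-1})$, which is square-integrable exactly when $m\geq 3$, and integrating by parts on $\Ball_R\setminus\Ball_\varepsilon$ with a boundary term of size $\mathcal{O}(\varepsilon^{m-1})$ identifies the classical derivative with the weak one. Your added remark that the removability step on its own only needs $m\geq 2$ is also correct.
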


\begin{proof}
    Let us fix an adapted set of charts $\{\Upsilon_s \col \Ball_R \to M\}$, and identify the maps $\phi \col M \setminus S \to N$ and $\iota \circ \phi \col M \setminus S \to \iota(N) \subset \R^K$. Thus for any $s \in S$, $\phi \circ \Upsilon_s = \phi_s + u_s$ where $\phi_s = \varphi_s(x/|x|)$ for a smooth map $\varphi_s \col \Sp^{m-1} \to \iota(N)$ and $u_s$ satisfies $|\partial^a u_s| = \mathcal{O}(r^{\mu-|a|})$ for any multi-index $a = (a_1,\ldots,a_m)$.
    
    Since $N$ is compact, the map $\phi \col M \setminus S \rightarrow \iota(N) \subset \R^K$ is bounded and thus $\phi \in L^2(M,N)$. Moreover, for any $s \in S$ and $j = 1,\ldots, m$, $|\partial_{x_j}(\phi \circ \Upsilon_s)| = \mathcal{O}(r^{-1})$ since $|\partial_{x_j} \phi_s| = \mathcal{O}(r^{-1})$ and $|\partial_{x_j} u_s | = \mathcal{O}(r^{\mu-1})$. Since $m \geq 3$, it follows that $\diff\phi_{|M \setminus S}$ is $L^2$. It remains to see that $\diff\phi_{|M\setminus S}$ coincides with the weak (distributional) derivative of $\phi$. Since $\phi$ is smooth outside of $S$, this amounts to checking that for any $\R^K$-valued function $v \col \Ball_R \to \R^m$ supported in the interior of $\Ball_R$, any $s \in S$ and any $1 \leq j \leq K$, 
    \begin{equation*}
        \int_{\Ball_R} \langle \nabla(\phi^j_s + u^j_s), v \rangle \diff x = - \int_{\Ball_R}  (\phi^j_s + u^j_s) \divergence(v) \diff x
    \end{equation*}
    where the inner product $\langle \cdot, \cdot \rangle$ is taken with respect to the standard inner product on $\R^m$, $\nabla$ is the Euclidean gradient and $\divergence(v)$ the Euclidean divergence. Note that the integrands on both sides of the equality are $L^1$, and therefore these are definite integrals. Using the divergence theorem, we obtain
    \begin{align*}
        \int_{\Ball_R} \langle \nabla(\phi^j_s + u^j_s), v \rangle \diff x & = \lim_{\varepsilon \rightarrow 0} \int_{\Ball_R \setminus \Ball_\varepsilon} \langle \nabla(\phi^j_s + u^j_s), v \rangle \diff x \\
        & = \lim_{\varepsilon \rightarrow 0} - \int_{\Ball_R \setminus \Ball_\varepsilon}  (\phi^j_s + u^j_s) \divergence(v) \diff x + \int_{\Sp^{m-1}_\varepsilon} (\phi^j_s + u^j_s) \langle v, \nu_{\Sp^{m-1}_\varepsilon} \rangle \vol_{\Sp^{m-1}_\varepsilon} \\
        & = - \int_{\Ball_R}  (\phi^j_s + u^j_s) \divergence(v) \diff x + \lim_{\varepsilon \to 0} \int_{\Sp^{m-1}_\varepsilon} (\phi^j_s + u^j_s) \langle v, \nu_{\Sp^{m-1}_\varepsilon} \rangle \vol_{\Sp^{m-1}_\varepsilon}
    \end{align*}
    where $\Sp^{m-1}_\varepsilon = \partial \Ball_\varepsilon$ is the Euclidean sphere of radius $\varepsilon$, $\vol_{\Sp^{m-1}_\varepsilon}$ its standard volume form and $\nu_{\Sp^{m-1}_\varepsilon}$ the outwards pointing normal. Since $\phi^j_s + u^j_s$ and $\langle v , \nu_{\Sp^{m-1}_\varepsilon} \rangle$ are uniformly bounded functions and $\vol(\Sp^{m-1}_\varepsilon) \to 0$ as $\varepsilon \to 0$, this proves our claim.
\end{proof}

Let us now fix a Riemannian metric $g$ on $M$. For any map $\phi \in W^{1,2}(M,N)$, we may define its Dirichlet energy
\begin{equation*}
    E(\phi) = \frac{1}{2} \int_M |\diff\phi|^2_{g,g_{\R^K}} \vol_{g_M} .
\end{equation*}
If $u$ is continuously differentiable, this coincides with the intrinsic notion of energy as defined in \cref{subsubsec: definitions and notations}. The Euler--Lagrange equations of the (extrinsic) Dirichlet energy functional define an $\R^K$-valued PDE which reads (cf. \cite[Sec. 2]{helein2007harmonic} for instance)
\begin{equation}    \label{eq:weakel}
    \Delta_g \phi = \tr_g(\II_{\phi}(\diff\phi,\diff\phi)) 
\end{equation}
where $\II$ is the second fundamental form of the embedding $\iota \col N \hookrightarrow \R^K$ and $\Delta_g$ is the Laplacian operator with geometers' conventions (such that it has negative principal symbol). Note that if $\phi \in W^{1,2}(M,N)$ then $\II_{\phi}(\diff\phi,\diff\phi) \in L^1(M,\R^K)$, and therefore it represents a well-defined $\R^K$-valued distribution. Moreover, if $\phi$ is $C^2$ then $-\Delta_g \phi + \tr_g(\II_{\phi}(\diff\phi,\diff\phi))$ is a section of $\prescript{\phi}{}{TN} \subset M \times \R^K$, which coincides with the tension $\tau(\phi)$. By definition, a map $\phi \in W^{1,2}(M,N)$ is \emph{weakly harmonic} if it is a (distributional) solution of the above equation.

\begin{prop}        \label{lem: m geq 3}
    Suppose that $m = \dim(M) \geq 3$ and let $\phi \col M \setminus S \to N$ be a map with conical singularities. Then $\phi$ is weakly harmonic if and only if the tension $\tau(\phi)$ on $M \setminus S$.
\end{prop}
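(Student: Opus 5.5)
We will show that the distributional equation \eqref{eq:weakel} holds on all of $M$ exactly when it holds pointwise on $M\setminus S$, by proving that no distribution supported on the finite set $S$ can appear. The forward direction is immediate. If $\phi$ is weakly harmonic, then testing \eqref{eq:weakel} against $\R^K$-valued functions compactly supported in $M\setminus S$ shows that the classical identity holds there, since $\phi$ is smooth on $M\setminus S$. Hence $\tau(\phi)=0$ on $M\setminus S$.

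For the converse, assume $\tau(\phi)=0$ on $M\setminus S$ and fix a test function $v\in C^\infty(M,\R^K)$. By the previous proposition $\phi\in W^{1,2}(M,N)$, so $\diff\phi\in L^2$, and the term $\tr_g\II_\phi(\diff\phi,\diff\phi)$ lies in $L^1$. The weak formulation we need is
\[
\int_M \langle \diff\phi,\diff v\rangle \vol_g + \int_M \langle \tr_g \II_\phi(\diff\phi,\diff\phi), v\rangle \vol_g = 0,
\]
with sign conventions as in \eqref{eq:weakel}. Both integrands are $L^1$, so each integral over $M$ is the limit of the corresponding integral over $M_\varepsilon \coloneqq M\setminus \cup_s \Upsilon_s(\Ball_\varepsilon)$.

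On $M_\varepsilon$ the map $\phi$ is smooth, so we integrate the first term by parts. Because $\Delta_g\phi=\tr_g\II_\phi(\diff\phi,\diff\phi)$ holds classically on $M\setminus S$, the interior terms cancel. What remains is a sum of boundary integrals $\int_{\partial \Upsilon_s(\Ball_\varepsilon)} \langle \partial_\nu \phi, v\rangle$.

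It remains to show these boundary terms vanish as $\varepsilon\to0$. In the adapted charts we have $|\partial\phi|=\mathcal{O}(r^{-1})$, while the area of the boundary sphere is $\mathcal{O}(\varepsilon^{m-1})$. Since $v$ is bounded, each boundary term is $\mathcal{O}(\varepsilon^{m-2})$, which tends to $0$ because $m\geq3$. The metric $g$ is smooth and comparable to the Euclidean metric in the charts, so the same bound holds for the $g$-normal derivative and the induced area form. Letting $\varepsilon\to0$ gives the weak equation, so $\phi$ is weakly harmonic.

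The only step needing care is confirming that the $L^1$ integrability justifies passing to the limit over $M_\varepsilon$, which dominated convergence supplies. The estimate $\varepsilon^{m-2}\to0$ is precisely where the hypothesis $m\geq 3$ is used.
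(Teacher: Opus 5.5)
Your proof is correct and is essentially the paper's argument. Both integrate by parts on the complement of small balls around $S$, use that the equation holds classically on $M\setminus S$ with an $L^1$ right-hand side, and show that the boundary term $\mathcal{O}(\varepsilon^{-1})\cdot\mathcal{O}(\varepsilon^{m-1})=\mathcal{O}(\varepsilon^{m-2})$ vanishes because $m\geq 3$; the paper phrases this as showing that the distributional Laplacian of $\phi$ has no part supported on $S$, testing against functions supported near each $s\in S$, which is the same computation localised.
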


\begin{proof}
    If $\phi$ is weakly harmonic, the tension field $\tau(\phi)$ must vanish away from the singular locus. To prove the converse statement, remark that since $\phi$ is smooth and harmonic outside of the singular locus $S$, and $\phi \in W^{1,2}(M,N)$ globally, it is enough to prove that the Laplacian $\Delta_g \phi$ (seen as an $\R^K$-valued disribution on $M$) is $L^1$. Outside of $S$, $\Delta_g \phi = \tr_{g} \II_{\phi}(\diff\phi,\diff\phi)$ so that the function $|\Delta_g \phi|$ is $L^1$ on $M \setminus S$. Therefore it only remains to check for any smooth map $u \col M \rightarrow \R^K$ supported inside a small geodesic ball $B_\delta(s)$ centred at $s \in S$, the following identity holds
    \begin{equation*}
        \int_{B_\delta(s)} \langle \diff\phi, \diff u \rangle_{g,g_{\R^K}} \vol_{g} = \int_{B_\delta(s) \setminus \{s\}} \langle \Delta_g \phi, u \rangle_{g_{\R^K}} \vol_{g} .
    \end{equation*}
    As in the previous proof, since the integrands on both sides are $L^1$ we can integrate by parts and this boils down to proving that $\lim_{\varepsilon \rightarrow 0} \int_{\partial B_\varepsilon(s)} u^j (*_{g}\diff\phi^j)= 0$ (where we take coordinates $\{y^j\}_{j=1,\ldots,K}$ on $\R^K$). This identity holds since $|\diff\phi^j|= \mathcal{O}(r^{-1})$ near $s$ ($r$ being the distance to $s$), and $m-1 \geq 2$ so that the volume of $\partial B_\varepsilon(s)$ behaves as $\mathcal{O}(\varepsilon^{m-1}) \leq \mathcal{O}(\varepsilon^2)$.
\end{proof}

When $m = \dim(M) \geq 4$, conically singular maps $\phi \col M \setminus S \to N$ which are weakly harmonic (equivalently, whose tension field vanishes on the regular locus by the previous lemma) satisfy a stronger notion of harmonicity; namely, they are \emph{stationary}. That is, $\frac{\diff}{\diff t} E(\phi \circ \exp(tX)) = 0$ for any smooth vector field $X \in \mathfrak{X}(M)$.

\begin{lem}     \label{lem: m geq 4}
    Suppose that $m = \dim(M) \geq 4$. Then any conically singular map $\phi \col M \setminus S \to N$ which is weakly harmonic is stationary.
\end{lem}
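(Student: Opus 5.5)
The plan is to verify stationarity directly by computing the first variation of the energy under domain diffeomorphisms, and to show that the boundary terms produced at the singular points vanish when $m \geq 4$. Let $X \in \mathfrak{X}(M)$ and $\psi_t = \exp(tX)$ its flow. Away from $S$, the standard first-variation computation for domain variations gives
\begin{equation*}
    \frac{\diff}{\diff t}\Big|_{t=0} E(\phi\circ\psi_t) = -\int_M \langle S_\phi, \nabla X\rangle_g \vol_g, \qquad S_\phi \coloneqq \tfrac12|\diff\phi|^2 g - \phi^*h ,
\end{equation*}
where $S_\phi$ is the stress-energy tensor. This identity holds for smooth maps. For our $\phi$ the integrand is $O(r^{-2})$, which is integrable since $m \geq 3$. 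To justify differentiating under the integral, I would change variables: $E(\phi\circ\psi_t) = \frac12\int_M |\diff\phi|^2_{(\psi_t^{-1})^*g}\vol_{(\psi_t^{-1})^*g}$. The $t$-derivative of this integrand is bounded by $C|\diff\phi|^2 \in L^1$, uniformly in $t$, so dominated convergence applies and yields the formula above.

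Next I would integrate by parts on $M \setminus \bigcup_s B_\varepsilon(s)$. On the smooth locus, $\operatorname{div} S_\phi = -\langle \tau(\phi), \diff\phi\rangle = 0$, because the tension vanishes there. This equivalence follows from \cref{lem: m geq 3} together with the hypothesis of weak harmonicity. Hence
\begin{equation*}
    \int_{M\setminus \cup B_\varepsilon} \langle S_\phi,\nabla X\rangle \vol_g = -\sum_{s\in S}\int_{\partial B_\varepsilon(s)} S_\phi(X,\nu)\,\diff\sigma .
\end{equation*}
As $\varepsilon \to 0$, the left-hand side converges to the full integral, again since $S_\phi \in L^1$.

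The final step is to estimate the boundary term. Near $s$ we have $|S_\phi| \leq C|\diff\phi|^2 = O(r^{-2})$ by the conical asymptotics (\cref{Def: Conically singular maps}), while $X$ is bounded and $|\partial B_\varepsilon| = O(\varepsilon^{m-1})$. The boundary term is therefore $O(\varepsilon^{m-3})$, which tends to $0$ precisely when $m \geq 4$. This shows the first variation vanishes, so $\phi$ is stationary.

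The main obstacle is the rigorous justification of the first-variation formula for the singular map, namely differentiating under the integral near $S$. I expect the change-of-variables domination argument to handle it cleanly, since all metric-dependent factors are smooth in $t$ and the only singular quantity is $|\diff\phi|^2 \in L^1$. It is also worth noting that for $m=3$ the boundary term is $O(1)$, where the tangent-map contribution genuinely need not vanish, which is consistent with the dimension restriction in the statement.
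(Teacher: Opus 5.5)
Your proof is correct and follows essentially the same route as the paper. Both arguments reduce stationarity to the stress-energy tensor $T = \tfrac12|\diff\phi|^2 g - \phi^*h$ being divergence-free across $S$: it is divergence-free on the smooth locus because the tension vanishes there, and the bound $|T| = \mathcal{O}(r^{-2})$ makes the boundary terms on $\partial B_\varepsilon(s)$ of order $\varepsilon^{m-3}$, which vanish for $m \geq 4$. The only difference is that you derive the first-variation formula explicitly via dominated convergence rather than citing the distributional characterisation, which also spares you the paper's additional claim that $\nabla T \in L^1$.
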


\begin{proof}
    Since $\phi$ is weakly harmonic, stationarity is equivalent to the property that the tensor $T = \tfrac{1}{2} |\diff\phi|^2_{g,h} g - \phi^*h$ (which is in $L^1(\mathrm{Sym}^2T^*M)$) be covariantly divergence-free, in the distributional sense (c.f. \cite[\S3.1]{helein2007harmonic} for instance). This property holds outside of $S$ since $\phi$ is smooth on $M \setminus S$. On the other hand, in local adapted coordinates the tensor $T$ takes the form
    \begin{equation*}
        T_{ab} = \frac{1}{2} g^{k\ell} \langle \frac{\partial \phi}{\partial x^k}, \frac{\partial \phi}{\partial x^\ell} \rangle g_{ab} - \langle \frac{\partial \phi}{\partial x^a}, \frac{\partial \phi}{\partial x^b} \rangle
    \end{equation*}
    where the brackets $\langle \cdot , \cdot \rangle$ are defined with respect to the standard inner product of $\R^K$, using the isometric embedding $\iota \col N \hookrightarrow \R^K$. This expression shows that $|T|_{g,g_{\R^K}} = \mathcal{O}(r^{-2})$ and $|\nabla T|_{g,g_{\R^K}} = \mathcal{O}(r^{-3})$ near a singularity $s \in S$, and using integration by parts as before we easily deduce that when $m \geq 4$, $\nabla^{g} T$ is $L^1$ and coincides with the covariant derivative computed on $M \setminus S$. This prove that $T$ is covariantly divergence-free on $M$ in the sense of distributions, and hence $\phi$ is stationary.
\end{proof}